\documentclass[12pt]{amsart}

\usepackage{jsn_style}

\title{A Functorial Classification of Toric Stacks}
\author{John S.\ Nolan}

\begin{document}

\begin{abstract}
    Toric stacks enrich the study of toric varieties by accounting for quotients of actions of diagonalizable groups.
    In practice, toric stacks are constructed and understood via ``stacky fans'' generalizing the fans appearing in toric geometry.
    Previous attempts to make this construction into an equivalence of categories have either run into crucial errors or have been restricted to special cases (e.g.\ only considering separated Deligne--Mumford stacks).
    In this paper, we solve this problem by introducing a new 2-category $\StkFan$ of stacky fans and constructing an equivalence of 2-categories between $\StkFan$ and a 2-category $\TStk$ of toric stacks.
    Along the way, we develop a theory of ``toric categories'' which categorifies the classical theory of toric monoids.
    We also explain how working with toric categories provides a simple axiomatic framework for Gale duality.
\end{abstract}

\maketitle

\tableofcontents

\section{Introduction}

Fix an algebraically closed field $k$ of characteristic zero; we will work throughout over $\Spec k$.
Our goal in this paper is to classify toric stacks over $\Spec k$ in terms of ``stacky fans.''
There are many definitions of toric stack in the literature, and for the purposes of this introduction we will focus on the following definition:

\begin{dfn}[{Special case of \zcref{dfn:toric_stack}}] \label{dfn:toric_stack_first_def} 
    A \emph{toric stack with trivial generic stabilizer} is an Artin stack $\Xfr$ of finite type together with:
    \begin{itemize}
        \item An action of an algebraic torus $T$ on $\Xfr$ and
        \item A dense $T$-equivariant open immersion $j: T \hookrightarrow \Xfr$
    \end{itemize}
    such that:
    \begin{itemize}
        \item $\Xfr$ is normal,
        \item $\Xfr$ has affine diagonal, and
        \item All stabilizer groups of $\Xfr$ at geometric points are diagonalizable.\footnote{Because of our hypotheses on $k$, diagonalizable algebraic groups are the same as products of tori and finite abelian groups.}
    \end{itemize}
    We refer to $T$ as the \emph{dense torus} of $\Xfr$.
\end{dfn}

\begin{rmk}
    In the main body of the paper, we use a more general definition of toric stack in which the dense torus is allowed to be a ``stacky torus,'' i.e.\ a product $T_0 \times BT_1$ with $T_0$ an algebraic torus and $T_1$ a diagonalizable algebraic group.
    For such a toric stack, the generic stabilizer group is $T_1$.
    Working with nontrivial generic stabilizers allows us to account for standard constructions such as gerbes and root stacks.
\end{rmk}

\subsection{Why should we care?}

In algebraic geometry, toric varieties are classified via combinatorial objects known as fans.
This classification allows one to reduce many problems about the algebraic geometry to problems in combinatorics and vice versa.
For instance, homological mirror symmetry is known for toric varieties (see \cite{fang2011categorification, kuwagaki2020nonequivariant} among others) in large part because it is possible to use the combinatorics of fans to construct a mirror.
Related questions for more general algebraic varieties are much harder to even state precisely.

Cox \cite{cox1995homogeneous} proved that every smooth toric variety can be realized as a quotient $(\AA^n \setminus Z) / G$, where $Z \subset \AA^n$ is a union of hyperplanes and $G \subset \GG_m^n$ is a diagonalizable group.
In other words, every smooth toric variety is naturally a substack of some quotient stack $\AA^n / G$.
This perspective led Borisov, Chen, and Smith to define (smooth, separated) toric Deligne--Mumford stack as certain open substacks of quotient stacks $\AA^n / G$ \cite{borisov2005orbifold}.
Borisov, Chen, and Smith also produced the first formalism of ``stacky fans'' for working with such toric stacks.
This definition of toric Deligne--Mumford stacks is useful even for the study of classical toric varieties, since toric Deligne--Mumford stacks show up in toric GIT and the toric minimal model program (see e.g. \cite{kawamata2006derived} for applications).

More generally, one can relax the axioms of \cite{borisov2005orbifold} to define smooth toric (Artin) stacks as quotients of open toric subvarieties of $\AA^n$ by diagonalizable groups acting coordinatewise.
Again, this generality is useful even for the study of classical toric varieties: the stack $\AA^n / G$ is a natural home for ``windows'' and variation of GIT (see e.g.\ \cite{ballard2012variation, halpern2015derived}).

It is natural to ask if there is an analogous story in the singular case.
In this generality, the Cox construction breaks down, and the above approaches fail.
Indeed, there are singular toric varieties which cannot be written as quotients of quasi-affine toric varieties by actions of diagonalizable groups (e.g.\ this is true for \cite[Example on pp.\ 25--26]{fulton1993introduction}, discussed in more detail in \cite[Example 4.2.13]{cls}).
In \cite{geraschenko2015toric_1}, Geraschenko and Satriano circumvented this subtlety by defining a toric stack to be a quotient of a toric variety by a diagonalizable group acting via a homomorphism to the dense torus.
They also produced a theory of ``stacky fans'' giving presentations of such toric stacks.

Geraschenko and Satriano attempted to prove in \cite{geraschenko2015toric_2} that every toric stack (according to our definition) with trivial generic stabilizer arises via their construction.
However, their proof had an error in the non-smooth case, and a counterexample to the main theorem of \cite{geraschenko2015toric_2} was later announced in \cite[Corollary 4.3.9]{gillam2015theory}.
(We also give a new counterexample in \zcref{prop:counterexample}.)
Our goal in this paper is to create a formalism of ``stacky fans'' that accounts for these counterexamples and unifies prior treatments of the subject.

\subsection{What has been done so far?}

Let us now recall the contributions of our illustrious forebears.
Any omissions are due to the author's ignorance.

In \cite{cox1995homogeneous}, Cox introduced a canonical quotient presentation for smooth toric varieties.
Although Cox's paper does not use the language of stacks, its ideas have been quite influential for the development of the theory of toric stacks, and variants of the Cox construction are often taken as definitions of (smooth) toric stacks.
We take an alternate approach, in which the Cox construction is instead a \emph{theorem} (\zcref{cor:cox_construction_stack}).

In \cite{borisov2005orbifold}, Borisov, Chen, and Smith used the Cox construction to introduce a notion of stacky fan and used these to construct smooth separated toric Deligne--Mumford stacks.
Iwanari (\cite{iwanari2009category}, in the case of trivial generic stabilizer) and Fantechi--Mann--Nironi (\cite{fantechi2010smooth}, in the general case) showed that every smooth separated toric Deligne--Mumford stack arises from this construction.

Tyomkin (\cite{tyomkin2012tropical}) introduced a notion of stacky fan which was studied in greater detail (and slightly generalized) by Gillam and Molcho \cite{gillam2015theory} under the name of ``KM fans.''
Gillam and Molcho showed (\cite[Theorem 3.10.7]{gillam2015theory}) that Tyomkin's stacky fans encode all separated toric Deligne--Mumford stacks with trivial generic stabilizer.
Upgrading this theorem to account for toric stacks with nontrivial generic stabilizer is not too hard once one understands how to work with stacky tori (see \zcref{prop:separated_criterion}).

Geraschenko and Satriano (\cite{geraschenko2015toric_1}) defined a notion of stacky fan and used it to construct a large class of toric Artin stacks.
As mentioned above, the sequel paper \cite{geraschenko2015toric_2} purports to prove that every toric stack arises in this way, though some steps of the proof only work in the smooth case.
Despite this, \cite{geraschenko2015toric_1} gives a useful combinatorial framework for working with the most common toric stacks appearing in practice, and we will find some of the steps in \cite{geraschenko2015toric_2} quite useful in our own classification program.

\subsection{How do we classify toric stacks?}

We should begin with our definition of ``stacky fans.''
This relies on some basic monoid-theoretic definitions, and readers unaccustomed to monoid theory should peek ahead to \zcref{sec:monoid}.

We write $\CMon$ for the category of commutative monoids and $\CMon_{/N}$ for the category of commutative monoids equipped with a homomorphism to some fixed $N \in \CMon$.

\begin{dfn}[{Special case of \zcref{dfn:stacky_cone}}]
    Let $P$ be a sharp toric monoid, let $N$ be a finitely generated free abelian group, and let $\phi: P \to N$ be a homomorphism.
    The \emph{stacky cone} corresponding to $\phi$ is the triple $\Cone(N \vec{/}_\phi P) := \big(N, \Face(P), P_{\Cone(N \vec{/}_\phi P)}\big)$ where:
    \begin{itemize}
        \item $\Face(P)$ is the poset of faces of $P$ ordered by inclusion and
        \item $P_{\Cone(N \vec{/}_\phi P)}: \Face(P) \to \CMon_{/N}$ is the functor sending a face $S \subset P$ to the natural morphism $S \hookrightarrow P \xrightarrow{\phi} N$.
    \end{itemize}
\end{dfn}

\begin{dfn}[Special case of \zcref{dfn:stacky_fan}]
    A \emph{stacky fan with free discrete cocharacter group} is a triple $\Sigma := (N_\Sigma, |\Sigma|, P_\Sigma)$ where:
    \begin{itemize}
        \item $N_\Sigma$ is a finitely generated free abelian group,
        \item $|\Sigma|$ is a finite poset, and
        \item $P_\Sigma: |\Sigma| \to \CMon_{/N_{\Sigma}}$ is a functor
    \end{itemize}
    such that:
    \begin{itemize}
        \item For all $\sigma \in |\Sigma|$, the triple $(N_\Sigma, |\Sigma|_{\leq \sigma}, P_\Sigma|_{|\Sigma|_{\leq \sigma}})$ is a stacky cone (up to the obvious notion of isomorphism of triples), and
        \item $|\Sigma|$ admits all finite meets.
    \end{itemize}
\end{dfn}

\begin{rmk}
    For the relationship between our notion of stacky fan and the other notions of stacky fan discussed above, see \zcref{sub:comparison_of_stacky_fans}.
\end{rmk}

Our main theorem (\zcref{thm:classification}) is a generalization of the following:

\begin{thm} \label{mainthm:first_statement}
    Let $\TStk^{\mathrm{tgs}}$ be the 2-category of toric stacks with trivial generic stabilizer, and let $\StkFan^{\mathrm{fdc}}$ be the 2-category\footnote{In this case, $\StkFan^{\mathrm{fdc}}$ is actually a 1-category, so it turns out that both categories in question are 1-categories. This will not be true when we drop the ``trivial generic stabilizer'' hypothesis.} of toric stacks with free discrete cocharacter group.
    There is a natural equivalence of 2-categories
    \[
        \Xfr_{-}: \StkFan^{\mathrm{fdc}} \simto \TStk^{\mathrm{tgs}}.
    \]
\end{thm}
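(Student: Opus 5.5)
The plan is to build the functor $\Xfr_{-}$ locally on stacky cones, glue it over the poset $|\Sigma|$, and then show it is fully faithful and essentially surjective.

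\emph{Step 1: the functor on stacky cones.} Given $\phi: P \to N$ with $P$ sharp toric, I would set
\[
\Xfr_{\Cone(N \vec{/}_\phi P)} := [\Spec k[P^{\mathrm{gp}} \times_{N} \ldots]]
\]
or, more concretely, the quotient stack $[\Spec k[P] / G_\phi]$. Here $G_\phi$ is the diagonalizable group with character group $\operatorname{coker}(N^\vee \to (P^{\mathrm{gp}})^\vee)$, suitably derived. It acts through the homomorphism to the torus $\Spec k[P^{\mathrm{gp}}]$ determined by $\phi$. The dense torus is then $T_N = N \otimes \GG_m$, provided the kernel and cokernel conditions hold. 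If $\phi^{\mathrm{gp}}$ has a kernel or finite cokernel, one gets a stacky torus with nontrivial generic stabilizer; presumably the "fdc" hypothesis together with trivial generic stabilizer forces $\phi^{\mathrm{gp}}$ to be surjective after saturation. For each face $S \subset P$, the localization $k[P] \to k[P][S^{-1}]$ gives a $T$-equivariant open immersion. This makes $\sigma \mapsto \Xfr_\sigma$ a functor from $|\Sigma|$ to toric stacks with open immersions.

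\emph{Step 2: gluing.} I would define $\Xfr_\Sigma$ as the colimit (union) of the $\Xfr_\sigma$ along these open immersions. Existence of finite meets in $|\Sigma|$ guarantees that $\Xfr_\sigma \cap \Xfr_\tau = \Xfr_{\sigma\wedge\tau}$. Normality, finite type, affine diagonal and diagonalizable stabilizers are then checked on the affine-quotient charts. For 1-morphisms of stacky fans, i.e.\ maps of posets compatible with maps $P_\sigma \to P_{\sigma'}$ over $N \to N'$, I would construct equivariant maps chart by chart and glue. Because the generic stabilizer is trivial, 2-morphisms are unique when they exist, so both sides are 1-categories, as in the footnote.

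\emph{Step 3: full faithfulness.} I would recover the combinatorial data intrinsically from $\Xfr$. The cocharacter group is $N = \operatorname{Hom}(\GG_m, T)$. The poset $|\Sigma|$ is the poset of $T$-invariant open substacks that are "affine-like" toric stacks, meaning they admit a good moduli space that is an affine toric variety, or are of the form $[\text{affine}/G]$. For each such chart, $P_\sigma$ is the monoid of $T$-orbits specializing relations: categorically, it is the monoid of cocharacters $\lambda$ together with extension data of $\GG_m \to T$ to $\AA^1 \to \Xfr$, modulo the stacky identification. Maps $\AA^1 \to [\Spec k[P]/G]$ extending a cocharacter correspond to lifts to $P$-points, which gives the monoid $P$ back. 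Equivariant maps of toric stacks then induce maps on these data, yielding inverse bijections on Hom-sets.

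\emph{Step 4: essential surjectivity, the main obstacle.} One must show that every toric stack $\Xfr$ is covered by $T$-invariant open substacks of the form $[\Spec k[P]/G_\phi]$. This is exactly where Geraschenko--Satriano failed in the nonsmooth case. A sharp toric monoid $P$ need not be free, so I would not insist that the chart be a quotient of an affine space or of a fixed toric variety. Instead I would use local structure: by Alper--Hall--Rydh, near a closed point with diagonalizable (linearly reductive) stabilizer $G_x$, $\Xfr$ is étale-locally $[\Spec A/G_x]$. Using the $T$-action, I would upgrade this to a $T$-invariant Zariski-open $[\Spec A/G]$ with $A$ a normal $T\times G$-equivariant domain containing a dense torus orbit. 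Such an $A$ is then a toric monoid algebra $k[P]$, by the classical classification of normal affine toric varieties applied to the $G$-torsor over the dense torus. The finitely many $T$-orbits give finiteness of $|\Sigma|$. Meets come from intersections of charts, which I would show are again charts using the affine diagonal. The delicate point is producing the Zariski-open $T$-invariant chart from the étale-local one; I would attempt this by taking the saturation of the open orbit-stratum around $x$ and using the torus action to contract onto the closed orbit, as in Sumihiro-type arguments.
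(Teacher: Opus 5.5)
Step 4, which you rightly call the crux, is not actually carried out. Alper--Hall--Rydh gives only \'etale charts $[\Spec A/G_x]\to\Xfr$. Producing a $T$-invariant \emph{Zariski}-open basic chart is precisely the content of a stacky Sumihiro theorem, and ``saturating the orbit stratum and contracting onto the closed orbit'' is not an argument for it. Even granting such a chart, concluding that $\Spec A$ is an affine toric variety requires lifting the $T$-action on $[\Spec A/G]$ to an action on $\Spec A$ of an extension of $T$ by $G$ with a dense free orbit, which you do not justify.

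The missing idea, which is how the paper proceeds, is not to attempt Sumihiro on $\Xfr$ at all. Since the generic stabilizer is trivial, $\Xfr\to\Xfr/T$ is a $T$-torsor and $\Xfr/T$ is a toric stack with trivial torus. \zcref{thm:stacky_sumihiro} applies there and gives Zariski-open basic neighborhoods $[V/T_V]$, with $V$ affine toric with torus $T_V$. Their preimages in $\Xfr$ are basic by an explicit torsor computation: $\Pic(V)=0$ and $T_V$-equivariant line bundles on $V$ are of the form $\Osc_V(\chi)$, so each preimage is $[(V\times T)/T_V]$ (\zcref{lem:torsor_is_basic}). To assemble an actual stacky fan you then still need three facts: each orbit has a smallest basic neighborhood, basic opens are closed under intersection, and open immersions of basic toric stacks are face inclusions (\zcref{lem:geometrization_cone_immersion_converse}). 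The same Sumihiro input is needed in Step 3, a point you do not address. To define $|f|$ for a toric morphism you must know that a toric morphism out of a basic stack lands in a basic open of the target (\zcref{prop:basic_maps_to_basic}).

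In Step 3 your invariants are the right ones; they are the paper's cocharacter category $\coweightcat$. But ``equivariant maps induce maps on these data, yielding inverse bijections'' is the entire content of full faithfulness on cones, and it is only asserted. The paper obtains $\Map_\TStk(\AA^1,\Xfr_K)=\PO\Oc(K)$ as a \emph{consequence} of full faithfulness (\zcref{prop:cocharacter_equivalence}). It proves full faithfulness via the Tannakian description of $\Xfr_K$ and the reconstruction $\CLine(\Xfr_K)=\GDual(\Oc(K))$ (\zcref{lem:linear_reconstruction_geometric}). Your route would need an independent computation of toric maps $\AA^1\to[U/G]$. It would also need a proof that a toric morphism of basic stacks is determined, up to unique 2-isomorphism, by its cocharacter data, and that every morphism of cones is realized. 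This is not formal: maps into stacks with positive-dimensional stabilizers are not determined by their restriction to the torus. For instance, the toric maps $\AA^1\to[\AA^1/\GG_m]$ given by the pairs $(\Osc,t)$ and $(\Osc,t^2)$ agree on $\GG_m$ but are not isomorphic.

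Finally, Step 1 is wrong as written.
\begin{itemize}
\item The chart must be built from $P^\vee=\Hom(P,\NN)$, not $P$, with a face $S\subset P$ corresponding to localization at $S^\perp$. Your assignment $S\mapsto\Spec k[P][S^{-1}]$ is order-reversing: for $P=\NN$ it sends $0$ to $\AA^1$ and $\NN$ to $\GG_m$.
\item You need a torus factor when $\operatorname{coker}\phi^{\mathrm{gp}}$ has positive rank: $P=0$, $N=\ZZ$ must give $\GG_m$, not $\Spec k$.
\item The hypothesis that $N$ is free and discrete imposes no condition on $\phi$. The dense torus is always $N\otimes\GG_m$ and the generic stabilizer is automatically trivial. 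A kernel of $\phi^{\mathrm{gp}}$, or torsion in its cokernel, only produces stabilizers along non-open orbits. For example, $\NN\xrightarrow{d}\ZZ$ gives $[\AA^1/\mu_d]$, which your functor must reach.
\end{itemize}
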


In other words, given a stacky fan $\Sigma$ with free discrete cocharacter group, we can produce a \emph{geometrization} $\Xfr_\Sigma$, which is a toric stack with trivial generic stabilizer.
The data of the toric stack $\Xfr_\Sigma$ exactly encodes that of the stacky fan $\Sigma$.

\begin{rmk}
    We can construct an inverse to $\Xfr_{-}$ as follows.
    Suppose we are given a toric stack $\Xfr$ with trivial generic stabilizer and dense torus $T$.
    Let $\Sigma = (N_\Sigma, |\Sigma|, P_\Sigma)$ be the stacky fan where:
    \begin{itemize}
        \item $N_\Sigma = \Hom(\GG_m, T)$,
        \item $|\Sigma|$ is the collection of $T$-orbits (of $k$-points) in $\Xfr$, and
        \item For $\sigma \in |\Sigma|$, we let $\Ufr_\sigma \subset \Xfr$ be the smallest toric open substack containing the orbit corresponding to $\sigma$.
        Then $\Ufr_\sigma$ can be viewed as a commutative monoid object in the 2-category of stacks.
        We let $P_{\Sigma,\sigma} = \Hom(\AA^1, \Ufr_\sigma)$, where $\AA^1$ is viewed as a commutative monoid via multiplication.
        (The homomorphism $P_{\Sigma,\sigma} \to N_\Sigma$ is restriction along the inclusion $\GG_m \hookrightarrow \AA^1$.)
    \end{itemize}
    With this choice of $\Sigma$, we have $\Xfr \simeq \Xfr_{\Sigma}$ as toric stacks.
\end{rmk}

\subsection{What is geometrization?}

Suppose we are given a finitely generated free abelian group $N$ (with dual $M$) and a classical fan $\Sigma \subset N \otimes \RR$.
To construct a toric variety $X_\Sigma$, we perform the following steps:

\begin{enumerate}
    \item Given a cone $\sigma \in \Sigma$, construct the dual cone $\sigma^\vee = \big\{\phi: N \otimes \RR \to \RR\, \big|\,\phi|_{\sigma} \geq 0\big\}$.
    Despite the standard notation, note that $\sigma^\vee$ depends on the homomorphism $\sigma \to N \otimes \RR$ and not just on $\sigma$.
    \item Let $U_\sigma = \Spec k\angles{\sigma^\vee \cap M}$.
    Thus we may think of $\sigma^\vee \cap M$ as a combinatorial approximation to the coordinate ring of $U_\sigma$.
    \item Glue together the varieties $U_\sigma$ along open immersions induced by inclusions of subcones to get $X_\Sigma$.
\end{enumerate}

To construct toric stacks from stacky fans, we will perform a similar procedure.
However, to ensure that we account for stacky behavior, we work one category level up and consider \emph{toric categories} instead of just toric monoids.
Morally, a toric category is a symmetric monoidal category that behaves like the category of line bundles on a toric stack with morphisms given by monomials.
Thus morphisms in a toric category form sets rather than $k$-vector spaces.

The precise definition of toric categories is a tad involved, so we shall postpone it until \zcref{sec:toric_cats}.
Before then, here are a few key examples to keep in mind:

\begin{ex}
    Any finitely generated abelian group $A$ may be viewed as a toric category with objects $A$ and 
    \[
        \Map_A(a_1, a_2) = \begin{cases}
            \{ * \} & a_1 = a_2 \\
            \varnothing & \textrm{otherwise.}
        \end{cases}
    \]
\end{ex}

\begin{ex}
    Let $P$ be a toric monoid, and let $\Bsf P$ be the category with:
    \begin{itemize}
        \item one object $*$,
        \item $\Map_{\Bsf P}(*, *) = P$, and
        \item Composition given by the operation in $P$.
    \end{itemize}
    There exists a natural symmetric monoidal structure on $\Bsf P$ (given on morphisms by the operation in $P$), and this symmetric monoidal structure makes $\Bsf P$ into a toric category.
\end{ex}

\begin{ex}
    Let $\ZZ \vec{/} \NN$ be the symmetric monoidal category such that:
    \begin{itemize}
        \item $\ob \ZZ \vec{/} \NN = \ZZ$.
        \item For $n_1, n_2 \in \ZZ$,
        \[
            \Map_{\ZZ \vec{/} \NN}(n_1, n_2) = \begin{cases}
                \{ * \} & n_2 - n_1 \in \NN \\
                \varnothing & \textrm{otherwise.}
            \end{cases}
        \]
        \item The symmetric monoidal structure is given by addition in $\ZZ$.
    \end{itemize}
    Then $\ZZ \vec{/} \NN$ is a toric category.
    The underlying category of $\ZZ \vec{/} \NN$ is the category corresponding to the standard poset $(\ZZ, \leq)$.
\end{ex}

Suppose we are given a stacky fan $\Sigma = (N_\Sigma, |\Sigma|, P_\Sigma)$ with free discrete cocharacter group.
To construct a toric stack $\Xfr_\Sigma$, we proceed as follows:

\begin{enumerate}
    \setcounter{enumi}{-1}
    \item Given $\sigma \in |\Sigma|$, we encode the data of the homomorphism $P_{\Sigma,\sigma} \to N_\Sigma$ in a \emph{skew quotient} toric category $N_\Sigma \vec{/} P_{\Sigma,\sigma}$.
    \item Construct the \emph{Gale dual} toric category $\GDual(N_\Sigma \vec{/} P_{\Sigma,\sigma})$.
    \item Let $\Ufr_\sigma$ be the stack defined by
    \[
        \Ufr_\sigma(R) = \iota_0 \Fun^\otimes(\GDual(N_\Sigma \vec{/} P_{\Sigma,\sigma}), \Mod_R)
    \]
    for all affine schemes $\Spec R$ over $\Spec k$.
    Here $\iota_0$ refers to the maximal groupoid contained in a category.
    This is a toric stack with trivial generic stabilizer, and one can show that the full subcategory of $\QCoh(\Ufr_\sigma)$ spanned by line bundles is the $k$-linearization of $\GDual(N_\Sigma \vec{/} P_{\Sigma,\sigma})$.
    \item Obtain $\Xfr_\Sigma$ by gluing together the stacks $\Ufr_\sigma$ along open immersions induced by inclusions of subcones.
\end{enumerate}

\subsection{What is Gale duality?}

The construction of toric stacks in the previous subsection assumes that we are able to take ``Gale duals'' of toric categories.
Most formalisms for toric stacks rely on some version of the derived Gale duality theory developed in \cite[\S 2]{borisov2005orbifold}.
This is a useful, if abstract, machinery that allows one to produce ``dual cones'' beyond the realm of classical toric geometry.

We are able to generalize this machinery to toric categories.
Furthermore, in this language, Gale duality admits a straightforward axiomatic description in terms of the examples we have discussed above:

\begin{thm}[Categorical Gale duality, {\zcref{thm:gale_duality}}] \label{mainthm:gale_duality}
    Let $\TCat$ be the 2-category of toric categories.
    Then:
    \begin{enumerate}
        \item Gale duality defines a contravariant 2-functor $\GDual: \TCat\op \to \TCat$.
        \item There is a natural equivalence $(\GDual)^2 = \id$ (so in particular $\GDual$ is a contravariant autoequivalence).
        \item The 2-functor $\GDual$ is uniquely characterized (up to essentially unique equivalence) by the properties:
        \begin{itemize}
            \item $(\GDual)^2 = \id$,
            \item $\GDual(\ZZ) = B\ZZ$,
            \item $\GDual(\ZZ \vec{/} \NN) = \Bsf \NN$, and
            \item $\GDual$ sends the natural map $\ZZ \to \ZZ \vec{/} \NN$ to the inclusion map $\Bsf \NN \hookrightarrow \Bsf \ZZ$.
        \end{itemize}
    \end{enumerate}
\end{thm}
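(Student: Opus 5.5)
We have to prove the categorical Gale duality theorem without having seen the definition of toric categories. The natural plan is to treat a toric category as a symmetric monoidal category whose hom-sets are torsors-with-boundary over toric monoids, built from a finitely generated abelian group of objects. We would then define $\GDual$ by an internal-hom construction into a dualizing object and check the three claims in turn.

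For part (1), the dualizing object should be the toric category $\ZZ\vec{/}\NN$, or rather a "universal" toric category $D$ with $\Fun^\otimes(D,\Mod_R)$ the stack $\AA^1/\GG_m$. Define $\GDual(\Ccal)$ to have objects the symmetric monoidal functors $\Ccal\to\ZZ\vec{/}\NN$, suitably twisted by the Pontryagin-type dual of the group of objects. Its morphisms are the monoidal natural transformations.

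I would first reduce to presentations. I would show, or cite from \zcref{sec:toric_cats}, that every toric category is equivalent to a skew quotient $A\vec{/}P$ for a homomorphism $P\to A$ from a toric monoid to a finitely generated abelian group. The 2-morphisms would then be described by homotopies of maps of such two-term data. In this language a toric category is essentially a two-term complex $P^{\mathrm{gp}}\to A$ in the derived category, with a positivity structure $P$. Gale duality then becomes the Borisov--Chen--Smith derived dual $\mathrm{Cone}(\cdot)^\vee[1]$, with the positive part given by the dual cone $P^\vee$. Functoriality in 2-morphisms follows because $\RHom(-,\ZZ)$ is a 2-functor on two-term complexes, that is, on Picard groupoids.

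For part (2), $\GDual^2\simeq\id$ would follow from two facts: biduality for finitely generated abelian groups in the derived sense ($\RHom(\RHom(C,\ZZ),\ZZ)\simeq C$), and the fact that $P^{\vee\vee}=P$ for saturated toric monoids. Both need care with torsion, which is exactly why the derived shift is needed. Naturality of the evaluation map gives the 2-natural equivalence.

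For part (3), existence is checked on the listed examples. Uniqueness is the delicate step. The plan is to express every toric category as a finite 2-categorical limit (a pullback or fiber product) built out of copies of $\ZZ$, $\ZZ\vec{/}\NN$ and the map $\ZZ\to\ZZ\vec{/}\NN$. One possible presentation is $A\vec{/}P$ as an equalizer-type construction from $\ZZ^n\to(\ZZ\vec{/}\NN)^n$ for generators of $P^\vee$.

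A contravariant autoequivalence automatically sends limits to colimits. Hence any $\GDual'$ satisfying the axioms is determined on these generators, and so everywhere, up to essentially unique equivalence. The main obstacle is this generation statement: showing that the listed objects and map generate $\TCat$ under (co)limits, in a way rigid enough that equivalences are essentially unique. I would first attempt it for free/torsion-free cases using saturation of toric monoids. I would then handle torsion in the groups of objects via the cofiber of $\ZZ\xrightarrow{n}\ZZ$, checking that $\GDual$ sends it to the corresponding $B(\ZZ/n)$.
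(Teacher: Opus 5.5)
The genuine gap is in (3). Your uniqueness argument rests on a generation statement that you do not prove, and the version you sketch cannot work as stated.

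\begin{itemize}
\item Limits in $\TCat$ are computed componentwise on the arrow $\PO\Cc\to\iota_0\Cc$. So any finite limit of copies of $\ZZ$ and $\ZZ\vec{/}\NN$ has discrete, torsion-free $\iota_0$, and you never reach $\Bsf\ZZ$, $\Bsf\NN$, or torsion.
\item You would therefore have to mix limits and colimits: equalizers to cut out the monoid, then cofibers and pushouts to produce $\Bsf$-parts and torsion.
\item Even then, you must show that a 2-functor satisfying the axioms is determined on every morphism in those diagrams. The axioms only specify it on two objects and a single map.
\end{itemize}

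The idea that makes all of this unnecessary is corepresentability: $\ZZ$ corepresents $\iota_0$ and $\ZZ\vec{/}\NN$ corepresents $\PO$ (\zcref{ex:i0_corepresented}, \zcref{ex:P0_corepresented}). Let $\Phi$ be any 2-functor with the listed properties. Since $\Phi$ is an anti-equivalence with $\Phi^2=\id$,
\[
\iota_0\Phi(\Cc)=\iota_0\Fun^\oplus\big(\ZZ,\Phi(\Cc)\big)\simeq\iota_0\Fun^\oplus\big(\Phi^2(\Cc),\Phi(\ZZ)\big)=\iota_0\Fun^\oplus(\Cc,\Bsf\ZZ).
\]
Likewise $\PO\Phi(\Cc)=\iota_0\Fun^\oplus(\Cc,\Bsf\NN)$, and the fourth axiom identifies the structure map with the one induced by $\Bsf\NN\hookrightarrow\Bsf\ZZ$. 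By \zcref{thm:TCat_structure_thm}, a toric category is determined by its arrow $\PO\Cc\to\iota_0\Cc$. Hence $\Phi(\Cc)$ is forced, with no generation lemma.

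For (1) and (2), your derived-dual description is the paper's construction:
\[
\GDual\big(\Omega^\infty(F^\bullet)\vec{/}M\big)=\Omega^\infty\RHom(E^\bullet,\ZZ)\vec{/}D_\CMon(M),\quad\text{where } E^\bullet=\fib(M^\gp\to F^\bullet).
\]
Three points need repair.

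\begin{itemize}
\item You must use the canonical presentation $\iota_0\Cc\vec{/}\PO\Cc$ (toric groupoid, sharp monoid), not one with discrete $A$. A discrete presentation is non-canonical (the $\Ext^1$-ambiguity of \zcref{prop:strict_trivialization}), so functoriality is not inherited from $\RHom$. It also forces $P$ to be non-sharp whenever $\Aut_\Cc(0)\neq 0$, and then $P^{\vee\vee}=P^\sharp\neq P$.
\item With $M$ sharp, biduality also needs $D_\CMon(M)^\gp=\Hom(M^\gp,\ZZ)$. This is what identifies $\fib\big(D_\CMon(M)^\gp\to\RHom(E^\bullet,\ZZ)\big)$ with $\RHom(F^\bullet,\ZZ)$.
\item Your opening internal-hom definition via functors into $\ZZ\vec{/}\NN$ is wrong: for $\Cc=\ZZ$ it returns the discrete group $\ZZ$, not $\Bsf\ZZ$. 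The correct targets are $\Bsf\ZZ$ and $\Bsf\NN$, as in \zcref{cor:gale_dual_as_skew_quotient}.
\end{itemize}
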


We are also able to translate this into a geometric theorem about ``basic'' toric stacks (i.e.\ those of the form $U / G$ where $U$ is a basic toric variety and $G$ is a diagonalizable algebraic group acting via a homomorphism to the dense torus of $U$).
See \zcref{thm:geometric_gale_duality} for details.

\begin{rmk}
    Gale duality is actually the main reason we avoid ``trivial generic stabilizer'' hypotheses throughout the paper.
    Indeed, the Gale dual of the basic toric stack $\GG_m$ is $B\GG_m$, which no longer has trivial generic stabilizer.
    Thus making Gale duality into a symmetric operation essentially forces us to consider toric stacks with nontrivial generic stabilizer.
\end{rmk}

\subsection{Organization} \label{subsec:organization}

This paper is split into four parts:
\begin{enumerate}
    \item In \zcref{part:toric_cats}, we develop the basic theory of toric categories.
    \zcref{sec:monoid} is dedicated to a quick review of the theory of toric monoids, mostly following \cite{ogus2018lectures}.
    We also include proofs of some useful claims we could not find in \cite{ogus2018lectures}.
    In \zcref{sec:2_monoid}, we discuss the more general theory of commutative 2-monoids and (strictly) abelian 2-groups, i.e.\ ``commutative monoid objects'' and ``(nice) abelian group objects'' in the 2-category of groupoids.
    This theory is used in \zcref{sec:toric_cats} to discuss toric categories and the skew quotient construction.
    In particular, we prove a structure theorem (\zcref{thm:TCat_structure_thm}) which allows us to reduce the study of toric categories to the study of toric monoids and strictly abelian 2-groups.
    Finally, in \zcref{sec:gale_duality}, we prove our Gale duality theorem (\zcref{thm:gale_duality}) and discuss some consequences.
    \item \zcref{part:fans_and_stacks} centers around defining the objects appearing in \zcref{mainthm:first_statement} and their generalizations in the case of nontrivial generic stabilizer.
    In \zcref{sec:stacky_fans}, we discuss stacky fans and construct a 2-category $\StkFan$ of stacky fans.
    Then, in \zcref{sec:defining_toric_stacks}, we define toric stacks and construct a 2-category $\TStk$ of toric stacks.
    We relate the two via a geometrization functor $\Xfr_{-}: \StkFan \to \TStk$ constructed in \zcref{sec:defining_geometrization}.
    \item In \zcref{part:geometrization_is_equivalence}, we prove that $\Xfr_{-}: \StkFan \to \TStk$ is an equivalence of categories (\zcref{thm:classification}), generalizing \zcref{mainthm:first_statement}.
    To accomplish this, we explain in \zcref{sec:equivariantization} how to quotient a toric stack $\Xfr$ by its dense stacky torus $\Tfr$, producing a toric stack $\Xfr / \Tfr$ with trivial dense torus.
    This allows us to use results of \cite{geraschenko2015toric_2} which imply that toric stacks with trivial dense torus are Zariski-locally covered by nice quotient stacks.\footnote{The local description of toric stacks in \cite{geraschenko2015toric_2} is correct; the aforementioned errors in \cite{geraschenko2015toric_2} only appear when attempting to translate between this local description and a global description.}
    In \zcref{sec:local_description}, we lift this local quotient description of $\Xfr / \Tfr$ to a local quotient description of $\Xfr$.
    Then, in \zcref{sec:proof_of_main_thm}, we combine these ingredients into a proof that $\Xfr_{-}$ is an equivalence of 2-categories.
    We also discuss some geometric consequences of Gale duality (see e.g.\ \zcref{thm:geometric_gale_duality}).
    \item \zcref{part:applications} includes further discussion of the relationship between the combinatorics of stacky fans and the geometry of toric stacks.
    This includes less technical material than the preceding sections.
    \zcref{sec:orbit_cone} focuses on the ``orbit--cone correspondence'' in the setting of general toric stacks.
    In \zcref{sec:dm_stacks}, we explore which properties of stacky fans correspond to standard properties of toric stacks (e.g.\ Deligne--Mumfordness, separatedness, smoothness).
    Finally, in \zcref{sec:picard_groups}, we explain how to compute Picard groups and class groups of toric stacks, and we use this to produce a new counterexample to the main theorem of \cite{geraschenko2015toric_2}.
    Many more generalizations of results in \cite{cls} and beyond to toric stacks are possible (and sometimes easy) in this framework; we include the preceding results merely as a sample of things that can be done.
\end{enumerate}

\subsection{Conventions and notation}

\begin{itemize}
    \item We work throughout over an algebraically closed field $k$ of characteristic zero.
    Our constructions are valid more generally, and the formalism of stacky fans makes no reference to fields at all.
    However, a key step in our proof in the case of nontrivial generic stabilizer relies on results of \cite{demeyer1992cohomological} which seem to be proved only with the above hypotheses on $k$.
    We do not know if it is possible to weaken these hypotheses.
    \item All monoids we consider will be commutative, though for terminological consistency we will generally include the adjective ``commutative'' whenever needed.
    We use additive notation throughout for most commutative monoids and symmetric monoidal categories.
    However, we use multiplicative notation in the corresponding monoid rings and use the symbol $\otimes$ for the standard symmetric monoidal structures on some classical categories (e.g.\ categories of quasicoherent sheaves).
    We trust that context will make the notation clear.
    \item Throughout this paper, we use the language of 2-categories.
    However, in the interest of readability and brevity, we deliberately avoid the finer details of 2-category theory.
    Readers who are interested in filling in these details should be able to do so without too much difficulty.
    \item If $\Cc$ is a category with a natural notion of ``duality'' functor $\Cc\op \to \Cc$, then we will typically write $D_\Cc$ for this functor.
    For example, $D_\CMon = \Hom_\CMon(-, \NN)$ and $D_\Ab = \Hom(-, \ZZ)$.
    \item If $\Cc$ is a category, we write $\iota_0 \Cc$ for the maximal groupoid contained in $\Cc$.
    More generally, if $\Cc$ is a 2-category, we write $\iota_1 \Cc$ for the 2-category obtained from $\Cc$ by applying $\iota_0$ to all hom-categories.
    \item If $\Cc$ is a (2- or 1-)category and $c \in \Cc$, we write $\Cc_{/c}$ for the (2- or 1-)category of objects of $\Cc$ equipped with morphisms to $c$.
    \item We use the symbol ``$=$'' for canonical equivalence / isomorphism and the symbol ``$\simeq$'' for non-canonical equivalence.
    \item If $\Cc$ and $\Dc$ are cocomplete categories, we will write $\Fun^L(\Cc, \Dc)$ for the category of colimit-preserving functors $\Cc \to \Dc$.
    We will also use variants of this notation, e.g.\ $\Fun^{L,\otimes}(\Cc, \Dc)$ for the category of colimit-preserving symmetric monoidal structures (for some symmetric monoidal structures $\otimes_\Cc$ and $\otimes_\Dc$).
    \item We use cohomological indexing for chain complexes.
\end{itemize}

\subsection{Acknowledgments}

The author would like to thank Remy van Dobben de Bruyn for answering a MathOverflow question of the author's and pinpointing an error in \cite{geraschenko2015toric_2} which served as the impetus for this project.\footnote{See \href{https://mathoverflow.net/questions/510252/}{https://mathoverflow.net/questions/510252/} for the relevant discussion, which occurred before the author knew about the counterexample mentioned in \cite{gillam2015theory}.}
The author also benefited from mathematical conversations with and encouragement from Swapnil Garg, Daigo Ito, David Nadler, Hsian-Hua Tseng and Constantin Teleman.

Part of this research was supported by the NSF Research Training Group ``Arithmetic, Combinatorics, and Topology of Algebraic Varieties'' at The Ohio State University (DMS-2231565).

\subsection{AI disclosure}

The author used ChatGPT and Copilot for literature review and foundational questions.
The mathematical content and exposition here is entirely due to the author, who takes full responsibility for any mistakes.

\part{The algebra of toric categories} \label{part:toric_cats}

\section{A quick review of toric monoids} \label{sec:monoid}

Let $(M, +)$ be a commutative monoid with group of units $M^\pm$ and group completion $M^\gp$.
We say $M$ is:
\begin{itemize}
    \item \emph{sharp} if $M^\pm = 0$,
    \item \emph{finitely generated} if there exists a surjection $\NN^n \twoheadrightarrow M$ for some $n$,
    \item \emph{integral} if the natural morphism $M \to M^\gp$ is injective,
    \item \emph{saturated} if $M$ is integral and, for every $m \in M^\gp$, if $d \cdot m \in M$ for some nonzero natural number $d$, then $m \in M$,
    \item \emph{fs} if $M$ is finitely generated, integral, and saturated, and
    \item \emph{toric} if $M$ is fs and $M^\gp$ is free.
\end{itemize}

\begin{rmk}
    Equivalently, $M$ is toric if and only if $M$ is saturated and embeds as a finitely generated submonoid of $\ZZ^n$ for some $n$.
\end{rmk}

In this section we review the basic theory of toric monoids, using \cite[Chapter I]{ogus2018lectures} as a reference.
We write:
\begin{itemize}
    \item $\CMon$ for the category of commutative monoids,
    \item $\CMon^\sharp$ for the category of sharp commutative monoids,
    \item $\CMon^\toric$ for the category of toric monoids, and
    \item $\CMon^{\sharp,\toric}$ for the category of sharp toric monoids.
\end{itemize}

\subsection{Faces}

Within any commutative monoid $M$ we have the following standard notions:
\begin{itemize}
    \item An \emph{ideal} of $M$ is a subset $I \subset M$ such that, if $i \in I$ and $m \in M$, then $i + m \in I$.
    \item A \emph{prime ideal} of $M$ is an ideal $\pfr \subset M$ such that, if $m_1 + m_2 \in \pfr$, then $m_1 \in \pfr$ or $m_2 \in \pfr$.
    \item A \emph{face} of $M$ is a submonoid $S \subset M$ such that, if $m_1 + m_2 \in S$, then both $m_1$ and $m_2$ are in $S$.
    For any $m \in M$, we write $\angles{m}$ for the smallest face of $M$ containing $m$.
\end{itemize}

The map $\pfr \mapsto M \setminus \pfr$ gives an order-reversing bijection from the poset $\Spec(M)$ of prime ideals in $M$ to the poset $\Face(M)$ of faces of $M$ (both ordered via inclusion).
We will mostly focus on understanding $\Face(M)$, since that is what shows up in the classical toric formalism.

The poset $\Face(M)$ has a unique maximal element $M$ and a unique minimal element $M^\pm$.
Furthermore, when $M$ is toric, or more generally finitely generated, $\Face(M)$ is finite (see e.g.\ \cite[Proposition I.1.4.7]{ogus2018lectures} for the integral case, which is all we need).

The functor $\Face$ preserves products:

\begin{prop} \label{prop:faces_of_product}
    Let $M_1$ and $M_2$ be commutative monoids.
    Then the map
    \begin{align*}
    \Face(M_1) \times \Face(M_2) &\to \Face(M_1 \oplus M_2) \\
    (S_1, S_2) &\mapsto S_1 \oplus S_2
    \end{align*}
    is an isomorphism of posets.
\end{prop}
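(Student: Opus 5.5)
The plan is a direct check: show that the map is well defined, injective, surjective and order-preserving in both directions. No earlier result beyond the definition of a face is needed. The one input used throughout is that $m_1 + m_2 = (a_1,a_2) + (b_1,b_2)$ in $M_1 \oplus M_2$ is computed coordinatewise, and that $M_i$ embeds in $M_1 \oplus M_2$ as the elements with the other coordinate $0$.

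\emph{Well-definedness.} Given faces $S_1 \subset M_1$ and $S_2 \subset M_2$, the set $S_1 \oplus S_2$ is a submonoid. Suppose $(a_1,a_2)+(b_1,b_2) \in S_1 \oplus S_2$. Then $a_i + b_i \in S_i$ for each $i$, so the face property of $S_i$ gives $a_i, b_i \in S_i$. Hence $(a_1,a_2)$ and $(b_1,b_2)$ both lie in $S_1 \oplus S_2$, which is therefore a face.

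\emph{Monotonicity and injectivity.} If $S_1 \subset S_1'$ and $S_2 \subset S_2'$, then clearly $S_1 \oplus S_2 \subset S_1' \oplus S_2'$. Conversely, $S_i$ is recovered from $S_1 \oplus S_2$ as its intersection with $M_i$. So $S_1 \oplus S_2 \subset S_1' \oplus S_2'$ implies $S_i \subset S_i'$. This shows the map is an order embedding, and in particular injective.

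\emph{Surjectivity.} This is the only step with real content. Given a face $S \subset M_1 \oplus M_2$, set $S_1 = S \cap (M_1 \oplus 0)$ and $S_2 = S \cap (0 \oplus M_2)$, viewed as subsets of $M_1$ and $M_2$.
\begin{itemize}
    \item Each $S_i$ is a face of $M_i$. It is an intersection of a face with a submonoid, and the inclusion $M_i \hookrightarrow M_1 \oplus M_2$ is additive.
    \item $S_1 \oplus S_2 \subset S$, because $S$ is closed under addition.
    \item $S \subset S_1 \oplus S_2$. Take $(a_1,a_2) \in S$ and write $(a_1,a_2) = (a_1,0) + (0,a_2)$. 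The face property of $S$ gives $(a_1,0), (0,a_2) \in S$, so $a_i \in S_i$.
\end{itemize}
Hence $S = S_1 \oplus S_2$. The main point to emphasize is this decomposition; everything else is routine.
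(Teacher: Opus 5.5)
Your proof is correct and follows the paper's argument: the paper defines $S_1 = \{m_1 \in M_1 : (m_1,0) \in S\}$ (and $S_2$ analogously) and applies the face condition to the decomposition $(m_1,m_2) = (m_1,0) + (0,m_2)$ to get $S = S_1 \oplus S_2$. You also verify points the paper calls clear or leaves implicit: that $S_1 \oplus S_2$ is a face, that $S_1 \oplus S_2 \subset S$, and that the map reflects order, which is what makes the bijection a poset isomorphism.
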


\begin{proof}
    It is clear that the map in question is injective and order-preserving.
    To see that this map is surjective, suppose $S \subset M_1 \times M_2$ is a face.
    Then
    \[
        S_1 = \bset{m_1 \in M_1}{(m_1, 0) \in S}
    \]
    is a face of $M_1$, and the analogous definition for the other coordinate gives a face $S_2 \subset M_2$.
    For any $(m_1, m_2) \in S$, we must also have $(m_1, 0) \in S$ and $(0, m_2) \in S$ by the face condition.
    Thus $m_1 \in S_1$ and $m_2 \in S_2$, so $(m_1, m_2) \in S_1 \oplus S_2$.
    It follows that $S = S_1 \oplus S_2$, proving surjectivity.
\end{proof}

\subsection{Localizations and quotients by faces}

If $M$ is a commutative monoid and $S \subset M$ is a face, then we can construct a \emph{localization} $M[-S]$ and a \emph{quotient} $M / S$ satisfying the expected universal properties.
These can both be written more explicitly:
the localization $M[-S]$ is
\[
    M[-S] = (M \times S) / \big((m_1, s_1) \sim (m_2, s_2) \textrm{ if there exists $s_3 \in S$ such that } m_1 + s_2 + s_3 = m_2 + s_1 + s_3\big)
\]
and the quotient $M / S$ is
\[
    M / S = M / \big(m_1 \sim m_2 \textrm{ if there exist $s_1, s_2 \in S$ such that } m_1 + s_1 = m_2 + s_2\big).
\]
In both cases the addition operation is defined termwise (i.e.\ $[m_1, s_1] + [m_2, s_2]  = [m_1 + m_2, s_1 + s_2]$ in $M[-S]$ and $[m_1] + [m_2] = [m_1 + m_2]$ in $M / S$).

\begin{ex}
    Let $M$ be any commutative monoid, and let $S = M$.
    The localization $M[-M]$ is the same as the group completion $M^\gp$.
    The quotient $M / M$ is of course $0$.
\end{ex}

\begin{ex}
    Let $M$ be any commutative monoid, and let $S = M^\pm$.
    The localization $M[-M^\pm]$ is $M$ itself.
    We write $M^\sharp$ for the quotient $M / M^\pm$; it is the universal sharp commutative monoid with a homomorphism from $M$.
\end{ex}

The following standard result gives a useful connection between localizations and faces:

\begin{prop} \label{prop:localization_and_quotient_by_face}
    Let $M$ be a commutative monoid, and let $S \subset M$ be a face.
    Then $S[-S] = S^\gp$ may be identified with $(M[-S])^\pm$, and 
    \[
        M / S = M[-S] / S[-S] = M[-S] / (M[-S])^\pm.
    \]
\end{prop}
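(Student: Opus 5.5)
We verify both claims directly from the explicit descriptions of $M[-S]$ and $M/S$ given above, comparing universal properties where convenient. Write $\lambda: M \to M[-S]$, $m \mapsto [m,0]$, for the localization map. Every element of $M[-S]$ has the form $\lambda(m) - \lambda(s)$ with $m \in M$ and $s \in S$.

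First we identify $(M[-S])^\pm$ with $S^\gp$. The inclusion $S \hookrightarrow M$ induces a homomorphism $S[-S] \to M[-S]$, $[s_1,s_2] \mapsto [s_1,s_2]$. Its image consists of units, since $[s_1,s_2] + [s_2,s_1] = [s_1+s_2, s_1+s_2] = 0$. For injectivity, suppose $[s_1,s_2] = [s_1',s_2']$ in $M[-S]$. Then there is $s_3 \in S$ with $s_1 + s_2' + s_3 = s_1' + s_2 + s_3$ in $M$, and this is the same relation defining equality in $S[-S]$. For surjectivity onto the units, suppose $[m,s]$ is a unit with inverse $[m',s']$. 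Then $[m+m', s+s'] = [0,0]$, which gives $s_3 \in S$ with $m + m' + s_3 = s + s' + s_3 \in S$. Since $S$ is a face, $m \in S$, so $[m,s]$ lies in the image. Thus $S^\gp = S[-S] \cong (M[-S])^\pm$. This face-condition step is the only point where the hypothesis on $S$ is really used, and it is the one I expect to require care.

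For the quotient identifications, the second equality $M[-S]/S[-S] = M[-S]/(M[-S])^\pm$ follows at once from the first step; here $(M[-S])^\pm$ is a face of $M[-S]$, as the group of units always is. It remains to show $M/S \cong M[-S]/(M[-S])^\pm$, which I would prove via universal properties.

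A homomorphism $M[-S] \to Q$ killing all units of $M[-S]$ is the same as a homomorphism from $M[-S]$ that kills the image of $S$. Such homomorphisms correspond, by the universal property of localization, to homomorphisms $f: M \to Q$ sending $S$ to units of $Q$ and in fact to $0$. These are exactly homomorphisms $M \to Q$ killing $S$, i.e.\ homomorphisms out of $M/S$. Hence both monoids corepresent the same functor, so the natural map $M/S \to M[-S]/(M[-S])^\pm$ induced by $\lambda$ is an isomorphism.

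Alternatively, one can check this directly. Surjectivity holds since $[m,s] \equiv [m,0]$ modulo units. For injectivity, suppose $[m_1,0]$ and $[m_2,0]$ become equal modulo units. Then $[m_1,0] + [s_1,s_2] = [m_2,0] + [s_1',s_2']$, and clearing denominators gives $m_1 + t_1 = m_2 + t_2$ for some $t_1, t_2 \in S$, so $[m_1] = [m_2]$ in $M/S$.
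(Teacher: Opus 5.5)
Your proof is correct and follows essentially the same route as the paper. The only difference is in the first step: the paper shows $S[-S]\to M[-S]$ is injective with image a face made of units and then invokes minimality of the unit face, whereas you argue directly that a unit $[m,s]$ forces $m\in S$ by the face property. These are the same computation. The identification $M/S = M[-S]/S[-S]$ is done in the paper by the same universal-property (Yoneda) argument you give.
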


\begin{proof}
    We first claim that the natural map $S[-S] \to M[-S]$ exhibits $S[-S]$ as a face in $M[-S]$.
    It is clear from the explicit construction of $S[-S]$ and $M[-S]$ that this natural map is injective.
    To see that the face condition holds, let let $[m_1, s_1], [m_2, s_2] \in M[-S]$, and suppose $[m_1 + m_2, s_1 + s_2] \in S[-S]$.
    Then there exists $s_3 \in S$ such that $m_1 + m_2 + s_3 \in S$.
    Because $S$ is a face, this implies $m_1 + m_2 \in S$, so $m_1 \in S$ and $m_2 \in S$.
    This implies that $[m_1, s_1], [m_2, s_2] \in S[-S]$, so $S[-S]$ is a face in $M[-S]$.

    Every element of $S[-S]$ is invertible, so $S[-S] \subset (M[-S])^\pm$.
    Since $S[-S]$ is a face of $M[-S]$ and $(M[-S])^\pm$ is the minimal face of $M[-S]$, we see that in fact $S[-S] = (M[-S])^\pm$, as desired.

    The identification $M[-S] / S[-S] = M / S$ follows from the universal property of quotients.
    More precisely, for any $N$, there are natural identifications between:
    \begin{itemize}
        \item $\Hom(M[-S] / S[-S], N)$,
        \item The set of homomorphisms $M[-S] \to N$ sending $S[-S]$ to $0$,
        \item The set of homomorphisms $M \to N$ sending $S$ to $0$, and
        \item $\Hom(M / S, N)$.
    \end{itemize}
    Thus $M[-S] / S[-S] = M / S$ by Yoneda.
\end{proof}

\begin{rmk}
    In particular, we see that any quotient $M / S$ (with $M$ a commutative monoid and $S$ a face of $M$) is sharp.
    This is proved directly in \cite[Proposition I.1.4.2(4)]{ogus2018lectures} in the integral case (which is all we actually need).
\end{rmk}

By \cite[Theorem I.2.1.17(4)]{ogus2018lectures}, if $M$ is toric, then $M[-S]$ must also be toric.
The analogous result also holds for quotients by faces:

\begin{prop} \label{prop:quotient_by_face_is_toric}
    Let $M$ be a toric monoid, and let $S \subset M$ be a face.
    Then $M / S$ is toric.
\end{prop}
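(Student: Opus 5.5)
We reduce to the sharp case using \zcref{prop:localization_and_quotient_by_face}, which identifies $M/S$ with $M[-S]/(M[-S])^\pm$. By \cite[Theorem I.2.1.17(4)]{ogus2018lectures}, $L := M[-S]$ is toric. It therefore suffices to prove the following: if $L$ is a toric monoid, then $L^\sharp = L/L^\pm$ is toric. We check the four conditions in turn.

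\emph{Finite generation.} This is immediate, since $L^\sharp$ is a quotient of $L$ and a surjection $\NN^n \twoheadrightarrow L$ composes to a surjection onto $L^\sharp$.

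\emph{Integrality.} Suppose $[a]+[c]=[b]+[c]$ in $L^\sharp$. Then $a+c+u=b+c+v$ for some units $u,v$, and since $L$ is integral we get $a+u=b+v$. Hence $[a]=[b]$. Moreover, $(L^\sharp)^\gp = L^\gp/L^\pm$: both sides have the same universal property, namely group homomorphisms out of $L$ killing $L^\pm$. It is also clear that $L^\sharp \to L^\gp/L^\pm$ is injective. Indeed, if $a-b\in L^\pm$ with $a,b\in L$, then $a=b+u$, so $[a]=[b]$.

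\emph{Saturation.} Let $x\in L^\gp$ and suppose $d[x]\in L^\sharp$ for some $d\ge 1$, i.e.\ $dx = \ell + u$ with $\ell\in L$ and $u\in L^\pm$ (computed in $L^\gp$). Then $d(x-u)$ is not obviously in $L$, so we adjust instead: $dx + (-u) = \ell \in L$, and $-u = d\cdot(-u)+(d-1)u$. Hence $d(x-u) = \ell + (d-1)u \in L$. By saturation of $L$, $x-u\in L$, and therefore $[x]=[x-u]\in L^\sharp$.

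\emph{Freeness of the group completion.} We expect this to be the main obstacle: we must show that $L^\gp/L^\pm$ is torsion-free, and since it is finitely generated it is then free. Suppose $x\in L^\gp$ with $dx\in L^\pm$. Then $dx\in L$, so $x\in L$ by saturation. Likewise $d(-x)=-(dx)\in L^\pm\subset L$, so $-x\in L$. Thus $x\in L^\pm$, and the quotient is torsion-free.

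Combining these, $L^\sharp \simeq M/S$ is sharp, finitely generated, integral, saturated, and has free group completion, so it is toric.
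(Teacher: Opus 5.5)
Your proof is correct and follows the paper's route: you reduce via $M/S = M[-S]/(M[-S])^\pm$ and the toricity of $M[-S]$ to the case $S = M^\pm$, and then verify by hand the facts the paper simply cites from \cite[Propositions I.1.3.3 and I.1.3.5]{ogus2018lectures}. There is one harmless slip in the saturation step: $d(x-u) = \ell - (d-1)u$, not $\ell + (d-1)u$, and in fact no adjustment is needed, since $dx = \ell + u$ already lies in $L$ and saturation of $L$ gives $x \in L$ directly.
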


\begin{proof}
    Write $M / S = M[-S] / S[-S]$.
    Thus, replacing $M$ by $M[-S]$ and $S$ by $S[-S]$, we reduce to the case $S = M^\pm$.
    In this case the result holds by \cite[Propositions I.1.3.3 and I.1.3.5]{ogus2018lectures}.
\end{proof}

We can also sometimes go in the reverse direction, using claims about $M^\sharp = M / M^\pm$ to infer claims about $M$.
In particular, we can recognize when $M$ is toric using this approach:

\begin{prop} \label{prop:check_toric_via_units_and_sharp}
    Let $M$ be a commutative monoid, and suppose:
    \begin{itemize}
        \item $M^\pm$ is a finitely generated free abelian group,
        \item $M^\pm$ acts freely on $M$ by translation, and
        \item $M^\sharp$ is toric.
    \end{itemize}
    Then $M$ is toric.
\end{prop}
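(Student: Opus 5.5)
The plan is to show that $M$ splits as $M \cong M^\pm \oplus M^\sharp$. Toricity then follows quickly: $M^\sharp$ is toric and $M^\pm$ is a finitely generated free group, hence toric. A direct sum of toric monoids is toric, since finite generation, integrality and saturation are each checked coordinatewise, and $(A \oplus B)^\gp = A^\gp \oplus B^\gp$ is free.

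First, I would check that $M$ is integral. Let $q : M \to M^\sharp$ be the quotient map. Two elements $m_1, m_2$ have the same image under $q$ exactly when $m_1 = m_2 + u$ for some unit $u$. Here we use that $M^\pm$ is a face, so the relation defining $M/M^\pm$ reduces to $m_1 + u_1 = m_2 + u_2$, that is, $m_1 = m_2 + (u_2 - u_1)$. Freeness of the action says this $u$ is unique.

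Now suppose $m + n = m' + n$ in $M$. Applying $q$ and using that $M^\sharp$ is integral gives $q(m) = q(m')$. Hence $m' = m + u$ for some unit $u$. Then $m + n = m + n + u$, and freeness of the action at the point $m+n$ forces $u = 0$. So $M$ is cancellative, hence integral.

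Second, I would construct a section of $q$. Because $M^\sharp$ is toric, $(M^\sharp)^\gp$ is free. Since $M$ is integral, $M^\gp$ surjects onto $(M^\sharp)^\gp$ with kernel $(M^\pm)^\gp = M^\pm$. This kernel claim needs a short check: an element $a - b$ with $q(a) = q(b)$ has $a = b + u$, so $a - b = u$. The surjection therefore splits, say via $s : (M^\sharp)^\gp \to M^\gp$.

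It remains to see that $s$ sends $M^\sharp$ into $M$. Take $\bar m \in M^\sharp$ and choose a lift $m$. Then $s(\bar m) - m$ lies in the kernel, so it is a unit $u$, and $s(\bar m) = m + u \in M$.

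Finally, the map $M^\pm \oplus M^\sharp \to M$ given by $(u, \bar m) \mapsto u + s(\bar m)$ is an isomorphism. It is surjective because any $m$ equals $s(q(m))$ plus a unit. It is injective because $q$ recovers $\bar m$, and then freeness recovers $u$.

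The main subtle point is the second step: one has to verify that the kernel of $M^\gp \to (M^\sharp)^\gp$ is exactly $M^\pm$ and that the group-level splitting restricts to the monoids. Both rely on the integrality established in the first step. Everything else is routine bookkeeping.
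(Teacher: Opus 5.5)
Your proof is correct, but it takes a different route from the paper's.

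The paper checks the four defining conditions one at a time by citation. Finite generation, integrality and saturation of $M$ are read off from the corresponding properties of $M^\sharp$, together with the hypotheses on $M^\pm$, via \cite[Propositions I.2.1.1, I.1.3.3 and I.1.3.5(4)]{ogus2018lectures}. Freeness of $M^\gp$ then comes from the exact sequence $0 \to M^\pm \to M^\gp \to (M^\sharp)^\gp \to 0$ of \cite[Proposition I.1.3.5(1)]{ogus2018lectures}, which is an extension of a free group by a free group.

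You instead proceed in three steps:
\begin{itemize}
    \item You prove integrality by hand, using cancellation in $M^\sharp$ plus freeness of the action. This is essentially the content of the cited Proposition I.1.3.3.
    \item You verify the same exact sequence directly.
    \item You use freeness of $(M^\sharp)^\gp$ more strongly than the paper does. Rather than only concluding that $M^\gp$ is free, you split the sequence and check that the group-level section restricts to a monoid section $M^\sharp \to M$.
\end{itemize}
This yields a non-canonical isomorphism $M \cong M^\pm \oplus M^\sharp$, from which finite generation, saturation and freeness of $M^\gp$ are immediate.

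What your approach buys is a self-contained argument and a stronger structural conclusion: $M$ actually decomposes as its unit group times its sharpening. The paper's argument is shorter given the references and handles each property independently, with no splitting needed.
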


\begin{proof}
    We need to check that:
    \begin{itemize}
        \item $M$ is finitely generated. This is \cite[Proposition I.2.1.1]{ogus2018lectures}.
        \item $M$ is integral.
        This is \cite[Proposition I.1.3.3]{ogus2018lectures}.
        \item $M$ is saturated.
        This is \cite[Proposition I.1.3.5(4)]{ogus2018lectures}.
        \item $M^\gp$ is free.
        By \cite[Proposition I.1.3.5(1)]{ogus2018lectures}, we have $M^\gp / M^\pm = (M^\sharp)^\gp$, which is free by hypothesis.
        Thus $M^\gp$ is an extension of the free abelian group $M^\gp / M^\pm$ by the free abelian group $M^\pm$, hence is itself free. \qedhere
    \end{itemize}
\end{proof}

\subsection{Duality}

Let $M$ be a commutative monoid.
The \emph{dual monoid} of $M$ is $D_\CMon(M) = \Hom(M, \NN)$.
We say that $M$ is \emph{dualizable} if the natural map $M \to D_\CMon^2(M)$ is an isomorphism.

By \cite[Theorem I.2.2.3]{ogus2018lectures}, we have the following key result: 
\begin{itemize}
    \item A finitely generated monoid $M$ is dualizable if and only if $M$ is sharp and toric.
\end{itemize}
Thus $D_\CMon$ defines an antiequivalence $(\CMon^{\sharp,\toric})\op \simto \CMon^{\sharp,\toric}$.

\begin{rmk}
    In \zcref{sub:Gale_duality}, we will see that there is a reasonable involutive definition of the ``dual'' of a non-sharp toric monoid.
    However, this requires passing to the world of toric categories.
\end{rmk}

If $M$ is a sharp toric monoid and $S \subset M$ is a face, then we may define a face $S^\bot \subset D_\CMon(M)$ via
\[
    S^\bot = \bset{\phi \in \Hom(M, \NN)}{\phi(s) = 0 \textrm{ for all } s \in S}.
\]
By \cite[Corollary I.2.2.4]{ogus2018lectures}, this defines an antiequivalence
\[
(-)^\bot: \Face(M)\op \simto \Face(D_\CMon(M))
\]
satisfying $S^{\bot\bot} = S$ for all faces $S \subset M$.

\subsection{A characterization of sharp toric monoids}

The following result can be useful when working with sharp toric monoids.

\begin{prop} \label{prop:sharp_toric_characterization}
    The following are equivalent for a monoid $M$:
    \begin{enumerate}
        \item $M$ is sharp and toric.
        \item $M$ is fs and embeds into $\NN^n$ for some $n$.
        \item $M$ is sharp and fs.
    \end{enumerate}
\end{prop}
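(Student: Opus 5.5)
We prove the cycle of implications (1) $\Rightarrow$ (2) $\Rightarrow$ (3) $\Rightarrow$ (1), using the duality theory recalled above.

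\textbf{(1) $\Rightarrow$ (2).} Suppose $M$ is sharp and toric. Then $M$ is fs by definition. It remains to embed $M$ into some $\NN^n$. By \cite[Theorem I.2.2.3]{ogus2018lectures}, $M$ is dualizable, so the natural map $M \to D_\CMon^2(M) = \Hom(\Hom(M,\NN),\NN)$ is an isomorphism; in particular it is injective. The dual $D_\CMon(M)$ is again sharp toric, hence finitely generated, say by $\phi_1, \dots, \phi_n$. Consider the evaluation map $M \to \NN^n$, $m \mapsto (\phi_1(m), \dots, \phi_n(m))$. An element of $D_\CMon^2(M)$ is determined by its values on generators of $D_\CMon(M)$. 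So two elements of $M$ with the same image in $\NN^n$ have the same image in $D_\CMon^2(M)$, and hence coincide. Thus $M$ embeds into $\NN^n$.

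\textbf{(2) $\Rightarrow$ (3).} Suppose $M$ is fs and embeds into $\NN^n$. Then $M$ is sharp: if $m \in M^\pm$ with inverse $m'$, then $m + m' = 0$ in $\NN^n$, which forces $m = m' = 0$ because $\NN^n$ is sharp.

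\textbf{(3) $\Rightarrow$ (1).} Suppose $M$ is sharp and fs. The only thing to check is that $M^\gp$ is free, i.e.\ torsion-free, since it is finitely generated. Let $x \in M^\gp$ with $d x = 0$ for some $d \geq 1$. Then $d x = 0 \in M$, so saturation gives $x \in M$. The same argument applied to $-x$ (which also satisfies $d(-x) = 0 \in M$) gives $-x \in M$. Hence $x \in M^\pm = 0$, so $M^\gp$ is torsion-free and therefore free. Thus $M$ is toric, and it is sharp by hypothesis.

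\textbf{Main obstacle.} The only nontrivial input is the embedding in (1) $\Rightarrow$ (2), which depends on dualizability and on the finite generation of $D_\CMon(M)$. Both are supplied by \cite[Theorem I.2.2.3]{ogus2018lectures}, as recalled in the duality subsection above. If one prefers to avoid this, an alternative is to embed $M$ into $M^\gp \cong \ZZ^r$, view the rational cone it spans as strongly convex (by sharpness), and choose finitely many integral supporting functionals that cut the cone out.
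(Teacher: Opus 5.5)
Your proof is correct and follows the same cycle (1) $\Rightarrow$ (2) $\Rightarrow$ (3) $\Rightarrow$ (1) as the paper. The only difference is that the paper cites \cite[Corollary I.2.2.7]{ogus2018lectures} and \cite[Proposition I.1.3.5(2)]{ogus2018lectures}, while you prove those steps directly: you embed $M$ into $\NN^n$ by evaluating at generators of $D_\CMon(M)$, and you use saturation to show that any torsion element of $M^\gp$ lies in $M^\pm = 0$.
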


\begin{proof}
    $(1) \Rightarrow (2)$.
    This is a consequence of \cite[Corollary I.2.2.7]{ogus2018lectures}.

    $(2) \Rightarrow (3)$.
    Immediate, since all submonoids of the sharp monoid $\NN^n$ are sharp.

    $(3) \Rightarrow (1)$.
    If $M$ is sharp, then $M^\gp$ is torsion-free by \cite[Proposition I.1.3.5(2)]{ogus2018lectures}.
    If $M$ is also finitely generated, then this implies $M^\gp$ is free.
\end{proof}

\subsection{Limits of toric monoids}

We will find the following result useful.

\begin{prop} \label{prop:toric_monoid_finite_limit}
    The category $\CMon^\toric$ admits finite limits, and these agree with finite limits in $\CMon$.
    The same holds for $\CMon^{\sharp,\toric}$.
\end{prop}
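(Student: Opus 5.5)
It suffices to show that $\CMon^\toric$ contains the terminal object $0$ and is closed under binary products and equalizers formed in $\CMon$. Every finite limit is built from these, so the limit in $\CMon$ will then lie in the full subcategory $\CMon^\toric$. That limit automatically satisfies the universal property there, so it is also the limit in $\CMon^\toric$. The same argument applies verbatim to $\CMon^{\sharp,\toric}$, once we check that sharpness is also preserved.

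The terminal object $0$ is clearly sharp and toric. For binary products, take $M_1, M_2$ toric. Then $M_1 \oplus M_2$ is finitely generated and integral, and $(M_1 \oplus M_2)^\gp = M_1^\gp \oplus M_2^\gp$ is free. Saturation can be checked coordinatewise, and sharpness is preserved because $(M_1\oplus M_2)^\pm = M_1^\pm \oplus M_2^\pm$.

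The main step is equalizers. Given $f, g: M \to N$ with $M, N$ toric, let $E = \bset{m \in M}{f(m) = g(m)}$, a submonoid of $M$.
\begin{itemize}
    \item \emph{Integral, free group completion.} $E$ is integral since $M$ is. Also $E^\gp \subset M^\gp$ (it is the subgroup generated by $E$), so $E^\gp$ is free, being a subgroup of a finitely generated free group.
    \item \emph{Saturated.} Suppose $x \in E^\gp$ with $dx \in E$ for some $d \ge 1$. Then $x \in M^\gp$, so $x \in M$ by saturation of $M$. Moreover $d(f(x)-g(x)) = 0$ in $N^\gp$, which is torsion-free, so $f(x) = g(x)$ and hence $x \in E$.
    \item \emph{Finitely generated (main obstacle).} Regard $E$ as $M \cap K$, where $K = \ker(f^\gp - g^\gp) \subset M^\gp$. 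Since $M$ is fs, it equals $C \cap M^\gp$ for the rational polyhedral cone $C \subset M^\gp \otimes \RR$ that it spans. Then $E = (C \cap (K\otimes\RR)) \cap K$, because $K$ is saturated in $M^\gp$ ($M^\gp/K$ embeds in the torsion-free group $N^\gp$). The intersection of a rational polyhedral cone with a rational subspace is again a rational polyhedral cone, so Gordan's lemma shows that $E$ is finitely generated.
\end{itemize}
Alternatively, for finite generation one could cite Ogus's result that the equalizer of fine monoids is fine.

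Finally, for the sharp case, $E$ is a submonoid of the sharp monoid $M$, so $E^\pm \subset M^\pm = 0$. Alternatively, \zcref{prop:sharp_toric_characterization} identifies sharp toric monoids with sharp fs monoids, and sharpness and fs-ness are both preserved by the constructions above.
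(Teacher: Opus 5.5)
Your proof is correct, but it is organized differently from the paper's. The paper decomposes finite limits into the terminal object and fiber products rather than binary products and equalizers. It obtains that $M\times_P N$ is fs by citing \cite[Theorem I.2.1.17(6)]{ogus2018lectures}. The only condition it checks by hand is torsion-freeness of $(M\times_P N)^\gp$, via the embedding $(M\times_P N)^\gp\hookrightarrow M^\gp\times N^\gp$, which is the same idea as your $E^\gp\subset M^\gp$. Sharpness is handled globally, by noting that any limit of sharp monoids is sharp.

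You instead verify every toric condition directly. You prove saturation elementwise and finite generation by writing $E=C\cap K$ and applying Gordan's lemma. The paper's route is shorter because it outsources the fs property to Ogus. Yours is self-contained apart from Gordan's lemma and the cone description of toric monoids.

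Two small simplifications are available:
\begin{itemize}
\item In the saturation step you do not need torsion-freeness of $N^\gp$. Since $E\subset K$ and $K$ is a group, $x\in E^\gp$ already gives $f^\gp(x)=g^\gp(x)$, so $x\in M\cap K=E$.
\item The identity $E=C\cap M^\gp\cap K=C\cap K$ holds without using that $K$ is saturated in $M^\gp$. The point Gordan's lemma actually needs is that $C\cap(K\otimes\RR)$ is a rational polyhedral cone with respect to the lattice $K$.
\end{itemize}
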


\begin{proof}
    Arbitrary limits of sharp monoids are sharp, so it suffices to prove the claim for $\CMon^\toric$.
    Every finite limit may be written in terms of final objects and fiber products.
    The final object of $\CMon$ is $0$, which is clearly toric.
    Thus it remains to show that $\CMon^\toric$ is closed under fiber products.

    Let
    \[
    \begin{tikzcd}
        M \times_P N \rar \dar & M \dar \\
        N \rar & P
    \end{tikzcd}
    \]
    be a Cartesian diagram in $\CMon$ with $M$, $N$, and $P$ toric.
    Then $M \times_P N$ is fs by \cite[Theorem I.2.1.17(6)]{ogus2018lectures}.

    We still need to show that $(M \times_P N)^\gp$ is torsion-free.
    But this is not hard: because $M \times_P N \subset M \times N$ and all monoids involved are integral, we get an inclusion 
    \[
    (M \times_P N)^\gp \hookrightarrow (M \times N)^\gp = M^\gp \times N^\gp
    \]
    by \cite[Remark I.1.3.2]{ogus2018lectures}.
    The groups $M^\gp$ and $N^\gp$ are both torsion-free by hypothesis, so $(M \times_P N)^\gp$ must also be torsion-free, and we are done.
\end{proof}

\begin{rmk}
    Combining \zcref{prop:toric_monoid_finite_limit} with the antiequivalence $D_\CMon: (\CMon^{\sharp,\toric})\op \simto \CMon^{\sharp,\toric}$, we see that $\CMon^{\sharp,\toric}$ also admits finite colimits.
    These may be computed by
    \[
        \colim_i M_i = D_\CMon\big(\lim_i D_\CMon(M_i)\big).
    \]
    Note that these \emph{do not} agree with finite colimits in $\CMon$: the colimits in $\CMon$ may not be sharp or integral.
    As such, this result is not nearly as helpful as the analogous result for limits.
\end{rmk}

\subsection{Saturated submonoids}

We will need the following (standard) relative notion of saturation:

\begin{dfn}
    Let $A$ be an abelian group.
    A submonoid $M \subset A$ is \emph{saturated in $A$} if, whenever $a \in A$ satisfies $d \cdot a \in M$ for some nonzero natural number $d$, then $a \in M$.
\end{dfn}

In particular, an integral monoid $M$ is saturated if and only if $M$ is a saturated submonoid of $M^\gp$.
Provided that $M$ is saturated (abstractly), the question of whether $M$ is saturated in an ambient abelian group $A$ can be rephrased entirely in terms of groups:

\begin{prop} \label{prop:saturated_submonoid_criterion}
    Let $A$ be an abelian group, and let $M$ be a submonoid of $A$.
    Then the following are equivalent:
    \begin{itemize}
        \item $M$ is saturated in $A$.
        \item $M$ is saturated in $M^\gp$ and $M^\gp$ is saturated in $A$.
        \item $M$ is saturated in $M^\gp$ and $A / M^\gp$ is torsionfree.
    \end{itemize}
\end{prop}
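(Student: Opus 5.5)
I will prove the cycle of implications (1) $\Rightarrow$ (2) $\Rightarrow$ (3) $\Rightarrow$ (1), where (1), (2), (3) denote the three conditions in the order listed. Throughout I use that, since $M \subset A$ is a submonoid of a group, $M$ is integral, and $M^\gp$ can be identified with the subgroup of $A$ generated by $M$, namely $\{m_1 - m_2 : m_i \in M\}$. The key point underlying everything is that saturation is a ``divisibility'' condition: an element $a$ with $d a$ in a subset lies in the subgroup only if the subgroup is closed under such roots.

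For (1) $\Rightarrow$ (2): if $M$ is saturated in $A$, then it is saturated in any intermediate group, in particular in $M^\gp$, since the condition for $a \in M^\gp$ is a special case of the condition for $a \in A$. To see $M^\gp$ is saturated in $A$, take $a \in A$ with $d a \in M^\gp$, say $d a = m_1 - m_2$. Then $d(a + m_2) = m_1 + (d-1) m_2 \in M$, so by saturation of $M$ in $A$ we get $a + m_2 \in M$, whence $a = (a + m_2) - m_2 \in M^\gp$. This shifting trick is the only real idea in the proof.

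For (2) $\Leftrightarrow$ (3), the middle condition is just a restatement: $M^\gp$ is saturated in $A$ if and only if $d a \in M^\gp$ with $d \neq 0$ forces $a \in M^\gp$, i.e.\ if and only if the class of $a$ in $A/M^\gp$ being $d$-torsion forces it to vanish, i.e.\ $A/M^\gp$ is torsionfree. So it suffices to show (2) $\Rightarrow$ (1). Given $a \in A$ with $d a \in M \subset M^\gp$, saturation of $M^\gp$ in $A$ gives $a \in M^\gp$, and then saturation of $M$ in $M^\gp$ gives $a \in M$.

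I expect no genuine obstacle; the only step requiring a small idea is the translation by $m_2$ in (1) $\Rightarrow$ (2), which converts a relation in the group completion into a relation in $M$ itself. One should also take care that ``$M$ saturated in $M^\gp$'' is being used with $M^\gp$ computed inside $A$, which is justified by integrality of $M$.
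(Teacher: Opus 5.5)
Your proof is correct and follows the paper's argument essentially verbatim: the same shift by $m_2$ for (1) $\Rightarrow$ (2), with (2) $\Rightarrow$ (1) and (2) $\Leftrightarrow$ (3) read off directly from the definitions. Your identity $d(a+m_2) = m_1 + (d-1)m_2$ is the right one (the paper's displayed version with $d \cdot m_1$ is a typo), and your conclusion $a+m_2 \in M$ is slightly sharper than the paper's stated $a + m_2 \in M^\gp$.
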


\begin{proof}
    $(1) \Rightarrow (2)$.
    Suppose $M$ is saturated in $A$.
    Because $M^\gp \subset A$, it is clear that $M$ is saturated in $M^\gp$.
    
    Now let $a \in A$ satisfy $d \cdot a \in M^\gp$ for some $d > 0$, so $d \cdot a = m_1 - m_2$ for some $m_1, m_2 \in M$.
    Then $d \cdot a + m_2 = m_1$, so $d \cdot (a + m_2) = d \cdot m_1 + (d - 1) \cdot m_2 \in M$.
    Because $M$ is saturated in $A$, this implies $a + m_2 \in M^\gp$ and thus $a \in M^\gp$.
    Thus $M^\gp$ is saturated in $A$.

    $(2) \Rightarrow (1)$, $(2) \Leftrightarrow (3)$.
    These are straightforward from the definitions.
\end{proof}

The finitely generated submonoids of a finitely generated free abelian group $A$ which are saturated in $A$ are particularly nice.
The following holds by \cite[Exercise 1.3.4 and Proposition 1.2.12]{cls}:

\begin{prop} \label{prop:sat_submon_of_lattice}
    Let $A$ be a finitely generated free abelian group, and let $M \subset A$ be a submonoid.
    Then:
    \begin{enumerate}
        \item $M$ is saturated in $A$ if and only if there exists a rational polyhedral cone $\ul{\sigma} \subset A \otimes \RR$ such that $M = \ul{\sigma} \cap A$.
        \item Such a $\ul{\sigma}$ is unique if it exists.
        \item $M$ is sharp if and only if the corresponding cone $\ul{\sigma}$ is strongly convex.
    \end{enumerate}
\end{prop}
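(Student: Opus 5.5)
We reduce to Gordan's lemma and the standard duality between rational polyhedral cones and their duals; the paper cites this from \cite{cls}, but here is how we would argue it directly. Throughout, write $A_\RR = A \otimes \RR$, and for a submonoid $M \subset A$ let $\ul{\sigma}_M = \RR_{\geq 0} \cdot M \subset A_\RR$ be the cone it generates. For (1), the first step is to observe that a finitely generated $M$ has $\ul{\sigma}_M$ rational polyhedral, generated by the images of a finite generating set of $M$. The statement as written does not assume $M$ finitely generated. In the intended application $M$ is toric, and for general $M$ we would interpret (1) with ``rational polyhedral'' replaced by ``convex cone'' or add finite generation. So we assume $M$ is finitely generated.

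For the ``if'' direction of (1): if $M = \ul{\sigma} \cap A$, suppose $a \in A$ and $d \cdot a \in M$ with $d > 0$. Then $d \cdot a \in \ul{\sigma}$, and since cones are closed under positive scaling, $a \in \ul{\sigma} \cap A = M$.

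For the ``only if'' direction, we take $\ul{\sigma} = \ul{\sigma}_M$ and show $\ul{\sigma}_M \cap A \subset M$. Let $a \in \ul{\sigma}_M \cap A$. Since $\ul{\sigma}_M$ is generated by finitely many lattice points $m_1, \dots, m_r \in M$, we can write $a = \sum \lambda_i m_i$ with $\lambda_i \geq 0$. The key step is to arrange that the $\lambda_i$ are rational. Consider the polyhedron $\{\lambda \in \RR_{\geq 0}^r : \sum \lambda_i m_i = a\}$. It is nonempty and cut out by rational linear equations and inequalities, so it contains a rational point. Clearing denominators gives $d \cdot a \in M$ for some $d > 0$, and saturation in $A$ then yields $a \in M$. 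This rationality step is the main technical point.

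For (2), suppose $M = \ul{\sigma} \cap A$ with $\ul{\sigma}$ rational polyhedral. Then $\ul{\sigma}$ is generated by lattice points lying in $\ul{\sigma} \cap A = M$, so $\ul{\sigma} \subset \ul{\sigma}_M$. The reverse inclusion $\ul{\sigma}_M \subset \ul{\sigma}$ is clear because $M \subset \ul{\sigma}$. Hence $\ul{\sigma} = \ul{\sigma}_M$, which is determined by $M$.

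For (3), recall that $\ul{\sigma}$ is strongly convex if and only if $\ul{\sigma} \cap (-\ul{\sigma}) = 0$. If $\ul{\sigma}$ is strongly convex, then any unit $m \in M$ has $\pm m \in \ul{\sigma}$, so $m = 0$ and $M$ is sharp. Conversely, suppose $\ul{\sigma}$ is not strongly convex. Then $\ul{\sigma} \cap (-\ul{\sigma})$ is a nonzero rational linear subspace, namely the lineality space, which is rational because it is the minimal face of a rational cone. It therefore contains a nonzero lattice point $a$. Both $a$ and $-a$ lie in $\ul{\sigma} \cap A = M$, so $a$ is a nonzero unit and $M$ is not sharp. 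The only subtle point here is the rationality of the lineality space, which follows from the fact that faces of a rational polyhedral cone are rational (cut out by rational supporting hyperplanes, via Farkas' lemma).
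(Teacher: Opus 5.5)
Your proof is correct. The paper gives no argument of its own here; it only cites \cite[Exercise 1.3.4 and Proposition 1.2.12]{cls}. So what you have written is a self-contained proof of the cited facts rather than a competing route. The pieces are:
\begin{itemize}
\item the existence of rational points in nonempty rational polyhedra gives the ``only if'' direction of (1);
\item the fact that the lattice generators of $\ul{\sigma}$ lie in $\ul{\sigma} \cap A = M$ gives (2);
\item rationality of the lineality space $\ul{\sigma} \cap (-\ul{\sigma})$ gives the nontrivial half of (3).
\end{itemize}
The last step can also be done with the same rational-point lemma, avoiding face theory. Take a relation $\sum_i \nu_i m_i = 0$ among the lattice generators with all $\nu_i \geq 0$ and $\nu_j > 0$ for some $m_j \neq 0$; it can be chosen with integer coefficients, and then $m_j$ is a nonzero unit of $M$. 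Writing the argument out also shows exactly where finite generation is used: only in the ``only if'' direction of (1). The ``if'' direction, (2), and (3) hold as stated. One notational point: $\RR_{\geq 0} \cdot M$ should be read as the set of finite nonnegative real combinations of elements of $M$, not as the union of rays through single elements.

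Your caveat about finite generation is not excess caution. Without it, (1) is false as the paper states it. For example, $M = \{(a,b) \in \ZZ^2 : 0 \leq b \leq \sqrt{2}\, a\}$ is a saturated submonoid of $\ZZ^2$. The real cone it spans is $\{(a,b) : 0 \leq b < \sqrt{2}\, a\} \cup \{0\}$, which is not closed, so $M$ is not finitely generated. By Gordan's lemma it therefore cannot equal $\ul{\sigma} \cap \ZZ^2$ for any rational polyhedral $\ul{\sigma}$. The saturation results in \cite{cls} are formulated for affine (finitely generated) semigroups, which is presumably how the hypothesis was dropped. Adding it costs the paper nothing: in its use of the proposition (\zcref{ex:Gale_dual_classical_toric}), the monoid $\Hom(M, \NN)$ is explicitly finitely generated.
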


\section{Commutative 2-monoids} \label{sec:2_monoid}

This section is devoted to understanding the theory of symmetric monoidal groupoids, or \emph{commutative 2-monoids}.
We won't have much use for general commutative 2-monoids, which can be quite hard to control.
However, the language of commutative 2-monoids gives a good framework for relating objects which interest us more: toric monoids (discussed above) and abelian 2-groups (discussed below).

\subsection{Some basic definitions and examples}

\begin{dfn}
    A \emph{commutative 2-monoid} is a small symmetric monoidal groupoid $(\Mc, \oplus)$.
    These form the objects of a full sub-2-category $\tCMon \subset \iota_1 \SMCat$.
\end{dfn}

Commutative 2-monoids abound in nature: for any small symmetric monoidal category $\Cc$, the maximal groupoid $\iota_0 \Cc$ is a commutative 2-monoid.
We will give more explicit examples shortly.
First, some more definitions:

\begin{dfn}
    Let $\Mc$ be a commutative 2-monoid.
    The \emph{component monoid} of $\Mc$, written $\pi_0(\Mc)$, is the commutative monoid of isomorphism classes of objects of $\Mc$.
\end{dfn}

\begin{rmk}
    Our use of the symbol $\pi_0$ (and many related symbols in the future) is meant to evoke connections with homotopy theory.
    Indeed, under the ``homotopy hypothesis,'' the 2-category of groupoids embeds into the $\infty$-category of homotopy types, i.e.\ CW-complexes up to homotopy equivalence.
    Commutative 2-monoids are then a very special case of the so-called ``$\EE_\infty$-spaces'' appearing in homotopy theory.
    Under this identification, our $\pi_0$ corresponds to taking the monoid of connected components of an $\EE_\infty$-space.
    See \cite{htt, ha}, among many other references, for a deep treatment of this story.
    We would like to stress that this paper does not require any $\infty$-categorical sophistication.
\end{rmk}

\begin{dfn}
    A commutative 2-monoid $\Ac$ is \emph{grouplike}, or an \emph{abelian 2-group}, if $\pi_0(\Ac)$ is an abelian group.
    We write $\tAb$ for the full subcategory of $\tCMon$ spanned by the abelian 2-groups.
\end{dfn}

A commutative 2-monoid $(\Mc, \oplus)$ is an abelian 2-group if and only if every object of $\Mc$ is $\oplus$-invertible up to isomorphism.
As one might expect, abelian 2-groups are much easier to study than general commutative 2-monoids.

Here are our promised examples:

\begin{ex} \label{ex:CMon_to_2CMon}
    Let $(M, +)$ be a commutative monoid.
    We may view $M$ as a commutative 2-monoid (where the corresponding category has set of objects $M$, morphisms only given by the identity, and symmetric monoidal operation $\oplus$ given by $+$).
    The component monoid of this is the original commutative monoid $M$, and this commutative 2-monoid is an abelian 2-group if and only if $M$ is an abelian group.

    This construction defines a fully faithful functor $\CMon \hookrightarrow \tCMon$.
    We will say that a commutative 2-monoid is (essentially) \emph{discrete} if it lies in the image of this functor.
\end{ex}

\begin{ex} \label{ex:BA}
    Let $A$ be any abelian group.
    Then there is an abelian 2-group $\Bsf A$ in which:
    \begin{itemize}
        \item $\ob \Bsf A = \{*\}$,
        \item $\Map_{\Bsf A}(*, *) = M$,
        \item $\oplus: \Bsf A \times \Bsf A \to \Bsf A$ is trivial on objects and is given on morphisms by addition in $A$, 
        \item All structure morphisms are trivial.
    \end{itemize}
    This defines a fully faithful functor $\Bsf: \Ab \to \tAb$.
\end{ex}

\begin{rmk} \label{rmk:BM}
    The above construction produces a symmetric monoidal category $\Bsf M$ for every commutative monoid $M$, though $\Bsf M$ is a groupoid if and only if $M$ is a group.
    The induced functor $\Bsf: \CMon \to \SMCat$ is fully faithful.
\end{rmk}

The following examples highlight the possible complexities of general commutative 2-monoids and abelian 2-groups.

\begin{ex} \label{ex:free_2_monoid}
    Let $\Fc$ be the groupoid of finite sets and bijections.
    Then $\Fc$ may be viewed as a commutative $2$-monoid\footnote{Modulo size issues, which we can ignore if we choose a representative of each isomorphism class of finite sets.} with operation given by disjoint union.
    More concretely:
    \begin{itemize}
        \item The objects of $\Fc$ may be identified with sets $\{ 1, \dots, n \}$ for $n \in \NN$.
        \item For all $n$, we have $\Aut_\Fc(\{1, \dots, n\}) = \Sigma_n$, the symmetric group on $n$ letters.
        \item The symmetric monoidal structure is given on objects by addition in $\NN$ and given on morphisms by concatenation of permutations.
    \end{itemize}
    One can show that $\Fc$ is the free commutative $2$-monoid on one generator.
    Note that the automorphism groups of objects of $\Fc$ vary with the isomorphism class of the object and are typically nonabelian.
\end{ex}

\begin{ex} \label{ex:free_2_group}
    Let $\mu_2 = \{ \pm 1 \}$, and consider the groupoid $\ZZ \times \Bsf \mu_2$.
    This groupoid admits a trivial symmetric monoidal structure (i.e.\ $\oplus$ is given by addition in $\ZZ$ and $\mu_2$, and all structure maps are the identity).
    However, there is also a distinct symmetric monoidal structure on $\ZZ \times \Bsf \mu_2$ with the same underlying monoidal structure, but with nontrivial braiding
    \[
        \beta_{m,n} = (-1)^{mn} \in \mu_2 = \Aut(m \oplus n)
    \]
    Let us write $\Fc^\gp$ for $\ZZ \times \Bsf \mu_2$ with this symmetric monoidal structure.
    Then $\Fc^\gp$ is an abelian 2-group; in fact, it is the free abelian 2-group on one generator (see \cite[Proposition 3.1]{johnson2012modeling}).
    
    The inclusion functor $\Bsf \mu_2 \hookrightarrow \Fc^\gp$ and the projection functor $\Fc^\gp \to \ZZ$ are naturally symmetric monoidal.
    However, there is no symmetric monoidal functor $\ZZ \to \Fc^\gp$ splitting this projection.
    We may thus think of $\Fc^\gp$ as a non-split extension of $\ZZ$ by $\Bsf \mu_2$.
\end{ex}

Given a homomorphism of commutative 2-monoids $\phi: \Mc \to \Mc'$, we may construct a \emph{fiber} $\fib(\phi)$ and a \emph{cofiber} $\cofib(\phi)$ satisfying the following universal properties:
\[
    \Hom_\tCMon(\Nc, \fib(\phi)) = \Hom_\tCMon(\Nc, \Mc) \times_{\Hom_\tCMon(\Nc, \Mc')} 0
\]
and
\[
    \Hom_\tCMon(\cofib(\phi), \Nc) = \Hom_{\tCMon}(\Mc', \Nc) \times_{\Hom_{\tCMon}(\Mc, \Nc)} 0
\]
for all $\Nc \in \tCMon$.
These are 2-categorical analogues of the kernel and cokernel, respectively.
We will eventually discuss these constructions in more detail in the special cases relevant to toric theory.

There is a natural notion of ``action'' of a symmetric monoidal category $\Mc$ (e.g.\ a commutative 2-monoid) on a category $\Cc$, generalizing the standard notion of monoid actions on sets.
Such an action is given by a functor $\Mc \times \Cc \to \Cc$ together with associativity and unitality isomorphisms satisfying some coherence conditions spelled out in \cite[Definition B.12]{fantechi2010smooth}.\footnote{Actually, \cite{fantechi2010smooth} works in the setting of sheaves of abelian 2-groups, but the required natural isomorphisms and coherence conditions are the same in both settings.}

\begin{ex}
    Given a homomorphism of commutative 2-monoids $\phi: \Mc \to \Mc'$, we obtain an action of $\Mc$ on $\Mc'$ as in \cite[Proposition B.14]{fantechi2010smooth}.
\end{ex}

\begin{ex}
    Let $M$ be a discrete commutative monoid.
    An action of $\Bsf M$ on a category $\Cc$ corresponds to a monoidal functor $\Bsf M \to \Fun(\Cc, \Cc)$, or equivalently to a monoid homomorphism $M \to \End(\id_\Cc)$.
\end{ex}

\subsection{Strictly abelian 2-groups}

We are particularly interested in the following class of well-behaved abelian 2-groups:

\begin{dfn}
    An abelian 2-group $\Ac$ is \emph{strictly abelian} if its symmetry isomorphism $\beta$ satisfies $\beta_{a,a} = \id_a$ for all $a \in \Mc$.
    Let $\tAb^\st$ be the full subcategory of $\tAb$ spanned by the strictly abelian 2-groups.
\end{dfn}

\begin{ex} \label{ex:Fgp_not_strict}
    The abelian 2-group $\Fc^\gp$ of \zcref{ex:free_2_group} is not strictly abelian, since its symmetry isomorphism $\beta$ satifsies $\beta_{m,n} = -1$ when $m$ and $n$ are both odd.
\end{ex}

The easiest way to perform computations with strictly abelian 2-groups is to use the following result to transform the category theory of strictly abelian 2-groups into the homological algebra of abelian groups.
Write $\Dsf^{[-1,0]}(\ZZ)$ for the (dg-)derived category of chain complexes of abelian groups with cohomology concentrated in degrees $0$ and $-1$.
Recall that any dg-category may be viewed as a 2-category where $0$-morphisms are cycles in the mapping complexes and $1$-morphisms are chain homotopies between these cycles.

\begin{prop}[{Special case of \cite[Corollaire 1.4.17]{deligne1971formule}}] \label{prop:deligne_classification}
    There is an equivalence of 2-categories 
    \[
        \Omega^\infty: \Dsf^{[-1,0]}(\ZZ) \simto \tAb^\st
    \]
    such that, for any abelian group $A$, we have $\Omega^\infty(A[0]) = A$ and $\Omega^\infty(A[1]) = \Bsf A$.
\end{prop}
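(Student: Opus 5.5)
Since the statement is cited as a special case of Deligne's result, my plan is to construct $\Omega^\infty$ explicitly and then check that it is essentially surjective and fully faithful on mapping groupoids.

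\emph{Construction.} Any object of $\Dsf^{[-1,0]}(\ZZ)$ is represented by a two-term complex $C = [C^{-1} \xrightarrow{d} C^0]$ of free abelian groups; truncate and use free resolutions. To such a $C$ I assign the groupoid $\Omega^\infty(C)$ whose objects are the elements of $C^0$. A morphism $x \to y$ is an element $c \in C^{-1}$ with $dc = y - x$, and composition is addition. The symmetric monoidal structure is addition in $C^0$ and $C^{-1}$, with all associators, unitors and braidings the identity. This is a strict Picard groupoid, so in particular $\beta_{a,a} = \id$ and $\Omega^\infty(C)$ is strictly abelian. It is grouplike with $\pi_0 = H^0(C)$ and $\Aut(0) = H^{-1}(C)$.

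A chain map $f$ induces a strict symmetric monoidal functor, and a chain homotopy $h: C^0 \to D^{-1}$ induces a monoidal natural transformation with components $h(x)$. One checks that homotopic maps give isomorphic functors, and that this extends to a 2-functor on the dg-localization. To avoid localization issues, I would work with the full dg-subcategory of complexes of free groups in degrees $[-1,0]$, which is a model for $\Dsf^{[-1,0]}(\ZZ)$ because such complexes are K-projective. The formulas $\Omega^\infty(A[0]) = A$ and $\Omega^\infty(A[1]) = \Bsf A$ then follow from the definition after replacing $A$ by a free resolution and noting the resulting equivalence.

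\emph{Essential surjectivity.} Given a strictly abelian $\Ac$, I would first strictify: by a standard coherence argument every symmetric monoidal groupoid is equivalent to one that is strictly associative and unital. Then I take the free abelian group $C^0 = \ZZ[\ob \Ac]$ mapping onto objects, and let $C^{-1}$ be a free group on pairs (a generator of $C^0$, a morphism from the corresponding object to the image of $0$). The main obstacle sits here: one has to show the braiding can be made identically trivial, not merely trivial on the diagonal. The invariant of a Picard groupoid with $\pi_0 = G$ and $\pi_1 = H$ is the quadratic map $q(a) = \beta_{a,a}$, which lands in $\Hom(G/2, H)$ and classifies the $k$-invariant in $H^3$ of the stable range. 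Strictness says $q = 0$, and I would prove that this forces $\Ac$ to be equivalent to a strict Picard groupoid. Concretely, I would compare with the free abelian 2-group of \zcref{ex:free_2_group}: $q = 0$ means the map from the free object on a generator factors through $\ZZ$. This lets me define a strict monoidal functor $\Omega^\infty(C) \to \Ac$ on free generators and extend it additively, and I would then check that it is an equivalence using $\pi_0$ and $\pi_1$.

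\emph{Full faithfulness.} For $C$ free, both sides are computed by the same data. A symmetric monoidal functor $\Omega^\infty(C) \to \Omega^\infty(D)$ is determined up to isomorphism by its values on the free generators of $C^0$ and $C^{-1}$, and strictness of $D$ removes the braiding constraint. This identifies the mapping groupoid with $\tau_{\le 0}$ of $\Hom(C,D)$, whose $\pi_0$ is $\Ext^0$ and whose $\pi_1$ is $\Ext^{-1}$. I would check this on $\pi_0$ and $\pi_1$ separately, using the long exact sequences coming from $C^{-1} \to C^0$. The subtle point is $\pi_0$, where $\Ext^1(H^0 C, H^{-1} D)$ has to match the monoidal extensions; this is again exactly where strictness is used, since in the non-strict case the 2-functor would miss the braiding data.
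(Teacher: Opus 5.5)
The paper gives no argument of its own here; it cites Deligne. Your overall strategy is the standard one: form the strict Picard groupoid of a two-term complex, strictify functors, and compare on $\pi_0$ and $\pi_1$. As written, however, it has two genuine gaps.

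\textbf{First gap: your model is too small.} If $C^{-1}$ and $C^0$ are both free, then $H^{-1}(C)=\ker d$ is a subgroup of a free abelian group, hence free. So these complexes only represent the objects of $\Dsf^{[-1,0]}(\ZZ)$ with free $H^{-1}$. Concretely:
\begin{itemize}
\item $(\ZZ/2)[1]$ is not represented, and $\Bsf(\ZZ/2)$ is not in your essential image.
\item Deriving $\Omega^\infty(A[1])=\Bsf A$ ``after replacing $A$ by a free resolution'' breaks, because that resolution of $A[1]$ lives in degrees $[-2,-1]$.
\end{itemize}
The statement is for arbitrary $A$, and the paper uses it in that generality, e.g.\ for limits in $\tAb$.

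The repair is to require only $C^0$ free and allow $C^{-1}$ arbitrary. Every object has such a representative, e.g.\ $[R\oplus H^{-1}\xrightarrow{(\iota,0)}F]$ with $F\twoheadrightarrow H^0$ free and $R$ its kernel. These complexes are no longer K-projective, so you must check that homotopy classes of chain maps $C\to D$ still compute $\Hom_{\Dsf}(C,D)$, and likewise for $C[1]$. This follows from the five lemma applied to the degreewise split sequence $0\to C^0[0]\to C\to C^{-1}[1]\to 0$. For any abelian group $B$, the comparison map is bijective for $B[1]$ (both sides are $\Hom(B,H^{-1}D)$) and injective for $B[0]$.

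On the 2-group side nothing is lost. Strictifying a symmetric monoidal functor $\Omega^\infty(C)\to\Omega^\infty(D)$ only requires that the symmetric $2$-cocycle $C^0\times C^0\to D^{-1}$ recording its monoidal structure be a coboundary. That is, you need $\Ext^1(C^0,D^{-1})=0$, which uses freeness of $C^0$ alone. Additivity of the induced map $C^{-1}\to D^{-1}$ then follows from functoriality plus monoidality, not from freeness of $C^{-1}$.

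\textbf{Second gap: the essential-surjectivity complex has the wrong homotopy groups,} even when $\Aut_\Ac(0)$ is free. Let $F\colon\ZZ[\ob\Ac]\to\Ac$ be the symmetric monoidal extension of $[a]\mapsto a$. This is where $\beta_{a,a}=\id$ enters, and your factorization of $\Fc^\gp\to\Ac$ through $\ZZ$ handles it correctly. The two readings of your construction both fail:
\begin{itemize}
\item If $C^{-1}$ is free on pairs $([a],\phi\colon a\simto 0)$ with $[a]$ a generator of $C^0$, then $\im d$ is spanned by the $[a]$ with $a\cong 0$. So $\coker d\neq\pi_0(\Ac)$ in general; for $\Ac=\ZZ$ the cokernel is free on the $[n]$ with $n\neq 0$.
\item If $C^{-1}$ is free on pairs $(r,\phi\colon F(r)\simto 0)$ with $r\in C^0$ arbitrary, the image is right. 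But $\ker d$ is then a huge free group containing every $e_{(r,\phi)}+e_{(r',\phi')}-e_{(r+r',\phi*\phi')}$, where $\phi*\phi'$ is the trivialization of $F(r+r')$ induced by $\phi\oplus\phi'$. The comparison functor is then not faithful (already for $\Ac=\ZZ$), and your $\pi_1$ check fails.
\end{itemize}

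Instead, take $C^{-1}=E$ to be the \emph{set} of such pairs, made into an abelian group via the monoidal structure of $F$. Commutativity uses the symmetry of $F$ and $\beta_{0,0}=\id$. Then $E\to C^0$ has image $\ker(C^0\to\pi_0\Ac)$ and kernel $\Aut_\Ac(0)$, so $\Omega^\infty([E\to C^0])\to\Ac$ is an equivalence. Note that $E\cong\ker(C^0\to\pi_0\Ac)\oplus\Aut_\Ac(0)$ is free only when $\Aut_\Ac(0)$ is, which is one more reason the first repair is needed. With both repairs in place, no discussion of $k$-invariants is required.
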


The equivalence of \zcref{prop:deligne_classification} allows us to classify the strictly abelian 2-groups.
This classification is well-known to the experts (see \cite[Corollaire 3.3]{sinh1982categories}), but we provide a detailed statement and proof for future reference.

\begin{prop} \label{prop:strict_trivialization}
    For an abelian 2-group $\Ac$, the following are equivalent:
    \begin{enumerate}
        \item $\Ac$ is strictly abelian.
        \item There exists a symmetric monoidal equivalence $\Ac \simeq \pi_0(\Ac) \times \Bsf \Aut_\Ac(0)$.
    \end{enumerate}
    Furthermore, if $\Ac$ is strictly abelian, the set of symmetric monoidal structures (up to natural isomorphism) on a fixed equivalence of underlying categories $\Ac \simeq \pi_0(\Ac) \times \Bsf \Aut_\Ac(0)$ is an $\Ext^1_\ZZ(\pi_0(\Ac), \Aut_\Ac(0))$-torsor.
\end{prop}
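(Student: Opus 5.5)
The plan is to transport the whole problem across Deligne's equivalence $\Omega^\infty: \Dsf^{[-1,0]}(\ZZ) \simto \tAb^\st$ of \zcref{prop:deligne_classification} and argue with two-term complexes. The proof then splits into the easy direction (2)$\Rightarrow$(1), the harder direction (1)$\Rightarrow$(2), and the torsor statement.

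For (2)$\Rightarrow$(1), I would check that the trivial product symmetric monoidal structure on $\pi_0(\Ac) \times \Bsf \Aut_\Ac(0)$ has every braiding equal to the identity, so in particular $\beta_{a,a} = \id$. Strict abelianness is invariant under symmetric monoidal equivalence, because such equivalences intertwine the braidings. This settles the direction.

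For (1)$\Rightarrow$(2), suppose $\Ac$ is strictly abelian and write $\Ac \simeq \Omega^\infty(C)$ for a complex $C$ with cohomology in degrees $-1,0$. I expect $H^0(C) = \pi_0(\Ac)$ and $H^{-1}(C) = \Aut_\Ac(0)$; this should follow from $\Omega^\infty$ being an equivalence together with the computations $\Omega^\infty(A[0]) = A$ and $\Omega^\infty(A[1]) = \Bsf A$, by taking the truncation triangle $\tau_{\le -1}C \to C \to \tau_{\ge 0}C$. (Note that $A[1]$ sits in degree $-1$ in cohomological indexing.) Because $\ZZ$ has global dimension $1$, every object of $\Dsf(\ZZ)$ is quasi-isomorphic to the sum of its shifted cohomology groups. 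The triangle $H^{-1}[1] \to C \to H^0[0] \to H^{-1}[2]$ splits, since its connecting map lies in $\Ext^2_\ZZ(H^0,H^{-1}) = 0$. So $C \simeq H^0(C)[0] \oplus H^{-1}(C)[1]$. Applying $\Omega^\infty$, which preserves finite products because it is an equivalence of additive 2-categories, gives $\Ac \simeq \pi_0(\Ac) \times \Bsf\Aut_\Ac(0)$.

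For the torsor claim, set $A = \pi_0(\Ac)$ and $B = \Aut_\Ac(0)$, and take the 2-category of strictly abelian 2-groups to be the target. Symmetric monoidal autoequivalences of $A \times \Bsf B$ lifting the identity on the underlying category, up to isomorphism, correspond under $\Omega^\infty$ to automorphisms of $A[0] \oplus B[1]$ in $\Dsf(\ZZ)$ that induce the identity on cohomology. These automorphisms are the matrices $\begin{pmatrix} 1 & 0 \\ f & 1\end{pmatrix}$ with $f \in \Hom_{\Dsf}(A[0], B[1]) = \Ext^1_\ZZ(A,B)$. They form a group isomorphic to $\Ext^1$, and it acts simply transitively on symmetric monoidal structures compatible with the fixed underlying equivalence by precomposition. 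Concretely, one can also see this through the factor sets: a symmetric monoidal structure on the identity functor of $A \times \Bsf B$ amounts to a symmetric 2-cocycle $A \times A \to B$ modulo coboundaries, which is $\Ext^1_\ZZ(A,B)$.

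The main obstacle is this last identification. I have to check that different symmetric monoidal structures on one underlying functor really correspond to such unipotent automorphisms, that naturally isomorphic structures correspond to the same class, and that the action is free. I would handle this through the cocycle description directly, since it avoids the subtleties of 2-morphisms in $\Dsf^{[-1,0]}(\ZZ)$.
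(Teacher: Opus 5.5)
Your proposal is correct and follows essentially the same route as the paper. Both arguments transport the problem along Deligne's equivalence and split the complex using $\Ext^2_\ZZ = 0$. Both then identify the symmetric monoidal structures on a fixed underlying equivalence with the unipotent part of the triangular endomorphism algebra of $\pi_0(\Ac)[0]\oplus\Aut_\Ac(0)[1]$, whose off-diagonal entry is $\Ext^1_\ZZ(\pi_0(\Ac),\Aut_\Ac(0))$. Your lower-triangular matrix is the column-vector form of the paper's upper-triangular one, and your symmetric-cocycle check is an extra hands-on confirmation of a step the paper does not spell out.
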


\begin{proof}
    $(1) \Rightarrow (2)$.
    This follows from \zcref{prop:deligne_classification} and the fact that every object of $\Dsf^{[-1,0]}(\ZZ)$ splits as a direct sum of its cohomology objects (see e.g.\ \cite[\S 1.6]{krause2005derived}).
    
    $(2) \Rightarrow (1)$.
    Immediate.
    
    For the ``furthermore'' claim, we use \zcref{prop:deligne_classification} to identify $\Ac$ with the object 
    \[
    \pi_0(\Ac)[0] \oplus \Aut_\Ac(0)[1] \in \Dsf^{[-1,0]}(\ZZ).
    \]
    The set of symmetric monoidal equivalences $\Ac \simeq \pi_0(\Ac) \times \Bsf \Aut_\Ac(0)$ is a torsor for the group
    \[
        G = \pi_0\big(\Aut_{\Dsf^{[-1,0]}(\ZZ)}\big(\pi_0(\Ac)[0] \oplus \Aut_\Ac(0)[1]\big)\big).
    \]
    This $G$ is the subgroup of invertible elements of the matrix algebra 
    \[
        H^0\Big(\End_{\Dsf(\ZZ)}\big(\pi_0(\Ac)[0] \oplus \Aut_\Ac(0)[1]\big)\Big) = \begin{bmatrix}
            \Hom_\ZZ\big(\pi_0(\Ac), \pi_0(\Ac)\big) & \Ext_\ZZ^1\big(\pi_0(\Ac), \Aut_\Ac(0)\big) \\
            0 & \Hom_\ZZ\big(\Aut_\Ac(0), \Aut_\Ac(0)\big)
        \end{bmatrix}.
    \]
    Fixing the equivalence of underlying categories corresponds to fixing elements of $\Hom_\ZZ\big(\pi_0(\Ac), \pi_0(\Ac)\big)$ and $\Hom_\ZZ\big(\Aut_\Ac(0), \Aut_\Ac(0)\big)$ (which we can take to be the identity), so that the set of symmetric monoidal structures on a fixed equivalence is a torsor for the group
    \[
        \begin{bmatrix}
            1 & \Ext_\ZZ^1(\pi_0(\Ac), \Aut_\Ac(0)) \\
            0 & 1
        \end{bmatrix} = \Ext_\ZZ^1(\pi_0(\Ac), \Aut_\Ac(0)). \qedhere
    \]
\end{proof}

We can use this to show:

\begin{prop} \label{prop:2Ab_limits}
    The functor
    \[
        T: \Dsf^{[-1,\infty)}(\ZZ) \xrightarrow{\tau^{\leq 0}} \Dsf^{[-1,0]}(\ZZ) \simeq \tAb
    \]
    preserves limits.
    In particular, $\tAb$ admits arbitrary limits, and these can be computed by applying $T$ to the corresponding limits computed in $\Dsf(\ZZ)$.
\end{prop}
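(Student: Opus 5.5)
The approach is to show that $\tau^{\leq 0}$ is a right adjoint, so that it preserves limits formally, and then to transport the conclusion to $\tAb$ along the equivalence $\Omega^\infty$ of \zcref{prop:deligne_classification}. (The target there is $\tAb^\st$; I read the $\tAb$ in the statement as $\tAb^\st$, as \zcref{prop:deligne_classification} requires.) I work throughout with $\Dsf(\ZZ)$ as a stable dg/$\infty$-category. I view $\Dsf^{[-1,0]}(\ZZ)$ and $\Dsf^{[-1,\infty)}(\ZZ)$ as full subcategories, so limits mean homotopy limits. Passing to the associated 2-categories (cycles and homotopies) turns these into 2-limits once we restrict to objects whose mapping complexes are 1-truncated.

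\textbf{Step 1: $\Dsf^{[-1,\infty)}(\ZZ)$ is closed under limits in $\Dsf(\ZZ)$.}
Limits are built from products and fibers. Products of complexes of abelian groups are exact, so $H^i(\prod X_j)=\prod H^i(X_j)$, which vanishes for $i<-1$. For a fiber $F\to X\to Y$, the long exact sequence $H^{i-1}(Y)\to H^i(F)\to H^i(X)$ gives $H^i(F)=0$ for $i\le -2$, since then $H^{i-1}(Y)=0=H^i(X)$.

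\textbf{Step 2: $\tau^{\leq 0}$ is right adjoint to the inclusion.}
For $A\in\Dsf^{[-1,0]}(\ZZ)$ and $X\in\Dsf^{[-1,\infty)}(\ZZ)$, the map $\tau^{\leq0}X\to X$ induces a quasi-isomorphism
\[
\tau^{\le 0}\mathrm{RHom}(A,\tau^{\le0}X)\to\tau^{\le0}\mathrm{RHom}(A,X).
\]
This is the standard t-structure adjunction: $\mathrm{RHom}(A,\tau^{\ge1}X)$ is concentrated in degrees $\ge1$, because $A$ is connective up to degree $0$. Hence $\tau^{\le0}$ restricted to $\Dsf^{[-1,\infty)}$ is right adjoint to the inclusion $\Dsf^{[-1,0]}\hookrightarrow\Dsf^{[-1,\infty)}$, and so preserves all homotopy limits. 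A limit of a diagram in $\Dsf^{[-1,0]}$ is therefore obtained by computing it in $\Dsf(\ZZ)$ (it lands in $\Dsf^{[-1,\infty)}$ by Step 1) and then applying $\tau^{\le 0}$.

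\textbf{Step 3 (the main obstacle): passing from homotopy limits to 2-limits.}
I must identify homotopy limits in $\Dsf^{[-1,0]}$ with 2-categorical limits in the associated 2-category. For $A,X\in\Dsf^{[-1,0]}$, the complex $\tau^{\le 0}\mathrm{RHom}(A,X)$ has cohomology only in degrees $-1,0$. Via Dold--Kan it is therefore a 1-type, i.e.\ the groupoid of cycles and homotopies. The homotopy limit of such truncated mapping complexes, after applying $\tau^{\le0}$, computes the 2-limit (pseudo-limit) of the corresponding groupoids, since homotopy limits of 1-types truncated to 1-types are pseudo-limits of groupoids.

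Combining this with the adjunction of Step 2 shows that
\[
\Hom(A,T(\lim X_i))\simeq\lim\Hom(A,X_i)
\]
as groupoids, which is exactly the 2-universal property. Finally, transporting along $\Omega^\infty$ yields the claim for $\tAb^\st$.
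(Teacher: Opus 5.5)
Your proposal is correct and is essentially the paper's argument: $\tau^{\le 0}$ is right adjoint to the inclusion $\Dsf^{[-1,0]}(\ZZ)\hookrightarrow\Dsf^{[-1,\infty)}(\ZZ)$ and hence preserves limits, and $\Dsf^{[-1,\infty)}(\ZZ)$ is closed under limits in $\Dsf(\ZZ)$. Your Steps 1 and 3 spell out points the paper leaves implicit, namely closure under products and fibers, and the identification of homotopy limits between $1$-truncated mapping spaces with 2-limits; reading $\tAb$ as $\tAb^\st$, as \zcref{prop:deligne_classification} requires, is the correct interpretation.
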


\begin{proof}
    The functor $\tau^{\leq 0}$ is, by definition, the right adjoint to the inclusion $\Dsf^{[-1,0]}(\ZZ) \hookrightarrow \Dsf^{[-1,\infty)}(\ZZ)$, hence must preserve limits.
    It follows that $T$ also preserves limits.
    The ``in particular'' claim follows immediately because $\Dsf^{[-1,\infty)}(\ZZ)$ admits arbitrary limits (which agree with the corresponding limits in $\Dsf(\ZZ)$).
\end{proof}

\subsection{\texorpdfstring{$\tTerm$}{2Term}}

By \zcref{prop:deligne_classification}, the study of strictly abelian 2-groups reduces to homological algebra in $\Dsf^{[-1,0]}(\ZZ)$.
Of course, homological algebra in $\Dsf^{[-1,0]}(\ZZ)$ contains all the complexities of the study of arbitrary abelian groups.
In practice, we only care about objects satisfying a strong finiteness condition:

\begin{dfn}
    Let $\tTerm$ denote the (2-category corresponding to the) dg-category of chain complexes of abelian groups $F^\bullet$ such that:
    \begin{itemize}
        \item $F^0$ and $F^{-1}$ are finitely generated free abelian groups, and
        \item $F^i = 0$ for all $i \neq 0, -1$.
    \end{itemize}
\end{dfn}

\begin{rmk} \label{rmk:2term_vs_DZ}
    The natural map $\tTerm \to \Dsf^{[-1,0]}(\ZZ)$ is fully faithful.
    The image of this map consists of the complexes $A^\bullet$ such that:
    \begin{itemize}
        \item $H^0(A^\bullet)$ is a finitely generated abelian group,
        \item $H^{-1}(A^\bullet)$ is a finitely generated free abelian group, and
        \item $H^i(A^\bullet) = 0$ for all $i \neq 0, -1$.
    \end{itemize}
\end{rmk}

The 2-category $\tTerm$ inherits some nice categorical properties from $\Dsf(\ZZ)$:

\begin{prop} \label{lem:2term_finite_limits}
    The 2-category $\tTerm$ admits finite limits.
    These can be computed by applying $\tau^{\leq 0}$ to the corresponding limits in $\Dsf(\ZZ)$.
\end{prop}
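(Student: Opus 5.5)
Finite limits are built from a final object and fiber products, so I will treat these two cases separately, with the heavier work in the fiber-product case. The final object is the zero complex, which lies in $\tTerm$. The plan is to use \zcref{prop:2Ab_limits}: for a diagram in $\tTerm$, compute its limit in $\Dsf(\ZZ)$, apply $\tau^{\leq 0}$, and get a limit in $\Dsf^{[-1,0]}(\ZZ) \simeq \tAb^\st$. By \zcref{rmk:2term_vs_DZ}, $\tTerm \to \Dsf^{[-1,0]}(\ZZ)$ is fully faithful. So a limit in the larger category whose value lies in the essential image of $\tTerm$ is also a limit in $\tTerm$. It therefore suffices to check that the truncated limit satisfies the three cohomological conditions of \zcref{rmk:2term_vs_DZ}. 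A small point to state explicitly: the limit in $\Dsf(\ZZ)$ of objects in $\Dsf^{[-1,0]}$ lies in $\Dsf^{[-1,\infty)}$, so \zcref{prop:2Ab_limits} applies.

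For a fiber product $A^\bullet \to C^\bullet \leftarrow B^\bullet$ in $\tTerm$, the homotopy pullback is $P = \fib(A \oplus B \to C)$. Its long exact sequence is
\[
0 \to H^{-1}(P) \to H^{-1}(A) \oplus H^{-1}(B) \to H^{-1}(C) \to H^0(P) \to H^0(A) \oplus H^0(B) \to H^0(C) \to H^1(P) \to 0,
\]
and there is no $H^{-2}(P)$ term because $H^{-2}(C) = 0$. After truncation, $\tau^{\leq 0}P$ has cohomology only in degrees $-1$ and $0$. The conditions then check out as follows:
\begin{itemize}
\item $H^{-1}(P)$ is a subgroup of the finitely generated free group $H^{-1}(A) \oplus H^{-1}(B)$, so it is finitely generated and free.
\item $H^0(P)$ is an extension of a subgroup of $H^0(A) \oplus H^0(B)$ by a quotient of $H^{-1}(C)$, so it is finitely generated.
\end{itemize}
Hence $\tau^{\leq 0}P$ lies in the image of $\tTerm$.

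The only step needing care is the 2-categorical bookkeeping: that "limit" in the 2-category attached to the dg-category agrees with the homotopy limit in $\Dsf(\ZZ)$, and that full faithfulness of $\tTerm \hookrightarrow \Dsf^{[-1,0]}(\ZZ)$ lets limits descend. I take the first as implicit in \zcref{prop:2Ab_limits}. The second is formal, since mapping groupoids into the limit are computed inside the ambient category. As in the conventions, I will not spell out further 2-categorical details.
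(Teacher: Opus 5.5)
Your proof is correct and is essentially the paper's argument: use \zcref{prop:2Ab_limits} together with full faithfulness of $\tTerm \hookrightarrow \Dsf^{[-1,0]}(\ZZ)$, so that it suffices to check the truncated limit against the cohomological criterion of \zcref{rmk:2term_vs_DZ}. The paper splits finite limits into fibers and finite products and reads off the cohomology from the explicit complex $F^{-1} \to F^0 \oplus G^{-1} \to G^0$, whereas you use the terminal object, pullbacks and the long exact sequence, which is the same computation.
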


\begin{proof}
    By \zcref{prop:2Ab_limits}, we know that $\Dsf^{[-1,0]}(\ZZ) \simeq \tAb$ admits arbitrary limits, computed by applying $\tau^{\leq 0}$ to the corresponding limits in $\Dsf(\ZZ)$.
    We need to show that the full subcategory $\tTerm \subset \Dsf^{[-1,0]}(\ZZ)$ is closed under finite limits.
    Finite limits in $\Dsf(\ZZ)$ can all be written in terms of fibers and finite products, so it suffices to show that the truncation $\tau^{\leq 0}$ of a fiber or finite product of objects of $\tTerm \subset \Dsf(\ZZ)$ still lies in $\tTerm$.
    The claim is obvious for finite products (no truncation is necessary), so we consider the case of fibers.

    Let $\phi: F^\bullet \to G^\bullet$ be a morphism in $\tTerm$.
    Then $\fib(\phi)$ (computed in $\Dsf(\ZZ)$) can be written as a chain complex
    \[
    \begin{tikzcd}
        \dots \rar & 0 \rar & F^{-1} \rar & F^0 \oplus G^{-1} \rar & G^0 \rar & 0 \rar & \dots
    \end{tikzcd}
    \]
    where the copy of $F^{-1}$ is in degree $-1$.
    Thus $H^{-1}(\fib(\phi))$ is finitely generated free and $H^0(\fib(\phi))$ is finitely generated.
    These are the only nonzero cohomology groups of $\tau^{\leq 0} \fib(\phi)$, so we see by \zcref{rmk:2term_vs_DZ} that $\tau^{\leq 0} \fib(\phi) \in \tTerm$, as needed.
\end{proof}

\subsection{Path 2-monoids}

The following construction of commutative 2-monoids gives a useful invariant of symmetric monoidal categories:

\begin{dfn}
    Let $(\Cc, \oplus)$ be a small symmetric monoidal category.
    The \emph{path $2$-monoid} of $\Cc$ is the commutative $2$-monoid $\PO \Cc := \iota_0(\Cc_{0/})$, where the symmetric monoidal structure on $\PO \Cc$ is given by $\oplus$ in $\Cc$.
\end{dfn}

\begin{rmk}
    The construction of $\PO \Cc$ is only pseudofunctorial in $\Cc$, since symmetric monoidal functors need not preserve $0$ ``on the nose.''
    This will not present serious issues for us, since in our main examples of interest, the path 2-monoids will be discrete (and any pseudofunctor valued in the 1-category $\CMon$ is just a 1-functor).
\end{rmk}

For any symmetric monoidal category $(\Cc, \oplus)$, the path 2-monoid $\PO \Cc$ has:
\begin{itemize}
    \item $\ob \PO \Cc = \sqcup_{c \in \Cc} \Map_\Cc(0, c)$,
    \item morphisms $f_1 \to f_2$ given by commutative triangles
    \[
        \begin{tikzcd}
            & 0 \ar[dl, "f_1", swap] \ar[dr, "f_2"] & \\
            c_1 \ar[rr, "\sim"] & & c_2
        \end{tikzcd}
    \]
    \item $\oplus$ given by $\oplus$ in $\Cc$ (using the natural isomorphism $0 \oplus 0 \simeq 0$ to interpret morphisms $0 \oplus 0 \to c_1 \oplus c_2$ as morphisms $0 \to c_1 \oplus c_2$), and
    \item structure morphisms induced by those of $\Cc$.
\end{itemize}

There is a natural homomorphism of commutative $2$-monoids $\cod: \PO \Cc \to \iota_0 \Cc$ sending any $f \in \PO \Cc$ to its codomain.

\begin{ex}
    Suppose $\Cc$ is the category of line bundles on some algebraic stack $\Xfr$. 
    Then $\PO \Cc$ is the commutative $2$-monoid of \emph{generalized effective Cartier divisors on $\Xfr$}, i.e.\ pairs $(\Lsc, s)$ where $\Lsc$ is a line bundle on $\Xfr$ and $s: \Osc_\Xfr \to \Lsc$ is a section.
    See e.g.\ \cite[Example 2.5(a)]{borne2012parabolic} for a more detailed discussion.
    The map $\cod: \PO \Cc \to \iota_0 \Cc$ sends a generalized effective Cartier divisor to its underlying line bundle.
\end{ex}

\begin{ex}
    Suppose $\Mc$ is a commutative 2-monoid.
    Then $\PO \Mc \simeq 0$.
    Indeed, for any $f: 0 \to m$ in $\Mc$, there is a unique commutative diagram
    \[
    \begin{tikzcd}
        & 0 \ar[dl, "f"'] \ar[dr, "\id_0"] & \\
        m \ar[rr, "f\inv"] & & 0
    \end{tikzcd}
    \]
    giving an isomorphism $f \simto \id_0$ in $\PO \Mc$.
\end{ex}

In some nice cases (including those we will consider throughout most of this paper), the path 2-monoid of a symmetric monoidal category turns out to be a discrete commutative monoid.
However, this is not true in general:

\begin{ex} \label{ex:path_not_discrete}
    Let $A$ be an abelian group, and let $\Cc_A$ be the symmetric monoidal category with:
    \begin{itemize}
        \item $\ob \Cc_A = \ZZ$
        \item For $m, n \in \ZZ$,
        \[
        \Map_{\Cc_A}(m, n) = \begin{cases}
            \varnothing & m > n \\
            A & m = n \\
            \{ * \} & m < n,
        \end{cases}
        \]
        with composition defined as addition in $A$ when this makes sense and trivially otherwise,
        \item $\oplus$ given on objects by addition in $\ZZ$ and on morphisms by addition in $A$ when it makes sense and trivially otherwise, and
        \item Symmetric monoidal structure maps all trivial.
    \end{itemize}
    The commutative $2$-monoid $\PO \Cc_A$ is not discrete, since $\Aut_{\PO \Cc_A}(*: 0 \to n) \cong A$ for any $n > 0$.
    In fact, there is an equivalence of categories $\PO \Cc_A = 0 \sqcup (\sqcup_{n > 0} \Bsf A)$.
\end{ex}

We note that not every commutative 2-monoid can arise as the path 2-monoid of a symmetric monoidal category.
Indeed, every such commutative 2-monoid must satisfy the following conditions (which may be considered as a categorified notion of ``sharpness,'' though we will not investigate the analogy in detail).

\begin{prop} \label{prop:path_monoid_sharp}
    Let $\Cc$ be a small symmetric monoidal category.
    Then:
    \begin{enumerate}
        \item $\Aut_{\PO \Cc}(0)$ is trivial, and
        \item $\pi_0(\PO \Cc)$ is sharp.
    \end{enumerate}
\end{prop}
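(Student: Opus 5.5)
Both claims come straight from the explicit description of $\PO \Cc$ given above: objects are morphisms $f: 0 \to c$, and a morphism $f_1 \to f_2$ is an isomorphism $u: c_1 \simto c_2$ with $u \circ f_1 = f_2$. The only input beyond the definitions is the standard fact that in any monoidal category the endomorphism monoid of the unit $0$ is commutative, acts compatibly via the unitors, and that invertible elements there behave as expected. In fact I expect we only need elementary manipulations with the unit object.

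For (1), the unit object of $\PO \Cc$ is $\id_0: 0 \to 0$, up to the unitor $0 \oplus 0 \simeq 0$, which only reidentifies $\id_0$ with itself. An automorphism of $\id_0$ in $\PO \Cc$ is an isomorphism $u: 0 \simto 0$ in $\Cc$ with $u \circ \id_0 = \id_0$. Hence $u = \id_0$, and $\Aut_{\PO \Cc}(\id_0)$ is trivial. Strictly, the unit of $\PO \Cc$ might be some object isomorphic to $\id_0$, but automorphism groups of isomorphic objects are isomorphic, so this does not matter.

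For (2), suppose $[f]$ and $[g]$ in $\pi_0(\PO \Cc)$ satisfy $[f] + [g] = 0$, with $f: 0 \to c$ and $g: 0 \to d$. Then there is an isomorphism $u: c \oplus d \simto 0$ such that $u \circ (f \oplus g) = \id_0$, after identifying $0 \oplus 0$ with $0$. We must show $[f] = 0$, that is, produce an isomorphism $v: c \simto 0$ with $v \circ f = \id_0$. This is the main obstacle, since we only know that $f$ has a retraction up to the isomorphism $u$. The plan is as follows.
\begin{itemize}
\item Set $r := u \circ (\id_c \oplus g): c \simeq c \oplus 0 \to c \oplus d \to 0$. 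Then $r \circ f = u \circ (f \oplus g) = \id_0$, so $f$ is a split monomorphism with retraction $r$.
\item By symmetry, $g$ has a retraction $s := u \circ (f \oplus \id_d): d \to 0$. Consider $e := f \circ r: c \to c$, an idempotent. We aim to show $e = \id_c$, which would make $f$ an isomorphism with inverse $r$. Then $v = r$ gives the required isomorphism $f \simto \id_0$ in $\PO \Cc$.
\item To show $e = \id_c$, use that $u$ is an isomorphism. Compute $(f \oplus g) \circ u: c \oplus d \to c \oplus d$. Writing $u = r' \ldots$ is awkward, so instead check that $(f \oplus g) \circ u$ is inverse to $u$, using $u \circ (f \oplus g) = \id_0$ and the invertibility of $u$: $(f \oplus g) = u^{-1} \circ u \circ (f \oplus g) = u^{-1}$. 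Thus $f \oplus g$ is an isomorphism $0 \simto c \oplus d$.
\item Then $f \oplus \id_d = (f \oplus g) \circ (\id_0 \oplus s)$ up to unitors, by naturality. So $c \oplus d \to c \oplus d$, via $(f \oplus \id_d) \circ (r \oplus \id_d) = e \oplus \id_d$, factors through the isomorphism, and comparing with $\id = (f \oplus g)\circ u$ should force $e \oplus \id_d = \id$.
\end{itemize}
Since tensoring with $\id_d$ need not be faithful, the cleaner route is more direct: $f \oplus g$ is an isomorphism, so tensoring the first factor gives $f \oplus \id_d = (f \oplus g)\circ(\id_0 \oplus s)$ up to unitors. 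Read through the inverse, this shows $f \oplus \id_d$ is a split epimorphism. Then the symmetric argument, combined with $r \circ f = \id_0$, should show that $f$ itself is invertible. I would try this diagram chase first, using interchange and naturality of the unitors. If it stalls, the fallback is to prove that $f$ is an isomorphism from the fact that $f \oplus g$ is one. Decompose $f \oplus g = (\id_c \oplus g) \circ f$ up to unitors. Then $f$ has the left inverse $r$, and also a right inverse $(f \oplus g)^{-1} \circ (\id_c \oplus g)$ after checking that composite. Hence $f$ is invertible, and $[f] = 0$.
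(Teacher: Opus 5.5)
Your part (1) is correct and is the paper's argument. Part (2) has a gap at the step you yourself flag as the main obstacle: you never actually prove $f\circ r=\id_c$, and both routes you propose silently assume it.

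\begin{itemize}
\item In the ``cleaner route'', $(f\oplus g)\circ(\id_0\oplus s)$ equals $f\oplus(g\circ s)$, not $f\oplus\id_d$. Identifying the two needs $g\circ s=\id_d$, which is the same claim with $g$ in place of $f$. Naturality does not give it.
\item In the fallback, you showed $(f\oplus g)^{-1}=u$, so the proposed right inverse $(f\oplus g)^{-1}\circ(\id_c\oplus g)$ is, up to unitors, $u\circ(\id_c\oplus g)=r$ again. ``Checking that composite'' is therefore literally checking $f\circ r=\id_c$, which is the goal.
\end{itemize}

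The factorization $f\oplus g=(\id_c\oplus g)\circ f$ with $f\oplus g$ invertible only tells you that $f$ is a split monomorphism and $\id_c\oplus g$ is a split epimorphism. Nothing there forces $f$ to be an epimorphism.

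The missing observation is that your worry ``tensoring with $\id_d$ need not be faithful'' does not apply here. The isomorphism $u$ exhibits $c\oplus d\simeq 0$, so $d$ is $\oplus$-invertible in $\Cc$. Hence $-\oplus d$ is an autoequivalence of $\Cc$ with quasi-inverse $-\oplus c$, and in particular it is faithful. With this, your bullet-4 idea closes immediately:
\begin{itemize}
\item $(e\oplus\id_d)\circ(f\oplus g)=(e\circ f)\oplus g=f\oplus g$, since $e\circ f=f\circ r\circ f=f$.
\item Because $f\oplus g$ is an isomorphism, this gives $e\oplus\id_d=\id_{c\oplus d}$.
\item Faithfulness of $-\oplus d$ then gives $e=\id_c$.
\end{itemize}
So $r$ is a two-sided inverse of $f$ with $r\circ f=\id_0$, which is exactly an isomorphism $f\simto\id_0$ in $\Cc_{0/}$, and $[f]=0$.

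The paper packages the same idea in the undercategory $\Cc_{0/}$. There $\id_0$ is initial and $f$ is $\oplus$-invertible, with inverse $g$. Hence $\Map_{\Cc_{0/}}(f,\id_0)\cong\Map_{\Cc_{0/}}(f\oplus g,\id_0\oplus g)\cong\Map_{\Cc_{0/}}(\id_0,g)$ is a point, and the paper concludes that $f\simeq\id_0$ in $\Cc_{0/}$.
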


\begin{proof}
    (1). The group $\Aut_{\PO \Cc}(0)$ consists of commutative triangles
    \[
        \begin{tikzcd}
            & 0 \ar[dl, "\id_{0}", swap] \ar[dr, "\id_{0}"] & \\
            0 \ar[rr, "a"] & & 0
        \end{tikzcd}
    \]
    in $\Cc$, where $a$ is an isomorphism.
    But this forces $a = \id_0$, so $\Aut_{\PO \Cc}(0) = \{ \id_0 \}$ is trivial.

    (2). Note that $\id_0: 0 \to 0$ is initial in $\Cc_{0/}$, so for any $f: 0 \to c$, we have
    \[
        \Map_{\Cc_{0/}}(\id_0, f) = \{*\}.
    \]
    Now suppose $f$ represents an invertible class in $\pi_0(\PO \Cc)$.
    Then $f$ is $\oplus$-invertible in $\PO \Cc$, hence also in $\Cc_{0/}$.
    It follows that
    \[
        \Map_{\Cc_{0/}}(f, \id_0) = \Map_{\Cc_{0/}}(\id_0, f^{\oplus-1}) = \{*\},
    \]
    so that $f \simeq \id_0$ in $\Cc_{0/}$.
    But this means that $[f] = [\id_0] = 0 \in \pi_0(\PO \Cc)$, i.e.\ $\pi_0(\PO \Cc)$ is sharp.
\end{proof}

\subsection{Relative path 2-monoids} \label{sub:relative_path}

The following generalization of path $2$-monoids will also be useful.

\begin{dfn} \label{dfn:relative_path_monoid}
    Let $\Cc$ be a small symmetric monoidal category, let $\Mc$ be a commutative 2-monoid, and let $F: \Mc \to \iota_0 \Cc$ be a homomorphism of commutative 2-monoids.
    Write $0 \downarrow_\Cc \Mc$ for the corresponding comma category, i.e.\ the category of morphisms $0 \to F(m)$ with $m \in \Mc$.
    The \emph{path 2-monoid of $\Cc$ relative to $F$} is $\Pop_{0,\Mc} \Cc := \iota_0(0 \downarrow_\Cc \Mc)$, considered with its natural symmetric monoidal structure.
\end{dfn}

More explicitly, $\Pop_{0,\Mc} \Cc$ has:
\begin{itemize}
    \item $\ob \Pop_{0,\Mc} \Cc = \big\{ \big(m \in \ob \Mc, f \in \Hom_\Cc(0, F(m))\big)\big\}$,
    \item For $(m_1, f_1), (m_2, f_2) \in \ob \Pop_{0,\Mc}$, $\Map_{\Pop_{0,\Mc} \Cc}((m_1, f_1), (m_2, f_2))$ consists of all $g: m_1 \simto m_2$ in $\Mc$ forming commutative triangles
    \[
        \begin{tikzcd}
            & 0 \ar[dl, "f_1", swap] \ar[dr, "f_2"] & \\
            F(c_1) \ar[rr, "F(g)"] & & F(c_2),
        \end{tikzcd}
    \]
    and
    \item $\oplus$ and the symmetric monoidal structure morphisms are induced by those of $\Cc$ and $\Mc$.
\end{itemize}
There is a natural homomorphism of commutative 2-monoids $\Pop_{0,\Mc} \Cc \to \Mc$ given by $(m, f) \mapsto m$.

\begin{ex}
    For any small symmetric monoidal category $\Cc$, we may write $\PO \Cc = \Pop_{0,\iota_0 \Cc} \Cc$ (where the functor $F: \iota_0 \Cc \to \iota_0 \Cc$ is the identity).
\end{ex}

\begin{ex}
    At the other extreme, $\Pop_{0,0} \Cc$ is the discrete commutative monoid $\Map_\Cc(0,0)$.
    If $\Aut_\Cc(0)$ is nontrivial, then $\Pop_{0,0} \Cc$ is not sharp, so \zcref{prop:path_monoid_sharp} does not extend to relative path monoids.
\end{ex}

If $\Cc$ is a small symmetric monoidal category and 
\[
\begin{tikzcd}
    {\Mc_1} \ar[dr, "{F_1}"'] \ar[rr, "G"] & & {\Mc_2} \ar[dl, "{F_2}"] \\
    & \iota_0 \Cc &
\end{tikzcd}
\]
is a commutative triangle in $\tCMon$ then there is an induced homomorphism of commutative 2-monoids $\Pop_{0,\Mc_1} \Cc \to \Pop_{0,\Mc_2} \Cc$ given by $(m, f) \mapsto (G(m), f)$.
In particular, for any $\Mc \to \iota_0 \Cc$, there are natural homomorphisms $\Map_\Cc(0, 0) = \Pop_{0,0} \Cc \to \Pop_{0,\Mc} \Cc$ and $\Pop_{0,\Mc} \Cc \to \PO \Cc$.

\section{Toric categories} \label{sec:toric_cats}

In this section we introduce the notion of a \emph{toric category}.
This is a categorification of the notion of toric monoid: just like how toric monoids control affine toric varieties, toric categories control ``basic'' toric stacks.
Toric categories satisfy a simple structure theorem (\zcref{thm:TCat_structure_thm}) which allows us to understand a general toric category in terms of abelian 2-groups and commutative monoids.

\subsection{Toric categories and the skew quotient construction}

\begin{dfn} \label{dfn:T_cat}
    A \emph{toric category} is a grouplike symmetric monoidal category $(\Cc, \oplus)$ such that:
    \begin{itemize}
        \item $\iota_0 \Cc$ is a strict abelian 2-group,
        \item $\pi_0(\iota_0 \Cc)$ is a finitely generated abelian group,
        \item $\Aut_\Cc(0)$ is a finitely generated free abelian group, and
        \item $\PO \Cc$ is a toric monoid (in particular, $\PO \Cc$ is essentially discrete).
    \end{itemize}
    Let $\TCat$ denote the full sub-2-category of $\iota_1 \SMCat$ spanned by the toric categories.
\end{dfn}

We single out the following special case, primarily to avoid having to repeat hypotheses:

\begin{dfn}
    A \emph{toric groupoid} is a toric category which is also a groupoid.
    Let $\TGpd$ denote the full sub-2-category of $\TCat$ spanned by the toric groupoids.
\end{dfn}

In other words, a toric groupoid is a strictly abelian 2-group $\Ac$ with $\pi_0(\Ac)$ finitely generated and $\Aut_\Ac(0)$ finitely generated free.
The following result is obvious:

\begin{prop} \label{prop:2Term_vs_TGpd}
    The functor $\Omega^\infty$ of \zcref{prop:deligne_classification} restricts to an equivalence $\tTerm \simto \TGpd$.
\end{prop}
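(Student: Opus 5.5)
The plan is to compose two facts already established: the equivalence $\Omega^\infty: \Dsf^{[-1,0]}(\ZZ) \simto \tAb^\st$ of \zcref{prop:deligne_classification}, and the description in \zcref{rmk:2term_vs_DZ} of $\tTerm$ as the full sub-2-category of $\Dsf^{[-1,0]}(\ZZ)$ on complexes with $H^0$ finitely generated and $H^{-1}$ finitely generated free. The composite $\tTerm \to \Dsf^{[-1,0]}(\ZZ) \to \tAb^\st$ is fully faithful as a composite of a fully faithful map and an equivalence. It remains to show that its essential image is exactly $\TGpd$, regarded as a full sub-2-category of $\tAb^\st$.

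First I identify $\TGpd$ inside $\tAb^\st$. A toric groupoid $\Cc$ satisfies $\iota_0\Cc=\Cc$, so by \zcref{dfn:T_cat} it is a strictly abelian 2-group with $\pi_0$ finitely generated and $\Aut(0)$ finitely generated free. The remaining condition, that $\PO\Cc$ be toric, holds automatically: the example following \zcref{prop:path_monoid_sharp} shows $\PO\Mc\simeq 0$ for any commutative 2-monoid $\Mc$, and $0$ is a toric monoid. Conversely, every such strictly abelian 2-group is a toric groupoid. So $\TGpd$ is precisely the full sub-2-category of $\tAb^\st$ on objects with $\pi_0$ finitely generated and $\Aut(0)$ finitely generated free.

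Next I match invariants under $\Omega^\infty$. For $A^\bullet \in \Dsf^{[-1,0]}(\ZZ)$, I claim $\pi_0\Omega^\infty(A^\bullet)\cong H^0(A^\bullet)$ and $\Aut_{\Omega^\infty(A^\bullet)}(0)\cong H^{-1}(A^\bullet)$. To check this, use the splitting $A^\bullet \simeq H^0[0]\oplus H^{-1}[1]$ from the proof of \zcref{prop:strict_trivialization}, together with the normalizations $\Omega^\infty(A[0])=A$ and $\Omega^\infty(A[1])=\Bsf A$. Since $\Omega^\infty$ is an equivalence of additive 2-categories, it preserves finite products, which gives the claimed identifications. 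Comparing with \zcref{rmk:2term_vs_DZ}, the essential image of $\tTerm$ is exactly the class of objects described in the previous paragraph, namely $\TGpd$.

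The only mildly delicate point is the invariant-matching step: one must know that $\Omega^\infty$ sends direct sums to products, and that the cohomology groups correspond to $\pi_0$ and $\Aut(0)$. This should be routine from the construction in \cite{deligne1971formule}.
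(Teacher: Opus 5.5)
Your proposal is correct and takes the same approach as the paper. The paper gives no proof beyond calling the result obvious, right after noting that a toric groupoid is exactly a strictly abelian 2-group with finitely generated $\pi_0$ and finitely generated free $\Aut(0)$. Your identification of $\pi_0$ and $\Aut(0)$ with $H^0$ and $H^{-1}$ via the splitting, combined with the description of the essential image of $\tTerm$ in $\Dsf^{[-1,0]}(\ZZ)$, supplies the details the paper leaves implicit.
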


Our principal method for constructing toric categories is the following:

\begin{cons} \label{cons:skew_quotient}
    Suppose:
    \begin{itemize}
        \item $M$ is a toric monoid,
        \item $\Ac$ is a toric groupoid, and
        \item $\phi: M \to \Ac$ is a homomorphism of commutative 2-monoids.
    \end{itemize}
    Then we may define a \emph{skew quotient} symmetric monoidal category $\Ac \vec{/}_\phi M$ as follows:\footnote{When $\phi$ is obvious from context, we will write $\Ac \vec{/} M$ for the skew quotient.}
    \begin{itemize}
        \item $\ob \Ac \vec{/} M = \ob \Ac$.
        \item For $a_1, a_2 \in \ob \Ac \vec{/} M$, we set 
        \[
            \Map_{\Ac \vec{/} M}(a_1, a_2) = \big\{ \big(m \in M, f \in \Map_\Ac(\phi(m) \oplus a_1, a_2)\big) \big\}.
        \]
        \item For $a_1, a_2, a_3 \in \ob \Ac \vec{/} M$, the composition map
        \[
            \Map_{\Ac \vec{/} M}(a_1, a_2) \times \Map_{\Ac \vec{/} M}(a_2, a_3) \to \Map_{\Ac \vec{/} M}(a_1, a_3)
        \]
        is given by
        \[
            (m_2, f_2) \circ (m_1, f_1) = \big(m_2 + m_1, f_2 \circ (\id_{\phi(m_2)} \oplus f_1) \in \Map_\Ac(\phi(m_2 + m_1) \oplus a_1, a_3)\big)
        \]
        In particular, the identity morphism $\id_{a_1}$ is $(0, \id_a)$.
        \item There is a natural functor $\Ac \to \Ac \vec{/} M$ given by the identity on objects and given by $f \mapsto (0, f)$ on morphisms.
        The symmetric monoidal structure morphisms of $\Ac \vec{/} M$ are the images of those of $\Ac$ under this functor.
    \end{itemize}
\end{cons}

\begin{rmk}
    When defining composition in $\Ac \vec{/} M$, we have generally suppressed mention of the monoidal structure morphisms of $\Ac$ and of $\phi$.
    In particular, the morphism in $\Map_\Ac(\phi(m_2 + m_1) \oplus a_1, a_3)$ arising as the second component in the composite $(m_2, f_2) \circ (m_1, f_1)$ is not literally $f_2 \circ (\id_{\phi(m_2)} \oplus f_1)$.
    Indeed, this formula does not even parse without inserting structure morphisms.
    Instead, the morphism in question is the composite
    \begin{align*}
        \phi(m_2 + m_1) \oplus a_1 &\simto \big(\phi(m_2) \oplus \phi(m_1)\big) \oplus a_1 \\
        &\simto \phi(m_2) \oplus (\phi(m_1) \oplus a_1) \\
        &\xrightarrow{\id_{\phi(m_2)} \oplus f_1} \phi(m_2) \oplus a_2 \\
        &\xrightarrow{f_2} a_3. 
    \end{align*}
    Likewise, each identity morphism $\id_a$ in $\Map_{\Ac \vec{/} M}$ is literally defined as
    \begin{align*}
        \big(0 \in M, \phi(0) \oplus a \simto 0 \oplus a \simto a\big).
    \end{align*}
    The usual coherence diagrams imply that all of the natural commutative diagrams we'd wish to construct in $\Ac \vec{/} M$ (e.g.\ those witnessing associativity and unitality of composition) do actually commute.
    Thus our choice to suppress the monoidal structure morphisms in our notation does not lead to any serious harm.
\end{rmk}

\begin{rmk}
    Our skew quotient construction is inspired by two notable prior works:
    \begin{itemize}
        \item The notion of ``$\Delta$-collection'' in \cite{cox1995functor} is essentially a symmetric monoidal functor out of a skew quotient.
        \item An $\infty$-categorical version of the skew quotient construction is studied in \cite[\S 8]{lehner2024group}.
        Our treatment of skew quotients is logically independent from \cite{lehner2024group}, and we will not use any $\infty$-categorical machinery.
    \end{itemize}
\end{rmk}

Here are a few key examples of the skew quotient construction.

\begin{ex}
    Let $M$ be a toric monoid.
    Then $0 \vec{/} M = \Bsf M$, the one-object symmetric monoidal category constructed in \zcref{rmk:BM}.
\end{ex}

\begin{ex}
    Let $\Ac$ be a toric groupoid.
    Then $\Ac \vec{/} 0 = \Ac$.
\end{ex}

\begin{ex} \label{ex:toric_poset}
    Let $A$ be a finitely generated abelian group and let $M \subset A$ be a toric submonoid.
    Then $A \vec{/} M$ is the category associated with the poset $(A, \leq)$, where $a_1 \leq a_2$ if and only if $a_2 - a_1 \in M$.
\end{ex}

It is fairly easy to describe the underlying abelian 2-group and the path 2-monoid of a skew quotient:

\begin{prop} \label{prop:skew_quotient_invariants}
    Let $\phi: M \to \Ac$ be as in \zcref{cons:skew_quotient}.
    Then:
    \begin{enumerate}
        \item $\iota_0(\Ac \vec{/} M) = \cofib(M^\pm \to \Ac)$.
        \item $\PO\,(\Ac \vec{/} M) = M^\sharp$.
    \end{enumerate}
\end{prop}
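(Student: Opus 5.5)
Both parts come from unwinding \zcref{cons:skew_quotient}, with all monoidal structure morphisms suppressed as in the remark following it.

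\textbf{Part (1).} A morphism $(m, f)\colon a_1 \to a_2$ in $\Ac \vec{/} M$ is invertible if and only if $m \in M^\pm$. If $(m,f)$ has an inverse $(m', f')$, then composition adds the monoid components, so $m + m' = 0$ and $m' + m = 0$; hence $m \in M^\pm$. Conversely, if $m \in M^\pm$, then $f\colon \phi(m) \oplus a_1 \to a_2$ is an isomorphism in the groupoid $\Ac$, and $(-m, \text{the evident inverse built from } f^{-1})$ is a two-sided inverse.

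Hence $\iota_0(\Ac \vec{/} M)$ has objects $\ob \Ac$, and its isomorphisms $a_1 \to a_2$ are pairs $(u, f)$ with $u \in M^\pm$ and $f\colon \phi(u) \oplus a_1 \simto a_2$. I would then identify this with $\cofib(M^\pm \to \Ac)$ through the universal property stated earlier. A homomorphism $\iota_0(\Ac \vec{/} M) \to \Nc$ consists of:
\begin{itemize}
    \item a homomorphism $G\colon \Ac \to \Nc$ (restriction along $\Ac \to \Ac \vec{/} M$), and
    \item for each $u \in M^\pm$, the image of the morphism $(u, \id)\colon 0 \to \phi(u)$, which gives an isomorphism $0 \simto G(\phi(u))$.
\end{itemize}
These isomorphisms are monoidal in $u$, because the composites $(u,\id)$ add. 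Such data is precisely a trivialization of $G|_{M^\pm}$, that is, a point of $\Hom(\Ac,\Nc) \times_{\Hom(M^\pm,\Nc)} 0$. The main care needed is checking that this correspondence is an equivalence of groupoids, including on 2-morphisms. Since $M^\pm$ is discrete, the coherence checks reduce to the associativity of composition in the construction.

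Alternatively, one can compute with \zcref{prop:deligne_classification}. The cofiber of $M^\pm[0] \to \Ac$ in $\Dsf^{[-1,0]}(\ZZ)$ has $\pi_0 = \pi_0(\Ac)/\phi(M^\pm)$ and $\Aut(0) = \Aut_\Ac(0) \times_{\pi_0(\Ac)} M^\pm$. These groups match the explicit description above: $\Aut(0)$ consists of pairs $(u, f\colon \phi(u) \simto 0)$. One can then check that the natural comparison functor is an equivalence. I would present the universal-property argument and mention this invariant check as a sanity test.

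\textbf{Part (2).} The objects of $\PO(\Ac \vec{/} M)$ are morphisms $(m, f)\colon 0 \to a$, where $f\colon \phi(m) \to a$ is an isomorphism in $\Ac$. A morphism $(m_1, f_1) \to (m_2, f_2)$ is an isomorphism $(u, g)\colon a_1 \to a_2$ such that $(u,g) \circ (m_1, f_1) = (m_2, f_2)$. This condition forces $m_2 = u + m_1$ and determines $g$ uniquely: $g$ must equal $f_2 \circ (\id \oplus f_1)^{-1}$, which makes sense because $\Ac$ is a groupoid.

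It follows that:
\begin{itemize}
    \item every Hom-set of $\PO(\Ac \vec{/} M)$ has at most one element, so it is essentially discrete;
    \item every object $(m,f)$ is isomorphic to $(m, \id_{\phi(m)})$;
    \item $(m_1, \id)$ and $(m_2, \id)$ are isomorphic if and only if $m_2 - m_1 \in M^\pm$, using that $M$ is integral so $u$ is unique.
\end{itemize}
Therefore $\pi_0 = M / M^\pm = M^\sharp$, and the monoid structure is the induced addition. The map $M \to \PO(\Ac \vec{/} M)$, $m \mapsto (m, \id)$, is a symmetric monoidal functor that identifies the target with the discrete monoid $M^\sharp$.

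The main obstacle is the 2-categorical bookkeeping in part (1). Part (2) is routine.
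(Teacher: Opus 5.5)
Your proof is correct. Part (2) follows the paper's argument: objects of $\PO(\Ac \vec{/} M)$ are pairs $(m, f\colon \phi(m) \simto a)$. Integrality of $M$ and invertibility of $f$ rule out nontrivial automorphisms; you prove the slightly stronger statement that every hom-set has at most one element. Then $m \mapsto (m, \id_{\phi(m)})$ identifies $\pi_0$ with $M/M^\pm$.

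Part (1) starts as the paper does: the isomorphisms are exactly the $(u,f)$ with $u \in M^\pm$, and they are generated by $\Ac$ together with the maps $(u,\id)$. The final identification, however, is genuinely different. The paper observes that $\iota_0(\Ac \vec{/} M)$ is obtained from $\Ac$ by adjoining isomorphisms $a \simto \phi(u) \oplus a$, and declares this to be the cofiber. You instead verify the defining universal property, namely
$\Hom_{\tCMon}(\iota_0(\Ac \vec{/} M), \Nc) \simeq \Hom_{\tCMon}(\Ac, \Nc) \times_{\Hom_{\tCMon}(M^\pm, \Nc)} 0$ naturally in $\Nc$, and conclude by Yoneda.

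Your route costs some coherence bookkeeping, but it ties the result to how the paper actually defines cofibers; the paper's generators-and-relations reading leaves that step implicit. In effect your argument is the groupoid-valued case of the proof of \zcref{prop:skew_quotient_UP}, because $\iota_0(\Ac \vec{/} M) = \Ac \vec{/} M^\pm$ and $\PO \Nc \simeq 0$ for every commutative 2-monoid $\Nc$. The paper's route is shorter and more intuitive.

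One small slip in your optional sanity check: $\Aut_\Ac(0) \times_{\pi_0(\Ac)} M^\pm$ is not meaningful as written. The group of pairs $(u, f\colon \phi(u) \simto 0)$ is $H^{-1}$ of the cone $M^\pm \oplus F^{-1} \to F^0$. This is an extension of $\ker(M^\pm \to \pi_0(\Ac))$ by $\Aut_\Ac(0)$.
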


\begin{proof}
    (1). By the definition of composition in $\Ac \vec{/} M$, the isomorphisms in $\Ac \vec{/} M$ are precisely those $(m, f)$ with $m \in M^\pm$.
    Up to the action of automorphism groups in $\Ac$, every such isomorphism can be written as $(m, \id_{\phi(m) \oplus a}): a \simto \phi(m) \oplus a$ for some $m \in M^\pm$ and $a \in \ob \Ac$.
    Thus $\iota_0 \Ac \vec{/} M$ is the abelian 2-group constructed from $\Ac$ by adjoining isomorphisms $a \simto \phi(m) \oplus a$ for all $a \in \ob \Ac$ and $m \in M^\pm$, which is precisely the definition of $\cofib(M^\pm \to \Ac)$.

    (2). We may view $\PO\, (\Ac \vec{/} M)$ as the groupoid of pairs $(m \in M, f: \phi(m) \simto a)$ (as $a \in \ob \Ac$ varies).
    The automorphisms of such an $(m, f)$ consist of pairs $(m', f')$ fitting into commutative triangles
    \[
    \begin{tikzcd}
        & 0 \ar[dl, "{(m, f)}"'] \ar[dr, "{(m, f)}"] & \\
        a \ar[rr, "{(m', f')}"] & & a.
    \end{tikzcd}
    \]
    In particular, $m + m' = m$ in $\PO \Cc$.
    Since $M$ is assumed integral, we must have $m' = 0$ in $\PO \Cc$, so without loss of generality we may assume $f' \in \Map_\Ac(a, a)$.
    Commutativity of the above diagram is then equivalent to the equation $f' \circ f = f$.
    Because $f$ is an isomorphism, we must have $f' = \id_a$.
    Thus $(m', f')$ is the identity morphism, i.e.\ $\Aut_{\PO\, (\Ac \vec{/} M)}((m, f))$ is trivial.
    Since $(m, f)$ is arbitrary, we see that $\PO\, (\Ac \vec{/} M)$ is essentially discrete.
    
    There is a natural homomorphism $\psi: M \to \PO\, (\Ac \vec{/} M)$ given by
    \[
        \psi(m) = \big(m, \id_{\phi(m)} \in \Map_{\Ac \vec{/} M}(\phi(m), \phi(m))\big).
    \]
    The homomorphism $\psi$ is (essentially) surjective: any pair $(m, f: \phi(m) \simto a)$ is isomorphic to a pair $(m, \id_{\phi(m)})$ via a commutative triangle
    \[
    \begin{tikzcd}
        & 0 \ar[dl, "{(m, \id_{\phi(m)})}"'] \ar[dr, "{(m, f)}"] & \\
        \phi(m) \ar[rr, "{(0, f)}"] & & a.
    \end{tikzcd}
    \]
    Furthermore, $\psi(m_1) \simeq \psi(m_2)$ if and only if there is a commutative triangle
    \[
    \begin{tikzcd}
        & 0 \ar[dl, "{(m_1, \id_{\phi(m_1)})}"'] \ar[dr, "{(m_2, \id_{\phi(m_2)})}"] & \\
        \phi(m_1) \ar[rr, "{(m_3, f)}"] & & \phi(m_2)
    \end{tikzcd}
    \]
    for some isomorphism $(m_3, f)$.
    Such an $f$ is unique given the existence of such an $m_3$, so we see that $\psi(m_1) \simeq \psi(m_3)$ if and only if there exists $m_3 \in M^\pm$ such that $m_3 + m_1 = m_2$.
    In other words, we have $\im \psi = M / M^\pm$.
    Because $\psi$ is surjective, we must have $\PO\, (\Ac \vec{/} M) = M / M^\pm = M^\sharp$.
\end{proof}

\begin{cor} \label{cor:skew_quotient_is_T_cat}
    With the notation of \zcref{cons:skew_quotient}, $\Ac \vec{/} M$ is a toric category.
\end{cor}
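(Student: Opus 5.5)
We verify the four conditions of \zcref{dfn:T_cat} one at a time for $\Cc = \Ac \vec{/} M$, relying mostly on \zcref{prop:skew_quotient_invariants}.

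First we check that $\Cc$ is grouplike and that its maximal groupoid has the required properties. By \zcref{prop:skew_quotient_invariants}(1), $\iota_0 \Cc = \cofib(M^\pm \to \Ac)$. The toric monoid $M$ has units $M^\pm$, a subgroup of the free group $M^\gp$, so $M^\pm$ is finitely generated free. Hence $M^\pm$ is a toric groupoid, and by \zcref{prop:2Term_vs_TGpd} the map $M^\pm \to \Ac$ corresponds to a morphism $M^\pm[0] \to A^\bullet$ in $\tTerm$. The cofiber in $\tAb$ should correspond, via \zcref{prop:deligne_classification}, to the truncation to $[-1,0]$ of the cone in $\Dsf(\ZZ)$. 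The cone of $M^\pm[0] \to A^\bullet$ already lives in degrees $[-1,0]$, so no truncation is needed. Its cohomology is:
\begin{itemize}
    \item $H^0 = \pi_0(\Ac)/\mathrm{im}(M^\pm)$, which is finitely generated;
    \item $H^{-1}$, which sits in an exact sequence $0 \to \Aut_\Ac(0) \to H^{-1} \to \ker(M^\pm \to \pi_0\Ac) \to 0$.
\end{itemize}
Both outer terms of this sequence are finitely generated free, the right one being a subgroup of a free group. So $H^{-1}$ is finitely generated free, and the cofiber again lies in $\tTerm$. In particular $\iota_0\Cc$ is strictly abelian, $\pi_0$ is finitely generated, $\Aut_\Cc(0)$ is finitely generated free, and since $\pi_0(\iota_0\Cc)$ is a group, $\Cc$ is grouplike.

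The main care point is justifying that cofibers in $\tAb^\st$ are computed by cones in $\Dsf^{[-1,0]}(\ZZ)$. \zcref{prop:2Ab_limits} handles only limits. I would argue this either by noting that $\Omega^\infty$ is an equivalence, so colimits in $\Dsf^{[-1,0]}(\ZZ)$ match; here one checks that the cone is in the subcategory, which makes it the colimit there, and that the cofiber of strictly abelian groupoids, as used in (1), is strict. Alternatively, one can directly identify the description in the proof of (1): $\Ac$ with isomorphisms $a \simto \phi(m)\oplus a$ adjoined. This can be modelled by the complex $A^{-1}\oplus M^\pm \to A^0$.

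Second, \zcref{prop:skew_quotient_invariants}(2) gives $\PO\Cc = M^\sharp = M/M^\pm$, discrete. It is toric by \zcref{prop:quotient_by_face_is_toric} applied to the face $M^\pm$. Combining the two steps, all four conditions hold, so $\Ac\vec{/}M$ is a toric category.
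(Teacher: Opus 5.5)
Your proposal is correct and is essentially the paper's proof. Both read off $\iota_0(\Ac\vec{/}M)=\cofib(M^\pm\to\Ac)$ and $\PO(\Ac\vec{/}M)=M^\sharp$ from \zcref{prop:skew_quotient_invariants}, observe that $\pi_0$ is a quotient of $\pi_0(\Ac)$ and that $\Aut_{\Ac\vec{/}M}(0)$ is an extension of a subgroup of $M^\pm$ by $\Aut_\Ac(0)$ (your orientation of that sequence is the correct one), and finish with \zcref{prop:quotient_by_face_is_toric}. Your cone computation only makes explicit the strictness of the cofiber, which the paper asserts without comment; you could also sidestep that ``care point'' entirely, since by construction the symmetry $\beta_{a,a}$ of $\Ac\vec{/}M$ is the image of $\beta^{\Ac}_{a,a}=\id$ under $\Ac\to\Ac\vec{/}M$.
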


\begin{proof}
    We need to check the conditions of \zcref{dfn:T_cat}.
    The abelian 2-group $\iota_0(\Ac \vec{/} M) = \cofib(M^\pm \to \Ac)$ is the cofiber of a homomorphism of strictly abelian 2-groups, hence is itself strictly abelian.
    We may write $\pi_0(\iota_0(\Ac \vec{/} M)) = \pi_0(\Ac) / \pi_0(M^\pm)$, and this group is finitely generated because $\pi_0(\Ac)$ is.
    By our construction, the group $\Aut_{\Ac \vec{/} M}(0)$ is an extension of the finitely generated free abelian group $\Aut_\Ac(0)$ by a subgroup of the finitely generated free abelian group $M^\pm$, hence is itself finitely generated free.
    Finally, the path 2-monoid $\PO\, (\Ac \vec{/} M) = M^\sharp$ is toric by \zcref{prop:quotient_by_face_is_toric}.
\end{proof}

\subsection{A universal property of skew quotients}

In \zcref{cons:skew_quotient} we have given an explicit construction of skew quotients $\Ac \vec{/} M$.
Categorically inclined readers may be wondering about the universal property of this construction and why we have chosen the name ``skew quotient.''
The following useful result should illuminate these matters.

\begin{prop} \label{prop:skew_quotient_UP}
    Fix notation as in \zcref{cons:skew_quotient}.
    For all small symmetric monoidal categories $\Cc$, there is a natural equivalence
    \[
        \iota_0 \Fun^\oplus(\Ac \vec{/} M, \Cc) = \Hom_{\tCMon}(\Ac, \iota_0 \Cc) \times_{\Hom_{\tCMon}(M, \iota_0 \Cc)} \Hom_{\tCMon}(M, \PO \Cc).
    \]
\end{prop}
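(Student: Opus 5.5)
We prove the equivalence by constructing a comparison functor in each direction and checking that they are mutually inverse up to natural isomorphism. The plan is to view $\Ac \vec{/} M$ as generated by two pieces of structure: the natural functor $\iota: \Ac \to \Ac \vec{/} M$ (identity on objects, $f \mapsto (0,f)$), and the ``generic sections'' $s_m := (m, \id_{\phi(m)}) : 0 \to \phi(m)$ for $m \in M$. Every morphism factors through these. Indeed, $(m, f): a_1 \to a_2$ equals $\iota(f) \circ (s_m \oplus \id_{a_1})$, once the unit isomorphism $0 \oplus a_1 \simeq a_1$ is suppressed as in the remark following \zcref{cons:skew_quotient}. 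This factorization is what drives the universal property.

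\textbf{Forward map.} Let $G: \Ac \vec{/} M \to \Cc$ be symmetric monoidal. It yields a homomorphism $G \circ \iota : \Ac \to \iota_0 \Cc$; this lands in $\iota_0\Cc$ because $\Ac$ is a groupoid. It also yields a homomorphism $M \to \PO\Cc$, namely $m \mapsto G(s_m) : 0 \simeq G(0) \to G(\phi(m))$. First we check that $m \mapsto s_m$ is a homomorphism of commutative 2-monoids $M \to \PO(\Ac \vec{/} M)$. This is the map $\psi$ from the proof of \zcref{prop:skew_quotient_invariants}. Its monoidality is witnessed by the composition rule, which gives $s_{m_1+m_2} = s_{m_1} \oplus s_{m_2}$ up to the structure isomorphisms of $\phi$. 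The two resulting composites $M \to \iota_0\Cc$ are $\cod \circ G(s_-)$ and $G\iota\phi$, and these agree on the nose. So we obtain an object of the fiber product. Natural isomorphisms of symmetric monoidal functors restrict compatibly, so the construction is a functor.

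\textbf{Inverse map.} Suppose given $F: \Ac \to \iota_0\Cc$, a homomorphism $\sigma: M \to \PO\Cc$, and an isomorphism $\alpha: \cod\circ\sigma \simeq F\circ\phi$ of homomorphisms $M \to \iota_0\Cc$. Define $\tilde G$ to agree with $F$ on objects. On morphisms, set
\[
\tilde G(m,f) = F(f)\circ \big(\alpha_m\sigma(m) \oplus \id_{F(a_1)}\big),
\]
with monoidality isomorphisms inserted. The key verifications are the following.
\begin{enumerate}
\item Functoriality reduces to the composition rule of $\Ac \vec{/} M$. The required compatibility $\sigma(m_1+m_2) = \sigma(m_1)\oplus\sigma(m_2)$ is exactly the monoidality of $\sigma$ together with that of $\alpha$.
\item The symmetric monoidal structure on $\tilde G$ is taken from that of $F$. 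Its naturality with respect to morphisms of the form $(m,f)$ requires the sections $\sigma(m)$ to be compatible with the braiding of $\Cc$. This holds because $\sigma$ is a symmetric monoidal functor.
\end{enumerate}

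\textbf{Main obstacle and conclusion.} The forward and inverse constructions are inverse up to isomorphism. In one direction this is immediate. In the other it follows from the factorization of $(m,f)$ recorded above, which shows that a symmetric monoidal functor out of $\Ac \vec{/} M$ is determined by its values on $\iota$ and on the $s_m$. The step I expect to be the main obstacle is the coherence bookkeeping. Specifically, one must check that the data of an object of the fiber product really carries exactly enough coherence for $\tilde G$ to be symmetric monoidal, no more and no less, and that morphisms on both sides match, including the $2$-cells of the fiber product over $\Hom_{\tCMon}(M,\iota_0\Cc)$. I would handle this by strictifying first: replace $\Ac$ and $\Cc$ by equivalent strict symmetric monoidal categories, and $\phi$ by a strict functor. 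The definitions in \zcref{cons:skew_quotient} then hold literally, and the verifications above become direct equations, with naturality in $\Cc$ evident.
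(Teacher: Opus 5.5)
Your proposal is correct and follows essentially the same route as the paper: the forward map restricts along $\Ac \to \Ac \vec{/} M$ and along $\psi\colon M \to \PO(\Ac \vec{/} M)$, $m \mapsto (m, \id_{\phi(m)})$, and the inverse is given by the same explicit formula $\tilde G(m,f) = F(f)\circ(\alpha_m\sigma(m)\oplus\id_{F(a_1)})$, with structure isomorphisms inserted. Your factorization $(m,f) = \iota(f)\circ(s_m \oplus \id_{a_1})$ spells out the round-trip check that the paper leaves to the reader. One small correction: in the naturality check for $\tilde G$, sliding $\sigma(m_2)$ past $\id_{F(a_1)}$ follows from naturality of the braiding of $\Cc$ together with $F$ being symmetric, not from symmetry of $\sigma$.
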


\begin{proof}
    Let $\Cc$ be a toric category.
    Restriction along the natural map $\Ac \to \iota_0 (\Ac \vec{/} \Mc)$ gives a map
    \[
        \iota_0 \Fun^\oplus(\Ac \vec{/} M, \Cc) \to \iota_0 \Fun^\oplus(\iota_0(\Ac \vec{/} M), \Cc) \to \Hom_{\tCMon}(\Ac, \iota_0 \Cc).
    \]
    Likewise, restriction along the natural map $M \to \PO\, (\Ac \vec{/} M)$ (constructed in \zcref{prop:skew_quotient_invariants}) gives a map 
    \[
        \iota_0 \Fun^\oplus(\Ac \vec{/} M, \Cc) \to \Hom_{\tCMon}(\PO\, (\Ac \vec{/} M), \PO \Cc) \to \Hom_{\tCMon}(M, \PO \Cc).
    \]
    The induced maps $\iota_0 \Fun^\oplus(\Ac \vec{/} M, \Cc) \to \Hom_{\tCMon}(M, \iota_0 \Cc)$ are naturally isomorphic since they both essentially send $F \in \iota_0 \Fun^\oplus(\Ac \vec{/} M, \Cc)$ to the homomorphism $m \mapsto F(\phi(m))$).
    Combining all of these maps gives a natural map
    \[
        \iota_0 \Fun^\oplus(\Ac \vec{/} M, \Cc) \to \Hom_{\tCMon}(\Ac, \iota_0 \Cc) \times_{\Hom_{\tCMon}(M, \iota_0 \Cc)} \Hom_{\tCMon}(M, \PO \Cc).
    \]

    To define an inverse to this functor, suppose that 
    \[
    (F_\Ac: \Ac \to \iota_0 \Cc, F_M: M \to \PO \Cc, \eta: F_\Ac \circ \phi \simto \cod \circ \PO \Cc)
    \]
    is a general object of $\Hom_{\tCMon}(\Ac, \iota_0 \Cc) \times_{\Hom_{\tCMon}(M, \iota_0 \Cc)} \Hom_{\tCMon}(M, \PO \Cc)$.
    We can define a corresponding symmetric monoidal functor $F: \Ac \vec{/} M \to \Cc$ as follows:
    \begin{itemize}
        \item On objects, we set $F(a) = F_\Ac(a)$.
        \item On morphisms, for $(m, f) \in \Map_{\Ac \vec{/} M}(a_1, a_2)$, we define $F(m, f)$ as the composite
        \begin{align*}
            F_\Ac(a_1) &\simto 0 \oplus F_\Ac(a_1) \\
            &\xrightarrow{F_M(m) \oplus \id_{F_\Ac(a_1)}} \cod F_M(m) \oplus F_\Ac(a_1) \\
            &\xrightarrow{\eta \oplus \id_{F_\Ac(a_1)}} F_\Ac(\phi(m)) \oplus F_\Ac(a_1) \\
            &\simto F_\Ac(\phi(m) \oplus a_1) \\
            &\xrightarrow{F_\Ac(f)} F_\Ac(a_2).
        \end{align*}
        \item The symmetric monoidal structure morphisms of $F$ are those of $F_\Ac$.
    \end{itemize}
    The dutiful reader can verify that this construction defines a functor 
    \[
        \Hom_{\tCMon}(\Ac, \iota_0 \Cc) \times_{\Hom_{\tCMon}(M, \iota_0 \Cc)} \Hom_{\tCMon}(M, \PO \Cc) \to \iota_0 \Fun^\oplus(\Ac \vec{/} M, \Cc)
    \]
    which is the inverse of the functor of the previous paragraph.
\end{proof}

\begin{ex} \label{ex:i0_corepresented}
    If we think of $\ZZ$ as the skew quotient $\ZZ \vec{/} 0$, then for all toric categories $\Cc$, we have
    \begin{align*}
        \iota_0 \Fun^\oplus(\ZZ \vec{/} 0, \Cc) &= \Hom_{\tAb}(\ZZ, \iota_0 \Cc) \times_{\Hom_{\tCMon}(0, \iota_0 \Cc)} \Hom_{\CMon}(0, \PO \Cc) \\
        &= \iota_0 \Cc \times_0 0 \\
        &= \iota_0 \Cc.
    \end{align*}
    Thus $\ZZ$ corepresents the 2-functor $\iota_0: \TCat \to \TGpd$.
\end{ex}

\begin{ex} \label{ex:P0_corepresented}
    Let $i: \NN \to \ZZ$ be the inclusion of the positive natural numbers.
    Then for all toric categories $\Cc$, we have
    \begin{align*}
        \iota_0 \Fun^\oplus(\ZZ \vec{/}_i \NN, \Cc) &= \Hom_{\tAb}(\ZZ, \iota_0 \Cc) \times_{\Hom_{\tCMon}(\NN, \iota_0 \Cc)} \Hom_{\CMon}(\NN, \PO \Cc) \\
        &= \Hom_{\tAb}(\ZZ, \iota_0 \Cc) \times_{\Hom_{\tAb}(\ZZ, \iota_0 \Cc)} \Hom_{\CMon}(\NN, \PO \Cc) \\
        &= \iota_0 \Cc \times_{\iota_0 \Cc} \PO \Cc \\
        &= \PO \Cc.
    \end{align*}
    Thus $\ZZ \vec{/}_i \NN$ corepresents the 2-functor $\PO: \TCat \to \CMon^{\sharp,\toric}$.
\end{ex}

\subsection{The structure theory of toric categories}

We would now like to show that every toric category arises as a skew quotient in a canonical way.
More precisely:

\begin{dfn}
    Let $\Arr^{\sharp,\toric\to\TGpd}(\tCMon)$ be the full subcategory of the 2-category of 1-morphisms and commutative squares in $\tCMon$ spanned by the morphisms $M \to \Ac$ where:
    \begin{itemize}
        \item $M$ is a discrete sharp toric monoid, and
        \item $\Ac$ is a toric groupoid.
    \end{itemize}
\end{dfn}

\begin{thm} \label{thm:TCat_structure_thm}
    The 2-functor
    \begin{align*}
        \TCat &\to \Arr^{\sharp,\toric\to\TGpd}(\tCMon) \\
        \Cc &\mapsto (\PO \Cc \to \iota_0 \Cc)
    \end{align*}
    is an equivalence, with inverse given by $(M \to \Ac) \mapsto \Ac \vec{/} M$.
    In particular, for $\Cc \in \TCat$, there is a natural isomorphism $\Cc \simeq \iota_0 \Cc \vec{/} \PO \Cc$.
\end{thm}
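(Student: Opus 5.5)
The proof has two halves. First I show that $\Cc \mapsto (\PO \Cc \to \iota_0 \Cc)$ lands in $\Arr^{\sharp,\toric\to\TGpd}(\tCMon)$ and that the composite $(M \to \Ac) \mapsto \Ac \vec{/} M \mapsto (\PO(\Ac\vec{/}M) \to \iota_0(\Ac\vec{/}M))$ is equivalent to the identity. Then I construct the counit $\iota_0\Cc \vec{/} \PO\Cc \to \Cc$ and show it is an equivalence for every toric category $\Cc$.

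Well-definedness is immediate. By \zcref{dfn:T_cat}, $\iota_0\Cc$ is a toric groupoid and $\PO\Cc$ is an essentially discrete toric monoid. It is sharp by \zcref{prop:path_monoid_sharp}. The map $\cod$ gives the arrow.

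For the first composite, let $M$ be sharp, so $M^\pm = 0$. Then \zcref{prop:skew_quotient_invariants} gives $\iota_0(\Ac\vec{/}M) = \cofib(0 \to \Ac) = \Ac$ and $\PO(\Ac\vec{/}M) = M^\sharp = M$. It remains to check that under these identifications $\cod$ corresponds to $\phi$. This holds because $\cod\psi(m) = \phi(m)$ on the nose, and the identification is natural in commutative squares.

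For the counit, I apply \zcref{prop:skew_quotient_UP} with target $\Cc$ to the tautological triple $(\id: \iota_0\Cc \to \iota_0\Cc,\ \id: \PO\Cc \to \PO\Cc,\ \eta = \id)$. This yields a symmetric monoidal functor $\varepsilon: \iota_0\Cc \vec{/} \PO\Cc \to \Cc$. It is the identity on objects, so it is essentially surjective. On morphisms it sends $(g: 0 \to c,\ f: c \oplus a_1 \simto a_2)$ to the composite
\[
a_1 \simeq 0 \oplus a_1 \xrightarrow{g \oplus \id} c \oplus a_1 \xrightarrow{f} a_2.
\]
The one point needing care is that the objects of the skew quotient are indexed by the discrete monoid $\PO\Cc$ rather than by all of $\Cc_{0/}$. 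I will choose a representative $g_p: 0 \to c_p$ for each class $p \in \pi_0(\PO\Cc)$. Since $\PO\Cc$ is essentially discrete, these choices are unique up to unique isomorphism, so the result is independent of them.

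The main obstacle is full faithfulness of $\varepsilon$. Because $\Cc$ is grouplike, tensoring with $-a_1$ gives a natural bijection $\Map_\Cc(a_1,a_2) \cong \Map_\Cc(0, a_2 \oplus (-a_1))$, and the same holds in the skew quotient. This reduces the problem to the case $a_1 = 0$. There I must show that $\coprod_p \Map_\Ac(c_p, a)/\!\sim$ maps bijectively to $\Map_\Cc(0,a)$, where $\Ac = \iota_0\Cc$.

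Surjectivity: any $h: 0 \to a$ is an object of $\Cc_{0/}$. It is therefore isomorphic in $\PO\Cc$ to some $g_p$ via an isomorphism $f: c_p \simto a$ with $f g_p = h$.

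Injectivity: suppose $(p, f)$ and $(p', f')$ have the same image, that is, $f g_p = f' g_{p'}$. Then $f'^{-1}f$ is an isomorphism $g_p \simto g_{p'}$ in $\PO\Cc$. Hence $p = p'$. Essential discreteness, namely trivial automorphism groups in $\PO\Cc$, then forces $f'^{-1}f = \id$, so $f = f'$. Since $M = \PO\Cc$ is sharp, the skew-quotient hom-set $\Map(0,a)$ is literally the set of such pairs with no further identification, so injectivity holds there.

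Finally, $\varepsilon$ must be compatible with composition and with the symmetric monoidal structure. This is guaranteed by construction through \zcref{prop:skew_quotient_UP}. Naturality of $\varepsilon$ and of the unit in $\Cc$ and in commutative squares is routine 2-categorical bookkeeping. Together these show the two functors are inverse equivalences, and in particular $\Cc \simeq \iota_0\Cc \vec{/} \PO\Cc$.
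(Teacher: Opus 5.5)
Your proposal is correct and follows essentially the same route as the paper. The paper builds the comparison functor via \zcref{prop:skew_quotient_UP}, gets essential surjectivity because it is the identity on objects, reduces full faithfulness to hom-sets out of $0$ by translation, and checks bijectivity by the same surjectivity and injectivity argument you give; the other composite is handled by \zcref{prop:skew_quotient_invariants} together with sharpness of $M$, as you do. The only difference is that the paper first proves the relative statement \zcref{prop:structure_thm_general} for an arbitrary suitable essentially surjective $F\colon \Ac \to \iota_0\Cc$, which needs \zcref{lem:relative_path_monoid_toric} for discreteness of $\Pop_{0,\Ac}\Cc$ and is reused in \zcref{cor:skew_quotient_of_discrete}, and then specializes to $F=\id$. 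You work with $F=\id$ directly, where discreteness and sharpness of $\PO\Cc$ come for free from the definition and \zcref{prop:path_monoid_sharp}.
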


The proof will appear after the proof of \zcref{prop:structure_thm_general}.

As part of \zcref{thm:TCat_structure_thm}, we see that, for every toric category $\Cc$, we can write $\Cc = \iota_0 \Cc \vec{/} \PO \Cc$.
This is the unique skew quotient presentation $\Cc = \Ac \vec{/} M$ with $M$ sharp.
Morally, one can think of this as something like a distinguished, functorial ``resolution'' of $\Cc$ by commutative 2-monoids.

\begin{ex} \label{ex:toric_poset_converse}
    Let $\Cc$ be a toric category which is also a poset.
    Then $A := \iota_0 \Cc$ is a discrete abelian group, and the morphisms of $\Cc$ define a partial order on $A$.
    The path monoid $\PO \Cc = A_{\geq 0}$ is the monoid of nonnegative elements of $A$.
    Thus \zcref{thm:TCat_structure_thm} implies $\Cc = A \vec{/} A_{\geq 0}$, giving a converse to \zcref{ex:toric_poset}.
\end{ex}

We will find it useful to prove \zcref{thm:TCat_structure_thm} in a somewhat greater level of generality: instead of just writing $\Cc = \iota_0 \Cc \vec{/} \PO \Cc$, we will write $\Cc = \Ac \vec{/} \Pop_{0,\Ac} \Cc$ whenever we are given a sufficiently nice homomorphism of toric groupoids $\Ac \to \iota_0 \Cc$ (see \zcref{prop:structure_thm_general}).
This gives us a more general source of ``resolutions'' of $\Cc$.
To work in this generality, we will need some preparatory lemmas:

\begin{lem} \label{lem:translation_lemma}
    Let $\Cc$ be a toric category, and let $f: c \to c'$ be any morphism in $\Cc$.
    Then there exists some $f' \in \PO \Cc$ such that $f = f' \oplus \id_{c'}$ (up to post-composition with an isomorphism). 
\end{lem}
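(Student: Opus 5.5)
The plan is to exploit that $\Cc$ is grouplike: translate $f$ to a morphism with source $0$ by tensoring with an inverse of $c$, then translate back. (I read the displayed identity as $f = f' \oplus \id_c$, i.e.\ translating along the source $c$; this is what the domains force.)

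\textbf{Step 1: definition of $f'$.} Since $\iota_0\Cc$ is an abelian 2-group, choose an object $-c$ and isomorphisms $\eta: 0 \simto c \oplus (-c)$ and $\varepsilon: 0 \simto (-c) \oplus c$. Define
\[
f' := (f \oplus \id_{-c}) \circ \eta : 0 \to c' \oplus (-c),
\]
which is an object of $\PO \Cc$.

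\textbf{Step 2: compare $f' \oplus \id_c$ with $f$.} Suppressing unitors and associators, we have
\[
f' \oplus \id_c = (f \oplus \id_{-c} \oplus \id_c) \circ (\eta \oplus \id_c),
\]
a morphism $c \to c' \oplus (-c) \oplus c$. Naturality of the unitor and associator in the first variable, together with functoriality of $\oplus$, gives
\[
(f \oplus \id_{(-c)\oplus c}) \circ (\id_c \oplus \varepsilon) = (\id_{c'} \oplus \varepsilon) \circ f.
\]
The two maps $\eta \oplus \id_c$ and $\id_c \oplus \varepsilon$ are both isomorphisms $c \simto c \oplus (-c) \oplus c$. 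So there is an automorphism $\alpha \in \Aut_\Cc(c)$ with $\eta \oplus \id_c = (\id_c \oplus \varepsilon) \circ \alpha$. Hence
\[
f' \oplus \id_c = (\id_{c'} \oplus \varepsilon) \circ f \circ \alpha.
\]
The outer factor $\id_{c'} \oplus \varepsilon$ is exactly the permitted post-composition by an isomorphism.

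\textbf{Step 3: removing $\alpha$ (the main obstacle).} The remaining issue is that $\alpha$ appears as a \emph{pre}-composition, not a post-composition. In a grouplike symmetric monoidal category, the map $\Aut(0) \to \Aut(c)$, $\beta \mapsto \beta \oplus \id_c$, is a bijection. Indeed, its inverse is $\gamma \mapsto \gamma \oplus \id_{-c}$ transported along $\eta$, and the round trips are the identity up to conjugation by structure isomorphisms; conjugation is harmless because $\Aut(0)$ is abelian (Eckmann--Hilton). So write $\alpha = \beta \oplus \id_c$ with $\beta \in \Aut(0)$. The interchange law then gives
\[
f \circ (\beta \oplus \id_c) = \beta \oplus f = (\beta \oplus \id_{c'}) \circ f,
\]
which converts $\alpha$ into post-composition by the isomorphism $\beta \oplus \id_{c'}$.

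Alternatively, one can absorb $\beta$ into $f'$ by replacing $f'$ with $f' \circ \beta^{-1}$. Either way we obtain $f \cong f'' \oplus \id_c$ up to post-composition with an isomorphism, for some $f'' \in \PO\Cc$.

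The only genuinely nonformal point is this identification $\Aut(c) = \Aut(0) \oplus \id_c$ and its compatibility with interchange. All other steps are coherence bookkeeping, which I would suppress as in the paper's conventions.
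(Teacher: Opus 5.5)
Your argument is correct, and you are right that the identity should read $f' \oplus \id_c$; the paper's own proof also translates by the source $c$. You rely on the same fact as the paper's one-line proof: since $\pi_0(\iota_0\Cc)$ is a group, $(-)\oplus c$ is an equivalence. The difference is that you unpack it by hand, and the ``main obstacle'' in your Step 3 is a consequence of doing so.

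The paper only uses that this equivalence is full. The composite
\[ \Map_\Cc(0, c'\oplus(-c)) \xrightarrow{(-)\oplus\id_c} \Map_\Cc(0\oplus c, c'\oplus(-c)\oplus c) \cong \Map_\Cc(c, c') \]
is surjective, where the last bijection uses the unitor and a fixed isomorphism $c'\oplus(-c)\oplus c \simto c'$. This gives $f'$ directly. Because one never commits to the specific preimage $(f\oplus\id_{-c})\circ\eta$, no discrepancy $\alpha$ between $\eta\oplus\id_c$ and $\id_c\oplus\varepsilon$ ever arises.

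Your Step 3 in effect reproves the fragment of full faithfulness needed to absorb $\alpha$, namely bijectivity of $\Aut(0)\to\Aut(c)$, $\beta\mapsto\beta\oplus\id_c$. You could instead have removed $\alpha$ at the start by taking $\varepsilon$ to be the unique morphism with $\id_c\oplus\varepsilon=\eta\oplus\id_c$ (modulo structure isomorphisms). That choice again uses that translation is fully faithful. Your route gives an explicit preimage, $(f\oplus\id_{-c})\circ\eta\circ\beta^{-1}$; the paper's gives brevity.

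One correction to your justification in Step 3. In the round trip $\Aut(c)\to\Aut(0)\to\Aut(c)$, the conjugation is by $\alpha\in\Aut(c)$, so commutativity of $\Aut(0)$ is not what makes it harmless. In fact nothing is needed. The round trip $\Aut(0)\to\Aut(c)\to\Aut(0)$ is exactly the identity, by naturality of $\beta\oplus\id_{(-)}$ applied to $\eta$. The other round trip is a conjugation, hence bijective. Together these force $\beta\mapsto\beta\oplus\id_c$ to be a bijection.
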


\begin{proof}
    Because $\pi_0(\iota_0 \Cc)$ is a group, the translation functor $(-) \oplus c$ is an equivalence, hence is full.
\end{proof}

\begin{lem} \label{lem:morphisms_in_T_cat_mono}
    Let $\Cc$ be a toric category.
    Then every morphism in $\Cc$ is both a monomorphism and an epimorphism.
\end{lem}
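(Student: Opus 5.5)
The plan is to reduce everything to morphisms out of $0$, using \zcref{lem:translation_lemma}, and then use that $\PO \Cc$ is a toric monoid, hence integral. We handle monomorphisms first; epimorphisms follow by a dual argument. Let $f: c \to c'$ and suppose $g_1, g_2: d \to c$ satisfy $f \circ g_1 = f \circ g_2$.

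\textbf{Step 1: reduce to the case $d = 0$.} Since $\pi_0(\iota_0\Cc)$ is a group, choose $d^{-1}$ with $d \oplus d^{-1} \simeq 0$. The functor $(-)\oplus d^{-1}$ is an equivalence of categories, as in the proof of \zcref{lem:translation_lemma}, and in particular it is faithful. So it suffices to show that $f \oplus \id_{d^{-1}}$ is a monomorphism when tested against the morphisms $g_i \oplus \id_{d^{-1}}$. Precomposing with the isomorphism $0 \simeq d\oplus d^{-1}$ turns these into morphisms out of $0$. We may therefore assume $d=0$, so that $g_1, g_2 \in \ob \PO\Cc$ have common codomain $c$.

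\textbf{Step 2: express $f$ via the path monoid.} By \zcref{lem:translation_lemma}, up to post-composing with an isomorphism (which does not affect cancellation), we can write $f = h \oplus \id_c$ for some $h: 0 \to e$ in $\PO\Cc$, with $c' \simeq e\oplus c$. Using the unit isomorphisms and the interchange law, $f\circ g_i$ corresponds to the object $h \oplus g_i$ of $\PO\Cc$. Concretely, $(h\oplus \id_c)\circ(\id_0 \oplus g_i) = h\oplus g_i$ as morphisms $0\simeq 0\oplus 0 \to e\oplus c$.

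\textbf{Step 3: cancel in $\PO\Cc$.} The equality $f\circ g_1 = f\circ g_2$ says that $h\oplus g_1 = h\oplus g_2$ on the nose in $\Map_\Cc(0, e\oplus c)$. This gives an isomorphism in $\PO\Cc$ (via the identity of $e\oplus c$), so $[h]+[g_1] = [h]+[g_2]$ in $\pi_0(\PO\Cc)$. Since $\PO\Cc$ is toric, it is integral, so $[g_1]=[g_2]$. Hence there is an automorphism $\alpha$ of $c$ with $\alpha\circ g_1 = g_2$.

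\textbf{Main obstacle: show $\alpha = \id_c$.} We must upgrade the equality of classes to an equality of morphisms. The key inputs are the essential discreteness of $\PO\Cc$ together with the identity $(\id_e\oplus\alpha)\circ(h\oplus g_1) = h\oplus g_2 = h\oplus g_1$. This exhibits $\id_e\oplus\alpha$ as an automorphism of the object $h\oplus g_1$ in $\PO\Cc$. Essential discreteness forces every such automorphism to be trivial, so $\id_e\oplus\alpha=\id_{e\oplus c}$. Since $e\oplus(-)$ is faithful, $\alpha=\id_c$, and therefore $g_1 = g_2$.

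\textbf{Epimorphisms.} Given $g_1\circ f = g_2\circ f$ with $g_i: c'\to d$, translate by $c'^{-1}$ so that $c'=0$ and $g_i \in \PO\Cc$. Then write $f = h\oplus \id$ via \zcref{lem:translation_lemma}, now applied in the form $f\oplus \id_{c^{-1}}$, which is a morphism with domain $0$ after reindexing. The same integrality and discreteness argument then shows $g_1 = g_2$. Alternatively, note that $\Cc\op$ is again a toric category, since $\PO(\Cc\op)$ is controlled by morphisms into $0$, which are identified with $\PO\Cc$ via translation; the epimorphism statement then follows from the monomorphism case.
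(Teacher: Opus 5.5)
Your argument is correct and is essentially the paper's proof. You translate via \zcref{lem:translation_lemma} so that everything lives in $\PO \Cc$, cancel using integrality of $\PO \Cc$, and then use its discreteness to upgrade the resulting isomorphism $g_1 \cong g_2$ to an equality; your explicit treatment of that last step (noting that $\id_e \oplus \alpha$ is an automorphism of the object $h \oplus g_1$, hence trivial, and then using faithfulness of $e \oplus (-)$) spells out what the paper compresses into ``$g_1' \cong g_2'$ uniquely \ldots\ and thus $g_1 = g_2$.''
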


\begin{proof}
    Let $f: c_1 \to c_2$ be any morphism in $\Cc$.
    We want to show that $f$ is a monomorphism showing that $f$ is an epimorphism is analogous.
    By \zcref{lem:translation_lemma} may assume without loss of generality that $f \in \PO \Cc$.
    
    Suppose $g_1, g_2: c \to c'$ are two morphisms such that $g_1 \circ f = g_2 \circ f$.
    Let $g_i' = g_i \oplus \id_{-c} \in \Map_\Cc(0, c' \oplus -c)$ for $i = 1, 2$.
    Then the equation $g_1 \circ f = g_2 \circ f$ (as morphisms in $\Cc$) translates into the isomorphism $g_1' \oplus f \cong g_2' \oplus f$ (as objects in $\PO \Cc$).
    Because $\PO \Cc$ is a toric monoid (and in particular discrete and integral), we must have $g_1' \cong g_2'$ uniquely as objects of $\PO \Cc$, and thus $g_1 = g_2$.
    Because $g_1$ and $g_2$ were arbitrary, $f$ must be a monomorphism.

    The proof that $f$ is an epimorphism is entirely analogous.
\end{proof}

\begin{lem} \label{lem:relative_path_monoid_toric}
    Let $\Cc$ be a toric category.
    Let $\Ac$ be a toric groupoid, and let $F: \Ac \to \iota_0 \Cc$ be a homomorphism of abelian 2-groups such that:
    \begin{itemize}
        \item The induced map $\Aut_\Ac(0) \to \Aut_\Cc(0)$ is injective, and
        \item The quotient $\Aut_\Cc(0) / \Aut_\Ac(0)$ is torsion-free.
    \end{itemize}
    Then $\Pop_{0,\Ac} \Cc$ is a discrete toric monoid.
\end{lem}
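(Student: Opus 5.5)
The plan is to identify $\Pop_{0,\Ac} \Cc$ with a 2-categorical fiber product, show that it is discrete, and then verify toricity using \zcref{prop:check_toric_via_units_and_sharp}.

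\textbf{Step 1 (fiber product).} Write $P := \PO \Cc$, a discrete sharp toric monoid. An object of the 2-fiber product $\Ac \times_{\iota_0 \Cc} P$ is a triple $(a, (f\colon 0 \to c), F(a) \simeq c)$. Such a triple is the same data, up to unique isomorphism, as a pair $(a, f\colon 0 \to F(a))$. Unwinding \zcref{dfn:relative_path_monoid} should therefore give a symmetric monoidal equivalence
\[
  \Pop_{0,\Ac}\Cc \simeq \Ac \times_{\iota_0\Cc} \PO\Cc,
\]
compatible with the projections to $\Ac$ and $\PO\Cc$.

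\textbf{Step 2 (discreteness).} An automorphism of $(a,f)$ is some $g \in \Aut_\Ac(a)$ with $F(g)\circ f = f$. By \zcref{lem:morphisms_in_T_cat_mono}, $f$ is an epimorphism, so $F(g) = \id_{F(a)}$. Translating by $-a$ identifies $\Aut_\Ac(a) \cong \Aut_\Ac(0)$ compatibly with $\Aut_\Cc(F(a)) \cong \Aut_\Cc(0)$. The injectivity hypothesis then forces $g = \id_a$, so $\Pop_{0,\Ac}\Cc$ is essentially discrete; call it $Q$.

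\textbf{Step 3 (units and sharpening).} Since $P$ is sharp, every unit of $Q$ maps to $0 \in P$. Hence
\[
  Q^\pm = \Ac \times_{\iota_0\Cc} 0 = \fib(\Ac \to \iota_0\Cc).
\]
I would compute this fiber via \zcref{prop:deligne_classification} and \zcref{lem:2term_finite_limits}. The long exact sequence
\[
  0 \to \Aut_\Ac(0) \to \Aut_\Cc(0) \to \pi_0 Q^\pm \to \pi_0\Ac \to \pi_0\iota_0\Cc
\]
shows that $H^{-1} = 0$, so the fiber is discrete. It also exhibits $Q^\pm$ as an extension of $\ker(\pi_0\Ac \to \pi_0\iota_0\Cc)$ by the free group $\Aut_\Cc(0)/\Aut_\Ac(0)$. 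The group $Q^\pm$ acts freely on $Q$, because $\Ac$ does so on itself and the fiber product is taken over a discrete cancellative $P$.

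For the sharpening, the comparison $Q/Q^\pm \to P$ should be injective, with image
\[
  Q^\sharp = P \times_{\pi_0\iota_0\Cc} \operatorname{im}(\pi_0\Ac).
\]
That is, $Q^\sharp$ consists of the classes $f\colon 0 \to c$ whose target $c$ lifts to $\Ac$. This is a finite limit of fs monoids, sharp because $P$ is, so it is sharp toric by \zcref{prop:toric_monoid_finite_limit} and \zcref{prop:sharp_toric_characterization}. Here I would need $\operatorname{im}(\pi_0\Ac)$, a finitely generated group, to be treated as an fs monoid; saturation of a subgroup inside a group is automatic.

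\textbf{Main obstacle.} I expect the hardest step to be showing that $Q^\pm$ is free. The extension in Step 3 only controls the $\Aut_\Cc(0)/\Aut_\Ac(0)$ part, while $\ker(\pi_0\Ac \to \pi_0\iota_0\Cc)$ could a priori have torsion. For example, $\Ac = \ZZ/2$ mapping to $\Cc = 0$ seems to satisfy both hypotheses yet gives $Q = \ZZ/2$. So either an implicit condition is needed (torsion-freeness of that kernel, or the intended applications supply it), or "toric" is meant in the fs sense here. I would first check this example against the definitions, then add the missing hypothesis or verify it in the cases where the lemma is used.

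With freeness in hand, \zcref{prop:check_toric_via_units_and_sharp} finishes the proof that $Q$ is a discrete toric monoid.
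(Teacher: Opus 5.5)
Your suspicion is justified. The statement is false as written, and your example is a genuine counterexample. The discrete group $\ZZ/2$ is a toric groupoid ($\pi_0$ finitely generated, trivial $\Aut(0)$), and $0$ is a toric category. Both hypotheses hold trivially for $F\colon \ZZ/2\to 0$, and $\Pop_{0,\Ac}\Cc\simeq\ZZ/2$ is fs with non-free group completion, hence not toric.

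The paper's proof breaks at exactly the point you isolated. It asserts that every unit of $\Pop_{0,\Ac}\Cc$ is isomorphic to a pair $(0,f)$ with $f\in\Aut_\Cc(0)$, and concludes $(\Pop_{0,\Ac}\Cc)^\pm=\Aut_\Cc(0)/\Aut_\Ac(0)$. A unit is really a pair $(a,f\colon 0\simto F(a))$, and $a\simeq 0$ is forced only when $\pi_0F$ is injective. Your description of the units as $\pi_0\fib(F)$, an extension of $\ker(\pi_0F)$ by $\Aut_\Cc(0)/\Aut_\Ac(0)$, is the correct one. In your example the paper's formula gives $0$, whereas the units are all of $\ZZ/2$.

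Your proposed repair, assuming $\ker(\pi_0F)$ is torsion-free, is the right one; the paper tacitly uses $\ker(\pi_0F)=0$. With it, your argument is otherwise the paper's:
\begin{itemize}
\item discreteness from the epimorphism property of \zcref{lem:morphisms_in_T_cat_mono} plus faithfulness of $F$;
\item freeness of the unit action, whose real input is that $\Aut_\Cc(c)$ acts freely on $\Map_\Cc(0,c)$, i.e.\ discreteness of $\PO\Cc$;
\item injectivity of $(\Pop_{0,\Ac}\Cc)^\sharp\to\PO\Cc$, which you only assert: an isomorphism $h\colon F(a_1)\simto F(a_2)$ with $hf_1=f_2$ translates into the unit $(a_2\oplus -a_1,\, h\oplus\id_{F(-a_1)})$ carrying $(a_1,f_1)$ to $(a_2,f_2)$;
\item finally \zcref{prop:check_toric_via_units_and_sharp}.
\end{itemize}

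Your handling of the sharpening is slightly cleaner than the paper's. The paper invokes \zcref{prop:toric_monoid_finite_limit} for $\pi_0(\iota_0\Cc)$ and $\im\pi_0F$, which may have torsion and so are not toric. In your route, $\PO\Cc\times_{\pi_0(\iota_0\Cc)}\im\pi_0F$ embeds in $\PO\Cc$, so it is sharp. It is fs by the closure of fs monoids under fiber products used inside the proof of \zcref{prop:toric_monoid_finite_limit}, hence toric by \zcref{prop:sharp_toric_characterization}.

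Nothing downstream is lost. The lemma enters only through \zcref{prop:structure_thm_general}, which needs the same extra hypothesis. In its two applications, $F=\id$ in \zcref{thm:TCat_structure_thm} and a splitting $\pi_0(\iota_0\Cc)\to\iota_0\Cc$ in \zcref{cor:skew_quotient_of_discrete}, the map $\pi_0F$ is an isomorphism.
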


\begin{proof}
    The functor $F$ is injective on $\Aut_\Ac(0)$ and every $\Aut_\Ac(a)$ is an $\Aut_\Ac(a)$-torsor, so $F$ is faithful.
    
    By the discussion following \zcref{dfn:relative_path_monoid}, we may write $\Pop_{0,\Ac} \Cc$ as the groupoid of pairs $(a \in \Ac, f \in \Map_\Cc(0, a))$.
    The automorphisms of any such $(a, f)$ are given by $g \in \Aut_\Ac(a)$ fitting into commutative triangles
    \[
        \begin{tikzcd}
            & 0 \ar[dl, "f", swap] \ar[dr, "f"] & \\
            F(a) \ar[rr, "F(g)"] & & F(a),
        \end{tikzcd}
    \]
    By \zcref{lem:morphisms_in_T_cat_mono}, the equation $F(g) \circ f = f = \id_{F(a)} \circ f$ implies that $F(g) = \id_{F(a)}$.
    Because $F$ is faithful, $g = \id_a$, so $\Aut_{\Pop_{0,\Ac} \Cc}(a, f)$ is trivial.
    The object $(a, f)$ was arbitrary, so $\Pop_{0,\Ac} \Cc$ is discrete.

    To see that $\Pop_{0,\Ac} \Cc$ is toric, first note that the units in $\Pop_{0,\Ac} \Cc$ may be written (up to isomorphism) as pairs $(0, f \in \Aut_\Cc(0))$.
    Two such pairs are isomorphic if and only if they differ by an element of $\Aut_\Ac(0)$, so $(\Pop_{0,\Ac} \Cc)^\pm = \Aut_\Cc(0) / \Aut_\Ac(0)$.
    
    Now recall that there is a natural homomorphism $\psi: \Pop_{0,\Ac} \Cc \to \PO \Cc$ given by $(a, f) \mapsto f$.
    We claim that $(\Pop_{0,\Ac} \Cc)^\sharp = \im \psi$, or equivalently $\psi$ identifies two objects of $\Pop_{0,\Ac} \Cc$ if and only if the two objects differ by a unit in $\Pop_{0,\Ac} \Cc$
    To see this claim, observe that $\psi(a_1, f_1) = \psi(a_2, f_2)$ if and only if there exists an isomorphism $h: F(a_1) \simto F(a_2)$ in $\Cc$ fitting into a commutative triangle
    \[
        \begin{tikzcd}
            & 0 \ar[dl, "f_1", swap] \ar[dr, "f_2"] & \\
            F(a_1) \ar[rr, "h"] & & F(a_2),
        \end{tikzcd}
    \]
    Given such an $h$, choose an additive inverse $-a_1$ for $a_1 \in \Ac$, and let $h' = h \oplus \id_{F(-a_1)} \in \Map_\Cc(0, F(a_2 \oplus -a_1))$.
    Then $(a_2, f_2) \cong (a_1, f_1) \oplus (a_2 \oplus -a_1, h')$ in $\Pop_{0,\Ac} \Cc$, and $(a_2 \oplus -a_1, h')$ is a unit in $\Pop_{0,\Ac} \Cc$.
    Conversely, by our above description of units in $\Pop_{0,\Ac} \Cc$ as pairs $(0, f)$, we see that $\psi$ identifies any two objects of $\Pop_{0,\Ac} \Cc$ which differ by a unit.
    It follows that $\im \psi = (\Pop_{0,\Ac} \Cc) / (\Pop_{0,\Ac} \Cc)^\pm = (\Pop_{0,\Ac} \Cc)^\sharp$ as claimed.

    We may now verify that the conditions of \zcref{prop:check_toric_via_units_and_sharp} hold for $\Pop_{0,\Ac} \Cc$:
    \begin{itemize}
        \item The group $(\Pop_{0,\Ac} \Cc)^\pm = \Aut_\Cc(0) / \Aut_\Ac(0)$ is finitely generated and free by hypothesis. 
        \item To see that $(\Pop_{0,\Ac} \Cc)^\pm$ acts freely on $\Pop_{0,\Ac} \Cc$, note by \zcref{lem:morphisms_in_T_cat_mono} that $\Aut_\Cc(0)$ acts freely on each hom-set $\Map_\Cc(0, a)$.
        If we quotient these hom-sets by the action of $\Aut_\Ac(0)$, we see that $(\Pop_{0,\Ac} \Cc)^\pm$ acts freely on each such quotient.
        Since $\Pop_{0,\Ac} \Cc$ can be viewed as a union of such quotients, $(\Pop_{0,\Ac} \Cc)^\pm$ acts freely on $\Pop_{0,\Ac} \Cc$.
        \item Note that $\im \psi$ is the submonoid of $\PO \Cc$ consisting of maps $0 \to F(a)$ with $a \in \Ac$.
        It follows that $\im \psi = \im \pi_0 F \times_{\pi_0(\iota_0 \Cc)} \PO \Cc$, so $\im \psi$ (and thus $(\Pop_{0,\Ac} \Cc)^\sharp$) is toric by \zcref{prop:toric_monoid_finite_limit}.
    \end{itemize}
    Thus $\Pop_{0,\Ac} \Cc$ is toric.
\end{proof}

We are now ready to construct the desired skew quotient presentations:

\begin{prop} \label{prop:structure_thm_general}
    Let $\Cc$ be a toric category, let $\Ac$ be a toric groupoid, and let $F: \Ac \to \iota_0 \Cc$ be a homomorphism such that:
    \begin{itemize}
        \item The induced map $\Aut_\Ac(0) \to \Aut_\Cc(0)$ is injective, 
        \item The quotient $\Aut_\Cc(0) / \Aut_\Ac(0)$ is torsion-free, and
        \item $F$ is essentially surjective.
    \end{itemize}
    Then $F$ induces a symmetric monoidal equivalence $F': \Ac \vec{/} \Pop_{0,\Ac} \Cc \simto \Cc$.
\end{prop}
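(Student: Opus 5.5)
We construct $F'$ from the universal property of skew quotients and then check directly that it is essentially surjective and fully faithful. By \zcref{lem:relative_path_monoid_toric}, the hypotheses make $M := \Pop_{0,\Ac} \Cc$ a discrete toric monoid. The natural homomorphism $\phi: M \to \Ac$, $(a, f) \mapsto a$, is therefore a legitimate input to \zcref{cons:skew_quotient}, and $\Ac \vec{/} M$ makes sense. We apply \zcref{prop:skew_quotient_UP} with target $\Cc$ to the triple $(F, \psi, \eta)$, where:
\begin{itemize}
    \item $F: \Ac \to \iota_0 \Cc$ is the given functor,
    \item $\psi: M \to \PO \Cc$ is the map $(a, f) \mapsto f$, and
    \item $\eta$ is the tautological identification $\cod \circ \psi = F \circ \phi$.
\end{itemize}
This yields a symmetric monoidal functor $F'$. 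Concretely, $F'$ agrees with $F$ on objects and sends a morphism $((a, f), g) \in \Map_{\Ac \vec{/} M}(a_1, a_2)$, with $g: a \oplus a_1 \simto a_2$ in $\Ac$, to the composite
\[
F(a_1) \simto 0 \oplus F(a_1) \xrightarrow{f \oplus \id} F(a) \oplus F(a_1) \simto F(a \oplus a_1) \xrightarrow{F(g)} F(a_2).
\]

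Essential surjectivity of $F'$ is immediate from that of $F$, so it remains to show that $F'$ is bijective on each hom-set $\Map_{\Ac \vec{/} M}(a_1, a_2) \to \Map_\Cc(F(a_1), F(a_2))$.

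For surjectivity on hom-sets, take $h: F(a_1) \to F(a_2)$. Pick an inverse $-a_1$ in $\Ac$ and set $a = a_2 \oplus (-a_1)$. Then $h \oplus \id_{F(-a_1)}$, composed with the structure isomorphisms, is a morphism $f: 0 \to F(a)$, so $(a, f) \in M$. Together with the canonical isomorphism $g: a \oplus a_1 \simto a_2$, the pair $((a, f), g)$ is a preimage of $h$. Checking that it really maps to $h$ is the same translation argument as in \zcref{lem:translation_lemma}.

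Injectivity on hom-sets is the main obstacle, because elements of $\Map_{\Ac \vec{/} M}(a_1, a_2)$ are only defined up to the identifications built into $M$. Suppose $((a, f), g)$ and $((a', f'), g')$ have the same image. Then $g' {}^{-1} \circ g: a \oplus a_1 \simto a' \oplus a_1$ translates, after cancelling $a_1$ (possible since $\Ac$ is grouplike), to an isomorphism $u: a \simto a'$ in $\Ac$. Equality of the images in $\Cc$ reads $F(g) \circ (f \oplus \id) = F(g') \circ (f' \oplus \id)$. Since every morphism in $\Cc$ is monic and epic by \zcref{lem:morphisms_in_T_cat_mono}, and translation by $F(a_1)$ is an equivalence, this forces $F(u) \circ f = f'$. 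By the explicit description of $\Pop_{0,\Ac} \Cc$, this says exactly that $u$ is an isomorphism $(a, f) \simto (a', f')$ in $M$. Because $M$ is discrete, $(a, f)$ and $(a', f')$ are the same element, and then $g$ and $g'$ coincide.

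The delicate point is bookkeeping: one must make sure $u$ is compatible with $g$ and $g'$ rather than just being some isomorphism $a \cong a'$. Here the faithfulness of $F$, established in the proof of \zcref{lem:relative_path_monoid_toric} from the injectivity hypothesis on $\Aut_\Ac(0)$, is what pins $u$ down. Once these two steps are done, $F'$ is a fully faithful, essentially surjective symmetric monoidal functor, hence an equivalence.
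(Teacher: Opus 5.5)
Your proposal is correct and is essentially the paper's proof. It uses the same construction of $F'$ via \zcref{prop:skew_quotient_UP} from $F$, the forgetful map $\Pop_{0,\Ac}\Cc \to \PO\Cc$ and the tautological identification, and the same injectivity argument, in which the isomorphism $(g')^{-1}\circ g$ shows that the two representatives are identified in $\Pop_{0,\Ac}\Cc$.

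The only difference is that the paper first uses \zcref{lem:translation_lemma} to reduce to hom-sets out of $0$. There surjectivity is immediate ($h$ is the image of $((a,h),\id_a)$) and no cancellation of $a_1$ is needed. In your general version, faithfulness of translation by $F(a_1)$ already gives $F(u)\circ f = f'$, so the appeal to \zcref{lem:morphisms_in_T_cat_mono} is unnecessary, and faithfulness of $F$ enters only through the discreteness of $\Pop_{0,\Ac}\Cc$.
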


\begin{proof}
    The monoid $\Pop_{0,\Ac} \Cc$ is toric by \zcref{lem:relative_path_monoid_toric}, so we are in the situation of \zcref{cons:skew_quotient} and can construct the skew quotient $\Ac \vec{/} \Pop_{0,\Ac} \Cc$.
    By \zcref{prop:skew_quotient_UP}, to construct a functor $F: \Ac \vec{/} \Pop_{0,\Ac} \Cc \to \Cc$, it suffices to construct an object of the groupoid
    \[
        \Hom_{\tAb}(\Ac, \iota_0 \Cc) \times_{\Hom_{\tCMon}(\Pop_{0,\Ac} \Cc, \iota_0 \Cc)} \Hom_{\CMon}(\Pop_{0,\Ac} \Cc, \PO \Cc),
    \]
    i.e.\ a triple $(G_1: \Ac \to \iota_0 \Cc, G_2: \Pop_{0,\Ac} \Cc \to \PO \Cc, \eta: G_1 \circ \cod \simto \cod \circ G_2)$.
    The triple in question is specified by:
    \begin{itemize}
        \item The homomorphism $F: \Ac \to \iota_0 \Cc$,
        \item The forgetful map $\Pop_{0,\Ac} \Cc \to \PO \Cc$, and
        \item The identity map from $F \circ \cod: \Pop_{0,\Ac} \Cc \to \iota_0 \Cc$ to itself.
    \end{itemize}
    The induced symmetric monoidal functor $F': \Ac \vec{/} \Pop_{0,\Ac} \Cc \to \Cc$ is essentially surjective, since $F'$ agrees with $F$ on objects.
    
    To see that $F'$ is fully faithful, let $a_1, a_2 \in \ob \Ac$ be arbitrary.
    We need to show that the natural map
    \[
        \Map_{\Ac \vec{/} \Pop_{0,\Ac} \Cc}(a_1, a_2) \to \Map_{\Cc}(F'(a_1), F'(a_2))
    \]
    is an isomorphism.
    Using \zcref{lem:translation_lemma}, we reduce to showing that the natural maps
    \[
        F'_{0,a}: \Map_{\Ac \vec{/} \Pop_{0,\Ac} \Cc}(0, a) \to \Map_{\Cc}(0, F'(a)) 
    \]
    are isomorphisms for all $a \in \ob \Ac$.
    
    By \zcref{cons:skew_quotient}, $\Map_{\Ac \vec{/} \Pop_{0,\Ac} \Cc}(0, a)$ consists of pairs $\big((a', f) \in \Pop_{0,\Ac} \Cc, g \in \Map_\Ac(a', a)\big)$ (where $(a', f)$ is considered up to the action of isomorphisms in $\Ac$).
    Equivalently, $\Map_{\Ac \vec{/} \Pop_{0,\Ac} \Cc}(0, a)$ consists of triples
    \[
        \big(a' \in \ob \Ac, f \in \Map_\Ac(0, F(a')), g \in \Map_\Ac(a', a)\big)
    \]
    again modulo suitable isomorphisms.
    The maps $F'_{0,a}$ send such a triple to $F(g) \circ f \in \Map_{\Cc}(0, F'(a))$.
    To see that $F'_{0,a}$ is injective, we note that if $F'_{0,a}(a_1', f_1, g_1) = F'_{0,a}(a_2', f_2, g_2)$, then $g_2\inv \circ g_1$ gives an isomorphism from $(a_1', f_1, g_1)$ to $(a_2', f_2, g_2)$, so the two triples must lie in the same equivalence class.
    To see that $F'_{0,a}$ is surjective, we note that any $h \in \Map_{\Cc}(0, F'(a))$ is the image of $(a, h, \id_a)$.
    Thus each $F'_{0,a}$ is an isomorphism, so $F'$ is fully faithful and therefore an equivalence.
\end{proof}

\begin{proof}[Proof of \zcref{thm:TCat_structure_thm}]
    We need to prove that:
    \begin{itemize}
        \item For all toric categories $\Cc$, there is a natural symmetric monoidal equivalence $\iota_0 \Cc \vec{/} \PO \Cc \simto \Cc$.
        This follows from \zcref{prop:structure_thm_general} applied to the identity homomorphism $\iota_0 \Cc \to \iota_0 \Cc$, since $\PO \Cc = \Pop_{0,\iota_0 \Cc} \Cc$.
        \item Whenever $(M \to \Ac)$ is an object of $\Arr^{\sharp,\toric\to\TGpd}(\tCMon)$, there is a natural equivalence $(M \to \Ac) \simto \big(\PO \, (\Ac \vec{/} M) \to \iota_0 (\Ac \vec{/} M)\big)$.
        
        Using the universal property of skew quotients (\zcref{prop:skew_quotient_UP}), we see that the identity functor $\id_{\Ac \vec{/} M}$ induces a commutative diagram
        \[
        \begin{tikzcd}
            M \rar \dar & \Ac \dar \\
            \PO\, (\Ac \vec{/} M) \rar & \iota_0 (\Ac \vec{/} M).
        \end{tikzcd}
        \]
        By \zcref{prop:skew_quotient_invariants} and sharpness of $M$, we have natural equivalences $\PO\, (\Ac \vec{/} M) = M$ and $\iota_0 (\Ac \vec{/} M) = \cofib(0 \to \Ac) = \Ac$, giving the claim. \qedhere
    \end{itemize}
\end{proof}

We can also use \zcref{prop:structure_thm_general} to present arbitrary toric categories as skew quotients of homomorphisms of \emph{discrete} commutative monoids.
This is not functorial (due to the non-uniqueness issues mentioned in \zcref{prop:strict_trivialization}), but it is still  quite useful.

\begin{cor} \label{cor:skew_quotient_of_discrete}
    Let $\Cc$ be a toric category.
    Then $\Cc$ is (non-canonically) equivalent to a skew quotient $A \vec{/} M$ where $A$ is a discrete abelian group and $M$ is a toric monoid.
\end{cor}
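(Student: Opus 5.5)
The plan is to apply \zcref{prop:structure_thm_general} with $\Ac$ a discrete abelian group. Set $A := \pi_0(\iota_0 \Cc)$, a finitely generated abelian group. To apply the proposition we need a homomorphism $F: A \to \iota_0 \Cc$, viewing $A$ as a discrete commutative 2-monoid as in \zcref{ex:CMon_to_2CMon}, such that:
\begin{itemize}
    \item $\Aut_A(0) = 0 \to \Aut_\Cc(0)$ is injective, which is automatic;
    \item $\Aut_\Cc(0)/0 = \Aut_\Cc(0)$ is torsion-free, which holds because it is finitely generated free by \zcref{dfn:T_cat};
    \item $F$ is essentially surjective.
\end{itemize}
Granting such an $F$, \zcref{prop:structure_thm_general} gives an equivalence $A \vec{/} \Pop_{0,A}\Cc \simto \Cc$. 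Moreover, \zcref{lem:relative_path_monoid_toric} shows that $M := \Pop_{0,A} \Cc$ is a discrete toric monoid, though not necessarily sharp. This is exactly the claimed presentation.

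It remains to construct $F$. Since $\iota_0 \Cc$ is a strictly abelian 2-group, \zcref{prop:strict_trivialization} gives a symmetric monoidal equivalence
\[
    \iota_0 \Cc \simeq \pi_0(\iota_0\Cc) \times \Bsf \Aut_\Cc(0) = A \times \Bsf \Aut_\Cc(0).
\]
Composing the inclusion $A \to A \times \Bsf\Aut_\Cc(0)$, $a \mapsto (a, *)$, with the inverse of this equivalence gives a symmetric monoidal functor $F: A \to \iota_0\Cc$. It induces the identity on $\pi_0$, so it is essentially surjective. Equivalently, in the language of \zcref{prop:deligne_classification}, $F$ is the inclusion of the summand $H^0[0]$ into the split complex $H^0[0] \oplus H^{-1}[1]$.

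This step is where the non-canonicity enters: the splitting is only well-defined up to the $\Ext^1_\ZZ(A, \Aut_\Cc(0))$-torsor ambiguity of \zcref{prop:strict_trivialization}. That is why the statement asserts only a non-canonical equivalence.

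I expect no serious obstacle here. The only point to verify is that the discrete group $A$, possibly with torsion, counts as a toric groupoid, so that the hypotheses of \zcref{prop:structure_thm_general} and \zcref{lem:relative_path_monoid_toric} apply. It does: it is strictly abelian because its braiding is trivial, $\pi_0 = A$ is finitely generated, and $\Aut(0) = 0$ is free. With that checked, the argument is a direct assembly of the earlier results.
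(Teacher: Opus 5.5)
Your proposal is correct and follows the paper's proof: set $A = \pi_0(\iota_0\Cc)$, use \zcref{prop:strict_trivialization} to obtain an essentially surjective $F\colon A \to \iota_0\Cc$, check the hypotheses of \zcref{prop:structure_thm_general} using $\Aut_A(0)=0$ and the freeness of $\Aut_\Cc(0)$, and conclude from \zcref{lem:relative_path_monoid_toric} that $M=\Pop_{0,A}\Cc$ is discrete and toric. You also spell out two points the paper leaves implicit: that a discrete $A$, even with torsion, is a toric groupoid, and that the choice of splitting is what makes the equivalence non-canonical.
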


\begin{proof}
    Let $A = \pi_0(\iota_0 \Cc)$.
    By \zcref{prop:strict_trivialization}, we may choose an essentially surjective functor $F: A \to \iota_0 \Cc$.
    The hypotheses of \zcref{prop:structure_thm_general} hold for this $F$ because $\Aut_{A}(0) = 0$ and $\Aut_\Cc(0)$ is torsionfree.
    Thus we get an equivalence $\Cc \simeq A \vec{/} \Pop_{0,A} \Cc$.
    By \zcref{lem:relative_path_monoid_toric}, $M = \Pop_{0,A} \Cc$ is discrete and toric, so we win.
\end{proof}

\subsection{Localizations of toric categories}

We'd now like to develop a basic theory of localization for toric categories generalizing the theory of localization of toric monoids.
This will be useful for understanding open immersions of toric stacks later.

\begin{dfn}
    Let $(\Cc, \oplus)$ be a small symmetric monoidal category, and let $S$ be a collection of morphisms in $\Cc$.
    The \emph{symmetric monoidal localization} of $\Cc$ along $S$ is the universal symmetric monoidal category $\Cc[S\inv]$ equipped with a symmetric monoidal functor $L: \Cc \to \Cc[S\inv]$ sending all morphisms in $S$ to isomorphisms.
\end{dfn}

\begin{notn}
    Since we will not need any other notion of localization of categories, we will often just refer to $\Cc[S\inv]$ as the \emph{localization} of $\Cc$ along $S$.
\end{notn}

Here are some key results controlling localizations of toric categories.

\begin{prop} \label{prop:localization_properties}
    Let $\Cc$ be a toric category.
    Then:
    \begin{enumerate}
        \item Each localization of $\Cc$ can be written as the localization along a face of $\PO \Cc$.
        \item If $S \subset \PO \Cc$ is a face, then
        \[
            \Cc[S\inv] = \cofib(S^\gp \to \iota_0 \Cc) \vec{/} \big(\PO \Cc / S\big).
        \]
        \item If $\Cc = \Ac \vec{/} M$ as in \zcref{cons:skew_quotient} and $S \subset M$ is a face, then $\Cc[S\inv] = \Ac \vec{/} M[-S]$.
        \item Every localization of $\Cc$ is a toric category.
    \end{enumerate}
\end{prop}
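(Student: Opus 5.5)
I would prove (3) first, since it is a direct computation via the universal property, and then deduce (1), (2), (4) from it using the structure theorem.

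\emph{Step 1: part (3).} Let $\Cc = \Ac \vec{/} M$ and let $S \subset M$ be a face. A symmetric monoidal functor $\Ac \vec{/} M \to \Dc$ inverts the morphisms $(s, \id_{\phi(s)})\colon 0 \to \phi(s)$ for $s \in S$ exactly when the induced $M \to \PO \Dc$ sends $S$ to invertible elements. By \zcref{prop:path_monoid_sharp}, $\PO \Dc$ is sharp, so such a path need not be the identity. The correct reformulation is that the corresponding morphism $0 \to \cod(\cdot)$ in $\Dc$ is an isomorphism, i.e.\ that $M \to \Pop_{0,?}\Dc$ factors through the groupoid of pairs $(d, 0 \simto d)$.

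I would therefore compare universal properties directly. Symmetric monoidal functors $\Ac \vec{/} M[-S] \to \Dc$ are, by \zcref{prop:skew_quotient_UP}, pairs $(F_\Ac, F_{M[-S]})$ compatible over $\iota_0\Dc$. By the universal property of $M[-S]$ in $\CMon$ (applied after passing to $\pi_0$, all relevant path 2-monoids being discrete), these amount to pairs $(F_\Ac, F_M)$ with $F_M(S)$ landing in $\oplus$-invertible arrows $0 \to d$. Since every morphism of the form $f \oplus \id$ is then invertible, these are exactly the functors out of $\Cc$ inverting $S$.

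The subtlety is that $\PO\Dc$ need not be discrete for a general symmetric monoidal $\Dc$. To handle this, I would instead construct the functor $L\colon \Ac \vec{/} M \to \Ac \vec{/} M[-S]$ explicitly and verify the universal property by hand. A functor $G$ inverting $S$ extends uniquely to $(m - s, f)$ by setting $G(m - s, f) = G(m, f) \circ G(s, \id)^{-1}$, suitably translated. Well-definedness uses the equivalence relation defining $M[-S]$ and \zcref{lem:translation_lemma}. This verification is the main obstacle: the work is bookkeeping of coherence isomorphisms and checking that the extension is unique up to unique 2-isomorphism.

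\emph{Step 2: parts (1), (2), (4).} Write $\Cc = \iota_0\Cc \vec{/} \PO\Cc$ by \zcref{thm:TCat_structure_thm}.

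For (1), let $T$ be any set of morphisms. By \zcref{lem:translation_lemma}, inverting $f$ is equivalent to inverting a translate $f' \in \PO\Cc$. An element of $\PO\Cc$ becomes invertible together with all of its summands, so we may replace $T$ by the face of $\PO\Cc$ it generates, namely the smallest face containing the images. Inverting $m_1 + m_2$ inverts both $m_1$ and $m_2$, because morphisms compose and every morphism is mono and epi.

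For (4) and (2), apply (3) to get $\Cc[S^{-1}] = \iota_0\Cc \vec{/} (\PO\Cc)[-S]$. This is a toric category by \zcref{cor:skew_quotient_is_T_cat}, since $(\PO\Cc)[-S]$ is toric by the cited Ogus result. Then \zcref{prop:skew_quotient_invariants} gives $\iota_0 = \cofib(((\PO\Cc)[-S])^\pm \to \iota_0\Cc) = \cofib(S^\gp \to \iota_0\Cc)$ and $\PO = ((\PO\Cc)[-S])^\sharp = \PO\Cc/S$, using \zcref{prop:localization_and_quotient_by_face}. Applying \zcref{thm:TCat_structure_thm} once more yields the formula in (2).
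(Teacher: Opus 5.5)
Your outline is correct, but it is organized differently from the paper. The paper proves (2) and (3) independently, each by a short computation with \zcref{prop:skew_quotient_UP}. It writes $\iota_0\Fun^\oplus(-,\Dc)$ as a fiber product of hom-groupoids. It then uses \zcref{prop:path_monoid_sharp} to replace $\Hom_{\tCMon}(M[-S],\PO\Dc)$ by $\Hom_{\tCMon}(M/S,\PO\Dc)$, i.e.\ by homomorphisms $M\to\PO\Dc$ killing $S$.

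This is the universal-property route you began and then abandoned because $\PO\Dc$ need not be discrete. Discreteness is never used. Sharpness of $\pi_0(\PO\Dc)$ together with $\Aut_{\PO\Dc}(0)=1$ makes the subgroupoid of $\oplus$-invertible objects of $\PO\Dc$ contractible. Hence homomorphisms out of $M[-S]$, or out of $M$ inverting $S$, factor essentially uniquely through $M[-S]^\sharp=M/S$.

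Your remark that sharpness means ``such a path need not be the identity'' has this backwards. An invertible class in $\pi_0(\PO\Dc)$ must be the class of $\id_0$, which is exactly the class of the isomorphisms $0\simto d$.

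Your calculus-of-fractions verification of (3) is a legitimate substitute, but it replaces a few lines by coherence bookkeeping. Conversely, your derivation of (2) from (3) via \zcref{thm:TCat_structure_thm}, \zcref{prop:skew_quotient_invariants} and \zcref{prop:localization_and_quotient_by_face} is correct. It is more economical than the paper's separate computation for (2).

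One step in (1) is misjustified: that inverting $m_1+m_2$ inverts $m_1$ and $m_2$ ``because every morphism is mono and epi.'' \zcref{lem:morphisms_in_T_cat_mono} concerns morphisms of $\Cc$, and functors do not preserve monomorphisms. What must be shown is that an arbitrary symmetric monoidal $F\colon\Cc\to\Dc$ inverting $m_1+m_2$ also inverts $m_1$. You also cannot argue inside $\Cc[T\inv]$ as though it were already toric, since (4) for general $T$ rests on (1).

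The claim itself is true and easy to repair. $F(m)$ is an isomorphism if and only if its class in $\pi_0(\PO\Dc)$ is $0$. So $[F m_1]+[F m_2]=0$ makes $[F m_1]$ a unit, hence $0$ by \zcref{prop:path_monoid_sharp}. The same argument shows that inverting $f$ inverts everything in the face generated by $f$.

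Alternatively, $F(m_1+m_2)$ factors both as $(F m_1\oplus\id)\circ F m_2$ and as $(\id\oplus F m_2)\circ F m_1$. The second factorization makes $F m_1$ split mono. The first makes $F m_1\oplus\id$ split epi, hence $F m_1$ split epi, since $-\oplus F(c_2)$ is an equivalence. With this repair the rest of your argument goes through.
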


\begin{proof}
    (1). Suppose $f: c \to c'$ is a morphism in $\Cc$.
    By \zcref{lem:translation_lemma}, we may assume without loss of generality that $f \in \PO \Cc$.

    Now we claim that localizing along an element $f \in \PO \Cc$ is the same as localizing along the whole face $\angles{f} \subset \PO \Cc$.
    Recall that 
    \[
        \angles{f} = \bset{f_1 \in \PO \Cc}{\textrm{ there exist $f_2 \in \PO \Cc$ and $n \geq 0$ such that } f_1 + f_2 = n \cdot f}
    \]
    where we use additive notation for $\PO \Cc$.
    Suppose $f_1 + f_2 = n \cdot f$ in $\PO \Cc$.
    Inverting $f$ allows us to write $f_1 + f_2 - n \cdot f = 0$, so that $f_2 - n \cdot f$ gives an inverse to $f_1$ in $\PO \Cc$.
    A suitable translate of this morphism $f_2 - n \cdot f$ gives an inverse to $f_1$ when viewed as a morphism in $\PO \Cc$.
    Thus inverting $f$ automatically also inverts any $f_1 \in \angles{f}$, giving the claim.

    So far we have seen that localizing along any individual morphism in $\Cc$ is the same as localizing along a face of $\Cc$.
    If $S$ is any set of morphisms of $\Cc$, then it follows that localizing along $S$ is equivalent to localizing along all faces of $\PO \Cc$ corresponding to morphisms in $S$.
    This is equivalent to localizing along the single face of $\PO \Cc$ generated (as a face) by translates of morphisms in $S$.

    (2). We use the universal property of skew quotients (\zcref{prop:skew_quotient_UP}).
    Indeed, for any small symmetric monoidal category $(\Cc', \oplus)$, let
    \begin{align*}
        \Fc_{\Cc'} &:= \iota_0 \Fun^\oplus\big(\cofib(S^\gp \to \iota_0 \Cc) \vec{/} (\PO \Cc / S), \Cc'\big) \\
        &= \Hom_{\tCMon}(\cofib(S^\gp \to \iota_0 \Cc), \iota_0 \Cc') \times_{\Hom_{\tCMon}(\PO \Cc / S, \iota_0 \Cc')} \Hom_{\tCMon}(\PO \Cc / S, \PO \Cc').
    \end{align*}
    Each term in the expression for $\Fc_{\Cc'}$ can be simplified:
    \begin{align*}
    \Hom_{\tCMon}(\cofib(S^\gp \to \iota_0 \Cc), \iota_0 \Cc') &= \fib(\Hom_{\tCMon}(\iota_0 \Cc, \iota_0 \Cc') \to \Hom_{\tCMon}(S^\gp, \iota_0 \Cc')), \\
    \Hom_{\tCMon}(\PO \Cc / S, \iota_0 \Cc') &= \fib(\Hom_{\tCMon}(\PO \Cc, \iota_0 \Cc') \to \Hom_{\tCMon}(S, \iota_0 \Cc')), \textrm{ and} \\
    \Hom_{\tCMon}(\PO \Cc / S, \PO \Cc') &= \fib(\Hom_{\tCMon}(\PO \Cc, \PO \Cc') \to \Hom_{\tCMon}(S, \PO \Cc')).
    \end{align*}
    Because fibers and fiber products commute, $\Fc_{\Cc'}$ can be computed as the fiber of the natural map from
    \[
    \iota_0 \Fun^\oplus\big(\Cc, \Cc'\big) = \iota_0 \Fun^\oplus\big(\iota_0 \Cc \vec{/} \PO \Cc, \Cc'\big) = \Hom_{\tCMon}(\iota_0 \Cc, \iota_0 \Cc') \times_{\Hom_{\tCMon}(\PO \Cc, \iota_0 \Cc')} \Hom_{\tCMon}(\PO \Cc, \PO \Cc')
    \]
    (identified using \zcref{thm:TCat_structure_thm} and \zcref{prop:skew_quotient_UP}) to 
    \[
    \Hom_{\tCMon}(S, \PO \Cc') = \Hom_{\tCMon}(S^\gp, \iota_0 \Cc') \times_{\Hom_{\tCMon}(S, \iota_0 \Cc')} \Hom_{\tCMon}(S, \PO \Cc').
    \]
    Because $\PO \Cc$ is sharp, the natural morphism $0 \to \Hom_{\tCMon}(S, \PO \Cc')$ is fully faithful by \zcref{prop:path_monoid_sharp}, and thus the natural morphism $\Fc_{\Cc'} \to \iota_0 \Fun^\oplus\big(\Cc, \Cc'\big)$ is also fully faithful.
    Thus we may view $\Fc_{\Cc'}$ is the full subcategory of $\iota_0 \Fun^\oplus\big(\Cc, \Cc'\big)$ consisting of symmetric monoidal functors $F$ such that $\PO F(S) = 0$.
    This last condition is equivalent to saying that $F$ sends all morphisms in $S$ to invertible morphisms in $\Cc'$, so 
    \[
        \iota_0 \Fun^\oplus\big(\cofib(S^\gp \to \iota_0 \Cc) \vec{/} (\PO \Cc / S), \Cc'\big) = \Fc_{\Cc'} = \iota_0 \Fun^\oplus\big(\Cc[S\inv], \Cc'\big)
    \]
    and we see that the needed universal property is satisfied.

    (3). We again use the universal property of skew quotients (\zcref{prop:skew_quotient_UP}) to write (for any $\Cc'$)
    \begin{align*}
        \iota_0 \Fun^\oplus(\Ac \vec{/} M[-S], \Cc') &= \Hom_{\tCMon}(\Ac, \iota_0 \Cc') \times_{\Hom_{\tCMon}(M[-S], \iota_0 \Cc')} \Hom_{\tCMon}(M[-S], \PO \Cc') \\
        &= \Hom_{\tCMon}(\Ac, \iota_0 \Cc') \times_{\Hom_{\tCMon}(M[-S], \iota_0 \Cc')} \Hom_{\tCMon}(M[-S]^\sharp, \PO \Cc') \\
        &= \Hom_{\tCMon}(\Ac, \iota_0 \Cc') \times_{\Hom_{\tCMon}(M[-S], \iota_0 \Cc')} \Hom_{\tCMon}(M / S, \PO \Cc') \textrm{ by \zcref{prop:localization_and_quotient_by_face}} \\
        &\subset \Hom_{\tCMon}(\Ac, \iota_0 \Cc') \times_{\Hom_{\tCMon}(M, \iota_0 \Cc')} \Hom_{\tCMon}(M, \PO \Cc') \\
        &= \iota_0 \Fun^\oplus(\Ac \vec{/} M, \Cc')
    \end{align*}
    where the inclusion identifies $\iota_0 \Fun^\oplus(\Ac \vec{/} M[-S], \Cc')$ with the collection of symmetric monoidal $F: \Ac \vec{/} M \to \Cc'$ which invert $S$, precisely as needed.

    (4). This follows immediately from (2) (or (3)) and \zcref{cons:skew_quotient}.
\end{proof}

The following special case of localization will be important for us:

\begin{dfn}
    Let $\Cc$ be a toric category.
    The \emph{groupoid completion of $\Cc$}, written $\Cc^\gpd$, is the localization of $\Cc$ along all morphisms in $\Cc$.
\end{dfn}

\begin{prop} \label{prop:Cgpd_calculation}
    Let $\Cc$ be a toric category, and write $\Cc = \Ac \vec{/} M$ for some $\phi: M \to \Ac$ as in \zcref{cons:skew_quotient}.
    Then 
    \[
        \Cc^\gpd = \cofib(M^\gp \to \Ac) = \Ac \vec{/} M^\gp.
    \]
\end{prop}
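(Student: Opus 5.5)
The plan is to reduce everything to \zcref{prop:localization_properties}. By definition $\Cc^\gpd$ is the localization of $\Cc$ along all morphisms. By part (1) of that proposition, localizing along a set of morphisms is the same as localizing along the face of $\PO \Cc$ generated by their translates into $\PO \Cc$. Every element of $\PO \Cc$ is itself a morphism of $\Cc$, so this generated face is all of $\PO \Cc$. Correspondingly, in the presentation $\Cc = \Ac \vec{/} M$, the relevant face of $M$ is $M$ itself. Indeed, a functor out of $\Ac \vec{/} M$ inverts every morphism if and only if it inverts the morphisms $(m, \id_{\phi(m)}) : 0 \to \phi(m)$ for all $m \in M$. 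This holds because every morphism $(m, f)$ factors as a translate of such a morphism followed by an isomorphism coming from $\Ac$.

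Next apply part (3) of \zcref{prop:localization_properties} with $S = M$. This gives $\Cc^\gpd = \Ac \vec{/} M[-M] = \Ac \vec{/} M^\gp$, since $M[-M] = M^\gp$. Part (3) was stated for faces of a toric monoid, and $M$ is a face of itself; moreover $M^\gp$ is toric, being a finitely generated free abelian group. So this is the second claimed equality.

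For the first equality, identify $\Ac \vec{/} M^\gp$ using \zcref{prop:skew_quotient_invariants}. Every morphism $(m, f)$ of $\Ac \vec{/} M^\gp$ has $m \in (M^\gp)^\pm = M^\gp$, so by part (1) of that proposition it is an isomorphism. Hence $\Ac \vec{/} M^\gp$ is a groupoid, and
\[
\Ac \vec{/} M^\gp = \iota_0(\Ac \vec{/} M^\gp) = \cofib\big((M^\gp)^\pm \to \Ac\big) = \cofib(M^\gp \to \Ac).
\]
Alternatively, one could check directly that both sides satisfy the same universal property. By \zcref{prop:skew_quotient_UP}, the groupoid $\iota_0\Fun^\oplus(\Ac \vec{/} M^\gp, \Cc')$ is the fiber product over $\Hom(M^\gp, \iota_0\Cc')$ with $\Hom(M^\gp, \PO\Cc')$. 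Since $M^\gp$ is a group and $\PO\Cc'$ is sharp with trivial $\Aut(0)$ by \zcref{prop:path_monoid_sharp}, the groupoid $\Hom(M^\gp, \PO\Cc')$ is contractible. What remains is exactly the cofiber's universal property.

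The only mildly delicate step is the first one: justifying that inverting all morphisms amounts to inverting the face $M$ of $M$, rather than the face $\PO\Cc = M^\sharp$. This requires $M$ to be compatible with part (3), which was phrased for faces of the given $M$ and not necessarily a sharp one. I would handle it by the factorization argument above, which shows directly that the universal properties of $\Cc^\gpd$ and $\Ac \vec{/} M[-M]$ agree, with no need to pass through $\PO \Cc$.
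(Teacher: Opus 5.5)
Your proposal is correct and follows the paper's proof, which is just the application of part (3) of \zcref{prop:localization_properties} to the face $M \subset M$. Your additional steps make explicit what the paper leaves implicit. The first is checking, via the factorization argument, that inverting all morphisms is the same as inverting the morphisms $(m, \id_{\phi(m)})$. The second is identifying the groupoid $\Ac \vec{/} M^\gp$ with $\cofib(M^\gp \to \Ac)$ through \zcref{prop:skew_quotient_invariants}(1).
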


\begin{proof}
    This follows from (3) in \zcref{prop:localization_properties} applied to the face $M \subset M$.
\end{proof}

\begin{ex}
    In particular, we see that $(\ZZ \vec{/} \NN)^\gpd = \cofib(\id: \ZZ \to \ZZ) = 0$.
    Pictorially, this corresponds to inverting all arrows in
    \[
    \begin{tikzcd}
        \dots \rar & -1 \rar & 0 \rar & 1 \rar & \dots
    \end{tikzcd}
    \]
    to obtain
    \[
    \begin{tikzcd}
        \dots \rar[leftrightarrow] & -1 \rar[leftrightarrow] & 0 \rar[leftrightarrow] & 1 \rar[leftrightarrow] & \dots.
    \end{tikzcd}
    \]
    This latter category is not zero ``on the nose,'' but all objects are canonically isomorphic to each other (and all hom-sets are singletons), so it is equivalent to zero.
\end{ex}

\subsection{Presheaves on toric categories}

To obtain a geometric object from a toric category $\Cc$, we will need to pass through the world of (multigraded) commutative algebra.
This can be accomplished by considering the category of presheaves $\PSh_k(\Cc) := \Fun(\Cc\op, \Vect_k)$.

Recall that $\PSh_k(\Cc)$ is a cocomplete $k$-linear category, and is in fact the universal such category admitting a functor from $\Cc$.
More precisely, if $\Vc$ is any cocomplete $k$-linear category, then there is a natural equivalence
\[
    \Fun^L_k(\PSh_k(\Cc), \Vc) \simto \Fun(\Cc, \Vc)
\]
obtained by precomposing with the ($k$-linearized) Yoneda embedding $y_\Cc: \Cc \to \PSh_k(\Cc)$.

Furthermore, there exists a \emph{Day convolution} symmetric monoidal structure $\star_\Cc$ on $\PSh_k(\Cc)$ which commutes with colimits in each variable.
Day convolution satisfies a similar universal property: if $(\Vc, \otimes)$ is any cocomplete $k$-linear symmetric monoidal category such that $\otimes$ commutes with colimits in each variable, then there is a natural equivalence
\[
    \Fun^{\otimes,L}_k\big((\PSh_k(\Cc), \star_\Cc), (\Vc, \otimes)\big) \simto \Fun^\otimes(\Cc, \Vc).
\]
In practice, the Day convolution product $\Fsc \star \Gsc$ can be computed by writing $\Fsc$ and $\Gsc$ as colimits of representable presheaves and taking the symmetric monoidal products of the representing objects in $\Cc$.

These abstract universal properties are perfectly nice, but we should justify the claim that we are really doing multigraded commutative algebra here.

\begin{notn}
    If $A$ is an abelian group and $R$ is an $A$-graded commutative ring, we let $\GrMod_R^A$ denote the category of $A$-graded $R$-modules.
\end{notn}

\begin{prop} \label{prop:presheaves_as_graded_modules}
    Let $M$ be a toric monoid, $A$ a discrete abelian group, and $\phi: M \to A$ a homomorphism.
    Then there is a symmetric monoidal equivalence 
    \[
        \big(\PSh_k(A \vec{/} M), \star_{A \vec{/} M}\big) \simeq \big(\GrMod^A_{k\angles{M}}, \otimes_{k\angles{M}}\big).
    \]
\end{prop}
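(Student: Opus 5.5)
We will construct the equivalence explicitly at the level of objects, then check that it is monoidal. Write $\Cc = A \vec{/} M$. By \zcref{ex:toric_poset}-style reasoning (here $A$ is discrete, so there are no nontrivial automorphisms in $A$), $\Map_\Cc(a_1, a_2) = \{m \in M : \phi(m) + a_1 = a_2\}$. Composition is addition in $M$.

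Given a presheaf $\Fsc: \Cc\op \to \Vect_k$, set
\[
    \Phi(\Fsc) = \bigoplus_{a \in A} \Fsc(-a).
\]
The sign convention is chosen so that a morphism $m: -a \to -a-\phi(m)$ induces $\Fsc(-a-\phi(m)) \to \Fsc(-a)$, i.e.\ a map from degree $a+\phi(m)$ to degree $a$. This is the wrong direction, so we instead take $\Phi(\Fsc)_a = \Fsc(a)$ after all and let $\chi^m$ act via the map $\Fsc(a) \to \Fsc(a - \phi(m))$ induced by $m: a - \phi(m) \to a$. This sends degree $a$ to degree $a - \phi(m)$. To fix the degrees we regrade with the convention $\deg \chi^m = -\phi(m)$ or, equivalently, precompose with the automorphism $-1$ of $A$. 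Either convention gives an equivalence; we will fix one and check that it is consistent. Functoriality of $\Fsc$ makes $\Phi(\Fsc)$ an $A$-graded $k\angles{M}$-module. Conversely, a graded module $V$ yields a presheaf $a \mapsto V_a$ with the morphism $m$ acting by $\chi^m$. These two constructions are visibly inverse, and natural transformations correspond exactly to degree-preserving module maps. This gives an equivalence of $k$-linear categories $\Phi$.

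For the monoidal structure, we use the universal property of Day convolution. The representable $y(a) = k[\Map_\Cc(-,a)]$ corresponds under $\Phi$ to the free graded module $k\angles{M}$ shifted by $a$, since $\Map(b,a) = \{m : b + \phi(m) = a\}$. The functor $a \mapsto \Phi(y(a)) = k\angles{M}(a)$ is symmetric monoidal $\Cc \to \GrMod^A_{k\angles{M}}$. The structure map is $k\angles{M}(a_1) \otimes_{k\angles{M}} k\angles{M}(a_2) \cong k\angles{M}(a_1+a_2)$, a morphism $m$ acts by multiplication by $\chi^m$, and the symmetry is trivial, matching the trivial symmetry of $\Cc$. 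The tensor product $\otimes_{k\angles{M}}$ preserves colimits in each variable, so this functor extends uniquely to a colimit-preserving symmetric monoidal functor $\PSh_k(\Cc) \to \GrMod^A_{k\angles{M}}$. This extension agrees with $\Phi$ on representables and commutes with colimits, since $\Phi$ is an equivalence and colimits of graded modules are computed degreewise. Hence the extension is $\Phi$, and $\Phi$ is therefore symmetric monoidal.

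The main obstacle is bookkeeping rather than substance. Two points need care. The first is getting the grading signs consistent between the presheaf direction and the grading of $\chi^m$. The second is verifying that the symmetric monoidal structure on $a \mapsto k\angles{M}(a)$ is coherent, in particular that the braiding on twisted free modules matches the trivial one on $A$. Since $A$ is discrete and the structure maps of $\Cc$ are trivial, the second point reduces to commutativity of $k\angles{M}$.
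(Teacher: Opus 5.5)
Your proof is correct, and its core is the paper's argument: the same degreewise identification, with the same inverse $V \mapsto (a \mapsto V_a)$. There are two differences in your favor.

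\begin{enumerate}
\item You take the contravariance of presheaves seriously. The paper's formula $m \cdot e = \Esc(\phi(m))(e) \in \Esc(\phi(m)+a)$ treats $\Esc$ as covariant. Your regrading along $-1 \in \Aut(A)$ is exactly what makes it literally correct. Equivalently, one can use the symmetric monoidal isomorphism $(A \vec{/} M)\op \simeq A \vec{/} M$, $a \mapsto -a$.
\item The paper only asserts that the equivalence is symmetric monoidal. You derive it from the universal property of Day convolution applied to the symmetric monoidal functor $a \mapsto k\angles{M}(a)$. This is a genuine justification. Your coherence check also holds: the braiding reduces to commutativity of $k\angles{M}$ because the structure maps of $A \vec{/} M$ are identities.
\end{enumerate}

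One cleanup is needed in your first paragraph, which argues against itself. Your initial convention $\Phi(\Fsc)_a = \Fsc(-a)$ was already right. The relevant morphism is $m \colon -a-\phi(m) \to -a$, since $\phi(m) + (-a-\phi(m)) = -a$. It induces $\Fsc(-a) \to \Fsc(-a-\phi(m))$, which goes from degree $a$ to degree $a+\phi(m)$, as multiplication by $\chi^m$ should. The arrow $m \colon -a \to -a-\phi(m)$ you wrote is not a morphism of $A \vec{/} M$ unless $2\phi(m) = 0$. The convention you end with (``precompose with $-1$'') is the one you started with. State it once, note that $\Phi(y(a))_b = k\angles{M}_{a+b}$ with generator in degree $-a$, and drop the detour.
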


\begin{proof}
    By \zcref{cons:skew_quotient} and discreteness of $A$, the morphism spaces in $A \vec{/} M$ are given by 
    \[
        \Map_{A \vec{/} M}(a_1, a_2) = \bset{m \in M}{\phi(m) + a_1 = a_2}.
    \]
    We may therefore define a symmetric monoidal functor $F: \PSh_k(A \vec{/} M) \to \GrMod^A_{k\angles{M}}$ via $F(\Esc) = \oplus_{a \in A} \Esc(a)$.
    Here $m \in M$ acts on $e \in \Esc(a)$ by $m \cdot e = \Esc\big(\phi(m)\big)(e) \in \Esc(\phi(m) + a)$.
    Note that $F$ has quasi-inverse given by
    \begin{align*}
        \GrMod^A_{k\angles{M}} &\to \PSh_k(A \vec{/} M) \\
        \oplus_{a \in A} E_a &\mapsto (a \mapsto E_a),
    \end{align*}
    so $F$ is a symmetric monoidal equivalence.
\end{proof}

\subsection{Invertible objects in presheaf categories}

For a toric category $\Cc$, we'd like to better understand the relationship between $\Cc$ and $\PSh_k(\Cc)$.
We will show here that the $k$-linearization $k\angles{\Cc}$ is captured by the full subcategory of $\otimes$-invertible objects in $\PSh_k(\Cc)$.

\begin{notn}
    The notation here is potentially ambiguous, since if $M$ is a commutative monoid, then $k\angles{M}$ could refer to the monoid ring of $M$ or to the $k$-linearization of the discrete category $M$.
    We will always take it to be the former whenever this is \emph{a priori} well defined.
    That is, if $M$ is assumed to be a commutative monoid, then we use $k\angles{M}$ for the monoid ring, whereas if $\Cc$ is a symmetric monoidal category (which may or may not happen to be a discrete commutative monoid), we use $k\angles{\Cc}$ for the corresponding $k$-linear category.
\end{notn}

We first make our discussion of $\otimes$-invertible objects somewhat more explicit.

\begin{dfn}
    Let $(\Vc, \otimes)$ be a symmetric monoidal category.
    Write $\Line(\Vc)$ for the full subcategory of $\Cc$ spanned by the $\otimes$-invertible objects.
\end{dfn}

\begin{rmk}
    If $\Cc$ is a toric category and $\Vc$ is any cocomplete $k$-linear symmetric monoidal category such that $\otimes$ commutes with colimits in each variable, we have
    \[
        \Fun^{\otimes,L}\big(\PSh_k(\Cc), \Vc\big) = \Fun^\otimes\big(\Cc, \Line(\Vc)\big).
    \]
    Indeed, this follows by combining the universal property of Day convolution with the fact that any functor $\Cc \to \Line(\Vc)$ sends all objects of $\Cc$ to $\otimes$-invertible objects of $\Vc$.
\end{rmk}

\begin{prop} \label{prop:line_objects_in_presheaves}
    Let $\Cc$ be a toric category.
    Then the natural symmetric monoidal functor $k\angles{\Cc} \to \Line(\PSh_k(\Cc))$ is an equivalence.
\end{prop}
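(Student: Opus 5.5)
The functor $k\angles{\Cc} \to \Line(\PSh_k(\Cc))$ is the $k$-linearized Yoneda embedding $y_\Cc$. I would split the proof into three steps: full faithfulness, landing in invertible objects, and essential surjectivity.

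\emph{Full faithfulness and landing in $\Line$.} Full faithfulness of $y_\Cc$ on $k\angles{\Cc}$ is the Yoneda lemma. Day convolution satisfies $y(c_1) \star y(c_2) \cong y(c_1 \oplus c_2)$ and the unit is $y(0)$, so $y$ is symmetric monoidal. Since $\pi_0(\iota_0 \Cc)$ is a group, each $y(c)$ has inverse $y(-c)$. Thus the functor lands in $\Line(\PSh_k(\Cc))$ and is fully faithful there.

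\emph{Reduction to graded modules.} By \zcref{cor:skew_quotient_of_discrete} I may replace $\Cc$ by $A \vec{/} M$ with $A$ discrete and $M$ toric. Then \zcref{prop:presheaves_as_graded_modules} identifies $\PSh_k(\Cc)$ symmetric monoidally with $\GrMod^A_{R}$, where $R = k\angles{M}$ is graded via $\phi: M \to A$. Under this identification the representable $y(a)$ becomes the shifted free module $R(a)$. Essential surjectivity therefore amounts to: every $\otimes_R$-invertible $A$-graded $R$-module $L$ is isomorphic to $R(a)$ for some $a \in A$. Since forgetting the grading is symmetric monoidal, $L$ is an invertible, hence finitely generated projective rank one, $R$-module. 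Moreover $R$ is a domain, as it embeds in $k\angles{M^\gp}$ with $M^\gp$ free.

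\emph{Main obstacle: triviality of the graded Picard group.} I would refine the grading: $R$ is naturally $M^\gp$-graded, with each graded piece at most one-dimensional, and the $A$-grading is pushed forward along $M^\gp \to A$. The idea is to reduce to $L$ being a monomial fractional ideal and then use a combinatorial argument. Suppose $L \subset k\angles{M^\gp}$ is spanned by monomials, with inverse $L'$ of the same form, and $L L' = R$. Then $1 \in LL'$, and since products of monomials are monomials, this forces monomials $m \in L$ and $m' \in L'$ with $m + m' = 0$. Consequently $L \cdot (-m) \subset LL' = R$, so $L \subset mR$, while $mR \subset L$. Hence $L = mR \cong R(\phi(m))$.

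To get $L$ into monomial form, I would first base change to $k\angles{M^\gp}$. This corresponds to the groupoid completion of \zcref{prop:Cgpd_calculation}, namely the toric groupoid $\cofib(M^\gp \to A)$. There I would check directly that invertible presheaves on a toric groupoid are representable: on each component, an invertible presheaf is a rank one module over the Laurent ring $k\angles{\Aut(0)}$, and the graded units of this ring are monomials. This exhibits $L$ as an $A$-graded submodule of a shift of $k\angles{M^\gp}$. The delicate point is upgrading ``$A$-graded submodule'' to ``spanned by $M^\gp$-monomials'', since the $A$-graded pieces can be larger than one-dimensional. I would try first to handle this using the action of the torus $\Hom(\ker(M^\gp \to A), \GG_m)$; the extra grading is exactly the eigenspace decomposition for this action. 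Alternatively, I would run the whole argument directly with the $\cofib$-twisted grading.
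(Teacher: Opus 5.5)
Your full-faithfulness step is fine: the linearized Yoneda lemma is a cleaner substitute for the paper's explicit Hom computation in $\GrMod^A_{k\angles{M}}$. The reduction to an invertible $A$-graded $R$-module $L$ with $R = k\angles{M}$ is also fine.

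The gap is the step you flag yourself: showing that $L$ is isomorphic to a fractional ideal spanned by $M^\gp$-monomials. This is the whole content of the proposition, and it is exactly where saturation of $M$ must be used. Nothing else in your argument uses saturation. The base change to $k\angles{M^\gp}$, and your (correct) proof that an invertible monomial fractional ideal is principal, work verbatim for $M = 2\NN + 3\NN \subset \ZZ$ and $A = 0$. But there $R = k[t^2,t^3]$ has $\Pic(R) \cong (k,+) \neq 0$. By your own argument, the nontrivial invertible modules are therefore \emph{not} isomorphic to monomial fractional ideals.

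For the same reason, the torus-action idea cannot work as stated. $L$ carries no action of that torus; on the cusp the nonzero classes are not even torus-fixed. Showing that the image of $L$ in $k\angles{M^\gp}(a)$ is torus-stable is precisely what has to be proved. The paper supplies this input by citing $\Pic(\Spec k\angles{M}) = 0$ for affine toric varieties \cite[Proposition 4.2.2]{cls}.

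Your route can be completed using normality of $R$. Let $I \subset k\angles{M^\gp}$ be the image of $L$, ignoring the shift. It is an invertible fractional ideal with $I \cdot k\angles{M^\gp} = k\angles{M^\gp}$. Since $R$ is a noetherian normal domain, $I = \bigcap_{\pfr} IR_\pfr$, with $\pfr$ running over height-one primes.

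If $\pfr$ contains no monomial, then $R_\pfr \supseteq k\angles{M^\gp}$, so $IR_\pfr = R_\pfr$. The remaining height-one primes are the torus-invariant primes $\pfr_\rho$ attached to the facets of $M$. Their valuations are $v_\rho(\sum_m c_m \chi^m) = \min_{c_m \neq 0} u_\rho(m)$, where $u_\rho$ are the primitive inward normals.

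Hence $I = \{ f \in k\angles{M^\gp} : v_\rho(f) \geq a_\rho \text{ for all } \rho \}$ for some integers $a_\rho$. This is spanned by monomials, as is its inverse, so your combinatorial argument gives $I = \chi^m R$. Because this happens inside an $A$-graded shift of $k\angles{M^\gp}$, you get $L \cong R(a')$ as \emph{graded} modules. This also covers the passage from ``the underlying module is free of rank one'' to ``$L$ is a graded shift,'' which the paper leaves implicit.
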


\begin{proof}
    Call the functor in question $F$.
    It is enough to show that $F$ is essentially surjective and fully faithful.
    Write $\Cc \simeq A \vec{/}_\phi M$ for some $\phi: M \to A$ as in \zcref{cor:skew_quotient_of_discrete}.
    
    By \zcref{prop:presheaves_as_graded_modules}, we may identify $\Line(\PSh_k(\Cc))$ with the full subcategory of $\GrMod^A_{k\angles{M}}$ spanned by the $\otimes_{k\angles{M}}$-invertible objects.
    The underlying $k\angles{M}$-module of any such module is isomorphic to $k\angles{M}$ (by \cite[Proposition 4.2.2]{cls} applied to the affine toric variety $\Spec k\angles{M}$).
    Thus the $\otimes_{k\angles{M}}$-invertible objects of $\GrMod^A_{k\angles{M}}$ are precisely the shifts $k\angles{M}(a)$ for $a \in A$.
    The functor $F$ sends $a \in \ob A \vec{/} M$ to $k\angles{M}(a)$, so $F$ is essentially surjective.
    
    To see that $F$ is fully faithful, let $a_1, a_2 \in A$ and compute
    \begin{align*}
        \Hom_{\GrMod^A_{k\angles{M}}}\big(k\angles{M}(a_1), k\angles{M}(a_2)\big) &= k\angles{M}_{a_2 - a_1} \\
        &= k\angles{\phi\inv(a_2 - a_1)} \\
        &= k\angles{\Map_\Cc(a_1, a_2)} \\
        &= \Hom_{k\angles{M}}(a_1, a_2).
    \end{align*}
    Thus $F$ is fully faithful.
\end{proof}

In other words, if $\Cc$ is a toric category, then we can recover the $k$-linearization $k\angles{\Cc}$ from the symmetric monoidal category $\PSh_k(\Cc)$.

\subsection{Reconstruction of toric categories from linear data} \label{sub:reconstruction}

The preceding results suggest a natural question: what extra data do we need to recover a toric category $\Cc$ (rather than just $k\angles{\Cc}$) from the category $\PSh_k(\Cc)$?
The answer turns out to be closely related to the basic theory of coalgebras.
This discussion is somewhat technical but will be important in \zcref{sub:fully_faithful}; readers are welcome to skip this subsection until it is needed.

Recall that, if $(C, \delta, \epsilon)$ is a coalgebra, an element $c \in C$ is said to be \emph{setlike} if $\delta(c) = c \otimes c$ and $\epsilon(c) = 1$.
Write $C^\setlike$ for the set of setlike elements of $C$.
Note that $(-)^\setlike$ defines a symmetric monoidal functor $\CoAlg_k \to \Set$.
In fact, there is an adjunction
\[
    k\angles{-}: \Set \rightleftarrows \CoAlg_k : (-)^\setlike.
\]
If $S$ is a set, then a standard exercise shows that the natural map $S \to k\angles{S}^\setlike$ is an isomorphism.
Equivalently, the functor $k\angles{-}$ is fully faithful when thought of as a functor valued in coalgebras.

This whole story extends to the setting of enriched (symmetric monoidal) categories.
Let $\SMCat_{\CoAlg_k}$ be the 2-category of symmetric monoidal categories enriched in $\CoAlg_k$.
If $\Cc \in \SMCat_{\CoAlg_k}$, we say a morphism $f: c_1 \to c_2$ in $\Cc$ is \emph{setlike} if it is setlike as an element of the coalgebra $\Map_\Cc(c_1, c_2)$.
There is a natural adjunction
\[
    k\angles{-}: \SMCat \rightleftarrows \SMCat_{\CoAlg_k} : (-)^\setlike.
\]
where the notation for each functor describes what it does to the mapping objects in a (possibly enriched) symmetric monoidal category.

Because $k\angles{-}^\setlike$ is naturally isomorphic to the identity functor on $\Set$, we immediately obtain:

\begin{prop} \label{prop:setlike_morphisms_recover_category}
    The composite 2-functor $k\angles{-}^\setlike: \SMCat \to \SMCat$ is naturally equivalent to the identity. 
\end{prop}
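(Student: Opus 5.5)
We construct the natural equivalence $\eta_\Cc: \Cc \to k\angles{\Cc}^\setlike$ directly from the unit of the adjunction $k\angles{-} \dashv (-)^\setlike$ and check it is an isomorphism objectwise. For a small symmetric monoidal category $(\Cc, \oplus)$, the enriched category $k\angles{\Cc}$ has the same objects as $\Cc$. Its mapping objects are the coalgebras $k\angles{\Map_\Cc(c_1, c_2)}$, with $\delta(f) = f \otimes f$ and $\epsilon(f) = 1$ on basis elements. Composition and $\oplus$ are the $k$-linear extensions of those in $\Cc$, and these are coalgebra maps because they send basis tensors $f \otimes g$ to basis elements. The symmetric monoidal structure isomorphisms of $\Cc$ become basis elements. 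Applying $(-)^\setlike$ keeps the same objects and replaces each mapping coalgebra by its set of setlike elements. This is closed under composition and $\oplus$, since $(-)^\setlike$ is symmetric monoidal $\CoAlg_k \to \Set$. It also contains the structure isomorphisms, since basis elements are setlike.

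Define $\eta_\Cc$ to be the identity on objects and the inclusion $\Map_\Cc(c_1,c_2) \to k\angles{\Map_\Cc(c_1,c_2)}^\setlike$ on morphisms. This is literally the unit $S \to k\angles{S}^\setlike$ of the set-level adjunction, applied to each hom-set. Because composition, $\oplus$ and the structure maps of $k\angles{\Cc}^\setlike$ are restrictions of the linearized ones of $\Cc$, the functor $\eta_\Cc$ is strictly symmetric monoidal, with identity structure morphisms.

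The key input is the standard fact, recalled just before the statement, that $S \to k\angles{S}^\setlike$ is a bijection for every set $S$. For completeness, the argument is short. Write $x = \sum_s a_s s$. The condition $\delta(x) = x\otimes x$ gives $a_s a_t = 0$ for $s\neq t$ and $a_s^2 = a_s$, and $\epsilon(x)=1$ gives $\sum_s a_s = 1$. Hence exactly one $a_s$ equals $1$ and the rest vanish, so $x = s$. Consequently $\eta_\Cc$ is bijective on objects and on every hom-set, so it is an isomorphism of symmetric monoidal categories, and in particular an equivalence.

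Finally, we check naturality in $\Cc$. For a symmetric monoidal functor $G: \Cc \to \Dc$, the functor $k\angles{G}^\setlike$ acts on setlike elements, that is, on basis elements, exactly as $G$ does, and it carries the same monoidal structure isomorphisms. The square $\eta_\Dc \circ G = k\angles{G}^\setlike \circ \eta_\Cc$ therefore commutes strictly. Natural transformations are treated the same way: their components are morphisms of $\Cc$, which are basis elements. So $\eta$ is a 2-natural isomorphism $\id \Rightarrow k\angles{-}^\setlike$.

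The only point needing care is bookkeeping. One must make sure that $(-)^\setlike$ of an enriched symmetric monoidal category is well defined, meaning the setlike morphisms are closed under the operations, and that the structure isomorphisms land in it. Both follow from monoidality of $(-)^\setlike$, so I expect no real obstacle.
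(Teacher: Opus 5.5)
Your proposal is correct and takes the same approach as the paper, which deduces the statement in one line from the set-level isomorphism $S \simto k\angles{S}^\setlike$ applied to each hom-set; that isomorphism is exactly your unit $\eta_\Cc$. You also prove the set-level fact and check the closure and naturality bookkeeping, which the paper leaves implicit.
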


This is an extension of the fact that, if $M$ is a toric monoid, then $M$ can be recovered from the multiplication on the affine toric variety $\Spec k\angles{M}$ (which is a commutative monoid scheme).
Unfortunately, this result is not so useful when developing the theory of toric varieties.
Instead, the classical theory makes great use of the fact that $M$ can be recovered from the affine toric variety $\Spec k\angles{M}$ together with the action of the dense torus $\Spec k\angles{M^\gp}$ (with no reference to the commutative monoid structure on all of $\Spec k\angles{M}$).
We will prove an analogous result in \zcref{prop:linear_reconstruction_algebraic}, but we first need some lemmas.

\begin{lem} \label{lem:isos_in_kC}
    Let $\Cc$ be a toric category.
    Then any isomorphism in $k\angles{\Cc}$ is a scalar multiple of a setlike isomorphism.
\end{lem}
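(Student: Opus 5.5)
The plan is to reduce to a computation in a single graded piece of a monoid algebra, and then use that the monoid is integral.

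\emph{Reduction to a discrete model.} By \zcref{cor:skew_quotient_of_discrete}, write $\Cc \simeq A \vec{/}_\phi M$ with $A$ a discrete abelian group and $M$ a toric monoid. This equivalence is symmetric monoidal, so it induces an equivalence $k\angles{\Cc} \simeq k\angles{A \vec{/} M}$ of $k$-linear categories that matches setlike morphisms with setlike morphisms. For $a_1, a_2 \in A$ we then have
\[
    \Map_{k\angles{\Cc}}(a_1, a_2) = k\angles{\bset{m \in M}{\phi(m) + a_1 = a_2}}.
\]
Composition is induced by addition in $M$. The setlike elements of this coalgebra are exactly the basis elements $m$, which are the morphisms of $\Cc$.

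\emph{Reduction to endomorphisms of $0$.} Let $u \in \Map_{k\angles{\Cc}}(a_1,a_2)$ and $v \in \Map_{k\angles{\Cc}}(a_2,a_1)$ be mutually inverse. Tensoring with $\id_{-a_1}$ is an autoequivalence, so we may assume $a_1 = 0$. Write $u = \sum_i \lambda_i m_i$ and $v = \sum_j \mu_j n_j$, where the $m_i$ are distinct elements of $M$ with $\phi(m_i) = a_2$ and the $n_j$ are distinct elements with $\phi(n_j) = -a_2$. The condition $v \circ u = \id_0$ then reads
\[
    \Big(\sum_j \mu_j n_j\Big)\Big(\sum_i \lambda_i m_i\Big) = 1 \quad \text{in } k\angles{M},
\]
and the same holds with the factors in the other order. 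So $u$ and $v$ are units of the monoid ring $k\angles{M}$.

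\emph{Key step.} It remains to show that every unit of $k\angles{M}$, for $M$ toric, is a scalar multiple of a monomial in $M^\pm$. I expect this to be the main point. I would first embed $M \subset M^\gp \cong \ZZ^r$, which is possible because $M$ is integral. Then $k\angles{M} \subset k[x_1^{\pm1}, \dots, x_r^{\pm1}]$, a Laurent polynomial ring over a field, hence a domain whose units are exactly $\lambda x^\alpha$ with $\lambda \in k^\times$. One can check this directly by choosing a generic linear order on $\ZZ^r$ and comparing the leading and trailing terms of a product. It follows that $u = \lambda m$ for some $\lambda \in k^\times$ and $m \in M$. Since $\lambda m$ is invertible in $k\angles{M}$, its inverse is $\lambda^{-1} m'$ with $m + m' = 0$ in $M$, so $m \in M^\pm$.

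\emph{Conclusion.} The morphism $m \colon 0 \to a_2$ in $\Cc$ therefore has the inverse $m'$, so it is a setlike isomorphism, and $u = \lambda m$ is a scalar multiple of it, as required.
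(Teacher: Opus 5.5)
Your proof is correct. It ends at the same fact as the paper, namely that units of a Laurent polynomial ring over $k$ are scalar multiples of monomials, but reaches it by a different reduction.

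The paper first replaces the isomorphism by its image in $k\angles{\Cc^\gpd}$, using that setlikeness can be tested there. It then uses the strict splitting $\Cc^\gpd \simeq A_0 \times \Bsf A_1$ of a toric groupoid, so that every isomorphism becomes a unit of $k\angles{A_1}$.

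You do not pass to $\Cc^\gpd$ at all. You work with a discrete skew-quotient model $A \vec{/} M$ of $\Cc$ itself, where the non-canonical splitting enters only through the choice of presentation. You identify hom-spaces with graded pieces of $k\angles{M}$ and composition with multiplication, and then embed $k\angles{M} \subset k\angles{M^\gp}$. That embedding is the ring-level counterpart of the paper's passage to $\Cc^\gpd$.

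Your version makes explicit two points the paper's reduction leaves implicit:
\begin{itemize}
\item The functor $k\angles{\Cc} \to k\angles{\Cc^\gpd}$ is faithful. In your argument this is just integrality of $M$.
\item The resulting monomial $m$ lies in $M^\pm$. So the setlike morphism is invertible already in $\Cc$, and not merely in $k\angles{\Cc}$ or in $\Cc^\gpd$.
\end{itemize}

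The paper's argument is shorter because it leans on the structure theory of toric groupoids it has already developed. Yours is more hands-on and self-contained once a discrete presentation is fixed.
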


\begin{proof}
    Suppose $f$ is an isomorphism in $k\angles{\Cc}$.
    Replacing $f$ by its image in $k\angles{\Cc^\gpd}$ and noting that a morphism in $k\angles{\Cc}$ is setlike if and only if the same is true for its image in $k\angles{\Cc^\gpd}$, we see that it suffices to show the claim when $\Cc$ is a toric groupoid.
    In this case we may use \zcref{prop:deligne_classification} to write $\Cc \simeq A_0 \times \Bsf A_1$ where $A_0$ is a finitely generated abelian group and $A_1$ is a finitely generated free abelian group.
    Every isomorphism in $k\angles{\Cc}$ may therefore be represented by a unit in the ring $k\angles{A_1}$.
    But the units of $k\angles{A_1}$ are all scalar multiples of setlike morphisms (e.g.\ by \cite[Exercise 3.3.4]{cls}), so we win.
\end{proof}

\begin{lem} \label{lem:toric_cat_as_fiber_product}
    Let $\Cc$ be a toric category.
    Then $\Cc = k\angles{\Cc} \times_{k\angles{\Cc^\gpd}} \Cc^\gpd$ as symmetric monoidal categories.
\end{lem}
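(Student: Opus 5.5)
We must show that the natural square formed by $\Cc \to k\angles{\Cc}$, $\Cc \to \Cc^\gpd$, $k\angles{\Cc} \to k\angles{\Cc^\gpd}$ and $\Cc^\gpd \to k\angles{\Cc^\gpd}$ exhibits $\Cc$ as the (2-categorical) fiber product. Here the last two maps are the linearizations of the localization $L: \Cc \to \Cc^\gpd$ and of the unit inclusion. Concretely, an object of the fiber product is a triple $(x, y, \alpha)$ with $x \in k\angles{\Cc}$, $y \in \Cc^\gpd$ and $\alpha: L(x) \simto y$ an isomorphism in $k\angles{\Cc^\gpd}$. A morphism $(x,y,\alpha) \to (x',y',\alpha')$ is a pair $(f, g)$ with $f \in \Hom_{k\angles{\Cc}}(x,x')$ and $g \in \Map_{\Cc^\gpd}(y,y')$ such that $\alpha' \circ L(f) = g \circ \alpha$. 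The symmetric monoidal structure is inherited componentwise, so it suffices to show that the comparison functor $\Phi: \Cc \to k\angles{\Cc} \times_{k\angles{\Cc^\gpd}} \Cc^\gpd$, $c \mapsto (c, L(c), \id)$, is an equivalence of underlying categories.

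To simplify, we use \zcref{cor:skew_quotient_of_discrete} to write $\Cc \simeq A \vec{/}_\phi M$ with $A$ discrete and $M$ toric. Then \zcref{prop:Cgpd_calculation} identifies $\Cc^\gpd$ with $A \vec{/} M^\gp$, and $L$ is the identity on objects. On hom-sets, $L$ is the inclusion
\[
\phi\inv(a_2 - a_1) \cap M \hookrightarrow \phi\inv(a_2 - a_1) \subset M^\gp,
\]
which is injective because $M$ is integral. Under linearization, $k\angles{\Map_\Cc(a_1,a_2)} \to k\angles{\Map_{\Cc^\gpd}(a_1,a_2)}$ is therefore the inclusion of the span of a subset of a basis.

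For full faithfulness, a morphism $\Phi(a_1) \to \Phi(a_2)$ is a pair $(f,g)$ with $L(f) = g$. Here $f$ is a $k$-linear combination of elements of $\phi\inv(a_2-a_1)\cap M$, and $g$ is a single basis element of $M^\gp$. Injectivity of the linearized $L$ on basis vectors forces $f$ to be exactly the basis element corresponding to $g$, which lies in $M$, i.e.\ $f$ is a morphism of $\Cc$. Conversely, each morphism of $\Cc$ gives such a pair. So $\Phi$ is bijective on hom-sets.

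The main point is essential surjectivity. Take an object $(x, y, \alpha)$. Every object of $k\angles{\Cc}$ is an object of $\Cc$, so $x = a \in A$. The isomorphism $\alpha: a \simto y$ lives in $k\angles{\Cc^\gpd}$. By \zcref{lem:isos_in_kC} applied to the toric groupoid $\Cc^\gpd$, $\alpha = \lambda u$ with $\lambda \in k^\times$ and $u$ a setlike isomorphism, i.e.\ a genuine morphism $u: a \to y$ of $\Cc^\gpd$. Then $(\lambda\inv \id_a, u\inv)$ defines an isomorphism $(a, y, \alpha) \simto (a, a, \id) = \Phi(a)$ in the fiber product. We check the compatibility:
\[
\id \circ L(\lambda\inv \id_a) = \lambda\inv \id_a = u\inv \circ \alpha .
\]
Thus $\Phi$ is essentially surjective, and, being symmetric monoidal by construction, it is a symmetric monoidal equivalence. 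The only delicate step is this use of \zcref{lem:isos_in_kC} to strip off scalars; the rest is bookkeeping with the basis description of hom-sets.
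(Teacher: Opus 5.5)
Your argument is the paper's proof: the same comparison functor, essential surjectivity by peeling off the scalar with \zcref{lem:isos_in_kC}, and full faithfulness because an element of $k\angles{\Map_\Cc(c,c')}$ whose image is a single morphism of $\Cc^\gpd$ must itself be a morphism of $\Cc$. Your reduction to $A \vec{/} M$ also usefully makes explicit the injectivity of $\Map_\Cc(c,c') \to \Map_{\Cc^\gpd}(c,c')$ (from integrality of $M$), which the paper's ``setlike'' step uses tacitly.

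There is one arithmetic slip. Since $u\inv \circ \alpha = \lambda\,\id_a$, the required isomorphism $(a,y,\alpha) \simto (a,a,\id)$ is $(\lambda\,\id_a, u\inv)$, not $(\lambda\inv\,\id_a, u\inv)$. As written, your displayed compatibility check fails unless $\lambda = \pm 1$.
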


\begin{proof}
    There's an obvious symmetric monoidal functor $F: \Cc \to k\angles{\Cc} \times_{k\angles{\Cc^\gpd}} \Cc^\gpd$ induced by the universal property of fiber products.
    We need to show that this is an isomorphism.

    Recall that $k\angles{\Cc} \times_{k\angles{\Cc^\gpd}} \Cc^\gpd$ has:
    \begin{itemize}
        \item Objects given by triples $(c_1, c_2, \phi)$ where $c_1 \in k\angles{\Cc}$, $c_2 \in \Cc^\gpd$, and $\phi: c_1 \simto c_2$ as objects of $k\angles{\Cc^\gpd}$.
        \item Morphisms $(c_1, c_2, \phi) \to (c_1', c_2', \phi')$ given by pairs $(f_1, f_2)$ with $f_1 \in \Hom_{k\angles{\Cc}}(c_1, c_1')$ and $f_2 \in \Map_{\Cc^\gpd}(c_2, c_2')$ such that $f_2 \circ \phi = \phi' \circ f_1$.
    \end{itemize}
    The functor $F$ is given on objects by $c \mapsto (c, c, \id_c)$ and on morphisms by $f \mapsto (f, f)$.
    
    To show that $F$ is essentially surjective, it suffices to show that any object of $k\angles{\Cc} \times_{k\angles{\Cc^\gpd}} \Cc^\gpd$ can be represented up to isomorphism by an object of the form $(c, c, \id_c)$ for $c \in \Cc$.
    To achieve this, let $(c_1, c_2, \phi) \in k\angles{\Cc} \times_{k\angles{\Cc^\gpd}} \Cc^\gpd$ be arbitrary.
    By \zcref{lem:isos_in_kC}, we may write $\phi = a \cdot \phi'$ for some $a \in k$ and some $\phi' \in \Cc^\gpd$.
    Then the pair $(a, (\phi')\inv)$ gives an isomorphism $(c_1, c_2, \phi) \simto (c_1, c_1, \id_{c_1})$, as needed.

    It remains to show that $F$ is fully faithful, i.e.\ that every $(f_1, f_2): (c, c, \id_{c}) \to (c', c', \id_{c'})$ is of the form $(f, f)$ for some $f \in \Map_\Cc(c, c')$.
    Commutativity of the diagram
    \[
    \begin{tikzcd}
        c \rar["\id"] \dar["f_1"] & c \dar["f_2"] \\
        c' \rar["\id"] & c'
    \end{tikzcd}
    \]
    is precisely saying that $f_1 = f_2$ as morphisms in $k\angles{\Cc^\gpd}$.
    Because $f_2$ is setlike in $k\angles{\Cc^\gpd}$, $f_1$ must itself be setlike in $k\angles{\Cc}$, hence must be induced by some $f \in \Map_\Cc(c, c')$.
    The image of $f$ in $\Cc^\gpd$ is $f_2$, so $(f_1, f_2) = (f, f)$, as claimed.
\end{proof}

By combining our previous results, we obtain:

\begin{prop} \label{prop:linear_reconstruction_algebraic}
    Let $\Cc$ be a toric category.
    Then
    \[
        \Cc = \Line(\PSh_k(\Cc)) \times_{\Line(\PSh_k(\Cc^\gpd))} \Line(\PSh_k(\Cc^\gpd))^\setlike.
    \]
\end{prop}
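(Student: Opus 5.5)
The plan is to combine \zcref{lem:toric_cat_as_fiber_product} with \zcref{prop:line_objects_in_presheaves} and \zcref{prop:setlike_morphisms_recover_category}. By \zcref{lem:toric_cat_as_fiber_product} we already have
\[
    \Cc = k\angles{\Cc} \times_{k\angles{\Cc^\gpd}} \Cc^\gpd
\]
as symmetric monoidal categories. So it suffices to identify each of the three corners of this fiber product, compatibly with the two maps, with the corresponding corner in the statement.

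First, $\Cc^\gpd$ is a toric category by \zcref{prop:localization_properties}(4). Hence \zcref{prop:line_objects_in_presheaves} applies to both $\Cc$ and $\Cc^\gpd$, giving symmetric monoidal equivalences
\[
    k\angles{\Cc} \simeq \Line(\PSh_k(\Cc)), \qquad k\angles{\Cc^\gpd} \simeq \Line(\PSh_k(\Cc^\gpd)).
\]
Both are induced by the $k$-linearized Yoneda embedding. Next, $\Line(\PSh_k(\Cc^\gpd))$ is a category whose hom-spaces are $k$-vector spaces. Via the second equivalence we transport the coalgebra enrichment of $k\angles{\Cc^\gpd}$, in which each morphism of $\Cc^\gpd$ is setlike, to $\Line(\PSh_k(\Cc^\gpd))$. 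This is how I read the superscript $\setlike$ in the statement. Then \zcref{prop:setlike_morphisms_recover_category} gives
\[
    \Line(\PSh_k(\Cc^\gpd))^\setlike \simeq k\angles{\Cc^\gpd}^\setlike = \Cc^\gpd.
\]

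It remains to check compatibility of the maps. The map $\Line(\PSh_k(\Cc)) \to \Line(\PSh_k(\Cc^\gpd))$ is the restriction to invertible objects of the symmetric monoidal left Kan extension $L_!$ along the localization $L:\Cc\to\Cc^\gpd$. Left Kan extension sends representables to representables, $L_! y_\Cc = y_{\Cc^\gpd}\circ L$. Hence the square formed by $k\angles{L}$ and $L_!$ together with the two Yoneda equivalences commutes up to canonical isomorphism. Likewise, the inclusion of setlike morphisms $\Cc^\gpd \to k\angles{\Cc^\gpd}$ corresponds to the inclusion $\Line(\PSh_k(\Cc^\gpd))^\setlike \to \Line(\PSh_k(\Cc^\gpd))$ under the same equivalence. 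Since both legs of the cospan are identified with the legs in \zcref{lem:toric_cat_as_fiber_product}, the fiber products agree and the claim follows.

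The main obstacle I expect is the precise meaning of $(-)^\setlike$ on $\Line(\PSh_k(\Cc^\gpd))$, i.e.\ making the coalgebra enrichment intrinsic rather than merely transported. One would like the enrichment to be canonical, for instance coming from the Hopf/comodule structure of $k\angles{A_1}$ for $\Cc^\gpd \simeq A_0\times \Bsf A_1$. Then \zcref{lem:isos_in_kC} would show that the setlike isomorphisms are exactly the morphisms of $\Cc^\gpd$, up to the scalar ambiguity handled there. A second delicate point is that the fiber product is 2-categorical. However, the map $\Cc^\gpd \to k\angles{\Cc^\gpd}$ is faithful and, by the essential-surjectivity argument in \zcref{lem:toric_cat_as_fiber_product}, the strict and homotopy fiber products agree. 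So replacing corners by equivalent ones is harmless.
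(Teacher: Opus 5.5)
Your proposal is correct and is essentially the paper's proof: identify the corners of the fiber product in \zcref{lem:toric_cat_as_fiber_product} using \zcref{prop:line_objects_in_presheaves} (applied to both $\Cc$ and $\Cc^\gpd$) and \zcref{prop:setlike_morphisms_recover_category}, then read off the result. Your extra checks fill in details the paper leaves implicit: that $\Cc^\gpd$ is toric, that the legs are compatible via $L_!$, and that the setlike structure is transported along the Yoneda equivalence.
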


\begin{proof}
    \zcref{prop:line_objects_in_presheaves} tells us that $\Line(\PSh_k(\Cc)) = k\angles{\Cc}$ and $\Line(\PSh_k(\Cc^\gpd)) = k\angles{\Cc^\gpd}$.
    Furthermore, by \zcref{prop:setlike_morphisms_recover_category}, we have
    \[
        \Line\big(\PSh_k(\Cc^\gpd)\big)^\setlike = k\angles{\Cc^\gpd}^\setlike = \Cc^\gpd.
    \]
    Combining these claims with \zcref{lem:toric_cat_as_fiber_product} gives the result.
\end{proof}

\section{Gale duality and consequences} \label{sec:gale_duality}

We can reformulate the (derived) Gale duality of \cite[\S 2]{borisov2005orbifold} as an anti-automorphism of the 2-category $\TCat$.
Using this language, we are able to give a simple axiomatic characterization of the Gale duality functor (see \zcref{thm:gale_duality}).

\subsection{Gale duality for toric categories} \label{sub:Gale_duality}

We begin by giving a concrete construction of the Gale duality functor, generalizing that of \cite[\S 2]{borisov2005orbifold}.

\begin{cons} \label{cons:gale_dual}
    Let $\Cc$ be a toric category.
    We define a new toric category $\GDual(\Cc)$, the \emph{Gale dual of $\Cc$}, as follows.
    Write $\Cc$ as a skew quotient $\Omega^\infty(F^\bullet) \vec{/} M$ (using e.g.\ \zcref{thm:TCat_structure_thm}).
    Let $E^\bullet = \fib(M^\gp \to F^\bullet) \in \Dsf(\ZZ)$.
    By \zcref{lem:gale_duality_WD1} below, $\RHom(E^\bullet, \ZZ)$ may be represented by an object of $\tTerm$, so $\Omega^\infty \RHom(E^\bullet, \ZZ)$ is a well-defined strictly abelian 2-group.
    Furthermore, there is a natural homomorphism
    \[
        D_\CMon(M) \to \Hom(M^\gp, \ZZ) = \RHom(M^\gp, \ZZ) \to \RHom(E^\bullet, \ZZ).
    \]
    Let
    \[
        \GDual(\Cc) = \Omega^\infty(\RHom(E^\bullet, \ZZ)) \vec{/} D_\CMon(M).
    \]
    Then $\GDual(\Cc)$ is a toric category by \zcref{lem:gale_duality_WD2} below and is independent of the choices made (up to natural equivalence) by \zcref{lem:gale_duality_WD3} below.
\end{cons}

\begin{lem} \label{lem:gale_duality_WD1}
    Let $F^\bullet \in \tTerm$, and let $A$ be a finitely generated free abelian group.
    Then 
    \[
    \RHom(\fib(A \to F^\bullet), \ZZ)
    \]
    lies in the essential image of the inclusion $\tTerm \hookrightarrow \Dsf^{[-1,0]}(\ZZ)$.
\end{lem}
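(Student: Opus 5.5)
We compute with explicit chain complexes. Write $F^\bullet = [F^{-1} \xrightarrow{d} F^0]$ with $F^{-1}, F^0$ finitely generated free. Any map $A \to F^\bullet$ in $\Dsf(\ZZ)$ from the complex $A[0]$ is represented by an honest chain map $\alpha: A \to F^0$, since $A$ is free and hence projective, so maps out of $A[0]$ into a bounded complex are computed by chain maps up to homotopy. Then, as in the proof of \zcref{lem:2term_finite_limits}, $E^\bullet := \fib(A \to F^\bullet)$ is represented by the three-term complex
\[
    F^{-1} \xrightarrow{(d, 0)} F^0 \oplus A \xrightarrow{(\ \cdot\ )} \cdots
\]
More precisely, $E^\bullet$ is the shifted cone $[F^{-1} \to A \oplus F^{-1}\ldots]$. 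It is cleanest to write it as the complex
\[
    A \oplus F^{-1} \xrightarrow{(\alpha, d)} F^0
\]
placed in degrees $0$ and $1$, shifted appropriately. Concretely, $E^\bullet$ has $E^0 = A \oplus F^{-1}$ and $E^1 = F^0$, with all other terms zero. All terms are finitely generated free.

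Since every term of $E^\bullet$ is free of finite rank, $\RHom(E^\bullet, \ZZ)$ is computed termwise by the dual complex $\Hom(E^{-\bullet}, \ZZ)$:
\[
    (F^0)^\vee \xrightarrow{(\alpha^\vee, d^\vee)} A^\vee \oplus (F^{-1})^\vee ,
\]
in degrees $-1$ and $0$. This is a two-term complex of finitely generated free abelian groups concentrated in degrees $-1, 0$, so it is literally an object of $\tTerm$. This proves the claim, and we could alternatively check the cohomological criterion of \zcref{rmk:2term_vs_DZ} directly.

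The only real obstacle is bookkeeping of degrees: checking that the fiber of a map into a complex in degrees $[-1,0]$ from a complex in degree $0$ has terms only in degrees $0$ and $1$, rather than $-1$. The fiber of $A[0] \to F^\bullet$ is $\mathrm{cone}(\alpha)[-1]$. The cone has terms $A$ in degree $-1$, and $F^{-1}$ in degree $-1$, and $F^0$ in degree $0$; so wait, the cone has $A \oplus F^{-1}$ in degree $-1$ and $F^0$ in degree $0$. Shifting by $[-1]$ puts these in degrees $0$ and $1$, as claimed. The duals then land in degrees $0$ and $-1$ respectively. So the dual lies in $\tTerm$ on the nose, with no truncation needed.
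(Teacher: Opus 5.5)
Your argument is correct and is essentially the paper's proof: both come down to the mapping-cone formula for a chain map between bounded complexes of finitely generated free groups. The only difference is the order of operations. You build $\fib(A \to F^\bullet)$ explicitly, with $A \oplus F^{-1}$ in degree $0$ and $F^0$ in degree $1$, and then dualize termwise. The paper instead first rewrites $\RHom(\fib(A \to F^\bullet), \ZZ)$ as $\cofib\big(\RHom(F^\bullet, \ZZ) \to \RHom(A, \ZZ)\big)$ and applies the cone formula there. In a write-up, delete the abandoned first attempt at the fiber (the ``three-term complex'' and the ``so wait'' correction); only the final degree bookkeeping is correct, and it is all you need.
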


\begin{proof}
    Let
    \[
    C^\bullet = \RHom(\fib(A \to F^\bullet), \ZZ) = \cofib\big(\RHom(F^\bullet, \ZZ) \to \RHom(A, \ZZ)\big).
    \]
    We may represent $\RHom(F^\bullet, \ZZ)$ by a complex of finitely generated free abelian groups with nonzero terms only in degrees $0$ and $1$.
    Likewise, we may represent $\RHom(A, \ZZ)$ by the finitely generated free abelian group $\Hom(A, \ZZ)$ concentrated in degree $0$.
    By the standard formula for cofibers / mapping cones (see e.g.\ \stacks{09KA}), this implies $C^\bullet$ may be represented by a complex of finitely generated abelian groups with nonzero terms only in degrees $-1$ and $0$, i.e.\ a complex in $\tTerm$.
\end{proof}

\begin{lem} \label{lem:gale_duality_WD2}
    Let $\Cc$ be a toric category.
    Then $\GDual(\Cc)$ is a toric category.
\end{lem}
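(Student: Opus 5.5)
The plan is to reduce the statement to \zcref{cor:skew_quotient_is_T_cat}. By \zcref{cons:gale_dual}, $\GDual(\Cc)$ is by definition the skew quotient $\Omega^\infty(\RHom(E^\bullet, \ZZ)) \vec{/} D_\CMon(M)$ along the composite homomorphism $D_\CMon(M) \to \RHom(E^\bullet, \ZZ)$. So it suffices to verify the three hypotheses of \zcref{cons:skew_quotient}:
\begin{enumerate}
    \item $D_\CMon(M)$ is a toric monoid;
    \item $\Omega^\infty(\RHom(E^\bullet, \ZZ))$ is a toric groupoid;
    \item the map between them is a homomorphism of commutative 2-monoids.
\end{enumerate}
Once these hold, \zcref{cor:skew_quotient_is_T_cat} gives the conclusion immediately.

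For (1), I would not assume $M$ is sharp, since the presentation $\Cc = \Omega^\infty(F^\bullet) \vec{/} M$ may be arbitrary. Any homomorphism $M \to \NN$ kills $M^\pm$, because $\NN$ is sharp. Hence $D_\CMon(M) = \Hom(M^\sharp, \NN) = D_\CMon(M^\sharp)$. By \zcref{prop:quotient_by_face_is_toric} applied to the face $M^\pm$, the monoid $M^\sharp$ is toric, and it is sharp by the remark following \zcref{prop:localization_and_quotient_by_face}. The duality antiequivalence $D_\CMon: (\CMon^{\sharp,\toric})\op \simto \CMon^{\sharp,\toric}$ then shows $D_\CMon(M)$ is sharp and toric.

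For (2), since $\Omega^\infty(F^\bullet)$ is a toric groupoid, \zcref{prop:2Term_vs_TGpd} lets us take $F^\bullet \in \tTerm$. The group $M^\gp$ is finitely generated free because $M$ is toric. \zcref{lem:gale_duality_WD1} with $A = M^\gp$ then shows $\RHom(E^\bullet, \ZZ)$ is represented by an object of $\tTerm$. Applying \zcref{prop:2Term_vs_TGpd} again, $\Omega^\infty(\RHom(E^\bullet, \ZZ))$ is a toric groupoid.

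For (3), the map is the composite of two pieces:
\begin{itemize}
    \item the discrete monoid homomorphism $D_\CMon(M) \hookrightarrow \Hom(M^\gp, \ZZ)$, given by extending $\phi$ to $\phi^\gp$;
    \item the image under $\Omega^\infty$ of the morphism $\Hom(M^\gp,\ZZ)[0] = \RHom(M^\gp, \ZZ) \to \RHom(E^\bullet, \ZZ)$ in $\Dsf^{[-1,0]}(\ZZ)$, induced by $E^\bullet = \fib(M^\gp \to F^\bullet) \to M^\gp$.
\end{itemize}
By \zcref{prop:deligne_classification}, $\Omega^\infty$ sends the latter to a homomorphism of abelian 2-groups and identifies $\Omega^\infty(\Hom(M^\gp,\ZZ)[0])$ with the discrete group $\Hom(M^\gp, \ZZ)$. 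Composing with the fully faithful inclusion $\CMon \hookrightarrow \tCMon$ of \zcref{ex:CMon_to_2CMon} gives the required homomorphism of commutative 2-monoids.

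I expect no serious obstacle here. The only points needing care are the possible non-sharpness of $M$ in step (1), handled by passing to $M^\sharp$, and checking that the target of \zcref{lem:gale_duality_WD1} really lands in $\tTerm$ rather than merely $\Dsf^{[-1,0]}(\ZZ)$, which that lemma already provides.
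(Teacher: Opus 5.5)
Your proposal is correct and follows the same route as the paper, which likewise notes that $\RHom(E^\bullet, \ZZ) \in \tTerm$ (via \zcref{lem:gale_duality_WD1}) and that $D_\CMon(M)$ is toric, and then invokes \zcref{cor:skew_quotient_is_T_cat}. The only difference is cosmetic: you reduce to $M^\sharp$ and use the sharp toric duality, whereas the paper cites Ogus's Theorem I.2.2.3 directly for toricity of $D_\CMon(M)$.
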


\begin{proof}
    In the setting of \zcref{cons:gale_dual}, we have already seen that $\RHom(E^\bullet, \ZZ) \in \tTerm$.
    Furthermore, by \cite[Theorem I.2.2.3]{ogus2018lectures}, $D_\CMon(M)$ is toric.
    Thus \zcref{cor:skew_quotient_is_T_cat} implies $\GDual(\Cc)$ is a toric category.
\end{proof}

\begin{lem} \label{lem:gale_duality_WD3}
    The above construction of $\GDual(\Cc)$ is independent of the choice of skew quotient presentation of $\Cc$.
\end{lem}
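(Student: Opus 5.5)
We will compare an arbitrary skew quotient presentation $\Cc \simeq \Omega^\infty(F^\bullet) \vec{/}_\phi M$ with the canonical one $\Cc = \iota_0 \Cc \vec{/} \PO \Cc$ of \zcref{thm:TCat_structure_thm}. This reduces the statement to showing that $\GDual$ computed from $(\phi: M \to \Omega^\infty F^\bullet)$ agrees with $\GDual$ computed from $(\PO \Cc \to \iota_0 \Cc)$. Write $\Ac = \Omega^\infty(F^\bullet)$. By \zcref{prop:skew_quotient_invariants}, the canonical data are $\PO \Cc = M^\sharp$ and $\iota_0 \Cc = \cofib(M^\pm \to \Ac)$. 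In $\tTerm$ the latter is represented by the cofiber $G^\bullet := \cofib(M^\pm \to F^\bullet)$, and the map $M^\sharp \to \iota_0\Cc$ is the one induced by $\phi$.

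The first step is to identify the two complexes $E^\bullet$. For the general presentation we have $E^\bullet = \fib(M^\gp \to F^\bullet)$. For the canonical one we have $E'^\bullet = \fib((M^\sharp)^\gp \to G^\bullet)$, and by \cite[Proposition I.1.3.5(1)]{ogus2018lectures}, $(M^\sharp)^\gp = M^\gp/M^\pm$. There is a map of cofiber sequences from $M^\pm \to M^\gp \to M^\gp/M^\pm$ to $M^\pm \to F^\bullet \to G^\bullet$, and the first components are equal. The octahedral axiom (equivalently, commuting fibers with cofibers in the stable category $\Dsf(\ZZ)$) then gives a natural quasi-isomorphism $E^\bullet \simto E'^\bullet$. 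This map is compatible with the maps $M^\gp \to E^\bullet[1]$ and $M^\gp/M^\pm \to E'^\bullet[1]$, or dually with the maps into $\RHom(M^\gp,\ZZ)$ used in \zcref{cons:gale_dual}. Applying $\RHom(-,\ZZ)$ yields an equivalence of toric groupoids $\Omega^\infty\RHom(E'^\bullet,\ZZ) \simeq \Omega^\infty\RHom(E^\bullet,\ZZ)$.

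The second step is to compare the monoids. Here $D_\CMon(M^\sharp) = D_\CMon(M)$, since homomorphisms to the sharp monoid $\NN$ kill units. The two structure maps $D_\CMon(M) \to \Omega^\infty\RHom(E^\bullet,\ZZ)$ and $D_\CMon(M^\sharp) \to \Omega^\infty\RHom(E'^\bullet,\ZZ)$ agree under the equivalence above, by the compatibility recorded in the first step. Hence the two arrows $D_\CMon(M) \to \Ac'$ are equivalent objects of $\Arr(\tCMon)$, and the skew quotients are equivalent. This uses only the explicit description in \zcref{cons:skew_quotient}, or alternatively the universal property \zcref{prop:skew_quotient_UP}.

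The main obstacle is naturality and coherence. The comparison should not depend on the chosen equivalence $\Cc \simeq \Ac \vec{/} M$, and it should be 2-functorial. For this we use \zcref{thm:TCat_structure_thm}: an equivalence of toric categories corresponds to an equivalence of arrows $(\PO\Cc \to \iota_0\Cc)$, which is a morphism in the homotopy 2-category of maps $M^\gp\to F^\bullet$ in $\Dsf(\ZZ)$ up to the $\tTerm$ identification of \zcref{prop:2Term_vs_TGpd}. The functors $\fib$, $\RHom(-,\ZZ)$ and $D_\CMon$ are functorial at this level, so the comparisons are canonical. Beyond this, the one point needing care is that the octahedral identification is canonical in this case. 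It is, because $M^\pm \to M^\gp$ is an honest injection of free groups and all maps can be modeled by strict maps of two-term complexes.
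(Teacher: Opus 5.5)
Your proposal is correct and follows the paper's proof essentially step for step. Like the paper, you compare an arbitrary presentation with the canonical sharp one $\Omega^\infty(\cofib(M^\pm\to F^\bullet))\vec{/}M^\sharp$, identify $E^\bullet\simeq (E')^\bullet$ from the $3\times 3$ diagram of cofiber sequences, use $D_\CMon(M^\sharp)=D_\CMon(M)$, and invoke the uniqueness of the sharp presentation from \zcref{thm:TCat_structure_thm}.

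One small slip: the compatibility you need is the commuting square formed by $E^\bullet\to(E')^\bullet$ with $E^\bullet\to M^\gp$ and $(E')^\bullet\to M^\gp/M^\pm$, which the same diagram provides. Compatibility with a map $M^\gp\to E^\bullet[1]$ says nothing, since the composite $M^\gp\to F^\bullet\to E^\bullet[1]$ is zero.
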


\begin{proof}
    Write $\Cc = \Omega^\infty(F^\bullet) \vec{/} M$ and use the notation of \zcref{cons:gale_dual} for the corresponding computation of $\GDual(\Cc)$.
    Let $(\GDual)'(\Cc)$ be the Gale dual of $\Cc$ constructed using the skew quotient presentation
    \[
        \Cc = \Omega^\infty(\cofib(M^\pm \to F^\bullet)) \vec{/} M^\sharp.
    \]
    We claim that there is a natural equivalence $D'_\Gale(\Cc) = \GDual(\Cc)$.

    Let 
    \[
        (E')^\bullet = \fib\big((M^\sharp)^\gp \to \cofib(M^\pm \to F^\bullet)\big) \in \Dsf(\ZZ).
    \]
    By \cite[Proposition I.1.3.5]{ogus2018lectures}, we can write $(M^\sharp)^\gp = M^\gp / M^\pm$.
    As complexes, this gives $(M^\sharp)^\gp = \cofib(M^\pm \to M^\gp)$.
    Thus all columns and the bottom two rows in the following square are cofiber sequences in $\Dsf(\ZZ)$:
    \[
    \begin{tikzcd}
        0 \rar \dar & E^\bullet \rar \dar & (E')^\bullet \dar \\
        M^\pm \rar \dar & M^\gp \rar \dar & \cofib(M^\pm \to M^\gp) \dar \\
        M^\pm \rar & F^\bullet \rar & \cofib(M^\pm \to F^\bullet)
    \end{tikzcd}
    \]
    It follows that the top row is a cofiber sequence, i.e.\ the natural map $E^\bullet \to (E')^\bullet$ is an equivalence.
    Thus
    \[
        \iota_0 (\GDual)'(\Cc) = \Omega^\infty(\RHom((E')^\bullet, \ZZ)) = \Omega^\infty(\RHom(E^\bullet, \ZZ)) = \iota_0 \GDual(\Cc).
    \]

    We can also see that
    \[
        \PO (\GDual)'(\Cc) = D_\CMon(M^\sharp) = D_\CMon(M) = \PO \GDual(\Cc).
    \]
    The natural maps $\PO \GDual(\Cc) \to \iota_0 \GDual(\Cc)$ and $\PO D'_\Gale(\Cc) \to \iota_0 D'_\Gale(\Cc)$ agree under these equivalences.
    Therefore \zcref{thm:TCat_structure_thm} implies that $\GDual(\Cc) = (\GDual)'(\Cc)$, as claimed.
    
    Because $M^\sharp$ is sharp, \zcref{thm:TCat_structure_thm} implies that the skew quotient presentation used in computing $D'_\Gale(\Cc)$ is unique.
    In other words, there exists a canonical skew quotient presentation we can use to compute the Gale dual, and this agrees with the Gale dual as computed using an arbitrary skew quotient presentation.
    Thus the construction of the Gale dual is independent of the choice of skew quotient presentation.
\end{proof}

Here are a few key examples to keep in mind:

\begin{ex} \label{ex:ZmodN_Gale_dual}
    Let $\Cc = \ZZ \vec{/} \NN$, so $F^\bullet = \ZZ$.
    Then $E^\bullet = \fib(\ZZ \simto \ZZ) = 0$, so $\RHom(E^\bullet, \ZZ) = 0$.
    Using $\Hom(\NN, \NN) = \NN$, we see that
    \[
        \GDual(\ZZ \vec{/} \NN) = 0 \vec{/} \NN = \Bsf \NN.
    \]
\end{ex}

\begin{ex} \label{ex:Z_Gale_dual}
    Let $\Cc = \ZZ = \ZZ \vec{/} 0$, so $F^\bullet = \ZZ$.
    Then $E^\bullet = \fib(0 \to \ZZ) = \ZZ[-1]$, so $\RHom(E^\bullet, \ZZ) = \ZZ[1]$ and $\Omega^\infty \RHom(E^\bullet, \ZZ) = \Bsf \ZZ$.
    Therefore
    \[
        \GDual(\ZZ) = (\Bsf \ZZ) \vec{/} 0 = \Bsf \ZZ.
    \]
\end{ex}

The above definition of Gale duality is quite concrete but admittedly a bit hard to understand ``conceptually.''
We have defined it in this way to highlight the connections with the classical theory of Gale duality.
By using the language of toric categories, we are able to provide an alternative axiomatic description of Gale duality:

\begin{thm}[Categorical Gale duality] \label{thm:gale_duality}
    The following are true:
    \begin{enumerate}
        \item Gale duality defines a contravariant 2-functor $\GDual: \TCat\op \to \TCat$.
        \item There is a natural equivalence $(\GDual)^2 = \id$ (so in particular $\GDual$ is a contravariant autoequivalence).
        \item The 2-functor $\GDual$ is uniquely characterized (up to essentially unique equivalence) by the properties:
        \begin{itemize}
            \item $(\GDual)^2 = \id$,
            \item $\GDual(\ZZ) = B\ZZ$,
            \item $\GDual(\ZZ \vec{/} \NN) = \Bsf \NN$, and
            \item $\GDual$ sends the skew quotient map $\ZZ \twoheadrightarrow \ZZ \vec{/} \NN$ to the inclusion map $\Bsf \NN \hookrightarrow \Bsf \ZZ$.
        \end{itemize}
    \end{enumerate}
\end{thm}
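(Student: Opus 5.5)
The plan is to exploit \zcref{thm:TCat_structure_thm}, which identifies $\TCat$ with the 2-category $\Arr^{\sharp,\toric\to\TGpd}(\tCMon)$ of morphisms $\phi: M \to \Omega^\infty(F^\bullet)$ with $M$ sharp toric and $F^\bullet \in \tTerm$. Since $M^\gp$ is free and $\Hom_{\tAb}(M, \Omega^\infty F^\bullet) = \Hom_{\Dsf(\ZZ)}(M^\gp, F^\bullet)$, such an object is the same as a pair consisting of a sharp toric monoid $M$ and a map $M^\gp \to F^\bullet$ in $\Dsf^{[-1,0]}(\ZZ)$. Equivalently, it is a pair $(M, E^\bullet \to M^\gp)$ with $E^\bullet = \fib(M^\gp \to F^\bullet)$, a complex whose cone lands in $\tTerm$.

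In this language $\GDual$ sends $(M, M^\gp \to F^\bullet)$ to $(D_\CMon(M), D_\CMon(M)^\gp = \RHom(M^\gp,\ZZ) \to \RHom(E^\bullet,\ZZ))$. I would check that this is functorial: a morphism consists of a monoid map $M \to M'$ together with a compatible map of complexes, and it induces a map of fiber sequences. Applying $\RHom(-,\ZZ)$ and $D_\CMon$, both of which are contravariant, then gives the dual morphism. The 2-morphisms, which are homotopies, transport in the same way. This proves (1), with \zcref{lem:gale_duality_WD3} handling well-definedness.

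For (2), compute $\GDual^2$. The fiber of $\RHom(M^\gp,\ZZ) \to \RHom(E^\bullet,\ZZ)$ is $\RHom(\cofib(E^\bullet \to M^\gp),\ZZ)[-1] = \RHom(F^\bullet,\ZZ)[-1]$. Dualizing again gives $F^\bullet[1]$, and this has to be reconciled with the degree conventions so that we recover $F^\bullet$. I expect this to amount to rotating the fiber sequence $E \to M^\gp \to F$ as in \zcref{ex:Z_Gale_dual}. Combined with $D_\CMon^2(M) = M$ from \cite[Theorem I.2.2.3]{ogus2018lectures} and the reflexivity $\RHom(\RHom(-,\ZZ),\ZZ) = \id$ on perfect complexes, this yields $\GDual^2 \simeq \id$ naturally. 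The examples \zcref{ex:ZmodN_Gale_dual} and \zcref{ex:Z_Gale_dual}, together with a direct check on the map $\ZZ \to \ZZ \vec{/} \NN$, verify the listed properties.

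For uniqueness in (3), the idea is that any involutive contravariant autoequivalence $D$ is determined by what it does on the corepresenting objects. By \zcref{ex:i0_corepresented} and \zcref{ex:P0_corepresented}, $\ZZ$ corepresents $\iota_0$ and $\ZZ\vec{/}\NN$ corepresents $\PO$. An autoequivalence satisfying the axioms therefore transports $\iota_0 \Cc = \Hom(\ZZ,\Cc)$ to $\Hom(D\Cc, \Bsf\ZZ)$ and $\PO\Cc$ to $\Hom(D\Cc,\Bsf\NN)$, compatibly with the map $\PO \to \iota_0$. Thus $\Hom(D\Cc,\Bsf\NN) \to \Hom(D\Cc,\Bsf\ZZ)$ is identified with $\PO\Cc\to\iota_0\Cc$.

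Next I would show that the functors $\Hom(-,\Bsf\NN)$ and $\Hom(-,\Bsf\ZZ)$ recover a toric category via the structure theorem. By \zcref{prop:skew_quotient_UP}, $\Hom(\Ac\vec{/}M,\Bsf\NN)$ is a fiber product involving $D_\CMon(M)$ and maps $\Ac\to\Bsf\ZZ$, that is, duals shifted by one. This computes the arrow $\PO D\Cc\to\iota_0 D\Cc$ functorially and matches it with the formula of \zcref{cons:gale_dual}. By \zcref{thm:TCat_structure_thm}, $D \simeq \GDual$.

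The main obstacle is this last reconstruction step, which requires $D$ to be determined by contravariant corepresented data. The delicate part is showing that $\Bsf\ZZ$ and $\Bsf\NN$, with the inclusion between them, are jointly ``co-generating,'' so that the hom 2-groupoids into them determine $D\Cc$ functorially up to essentially unique equivalence, including the $\Ext^1$ ambiguity of \zcref{prop:strict_trivialization}. I would handle this by working entirely in $\Dsf(\ZZ)$ via \zcref{prop:deligne_classification}, where $\Hom(-,\Bsf\ZZ)$ becomes $\RHom(-,\ZZ[1])$ and reflexivity gives the required determination.
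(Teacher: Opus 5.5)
Your treatment of (1), and your overall strategy of working through \zcref{thm:TCat_structure_thm} with the canonical sharp presentation, match the paper. There are two concrete problems, one in (2) and one in (3).

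\textbf{The shift in (2).} Your fiber computation carries a spurious shift, and the ``rotation'' you hope for cannot remove it. Since $\RHom(-,\ZZ)$ is exact, it sends the cofiber sequence $E^\bullet \to M^\gp \to F^\bullet$ to the fiber sequence $\RHom(F^\bullet,\ZZ) \to \RHom(M^\gp,\ZZ) \to \RHom(E^\bullet,\ZZ)$. Hence
\[
    G^\bullet := \fib\big(D_\CMon(M)^\gp \to \RHom(E^\bullet,\ZZ)\big) = \RHom(F^\bullet,\ZZ),
\]
with no shift, and reflexivity gives $\RHom(G^\bullet,\ZZ) = F^\bullet$ on the nose. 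Together with $D_\CMon^2(M) = M$, this is the paper's entire proof of (2). If your $[-1]$ were really there, you would obtain $\Omega^\infty(F^\bullet[1])$, which is wrong. Test it on $\Cc = \ZZ$: here $E^\bullet = \ZZ[-1]$, $\RHom(E^\bullet,\ZZ) = \ZZ[1]$ and $G^\bullet = \fib(0 \to \ZZ[1]) = \ZZ$. Your formula would instead give $G^\bullet = \ZZ[-1]$, hence $\GDual^2(\ZZ) = \Bsf\ZZ$.

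\textbf{The obstacle in (3).} The ``co-generation'' obstacle comes from applying corepresentability to $\Cc$ instead of to $D\Cc$. Your identification of $\Hom(D\Cc,\Bsf\NN) \to \Hom(D\Cc,\Bsf\ZZ)$ with $\PO\Cc \to \iota_0\Cc$ only describes maps out of $D\Cc$. That is why you then seem to need $\Bsf\ZZ$ and $\Bsf\NN$ to detect objects.

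The missing move is to substitute $D\Cc$ for $\Cc$ and use $D^2 = \id$:
\[
    \iota_0 D\Cc = \iota_0\Fun^\oplus(\ZZ, D\Cc) \simeq \iota_0\Fun^\oplus(D^2\Cc, D\ZZ) = \iota_0\Fun^\oplus(\Cc,\Bsf\ZZ).
\]
Likewise $\PO D\Cc \simeq \iota_0\Fun^\oplus(\Cc,\Bsf\NN)$. By the fourth axiom, the structure map $\PO D\Cc \to \iota_0 D\Cc$ is postcomposition with $\Bsf\NN \hookrightarrow \Bsf\ZZ$.

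Everything on the right-hand sides is expressed in terms of $\Cc$ alone. So \zcref{thm:TCat_structure_thm} forces $D\Cc \simeq \iota_0\Fun^\oplus(\Cc,\Bsf\ZZ) \vec{/} \iota_0\Fun^\oplus(\Cc,\Bsf\NN)$, naturally in $\Cc$, for any $D$ satisfying the axioms, and in particular for $\GDual$. That is the paper's proof of uniqueness. You need no detection statement, no detour through $\Dsf(\ZZ)$, and no matching against \zcref{cons:gale_dual}.

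The $\Ext^1$ ambiguity of \zcref{prop:strict_trivialization} never enters either. The structure theorem is a canonical 2-equivalence, not a choice of discrete splitting.

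Incidentally, $\iota_0\Fun^\oplus(\Ac\vec{/}M,\Bsf\NN)$ is simply $D_\CMon(M)$, because $\iota_0\Bsf\NN$ is trivial. Maps $\Ac \to \Bsf\ZZ$ appear only in the $\Bsf\ZZ$ computation.
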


\begin{proof}
    (1). By \zcref{thm:TCat_structure_thm}, it is possible to write every toric category as a skew quotient in a functorial manner.
    All of the other steps in the construction of $\GDual$ are functorial, so we see that $\GDual$ is a (contravariant) 2-functor.

    (2). Let $\Cc$ be a toric category.
    Take the skew quotient presentation $\Cc = \Omega^\infty(F^\bullet) \vec{/} M$ of \zcref{thm:TCat_structure_thm}, so $M$ is sharp and $D_\CMon^2(M) = M$.
    Using the notation of \zcref{cons:gale_dual}, we have $\GDual(\Cc) = \Omega^\infty(\RHom(E^\bullet, \ZZ)) \vec{/} D_\CMon(M)$.
    Let $G^\bullet = \fib\big(D_\CMon(M)^\gp \to \RHom(E^\bullet, \ZZ)\big) \in \Dsf(\ZZ)$.
    By \cite[Theorem I.2.2.3]{ogus2018lectures}, we have $D_\CMon(M)^\gp = \Hom(M^\gp, \ZZ) = \RHom(M^\gp, \ZZ)$.
    Therefore
    \[
        G^\bullet = \fib\big(\RHom(M^\gp, \ZZ) \to \RHom(E^\bullet, \ZZ)\big) = \RHom\big(\cofib(E^\bullet \to M^\gp), \ZZ\big) = \RHom(F^\bullet, \ZZ),
    \]
    so $\RHom(G^\bullet, \ZZ) = F^\bullet$.
    We see that 
    \[
        \GDual^2(\Cc) = \Omega^\infty(\RHom(G^\bullet, \ZZ)) \vec{/} D_\CMon^2(M) = \Omega^\infty(F^\bullet) \vec{/} M = \Cc
    \]
    as desired.

    (3). By \zcref{thm:TCat_structure_thm}, for any toric category $\Cc$, the toric category $\GDual(\Cc)$ is determined by the strictly abelian 2-group $\iota_0 \GDual(\Cc)$, the sharp toric monoid $\PO \GDual(\Cc)$, and the natural homomorphism $\PO \GDual(\Cc) \to \iota_0 \GDual(\Cc)$.
    We can compute $\iota_0 \GDual(\Cc)$ directly using the stated properties:
    \begin{align*}
        \iota_0 \GDual(\Cc) &= \iota_0 \Fun^\oplus(\ZZ, \GDual(\Cc)) \textrm{ by \zcref{ex:i0_corepresented}} \\
        &= \iota_0 \Fun^\oplus((\GDual)^2(\Cc), \GDual(\ZZ)) \\
        &= \iota_0 \Fun^\oplus(\Cc, \Bsf \ZZ).
    \end{align*}
    Likewise, we can compute
    \begin{align*}
        \PO \GDual(\Cc) &= \iota_0 \Fun^\oplus(\ZZ \vec{/} \NN, \GDual(\Cc)) \textrm{ by \zcref{ex:P0_corepresented}} \\
        &= \iota_0 \Fun^\oplus((\GDual)^2(\Cc), \GDual(\ZZ \vec{/} \NN)) \\
        &= \iota_0 \Fun^\oplus(\Cc, \Bsf \NN).
    \end{align*}
    From these descriptions and our description of $\GDual$ of the skew quotient map $\ZZ \to \ZZ \vec{/} \NN$, we can recover the map $\PO \GDual(\Cc) \to \iota_0 \GDual(\Cc)$: it is the map $\iota_0 \Fun^\oplus(\Cc, \Bsf \NN) \to \iota_0 \Fun^\oplus(\Cc, \Bsf \ZZ)$ obtained from the inclusion $\Bsf \NN \hookrightarrow \Bsf \ZZ$.
    Thus $\GDual(\Cc)$ is uniquely specified by the stated properties.
\end{proof}

\begin{cor} \label{cor:gale_dual_as_skew_quotient}
    Let $\Cc$ be a toric category.
    Then 
    \[
        \GDual(\Cc) = \iota_0 \Fun^{\oplus}(\Cc, \Bsf \ZZ) \vec{/} \iota_0 \Fun^{\oplus}(\Cc, \Bsf \NN),
    \]
    where the map $\Fun^{\oplus}(\Cc, \Bsf \NN) \to \Fun^{\oplus}(\Cc, \Bsf \ZZ)$ is induced by the standard inclusion $\NN \hookrightarrow \ZZ$.
\end{cor}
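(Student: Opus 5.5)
This corollary is essentially a repackaging of the computation in part (3) of the proof of \zcref{thm:gale_duality}. I would start from the structure theorem, \zcref{thm:TCat_structure_thm}, applied to $\GDual(\Cc)$. It gives a natural equivalence
\[
\GDual(\Cc) \simeq \iota_0 \GDual(\Cc) \vec{/} \PO \GDual(\Cc),
\]
where the structure map is the natural homomorphism $\cod: \PO \GDual(\Cc) \to \iota_0 \GDual(\Cc)$. The task is therefore to identify three things: the strictly abelian 2-group $\iota_0 \GDual(\Cc)$, the sharp toric monoid $\PO \GDual(\Cc)$, and the map $\cod$ between them.

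For the first two, I would use the corepresentability results. By \zcref{ex:i0_corepresented} and \zcref{ex:P0_corepresented},
\[
\iota_0 \GDual(\Cc) = \iota_0\Fun^\oplus(\ZZ, \GDual(\Cc)), \qquad \PO \GDual(\Cc) = \iota_0\Fun^\oplus(\ZZ\vec{/}\NN, \GDual(\Cc)).
\]
Next I would apply \zcref{thm:gale_duality}(1) and (2): $\GDual$ is a contravariant 2-autoequivalence with $\GDual^2 = \id$, so it induces equivalences
\[
\iota_0\Fun^\oplus(X, \GDual(\Cc)) \simeq \iota_0\Fun^\oplus(\Cc, \GDual(X)).
\]
Concretely, one applies $\GDual$ to a functor $X \to \GDual(\Cc)$ and precomposes with the equivalence $\Cc \simeq \GDual^2(\Cc)$. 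Substituting the computations $\GDual(\ZZ) = \Bsf\ZZ$ (\zcref{ex:Z_Gale_dual}) and $\GDual(\ZZ\vec{/}\NN) = \Bsf\NN$ (\zcref{ex:ZmodN_Gale_dual}) yields the two factors appearing in the statement.

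For the map, note that $\cod$ corresponds, under corepresentability, to precomposition with the skew quotient functor $\ZZ \to \ZZ\vec{/}\NN$. Naturality of the equivalence above in $X$ then transports $\cod$ to postcomposition with $\GDual$ of that functor. By \zcref{thm:gale_duality}(3), $\GDual$ of the functor $\ZZ \to \ZZ\vec{/}\NN$ is the inclusion $\Bsf\NN \hookrightarrow \Bsf\ZZ$, so we obtain exactly the map induced by $\NN \hookrightarrow \ZZ$.

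The main obstacle is verifying this last claim, which the uniqueness statement in (3) asserts as a property rather than computes. I would check it by hand using \zcref{cons:gale_dual}. Applied to $\ZZ \to \ZZ\vec{/}\NN$, the relevant map on fibers is $E^\bullet = \ZZ[-1] \to 0$, and the map on monoids is $0 \to \NN$. Dualizing gives the pair $(0 \to \ZZ[1],\ \NN \to 0)$, that is, the functor $\Bsf\NN = 0\vec{/}\NN \to \Bsf\ZZ$ that sends $n \mapsto n$ through the map $D_\CMon(\NN) \to \RHom(E^\bullet,\ZZ)$, which is the inclusion. Throughout, the naturality of all identifications must be tracked only up to coherent 2-isomorphism. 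This is harmless because the source $\PO$ is discrete (1-categorical), so all the relevant 2-cells are unique.
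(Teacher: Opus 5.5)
Your main argument is the paper's own proof. The paper proves the corollary by combining the computation in part (3) of the proof of \zcref{thm:gale_duality} with \zcref{thm:TCat_structure_thm}. If, like the paper, you take the behaviour of $\GDual$ on $\ZZ \to \ZZ\vec{/}\NN$ from the statement of (3), your argument is complete.

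The by-hand check you add for that property does not work as written, and the principle you use to dismiss 2-cells is false. Write $\Cc_1 = \ZZ\vec{/}0$ and $\Cc_2 = \ZZ\vec{/}\NN$, so $E_1^\bullet = \ZZ[-1]$ and $E_2^\bullet = 0$. The structure maps of the two duals are $D_\CMon(\NN) = \NN \to \RHom(E_2^\bullet,\ZZ) = 0$ and $D_\CMon(0) = 0 \to \RHom(E_1^\bullet,\ZZ) = \ZZ[1]$. Neither of these is the inclusion, and both composites $\NN \to \Bsf\ZZ$ around the induced square are trivial. By \zcref{prop:skew_quotient_UP}, the induced functor $\Bsf\NN \to \Bsf\ZZ$ sends the morphism $n$ to $\eta_n \in \Aut_{\Bsf\ZZ}(*) = \ZZ$, where $\eta$ is the 2-cell filling that square. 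So the entire content of the check lies in $\eta$.

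That 2-cell comes from the homotopy in the map of fiber sequences $E_i^\bullet \to M_i^\gp \to F_i^\bullet$. The two nullhomotopies of $E_1^\bullet = \ZZ[-1] \to M_2^\gp = \ZZ$, one through $M_1^\gp = 0$ and one through $E_2^\bullet \simeq 0$, differ by a generator of $\Hom(\ZZ,\ZZ)$. After applying $\RHom(-,\ZZ)$ and precomposing with $D_\CMon(\NN) \hookrightarrow \Hom(\NN^\gp,\ZZ)$, this becomes $n \mapsto \pm n$. The sign depends on your fiber conventions and can be absorbed into the identification $\GDual(\ZZ) = \Bsf\ZZ$.

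In particular, a discrete source does not make 2-cells unique when the target has automorphisms. The 2-cells between two homomorphisms $\NN \to \Bsf\ZZ$ form $\Hom(\NN,\ZZ) = \ZZ$, and discarding them here would give the zero functor rather than the inclusion. In your main argument the 2-cells are harmless for a different reason: any 2-isomorphism $\phi \cong \phi'$ of structure maps induces an equivalence $\Ac\vec{/}_\phi M \simeq \Ac\vec{/}_{\phi'} M$, which is all the corollary needs.
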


\begin{proof}
    This follows by the proof of \zcref{thm:gale_duality}(3) combined with \zcref{thm:TCat_structure_thm}.
\end{proof}

\begin{qn} \label{qn:gale_real_params}
    In convex geometry, Gale duality is often phrased in terms of real vector spaces, with no reference to the questions of rationality which appear in toric geometry.
    Can the axiomatic perspective on Gale duality (\zcref{thm:gale_duality}) be extended to account for real parameters?
\end{qn}

\begin{rmk}
    An affirmative answer to \zcref{qn:gale_real_params} would likely have applications to the study of quantum toric stacks (see e.g.\ \cite{katzarkov2021quantum}) and symplectic duality (see e.g.\ \cite{braden2010gale}).
\end{rmk}

The axiomatic perspective on Gale duality can make it easier to understand what Gale duality does to morphisms.
For instance:

\begin{prop} \label{prop:gale_duality_localization}
    Gale duality exchanges localizations and inclusions of faces: if $\Cc$ is a toric category and $S \subset \PO \Cc$ is a face, then $\GDual$ sends the localization functor $\Cc \to \Cc[S\inv]$ to the inclusion functor 
    \[
        \iota_0 \GDual(\Cc) \vec{/} S^\bot \hookrightarrow \iota_0 \GDual(\Cc) \vec{/} \Hom(\PO \Cc, \NN) = \GDual(\Cc).
    \]
    In particular, $\iota_0 \GDual(\Cc) = \GDual(\Cc^\gpd)$.
\end{prop}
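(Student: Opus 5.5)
We work with the canonical presentation $\Cc = \iota_0 \Cc \vec{/} \PO \Cc$ of \zcref{thm:TCat_structure_thm}. Write $M = \PO \Cc$ (sharp toric) and $\iota_0 \Cc = \Omega^\infty(F^\bullet)$. By \zcref{prop:localization_properties}(3), $\Cc[S\inv] = \Omega^\infty(F^\bullet) \vec{/} M[-S]$, and the localization functor $\Cc \to \Cc[S\inv]$ is the skew quotient map induced by the commutative square consisting of the identity on $\Omega^\infty(F^\bullet)$ and the localization $M \to M[-S]$. Since \zcref{lem:gale_duality_WD3} shows that $\GDual$ may be computed from any skew quotient presentation, and functoriality of the construction on maps of presentations follows from \zcref{thm:gale_duality}(1), we compute $\GDual$ of this map directly via \zcref{cons:gale_dual}.

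For $\Cc$ we set $E^\bullet = \fib(M^\gp \to F^\bullet)$. For $\Cc[S\inv]$ we use $M[-S]^\gp = M^\gp$, which holds because $M[-S] \subset M^\gp$ for integral $M$. Hence the complex $E^\bullet$ is the same, and the induced map on $\Omega^\infty\RHom(E^\bullet,\ZZ)$ is the identity. On the monoid side, we must show that $D_\CMon(M[-S]) \to D_\CMon(M)$ is the inclusion of $S^\bot$. A homomorphism $M[-S] \to \NN$ is the same as a homomorphism $\phi: M \to \NN$ that extends to $S^\gp$. Since $\NN$ is sharp, this happens exactly when $\phi|_S = 0$, i.e.\ $\phi \in S^\bot$. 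All maps to $M^\gp$ are compatible, so the structure maps to $\Omega^\infty\RHom(E^\bullet,\ZZ)$ agree. Therefore
\[
\GDual(\Cc[S\inv]) = \iota_0\GDual(\Cc) \vec{/} S^\bot,
\]
and $\GDual$ of the localization is the functor induced by $S^\bot \hookrightarrow D_\CMon(M)$ and the identity on $\iota_0 \GDual(\Cc)$. This functor is the identity on objects and the evident inclusion on mapping sets, i.e.\ the stated inclusion functor.

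For the ``in particular'' claim, take $S = M$. Then $\Cc[S\inv] = \Cc^\gpd$ by definition, and $M^\bot = 0$ because $M^\gp$ is generated by $M$. So $\GDual(\Cc^\gpd) = \iota_0\GDual(\Cc) \vec{/} 0 = \iota_0 \GDual(\Cc)$.

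The main obstacle is justifying that Gale duality can be computed on a morphism using non-canonical presentations. The presentation $\Omega^\infty(F^\bullet)\vec{/}M[-S]$ is not sharp, so it is not the canonical one, and \zcref{lem:gale_duality_WD3} only treats objects. I would handle this by passing through the canonical presentation $\cofib(S^\gp \to \iota_0\Cc) \vec{/} (M/S)$ of \zcref{prop:localization_properties}(2), with $M[-S]^\sharp = M/S$. The comparison of the $E$-complexes in that lemma is natural, via the same $3\times 3$ diagram, so it is compatible with the map from $\Cc$. Alternatively, the whole argument can be run through the formula of \zcref{cor:gale_dual_as_skew_quotient}: functors $\Cc[S\inv] \to \Bsf\NN$ are exactly those killing $S$, because $\NN$ has no nonzero units, while functors $\Cc[S\inv] \to \Bsf\ZZ$ are the same as functors $\Cc \to \Bsf\ZZ$, because $\Bsf\ZZ$ is a groupoid. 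This recovers both identifications at once.
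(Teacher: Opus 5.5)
Your argument is correct, but your main route differs from the paper's. The paper never touches the complexes $E^\bullet$. It uses the corepresentability formulas from the proof of \zcref{thm:gale_duality}(3), namely $\iota_0\GDual(\Dc) = \iota_0\Fun^\oplus(\Dc,\Bsf\ZZ)$ and $\PO\GDual(\Dc) = \iota_0\Fun^\oplus(\Dc,\Bsf\NN)$, and evaluates them at $\Dc = \Cc[S\inv]$. For the first it uses the universal property of localization. For the second it uses the sharp presentation $\cofib(S^\gp\to\iota_0\Cc)\vec{/}(\PO\Cc/S)$ of \zcref{prop:localization_properties}(2) together with \zcref{prop:skew_quotient_UP}. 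It then reassembles the answer via \zcref{thm:TCat_structure_thm}; this is essentially your ``alternatively'' paragraph.

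Because those formulas are visibly natural in $\Dc$, the paper gets functoriality for free. However, it leaves the identification of the induced morphism to ``tracing through the definitions.'' Your route through \zcref{cons:gale_dual} with the non-sharp presentation $\iota_0\Cc\vec{/}M[-S]$ of \zcref{prop:localization_properties}(3) makes that identification transparent. Since $M[-S]^\gp = M^\gp$, the complex $E^\bullet$ is literally unchanged, so the map on $\iota_0$ is the identity and the map on monoids is the restriction $D_\CMon(M[-S]) = S^\bot \hookrightarrow D_\CMon(M)$.

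The price is exactly the point you flag: you need the comparison of \zcref{lem:gale_duality_WD3} to be natural in morphisms of presentations, which the paper never states. Your fix is sound. The composite of $(M\to F^\bullet)\to(M[-S]\to F^\bullet)$ with the comparison map to $(M/S\to\cofib(S^\gp\to F^\bullet))$ is the canonical presentation of the localization functor, and the $3\times 3$ equivalence $E\simto E'$ is induced by a map of presentations, hence functorial.

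One small imprecision: $\Cc[(\PO\Cc)\inv] = \Cc^\gpd$ is not literally by definition, since $\Cc^\gpd$ inverts all morphisms. You need \zcref{lem:translation_lemma} (every morphism is a translate of an element of $\PO\Cc$), though the paper glosses this as well.
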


\begin{proof}
    We can compute
    \begin{align*}
        \iota_0 \GDual(\Cc[S\inv]) &= \iota_0 \Fun^\oplus(\Cc[S\inv], \Bsf\ZZ) \\
        &= \iota_0 \Fun^\oplus(\Cc, \Bsf\ZZ) \textrm{ by the universal property of $\Cc[S\inv]$} \\
        &= \iota_0 \Cc
    \end{align*}
    and
    \begin{align*}
        \PO \GDual(\Cc[S\inv]) &= \iota_0 \Fun^\oplus(\Cc[S\inv], \Bsf\NN) \\
        &= \iota_0 \Fun^\oplus\big(\cofib(S^\gp \to \iota_0 \Cc) \vec{/} (\PO \Cc / S), \Bsf \NN\big) \textrm{ by \zcref{prop:localization_properties}} \\
        &= 0 \times_0 \Hom(\PO \Cc / S, \NN) \textrm{ by \zcref{prop:skew_quotient_UP}} \\
        &= \Hom(\PO \Cc / S, \NN) \\
        &= \Hom(\PO \Cc, \NN) \times_{\Hom(S, \NN)} 0 \\
        &= \bset{\phi: S \to \NN}{\phi(S) = 0} \\
        &= S^\bot
    \end{align*}
    so 
    \[
        \GDual(\Cc[S\inv]) = \iota_0 \GDual(\Cc) \vec{/} \PO \GDual(\Cc) = \iota_0 \GDual(\Cc) \vec{/} S^\bot
    \]
    by \zcref{thm:TCat_structure_thm}.
    Tracing through the definitions shows that the natural morphism $\GDual(\Cc[S\inv]) \to \GDual(\Cc)$ agrees with the morphism induced by face inclusion, as needed.
    The ``in particular'' claim follows if we take $S = \PO \Cc$, so $\Cc[S\inv] = \Cc^\gpd$ and $S^\bot = 0$.
\end{proof}

\subsection{A collection of useful examples}

\begin{ex} \label{ex:Gale_dual_of_lattice}
    Let $A$ be a finitely generated free abelian group.
    Then $A = A \vec{/} 0$, so $\PO \GDual(A) = D_\CMon(0) = 0$ and 
    \[
        \iota_0 D_\Gale(A) = \iota_0 \Fun^\oplus(A, \Bsf \ZZ) = \iota_0 \Omega^\infty \RHom(A, \ZZ[1]) = \iota_0 \Omega^\infty D_\Ab(A)[1] = \Bsf D_\Ab(A).
    \]
    Thus
    \[
        \GDual(A) = \Bsf D_\Ab(A).
    \]
    Since $\GDual$ is an involution, we must also have
    \[
        \GDual(\Bsf A) = D_\Ab(A).
    \]
\end{ex}

\begin{ex} \label{ex:Gale_dual_classical_toric}
    Let $M$ be a toric monoid.
    Then \zcref{cor:gale_dual_as_skew_quotient} implies
    \begin{align*}
        \GDual(\Bsf M) &= \iota_0 \Fun^\oplus(\Bsf M, \Bsf \ZZ) \vec{/} \iota_0 \Fun^\oplus(\Bsf M, \Bsf \NN) \\
        &= \Hom(M, \ZZ) \vec{/} \Hom(M, \NN).
    \end{align*}
    The skew quotient categories arising in this way are precisely those of the form $N \vec{/} (\ul{\sigma} \cap N)$, where $N$ is a finitely generated free abelian group and $\ul{\sigma}$ is a strongly convex rational polyhedral cone in $N \otimes \RR$.
    
    Indeed, $N = \Hom(M, \ZZ)$ is a finitely generated free abelian group and $\Hom(M, \NN)$ is a finitely generated submonoid of $N$ which is saturated in $N$, so \zcref{prop:sat_submon_of_lattice} implies every skew quotient $\Hom(M, \ZZ) \vec{/} \Hom(M, \NN)$ arises in this way.
    Conversely, given such a pair $(N, \ul{\sigma})$, let
    \[
        M = \bset{\phi: N \to \ZZ}{\phi(\ul{\sigma} \cap N) \geq 0}.
    \]
    Then the theory of dual cones (specifically \cite[Proposition 1.2.4 and 1.2.12]{cls}) implies $N = \Hom(M, \ZZ)$ (using strong convexity of $\ul{\sigma}$) and $\ul{\sigma} \cap N = \Hom(M, \NN)$.
    Thus $\GDual(\Bsf M) = N \vec{/} (N \cap \ul{\sigma})$.
    Because $\GDual$ is an involution, we obtain
    \[
        \GDual(N \vec{/} (\ul{\sigma} \cap N)) = \Bsf M = \Bsf \big(\bset{\phi: N \to \ZZ}{\phi(\ul{\sigma} \cap N) \geq 0}\big).
    \]
\end{ex}

\begin{ex}
    As a special case of the above, we see that if $N$ is a finitely generated free abelian group and $\ul{\sigma} \subset N \otimes \RR$ is a strongly convex rational polyhedral cone, then 
    \[
        \GDual(\Bsf(\ul{\sigma} \cap N)) = \Hom(\ul{\sigma} \cap N, \ZZ) \vec{/} \Hom(\ul{\sigma} \cap N, \NN)
    \]
    One can show $\GDual(\Bsf (\ul{\sigma} \cap N))$ is a poset with underlying objects given by cosets of $\bset{\phi: N \to \ZZ}{\phi(\ul{\sigma} \cap N) = 0}$ in $\Hom(N, \ZZ)$.

    Let $M = \Hom(N, \ZZ)$, and let $\Dsf_c^b(M \otimes \RR, k)$ be the derived dg-category of constructible sheaves of $k$-vector spaces on the real vector space $M \otimes \RR$.
    Passing to the $k$-linearization and using \cite[Proposition 3.3]{fang2011categorification}, we see that $k\angles{\GDual(\Bsf (\ul{\sigma} \cap N))}$ can be identified with the full subcategory of $\Dsf_c^b(M \otimes \RR, k)$ generated by certain ``costandard sheaves'' $\Theta(\ul{\sigma}, \chi)$.
    In particular, Gale duality is closely related to homological mirror symmetry and the coherent-constructible correspondence for toric varieties.
\end{ex}

\begin{ex} \label{ex:gale_dual_of_product}
    Let $\Cc_1$ and $\Cc_2$ be toric categories.
    The Cartesian product 
    \[
        \Cc_1 \times \Cc_2 = (\iota_0 \Cc_1 \times \iota_0 \Cc_2) \vec{/} (\PO \Cc_1 \times \PO \Cc_2)
    \]
    is also toric, and following the construction of \zcref{cons:gale_dual} shows that
    \[
        \GDual(\Cc_1 \times \Cc_2) = \GDual(\Cc_1) \times \GDual(\Cc_2).
    \]
\end{ex}

\subsection{Limits and colimits of toric categories}

\begin{prop} \label{prop:Tcat_finite_lim_colim}
    The 2-category $\TCat$ admits finite limits and colimits.
\end{prop}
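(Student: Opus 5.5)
Since Gale duality $\GDual: \TCat\op \to \TCat$ is a contravariant autoequivalence with $(\GDual)^2 = \id$ (\zcref{thm:gale_duality}), it sends finite limits in $\TCat$ to finite colimits and vice versa. It therefore suffices to prove that $\TCat$ admits finite limits; finite colimits are then given by
\[
    \colim_i \Cc_i = \GDual\big(\lim_i \GDual(\Cc_i)\big).
\]
For finite limits, it suffices to produce a final object and pullbacks, or equivalently a final object, finite products and fibers/fiber products. The zero category $0 = 0 \vec{/} 0$ is toric and is final. Finite products exist by \zcref{ex:gale_dual_of_product}: we have $\Cc_1 \times \Cc_2 = (\iota_0 \Cc_1 \times \iota_0 \Cc_2) \vec{/} (\PO \Cc_1 \times \PO \Cc_2)$.

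For fiber products, I would use the structure theorem (\zcref{thm:TCat_structure_thm}), which identifies $\TCat$ with $\Arr^{\sharp,\toric\to\TGpd}(\tCMon)$. Given a cospan $\Cc_1 \to \Cc_3 \leftarrow \Cc_2$, I would form the following two limits.
\begin{itemize}
    \item The limit $\Ac = \iota_0\Cc_1 \times_{\iota_0\Cc_3} \iota_0\Cc_2$ of toric groupoids. This exists in $\TGpd \simeq \tTerm$ by \zcref{prop:2Term_vs_TGpd} and \zcref{lem:2term_finite_limits}, computed as $\tau^{\leq 0}$ of the derived fiber product.
    \item The limit $M = \PO\Cc_1 \times_{\PO\Cc_3} \PO\Cc_2$. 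This is a sharp toric monoid by \zcref{prop:toric_monoid_finite_limit}.
\end{itemize}
Since $\tAb \subset \tCMon$ is closed under limits, the 2-categorical universal property of limits gives an induced map $M \to \Ac$. The candidate limit is $\Ac \vec{/} M$, which is toric by \zcref{cor:skew_quotient_is_T_cat}.

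The main obstacle is verifying that $\Ac \vec{/} M$ really is the limit in $\TCat$. The issue is that the equivalence with arrows is an equivalence of 2-categories, so a limit in the arrow 2-category must be computed objectwise. I would check this directly via mapping groupoids. For a toric category $\Dc$, the groupoid $\iota_0\Fun^\oplus(\Dc, \Ac \vec{/} M)$ is the groupoid of commutative squares from $(\PO\Dc \to \iota_0\Dc)$ to $(M \to \Ac)$. That is,
\[
    \Hom(\iota_0\Dc,\Ac)\times_{\Hom(\PO\Dc,\Ac)}\Hom(\PO\Dc,M).
\]
Each factor turns the objectwise limits into limits of hom-groupoids. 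Because finite limits commute with each other, this is the fiber product of the corresponding groupoids for $\Cc_1,\Cc_2,\Cc_3$, which is the desired universal property.

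One subtlety must be handled here: $M$ is discrete while $\Hom_{\tCMon}(\PO\Dc, \Ac)$ is a genuine groupoid. So the squares carry 2-cell data, and one must confirm that the objectwise limit (rather than some truncation) is what the arrow 2-category requires. Since $\tCMon$ limits of a discrete diagram of sharp toric monoids agree with the $\CMon$ limit, this works out.
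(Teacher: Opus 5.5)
Your proposal is correct and follows the paper's proof. The paper also computes finite limits componentwise through the structure theorem, taking limits of the $\iota_0\Cc_i$ in $\TGpd \simeq \tTerm$ and of the $\PO\Cc_i$ in sharp toric monoids, and obtains finite colimits by Gale duality as $\GDual(\lim_i \GDual(\Cc_i))$; your mapping-groupoid verification of the universal property simply spells out what the paper leaves implicit.
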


\begin{proof}
    Existence of finite limits in $\TCat$ follows from \zcref{thm:TCat_structure_thm}.
    Indeed, if $\Cc_{-}: I \to \TCat$ is a diagram, then 
    \begin{equation} \label{eq:finite_limit_tcat}
        \lim_i \Cc_i = \lim_i (\iota_0 \Cc_i \vec{/} \PO \Cc_i) = \big(\lim_i \iota_0 \Cc_i\big) \vec{/} \big(\lim_i \PO \Cc_i\big)
    \end{equation}
    where the limits on the right hand side exist by \zcref{lem:2term_finite_limits} (using the equivalence $\TGpd \simeq \tTerm$) and \zcref{prop:toric_monoid_finite_limit}.

    Existence of finite colimits then follows from \zcref{thm:gale_duality}, since $\GDual$ interchanges finite limits and finite colimits.
    More precisely, given $\Cc_{-}: I \to \TCat$, we have
    \begin{equation} \label{eq:finite_colimit_tcat_via_gale}
        \colim_i \Cc_i = \colim_i (\GDual)^2(\Cc_i) = \GDual\left(\lim_i \GDual(\Cc_i)\right).
    \end{equation}
    Alternatively, we can compute 
    \begin{equation} \label{eq:finite_colimit_tcat}
        \colim_i \Cc_i = \big(\colim_i \iota_0 \Cc_i\big) \vec{/} \big(\colim_i \PO \Cc_i\big)
    \end{equation}
    where the colimits in question are computed in $\TGpd$ and $\CMon^{\sharp,\toric}$.
\end{proof}

\eqref{eq:finite_limit_tcat} and \eqref{eq:finite_colimit_tcat_via_gale} give us effective means of computing finite limits and colimits in $\TCat$.
General finite colimits of toric categories can be difficult to understand conceptually, but we do have the following result:

\begin{prop} \label{prop:finite_product_coproduct_TCat}
    Finite products agree with finite coproducts in $\TCat$.
\end{prop}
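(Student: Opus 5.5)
The plan is to show that the product $\Cc_1 \times \Cc_2$, with its two projections, is also a coproduct once we supply the inclusions $i_1 = (\id, 0): \Cc_1 \to \Cc_1 \times \Cc_2$ and $i_2 = (0, \id)$. Since $\Cc_1 \times \Cc_2 = (\iota_0 \Cc_1 \times \iota_0 \Cc_2) \vec{/} (\PO \Cc_1 \times \PO \Cc_2)$ (as in \zcref{ex:gale_dual_of_product}), the universal property \zcref{prop:skew_quotient_UP} reduces the question to the two ingredients separately. For any toric category $\Dc$ it gives
\[
    \iota_0 \Fun^\oplus(\Cc_1 \times \Cc_2, \Dc) = \Hom_{\tCMon}(\iota_0 \Cc_1 \times \iota_0 \Cc_2, \iota_0 \Dc) \times_{\Hom_{\tCMon}(\PO \Cc_1 \times \PO \Cc_2, \iota_0 \Dc)} \Hom_{\tCMon}(\PO \Cc_1 \times \PO \Cc_2, \PO \Dc).
\]
So it suffices that finite products are biproducts in each factor category.

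For the strictly abelian 2-groups, \zcref{prop:deligne_classification} identifies $\tAb^\st$ with $\Dsf^{[-1,0]}(\ZZ)$, an additive (dg/2-)category. There the product is the direct sum, so $\Hom(\Ac_1 \times \Ac_2, \Bc) = \Hom(\Ac_1, \Bc) \times \Hom(\Ac_2, \Bc)$ via restriction along the inclusions. For this I would need $\iota_0 \Dc$ to be strictly abelian, which holds because $\Dc$ is toric.

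For commutative monoids, $M_1 \times M_2$ is the coproduct $M_1 \oplus M_2$ in $\CMon$, and hence also in $\tCMon$ when the target is discrete or grouplike. Each target here is discrete ($\PO \Dc$) or an abelian 2-group ($\iota_0 \Dc$). The commutative 2-monoid case needs a brief justification: for a discrete source, a homomorphism from $M_1 \oplus M_2$ amounts to a pair of homomorphisms together with the symmetric structure, and the symmetric monoidal sum map $\Dc \times \Dc \to \Dc$ provides the inverse.

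Substituting these splittings, and using that fiber products commute with products, the displayed groupoid becomes
\[
    \iota_0 \Fun^\oplus(\Cc_1, \Dc) \times \iota_0 \Fun^\oplus(\Cc_2, \Dc),
\]
again by \zcref{prop:skew_quotient_UP} together with \zcref{thm:TCat_structure_thm}. One then checks that this equivalence is induced by restriction along $i_1$ and $i_2$. Empty products and coproducts are both the zero category $0 = 0 \vec{/} 0$, which is initial and final.

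The main obstacle is the 2-categorical bookkeeping. We need the equivalence to be natural and given by restriction along $i_1, i_2$, not just an abstract equivalence. We also need the mixed term $\Hom(\PO \Cc_1 \times \PO \Cc_2, \iota_0 \Dc)$ to split compatibly with the other two, which is what makes the fiber product decompose.

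An alternative check is available through \zcref{thm:gale_duality}. Gale duality turns products into coproducts, and \zcref{ex:gale_dual_of_product} shows that $\GDual$ preserves products. Hence the coproduct of $\Cc_1$ and $\Cc_2$ is $\GDual(\GDual \Cc_1 \times \GDual \Cc_2) = \Cc_1 \times \Cc_2$. I would present this as the main proof, with the direct argument above serving to identify the structure maps.
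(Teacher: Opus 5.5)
Your Gale-duality argument, which you say you would present as the main proof, is exactly the paper's proof. The paper writes $\Cc_1 \oplus \Cc_2 = \GDual(\GDual(\Cc_1) \times \GDual(\Cc_2))$, applies \zcref{ex:gale_dual_of_product}, and then uses $(\GDual)^2 = \id$ from \zcref{thm:gale_duality}.

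Your direct argument is also correct, and it is a genuinely different, more elementary route. It needs only \zcref{prop:skew_quotient_UP}, \zcref{thm:TCat_structure_thm}, and the fact that finite products are biproducts in $\tCMon$. Your sum-map argument in fact proves that fact for arbitrary commutative 2-monoids, so neither the discrete/grouplike restriction on targets nor strictness of $\iota_0 \Dc$ is really used.

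Compared with the paper's route, it buys two things. First, it is independent of the whole Gale duality machinery. Second, and more substantively, it identifies the coproduct inclusions with $(\id, 0)$ and $(0, \id)$, i.e.\ it shows the canonical comparison map is an equivalence. The paper's chain of equalities identifies the coproduct with $\Cc_1 \times \Cc_2$ only as an object.

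You can also bypass the skew-quotient decomposition entirely, and with it the bookkeeping for the mixed term $\Hom_{\tCMon}(\PO \Cc_1 \times \PO \Cc_2, \iota_0 \Dc)$ that you flag as the main obstacle. The same sum-map argument, using $\oplus_\Dc \colon \Dc \times \Dc \to \Dc$, shows directly that restriction along $i_1, i_2$ gives $\iota_0 \Fun^\oplus(\Cc_1 \times \Cc_2, \Dc) \simeq \iota_0 \Fun^\oplus(\Cc_1, \Dc) \times \iota_0 \Fun^\oplus(\Cc_2, \Dc)$ for every symmetric monoidal $\Dc$. Since $\TCat$ is a full sub-2-category of $\iota_1 \SMCat$ closed under finite products, this already gives the result.
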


\begin{proof}
    It suffices to consider the case of binary coproducts $\Cc_1 \oplus \Cc_2$.
    We can compute
    \begin{align*}
        \Cc_1 \oplus \Cc_2 &= \GDual(\GDual(\Cc_1) \times \GDual(\Cc_2)) \textrm{ by \zcref{eq:finite_colimit_tcat_via_gale}} \\
        &= (\GDual)^2(\Cc_1) \times (\GDual)^2(\Cc_2) \textrm{ by \zcref{ex:gale_dual_of_product}} \\
        &= \Cc_1 \times \Cc_2 \textrm{ by \zcref{thm:gale_duality}.} \qedhere
    \end{align*}
\end{proof}

\part{Stacky fans and toric stacks} \label{part:fans_and_stacks}

Our goal in this part is to give detailed definitions of the 2-categories $\StkFan$ and $\TStk$, as well as the geometrization functor $\Xfr_{-}$, appearing in our classification theorem for toric stacks.

\section{Stacky fans} \label{sec:stacky_fans}

We first introduce and study a 2-category $\StkFan$ of stacky fans.

\subsection{Toric decorated posets and stacky cones}

The category of schemes is typically constructed as a full subcategory of the ambient category of locally ringed spaces.
The following definition gives an analogous ambient category for stacky cones and fans:

\begin{dfn}
    A \emph{toric decorated poset} is a triple $\Sigma = (\Nc_\Sigma, |\Sigma|, S)$ where:
    \begin{itemize}
        \item $\Nc_\Sigma$ is a toric groupoid,
        \item $|\Sigma|$ is a finite poset, and
        \item $P_\Sigma: |\Sigma| \to (\CMon^{\sharp,\toric})_{/\Nc_\Sigma}$ is a functor sending each $\sigma \in |\Sigma|$ to a homomorphism of commutative 2-monoids $b_\sigma: P_{\Sigma,\sigma} \to \Nc_\Sigma$ where $P_{\Sigma,\sigma}$ is discrete, sharp, and toric.
    \end{itemize}
\end{dfn}

\begin{notn}
    When we are considering a single toric decorated poset $\Sigma$, we will often omit the subscript $\Sigma$ from our notation, writing $\Nc$ for the corresponding poset and $P: |\Sigma| \to (\CMon^{\sharp,\toric})_{/\Nc}$ for the corresponding functor.
\end{notn}

We can assemble these into a 2-category as follows:

\begin{dfn}
    The \emph{2-category of toric decorated posets} $\TDP$ is the 2-category where:
    \begin{itemize}
        \item Objects are toric decorated posets $\Sigma$,
        \item A 1-morphism $f: \Sigma \to \Sigma'$ is a triple $(f_{\#,\Nc}, |f|, \{ f_{\#,\sigma} \}_{\sigma \in \Sigma})$ where:
        \begin{itemize}
            \item $f_{\#,\Nc}: \Nc_\Sigma \to \Nc_{\Sigma'}$ is a homomorphism of toric groupoids,
            \item $|f|: |\Sigma| \to |\Sigma'|$ is an order-preserving map, and
            \item For $\sigma \in |\Sigma|$, $f_{\#,\sigma}: P_{\Sigma,\sigma} \to P_{\Sigma',f(\sigma)}$ is a 1-morphism in $(\CMon^{\sharp,\toric})_{/\Nc_{\Sigma'}}$ (where the structure morphism $P_{\Sigma,\sigma} \to \Nc_{\Sigma'}$ is $f_{\#,\Nc} \circ b_\sigma$),
        \end{itemize}
        such that:
        \begin{itemize}
            \item For all $\sigma_1 \leq \sigma_2 \in |\Sigma|$, the diagram
            \begin{equation} \label{eq:tdp_1_morphism_square}
            \begin{tikzcd}
                P_{\Sigma,\sigma_1} \rar \dar["f_{\#,\sigma_1}"] & P_{\Sigma,\sigma_2} \dar["f_{\#,\sigma_2}"] \\
                P_{\Sigma',f(\sigma_1)} \rar & P_{\Sigma',f(\sigma_2)}
            \end{tikzcd}
            \end{equation}
            commutes in $(\CMon^{\sharp,\toric})_{/\Nc_{\Sigma'}}$.
            \item For all $\sigma \in |\Sigma|$, we have $P_{f(\sigma)} = \angles{\im f_{\#,\sigma}}$.\footnote{This ``density'' condition should be thought of as analogous to the requirement that a homomorphism of locally ringed spaces sends maximal ideals of local rings to maximal ideals of local rings.}
        \end{itemize}
        \item 2-morphisms $\alpha: f \Rightarrow g$ between 1-morphisms exist if and only if $|f| = |g|$, in which case they are given by $(\alpha_\Nc, \{ \alpha_\sigma \}_{\sigma \in |\Sigma|})$
        where:
        \begin{itemize}
            \item $\alpha_\Nc: f_{\#,\Nc} \Rightarrow g_{\#,\Nc}$ is a symmetric monoidal natural transformation, and
            \item For $\sigma \in |\Sigma|$, $\alpha_\sigma: f_{\#,\sigma} \to g_{\#,\sigma}$ is a 2-morphism in $(\CMon^{\sharp,\toric})_{/\Nc_{\Sigma'}}$ where $\alpha_\Nc$ is used to identify the structure morphisms $f_{\#,\Nc} \circ b_\sigma$ and $g_{\#,\Nc} \circ b_\sigma$ as homomorphisms $P_{\Sigma,\sigma} \to \Nc_{\Sigma'}$.
        \end{itemize} 
    \end{itemize}
\end{dfn}

\subsection{Definition of stacky cones}

Our first important examples of toric decorated posets are stacky cones.

\begin{dfn} \label{dfn:stacky_cone}
    Let $\Cc$ be a toric category.
    The \emph{stacky cone associated with $\Cc$}, written $\Cone(\Cc)$, is the toric decorated poset $(\iota_0 \Cc, \Face(\PO \Cc), \incl_\Cc)$ where:
    \begin{itemize}
        \item $\Face(\PO \Cc)$ is the poset of faces of $\PO \Cc$, and
        \item $\incl_\Cc: \Face(\PO \Cc) \to (\CMon^{\sharp,\toric})_{/\iota_0 \Cc}$ sends $S \subset \PO \Cc$ to the natural morphism $S \to \iota_0 \Cc$.
    \end{itemize}
    More generally, a \emph{stacky cone} is a toric decorated poset which is equivalent to $\Cone(\Cc)$ for some $\Cc$.
    Let $\StkCone$ be the full subcategory of $\TDP$ with objects stacky cones.
\end{dfn}

Here are some key examples to keep in mind:

\begin{ex} \label{ex:classical_toric_cone}
    Let $N$ be a finitely generated free abelian group, and let $\ul{\sigma} \subset N \otimes_\ZZ \RR$ be a strongly convex rational polyhedral cone.
    From this data we obtain a toric category $N \vec{/} (\ul{\sigma} \cap N)$.
    The corresponding stacky cone $K_{\ul{\sigma}} = \Cone(N \vec{/} (\ul{\sigma} \cap N))$ is the toric decorated poset $(N, |K_{\ul{\sigma}}|, P)$ where:
    \begin{itemize}
        \item The poset $|K_{\ul{\sigma}}|$ is the poset of faces of $\ul{\sigma}$ and
        \item The functor $P$ sends a face $\ul{\sigma}' \subset \ul{\sigma}$ to the natural morphism $\ul{\sigma}' \cap N \to N$ and sends inclusions of faces to the obvious commutative triangles over $N$.
    \end{itemize}
    This allows us to view the cones of classical toric geometry as stacky cones.
\end{ex}

\begin{ex} \label{ex:Gm_cone}
    As a sub-example of \zcref{ex:classical_toric_cone}, consider $N = \ZZ$ and the cone $\ul{\sigma} = 0$.
    The underlying poset $|K_{\ul{\sigma}}|$ is $\{ \sigma_0 \}$, and the functor $P$ sends $\sigma_0$ to the inclusion $0 \to \ZZ$.
    This is the stacky cone corresponding to the toric variety $\GG_m$.
\end{ex}

\begin{ex} \label{ex:A1_cone}
    As another sub-example of \zcref{ex:classical_toric_cone}, consider $N = \ZZ$ and the cone $\ul{\sigma} = \RR_{\geq 0}$.
    The underlying poset $|K_{\ul{\sigma}}|$ is $\{ \sigma_0 < \sigma_1 \}$, and the functor $P$ satisfies $P_{\sigma_0} = 0 \subset \ZZ$ and $P_{\sigma_1} = \NN \subset \ZZ$.  
    This is the stacky cone corresponding to the toric variety $\AA^1$.
    The minimal element $\sigma_0$ of the poset corresponds to the dense torus $\GG_m$ while the maximal element $\sigma_1$ corresponds to $0 \in \AA^1$.
\end{ex}

Here are some genuinely ``stacky'' examples:

\begin{ex} \label{ex:A1_mod_mu_d_cone}
    Let $\Cc = \ZZ \vec{/}_d \NN$ where the homomorphism $\NN \to \ZZ$ sends $1 \in \NN$ to $d \in \ZZ$ for some $d > 0$.
    Then $\Cone(\Cc)$ has:
    \begin{itemize}
        \item $\Nc = \ZZ$,
        \item $|\Cone(\Cc)| = \{ \sigma_0 < \sigma_1 \}$, and
        \item $P_{\sigma_0} = 0 \subset \ZZ$ and $P_{\sigma_1} = \NN \xrightarrow{d} \ZZ$.
    \end{itemize}
    This is the stacky cone corresponding to the toric stack $[\AA^1 / \mu_d]$.
    The minimal element $\sigma_0$ of the poset corresponds to the dense torus $\GG_m / \mu_d$, while $\sigma_1$ corresponds to the copy of $B\mu_d$ at the origin.
\end{ex}

\begin{ex} \label{ex:A1_mod_Gm_cone}
    Let $\Cc = \Bsf \NN$.
    Then $\Cone(\Cc)$ has:
    \begin{itemize}
        \item $\Nc = 0$,
        \item $|\Cone(\Cc)| = \{ \sigma_0 < \sigma_1 \}$, and
        \item $P_{\sigma_0} = 0$ and $P_{\sigma_1} = \NN$ (with the unique map $\ZZ \to 0$).
    \end{itemize}
    This is the stacky cone corresponding to $\AA^1 / \GG_m$.
    Here $\sigma_0$ corresponds to the trivial dense torus, while $\sigma_1$ corresponds to the copy of $B\GG_m$ located at the origin of $\AA^1 / \GG_m$.
\end{ex}

\subsection{Functoriality of stacky cones}

Note that $\Cone$ upgrades to a 2-functor $\Cone: \TCat \to \StkCone$.
This sends a symmetric monoidal functor $F: \Cc \to \Dc$ to the 1-morphism 
\[
    \Cone(F) = (\iota_0 F, |\Cone(F)|, \{\Cone(F)_{\#,\sigma}\}_{\sigma \in \Sigma})
\]
where, for $\sigma \in \Face(\PO \Cc)$:
\begin{itemize}
    \item $|\Cone(F)|(\sigma) = \angles{(\PO F) (P_{\sigma})} \in \Face(\PO \Dc)$, and 
    \item $\Cone(F)_{\#,\sigma}: P_{\Cone(\Cc),\sigma} \to P_{\Cone(\Dc),|\Cone(F)|(\sigma)}$ agrees with the natural morphism induced by $\PO F$.
\end{itemize}
(The behavior on 2-isomorphisms is obvious, so we will ignore it for brevity.)

\begin{ex}
    The projection functor $\ZZ \vec{/} \NN \to \Bsf \NN$ gives a morphism $p: \Cone(\ZZ \vec{/} \NN) \to \Cone(\Bsf \NN)$, where
    \begin{itemize}
        \item $p_{\#,\Nc}: \ZZ \to 0$ is the unique map,
        \item $|p|: \Face(\NN) \to \Face(\NN)$ is the identity, and
        \item The maps $p_{\#,\sigma_0}: 0 \to 0$ and $p_{\#,\sigma_1}: \NN \to \NN$ are identity maps.
    \end{itemize}
    At the level of toric stacks, $p$ corresponds to the projection map $\AA^1 \to \AA^1 / \GG_m$.
\end{ex}

There is a natural functor from $\TDP$ to $\TCat$ which behaves somewhat like ``global sections of the structure sheaf'' does in scheme theory (but with the variance reversed):

\begin{dfn}
    Let $\Oc: \TDP \to \TCat$ be the (2-)functor given by
    \[
        \Oc(\Sigma) = \colim_{\sigma \in \Sigma} \Nc_\Sigma \vec{/} P_{\Sigma,\sigma}
    \]
    This is well-defined by \zcref{prop:Tcat_finite_lim_colim}.
\end{dfn}

\begin{ex} \label{ex:global_sections_of_cone}
    Let $\Cc$ be a toric category.
    The poset $|\Cone(\Cc)|$ has a final object (namely, the face $\PO \Cc$), so
    \[
        \Oc(\Cone(\Cc)) = \Nc_{\Cone(\Cc)} \vec{/} P_{\Cone(\Cc),\PO \Cc} = \iota_0 \Cc \vec{/} \PO \Cc = \Cc
    \]
    by \zcref{thm:TCat_structure_thm}.
\end{ex}

It turns out that $\Oc$ is the left adjoint to $\Cone$:

\begin{prop} \label{prop:Cone_Gamma_adjunction}
    Let $\Sigma$ be a toric decorated poset, and let $\Cc$ be a toric category.
    Then there is a natural equivalence
    \[
        \Map_{\TDP}(\Sigma, \Cone(\Cc)) = \iota_0 \Fun^\oplus(\Oc(\Sigma), \Cc).
    \]
\end{prop}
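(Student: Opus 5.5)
The plan is to compute both sides using the colimit formula for $\Oc$, then compare them one cone at a time. By definition $\Oc(\Sigma) = \colim_{\sigma \in |\Sigma|} \Nc_\Sigma \vec{/} P_{\Sigma,\sigma}$. So
\[
    \iota_0 \Fun^\oplus(\Oc(\Sigma), \Cc) = \lim_{\sigma \in |\Sigma|} \iota_0 \Fun^\oplus(\Nc_\Sigma \vec{/} P_{\Sigma,\sigma}, \Cc),
\]
a 2-limit of groupoids over the finite poset $|\Sigma|$. This uses that the colimit in \zcref{prop:Tcat_finite_lim_colim} is a genuine 2-categorical colimit in $\TCat$.

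Next I apply \zcref{prop:skew_quotient_UP} to each term. An object of the $\sigma$-th groupoid is a homomorphism $F_\Nc: \Nc_\Sigma \to \iota_0 \Cc$, a homomorphism $F_\sigma: P_{\Sigma,\sigma} \to \PO \Cc$, and an identification of the two induced maps $P_{\Sigma,\sigma} \to \iota_0 \Cc$. The transition maps for $\sigma_1 \leq \sigma_2$ restrict along $P_{\Sigma,\sigma_1} \to P_{\Sigma,\sigma_2}$ and act as the identity on the $\Nc_\Sigma$-component. Because every $\Nc_\Sigma \vec{/} P_{\Sigma,\sigma}$ shares the same $\Nc_\Sigma$, I expect the limit to collapse to the following data:
\begin{itemize}
    \item a single $F_\Nc$;
    \item a compatible family of maps $F_\sigma: P_{\Sigma,\sigma} \to \PO \Cc$ over $\iota_0 \Cc$;
    \item coherence 2-cells satisfying the evident cocycle conditions.
\end{itemize}
Since $\PO \Cc$ is discrete, the family $\{F_\sigma\}$ is just a strictly commuting family of monoid maps lying over $F_\Nc$.

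On the other side, a 1-morphism $f: \Sigma \to \Cone(\Cc)$ consists of three pieces. The first is $f_{\#,\Nc}: \Nc_\Sigma \to \iota_0 \Cc$. The second is an order-preserving map $|f|: |\Sigma| \to \Face(\PO \Cc)$. The third is a family of maps $f_{\#,\sigma}: P_{\Sigma,\sigma} \to |f|(\sigma) \subset \PO \Cc$ over $\iota_0 \Cc$, which must be compatible and satisfy the density condition $|f|(\sigma) = \angles{\im f_{\#,\sigma}}$. The density condition makes $|f|$ determined by the maps $f_{\#,\sigma}$. Conversely, given compatible $F_\sigma$, I set $|f|(\sigma) := \angles{\im F_\sigma}$. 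This is order preserving because $\sigma_1 \leq \sigma_2$ forces $\im F_{\sigma_1} \subset \im F_{\sigma_2}$. The maps then factor through these faces, since a face is a submonoid and hence the inclusion into $\PO \Cc$ is injective. Commutativity of \eqref{eq:tdp_1_morphism_square} follows from compatibility of the $F_\sigma$, again using injectivity of face inclusions.

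The 2-morphisms match as well. Both sides have 2-morphisms consisting of a symmetric monoidal natural transformation $\alpha_\Nc$ together with equalities of the discrete maps into $\PO \Cc$ compatible with $\alpha_\Nc$. On the $\TDP$ side the condition $|f| = |g|$ is automatic, because $|f|$ is determined by the $f_{\#,\sigma}$ and 2-morphisms in $(\CMon^{\sharp,\toric})_{/\Nc}$ with discrete source are equalities plus the data of $\alpha_\Nc$. Naturality in $\Sigma$ and $\Cc$ is clear from the construction.

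The main obstacle is the first step: identifying the 2-limit of the groupoids $\iota_0\Fun^\oplus(\Nc_\Sigma \vec{/} P_{\Sigma,\sigma}, \Cc)$ with a single $F_\Nc$ plus compatible monoid maps, rather than a family of $F_\Nc^{(\sigma)}$ glued by isomorphisms. Since the poset $|\Sigma|$ need not be connected or have a minimum, there is no initial $\sigma$ to rigidify the $\Nc$-component, and it is not clear that the family of $F_\Nc^{(\sigma)}$ reduces to one map. I would handle this by working with the colimit as computed in \eqref{eq:finite_colimit_tcat}. I would first check that the $\iota_0$-part of the colimit is the appropriate quotient of $\Nc_\Sigma$, and then show that the resulting descriptions agree. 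If that proves delicate, I would instead adjoin the cone point $\Nc_\Sigma \vec{/} 0 = \Nc_\Sigma$ to the diagram and argue it does not change the relevant limits.
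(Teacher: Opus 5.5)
\textbf{This is a genuine gap, and it cannot be closed as stated.} Your route is the paper's: restrict $F$ to each $\Nc_\Sigma \vec{/} P_{\Sigma,\sigma}$, unpack with \zcref{prop:skew_quotient_UP}, and recover $|f|$ by density as $\angles{\im \PO F_\sigma}$. The paper dispatches your ``main obstacle'' in a parenthetical (``$F_\Nc$ \dots\ induced by any $F_\sigma$ (all such functors are naturally isomorphic)''). You are right that it is a real gap. Neither of your remedies closes it, because the statement is false for general toric decorated posets.

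The factor $G := \Hom_{\tAb}(\Nc_\Sigma, \iota_0 \Cc)$ in \zcref{prop:skew_quotient_UP} is the same for every $\sigma$, and the transition maps are the identity on it. So the $\Nc$-part of your limit is the limit over $|\Sigma|^{\mathrm{op}}$ of the constant groupoid $G$. That is, it is the groupoid of functors into $G$ from the groupoid obtained by inverting all arrows of $|\Sigma|$. This equals $G$ only when that groupoid is contractible. Three examples show how it fails:
\begin{itemize}
\item Let $|\Sigma|$ be two incomparable points with $P_{\Sigma,\sigma} = 0$. Then $\Oc(\Sigma)$ is a coproduct of two copies of $\Nc_\Sigma$, so the right-hand side is $G \times G$. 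On the left, density forces both points to the face $0$ of the sharp monoid $\PO\Cc$ (\zcref{prop:path_monoid_sharp}). So a 1-morphism is just one $f_{\#,\Nc}$, and the left-hand side is $G$. For $\Nc_\Sigma = \Cc = \ZZ$ this is $\ZZ^2$ versus $\ZZ$.
\item For $|\Sigma| = \varnothing$ one gets $\Oc(\Sigma) = 0$. The right side is then a point, while the left side is $G$.
\item Connectedness is not enough either. Take the crown $\{a, b < c, d\}$ with all $P_{\Sigma,\sigma} = 0$. The limit is the free loop groupoid of $G$. For $\Nc_\Sigma = \ZZ$ and $\Cc = \Bsf\ZZ$ this is $\ZZ \times \Bsf\ZZ$, whereas the left side is $\Bsf\ZZ$.
\end{itemize}
So adjoining the cone point $\Nc_\Sigma \vec{/} 0$ does change the colimit in general. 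In such cases $\iota_0\Oc(\Sigma)$ is not a quotient of $\Nc_\Sigma$ but something larger.

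\textbf{What is true, and how your argument then finishes.} The statement holds when $|\Sigma|$ has a least element, and this is all the paper ever uses: \zcref{cor:stacky_cone_equivalence} applies the result to $\Sigma = \Cone(\Cc')$, and \zcref{prop:cox_construction_fan} applies it to a stacky fan. Every stacky cone and every stacky fan has such an element. Under that hypothesis your second remedy is correct. Each comma category $j \downarrow |\Sigma|$ has an initial object, so $|\Sigma| \hookrightarrow |\Sigma|^{\triangleleft}$ is cofinal, and adjoining $\Nc_\Sigma \vec{/} 0$ does not change $\Oc(\Sigma)$. Equivalently, the limit of the constant diagram $G$ is $G$. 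Commuting the limit past the fiber products of \zcref{prop:skew_quotient_UP} then yields exactly:
\begin{itemize}
\item one $F_\Nc$;
\item a compatible family $F_\sigma : P_{\Sigma,\sigma} \to \PO\Cc$, which strictly commutes because $\PO\Cc$ is discrete;
\item a family of 2-cells $F_\Nc \circ b_\sigma \Rightarrow \cod \circ F_\sigma$ compatible with restriction.
\end{itemize}
Once $|f|$ is recovered by density, this is precisely a 1-morphism $\Sigma \to \Cone(\Cc)$. The rest of your argument is fine: order preservation, factorization through faces, matching of 2-morphisms, and the automatic equality $|f| = |g|$. Add the least-element hypothesis to the statement and your proof goes through. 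Without it, no proof exists.
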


\begin{proof}
    Functoriality of $\Oc$ gives a natural map
    \[
        \Map_{\TDP}(\Sigma, \Cone(\Cc)) \to \iota_0 \Fun^\oplus\big(\Oc(\Sigma), \Oc(\Cone(\Cc))\big)
    \]
    By \zcref{ex:global_sections_of_cone}, we may write $\Oc(\Cone(\Cc)) = \Cc$.
    Write 
    \[
    \Phi: \Map_{\TDP}(\Sigma, \Cone(\Cc)) \to \iota_0 \Fun^\oplus\big(\Oc(\Sigma), \Cc)
    \]
    for the induced map.
    
    We need to construct an inverse $\Psi$ to $\Phi$.
    Given $F: \Oc(\Sigma) \to \Cc$, define $\Psi(F): \Sigma \to \Cone(\Cc)$ as follows.
    For $\sigma \in |\Sigma|$, we let
    \begin{itemize}
        \item $F_\sigma$ be the induced symmetric monoidal functor $\Nc_\Sigma \vec{/} P_\sigma \to \Oc(\Sigma) \xrightarrow{F} \Cc = \iota_0 \Cc \vec{/} \PO \Cc$,
        \item $F_\Nc: \Nc_\Sigma \to \iota_0 \Cc = \Nc_{\Cone(\Cc)}$ be the functor induced by any $F_\sigma$ (all such functors are naturally isomorphic),
        \item $|\Psi(F)|(\sigma)$ be $\angles{\im \PO F_\sigma} \in \Face(\PO \Cc)$, and
        \item $\Psi(F)_{\#,\sigma}: P_\sigma \to \angles{\im \PO F_\sigma}$ be the symmetric monoidal functor induced by $F_\sigma$.
    \end{itemize}

    To see that $\Phi \Psi = \id$, suppose we are given $F: \Oc(\Sigma) \to \Cc$.
    Then $\Phi \Psi (F)$ is the symmetric monoidal functor $\Oc(\Sigma) \to \Cc$ induced by the universal property of colimits from the maps $F_\sigma$.
    But the $F_\sigma$ are themselves defined as the restrictions of $F$ to the toric categories $\Nc_\Sigma \vec{/} P_\sigma$, so the universal property of colimits tells us that $\Phi \Psi (F) = F$ naturally.

    To see that $\Psi \Phi = \id$, suppose we are given $f: \Sigma \to \Cone(\Cc)$.
    For any $\sigma \in |\Sigma|$, we can see that:
    \begin{itemize}
        \item $\Phi(f)_\sigma$ agrees with the map $\Nc_\Sigma \vec{/} P_\sigma \to \Cc$ induced by $f_{\#,\sigma}$.
        \item $(\Psi \Phi(f))_\Nc: \Nc_\Sigma \to \Nc_{\Cone(\Cc)}$ agrees with $f_\Nc$ by the preceding statement.
        \item $(\Psi \Phi (f))_{\#,\sigma}: P_\sigma \to \angles{\im \PO f_{\#,\sigma}}$ agrees with $f_{\#,\sigma}$.
        \item $|\Psi \Phi(f)|(\sigma) = \angles{\im \PO \Phi(f)_\sigma} = \angles{\im \PO f_{\#,\sigma}} = |f|(\sigma)$.
    \end{itemize}
    Thus there is a natural equivalence $\Psi \Phi = \id$, and we win.
\end{proof}

\begin{cor} \label{cor:stacky_cone_equivalence}
    The functor $\Cone$ defines an equivalence $\TCat \simto \StkCone$ with quasi-inverse $\Oc$.
\end{cor}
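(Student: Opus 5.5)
The plan is to derive the corollary formally from the adjunction of \zcref{prop:Cone_Gamma_adjunction} together with the computation of \zcref{ex:global_sections_of_cone}. We show that $\Cone: \TCat \to \StkCone$ is fully faithful and essentially surjective, and that the counit-type identification $\Oc \circ \Cone \simeq \id_{\TCat}$ serves as one half of the equivalence data.

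\textbf{Step 1: full faithfulness.} For toric categories $\Cc, \Dc$, apply \zcref{prop:Cone_Gamma_adjunction} with $\Sigma = \Cone(\Cc)$. This gives
\[
    \Map_{\TDP}(\Cone(\Cc), \Cone(\Dc)) = \iota_0 \Fun^\oplus(\Oc(\Cone(\Cc)), \Dc) = \iota_0 \Fun^\oplus(\Cc, \Dc),
\]
where the second identification is \zcref{ex:global_sections_of_cone}. We must check that this composite equivalence is inverse to the map on hom-groupoids induced by the 2-functor $\Cone$, and not merely some abstract equivalence. I would do this by unwinding the map $\Psi$ from the proof of \zcref{prop:Cone_Gamma_adjunction} for $\Sigma = \Cone(\Cc)$.

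Given $F: \Cc \to \Dc$, each $F_S$ for a face $S \subset \PO \Cc$ is the restriction of $F$ to $\iota_0\Cc \vec{/} S$. Hence $\Psi(F)$ has:
\begin{itemize}
    \item $\Nc$-component $\iota_0 F$;
    \item poset map $S \mapsto \angles{\im \PO F|_S}$;
    \item components induced by $\PO F$.
\end{itemize}
This is exactly $\Cone(F)$ as defined in the functoriality subsection. Since $\Cone$ is a sub-2-category of $\TDP$ (full, by definition of $\StkCone$), the same equivalence holds on hom-groupoids in $\StkCone$.

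\textbf{Step 2: essential surjectivity.} This is immediate from \zcref{dfn:stacky_cone}: a stacky cone is by definition a toric decorated poset equivalent to $\Cone(\Cc)$ for some $\Cc$.

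\textbf{Step 3: identifying the quasi-inverse.} A fully faithful, essentially surjective 2-functor is an equivalence. Its quasi-inverse is determined up to equivalence by any natural equivalence $\Oc \circ \Cone \simeq \id_{\TCat}$, and \zcref{ex:global_sections_of_cone} supplies one.

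For the other composite, take a stacky cone $\Sigma \simeq \Cone(\Cc)$. Then $\Oc(\Sigma) \simeq \Oc(\Cone(\Cc)) \simeq \Cc$, so $\Cone(\Oc(\Sigma)) \simeq \Sigma$. Naturality here comes from the unit of the adjunction, namely the map $\Sigma \to \Cone(\Oc(\Sigma))$ corresponding to $\id_{\Oc(\Sigma)}$. This unit is an equivalence on stacky cones because it is one on objects of the form $\Cone(\Cc)$: by the triangle identity, it is inverse to $\Cone$ applied to the counit, which is an equivalence.

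\textbf{Main obstacle.} The only genuine content is the compatibility check in Step 1: that the adjunction equivalence restricted to $\Sigma = \Cone(\Cc)$ agrees with the action of $\Cone$ on morphisms, including the face-map component $|\Cone(F)|$. I would verify this directly from the explicit formulas, as sketched above. The remaining steps are standard 2-categorical bookkeeping of the kind the paper permits itself to suppress.
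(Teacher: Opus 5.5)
Your proposal is correct and takes the same route as the paper. Essential surjectivity comes straight from the definition of a stacky cone. Full faithfulness comes from combining the adjunction $\Map_{\TDP}(\Sigma, \Cone(\Cc)) = \iota_0 \Fun^\oplus(\Oc(\Sigma), \Cc)$ with $\Oc(\Cone(\Cc)) = \Cc$, and $\Oc$ is the quasi-inverse because an adjoint of an equivalence is a quasi-inverse. Two parts of your writeup spell out what the paper leaves implicit: the check, by unwinding $\Psi$, that the resulting hom-equivalence really is $F \mapsto \Cone(F)$, and the triangle-identity argument for the unit.
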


\begin{proof}
    It is obvious that $\Cone$ is essentially surjective onto $\StkCone$.
    To see that $\Cone$ is fully faithful, note that for any $\Cc, \Dc \in \TCat$, we have
    \begin{align*}
        \iota_0 \Fun^\oplus(\Cc, \Dc) &= \iota_0 \Fun^\oplus\big(\Oc(\Cone(\Cc)), \Dc\big) \textrm{ by \zcref{ex:global_sections_of_cone}} \\
        &= \Map_\TDP(\Cone(\Cc), \Cone(\Dc)) \textrm{ by \zcref{prop:Cone_Gamma_adjunction}}.
    \end{align*}
    Thus $\Cone$ is an equivalence.
    The functor $\Oc$ must be its quasi-inverse because any adjoint to an equivalence is a quasi-inverse of that equivalence.
\end{proof}

\subsection{Subcones}

We can try to understand general toric decorated posets by looking at their subcones.
More precisely:

\begin{dfn}
    A \emph{subcone} of a toric decorated poset is a downward closed subset which is also a stacky cone (with the induced structure).
\end{dfn}

The subcones of a stacky cone are particularly easy to understand:

\begin{prop} \label{prop:subcones_of_cone}
    Let $K$ be a stacky cone.
    Then:
    \begin{enumerate}
        \item The subcones of $K$ are precisely the subsets $K_{\leq \kappa}$ for $\kappa \in K$.
        \item The intersection of any (necessarily finite) collection of subcones of $K$ is itself a subcone of $K$.
    \end{enumerate}
\end{prop}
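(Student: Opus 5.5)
By \zcref{cor:stacky_cone_equivalence} we may replace $K$ by $\Cone(\Cc)$ for a toric category $\Cc$. Subcones are defined in terms of the induced structure on downward closed subsets, which is invariant under equivalence of toric decorated posets. Then $|K| = \Face(\PO \Cc)$, $\Nc_K = \iota_0 \Cc$, and $P_K$ sends a face $S$ to $S \to \iota_0 \Cc$. The plan is to identify the subcones as exactly the principal down-sets and then read off (2) from the lattice structure of $\Face(\PO \Cc)$.

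For (1), first fix a face $\kappa = S$ of $\PO\Cc$. The faces of $S$ are exactly the faces of $\PO \Cc$ contained in $S$: a face of a face is a face, and a face of $\PO\Cc$ inside $S$ is a face of $S$. Hence $K_{\leq \kappa} = \Face(S)$ as a poset, with $P_K$ restricting to $S' \mapsto (S' \to \iota_0\Cc)$. Now consider the toric category $\Dc := \iota_0 \Cc \vec{/} S$. By \zcref{prop:skew_quotient_invariants}, together with sharpness of $S$ (a submonoid of the sharp monoid $\PO\Cc$, and toric by \zcref{prop:toric_monoid_finite_limit} or the standard fact that faces of toric monoids are toric), we have $\iota_0 \Dc = \iota_0\Cc$ and $\PO \Dc = S$. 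Thus $\Cone(\Dc) = (\iota_0\Cc, \Face(S), \incl)$ agrees with $K_{\leq\kappa}$, so it is a subcone. Conversely, let $L \subset K$ be a downward closed subset that is a stacky cone. By definition, $|L|$ is isomorphic to some $\Face(\PO \Dc)$, which has a unique maximal element (see \zcref{sec:monoid}). Call the image of that maximal element $\kappa$. Since $L$ is downward closed and $\kappa$ is its maximum, $L = K_{\leq \kappa}$.

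For (2), by (1) it suffices to show that $K_{\leq \kappa_1} \cap \dots \cap K_{\leq \kappa_n}$ is again of the form $K_{\leq \kappa}$. Intersections of faces are faces, directly from the definition of a face. So with $\kappa = \kappa_1 \cap \dots \cap \kappa_n$, the intersection of the down-sets is exactly $K_{\leq \kappa}$. The empty intersection is $K = K_{\leq \PO\Cc}$, which is consistent. Finiteness is automatic because $\Face(\PO\Cc)$ is finite.

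The main obstacle is purely one of bookkeeping in (1). We need that "stacky cone with the induced structure" on $K_{\leq\kappa}$ is matched by $\Cone(\iota_0\Cc \vec{/} S)$ \emph{including} the functor $P$, not merely the underlying poset. This is exactly the identification $\PO(\iota_0\Cc\vec{/}S) = S$ compatibly with the maps to $\iota_0\Cc$. That compatibility should follow from the universal map $\psi$ in the proof of \zcref{prop:skew_quotient_invariants}, which is over $\iota_0\Cc$ by construction.
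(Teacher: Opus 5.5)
Your proposal is correct and follows essentially the same argument as the paper. A subcone has a unique maximal element $\kappa$ and hence equals $K_{\leq \kappa}$; each $K_{\leq\kappa}$ is identified with $\Cone(\iota_0\Cc \vec{/} S)$ for the corresponding face $S$; and (2) follows because intersections of faces are faces.

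The only difference is that you spell out the bookkeeping that the paper calls immediate from the definitions. That is: faces of $S$ are exactly the faces of $\PO\Cc$ contained in $S$; $S$ is sharp and toric; and $\iota_0(\iota_0\Cc\vec{/}S) = \iota_0\Cc$ and $\PO(\iota_0\Cc\vec{/}S) = S$ via \zcref{prop:skew_quotient_invariants}.
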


\begin{proof}
    (1). The poset underlying any subcone of $K$ must have a unique maximal element.
    If this maximal element is $\kappa$, then the fact that the poset is downward closed in $K$ lets us identify the subcone in question with $K_{\leq \kappa}$.

    It remains to show that every $K_{\leq \kappa}$ is a subcone of $K$.
    If $K = \Cone(\Cc)$, then $\kappa$ corresponds to some $P \in \Face(\PO \Cc)$, and it is immediate from the definitions that $K_{\leq \kappa} = \Cone(\iota_0 \Cc \vec{/} P)$.
    Thus $K_{\leq \kappa}$ is in fact a subcone of $K$.

    (2). By induction, it suffices to show that the intersection of two subcones of $K$ is itself a subcone of $K$.
    So let $\kappa_1, \kappa_2 \in K$, and look at the intersection $K_{\leq \kappa_1} \cap K_{\leq \kappa_2}$.
    If $K = \Cone(\Cc)$, then each $\kappa_i$ corresponds to some face $P_i$ of $\PO \Cc$.
    The intersection $P_1 \cap P_2$ is itself a face of $\PO \Cc$ and thus corresponds to some $\kappa_3 \in K$.
    We see that $K_{\leq \kappa_1} \cap K_{\leq \kappa_2} = K_{\leq \kappa_3}$, which is a subcone of $K$ as needed.
\end{proof}

\subsection{Definition of stacky fans}

\begin{dfn} \label{dfn:stacky_fan}
    A \emph{stacky fan} is a toric decorated poset $\Sigma = (\Nc, |\Sigma|, P)$ such that:
    \begin{itemize}
        \item For every $\sigma \in |\Sigma|$, $\Sigma_{\leq \sigma}$ is a subcone of $\Sigma$, and
        \item For every (necessarily finite) collection of subcones of $\Sigma$, the intersection is itself a subcone.
    \end{itemize}
    Let $\StkFan$, the \emph{2-category of stacky fans}, denote the full subcategory of $\TDP$ spanned by the stacky fans.
\end{dfn}

Before giving examples of stacky fans, we should make it easier to check that a given toric decorated poset is a stacky fan.
For this, we will first introduce an \emph{a priori} weaker condition:

\begin{dfn}
    Let $\Sigma$ be a toric decorated poset.
    A collection of downward closed subsets $\{ K_i \subset \Sigma \}_{i \in I}$ is a \emph{cover} of $\Sigma$ if $|\Sigma| = \cup_{i \in I} |K_i|$.
    We say that a cover $\{ K_i \}_{i \in I}$ of $\Sigma$ satisfies the \emph{fan condition} if:
    \begin{itemize}
        \item Every $K_i$ is a subcone of $\Sigma$ and
        \item For all $i_1, i_2 \in I$, we have $K_{i_1} \cap K_{i_2} = K_{i_3}$ for some $i_3 \in I$.
    \end{itemize}
\end{dfn}

\begin{prop} \label{prop:fan_criterion}
    Let $\Sigma$ be a toric decorated poset.
    The following are equivalent:
    \begin{enumerate}
        \item $\Sigma$ is a stacky fan.
        \item $\Sigma$ admits a cover consisting of subcones, and $|\Sigma|$ admits all finite meets.
        \item The cover $\{ \Sigma_{\leq \sigma} \}_{\sigma \in \Sigma}$ of $\Sigma$ satisfies the fan condition.
        \item There exists a cover of $\Sigma$ satisfying the fan condition.
    \end{enumerate}
\end{prop}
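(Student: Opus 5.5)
We will prove the cycle of implications $(1) \Rightarrow (3) \Rightarrow (4) \Rightarrow (2) \Rightarrow (1)$, using \zcref{prop:subcones_of_cone} as the main input. The basic observation throughout is that a subcone $K$ of $\Sigma$ has a unique maximal element $\kappa$, by the argument in \zcref{prop:subcones_of_cone}(1). Since $K$ is downward closed, this gives $K = \Sigma_{\leq \kappa}$. Thus subcones of $\Sigma$ are exactly the sets $\Sigma_{\leq \sigma}$ that happen to be stacky cones.

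$(1) \Rightarrow (3)$: each $\Sigma_{\leq \sigma}$ is a subcone by the first axiom of a stacky fan. Given $\sigma_1, \sigma_2$, the intersection $\Sigma_{\leq \sigma_1} \cap \Sigma_{\leq \sigma_2}$ is a subcone by the second axiom, hence equals $\Sigma_{\leq \sigma_3}$ for some $\sigma_3$ by the observation above.

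$(3) \Rightarrow (4)$ is immediate.

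$(4) \Rightarrow (2)$: the given cover consists of subcones, so it remains to produce finite meets. Since $|\Sigma|$ is finite, it suffices to produce binary meets and a top element. For the top element, we take the empty meet in the sense needed; if this is problematic, we only need meets of nonempty families, and we will check which is intended against the definition. For $\sigma_1, \sigma_2$, choose $K_{i_1} \ni \sigma_1$ and $K_{i_2} \ni \sigma_2$ from the cover. The intersection $K_{i_1} \cap K_{i_2}$ is some $K_{i_3} = \Sigma_{\leq \kappa_3}$. Inside the stacky cone $K_{i_1} = \Cone(\Cc)$, with underlying poset $\Face(\PO \Cc)$, the elements $\sigma_1$ and $\kappa_3$ have a meet, namely the intersection of faces, as in \zcref{prop:subcones_of_cone}(2). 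Iterating this inside $K_{i_2}$ and using downward closure shows that $\Sigma_{\leq \sigma_1} \cap \Sigma_{\leq \sigma_2}$ is $\Sigma_{\leq \tau}$ for a single element $\tau$, and this $\tau$ is the meet $\sigma_1 \wedge \sigma_2$. Concretely, $\Sigma_{\leq \sigma_j} = (K_{i_j})_{\leq \sigma_j}$ is a subcone of a cone, so both lie in $K_{i_3}$ after intersecting. The intersection is then $(K_{i_3})_{\leq \sigma_1'} \cap (K_{i_3})_{\leq \sigma_2'}$ for suitable $\sigma_j'$, which is a principal down-set by \zcref{prop:subcones_of_cone}(2).

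$(2) \Rightarrow (1)$: every $\sigma$ lies in some subcone $K = \Sigma_{\leq \kappa}$ of the cover. By \zcref{prop:subcones_of_cone}(1), $\Sigma_{\leq \sigma} = K_{\leq \sigma}$ is a subcone of $K$, and it carries the induced structure from $\Sigma$, so it is a subcone of $\Sigma$. For intersections, a finite family of subcones $\Sigma_{\leq \sigma_i}$ intersects to $\Sigma_{\leq \wedge \sigma_i}$ because meets exist. This set is a subcone by the previous sentence.

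The main obstacle is the bookkeeping in $(4) \Rightarrow (2)$: one must show that pairwise intersections of principal down-sets are principal, using only that the cover is closed under intersection. The key point is that everything relevant lies inside a single stacky cone, whose face poset is a meet-semilattice. A further subtlety is the empty meet, i.e.\ a maximum of $|\Sigma|$, and I would interpret ``finite meets'' as ``nonempty finite meets,'' consistent with the definition's ``(necessarily finite) collection of subcones.''
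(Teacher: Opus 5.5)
Your argument is correct and uses the same ingredients as the paper's: subcones are principal down-sets, \zcref{prop:subcones_of_cone}, and the trick of writing $\Sigma_{\leq \sigma_1} \cap \Sigma_{\leq \sigma_2}$ as an intersection of two subcones of the single cone $K_{i_3}$. You only arrange the cycle as $(1)\Rightarrow(3)\Rightarrow(4)\Rightarrow(2)\Rightarrow(1)$ and use that trick to construct meets, whereas the paper runs $(1)\Rightarrow(2)\Rightarrow(3)\Rightarrow(4)\Rightarrow(1)$ and uses it to show directly that intersections of subcones are subcones. Your reading of ``finite meets'' as nonempty finite meets is the intended one, and is in fact forced: a top element already fails to exist for the stacky fan of $\PP^1$.
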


\begin{proof}
    ($1 \Rightarrow 2$).
    Suppose $\Sigma$ is a stacky fan.
    It is clear that $\Sigma$ admits a cover consisting of subcones.
    If $\{ \sigma_i \}_{i \in I}$ is a (necessarily finite) collection of elements of $\Sigma$, the meet $\lim_i \sigma_i$ is the unique maximal element of $\cap_i \Sigma_{\leq \sigma_i}$, if it exists.
    We know all $\Sigma_{\leq \sigma_i}$ are stacky cones, so the intersection $\cap_i \Sigma_{\leq \sigma_i}$ must be a stacky cone.
    This stacky cone must have a unique maximal element, so $\Sigma$ must have meets.

    ($2 \Rightarrow 3$).
    Suppose $|\Sigma|$ admits all finite meets, and let $\{ K_i \}_{i \in I}$ be a cover of $\Sigma$ consisting of subcones.
    To show that $\{ \Sigma_{\leq \sigma} \}_{\sigma \in \Sigma}$ satisfies the fan condition, first let $\sigma \in \Sigma$.
    Then $\sigma \in K_i$ for some $i \in I$, and \zcref{prop:subcones_of_cone}(1) shows that the subset $\Sigma_{\leq \sigma} \subset K_i$ is a subcone of $K_i$, hence of $\Sigma$.

    To check that the condition on intersections holds, let $\sigma_1, \sigma_2 \in \Sigma$.
    By hypothesis, the meet $\sigma_{i_1} \wedge \sigma_{i_2}$ exists, so 
    \[
        \Sigma_{\leq \sigma_{i_1}} \cap \Sigma_{\leq \sigma_{i_2}} = \Sigma_{\leq \sigma_1 \wedge \sigma_2}
    \]
    is also an element of our cover, as needed.

    ($3 \Rightarrow 4$).
    Obvious.

    ($4 \Rightarrow 1$).
    Suppose $\{ K_i \}_{i \in I}$ is a cover of $\Sigma$ which satisfies the fan condition.
    Let $\sigma \in \Sigma$.
    Then $\sigma \in K_i$ for some $i$.
    Because $K_i$ is downward closed, we have $\Sigma_{\leq \sigma} \subset K_i$.
    By \zcref{prop:subcones_of_cone}(1), we see that $\Sigma_{\leq \sigma}$ is itself a stacky cone, hence is a subcone of $\Sigma$.

    We still need to show that the intersection of any finite collection of subcones of $\Sigma$ is itself a subcone.
    By induction, it suffices to show that if $K'_1$ and $K'_2$ are subcones of $\Sigma$, then $K'_1 \cap K'_2$ is a subcone of $\Sigma$.
    Note that $K'_1$ has a unique maximal element $\kappa'_1$, and $\kappa'_1 \in K_{i_1}$ for some $i_1 \in I$.
    Because $K_{i_1}$ is downward closed, we must have $K'_1 \subset K_{i_1}$.
    Likewise we have $K'_2 \subset K_{i_2}$ for some $i_2 \in I$.
    By the fan condition, we have $K_{i_1} \cap K_{i_2} = K_{i_3}$ for some $i_3 \in I$.
    In particular we have $K'_1 \cap K'_2 \subset K_{i_3}$, so 
    \[
        K'_1 \cap K'_2 = (K'_1 \cap K_{i_3}) \cap (K'_2 \cap K_{i_3}).
    \]
    Note that $K'_1 \cap K_{i_3}$ is a subcone of $K_{i_1}$ by \zcref{prop:subcones_of_cone}(2).
    Likewise $K'_2 \cap K_{i_3}$ is a subcone of $K_{i_2}$.
    Both of these are actually subcones of $K_{i_3}$, so $(K'_1 \cap K_{i_3}) \cap (K'_2 \cap K_{i_3})$ is a subcone of $K_{i_3}$ by \zcref{prop:subcones_of_cone}(2) again.
    Thus $K'_1 \cap K'_2$ is a subcone of $K_{i_3}$ (hence also of $\Sigma$), as needed.
\end{proof}

Here are a few key examples to keep in mind.
More will be discussed in \zcref{sub:comparison_of_stacky_fans}:

\begin{ex}
    Any stacky cone $K$ is itself a stacky fan.
    Indeed, the cover $\{ K \}$ of $K$ satisfies the fan condition.
\end{ex}

\begin{ex} \label{ex:classical_toric_fan}
    Let $N$ be a finitely generated free abelian group, and let $\ul{\Sigma} \subset N \otimes_\ZZ \RR$ be a fan (as in \cite[\S 3.1]{cls}).
    Then we may define a stacky fan $\Sigma = (N, |\Sigma|, P)$ as follows:
    \begin{itemize}
        \item $|\Sigma|$ is the poset of cones in $\ul{\Sigma}$.
        \item For all cones $\ul{\sigma} \in \ul{\Sigma}$, we have $P_{\ul{\sigma}} = (\ul{\sigma} \cap N)$.
        (Inclusions of cones are sent to the obvious morphisms.)
    \end{itemize}
    We may view $\Sigma$ as a union of the cones $K_{\ul{\sigma}}$ (see \zcref{ex:classical_toric_cone}) for $\ul{\sigma} \in \ul{\Sigma}$.
    Any two subcones $K_{\ul{\sigma_1}}, K_{\ul{\sigma_2}} \subset \Sigma$ intersect in the subcone $K_{\ul{\sigma_1} \cap \ul{\sigma_2}}$, so $\Sigma$ is in fact a stacky fan.
    This allows us to view the fans of classical toric geometry as stacky fans.
\end{ex}

\begin{ex} \label{ex:P1_fan}
    As a sub-example of \zcref{ex:classical_toric_fan}, let $\ul{\Sigma}$ be the fan of $\PP^1$, consisting of $\big\{ \{ 0 \}, \RR_{\geq 0}, \RR_{\leq 0} \big\} \subset \RR$.
    The corresponding stacky fan $\Sigma$ satisfies:
    \begin{itemize}
        \item $\Nc = \ZZ$.
        \item $|\Sigma|$ is the poset
        \[
            \sigma_{\leq 0} \leftarrow \sigma_0 \rightarrow \sigma_{\geq 0}.
        \]
        \item The functor $P$ sends the above poset to
        \[
            \{-\NN \subset \ZZ\} \leftarrow \{ 0 \subset \ZZ \} \rightarrow \{ \NN \subset \ZZ \}.
        \]
    \end{itemize}
    Note that $\Sigma$ is the union of two copies of the stacky cone for $\AA^1$ (\zcref{ex:A1_cone}) along a common copy of the stacky cone for $\GG_m$ (\zcref{ex:Gm_cone}), as expected.
\end{ex}

\begin{ex}
    We can generalize the preceding example to give stacky fans for weighted projective lines $\PP(d_0, d_1)$, where $d_0$ and $d_1$ are positive natural numbers.
    Here:
    \begin{itemize}
        \item $\Nc = N = \coker(\ZZ \xrightarrow{\alpha} \ZZ^2)$, where $\alpha$ is given by the matrix
        \[
            \begin{bmatrix}
                d_0 \\
                d_1
            \end{bmatrix}.
        \]
        Choosing a splitting of the natural map $N \to N / N_\tors$, we can write $N \simeq \ZZ \oplus \ZZ / \gcd(d_0, d_1)$.
        \item $|\Sigma|$ is the poset
        \[
            \sigma_{\leq 0} \leftarrow \sigma_0 \rightarrow \sigma_{\geq 0}.
        \]
        \item The functor $S$ sends the above poset to
        \[
            \{\NN \xrightarrow{1 \mapsto (-d_1, 0)} N\} \leftarrow \{ 0 \mapsto \Nc \} \rightarrow \{\NN \xrightarrow{1 \mapsto (d_0, 0)} N\}.
        \]
    \end{itemize}
    If $d_0$ and $d_1$ are coprime, then $N \simeq \ZZ$, and we can view $\Sigma$ as a union of the stacky fans of $\AA^1 / \mu_{d_0}$ and $\AA^1 / \mu_{d_1}$ (see \zcref{ex:A1_mod_mu_d_cone}).
    When this is not the case, the presence of torsion in $N$ indicates that the weighted projective line $\PP(d_0, d_1)$ has nontrivial generic stabilizer.
    We will discuss this more generally in \zcref{cor:generic_stabilizer_criteria} below.
\end{ex}

\subsection{Comparison with other definitions of stacky fans} \label{sub:comparison_of_stacky_fans}

There are many definitions of ``stacky fan'' in the literature.
In this subsection we will discuss the definitions we know of and show that they all give stacky fans in our sense.

\begin{ex} \label{ex:BCS_fan}
    Let $(N, \ul{\Sigma}, \beta: \ZZ^n \to N)$ be a stacky fan in the sense of \cite{borisov2005orbifold},\footnote{The stacky fans of \cite{iwanari2009category} comprise the special case where $N$ is free.} i.e.\
    \begin{itemize}
        \item $N$ is a finitely generated abelian group,
        \item $\ul{\Sigma}$ is a (classical) simplicial fan in $N \otimes \RR$ with rays $\rho_1, \dots, \rho_n$, and
        \item $\beta: \ZZ^n \to N$ is a homomorphism such that, for all $i$, we have $\rho_i = \RR_{\geq 0} \cdot \beta(\ebf_i)$.
    \end{itemize}
    From this data, we can define a stacky fan $\Sigma$ (in our sense) with:
    \begin{itemize}
        \item $\Nc_\Sigma = N$,
        \item $|\Sigma|$ given by the set of cones in $\ul{\Sigma}$, and
        \item For $\ul{\sigma} \in \ul{\Sigma}$ with rays $\rho_{i_1}, \dots, \rho_{i_k}$, $P_{\ul{\sigma}} = \NN \ebf_{i_1} \oplus \NN \ebf_{i_k}$, mapping to $N$ via $\beta$.
    \end{itemize}
\end{ex}

\begin{ex} \label{ex:KM_fan}
    Let $(N, \{ P_\sigma \}_{\sigma \in |\Sigma|})$ be a KM fan in the sense of \cite[Definition 2.3.1]{gillam2015theory},\footnote{See the discussion immediately following \cite[Definition 2.3.1]{gillam2015theory} for a discussion of how to pass between this definition of KM fan and an alternative definition that generalizes the stacky fans of \cite{tyomkin2012tropical}.} i.e.:
    \begin{itemize}
        \item $N$ is a finitely generated abelian group and
        \item $\{ P_\sigma \}_{\sigma \in |\Sigma|}$ is a \emph{set} of sharp toric submonoids of $N$,\footnote{By \zcref{prop:sharp_toric_characterization}, $P_\sigma$ is sharp and toric if and only if $P_\sigma$ is sharp and fs, which is what \cite[Definition 2.3.1]{gillam2015theory} requires.}
    \end{itemize}
    such that:
    \begin{itemize}
        \item Every face of each $P_\sigma$ is contained in $\{ P_\sigma \}$, and
        \item Each pair $P_{\sigma_1}$ and $P_{\sigma_2}$ intersects in a common face.
    \end{itemize}
    (If $N$ is free, we say that this KM fan is a ``lattice KM fan.'')
    
    From this data, we can define a stacky fan $\Sigma = (N, |\Sigma|, P)$ (in our sense) where $P$ sends $\sigma \in |\Sigma|$ to the inclusion map $P_\sigma \hookrightarrow N$.
\end{ex}

\begin{ex} \label{ex:GS_fan}
    Let $(L, \ul{\Sigma}, \beta)$ be a non-strict stacky fan in the sense of \cite[Definition 2.15]{geraschenko2015toric_1}, i.e.\
    \begin{itemize}
        \item $L$ is a finitely generated free abelian group,
        \item $N$ is a finitely generated abelian group,
        \item $\ul{\Sigma}$ is a classical fan in $L \otimes \RR$, and
        \item $\beta: L \to N$ is a homomorphism.
    \end{itemize}
    From this data, we can define a stacky fan $\Sigma$ (in our sense) where:
    \begin{itemize}
        \item $\Nc_\Sigma = \Omega^\infty \cofib(\RHom(H^0_\beta, \ZZ) \oplus \RHom(H^1_\beta, \ZZ) \xrightarrow{(0, - \circ f)} L)$ where:
        \begin{itemize}
            \item For $i = 0, 1$, $H^i_\beta = \Rbf^i \Hom(\cofib(\beta), \ZZ)$ and
            \item $f$ is the natural map $\RHom(H^1_\beta, \ZZ) = \Hom(H^1_\beta, \ZZ) \to \Hom(\Hom(L, \ZZ), \ZZ) = L$,
        \end{itemize}
        \item $|\Sigma|$ is the set of cones in $\ul{\Sigma}$, and
        \item For $\ul{\sigma} \in \ul{\Sigma}$, $P_{\ul{\sigma}} = \ul{\sigma} \cap L$, mapping to $\Nc_\Sigma$ via the composite of the natural maps $\ul{\sigma} \cap L \to L \to \Nc_\Sigma$.
    \end{itemize}
\end{ex}

\subsection{Topology of stacky fans}

Let $\Sigma$ be a stacky fan.
We can think of the underlying finite poset $|\Sigma|$ as a topological space, where the open subsets are the downward closed subsets.
In this section we will discuss the topology and combinatorics of $|\Sigma|$.

First observe that $|\Sigma|$ has a unique minimal element.
Indeed, this is true for any subcone $K$ of $\Sigma$ (since the face poset of any monoid $M$ has a unique minimal element $M^\pm$).
Furthermore, since subcones of $\Sigma$ intersect in subcones, the minimal element of any two subcones must also be the minimal element of their intersection, giving uniqueness.
The analogous statement for maximal elements is false: as we saw in \zcref{ex:P1_fan}, a typical stacky fan has many maximal elements.

By \zcref{prop:subcones_of_cone}, the subcones of a stacky cone $K$ are in bijective correspondence with the points of $K$.
We can extend this to stacky fans:

\begin{prop} \label{prop:pts_and_cones_stacky_fan}
    Let $\Sigma$ be a stacky fan.
    Then the assignment $\sigma \mapsto \Sigma_{\leq \sigma}$ gives a bijection between points of $\Sigma$ and subcones of $\Sigma$.
    In particular, $\Sigma$ is a stacky cone if and only if $|\Sigma|$ has a unique maximal element.
\end{prop}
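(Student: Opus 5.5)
The plan is to check that the map $\sigma \mapsto \Sigma_{\leq \sigma}$ is well defined, injective, and surjective, and then read off the ``in particular'' clause.

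\emph{Well defined.} By \zcref{dfn:stacky_fan}, each $\Sigma_{\leq \sigma}$ is a subcone of $\Sigma$, so the assignment lands in the set of subcones.

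\emph{Injective.} The poset $\Sigma_{\leq \sigma}$ has $\sigma$ as its unique maximal element. Hence $\Sigma_{\leq \sigma} = \Sigma_{\leq \tau}$ forces $\sigma \leq \tau$ and $\tau \leq \sigma$, so $\sigma = \tau$ by antisymmetry.

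\emph{Surjective.} Let $K \subset \Sigma$ be a subcone.
\begin{itemize}
    \item Since $K$ is a stacky cone, it is equivalent to $\Cone(\Cc)$ for some toric category $\Cc$.
    \item Its underlying poset is therefore isomorphic to $\Face(\PO \Cc)$, which has the unique maximal element $\PO \Cc$. Call the corresponding element $\kappa \in K$; it is the greatest element of $|K|$.
    \item Since $K$ is downward closed in $\Sigma$ and contains $\kappa$, we get $\Sigma_{\leq \kappa} \subset K$.
    \item Conversely, every element of $K$ is $\leq \kappa$, so $K \subset \Sigma_{\leq \kappa}$.
\end{itemize}
Thus $K = \Sigma_{\leq \kappa}$ as subsets. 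The decorations also agree, because a subcone carries the induced structure by definition. This is the same argument used in \zcref{prop:subcones_of_cone}(1).

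\emph{In particular.} Suppose $\Sigma$ is a stacky cone. Then $\Sigma$ is equivalent to some $\Cone(\Cc)$, whose underlying poset is $\Face(\PO \Cc)$, and this has a unique maximal element. Conversely, suppose $|\Sigma|$ has a unique maximal element $\sigma$. Since $|\Sigma|$ is finite, every element lies below some maximal element, hence below $\sigma$. So $\Sigma = \Sigma_{\leq \sigma}$, which is a subcone and hence a stacky cone.

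The only step needing any care is surjectivity. There one must use that the face poset of a stacky cone has a greatest element, and that ``equivalence of toric decorated posets'' for stacky cones includes an isomorphism of underlying posets. That second fact holds because 2-morphisms in $\TDP$ exist only between 1-morphisms with the same underlying map $|f|$, so an equivalence induces a bijection of posets.
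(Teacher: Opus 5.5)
Your proposal is correct and follows the paper's proof essentially step for step. The structure is the same: well-definedness from the definition of a stacky fan, injectivity from antisymmetry, surjectivity by taking the greatest element $\kappa$ of a subcone $K$ so that $K = \Sigma_{\leq \kappa}$, and the ``in particular'' clause by the same reasoning. Your extra remarks make explicit two details the paper leaves implicit: that an equivalence of toric decorated posets induces an isomorphism of underlying posets, and that finiteness puts every element below some maximal element.
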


\begin{proof}
    Let $\sigma \in \Sigma$.
    Then $\Sigma_{\leq \sigma}$ is a subcone of $\Sigma$ by definition, and the assumption that $|\Sigma|$ is a poset implies the assignment $\sigma \mapsto \Sigma_{\leq \sigma}$ is injective.
    To see that this assignment is surjective, let $K$ be a subcone of $\Sigma$.
    Then $K$ has a maximal element $\kappa$, and $K = \Sigma_{\leq \kappa}$.

    For the ``in particular'' claim, we already noted the ``only if'' direction in \zcref{prop:subcones_of_cone}.
    For the ``if'' direction, suppose $\sigma \in |\Sigma|$ is the unique maximal element.
    Then $\Sigma = \Sigma_{\leq \sigma}$, hence is a stacky cone.
\end{proof}

Thus subcones of a stacky fan are the downward-closed subsets with unique maximal elements.
More generally, any downward closed subset of a stacky fan is itself a stacky fan:

\begin{prop} \label{prop:downward_closed_is_subfan}
    Let $\Sigma$ be a fan.
    Then any downward-closed subset $\Sigma' \subset \Sigma$ admits a natural stacky fan structure, and the inclusion map $\Sigma \hookrightarrow \Sigma'$ is a morphism of stacky fans.
\end{prop}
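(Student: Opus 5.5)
The natural stacky fan structure on $\Sigma'$ is obtained by restriction. We set $\Nc_{\Sigma'} = \Nc_\Sigma$ and $|\Sigma'| = \Sigma'$ with the induced order, and we take $P_{\Sigma'} = P_\Sigma|_{\Sigma'}$. This triple is a toric decorated poset because every decoration $P_{\Sigma,\sigma} \to \Nc_\Sigma$ is unchanged. Note also that the inclusion actually runs $\Sigma' \hookrightarrow \Sigma$; the statement has the arrow reversed, and the plan treats it in the correct direction.

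To show $\Sigma'$ is a stacky fan, we use \zcref{prop:fan_criterion} in the form (4): we exhibit a cover satisfying the fan condition, namely $\{\Sigma'_{\leq \sigma}\}_{\sigma \in \Sigma'}$. Since $\Sigma'$ is downward closed, $\Sigma'_{\leq \sigma} = \Sigma_{\leq \sigma}$ as decorated posets. The latter is a subcone of $\Sigma$, so it is a stacky cone, and it is downward closed in $\Sigma'$; hence it is a subcone of $\Sigma'$. For the intersection condition, let $\sigma_1, \sigma_2 \in \Sigma'$. Because $\Sigma$ is a stacky fan, the meet $\sigma_1 \wedge \sigma_2$ exists in $|\Sigma|$ (\zcref{prop:fan_criterion}(2)), and $\Sigma_{\leq \sigma_1} \cap \Sigma_{\leq \sigma_2} = \Sigma_{\leq \sigma_1 \wedge \sigma_2}$. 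Downward closure gives $\sigma_1 \wedge \sigma_2 \leq \sigma_1 \in \Sigma'$, so $\sigma_1 \wedge \sigma_2 \in \Sigma'$. The intersection is therefore again a member of the cover. If $\Sigma'$ is empty the claim is vacuous, or we simply exclude that case by convention.

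It remains to show the inclusion is a 1-morphism in $\TDP$. We take $f_{\#,\Nc} = \id$, take $|f|$ to be the inclusion of posets, and take every $f_{\#,\sigma}$ to be the identity of $P_{\Sigma,\sigma}$. The squares \eqref{eq:tdp_1_morphism_square} commute trivially. The density condition $P_{f(\sigma)} = \angles{\im f_{\#,\sigma}}$ holds because the image is all of $P_{\Sigma,\sigma}$, which is its own maximal face. No step here is a real obstacle; the only point needing care is checking that meets computed in $\Sigma$ land in $\Sigma'$, and that is exactly where downward closure is used.
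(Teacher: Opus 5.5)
Your proof is correct and follows the paper's argument: you restrict $\Nc_\Sigma$ and $P_\Sigma$ to $\Sigma'$, then apply the fan criterion to the cover $\{\Sigma_{\le\sigma}\}_{\sigma\in\Sigma'}$. You also add two details the paper leaves implicit, namely that $\sigma_1\wedge\sigma_2\in\Sigma'$ by downward closure, so each pairwise intersection is literally a member of the cover, and the explicit check that the correctly oriented inclusion $\Sigma'\hookrightarrow\Sigma$ satisfies the density condition.
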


\begin{proof}
    We view $\Sigma'$ as a stacky fan with $\Nc_{\Sigma'} = \Nc_\Sigma$ and $P_{\Sigma'} = P_\Sigma|_{\Sigma'}$.
    To see that this is a stacky fan, we apply \zcref{prop:fan_criterion} to the cover $\{ \Sigma_{\leq \sigma} \}_{\sigma \in \Sigma'}$ of $\Sigma'$.
    We need to check:
    \begin{itemize}
        \item For all $\sigma \in \Sigma'$, $\Sigma_{\leq \sigma}$ is a subcone of $\Sigma'$.
        Because $\Sigma$ is a stacky fan, we see that $\Sigma_{\leq \sigma}$ is a subcone of $\Sigma$ which is contained in $\Sigma'$, so this claim is clear.
        \item For all $\sigma_1, \sigma_2 \in \Sigma'$, the intersection $\Sigma_{\leq \sigma_1} \cap \Sigma_{\leq \sigma_2}$ is a subcone of $\Sigma'$.
        This again follows from the assumption that $\Sigma$ is a stacky fan. \qedhere
    \end{itemize}
\end{proof}

\begin{dfn}
    A \emph{subfan} of a stacky fan is a downward closed subset, viewed as a stacky fan with the induced structure.
    An \emph{open immersion of stacky fans} is a morphism of stacky fans which is isomorphic to the inclusion of a subfan.
\end{dfn}

\begin{rmk} \label{rmk:Zariski_on_StkFan}
    We can define a \emph{Zariski topology} on (the homotopy category of) $\StkFan$ by saying that covers of a stacky fan $\Sigma$ are families of morphisms $\{ j_i: \Sigma_i \to \Sigma \}_{i \in I}$ such that each $j_i$ is an open immersion and such that $\Sigma = \cup_{i \in I} j_i(\Sigma_i)$.
\end{rmk}

In the classical theory of fans, it is often useful to consider the collection of all subcones of some specified dimension in a given fan.
The analogue in the theory of stacky fans is straightforward:

\begin{dfn}
    For a stacky fan $\Sigma$ and $n \in \NN$, let 
    \[
        \Sigma(n) = \bset{\sigma \in |\Sigma|}{\rk P_\sigma^\gp = n}.
    \]
    Also let $\Sigma(\leq n) = \cup_{i \leq n} \Sigma(i)$, so $\Sigma(\leq n)$ is a subfan of $\Sigma$. 
\end{dfn}

Note that $\Sigma(0) = \Sigma(\leq 0)$ is the subcone consisting of the unique minimal element of $|\Sigma|$.
The cone $\Sigma(0)$ encodes the dense torus of the toric stack corresponding to $\Sigma$, as we shall see in \zcref{prop:geometrization_of_cone}.

\subsection{Morphisms of stacky fans}

Recall that morphisms of schemes are essentially obtained by gluing together morphisms of affine schemes.
An analogous claim holds for stacky fans:

\begin{prop} \label{prop:lim_colim_maps_in_StkFan}
    Let $\Sigma$ and $\Sigma'$ be stacky fans.
    Then 
    \[
        \Map_\StkFan(\Sigma, \Sigma') = \lim_{K \subset \Sigma} \colim_{K' \subset \Sigma'} \Map_\StkCone(K, K'),
    \]
    where the limit is taken over subcones of $\Sigma$ and $\Sigma'$.
\end{prop}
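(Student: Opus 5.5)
The plan is to prove the formula in two stages: first as a limit over subcones of $\Sigma$, then as a filtered colimit over subcones of $\Sigma'$. Both stages use only the definition of 1- and 2-morphisms in $\TDP$. The data $(f_{\#,\Nc}, |f|, \{f_{\#,\sigma}\})$ of a morphism are indexed pointwise over $|\Sigma|$. The conditions on them, namely the commutative squares \eqref{eq:tdp_1_morphism_square} and the density condition, involve only comparable pairs $\sigma_1 \leq \sigma_2$ or single points. Since every comparable pair lies in some subcone $\Sigma_{\leq \sigma_2}$, a morphism $\Sigma \to \Sigma'$ should be the same as a compatible family of morphisms $\Sigma_{\leq\sigma} \to \Sigma'$.

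\textbf{Limit over subcones of $\Sigma$.} By \zcref{prop:pts_and_cones_stacky_fan}, the poset of subcones of $\Sigma$ is $|\Sigma|$ itself. Restriction gives a functor $\Map_\StkFan(\Sigma,\Sigma') \to \lim_{\sigma} \Map_\TDP(\Sigma_{\leq\sigma},\Sigma')$. I will argue that this is an equivalence by building an inverse, taking care with the shared datum $f_{\#,\Nc}$. A compatible family consists of maps $f^\sigma_{\#,\Nc}$ together with 2-isomorphisms on restrictions. Since $|\Sigma|$ has a unique minimal element $\sigma_0$ lying in every subcone, all the $f^\sigma_{\#,\Nc}$ are identified with $f^{\sigma_0}_{\#,\Nc}$. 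We use that one map as $f_{\#,\Nc}$ and transport each $f^\sigma_{\#,\sigma}$ along these identifications. The poset maps $|f^\sigma|$ agree on overlaps strictly, because 2-morphisms require $|f|=|g|$, so they glue to a map $|f|$. Because $\Sigma$ is a fan, overlaps of subcones are again subcones, and this should make the 2-categorical coherence of the limit automatic. The argument for 2-morphisms is identical.

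\textbf{Colimit over subcones of $\Sigma'$.} Now fix a subcone $K = \Sigma_{\leq\sigma}$, and compare $\Map_\TDP(K,\Sigma')$ with $\colim_{K'} \Map_\StkCone(K,K')$. The natural map from the colimit is given by postcomposing with the open immersions $K' \hookrightarrow \Sigma'$ of \zcref{prop:downward_closed_is_subfan}.

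For essential surjectivity, take $f: K \to \Sigma'$ with $f(\sigma)=\sigma'$. Because $|f|$ is order-preserving and $\sigma$ is the maximum of $K$, $f$ factors through $K' = \Sigma'_{\leq\sigma'}$. The factored map is still a morphism, since the density condition $P_{f(\tau)} = \angles{\im f_{\#,\tau}}$ is intrinsic to the target point.

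For full faithfulness and for filteredness, I will use that the subcones of $\Sigma'$ through which $f$ factors are exactly the $\Sigma'_{\leq\tau'}$ with $\tau' \geq \sigma'$. Two such subcones meet in $\Sigma'_{\leq \tau_1'\wedge\tau_2'}$, which contains $\Sigma'_{\leq\sigma'}$. Hence the category of factorizations has an initial object and is contractible. Two factorizations through $K'_1$ and $K'_2$ are identified in the colimit via $K'_1\cap K'_2$, so the colimit computes $\Map_\TDP(K,\Sigma')$. Finally, since $\StkCone \subset \TDP$ is full, $\Map_\TDP(K,K') = \Map_\StkCone(K,K')$.

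\textbf{Main obstacle.} I expect the delicate point to be the interpretation of the colimit. Indexed over all subcones of $\Sigma'$, it is a colimit of groupoids over a poset that is not filtered; the poset of subcones is $|\Sigma'|$, which has many maximal elements. It must therefore be read as a homotopy colimit, and I need to show that it nevertheless computes $\Map_\TDP(K,\Sigma')$ as a disjoint union over $\sigma'\in|\Sigma'|$ of the maps with $f(\sigma)=\sigma'$. My first attempt will be to decompose $\Map_\StkCone(K,\Sigma'_{\leq\tau'})$ as $\coprod_{\sigma'\leq\tau'}\Map^{\sigma\mapsto\sigma'}(K,\Sigma'_{\leq\sigma'})$. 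The colimit then splits into pieces indexed by $\sigma'$. Each piece is a constant diagram on the subposet $\{\tau' \geq \sigma'\}$, which has the minimum $\sigma'$ and is therefore contractible. So each piece contributes exactly $\Map^{\sigma\mapsto\sigma'}(K,\Sigma'_{\leq\sigma'})$.
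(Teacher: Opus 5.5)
Your proposal is correct and takes the same route as the paper. You first glue morphisms defined on the subcones $\Sigma_{\leq\sigma}$ to reduce to the case $\Sigma = K$ of a stacky cone. You then observe that any $f\colon K \to \Sigma'$ factors through $\Sigma'_{\leq f(\kappa_{\max})}$ because $|f|$ is order-preserving.

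The paper states the colimit step in one line, saying that $\Map_\StkFan(K,\Sigma')$ is ``a union of the spaces of maps from $K$ to subcones of $\Sigma'$.'' Your decomposition of $\Map(K,\Sigma'_{\leq\tau'})$ by the value of $f(\kappa_{\max})$, together with the contractibility of $\{\tau' \geq \sigma'\}$, makes that line precise.
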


\begin{proof}
    To define a morphism of stacky fans $\Sigma \to \Sigma'$, it suffices to define morphisms of stacky fans $K \to \Sigma'$ for all subcones $K \subset \Sigma'$ (in fact, for all subcones in a cover of $\Sigma$) such that these morphisms agree up to natural isomorphism on intersections.
    This is the same as saying
    \[
        \Map_\StkFan(\Sigma, \Sigma') = \lim_{K \subset \Sigma} \Map_\StkCone(K, \Sigma').
    \]
    
    We thus reduce to studying the case where $\Sigma = K$ is a stacky cone.
    Let $\kappa_{\max}$ be the maximal element of $K$.
    Then, for any $f: \Sigma \to \Sigma'$, we must have $f(K) \subset \Sigma'_{\leq f(\kappa_{\max})}$.
    Every $\Sigma'_{\leq f(\kappa_{\max})}$ is a subcone of $\Sigma'$, so $\Map_\StkFan(K, \Sigma')$ is a union of the spaces of maps from $K$ to subcones of $\Sigma'$.
    In other words,
    \[
        \Map_\StkFan(K, \Sigma') = \colim_{K' \subset \Sigma'} \Map_\StkCone(K, K').
    \]
    Combining this with the results of the previous paragraph gives the claim.
\end{proof}

\section{Stacky tori and toric stacks} \label{sec:defining_toric_stacks}

We now seek to describe the 2-category $\TStk$ of toric stacks.
These will be algebraic stacks defined on $\Aff_k$, the \'etale site of affine $k$-schemes.\footnote{Standard descent arguments allow us to also view these as algebraic stacks on the other standard sites over $\Spec k$ arising in algebraic geometry, e.g.\ the \'etale site of all $k$-schemes.}

\subsection{Shifted Cartier duality and stacky tori}

For a sheaf of abelian groups $\Asc$ on $\Aff_k$, the \emph{Cartier dual} of $\Asc$ is the abelian group scheme $\CDual(\Asc) = \ul{\Hom}(\Asc, \GG_m)$, where the multiplication on $\CDual(\Asc)$ arises from the multiplication on $\GG_m$ and the diagonal map on $\Asc$.
Because $k$ is algebraically closed, Cartier duality gives an antiequivalence between the category of diagonalizable groups over $k$ and the category of finitely generated abelian groups (viewed as constant sheaves on $\Aff_k$).
More precisely, for any diagonalizable $G$, we have $\CDual(G) = \ul{\weight(G)}$ and $\CDual(\ul{\weight(G)}) = G$.

Let us recall how this story generalizes to the world of stacks.

\begin{dfn} \label{dfn:2ab_stk}
    An \emph{abelian 2-group stack} is a stack (for the large \'etale site of $\Spec k$) valued in symmetric monoidal groupoids.
    These assemble into a 2-category $\tAb(\Stk)$, defined as the full subcategory of $\Fun(\Aff_k\op, \tAb)$ spanned by objects whose underlying functor $\Aff_k\op \to \Gpd$ is a stack.
    
    One can define actions of abelian 2-group stacks on other stacks as in \cite[Definition B.12]{fantechi2010smooth}.
\end{dfn}

\begin{rmk}
    As with abelian 2-groups, one should keep in mind that homomorphisms of abelian 2-group stacks are not the same as homomorphisms of the underlying ``2-group stacks:'' not every monoidal functor is symmetric monoidal.
\end{rmk}

\begin{dfn}[{\cite[Definition 3.1]{brochard2021duality}}]
    Let $\Ac$ be an abelian 2-group stack.
    The \emph{shifted Cartier dual} of $\Ac$ is the abelian 2-group stack $\SCDual(\Ac)$ with $R$-points
    \[
        \SCDual(\Ac)(R) = \Hom_{\tAb(\Stk)}(\Ac, B\GG_m).
    \]
    This defines a 2-functor $\SCDual: \tAb(\Stk)\op \to \tAb(\Stk)$. 
\end{dfn}

\begin{ex} \label{ex:shifted_Cartier_dual_of_A}
    Let $A$ be a finitely generated abelian group.
    Then \cite[Example 3.7]{brochard2021duality} gives $\SCDual(\ul{A}) = B\CDual(\ul{A})$, the classifying stack of the Cartier dual of $\ul{A}$.
\end{ex}

\begin{ex} \label{ex:shifted_Cartier_dual_of_BA}
    Let $A$ be a finitely generated abelian group, and note that $\ul{\Bsf A} = B\ul{A}$.\footnote{That is, the stackification of the constant prestack $R \mapsto \Bsf A$ is the classifying stack of the constant group sheaf $\ul{A}$.}
    Then \cite[Corollary 3.5]{brochard2021duality} gives $\SCDual(B\ul{A}) = \CDual(\ul{A})$.
\end{ex}

\begin{ex} \label{ex:shifted_Cartier_dual_of_2term}
    We can combine the preceding examples as follows.
    Let $\phi: A \to A'$ be a homomorphism of finitely generated abelian groups.
    Then
    \begin{align*}
        \SCDual(\ul{\cofib(\phi)}) &= \Hom(\ul{\cofib(\phi)}, B\GG_m) \\
        &= \fib(\SCDual(\ul{A'}) \to \SCDual(\ul{A})) \\
        &= \fib(B\CDual(\ul{A'}) \to B\CDual(\ul{A})) \\
        &= \CDual(\ul{A}) / \CDual(\ul{A'}).
    \end{align*}
\end{ex}

\begin{dfn} \label{dfn:stacky_torus}
    A \emph{stacky torus} is an abelian 2-group stack of the form $\SCDual(\ul{\Ac})$ where $\Ac$ is a toric groupoid.
\end{dfn}

There are of course more explicit (but less canonical) perspectives on the definition of stacky tori:
\begin{itemize}
    \item By \zcref{rmk:2term_vs_DZ} and \zcref{prop:deligne_classification}, every toric groupoid splits (non-canonically) as a direct sum $A_0 \oplus \Bsf A_1$, where $A_0$ is finitely generated and $A_1$ is finitely generated free.
    Thus every stacky torus is of the form $\SCDual(\ul{A_0}) \times \SCDual(B \ul{A_1}) = B\CDual(\ul{A_0}) \times \CDual(\ul{A_1})$ with $A_0$ finitely generated and $A_1$ finitely generated free.
    In other words, every stacky torus splits (non-canonically) as a product $BG \times T$ with $G$ diagonalizable and $T$ a torus.
    \item The preceding claim implies that every stacky torus can be written as a quotient $[T / G]$ with $T$ a torus, $G$ diagonalizable, and $G \to T$ a homomorphism.
    Conversely, every such quotient $[T / G]$ is a stacky torus.
\end{itemize}

We define stacky tori using toric groupoids largely because of the following result:

\begin{prop}
    The 2-functor $\SCDual(\ul{(-)}): \TGpd\op \to \tAb(\Stk)$ defines an antiequivalence between $\TGpd$ and the full subcategory of $\tAb(\Stk)$ spanned by the stacky tori.
\end{prop}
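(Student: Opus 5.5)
The plan is to show that $\SCDual(\ul{(-)})$ is fully faithful on $\TGpd$; essential surjectivity onto stacky tori holds by \zcref{dfn:stacky_torus}. For full faithfulness, the approach is to construct, for every toric groupoid $\Ac$, a natural equivalence $\ul{\Ac} \simeq \SCDual^2(\ul{\Ac})$, the biduality map, and to verify that $\ul{(-)}: \TGpd \to \tAb(\Stk)$ is fully faithful. Then
\[
\Hom_{\tAb(\Stk)}\big(\SCDual(\ul{\Ac}), \SCDual(\ul{\Ac'})\big) \simeq \Hom_{\tAb(\Stk)}\big(\ul{\Ac'}, \SCDual^2(\ul{\Ac})\big) \simeq \Hom_{\tAb(\Stk)}(\ul{\Ac'}, \ul{\Ac}) \simeq \Hom_{\TGpd}(\Ac', \Ac).
\]
The first equivalence comes from the tautological adjunction-type symmetry: both sides classify bilinear pairings $\SCDual(\ul{\Ac}) \times \ul{\Ac'} \to B\GG_m$. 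This symmetry is formal from the definition $\SCDual(-) = \ul{\Hom}(-, B\GG_m)$.

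For biduality, I would reduce to the split case. By \zcref{prop:2Term_vs_TGpd} and \zcref{prop:strict_trivialization}, every $\Ac$ is (non-canonically) $A_0 \oplus \Bsf A_1$, and $\SCDual$ converts finite direct sums to products. It therefore suffices to check that the natural biduality map is an equivalence for $\ul{A}$ with $A$ finitely generated and for $B\ul{A}$ with $A$ free. \zcref{ex:shifted_Cartier_dual_of_A} and \zcref{ex:shifted_Cartier_dual_of_BA} give $\SCDual(\ul{A}) = B\CDual(\ul{A})$ and $\SCDual(B\ul{A}) = \CDual(\ul{A})$. Applying the same results of \cite{brochard2021duality} to the diagonalizable group $\CDual(\ul{A})$ (their statements cover $BG$ and $G$ for $G$ diagonalizable), together with classical Cartier duality $\CDual\CDual(\ul{A}) = \ul{A}$ over algebraically closed $k$, shows the biduality maps are equivalences. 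Because the biduality map is natural, it does not matter that the splitting is non-canonical: being an equivalence is a property.

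The remaining step, which I expect to be the main obstacle, is full faithfulness of $\Ac \mapsto \ul{\Ac}$ on toric groupoids. Here one must compute $\Hom_{\tAb(\Stk)}(\ul{\Ac'}, \ul{\Ac})$ and compare it with $\Hom_{\TGpd}(\Ac', \Ac)$, including the $\Ext^1$-twisted symmetric monoidal structures. My first attempt is to use sheafification adjunction: $\Hom(\ul{\Ac'}, \ul{\Ac}) = \Hom_{\tAb}(\Ac', \ul{\Ac}(\Spec k))$. Since $\Spec k$ is connected and $\ul{\Ac}(\Spec k)$ may be computed as $\tau^{\leq 0}R\Gamma$ of the constant complex, it is enough to know that $H^1_{\mathrm{et}}(\Spec k, \ul{A_1}) = 0$ (true as $k$ is algebraically closed) and $H^0 = A_1$, giving $\ul{\Ac}(\Spec k) \simeq \Ac$. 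Working in the derived language of \zcref{prop:deligne_classification} keeps the $\Ext^1$ bookkeeping automatic: homomorphisms become maps in $\Dsf^{[-1,0]}(\ZZ)$ on both sides. The one thing to double-check is that morphisms in $\tAb(\Stk)$ between such constant objects are computed by this derived-global-sections formula. I would justify this via the stacky analogue of Deligne's equivalence, namely \cite[Corollaire 1.4.17]{deligne1971formule} in its sheaf form.
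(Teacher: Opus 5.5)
Your proof is correct, but it is organized differently from the paper's. The paper cites Brochard's general biduality theorem \cite[Theorem 4.11]{brochard2021duality} to get $\SCDual^2(\ul{\Ac}) = \ul{\Ac}$. It then exhibits an explicit quasi-inverse $\Tfr \mapsto \Gamma(\Spec k, \SCDual(\Tfr))$ and checks only the composite $\Gamma(\Spec k, \SCDual^2(\ul{\Ac})) = \Gamma(\Spec k, \ul{\Ac}) = \Ac$.

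You make two different choices. First, you rebuild biduality from the split case, using Brochard's elementary computations of $\SCDual(\ul{A})$ and $\SCDual(B\ul{A})$ together with Cartier duality. Second, you prove full faithfulness head-on through the chain swap, then biduality, then the constant-stack adjunction.

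Your route makes explicit the two inputs the paper's short proof uses tacitly. One is biduality. The other is the identification $\ul{\Ac}(\Spec k) \simeq \Ac$ as symmetric monoidal groupoids. The paper records this only as the bare equality $\Gamma(\Spec k, \ul{\Ac}) = \Ac$; you justify it via the adjunction with evaluation at the terminal object $\Spec k$ and exactness of $\Gamma(\Spec k, -)$ for $k$ algebraically closed. Your route also verifies full faithfulness completely. The paper writes out only one composite with its proposed inverse; the other composite, $\SCDual(\ul{\Gamma(\Spec k, \SCDual(\Tfr))}) \simeq \Tfr$, needs the same two facts.

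The paper's advantage is brevity and an explicit formula for the inverse. Yours leaves two small checks implicit:
\begin{enumerate}
\item The composite of your three equivalences should be the map on hom-groupoids actually induced by $\SCDual(\ul{(-)})$. It is: under the swap, $\SCDual(\ul{f})$ corresponds to the biduality map of $\ul{\Ac}$ composed with $\ul{f}$.
\item For the summand $B\ul{A_1}$, after $\SCDual(B\ul{A_1}) = \CDual(\ul{A_1}) = T$ a torus, the step $\SCDual(T) = B\CDual(T)$ requires $\ul{\Ext}^1(T, \GG_m) = 0$. This is standard, but it is an input beyond classical Cartier duality, so make sure the result you cite really covers tori. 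The paper avoids the issue by citing the general biduality theorem.
\end{enumerate}
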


\begin{proof}
    Let $\Ac \in \TGpd$.
    By \cite[Theorem 4.11]{brochard2021duality}, we have $(\SCDual)^2(\ul{\Ac}) = \ul{\Ac}$.
    The functor sending a stacky torus $\Tfr$ to $\Gamma(\Spec k, \SCDual(\Tfr))$ thus defines an inverse to $\SCDual(\ul{(-)})$, since 
    \[
        \Gamma\big(\Spec k, (\SCDual)^2(\ul{\Ac})\big) = \Gamma(\Spec k, \ul{\Ac}) = \Ac. \qedhere
    \]
\end{proof}

\subsection{Definition and examples of toric stacks}

\begin{dfn} \label{dfn:toric_stack}
    A \emph{toric stack} (over $\Spec k$) is an Artin stack $\Xfr$ of finite type over $\Spec k$ together with:
    \begin{itemize}
        \item An action of a stacky torus $\Tfr$ on $\Xfr$ and
        \item A dense $\Tfr$-equivariant open immersion $j: \Tfr \hookrightarrow \Xfr$
    \end{itemize}
    such that:
    \begin{itemize}
        \item $\Xfr$ is normal,
        \item $\Xfr$ has affine diagonal, and
        \item All stabilizer groups of $\Xfr$ at geometric points are diagonalizable.
    \end{itemize}
    We refer to $\Tfr$ as the \emph{dense stacky torus} of $\Xfr$.
\end{dfn}

As in \cite{geraschenko2015toric_2}, the ``affine diagonal'' condition here serves as a stacky replacement for the usual separation assumption on toric varieties (which cannot hold for general Artin stacks).
A systematic discussion of the affine diagonal condition for schemes can be found in \cite{alonso2008derived}.
The following remarks will hopefully clarify some aspects of this condition.

\begin{rmk} \label{rmk:affine_diagonal_reasonable}
    The property of having affine diagonal is a ``reasonable property'' of morphisms in the sense of \cite[\S 8.1]{vakil2025rising},\footnote{Here we replace all references to Zariski covers in \textit{op.\ cit.\ } by references to smooth covers, since we are dealing with stacks.} i.e.\ the class of morphisms with affine diagonal is stable under composition, stable under base change, and smooth-local on the target (see e.g.\ \cite[Theorem 11.1.2]{vakil2025rising} for the standard argument).
    This implies that the class of morphisms with affine diagonal has many useful properties, e.g.\ there is a cancellation law for such morphisms as discussed in \cite[Theorem 11.1.1]{vakil2025rising}.
\end{rmk}

The next result gives a useful alternative condition for checking that an algebraic stack has affine diagonal.
The proof is essentially that of \cite[Proposition 2.4]{alonso2008derived} (with ``Zariski cover'' replaced by ``smooth cover''), so we omit the details.

\begin{prop} \label{prop:affine_diagonal_intersection_criterion}
    Let $\Xfr$ be an algebraic stack, and let $f: \Xfr \to \Spec R$ be a morphism to an affine scheme.
    The following are equivalent:
    \begin{enumerate}
        \item $f$ has affine diagonal.
        \item For all smooth covers $\{ U_i \to \Xfr \}_{i \in I}$ with all $U_i$ affine, all fiber products $U_i \times_{\Xfr} U_j$ are affine.
        \item There exists a smooth cover $\{ U_i \to \Xfr \}_{i \in I}$ with all $U_i$ affine and such that all fiber products $U_i \times_{\Xfr} U_j$ are affine.
    \end{enumerate}
\end{prop}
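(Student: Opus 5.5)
The plan is to follow the cycle $(1)\Rightarrow(2)\Rightarrow(3)\Rightarrow(1)$, modelled on \cite[Proposition 2.4]{alonso2008derived}, using the fact (\zcref{rmk:affine_diagonal_reasonable}) that having affine diagonal is stable under composition and base change and is smooth-local on the target. Throughout, I identify the diagonal of $f$ with the relative diagonal $\Delta_f: \Xfr \to \Xfr \times_{\Spec R} \Xfr$. The key fiber-product identity is
\[
    U_i \times_{\Xfr} U_j \;=\; \Xfr \times_{\Xfr \times_{\Spec R} \Xfr} (U_i \times_{\Spec R} U_j),
\]
valid for any morphisms $U_i, U_j \to \Xfr$.

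For $(1)\Rightarrow(2)$: let $\{U_i \to \Xfr\}$ be a smooth cover by affines. Since $U_i$, $U_j$ and $\Spec R$ are affine, $U_i \times_{\Spec R} U_j$ is affine. The displayed identity exhibits $U_i \times_\Xfr U_j$ as the base change of the affine morphism $\Delta_f$ along $U_i \times_{\Spec R} U_j \to \Xfr \times_{\Spec R} \Xfr$. Hence $U_i \times_\Xfr U_j$ is affine over an affine scheme, so it is affine. For $(2)\Rightarrow(3)$, it suffices to note that an algebraic stack admits some smooth cover by affine schemes: take a smooth atlas by a scheme and cover that scheme by affine opens.

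For $(3)\Rightarrow(1)$: fix a cover $\{U_i\}$ as in (3). The family $\{U_i \times_{\Spec R} U_j \to \Xfr \times_{\Spec R} \Xfr\}_{i,j}$ is a smooth cover of the target of $\Delta_f$, since products of smooth surjections are smooth surjections. By the displayed identity, the base change of $\Delta_f$ to each member is $U_i \times_\Xfr U_j \to U_i \times_{\Spec R} U_j$. This is a morphism between affine schemes, by hypothesis and by affineness of the product, and is therefore affine. Because affineness of a representable morphism is smooth-local on the target, $\Delta_f$ is affine.

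The main obstacle is the representability bookkeeping in this last step. One must know that $\Delta_f$ is representable by algebraic spaces, which holds for any algebraic stack, so that ``affine'' is meaningful and smooth-local. One must also know that smooth descent of affineness applies to representable morphisms; this is the stacky substitute for Zariski gluing in \emph{loc.\ cit.}, and I would cite the standard fpqc/smooth descent for affine morphisms. I expect this is also the point where one should double-check that the cover $\{U_i \times_{\Spec R} U_j\}$ is jointly surjective. That follows because every point of $\Xfr \times_{\Spec R} \Xfr$ over a field lifts, after a field extension, to a pair of points of the $U_i$.
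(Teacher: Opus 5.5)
Your proposal is correct and is essentially the paper's own argument. The paper omits the proof, saying only that it is the proof of Proposition~2.4 of \cite{alonso2008derived} with Zariski covers replaced by smooth covers. That adaptation is exactly what you carry out, using the identity $U_i \times_{\Xfr} U_j = \Xfr \times_{\Xfr \times_{\Spec R} \Xfr} (U_i \times_{\Spec R} U_j)$ together with the fact that affineness of a representable morphism is smooth-local on the target.
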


We now provide some key examples of toric stacks:

\begin{ex}
    Any toric variety is a toric stack.
\end{ex}

\begin{ex} \label{ex:quotient_of_toric_variety_is_toric_stack}
    Let $X$ be a toric variety with torus $T$, let $G$ be a diagonalizable group, and suppose we are given a homomorphism $G \to T$.
    Then $[X / G]$ is a toric stack with dense stacky torus $\Tfr = [T / G]$.
    The toric stacks that arise in this way are precisely the ``non-strict toric stacks'' considered in \cite{geraschenko2015toric_1}.
\end{ex}

\begin{ex}
    The affine line with doubled origin is a toric stack with dense torus $\GG_m$.
    However, the affine plane with doubled origin is not a toric stack, since it does not have affine diagonal.
\end{ex}

\subsection{A 2-category of toric stacks}

Defining a 2-category $\TStk$ of toric stacks is a bit involved.
Let $\Xfr$ and $\Xfr'$ be toric stacks with dense stacky tori $j: \Tfr \hookrightarrow \Xfr$ and $j': \Tfr' \hookrightarrow \Xfr'$.
For every homomorphism $\phi: \Tfr \to \Tfr'$, we get an action of $\Tfr$ on $\Xfr$, and we can thus make sense of the groupoid of $\Tfr$-equivariant maps $\Map^\Tfr(\Xfr, \Xfr') = \Map^{\Tfr,\phi}(\Xfr, \Xfr')$.
This defines a functor
\begin{align*}
    M_{\Xfr,\Xfr'}: \Hom_{\tAb(\Stk)}(\Tfr, \Tfr') &\to \Gpd \\
    \phi &\mapsto \Map^{\Tfr,\phi}(\Xfr, \Xfr').
\end{align*}
Applying the Grothendieck construction to $M_{\Xfr,\Xfr'}$ produces a groupoid $\Map_\TStk^\pre(\Xfr, \Xfr')$ with objects given by pairs $(\phi: \Tfr \to \Tfr', f: \Xfr \to \Xfr')$ where $f$ is $\Tfr$ equivariant for the action induced by $\phi$.
There is a natural map $\Map_\TStk^\pre(\Xfr, \Xfr') \to \Hom_{\tAb(\Stk)}(\Tfr, \Tfr')$ sending $(\phi, f)$ to $\phi$.

To get a reasonable definition of morphism of toric stack, we still need to impose compatibility with the dense open immersions $j, j'$.
Note that precomposition with $j$ defines a natural transformation $M_{\Xfr,\Xfr'} \to M_{\Tfr,\Xfr'}$, and postcomposition with $j'$ defines a natural transformation $M_{\Tfr,\Tfr'} \to M_{\Tfr,\Xfr'}$.
Applying the Grothendieck construction gives a diagram of groupoids
\[
\begin{tikzcd}
    & \Map_\TStk^\pre(\Tfr, \Tfr') \dar["j' \circ -"] \\
    \Map_\TStk^\pre(\Xfr, \Xfr') \rar["- \circ j"] & \Map_\TStk^\pre(\Tfr, \Xfr')
\end{tikzcd}
\]
over $\Hom_{\tAb(\Stk)}(\Tfr, \Tfr')$.
Every homomorphism of abelian 2-group stacks induces a natural equivariant map (sending the unit to the unit), so there is a natural functor $\Hom_{\tAb(\Stk)}(\Tfr, \Tfr') \to \Map_\TStk^\pre(\Tfr, \Tfr')$.
Precomposing the vertical arrow in our above diagram with this functor gives a diagram
\[
\begin{tikzcd}
    & \Hom_{\tAb(\Stk)}(\Tfr, \Tfr') \dar["j' \circ -"] \\
    \Map_\TStk^\pre(\Xfr, \Xfr') \rar["- \circ j"] & \Map_\TStk^\pre(\Tfr, \Xfr').
\end{tikzcd}
\]

\begin{dfn}
    Let $\Xfr$ and $\Xfr'$ be toric stacks as above.
    We define the \emph{category of morphisms of toric stacks from $\Xfr$ to $\Xfr'$} as
    \[
        \Map_{\TStk}(\Xfr, \Xfr') = \Map_\TStk^\pre(\Xfr, \Xfr') \times_{\Map_\TStk^\pre(\Tfr, \Xfr')} \Hom_{\tAb(\Stk)}(\Tfr, \Tfr').
    \]
\end{dfn}

Objects of $\Map_{\TStk}(\Xfr, \Xfr')$ (i.e.\ 1-morphisms in $\TStk$) are triples
\[
    (\phi: \Tfr \to \Tfr', f: \Xfr \to \Xfr', \alpha: f \circ j \simto j' \circ \phi)
\]
where:
\begin{itemize}
    \item $\phi$ is a homomorphism of abelian 2-group stacks,
    \item $f$ is a $\Tfr$-equivariant morphism of stacks, and
    \item $\alpha$ is an isomorphism of $\Tfr$-equivariant morphisms $\Tfr \to \Xfr'$.
\end{itemize}
Morphisms $(\phi_1, f_1, \alpha_1) \to (\phi_2, f_2, \alpha_2)$ in $\Map_{\TStk}(\Xfr, \Xfr')$ are pairs of isomorphisms $(\phi_1 \simto \phi_2, f_1 \simto f_2)$ satisfying certain coherence conditions which we will not write out explicitly.

We may now define a composition operation on morphisms of toric stacks: for
\[
    (\phi: \Tfr \to \Tfr', f: \Xfr \to \Xfr', \alpha: f \circ j \simto j' \circ \phi) \in \Map_{\TStk}(\Xfr, \Xfr')
\]
and
\[
    (\phi': \Tfr' \to \Tfr'', f': \Xfr' \to \Xfr'', \alpha': f' \circ j' \simto j'' \circ \phi') \in \Map_{\TStk}(\Xfr', \Xfr''),
\]
the composite is
\[
    (\phi' \circ \phi, f' \circ f, (\alpha' \star \phi) \circ (\id_{f'} \star \alpha): f' \circ f \circ j \simto j'' \circ \phi' \circ \phi) \in \Map_{\TStk}(\Xfr, \Xfr'')
\]
where $\star$ is horizontal composition in the 2-category of stacks with $\Tfr$-action.
Composition extends to a functor $\Map_{\TStk}(\Xfr, \Xfr') \times \Map_{\TStk}(\Xfr', \Xfr'') \to \Map_{\TStk}(\Xfr, \Xfr'')$, so we may define:

\begin{dfn}
    The \emph{2-category of toric stacks} $\TStk$ has:
    \begin{itemize}
        \item Objects given by toric stacks $\Xfr$,
        \item Categories of morphisms given by $\Map_\TStk(\Xfr, \Xfr')$ as defined above, and
        \item Composition as defined above.
    \end{itemize}
\end{dfn}

We will often abuse notation and denote a morphism of toric stacks by $f: \Xfr \to \Xfr'$, writing $f|_\Tfr$ for the corresponding homomorphism $\Tfr \to \Tfr'$ (and leaving the isomorphism $f \circ j \simto j' \circ f|_\Tfr$ implicit).

\subsection{Basic toric stacks}

\begin{dfn} \label{dfn:basic}
    A toric stack $\Ufr$ is \emph{basic} if it can be written as a quotient stack $[U / G]$ where $U$ is an affine toric variety with dense torus $T$ and $G$ is a diagonalizable group acting on $U$ via a homomorphism $G \to T$.
    Write $\TStk^\basic$ for the full subcategory of $\TStk$ with objects the basic toric stacks.
\end{dfn}

\begin{notn}
    If $\Xfr$ is a toric stack and $\Ufr \subset \Xfr$ is an toric open substack such that $\Ufr$ is basic, we say $\Ufr$ is a \emph{basic open} of $\Xfr$.
\end{notn}

\begin{rmk}
    Recall (\cite[Definition 3.1]{alper2013good}) that a quasicompact algebraic stack $\Xfr$ over $\Spec k$ is \emph{cohomologically affine} if its global sections functor $\Gamma(\Xfr, -): \QCoh(\Xfr) \to \Mod_k$ is exact.
    In \cite{geraschenko2015toric_1}, Geraschenko and Satriano use the term ``cohomologically affine'' to refer to basic toric stacks, since the notions agree for toric stacks in their sense (\cite[Remark B.6]{geraschenko2015toric_1}).
    However, it is not clear \emph{a priori} that every cohomologically affine toric stack in our sense is basic, though we will eventually prove this in \zcref{prop:basic_coh_aff}.
    Thus we generally prefer the term ``basic'' to ``cohomologically affine.''
\end{rmk}

Note that the quotient presentation of a basic toric stack $\Ufr$ is non-unique.
For example, we have $\Spec k = \GG_m / \GG_m = \GG_m^2 / \GG_m^2$.
We do not take the choice of a quotient presentation as part of the data of a basic toric stack; instead, we merely require that a quotient presentation exists.
We shall see in \zcref{prop:geometrization_of_cone} that basic toric stacks also admit a ``coordinate-free'' definition: they are precisely the toric stacks that arise as ``geometrizations'' of stacky cones.

We would like to show that the intersection of basic opens in an arbitrary toric stack is still basic.
We will first need the following:

\begin{lem} \label{lem:inclusion_of_basic_is_affine}
    Let $\Xfr$ be a toric stack, let $\Ufr$ be a basic toric stack and let $f: \Ufr \to \Xfr$ be a representable morphism (e.g.\ an open immersion).
    Then $f$ is affine.
\end{lem}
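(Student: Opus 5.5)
Write $\Ufr = [U/G]$ with $U = \Spec A$ an affine toric variety and $G$ diagonalizable acting through $G \to T$. The plan is to reduce affineness of $f$ to affineness of a morphism out of the affine scheme $U$, using the cancellation law for the reasonable class of morphisms with affine diagonal (\zcref{rmk:affine_diagonal_reasonable}) together with the fact that $\Ufr$ is cohomologically affine.

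First I show that $g: U \to \Ufr \xrightarrow{f} \Xfr$ is affine. Since $U$ is affine, it is enough that $\Xfr$ has affine diagonal. Base change of $U \to \Spec k$ along $\Xfr \to \Spec k$ gives $U \times \Xfr \to \Xfr$, which is affine. The graph $U \to U \times \Xfr$ is a base change of the diagonal $\Xfr \to \Xfr \times \Xfr$, so it is affine. Composing these, $g$ is affine.

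Next I descend along the $G$-torsor $q: U \to \Ufr$. Take any affine scheme $V \to \Xfr$ and set $\Ufr_V = \Ufr \times_\Xfr V$. Because $f$ is representable, $\Ufr_V$ is an algebraic space. The pullback $U_V = U \times_\Xfr V$ is affine, since $g$ is affine. The map $U_V \to \Ufr_V$ is a $G$-torsor, so $\Ufr_V = [U_V/G]$. This quotient is an algebraic space, so $G$ acts freely on $U_V$, i.e.\ with trivial stabilizers.

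For $G$ linearly reductive acting freely on an affine scheme, the quotient $[U_V/G]$ is $\Spec \Gamma(U_V, \Osc)^G$. The argument is that the good moduli space $[U_V/G] \to \Spec \Gamma(U_V,\Osc)^G$ exists because $G$ is linearly reductive (diagonalizable, characteristic zero), by \cite{alper2013good}. A good moduli space morphism from an algebraic space is an isomorphism, since a good moduli space map with trivial stabilizers is an isomorphism \cite[Theorem 6.6 / Prop.\ 6.?]{alper2013good}. Hence $\Ufr_V$ is affine for every affine $V$, and $f$ is affine.

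The main obstacle is this last step: citing cleanly that a representable (algebraic space) good moduli space source forces an isomorphism. If that citation is awkward, I would instead argue directly. The free diagonalizable action on $\Spec B$ makes $B$ a $\weight(G)$-graded ring in which every degree-$\chi$ piece $B_\chi$ is an invertible $B_0$-module. Then $\Spec B \to \Spec B_0$ is a $G$-torsor, which can be checked fpqc-locally by strongly graded ring theory. This identifies $[\Spec B / G] = \Spec B_0$.
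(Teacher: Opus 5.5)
Your proposal is correct and is essentially the paper's argument. Affine diagonal of $\Xfr$ makes $U \times_{\Xfr} V$ affine, representability makes $\Ufr \times_{\Xfr} V = [(U \times_{\Xfr} V)/G]$ an algebraic space, and an algebraic space that is a quotient of an affine scheme by a linearly reductive group must be affine.

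The only difference is the last step: the paper uses cohomological affineness plus Serre's criterion (\stacks{07V6}) where you use the good-moduli-space argument. Citing Serre's criterion replaces your placeholder reference, avoids any noetherian hypothesis on $V$ that the universality property would need, and makes the strongly graded fallback unnecessary.
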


\begin{proof}
    Choose a smooth cover $\{ p_i: U_i \to \Xfr \}$ with all $U_i$ affine.
    By descent it suffices to show all of the projection morphisms $f_i: \Ufr \times_\Xfr U_i \to U_i$ are affine.
    
    Write $\Ufr = U / G$ for some affine toric variety $U$ and diagonalizable group $G$.
    Then, for all $i$, we have $\Ufr \times_\Xfr U_i = (U \times_\Xfr U_i) / G$.
    Since $\Xfr$ has affine diagonal, each $\Ufr \times_\Xfr U_i$ is affine.
    Thus each $\Ufr \times_\Xfr U_i$ is a quotient of an affine scheme $U \times_\Xfr U_i$ by a linearly reductive group, hence is cohomologically affine by \cite[Example 12.9]{alper2013good}.
    Because $f$ is representable, $\Ufr \times_\Xfr U_i$ is an algebraic space and Serre's criterion (e.g.\ \stacks{07V6}) shows each $\Ufr \times_\Xfr U_i$ is an affine scheme.
    Thus all $f_i$ must be affine.
\end{proof}

\begin{prop} \label{prop:intersection_of_basic}
    Let $\Xfr$ be a toric stack, and let $\Ufr \subset \Xfr$ and $\Ufr' \subset \Xfr$ be basic opens of $\Xfr$.
    Then $\Ufr \cap \Ufr'$ is a basic open of $\Xfr$.
\end{prop}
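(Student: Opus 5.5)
The idea is to realize $\Ufr \cap \Ufr'$ as a $G$-quotient of a toric open subvariety of $U$, show that subvariety is affine, and conclude by \zcref{dfn:basic}. Write $\Ufr = [U/G]$ with $U$ an affine toric variety with dense torus $T$ and $G \to T$ a homomorphism from a diagonalizable group. Since $\Ufr \cap \Ufr' = \Ufr \times_\Xfr \Ufr'$ is an open substack of $\Ufr$, its preimage $V \subset U$ under the quotient map $U \to \Ufr$ is a $G$-stable open subscheme with $\Ufr \cap \Ufr' = [V/G]$.

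First, we would check that $V$ is affine. The open immersion $\Ufr' \hookrightarrow \Xfr$ is representable, so it is affine by \zcref{lem:inclusion_of_basic_is_affine}. Affineness is stable under base change, so the base change $\Ufr \cap \Ufr' \to \Ufr$ is affine. Base changing once more along $U \to \Ufr$ shows that $V \to U$ is affine. Since $U$ is affine, $V$ is an affine scheme.

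Next, we would show that $V$ is a toric open subvariety of $U$. The open $\Ufr \cap \Ufr'$ contains the dense stacky torus $\Tfr$, because both basic opens do: they are toric open substacks. The preimage of $\Tfr \subset \Ufr$ in $U$ should be the torus $T$, so $T \subset V$. We also need $V$ to be $T$-stable. Here we would argue that $\Ufr \cap \Ufr'$ is $\Tfr$-stable in $\Xfr$, since both opens are. The $\Tfr$-action on $\Ufr$ lifts, via the surjection $T \to [T/G] = \Tfr$, to the $T$-action on $U$, and the preimage of a $\Tfr$-stable open is then $T$-stable. So $V$ is a $T$-stable open containing $T$ that is also affine, hence an affine toric variety with dense torus $T$ (normality is inherited from $U$).

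Finally, $G$ acts on $V$ through $G \to T$, so $\Ufr \cap \Ufr' = [V/G]$ is basic. It is a toric open substack of $\Xfr$ because it is an open substack stable under $\Tfr$ and containing $\Tfr$. I expect the main obstacle to be the bookkeeping in the previous step: identifying the dense stacky torus of $[U/G]$ with $[T/G]$ compatibly with the given $\Tfr \hookrightarrow \Xfr$, and making precise that stability under the stacky torus pulls back to $T$-stability of $V$. The affineness itself is immediate from \zcref{lem:inclusion_of_basic_is_affine}.
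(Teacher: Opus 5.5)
Your proposal is correct and follows essentially the same route as the paper. You write $\Ufr = [U/G]$ and use \zcref{lem:inclusion_of_basic_is_affine} to see that $\Ufr' \hookrightarrow \Xfr$ is affine. Base changing shows that $V = U \times_{\Xfr} \Ufr'$ is an affine, torus-stable open of $U$, hence an affine toric variety, and then $\Ufr \cap \Ufr' = [V/G]$. Your extra checks that $T \subset V$ and that $\Tfr$-stability pulls back to $T$-stability just spell out what the paper compresses into the phrase ``affine torus-equivariant open immersion.''
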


\begin{proof}
    Write $\Ufr = [U / G]$.
    By \zcref{lem:inclusion_of_basic_is_affine}, the inclusion $\Ufr' \hookrightarrow \Xfr$ must be affine.
    Then the base change $U \times_\Xfr \Ufr' \hookrightarrow U$ is an affine torus-equivariant open immersion, so $U \times_\Xfr \Ufr'$ is an affine toric variety.
    Since $\Ufr \cap \Ufr' = (U \times_\Xfr \Ufr') / G$, we see $\Ufr \cap \Ufr'$ is basic.
\end{proof}

\section{Toric stacks via stacky fans} \label{sec:defining_geometrization}

So far, we have introduced the 2-categories $\StkFan$ and $\TStk$.
We now seek to relate these two categories via a ``geometrization'' functor $\Xfr_{-}: \StkFan \to \TStk$.

\subsection{Geometrization of stacky cones}

We begin by discussing how to construct a basic toric stack from a stacky cone.
Our construction here is closely related to that in \cite[\S 4]{ito2025geometric}.

\begin{dfn}
    Let $K$ be a stacky cone.
    The \emph{geometrization} of $K$ is the stack\footnote{This is a stack by faithfully flat descent for quasicoherent sheaves.} $\Xfr_K$ defined by
    \[
        \Xfr_K(R) = \iota_0 \Fun^\otimes\big(\GDual(\Oc(K)), \Mod_R\big)
    \]
    for $\Spec R \in \Aff_k$.
\end{dfn}

\begin{rmk}
    By descent, if $S$ is any test scheme, we have
    \[
        \Xfr_K(S) = \iota_0 \Fun^\otimes\big(\GDual(\Oc(K)), \QCoh(S)\big).
    \]
\end{rmk}

\begin{prop} \label{prop:geometrization_of_cone}
    Let $K$ be a stacky cone.
    Then:
    \begin{enumerate}
        \item $\Xfr_K$ is a basic toric stack, and there is a natural symmetric monoidal equivalence 
        \[
            \big(\QCoh(\Xfr_K), \otimes_{\Osc}\big) \simeq \big(\PSh_k(\GDual(\Oc(K))), \star_{\GDual(\Oc(K))}\big).
        \]
        \item The dense stacky torus $\Tfr_K$ of $\Xfr_K$ satisfies 
        \[
            \Tfr_K = \Xfr_{K(0)} = \SCDual(\ul{\GDual(\Nc_K)}).
        \]
        In particular, $\dim \Xfr_K = \dim \Xfr_{K(0)} = \rk \pi_0(\Nc_K) - \rk \Aut_{\Nc_K}(0)$.
        \item Every basic toric stack arises as $\Xfr_K$ for some stacky cone $K$.
    \end{enumerate}
\end{prop}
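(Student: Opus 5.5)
The plan is to reduce everything to a discrete skew quotient presentation and then identify $\Xfr_K$ with an explicit quotient stack. Set $\Cc := \GDual(\Oc(K))$, which is a toric category by \zcref{lem:gale_duality_WD2}. By \zcref{cor:skew_quotient_of_discrete}, choose a (non-canonical) equivalence $\Cc \simeq A \vec{/}_\phi M$ with $A$ a discrete finitely generated abelian group, $M$ a toric monoid and $\phi: M \to A$. The universal property \zcref{prop:skew_quotient_UP} then computes the groupoid
\[
    \Xfr_K(R) = \iota_0 \Fun^\otimes(A \vec{/} M, \Mod_R) = \Hom_{\tCMon}(A, \iota_0 \Mod_R) \times_{\Hom_{\tCMon}(M, \iota_0 \Mod_R)} \Hom_{\tCMon}(M, \PO \Mod_R).
\]
Since $A$ is a group, every object in the image of the first factor is $\otimes$-invertible. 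So an $R$-point is a symmetric monoidal functor $a \mapsto \Lsc_a$ into invertible $R$-modules, together with multiplicative sections $s_m: R \to \Lsc_{\phi(m)}$.

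The first factor is exactly the groupoid of $\CDual(\ul{A})$-torsors on $\Spec R$: such a torsor $P$ corresponds to the $A$-graded decomposition of $\pi_*\Osc_P$ into line bundles. The sections $s_m$ are then exactly a $\CDual(\ul{A})$-equivariant map $P \to \Spec k\angles{M}$, where $\CDual(\ul{A})$ acts through the homomorphism $\CDual(\ul{A}) \to \CDual(\ul{M^\gp})$ dual to $\phi^\gp: M^\gp \to A$. I will make this identification functorially in $R$ and check it is compatible with descent. This yields $\Xfr_K \simeq [\Spec k\angles{M} / \CDual(\ul{A})]$, and I expect this to be the main obstacle, largely bookkeeping about symmetric monoidal structures and torsors.

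With this identification, part (1) follows. The scheme $U = \Spec k\angles{M}$ is an affine toric variety because $M$ is toric, so $\Xfr_K$ is basic. I then check the axioms of \zcref{dfn:toric_stack}:
\begin{itemize}
    \item normality comes from saturation of $M$;
    \item the diagonal is affine because $\Xfr_K$ is a quotient of an affine scheme by a diagonalizable group;
    \item stabilizers are subgroups of $\CDual(\ul{A})$, hence diagonalizable.
\end{itemize}
The dense stacky torus is $[\Spec k\angles{M^\gp}/\CDual(\ul{A})]$. For the sheaf statement, $\QCoh$ of the quotient is $\GrMod^A_{k\angles{M}}$, which is $\PSh_k(A \vec{/} M)$ with Day convolution by \zcref{prop:presheaves_as_graded_modules}. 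Finally, the equivalence is independent of the chosen presentation because it is intrinsically $\PSh_k(\Cc)$.

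For part (2), $K(0)$ is the minimal subcone, with $P = 0$ since $P$ is sharp, so $\Oc(K(0)) = \Nc_K$. The open immersion $K(0) \subset K$ is the inclusion of the minimal face. By \zcref{thm:gale_duality} and \zcref{prop:gale_duality_localization}, $\GDual$ carries this inclusion to the groupoid-completion localization $\Cc \to \Cc^\gpd = A \vec{/} M^\gp$ (\zcref{prop:Cgpd_calculation}). Geometrically, this localization is the open $[\Spec k\angles{M^\gp}/\CDual(\ul{A})]$, i.e.\ the dense stacky torus. Moreover, for a toric groupoid $\Ac$, symmetric monoidal functors $\Ac \to \Mod_R$ land in $\Line$ and are $\Hom_{\tAb(\Stk)}(\ul{\Ac}, B\GG_m)$, so $\Xfr_{K(0)} = \SCDual(\ul{\GDual(\Nc_K)})$. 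For the dimension, I split $\GDual(\Nc_K) \simeq A_0 \oplus \Bsf A_1$, so the torus is $B\CDual(\ul{A_0}) \times \CDual(\ul{A_1})$ of dimension $\rk A_1 - \rk A_0$. I then compute the ranks via the Euler characteristic of $\RHom(\fib(0 \to \Nc_K), \ZZ)$, which gives $\rk \pi_0(\Nc_K) - \rk \Aut_{\Nc_K}(0)$.

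For part (3), take a basic toric stack $[U/G]$ with $U = \Spec k\angles{M}$ and $G = \CDual(\ul{A})$ acting through $G \to T$, which corresponds to some $M^\gp \to A$. Let $\phi$ be the composite $M \to M^\gp \to A$, and set $K := \Cone(\GDual(A \vec{/}_\phi M))$. By \zcref{cor:stacky_cone_equivalence} and $(\GDual)^2 = \id$, we have $\GDual(\Oc(K)) \simeq A \vec{/} M$. The computation above then gives $\Xfr_K \simeq [U/G]$ compatibly with the torus actions.
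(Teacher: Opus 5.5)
Your proof is correct, but at the central step of (1) it takes a different route from the paper. The paper first proves the sheaf statement, $\PSh_k(\Cc)\simeq\GrMod^A_{k\angles{M}}=\QCoh([U/G])$. It then deduces $\Xfr_K\simeq[U/G]$ from Tannakian reconstruction (Sch\"appi), via $\Xfr_K(S)=\iota_0\Fun^{\otimes,L}(\QCoh([U/G]),\QCoh(S))=[U/G](S)$.

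You instead compute the functor of points directly from \zcref{prop:skew_quotient_UP}. A symmetric monoidal functor $A\to\Mod_R$ is an $A$-graded commutative $R$-algebra $\bigoplus_a L_a$ with invertible pieces, i.e.\ a $\CDual(\ul{A})$-torsor; the symmetry constraint is what makes this algebra commutative. The compatible $M$-sections are then a graded map $k\angles{M}\to\bigoplus_a L_a$, i.e.\ an equivariant map to $U$.

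This Cox-style moduli description avoids the Tannakian black box. It also shows directly that $R\mapsto\iota_0\Fun^\otimes(\Cc,\Mod_R)$ is already a stack. The cost is that the $\QCoh$ equivalence becomes a separate computation. You should justify its naturality in $K$ as the paper does: it is induced by the tautological functor $\Cc\to\QCoh(\Xfr_K)$ coming from the universal point. That is what your phrase ``intrinsically $\PSh_k(\Cc)$'' has to mean.

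In (2), the paper again passes through $\QCoh$ and Tannaka to get $\Tfr_K\simeq\Xfr_{K'}$ with $K'=\Cone(\GDual(A\vec{/}M^\gp))$. It then identifies $K'=K(0)$ and computes $\SCDual$ from the explicit quotient $T/G$ using \zcref{ex:shifted_Cartier_dual_of_2term}. You instead dualize the face inclusion $K(0)\subset K$ to $\Cc\to\Cc^\gpd$ and read off $\Xfr_{K(0)}=\SCDual(\ul{\GDual(\Nc_K)})$ straight from the definitions. This works because homomorphisms from the constant stack $\ul{\Ac}$ to $B\GG_m$ over $\Spec R$ are symmetric monoidal functors $\Ac\to\iota_0\Line(\Mod_R)$. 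That argument is cleaner and presentation-free.

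Your rank computation also fills in the dimension formula that the paper leaves as ``in particular.'' Part (3) matches the paper.
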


\begin{proof}
    (1). Let $\Cc = \GDual(\Oc(K))$, so that $\Xfr_K(S) = \iota_0 \Fun^\otimes(\Cc, \Mod_R)$ for all $\Spec R \in \Aff_k$.
    By the universal property of presheaf categories, we have $\Xfr_K(S) = \iota_0 \Fun^{\otimes,L}(\PSh_k(\Cc), \Mod_S)$ for all $S$.
    Use \zcref{cor:skew_quotient_of_discrete} to write $\Cc \simeq A \vec{/}_\phi M$ for some toric monoid $M$, some discrete abelian group $A$, and some $\phi: M \to A$.
    \zcref{prop:presheaves_as_graded_modules} gives a symmetric monoidal equivalence $(\PSh_k(\Cc), \star_\Cc) \simeq (\GrMod_{k\angles{M}}^A, \otimes_{k\angles{M}})$.
    If $U = \Spec k\angles{M}$ and $G = k\angles{A}$, then the dense torus $T$ of $U$ is $\Spec k\angles{M^\gp}$, and $G$ acts on $U$ via the homomorphism $G \to T$ induced by $M^\gp \to A$.
    It is easy to see that
    \[
        (\GrMod_{k\angles{M}}^A, \otimes_{k\angles{M}}) = (\QCoh(U)^G, \otimes_{\Osc_U}) = (\QCoh(U / G], \otimes_{\Osc_{U/G}}).
    \]
    Thus we have a collection of equivalences
    \[
        \Xfr_K(S) \simeq \iota_0 \Fun^{\otimes,L}(\QCoh(U / G), \QCoh(S))
    \]
    which are natural in $S$.
    By the Tannakian formalism (see e.g.\ \cite[Theorem 2.9]{schappi2018abelian}), we have 
    \[
        \iota_0 \Fun^{\otimes,L}(\QCoh(U / G), \QCoh(S)) \simeq (U/G)(S),
    \]
    for all test schemes $S$, so $\Xfr_K \simeq U/G$.
    By \zcref{ex:quotient_of_toric_variety_is_toric_stack}, we see that $\Xfr_K$ is a toric stack.
    
    The claimed symmetric monoidal equivalence is the composite equivalence
    \[
        (\PSh_k(\Cc), \star_\Cc) \simeq (\GrMod_{k\angles{M}}^A, \otimes_{k\angles{M}}) = (\QCoh([U / G]), \otimes_{\Osc_{U/G}}) \simeq (\QCoh(\Xfr_K), \otimes_{\Osc}).
    \]
    Tracing through the definition of this equivalence, we see that it is actually induced by the natural symmetric monoidal functor $\GDual(\Oc(K)) = \Cc \to \QCoh(\Xfr_K)$, hence is natural in $K$.

    (2). With the notation used in the proof of (1), we have $\Tfr_K \simeq T / G$.
    By the argument of (1), we have
    \[
        \big(\QCoh(T / G), \otimes_{\Osc_{T/G}}\big) = \big(\GrMod_{k\angles{M^\gp}}^A, \otimes_{k\angles{M^\gp}}\big) = \big(\PSh_k(A \vec{/} M^\gp), \star_{A \vec{/} M^\gp}\big)
    \]
    and thus $\Tfr_K \simeq T / G \simeq \Xfr_{K'}$ where $K' = \Cone(\GDual(A \vec{/} M^\gp))$.
    The composite equivalence $\Tfr_K \simto \Xfr_{K'}$ is natural.

    We now need to show that $K' = K(0)$.
    By \zcref{prop:Cgpd_calculation}, we have $A \vec{/} M^\gp = \Cc^\gpd$ and thus $K' = \Cone(\GDual(\Cc^\gpd))$.
    Furthermore, $\GDual(\Cc^\gpd) = \iota_0 \GDual(\Cc) = \Nc_K$ by \zcref{prop:gale_duality_localization}.
    Thus $K' = \Cone(\Nc_K) = K(0)$.

    To see the last equivalence in (2), we note that
    \begin{align*}
        \SCDual(\ul{\GDual(\Nc_K)}) &= \SCDual(\ul{A \vec{/} M^\gp}) \\
        &= \SCDual(\ul{\cofib(M^\gp \to A)}) \\
        &= \CDual(\ul{M^\gp}) / \CDual(\ul{A}) \textrm{ by \zcref{ex:shifted_Cartier_dual_of_2term}} \\
        &= T / G = \Tfr_K.
    \end{align*}
    
    (3). Given a basic toric stack $U / G$, we may write $U = \Spec k\angles{M}$ and $G = k\angles{A}$ for some toric monoid $M$ and finitely generated free abelian group $A$.
    Assuming that the action of $G$ on $U$ arises via a homomorphism $G \to T$ (where $T$ is the dense torus of $U$), we can pass to groups of characters and obtain a homomorphism $M^\gp \to A$.
    This induces a homomorphism $\phi: M \to A$, and running the argument of (1) in reverse shows that $U / G = \Xfr_K$ for $K = \Cone(\GDual(A \vec{/}_{\phi} M))$.
\end{proof}

\begin{notn}
    As in \zcref{prop:geometrization_of_cone}, if $K$ is a stacky cone, we will write $\Tfr_K$ for the dense stacky torus of $\Xfr_K$.
\end{notn}

Here are a few key examples:

\begin{ex}
    Let $X$ be an affine toric variety.
    By classical toric geometry, $X$ is classified by a sharp toric saturated submonoid $P \subset N$, where $N$ is a finitely generated free abelian group.\footnote{In the notation of \cite{cls}, if $X = U_\sigma$ for $\sigma \subset N \otimes \RR$, then $P = \sigma \cap N$.}
    We have $X = \Xfr_{\Cone(N \vec{/} P)}$.
    This can be shown directly, though we will not give a proof since it can be deduced as a special case of \zcref{prop:KM_geometrization} below.
\end{ex}

\begin{ex} \label{ex:BT_as_geometrization}
    Let $T$ be an algebraic torus.
    By \zcref{ex:Gale_dual_of_lattice}, we have $\GDual(\Bsf \coweight(T)) = \weight(T)$, so $\Xfr_{\Cone(\Bsf \coweight(T))}$ is the stack given by
    \[
        R \mapsto \iota_0 \Fun^\otimes(\weight(T), \Mod_R) = \iota_0 \Fun^{\otimes,L}(\QCoh(BT), \Mod_R) = BT(R).
    \]
    Thus $\Xfr_{\Cone(\Bsf \coweight(T))} = BT$.
\end{ex}

Given a stacky cone $K$, the basic toric stack $\Xfr_K$ is naturally a commutative 2-monoid stack with multiplication defined as follows.
The symmetric monoidal product on $\Osc(K)$ corresponds to the diagonal morphism $\Delta_{\GDual(\Oc(K))}: \GDual(\Oc(K)) \to \GDual(\Oc(K)) \times \GDual(\Oc(K))$.
For $S \in \Sch_k$, we can use this diagonal to obtain a multiplication map (for all $S \in \Sch_{k}$):
\begin{align*}
    \Xfr_K(S) \times \Xfr_K(S) &= \Fun^\otimes\big(\GDual(\Oc(K)), \QCoh(S)\big) \times \Fun^\otimes\big(\GDual(\Oc(K)), \QCoh(S)\big) \\
    &\xrightarrow{(F_1, F_2) \mapsto F_1 \otimes_{\Osc} F_2} \Fun^\otimes\big(\GDual(\Oc(K)) \times \GDual(\Oc(K)), \QCoh(S)\big) \\
    &\xrightarrow{- \circ \Delta_{\GDual(\Oc(K))}} \Fun^\otimes\big(\GDual(\Oc(K)), \QCoh(S)\big)
    &= \Xfr_K(S).
\end{align*}
The symmetric monoidal structure morphisms on $\Osc(K)$ make each $\Xfr_K(S)$ into a commutative 2-monoid.
This construction is natural in $S$, so we are justified in saying that $\Xfr_K$ is a commutative 2-monoid stack.

In particular, this construction gives the stacky torus structure on $\Tfr_K = \Xfr_{K(0)}$.
The inclusion $K(0) \subset K$ gives a homomorphism $\Tfr_K \to \Xfr_K$ which gives rise to the natural action $\Tfr_K \curvearrowright \Xfr_K$.

Equipped with this understanding of the stacky torus action on $\Xfr_K$, we can show that $\Xfr_{-}$ actually defines a functor:

\begin{prop} \label{prop:geometrization_of_cone_functorial}
    Geometrization defines a functor $\Xfr_{-}: \StkCone \to \TStk$.
\end{prop}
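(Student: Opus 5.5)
To build the 2-functor, I will factor geometrization as a composite of 2-functors that are already available. By \zcref{cor:stacky_cone_equivalence}, $\Oc: \StkCone \to \TCat$ is an equivalence. By \zcref{thm:gale_duality}, $\GDual: \TCat\op \to \TCat$ is a contravariant 2-functor. Finally, $\Cc \mapsto \big(R \mapsto \iota_0 \Fun^\otimes(\Cc, \Mod_R)\big)$ is contravariant in $\Cc$ by precomposition. Two contravariances cancel, so the composite $K \mapsto \Xfr_K$ is a covariant 2-functor from $\StkCone$ to stacks. Concretely, a 1-morphism $f: K \to K'$ gives a symmetric monoidal functor $\GDual(\Oc(f)): \GDual(\Oc(K')) \to \GDual(\Oc(K))$. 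Precomposition with this functor gives maps $\Xfr_f: \Xfr_K(R) \to \Xfr_{K'}(R)$, natural in $R$. Symmetric monoidal natural isomorphisms between 1-morphisms map to 2-isomorphisms of stacks in the same way, and the coherence isomorphisms are inherited from those of $\GDual$ and $\Oc$.

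It remains to upgrade $\Xfr_f$ to a 1-morphism of $\TStk$, i.e.\ a triple $(\phi, \Xfr_f, \alpha)$.

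\begin{itemize}
    \item \emph{Restriction to tori.} $f$ sends the minimal element of $|K|$ to the minimal element of $|K'|$. Indeed, $f_{\#}$ of the trivial face lands in the trivial face by sharpness and the density condition. So $f$ restricts to $f(0): K(0) \to K'(0)$. Set $\phi = \Xfr_{f(0)}: \Tfr_K = \Xfr_{K(0)} \to \Xfr_{K'(0)} = \Tfr_{K'}$, using \zcref{prop:geometrization_of_cone}(2).
    \item \emph{Compatibility with the open immersions.} The square of inclusions $K(0) \subset K$ and $K'(0) \subset K'$ commutes with $f$ and $f(0)$, up to a specified 2-isomorphism in $\StkCone$. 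Functoriality of $\Xfr_{-}$ turns that 2-isomorphism into the required $\alpha: \Xfr_f \circ j \simto j' \circ \phi$.
\end{itemize}

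Next I must check that $\phi$ is a homomorphism of abelian 2-group stacks and that $\Xfr_f$ is equivariant. I will show more: for any morphism of stacky cones, $\Xfr_f$ is a homomorphism of commutative 2-monoid stacks for the structure described just before the statement. That structure comes from the diagonal $\Delta: \GDual(\Oc(K)) \to \GDual(\Oc(K)) \times \GDual(\Oc(K))$. Any symmetric monoidal functor $G$ commutes with diagonals, $\Delta \circ G = (G \times G) \circ \Delta$, strictly. Hence precomposition by $G$ commutes with the product $(F_1, F_2) \mapsto (F_1 \otimes F_2) \circ \Delta$, compatibly with the unit and structure morphisms.

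\begin{itemize}
    \item Applied to $f(0)$, this says $\phi$ is a homomorphism.
    \item Applied to $f$, it says $\Xfr_f$ is a monoid homomorphism. The action $\Tfr_K \curvearrowright \Xfr_K$ is multiplication pulled back along the monoid map $\Tfr_K \to \Xfr_K$. Combining this with the square from the previous step gives the $\Tfr_K$-equivariance of $\Xfr_f$ with respect to $\phi$.
\end{itemize}

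Compatibility of all this with composition and with 2-morphisms follows formally, because every piece is built by precomposition along a 2-functor.

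The main obstacle is bookkeeping rather than mathematics. The $\TStk$ hom-categories are defined through a fiber product of Grothendieck constructions, so one must verify the following:
\begin{itemize}
    \item The triple $(\phi, \Xfr_f, \alpha)$ is coherent, i.e.\ the equivariance data on $\Xfr_f$ restricts along $j$ to the tautological equivariance of $\phi$ under $\alpha$.
    \item Composition in $\TStk$, defined via the horizontal-composition formula for the $\alpha$'s, matches the image of composition in $\StkCone$.
\end{itemize}
I will handle both by noting that every datum is the image, under the single 2-functor $K \mapsto \Xfr_K$, of a diagram in $\StkCone$ involving $K(0) \hookrightarrow K$. The coherence conditions then hold because they already hold in $\StkCone$ and are preserved by a 2-functor. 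In keeping with the paper's conventions, I will not spell out the finer 2-categorical details.
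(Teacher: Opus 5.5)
Your proposal is correct and follows the paper's proof essentially step for step. The paper likewise writes $\Xfr_{-}$ on underlying stacks as the composite of $\Oc$, $\GDual$, and $\iota_0 \Fun^\otimes(-, \QCoh)$. It then uses the commutative square formed by $K(0) \hookrightarrow K$ and $K'(0) \hookrightarrow K'$, together with the fact that $\Xfr_f$ respects the commutative 2-monoid structures, to produce the torus homomorphism and the equivariant structure. Your extra remarks make explicit two points the paper leaves implicit: the density condition plus sharpness forces $f$ to send the minimal face to the minimal face, and precomposition commutes with the diagonal-induced multiplication.
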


\begin{proof}
    To see that this construction is functorial in $K$, note that $\Xfr_{-}$ may be understood at the level of underlying stacks as the composite
    \[
        \StkCone \xrightarrow{\Oc} \TCat \xrightarrow{\GDual} \TCat\op \xrightarrow{\iota_0 \Fun^\otimes(-, \QCoh)} \Stk_{k}.
    \]
    Thus $\Xfr_{-}$ defines a functor $\StkCone \to \Stk_k$.
    In fact, the discussion preceding the proposition statement shows that $\Xfr_{-}$ sends morphisms of stacky cones to morphisms of commutative 2-monoid stacks.
    
    To see that $\Xfr_{-}$ actually induces morphisms of toric stacks, suppose we are given some morphism of stacky cones $f: K \to K'$.
    This $f$ induces a morphism $f(0): K(0) \to K'(0)$.
    The commutative diagram
    \[
    \begin{tikzcd}
        K(0) \rar["f(0)"] \dar[hook] & K'(0) \dar[hook] \\
        K \rar["f"] & K'
    \end{tikzcd}
    \]
    induces a commutative diagram of commutative 2-monoid stacks
    \[
    \begin{tikzcd}
        \Tfr_K \rar \dar[hook] & \Tfr_{K'} \dar[hook] \\
        \Xfr_K \rar["\Xfr_f"] & \Xfr_{K'}.
    \end{tikzcd}
    \]
    In particular, the morphism $f$ naturally induces:
    \begin{itemize}
        \item A homomorphism of stacky tori $\Tfr_K \to \Tfr_{K'}$ and
        \item A $\Tfr_K$-equivariant structure on $\Xfr_f$.
    \end{itemize}
    This is exactly what we need to get a morphism in $\TStk$.
    We omit the (straightforward) discussion of 2-morphisms and verification of the 2-functor axioms.
\end{proof}

Before we can move on to discussing the geometrization of stacky fans, we need the following result, which we will generalize in \zcref{prop:open_imm_biconditional}.

\begin{prop} \label{prop:geometrization_cone_immersion}
    Let $K$ be a stacky cone, and let $j: K' \hookrightarrow K$ be a an open immersion of stacky cones.
    Then $\Xfr_j: \Xfr_{K'} \to \Xfr_K$ is an affine open immersion.
\end{prop}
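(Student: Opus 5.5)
The plan is to reduce to the classical statement that a face inclusion of toric monoids gives a principal open immersion of affine toric varieties, and then pass to quotients by the diagonalizable group.

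First, identify $K'$. By \zcref{prop:subcones_of_cone}, an open immersion of stacky cones is, up to isomorphism, the inclusion of a subcone $K_{\leq \kappa}$. Write $K = \Cone(\Cc)$ with $\Cc = \Oc(K)$, and let $\kappa$ correspond to a face $S \subset \PO \Cc$. Then, as in the proof of \zcref{prop:subcones_of_cone}, $K' = \Cone(\iota_0 \Cc \vec{/} S)$. Under $\Oc$, the map $j$ is the face-inclusion functor $\iota_0\Cc \vec{/} S \hookrightarrow \iota_0 \Cc \vec{/} \PO \Cc = \Cc$.

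Second, apply Gale duality. By \zcref{prop:gale_duality_localization} together with involutivity (\zcref{thm:gale_duality}), $\GDual$ turns this face inclusion into a localization $\Dc \to \Dc[T\inv]$. Here $\Dc = \GDual(\Cc)$, and $T = S^\bot$ is a face of $\PO \Dc = D_\CMon(\PO\Cc)$. Choose a presentation $\Dc \simeq A \vec{/}_\phi M$ with $A$ discrete, using \zcref{cor:skew_quotient_of_discrete}. Let $T_M \subset M$ be the preimage face of $T$; this correspondence of faces under $M \to M^\sharp = \PO\Dc$ is routine. By \zcref{prop:localization_properties}(3), $\Dc[T\inv] = A \vec{/} M[-T_M]$.

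Third, run the proof of \zcref{prop:geometrization_of_cone}(1) for both categories with the same $A$ and $G = \Spec k\angles{A}$. This gives $\Xfr_K = [\Spec k\angles{M}/G]$ and $\Xfr_{K'} = [\Spec k\angles{M[-T_M]}/G]$. The map $\Xfr_j$ is induced by the $G$-equivariant map $\Spec k\angles{M[-T_M]} \to \Spec k\angles{M}$ coming from $k\angles{M} \to k\angles{M[-T_M]}$. This needs a naturality check: the Tannakian identification $\iota_0\Fun^{\otimes,L}(\QCoh(U/G),\QCoh(S)) \simeq (U/G)(S)$ should be compatible with the pullback functor on graded modules induced by localization. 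That compatibility holds because restriction along $A\vec{/}M \to A\vec{/}M[-T_M]$ corresponds to base change $-\otimes_{k\angles{M}} k\angles{M[-T_M]}$ on $A$-graded modules.

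Finally, $M$ is finitely generated, so the face $T_M$ is generated as a face by a single element $m$. Hence $k\angles{M[-T_M]} = k\angles{M}[\chi^{-m}]$, and the map of affine toric varieties is the principal open $D(\chi^m)$. This is an affine open immersion. The function $\chi^m$ is $G$-semi-invariant, so $D(\chi^m)$ is $G$-stable, and the immersion descends to the quotient. Affineness and being an open immersion are both checked after the smooth base change along $U \to [U/G]$, where the square is Cartesian. Therefore $\Xfr_j$ is an affine open immersion.

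The main obstacle is the third step: verifying that $\Xfr_j$ really coincides with the map of quotient stacks induced by the monoid localization. The rest is formal.
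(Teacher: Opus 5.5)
Your proposal is correct and follows the paper's proof. Both arguments identify $j$ with a face inclusion $\iota_0\Oc(K)\vec{/}S \hookrightarrow \Oc(K)$, turn it into a localization via \zcref{prop:gale_duality_localization}, present that localization as $A\vec{/}M \to A\vec{/}M[-S]$ using \zcref{cor:skew_quotient_of_discrete} and \zcref{prop:localization_properties}, and conclude that the map is $[U'/G] \hookrightarrow [U/G]$ with $U' \subset U$ a $G$-stable open affine. Your version also supplies two details the paper leaves implicit: the compatibility of the Tannakian identifications with the localization functor, and the reduction to a principal open $D(\chi^m)$ to get affineness.
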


\begin{proof}
    By \zcref{cor:skew_quotient_of_discrete}, we may write $\GDual(\Oc(K)) \simeq A \vec{/} M$ for some toric monoid $M$ and finitely generated abelian group $A$.
    The proof of \zcref{prop:geometrization_of_cone} shows that $\Xfr_K \simeq [U / G]$, where $U = \Spec k\angles{M}$ and $G = \Spec k\angles{A}$.
    
    If $K'$ is a subcone of $K$, the natural map $\Oc(K') \to \Oc(K)$ corresponds to the inclusion of a face of $\PO \Oc(K)$.
    \zcref{prop:gale_duality_localization} then implies the natural map $\GDual(\Oc(K)) \to \GDual(\Oc(K'))$ is a localization.
    By \zcref{prop:localization_properties}, we see that $\GDual(\Oc(K')) = A \vec{/} M[-S]$ for some face $S \subset M$.
    This allows us to write $\Xfr_{K'} \simeq [U' / G]$, where $U' = \Spec k\angles{M[-S]} = \Spec k\angles{M}[S\inv]$ is an open subvariety of $U$, as needed.
\end{proof}

\subsection{Geometrization of stacky fans}

So far we have discussed how to geometrize stacky cones to obtain basic toric stacks.
We can obtain more general toric stacks by geometrizing stacky fans:

\begin{dfn}
    Let $\Sigma$ be a stacky fan.
    The \emph{geometrization of $\Sigma$} is the stack $\Xfr_\Sigma = \bigcup_{K \subset \Sigma \textrm{ a subcone}} \Xfr_K$.
\end{dfn}

Here the ``union'' is really a colimit in $\Stk$ taken over open immersions $\Xfr_{K'} \subset \Xfr_K$ for subcones $K' \subset K \subset \Sigma$.
This is a genuine union of groupoids at the level of geometric points, though for more general $S$-points we need to stackify.

We think of $\Xfr_{\Sigma}$ as a ``union of basic opens.''
The poset indexing this colimit may be identified with $|\Sigma|$, so the basic opens of $\Xfr_{\Sigma}$ correspond to points of $|\Sigma|$, just like how affine toric open subvarieties of a toric variety correspond to cones in the fan.

\begin{prop} \label{prop:geometrization_of_fan}
    If $\Sigma$ is a stacky fan, then $\Xfr_\Sigma$ is a toric stack with dense stacky torus 
    \[
        \Tfr_\Sigma = \Xfr_{\Sigma(0)} = \SCDual(\ul{\GDual(\Nc_\Sigma)}).
    \]
    In particular, $\dim \Xfr_\Sigma = \dim \Xfr_{\Sigma(0)} = \rk \pi_0(\Nc_\Sigma) - \rk \Aut_{\Nc_\Sigma}(0)$.
\end{prop}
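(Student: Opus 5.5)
We reduce every axiom in \zcref{dfn:toric_stack} to the basic opens $\Xfr_K$ for subcones $K \subset \Sigma$, which are toric stacks by \zcref{prop:geometrization_of_cone}. First, the colimit defining $\Xfr_\Sigma$ is taken over the finite poset $|\Sigma|$ along open immersions (\zcref{prop:geometrization_cone_immersion}). The fan condition (\zcref{prop:fan_criterion}) says $\Sigma_{\leq \sigma_1} \cap \Sigma_{\leq \sigma_2} = \Sigma_{\leq \sigma_1 \wedge \sigma_2}$. Hence the gluing data satisfy the cocycle condition, and each $\Xfr_K \to \Xfr_\Sigma$ is an open immersion with $\Xfr_{K_1} \times_{\Xfr_\Sigma} \Xfr_{K_2} = \Xfr_{K_1 \cap K_2}$. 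It follows that $\Xfr_\Sigma$ is an Artin stack of finite type, covered by finitely many basic opens. Normality, diagonalizability of stabilizers at geometric points, and finite type are all local properties, so they follow from the corresponding properties of each $[U/G]$.

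For the affine diagonal, we use \zcref{prop:affine_diagonal_intersection_criterion}. Choose the smooth affine cover $U_K \to [U_K/G_K] = \Xfr_K \hookrightarrow \Xfr_\Sigma$. Then
\[
U_{K_1} \times_{\Xfr_\Sigma} U_{K_2} = U_{K_1} \times_{\Xfr_{K_1}} \Xfr_{K_1 \cap K_2} \times_{\Xfr_{K_2}} U_{K_2}.
\]
By \zcref{prop:geometrization_cone_immersion}, $\Xfr_{K_1\cap K_2} \to \Xfr_{K_i}$ is an affine open immersion, so the right-hand side is affine over $U_{K_1} \times_{\Xfr_{K_1 \cap K_2}} U_{K_2}$. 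Since $\Xfr_{K_1\cap K_2}$ is a basic quotient stack with affine diagonal, this fiber product is affine. The whole fiber product is therefore affine.

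Next comes the torus. Let $\sigma_0$ be the unique minimal element of $|\Sigma|$, so that $\Sigma(0) = \{\sigma_0\}$ is contained in every subcone and is the subcone $K(0)$ of each $K$. By \zcref{prop:geometrization_of_cone}(2), $\Tfr_K = \Xfr_{K(0)} = \Xfr_{\Sigma(0)} = \SCDual(\ul{\GDual(\Nc_\Sigma)})$ for all $K$, since $\Nc_K = \Nc_\Sigma$. For $K' \subset K$, the open immersion $\Xfr_{K'} \to \Xfr_K$ is induced by a morphism of stacky cones. By \zcref{prop:geometrization_of_cone_functorial} it is therefore $\Tfr$-equivariant, and it restricts to the identity on $\Xfr_{\Sigma(0)}$. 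The actions $\Tfr_\Sigma \curvearrowright \Xfr_K$ thus glue to an action on the colimit, and the inclusions $j_K: \Tfr_\Sigma \hookrightarrow \Xfr_K$ glue to a $\Tfr_\Sigma$-equivariant open immersion $j: \Tfr_\Sigma \hookrightarrow \Xfr_\Sigma$. This is dense because it is dense in each $\Xfr_K$ and $\Xfr_\Sigma$ is covered by these opens, each containing $\Tfr_\Sigma$. The dimension formula then follows from the one in \zcref{prop:geometrization_of_cone}(2) applied to any $K$, since all the $\Xfr_K$ are open and dense in $\Xfr_\Sigma$.

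The main obstacle is making the gluing rigorous 2-categorically, including the action. The actions on the $\Xfr_K$ must be compatible with all the coherence isomorphisms, so that they descend to an honest action on the stackified colimit. I would handle this by viewing $K \mapsto \Xfr_K$ as a functor from $|\Sigma|$ into the 2-category of stacks with $\Tfr_\Sigma$-action, obtained via \zcref{prop:geometrization_of_cone_functorial}. I would then note that colimits along open immersions in this 2-category are computed on underlying stacks, since the forgetful functor commutes with colimits and products with $\Tfr_\Sigma$ preserve them. Affine-diagonal checking is the secondary subtle point and is handled by the fiber-product reduction above.
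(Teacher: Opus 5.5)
Your proposal is correct and follows the paper's proof essentially step for step: local properties come from the basic opens $\Xfr_K$, the torus action and dense embedding glue via \zcref{prop:geometrization_of_cone_functorial}, and affine diagonal is checked with \zcref{prop:affine_diagonal_intersection_criterion} by rewriting $U_{K_1} \times_{\Xfr_\Sigma} U_{K_2}$ as a fiber product over $\Xfr_{K_1 \cap K_2}$. One phrase needs fixing, since $U_{K_i}$ does not map to $\Xfr_{K_1 \cap K_2}$: the correct statement is that each $U_{K_i} \times_{\Xfr_{K_i}} \Xfr_{K_1 \cap K_2}$ is an affine scheme (a base change of the affine morphism of \zcref{prop:geometrization_cone_immersion}), and the fiber product of these two affine schemes over $\Xfr_{K_1 \cap K_2}$ is affine because $\Xfr_{K_1 \cap K_2}$ has affine diagonal.
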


\begin{proof}
    Let $\Sigma$ be a stacky fan.
    For every subcone $K \subset \Sigma$, we have $\Sigma(0) = K(0) \subset \Sigma$, so $\Tfr_\Sigma$ acts on $\Xfr_K$ and embeds as a dense open of $\Xfr_K$.
    The transition maps $\Xfr_{K'} \to \Xfr_K$ are all $\Tfr_\Sigma$-equivariant and respect these embeddings by \zcref{prop:geometrization_of_cone_functorial}, so we get a natural action of $\Tfr_\Sigma$ on $\Xfr_\Sigma$ and a dense open embedding $\Tfr_\Sigma \hookrightarrow \Xfr_\Sigma$.
    The stack $\Xfr_\Sigma$ is normal and has diagonalizable stabilizers at geometric points (since these properties can be checked locally on $\Xfr_\Sigma$, this follows from \zcref{prop:geometrization_of_cone}).

    We still need to check that $\Xfr_\Sigma$ has affine diagonal.
    For each subcone $K \subset \Sigma$, choose an affine toric variety $U_K$ with a smooth toric surjection $U_K \twoheadrightarrow \Xfr_K$.
    Let $p_K: U_K \to \Xfr_\Sigma$ be the induced morphism.
    It suffices to show that the smooth cover $\{ p_K: U_K \to \Xfr_\Sigma \}_{K \subset \Sigma}$ satisfies the intersection condition of \zcref{prop:affine_diagonal_intersection_criterion}.
    
    Given two subcones $K_1, K_2 \subset \Sigma$, we have $\Xfr_{K_1} \times_{\Xfr_\Sigma} \Xfr_{K_2} = \Xfr_{K_1 \cap K_2}$, so the images of $p_{K_1}$ and $p_{K_2}$ meet in $\Xfr_{K_1 \cap K_2}$.
    Thus
    \begin{equation} \label{eq:fiber_product_checking_fan_affine_diagonal}
        U_{K_1} \times_{\Xfr_\Sigma} U_{K_2} = p_{K_1}\inv(\Xfr_{K_1 \cap K_2}) \times_{\Xfr_{K_1 \cap K_2}} p_{K_2}\inv(\Xfr_{K_1 \cap K_2}).
    \end{equation}
    Now $p_{K_1}\inv(\Xfr_{K_1 \cap K_2})$ fits into a Cartesian square
    \[
    \begin{tikzcd}
        p_{K_1}\inv(\Xfr_{K_1 \cap K_2}) \rar \dar & U_{K_1} \dar \\
        \Xfr_{K_1 \cap K_2} \rar[hook] & \Xfr_{K_1}.
    \end{tikzcd}
    \]
    The bottom map here is affine by \zcref{prop:geometrization_cone_immersion}, so since $U_{K_1}$ is an affine scheme we see that $p_{K_1}\inv(\Xfr_{K_1 \cap K_2})$ is also an affine scheme.
    Similarly, $p_{K_2}\inv(\Xfr_{K_1 \cap K_2})$ is an affine scheme.
    By \zcref{prop:geometrization_of_cone}, $\Xfr_{K_1 \cap K_2}$ has affine diagonal.
    Thus the fiber product in \zcref{eq:fiber_product_checking_fan_affine_diagonal} must be affine by \zcref{prop:affine_diagonal_intersection_criterion}, and we win.
\end{proof}

It is clear that geometrization defines a functor $\Xfr_{-}: \StkFan \to \TStk$.

\begin{rmk}
    Those in the know may think of $\Xfr_{-}: \StkFan \to \TStk$ as the left Kan extension of the corresponding functor $\Xfr_{-}: \StkCone \to \TStk$ along the inclusion $\StkCone \hookrightarrow \StkFan$.
    We will not have any explicit need of this perspective.
\end{rmk}

\subsection{Geometrization and other formalisms}

Here we establish that our geometrization construction generalizes the constructions of \cite{gillam2015theory} and \cite{geraschenko2015toric_1}.
The proofs here are rather unenlightening exercises in matching up notation, and we will not fault any reader who chooses to skip them.

\begin{prop} \label{prop:KM_geometrization}
    Let $(N, \{P_\sigma\}_{\sigma \in |\Sigma|})$ be a KM fan, and let $\Sigma$ be the corresponding stacky fan constructed via \zcref{ex:KM_fan}.
    Then the geometrization $\Xfr_\Sigma$ agrees with the toric stack constructed from $(N, \{P_\sigma\}_{\sigma \in |\Sigma|})$ in \cite[\S 2.5-2.6]{gillam2015theory}.
\end{prop}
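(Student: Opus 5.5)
Both constructions are glued from local pieces indexed by $|\Sigma|$ along the same open immersions, so the plan is to compare them cone by cone and then check that the gluing data agree. I recall that Gillam--Molcho attach to each $\sigma \in |\Sigma|$ (with $P_\sigma \subset N$ a sharp toric submonoid) a stack of the form $[\Spec k\angles{P_\sigma^\vee}/G_\sigma]$, or equivalently a quotient of an affine toric variety by a diagonalizable group acting through its torus. These are glued along the opens coming from faces $\tau \subset \sigma$. Our $\Xfr_\Sigma$ is by definition the colimit of the $\Xfr_{\Sigma_{\leq\sigma}}$ over the same poset, along the open immersions of \zcref{prop:geometrization_cone_immersion}. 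It therefore suffices to produce equivalences $\Xfr_{\Sigma_{\leq\sigma}} \simeq \Xfr^{\mathrm{GM}}_\sigma$ that are natural in $\sigma \in |\Sigma|$ and compatible with the torus actions.

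For the local step, fix $\sigma$ and set $\Cc = \Oc(\Sigma_{\leq \sigma}) = N \vec{/} P_\sigma$ (by \zcref{ex:global_sections_of_cone}). Since $N$ is discrete, I would compute $\GDual(\Cc)$ from \zcref{cons:gale_dual} with $F^\bullet = N[0]$ and $M = P_\sigma$. Then $E^\bullet = \fib(P_\sigma^\gp \to N)$, which is a two-term complex with $H^0 = \ker$ and $H^1 = \coker(P_\sigma^\gp \to N)$. So $\RHom(E^\bullet,\ZZ)$ is the cone of $N^* \to (P_\sigma^\gp)^*$ in the derived sense, and $\GDual(\Cc) = \Omega^\infty\RHom(E^\bullet,\ZZ) \vec{/} P_\sigma^\vee$.

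Next I would choose a presentation: pick a surjection $\ZZ^r \twoheadrightarrow N$ (or a free resolution $0\to L_1\to L_0\to N\to 0$), lift it to a presentation of $\GDual(\Cc)$ as a skew quotient $A \vec{/} M'$ with $A$ discrete, exactly as in \zcref{cor:skew_quotient_of_discrete}, and read off $\Xfr_{\Sigma_{\leq\sigma}} \simeq [\Spec k\angles{M'}/\Spec k\angles{A}]$ from the proof of \zcref{prop:geometrization_of_cone}. The computation should then recover exactly the Gillam--Molcho recipe. That recipe takes the affine toric variety of the saturated monoid that is the preimage of $P_\sigma$ in the lattice $L_0$ (or $P_\sigma^\gp$ with the torsion handled by a $B$ of a finite group) and quotients it by the Cartier dual of the relevant kernel/cokernel. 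I expect the identification to be formal once both sides are written as $\Spec$ of a graded monoid algebra modulo a diagonalizable group, using \zcref{ex:shifted_Cartier_dual_of_2term} for the torus part.

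For naturality, a face $\tau \subset \sigma$ induces the inclusion $N \vec{/} P_\tau \to N\vec{/} P_\sigma$. By \zcref{prop:gale_duality_localization} its Gale dual is the localization at the face $\tau^\bot$, i.e.\ the open immersion $\Spec k\angles{M'}[\,\cdot\,^{-1}] \subset \Spec k\angles{M'}$ with the same group. This matches the Gillam--Molcho gluing maps, so the local equivalences assemble into $\Xfr_\Sigma \simeq \Xfr^{\mathrm{GM}}_\Sigma$. The torus actions agree because both are restrictions to $\sigma = $ minimal element, by \zcref{prop:geometrization_of_fan}.

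The main obstacle is the local step in the presence of torsion in $N$. The Gale dual is then a genuinely non-discrete toric groupoid, and one must check that the chosen discrete presentation gives the same group quotient as Gillam--Molcho's, including the generic stabilizer. It must also be chosen compatibly for all $\sigma$ simultaneously. I would handle this by fixing a single resolution $L_1 \to L_0 \to N$ for the whole fan, so that all local presentations share one group $G$ and differ only in the monoid, which makes compatibility with the face maps automatic.
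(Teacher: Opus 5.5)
Your global architecture matches the paper's: compare cone by cone, then glue, with \zcref{prop:gale_duality_localization} turning face inclusions into localizations. The gluing step is at the paper's level of detail. The gap is the local step, which is the entire content of the proposition. You never state the Gillam--Molcho chart precisely. You only assert that a discrete presentation of $\GDual(N\vec{/}P_\sigma)$ ``should recover exactly'' it, and with the presentation you name (\zcref{cor:skew_quotient_of_discrete}) this is false. Since $P_\sigma^\gp\to N$ is injective, $E^\bullet\simeq (N/P_\sigma^\gp)[-1]$, so that presentation quotients by $\Spec k\angles{\Ext^1(N/P_\sigma^\gp,\ZZ)}$.

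The Gillam--Molcho chart is built differently:
\begin{itemize}
\item it chooses a free finite-index sublattice $L\subset N$ containing $P_\sigma^\gp$;
\item it sets $S=\{\phi\in\Hom(L,\ZZ) : \phi(P_\sigma)\geq 0\}$, which is not $D_\CMon(P_\sigma)$, since its units $\Hom(L/P_\sigma^\gp,\ZZ)$ supply the torus directions;
\item it quotients $\Spec k\angles{S}$ by $\Spec k\angles{\Ext^1(N/L,\ZZ)}$, which depends on $L$.
\end{itemize}
Already for $N=\ZZ$, $P_\sigma=0$, $L=2\ZZ$ one gets $[\GG_m/\mu_2]$, not $\GG_m$ on the nose. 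Two quotient presentations with different groups are not identified ``formally'' by writing both as graded monoid algebras. Fixing one resolution $L_1\to L_0\to N$ does not help either: it is not their $L$. Theirs is a sublattice chosen cone by cone, and when $N$ has torsion no single free sublattice need contain every $P_\sigma^\gp$. Also, the non-discreteness of $\iota_0\GDual(N\vec{/}P_\sigma)\simeq \Ext^1(N/P_\sigma^\gp,\ZZ)\times\Bsf\Hom(N/P_\sigma^\gp,\ZZ)$ comes from $\rk P_\sigma^\gp<\rk N$, not from torsion in $N$.

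The missing idea is to compare toric categories rather than presentations, and to run the comparison from the Gillam--Molcho side, where the input is discrete and no choices are needed. By the proof of \zcref{prop:geometrization_of_cone}(3), the chart $[\Spec k\angles{S}/\Spec k\angles{A}]$ with $A=\Ext^1(N/L,\ZZ)$ is $\Xfr_{\Cone(\GDual(A\vec{/}S))}$. So by \zcref{thm:gale_duality} and \zcref{cor:stacky_cone_equivalence} it suffices to prove $\GDual(A\vec{/}S)\simeq N\vec{/}P_\sigma$. This is a direct application of \zcref{cons:gale_dual}, in two computations:
\begin{itemize}
\item $D_\CMon(S)=\Hom(S,\NN)=L\cap\RR_{\geq 0}P_\sigma$, which is $P_\sigma$ as long as $P_\sigma$ is saturated in $L$, i.e.\ $L/P_\sigma^\gp$ is torsion-free;
\item $S^\gp=\Hom(L,\ZZ)$, so applying $\RHom(-,\ZZ)$ to $0\to L\to N\to N/L\to 0$ gives $\fib(S^\gp\to A)=\RHom(N,\ZZ)$, hence $\iota_0\GDual(A\vec{/}S)=\Omega^\infty\RHom(\RHom(N,\ZZ),\ZZ)=N$, with $D_\CMon(S)\to N$ the inclusion.
\end{itemize}
\zcref{thm:TCat_structure_thm} then identifies $\GDual(A\vec{/}S)$ with $N\vec{/}P_\sigma$, and the gluing proceeds as you (and the paper) describe.
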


\begin{proof}
    Let us first consider the case where $(N, \{P_\sigma\}_{\sigma \in |\Sigma|})$ is a single cone (with KM structure) together with all of its faces.
    In other words, the data of $(N, \{P_\sigma\}_{\sigma \in |\Sigma|})$ consists of a sharp toric submonoid $P$ of some finitely generated abelian group $N$.

    To construct the corresponding toric stack via the approach of \cite{gillam2015theory}, we perform the following steps:
    \begin{enumerate}
        \item Choose a free subgroup $L \subset N$ containing $P^\gp$ and such that $N / L$ is finite.
        \item Apply $\RHom(-, \ZZ)$ to the short exact sequence
        \[
        \begin{tikzcd}
            0 \rar & L \rar & N \rar & N / L \rar & 0.
        \end{tikzcd}
        \]
        to get an exact sequence
        \[
        \begin{tikzcd}
            0 \rar & \Hom(N, \ZZ) \rar & \Hom(L, \ZZ) \rar & \Ext^1(N / L, \ZZ) \rar & \Ext^1(N, \ZZ) \rar & 0.
        \end{tikzcd}
        \]
        \item Let $S = \bset{\phi: L \to \ZZ}{\phi(P) \geq 0}$.
        The inclusion $S \hookrightarrow \Hom(L, \ZZ)$ induces a homomorphism $S \to \Ext^1(N / L, \ZZ)$.
        \item Take the quotient
        \[
            (\Spec k\angles{S}) / (\Spec k\angles{\Ext^1(N / L, \ZZ)})
        \]
        where the action is induced by the homomorphism described in the previous step.
    \end{enumerate}
    By the construction in \zcref{prop:geometrization_of_cone}, this quotient stack is the same as the geometrization of $\Cone(\GDual(\Ext^1(N / L, \ZZ) \vec{/} S))$.
    Thus it suffices to show that the stacky cone $K$ corresponding to $(N, \{P_\sigma\}_{\sigma \in |\Sigma|})$ via \zcref{ex:KM_fan} is 
    \[
        \Cone(\GDual(\Ext^1(N / L, \ZZ) \vec{/} S)).
    \]
    Since $K = \Cone(N \vec{/} P)$ by construction, we actually just need to show $\GDual(N \vec{/} P) = \Ext^1(N / L, \ZZ) \vec{/} S$, or equivalently
    \[
        \GDual(\Ext^1(N / L, \ZZ) \vec{/} S) = N \vec{/} P.
    \]

    To accomplish this we will use our explicit description of Gale duality (\zcref{cons:gale_dual}).
    First note that $S = \Hom(L, \ZZ) \times_{\Hom(P, \ZZ)} \Hom(P, \NN)$, so
    \begin{align*}
        D_\CMon(S) &= D_\CMon(\Hom(L, \ZZ)) \oplus_{D_\CMon(\Hom(P, \ZZ))} D_\CMon(\Hom(P, \ZZ)) \\
        &= 0 \oplus_0 D_\CMon^2(P) \textrm{ because $D_\CMon$ sends any abelian group to $0$} \\
        &= 0 \oplus_0 P \\
        &= P.
    \end{align*}
    Thus $\PO \GDual(\Ext^1(N / L, \ZZ) \vec{/} S) = P$.
    To compute $\iota_0 \GDual(\Ext^1(N / L, \ZZ) \vec{/} S)$, observe that $S^\gp = \Hom(L, \ZZ)$ (this is precisely the standard computation that shows that $L$ is the character lattice of the dense torus in $\Spec k\angles{S}$).
    Thus
    \[
        \fib(S^\gp \to \Ext^1(N / L, \ZZ)) = \fib(\Hom(L, \ZZ) \to \Hom(N / L, \ZZ)) = \RHom(N, \ZZ),
    \]
    as we can see using the exact sequence of step (2) above.
    We may now compute
    \[
        \iota_0 \GDual(\Ext^1(N / L, \ZZ) \vec{/} S) = \RHom(\RHom(N, \ZZ), \ZZ) = N
    \]
    so that $\GDual(\Ext^1(N / L, \ZZ) \vec{/} S) = N \vec{/} P$ as desired.

    We see that Gillam--Molcho's construction of toric stacks agrees with our own when the input is a single KM cone.
    For the case of more general KM fans, we note that both our construction and Gillam--Molcho's are based on gluing these locally defined toric stacks via the same gluing maps, so the two constructions agree in general.
\end{proof}

\begin{prop}
    Let $(L, \ul{\Sigma}, \beta: L \to N)$ be a non-strict stacky fan in the sense of Geraschenko--Satriano, and let $\Sigma$ be the corresponding stacky fan (see \zcref{ex:GS_fan}).
    Then the geometrization $\Xfr_\Sigma$ agrees with the toric stack constructed in \cite[\S 2.2]{geraschenko2015toric_1}.
\end{prop}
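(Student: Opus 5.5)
Following the proof of \zcref{prop:KM_geometrization}, I reduce to a single cone and then glue. Fix a cone $\ul{\sigma} \in \ul{\Sigma}$. Geraschenko--Satriano build the stack on this chart as $[U_{\ul{\sigma}} / G_\beta]$. Here $U_{\ul{\sigma}} = \Spec k\angles{\ul{\sigma}^\vee \cap L^\vee}$ is the affine toric variety with torus $T_L = \CDual(\ul{L^\vee})$. The group $G_\beta$ is the Cartier dual of $\Rbf^1\Hom(\cofib(\beta),\ZZ)$, i.e.\ of $\coker(\beta^\vee: N^\vee \to L^\vee)$ up to the Ext term. It acts via the natural map $G_\beta \to T_L$ coming from $L^\vee \to \Rbf^1\Hom(\cofib \beta, \ZZ) = H^1_\beta$. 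By the construction in the proof of \zcref{prop:geometrization_of_cone}, this quotient is $\Xfr_{K}$ with
\[
K = \Cone\big(\GDual(H^1_\beta \vec{/} S_{\ul{\sigma}})\big), \qquad S_{\ul{\sigma}} = \ul{\sigma}^\vee \cap L^\vee ,
\]
where $S_{\ul\sigma} \to H^1_\beta$ is the composite $S_{\ul\sigma} \subset L^\vee \to H^1_\beta$. By \zcref{cor:stacky_cone_equivalence} it therefore suffices to identify $\GDual(H^1_\beta \vec{/} S_{\ul{\sigma}})$ with $\Nc_\Sigma \vec{/} (\ul{\sigma} \cap L)$.

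I would compute this Gale dual with \zcref{cons:gale_dual}, as in the KM case. For the monoid part, $S_{\ul\sigma}$ need not be sharp, so I first pass to its sharpening, which is legitimate by \zcref{lem:gale_duality_WD3}. Then $D_\CMon(S_{\ul{\sigma}}) = \ul{\sigma} \cap L$ by classical cone duality (\cite[Proposition 1.2.4 and 1.2.12]{cls}), since $D_\CMon$ only sees the sharp quotient. For the groupoid part, $S_{\ul\sigma}^\gp = L^\vee$, so the construction gives
\[
E^\bullet = \fib(L^\vee \to H^1_\beta), \qquad \iota_0 \GDual = \Omega^\infty \RHom(E^\bullet, \ZZ).
\]
I then compare $E^\bullet$ with $\RHom(\cofib \beta, \ZZ)$. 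The triangle $L \to N \to \cofib\beta$ dualizes to
\[
\RHom(\cofib\beta,\ZZ) \to N^\vee \to L^\vee \to \RHom(\cofib \beta,\ZZ)[1].
\]
The connecting map $L^\vee \to \RHom(\cofib\beta, \ZZ)[1]$ factors through $H^1_\beta$, with the $H^0_\beta$ part and the remaining Ext terms controlling the discrepancy. Dualizing again should show that $\RHom(E^\bullet,\ZZ)$ is exactly the cofiber defining $\Nc_\Sigma$ in \zcref{ex:GS_fan}, namely $L$ modulo the images of $\RHom(H^0_\beta,\ZZ)$ (via the zero map) and $\RHom(H^1_\beta,\ZZ)$ (via $f$). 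Finally I check that under this identification the map $D_\CMon(S_{\ul\sigma}) = \ul\sigma \cap L \to \Nc_\Sigma$ is the composite $\ul\sigma\cap L \to L \to \Nc_\Sigma$. Both maps come from $L = (L^\vee)^\vee$, so this should follow by naturality.

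The gluing step is formal, exactly as at the end of \zcref{prop:KM_geometrization}. In both constructions the charts are glued along the open immersions induced by face inclusions $\ul\tau \subset \ul\sigma$. On our side these correspond to localizations (\zcref{prop:gale_duality_localization}, \zcref{prop:geometrization_cone_immersion}); on theirs they correspond to $U_{\ul\tau} \subset U_{\ul\sigma}$. The local identifications above are natural in $\ul\sigma$, since every step is functorial in $S_{\ul\sigma}$, so they are compatible with these maps.

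The main obstacle is the homological bookkeeping in the groupoid part. One must show that $\RHom(E^\bullet,\ZZ)$ matches the somewhat ad hoc cofiber in \zcref{ex:GS_fan}, including the $H^0_\beta$ summand that enters via the zero map, and keep track of degree conventions and of the $\Ext^1$ torsion in $\Rbf^1\Hom(\cofib \beta, \ZZ)$. I would first try the case where $\beta$ has finite cokernel, where $H^0_\beta = 0$, and then handle $H^0_\beta$ by splitting off a torus factor, using \zcref{ex:gale_dual_of_product}.
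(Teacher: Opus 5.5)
Your outline is the same as the paper's: reduce to one cone, recognize the chart as $\Xfr_{\Cone(\GDual(A \vec{/} M))}$ via \zcref{prop:geometrization_of_cone}, compute $\GDual$ with \zcref{cons:gale_dual}, and glue. However, the step you flag as the main obstacle actually fails with your choice $A = H^1_\beta$.

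With that choice, one gets directly
\[
    \RHom(E^\bullet, \ZZ) = \RHom\big(\fib(L^\vee \to H^1_\beta), \ZZ\big) = \cofib\big(\RHom(H^1_\beta, \ZZ) \to L\big),
\]
with no need to compare against $\RHom(\cofib \beta, \ZZ)$. On the other hand, the $\RHom(H^0_\beta,\ZZ)$-component of the map in \zcref{ex:GS_fan} is zero, and $H^0_\beta = \ker(N^\vee \to L^\vee)$ is free. So the cofiber defining $\Nc_\Sigma$ splits as
\[
    \cofib\big(\RHom(H^1_\beta, \ZZ) \to L\big) \oplus \Hom(H^0_\beta, \ZZ)[1].
\]
Hence $\Nc_\Sigma \simeq \iota_0 \GDual(H^1_\beta \vec{/} S_{\ul{\sigma}}) \times \Bsf \Hom(H^0_\beta, \ZZ)$, and the two differ whenever $\coker \beta$ is infinite.

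Splitting off a torus factor cannot repair this. The surplus factor $\Bsf \Hom(H^0_\beta, \ZZ)$ geometrizes to a $B\GG_m^r$ factor with $r = \rk H^0_\beta$, not to a torus. By \zcref{prop:geometrization_of_fan} it lowers the dimension by $r$, and nothing in $[U_{\ul{\sigma}} / \Spec k\angles{H^1_\beta}]$ accounts for it. So with the group you wrote down, the identification you need is false, not merely hard; the two stacks would even have different dimensions.

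The missing idea is the one the paper's proof uses. Take the acting group to be $\Spec k\angles{A}$ with $A = H^0_\beta \oplus H^1_\beta$, where $M = S_{\ul{\sigma}} \to A$ is zero on the $H^0_\beta$ summand. Then $\Spec k\angles{H^0_\beta}$ acts trivially and contributes exactly the missing $B\GG_m^r$. In this setup $E^\bullet = \fib(L^\vee \to H^1_\beta) \oplus H^0_\beta[-1]$, and $\RHom(E^\bullet, \ZZ)$ is literally the cofiber of \zcref{ex:GS_fan}, zero map included. Your "homological bookkeeping" then collapses to one line. The monoid computation $D_\CMon(S_{\ul{\sigma}}) = \ul{\sigma} \cap L$, the compatibility of the map to $\Nc_\Sigma$, and the gluing go through as you describe. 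Your planned finite-cokernel special case is fine, since there $H^0_\beta = 0$ and the two choices of $A$ agree.
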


\begin{proof}
    As in the proof of \zcref{prop:KM_geometrization}, it suffices to show that the result holds when the Geraschenko--Satriano stacky fan has a single maximal cone $\ul{\sigma}$ (since it is easy to see the result is compatible with gluing).
    So suppose this is the case.
    Then the stacky fan (in our sense) corresponding to the GS stacky fan in question is the stacky cone
    \[
        K = \Cone\big(\Omega^\infty\cofib\big(\RHom(H^0_\beta, \ZZ) \oplus \Hom(H^1_\beta, \ZZ) \to L\big) \vec{/} \Hom(\ul{\sigma} \cap L, \ZZ)\big).
    \]
    If we let $A = H^0_\beta \oplus H^1_\beta$ and $M = \bset{\phi: L \to \ZZ}{\phi|_{\sigma \cap L} \geq 0}$, then a direct computation (along the lines of \zcref{ex:Gale_dual_classical_toric}) shows that
    \[
        K = \Cone(\GDual(A \vec{/} M)).
    \]
    The construction of \zcref{prop:geometrization_of_cone} then shows
    \[
        \Xfr_K = (\Spec k\angles{M}) / (\Spec k\angles{A}),
    \]
    which exactly agrees with the quotient stack constructed in \cite[\S 2.2]{geraschenko2015toric_1}.
\end{proof}

\part{Geometrization as an equivalence of 2-categories} \label{part:geometrization_is_equivalence}

This part is dedicated to proving that the geometrization functor $\Xfr_{-}: \StkFan \to \TStk$ is an equivalence of 2-categories.
We refer to \zcref{subsec:organization} for a brief outline of the proof.

\section{Equivariantization} \label{sec:equivariantization}

One of the key reasons for working with stacks is that doing so allows us to translate equivariant questions about $G$-actions on a scheme $X$ into intrinsic questions about the moduli stack $X / G$ (and vice versa).
We'd like to be able to perform a similar translation in the theory of toric stacks.
More precisely, if $\Xfr$ is a toric stack with dense stacky torus $\Tfr$, then we'd like to construct a ``quotient stack'' $\Xfr / \Tfr$, so equivariant questions about $\Tfr \hookrightarrow \Xfr$ (which can be daunting) translate into intrinsic questions about $\Xfr / \Tfr$, which has trivial torus.

\subsection{Rigidifying toric stacks}

To construct a quotient stack $\Xfr / \Tfr$, we will first need to understand how to remove the generic stabilizer of $\Xfr$.
We introduce the following notation:

\begin{dfn}
    If $\Tfr$ is a stacky torus with identity morphism $e: \Spec k \to \Tfr$, let $\Tfr_\stab := \ul{\Aut}(e)$ be the \emph{stabilizer group} of $\Tfr$.
    This is a diagonalizable algebraic group.
\end{dfn}

For any stacky torus $\Tfr$, there is a natural homomorphism of stacky tori $B \Tfr_\stab \to \Tfr$.
In particular, if $\Xfr$ is a toric stack with dense torus $\Tfr$, then $B \Tfr_\stab$ acts on $\Xfr$.

The ``quotient'' $\Xfr / B \Tfr_\stab$ can be made mathematically precise: it is the ``rigidification'' $\Xfr \fslash \Tfr_\stab$ discussed in \cite[\S 5]{romagny2005group} and \cite[\S 5.1]{abramovich2003twisted}.\footnote{As noted in \cite[paragraph after Theorem 5.1.5]{abramovich2003twisted}, we can also make sense of $\Xfr / B \Tfr_\stab$ directly using the theory of higher stacks, though this requires a level of abstraction we will not need in this paper.}
There is a natural morphism $\Xfr \to \Xfr \fslash \Tfr_\stab$ making $\Xfr$ into a $\Tfr_\stab$-gerbe\footnote{Readers unaccustomed to gerbes should mentally substitute all references to ``$D$-gerbes'' (for $D$ diagonalizable) with references to (\'etale-locally trivial) ``$BD$-torsors.'' We will not lose anything important by this.} over $\Xfr \fslash \Tfr_\stab$, and this morphism is the universal morphism annihilating the action of $\Tfr_\stab$ on $\Xfr$.

To see that the construction of the rigidification in \cite[\S 5.1]{abramovich2003twisted} applies in this situation, we will need the following result:

\begin{lem} \label{lem:stabilizer_action_faithful}
    Let $\Xfr$ be a toric stack.
    Then, for all $k$-schemes $S$ and all $x \in \Xfr(S)$, the induced homomorphism $\phi_x: \Tfr_\stab \times S \to \ul{\Aut}_S(x)$ (of group schemes over $S$) has trivial kernel.
\end{lem}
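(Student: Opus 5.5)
The plan is to reduce to the dense stacky torus, where the claim is a computation, and then spread it to all of $\Xfr$ by density and separatedness of $\ker\phi_x$. The key observation is that $\ker\phi_x$ is a closed subgroup scheme of the diagonalizable group $\Tfr_\stab \times S$ over $S$. Its formation commutes with base change in $S$, since $\phi_x$ is defined functorially from the action map $\Tfr \times \Xfr \to \Xfr$ restricted to $B\Tfr_\stab \times \Xfr \to \Xfr$.

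First I reduce to the case $S = \Spec K$ with $K$ an algebraically closed field. The kernel of a homomorphism out of a diagonalizable group scheme $D_S$ is a closed subgroup scheme. Because $\ul{\Aut}_S(x)$ is affine over $S$ (here we use that $\Xfr$ has affine diagonal), this kernel is again of multiplicative type, cut out by a subsheaf of characters. Such a subgroup is trivial if it is trivial on all geometric fibers, so by base change it suffices to treat geometric points $x: \Spec K \to \Xfr$. If fibrewise triviality plus flatness looks delicate, I would instead argue via the character map: $\phi_x$ corresponds dually to a map from the characters of $\ul{\Aut}_S(x)$ to $\weight(\Tfr_\stab)$, and triviality of the kernel amounts to surjectivity of this map, which can be checked on fibers.

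Second I handle a geometric point $x$ lying in the dense stacky torus $\Tfr \subset \Xfr$. By the explicit description $\Tfr \simeq [T/G]$, with $G \to T$ and $\Tfr_\stab = \ker(G \to T)$, the stabilizer of any point of $[T/G]$ is exactly $\ker(G \to T)$. The map $\phi_x$ is then the identity, because $B\Tfr_\stab \to \Tfr$ acts through $G$ acting by translation. Hence the kernel is trivial there.

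Third I treat a general geometric point $x$, which I expect to be the main obstacle. Working locally, I would use the intersection results together with the local quotient description to reduce to the case where $x$ lies in a basic open $[U/G]$ containing $\Tfr$. I worry this description is only established later in the paper, so I first try an intrinsic argument. The kernels $\ker\phi_x$ form a closed subgroup $\Kc \subset \Tfr_\stab \times \Xfr$ over $\Xfr$ that is of multiplicative type. By the first step it is trivial over the dense open $\Tfr$. For a basic open $[U/G]$ the argument is direct: $\Tfr_\stab = \ker(G\to T)$ acts trivially on $U$, so it lies in every stabilizer $G_u \subset G$. Consequently $\phi_x$ is the inclusion $\ker(G \to T) \hookrightarrow G_u$, which is injective. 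Without the local quotient description, I would use normality of $\Xfr$ and the rigidity of multiplicative-type subgroups of the constant group $\Tfr_\stab$. The character group of $\Kc$ is locally constant on the connected normal stack $\Xfr$ (multiplicative type subgroups of $D_{\Xfr}$ correspond to locally constant quotients of $\weight(D)$ once $\Kc$ is flat). Since it is trivial over the dense open $\Tfr$, it would then be trivial everywhere. Flatness of $\Kc$ is exactly the subtle point, and this is where I would fall back on the basic-open computation.
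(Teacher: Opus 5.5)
\paragraph{The gap is in your third step.} That step carries essentially all the content of the lemma, and neither route you offer closes it.

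Your fallback, placing $x$ in a basic open $[U/G]$, is circular. The fact that every toric stack is a union of basic opens (\zcref{prop:toric_stack_locally_basic}) is proved via the equivariantization $\Xfr \to \Xfr/\Tfr$. That requires the rigidification $\Xfr \fslash \Tfr_\stab$ of \zcref{prop:rigidification_is_toric}, and the existence of that rigidification is deduced from exactly this lemma.

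Your intrinsic route is the right idea, but its one nontrivial input is missing. You assert that the kernel is of multiplicative type, which already includes flatness, yet you give no reason, and you then flag flatness as unresolved. It does not follow from being a closed subgroup of a diagonalizable group that is trivial over a dense open. For example, the ideal $(t(z-1),\, z^2-1) \subset k[t,z^{\pm 1}]$ defines a closed subgroup scheme of $\GG_m \times \AA^1$ over $\AA^1$. It is trivial for $t \neq 0$ but equals $\mu_2$ at $t = 0$.

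The same remark applies to your first step: ``the target is affine, so the kernel is cut out by characters'' is not a proof of multiplicative type. The reduction to geometric fibres there is nevertheless true, because a finite-type closed subgroup with trivial fibres is unramified with open identity section. But that reduction does not help with the third step.

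\paragraph{The missing ingredient.} What you need is \cite[Exp.~IX, Th\'eor\`eme 6.8]{grothendieck1970groupes}: the kernel of a homomorphism from a group of multiplicative type to a finitely presented separated group scheme is of multiplicative type. The paper applies this over a smooth atlas $p: U \to \Xfr$ with $U$ a scheme, which avoids having to speak of subgroups over the stack. The argument runs as follows.
\begin{enumerate}
\item $\ul{\Aut}_U(p)$ is affine of finite type over $U$ because $\Xfr$ has affine diagonal, so $\ker \phi_p \subset \Tfr_\stab \times U$ is of multiplicative type.
\item $\ker\phi_p$ is trivial over $p^{-1}(\Tfr)$, which is dense in $U$ because $p$ is smooth and hence open.
\item The type of a multiplicative-type group is locally constant, so $\ker \phi_p$ is trivial everywhere.
\item For an arbitrary $x: S \to \Xfr$, base change along the smooth surjection $\pi_1: S \times_\Xfr U \to S$. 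Kernel formation commutes with base change and $x \circ \pi_1 \simeq p \circ \pi_2$, so fpqc descent gives triviality of $\ker\phi_x$.
\end{enumerate}
With this argument you need neither the reduction to geometric points nor any local quotient description.
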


\begin{proof}
    First consider the case where $x$ is a smooth morphism $p: U \to \Xfr$ with $U$ a scheme.
    Because $\Tfr$ is dense in $\Xfr$, the preimage $p\inv(\Tfr) = \Tfr \times_{\Xfr} U$ is dense in $U$.
    The restricted homomorphism $\phi_p|_{p\inv(\Tfr)}: \Tfr_\stab \times p\inv(\Tfr) \to \ul{\Aut}_{p\inv(\Tfr)}(p|_{p\inv(\Tfr)})$ is an isomorphism (fiberwise, it is $\Tfr_\stab \simto \Tfr_\stab$).
    Thus $\ker \phi_p$ is generically trivial over $U$.
    By \cite[Th\'eor\`eme 6.8]{grothendieck1970groupes}, $\ker \phi_p$ is of multiplicative type, i.e.\ is diagonalizable (fpqc-)locally on $S$.
    Because $\ker \phi_p$ is generically trivial, this means that $\ker \phi_p$ is locally trivial.
    By faithfully flat descent, $\ker \phi_p$ must itself be trivial.

    Now consider the case of an arbitrary morphism $x: S \to \Xfr$.
    Let $p: U \to \Xfr$ be a smooth surjection with $U$ a scheme, and let $\pi_1: S \times_\Xfr U \to S$ and $\pi_2: S \times_\Xfr U \to U$ be the projection maps.
    Consider the Cartesian square
    \[
    \begin{tikzcd}
        \ker \phi_{p \circ \pi_1} \rar \dar & \ker \phi_p \dar \\
        S \times_\Xfr U \rar["\pi_1"] & S.
    \end{tikzcd}
    \]
    Because $p$ is a smooth surjection, $\pi_1$ is a smooth surjection, so faithfully flat descent tells us that $\ker \phi_p$ is trivial if and only if $\ker \phi_{x \circ \pi_1}$ is trivial.
    We can write $x \circ \pi_1 = p \circ \pi_2$, so
    \[
        \ker \phi_{x \circ \pi_1} = \ker \phi_{p \circ \pi_2} = (\ker \phi_p) \times_U (S \times_\Xfr U).
    \]
    By the result of the first paragraph above, $\ker \phi_p$ is trivial.
    Thus $\ker \phi_{x \circ \pi_1}$ and $\ker \phi_x$ are also trivial.
\end{proof}

We will also need one direction of the following result, allowing us to ``bootstrap'' the affine diagonal condition to or from a gerbe.
The converse direction will be useful later on, so we also prove it here.

\begin{lem} \label{lem:gerbe_affine_diagonal}
    Let $D$ be a diagonalizable group over $\Spec k$, let $\Yfr$ be an Artin stack of finite type over $\Spec k$, and let $p: \Xfr \to \Yfr$ be a $D$-gerbe.
    Then:
    \begin{enumerate}
        \item $p$ has affine diagonal.
        \item The stack $\Yfr$ has affine diagonal if and only if $\Xfr$ has affine diagonal.
    \end{enumerate}
\end{lem}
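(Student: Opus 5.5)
The plan is to prove (1) by reducing, via smooth-local descent, to the case of a trivial gerbe. Then I will deduce (2) from (1) together with the composition and cancellation properties of morphisms with affine diagonal recorded in \zcref{rmk:affine_diagonal_reasonable}.

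For (1), the diagonal $\Delta_p: \Xfr \to \Xfr \times_\Yfr \Xfr$ is a morphism of stacks over $\Yfr$. Having affine diagonal is smooth-local on the target, so I may base change along a smooth (or \'etale, since gerbes are \'etale-locally trivial) cover $V \to \Yfr$ with $V$ an affine scheme over which the gerbe trivializes. Base change is compatible with forming the relative diagonal, so it suffices to show that $BD \times V \to V$ has affine diagonal. This diagonal is the base change of $B D \to BD \times BD$ over $V$. Pulling back further along the smooth cover $V \to BD_V \times_V BD_V$ identifies the diagonal with $D_V \to V$, which is affine because $D$ is an affine group scheme. More canonically, the fiber of $\Delta$ over a pair of objects is the $\ul{\Aut}$/$\ul{\mathrm{Isom}}$-torsor under $D$, hence affine. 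I expect this step to be the main technical point. One must be careful that \'etale-local triviality of $p$ really gives an affine-scheme cover of $\Yfr$ over which $\Xfr$ is $BD$; I would first pick a smooth affine cover of $\Yfr$ and then refine it \'etale-locally.

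For (2), the direction ``$\Yfr$ affine diagonal $\Rightarrow$ $\Xfr$ affine diagonal'' follows from stability under composition. The structure map $\Xfr \to \Spec k$ factors as $\Xfr \xrightarrow{p} \Yfr \to \Spec k$, where $p$ has affine diagonal by (1) and $\Yfr \to \Spec k$ has affine diagonal by hypothesis. For the converse, I would use that $p$ is a smooth surjection (gerbes for smooth $D$ in characteristic zero are smooth and surjective) and check affineness of $\Delta_\Yfr$ smooth-locally. Base changing $\Delta_\Yfr: \Yfr \to \Yfr \times \Yfr$ along the smooth surjection $\Xfr \times \Xfr \to \Yfr \times \Yfr$ gives $\Yfr \times_{\Yfr\times\Yfr} (\Xfr\times\Xfr) = \Xfr \times_\Yfr \Xfr \to \Xfr \times \Xfr$. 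This map factors through $\Delta_\Xfr$ only in the sense that $\Xfr \to \Xfr\times_\Yfr\Xfr \to \Xfr\times\Xfr$ composes to $\Delta_\Xfr$. So I would instead apply the cancellation law in the form: if $\Delta_\Xfr = g \circ \Delta_p$ is affine, and $\Delta_p$ is affine and surjective (indeed a $D$-torsor-like smooth surjection, since $\Xfr \to \Xfr\times_\Yfr\Xfr$ is itself a gerbe-type map onto an $\ul{\mathrm{Isom}}$ stack), then $g$ is affine. This holds because affineness descends along a smooth surjection $\Xfr \to \Xfr\times_\Yfr\Xfr$ after base change. Hence $\Xfr\times_\Yfr\Xfr \to \Xfr\times\Xfr$ is affine, and by smooth descent so is $\Delta_\Yfr$.
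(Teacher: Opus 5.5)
Part (1) and the implication ``$\Yfr$ has affine diagonal $\Rightarrow$ $\Xfr$ has affine diagonal'' are fine and match the paper. The gap is in the converse. Your reduction there is correct: $\Delta_\Yfr$ is affine if and only if its base change $g\colon \Xfr\times_\Yfr\Xfr\to\Xfr\times\Xfr$ along the smooth surjection $p\times p$ is affine. The failing step is the next one, where you deduce that $g$ is affine from $g\circ\Delta_p=\Delta_\Xfr$ being affine and $\Delta_p$ being a smooth affine surjection. Affineness descends along smooth covers of the \emph{target}, not of the source, so this step is not valid. For instance, $\mathrm{SL}_2\to\AA^2\setminus\{0\}$ (first column) is a smooth affine surjection, in fact a $\GG_a$-torsor, and $\mathrm{SL}_2\to\Spec k$ is affine, yet $\AA^2\setminus\{0\}\to\Spec k$ is not. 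Your justification uses nothing about $D$ beyond smoothness, and this example shows that smoothness alone cannot suffice.

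The missing idea is the linear reductivity of $D$. Take an affine scheme $T\to\Xfr\times\Xfr$. Set $W=T\times_{\Xfr\times\Xfr}(\Xfr\times_\Yfr\Xfr)$, which is an algebraic space, and $W'=T\times_{\Xfr\times\Xfr}\Xfr$, which is affine because $\Delta_\Xfr$ is affine. The base change $W'\to W$ of $\Delta_p$ is a $D$-torsor, so $W\simeq[W'/D]$. Since $D$ is diagonalizable, hence linearly reductive, $[W'/D]$ is cohomologically affine. A cohomologically affine quasi-compact, quasi-separated algebraic space is affine by Serre's criterion, so $W$ is affine; equivalently, by uniqueness of good moduli spaces, $W=\Spec\Gamma(W',\Osc_{W'})^D$. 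This is exactly the input the paper uses. It covers $\Yfr$ by affines $U_i$ trivializing the gerbe and writes $U_i\times_\Yfr U_j$ as the quotient of the affine scheme $U_i\times_\Xfr U_j$ by a free $D$-action. It then invokes Alper's theorem on good moduli spaces of quotients of affine schemes by linearly reductive groups. Once you add this step, your diagonal-based route becomes a valid alternative to the paper's intersection-criterion argument.
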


\begin{proof}
    Recall (\zcref{rmk:affine_diagonal_reasonable}) that having affine diagonal is a ``reasonable property'' of morphisms.
    
    (1). Because $p$ is locally trivial, we may choose a smooth surjection $\Ufr \to \Yfr$ such that $\Xfr \times_\Yfr \Ufr = \Ufr \times BD$ and such that this equivalence matches the natural projection maps from both sides to $\Ufr$.
    That is, we have a Cartesian square
    \[
    \begin{tikzcd}
        \Ufr \times BD \rar["\pi_1"] \dar & \Ufr \dar \\
        \Xfr \rar["p"] & \Yfr.
    \end{tikzcd}
    \]
    The forgetful map $BD \to \Spec k$ has affine diagonal, so the projection map $\pi_1$ also has affine diagonal.
    Because having affine diagonal is smooth-local on the target, $p$ must have affine diagonal.

    (2). The ``only if'' direction follows from (1), since $\Xfr \to \Spec k$ is the composite of $p$ and the structure morphism $\Yfr \to \Spec k$, both of which have affine diagonal.

    For the ``if'' direction, by \zcref{prop:affine_diagonal_intersection_criterion}, it suffices to find a smooth cover $\{ q_i: U_i \to \Yfr \}_{i \in I}$ with all $U_i$ affine such that all fiber products $U_i \times_{\Yfr} U_j$ are affine.
    Recall (see e.g.\ \cite[Appendix C.1]{abramovich2008gromov}) that the rigidification $\Xfr \fslash D$ exists and $p$ can be identified with the natural map $\Xfr \to \Xfr \fslash D$.
    Let $\{ q_i: U_i \to \Xfr \fslash D \}_{i \in I}$ be a smooth cover of $\Xfr \fslash D$ by affine schemes such that the gerbe $\Xfr \to \Xfr \fslash D$ is trivial on each $U_i$.
    We claim any such cover satisfies the desired condition.

    Note that we may factor each $q_i$ through some $q_i': U_i \to \Xfr$.
    By the discussion in \cite[\S 5.1.7]{abramovich2003twisted}, each fiber product $U_i \times_{\Xfr \fslash D} U_j$ is a quotient of the algebraic space $\ul{\Isom}_{U_i \times U_j}(q_i' \circ \pi_1, q_j' \circ pi_2)$ by a free action of the diagonalizable group scheme $D \times (U_i \times U_j)$ over $U_i \times U_j$.
    The $\ul{\Isom}$ sheaf here is an affine scheme because $\Xfr$ has affine diagonal, so \cite[Theorem 13.2]{alper2013good} implies that the good moduli space of $U_i \times_{\Xfr \fslash D} U_j$ is an affine scheme.
    But $U_i \times_{\Xfr \fslash D} U_j$ is already an algebraic space (since the group scheme action is free), so uniqueness of good moduli spaces then implies $U_i \times_{\Xfr \fslash D} U_j$ is itself an affine scheme, as needed.
\end{proof}

We are now able to show that we can rigidify a toric stack $\Xfr$ and that the result $\Xfr \fslash \Tfr_\stab$ is still a toric stack.

\begin{prop} \label{prop:rigidification_is_toric}
    Let $\Xfr$ be a toric stack with dense stacky torus $\Tfr$.
    Then the rigidification $\Xfr \fslash \Tfr_\stab$ exists and is an algebraic stack.
    The natural morphism $\Xfr \to \Xfr \fslash \Tfr_\stab$ is a $\Tfr_\stab$-gerbe.
    Furthermore, $\Xfr \fslash \Tfr_\stab$ is a toric stack with dense torus $\Tfr_\coarse$ (and thus trivial generic stabilizer).
\end{prop}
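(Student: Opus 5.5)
We plan to follow the rigidification construction of \cite[\S 5.1]{abramovich2003twisted}, whose input is a flat subgroup of the inertia of $\Xfr$ that is central and normal. Here the subgroup is the image of $\Tfr_\stab \times \Xfr \to I_\Xfr$ coming from the action of $B\Tfr_\stab$ on $\Xfr$ through $B\Tfr_\stab \to \Tfr$. \zcref{lem:stabilizer_action_faithful} says that for every $x \in \Xfr(S)$ the map $\phi_x: \Tfr_\stab \times S \to \ul{\Aut}_S(x)$ has trivial kernel. So $\Tfr_\stab \times S$ embeds as a closed, flat (constant diagonalizable) subgroup scheme of $\ul{\Aut}_S(x)$.

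The automorphisms in question come from an action of the \emph{abelian} 2-group $B\Tfr_\stab$. Compatibility of the action with morphisms $x \to x'$ therefore makes these subgroups central, and in particular normal, and compatible with pullback. Hence the hypotheses of \cite[Theorem 5.1.5]{abramovich2003twisted} are met. It follows that $\Xfr \fslash \Tfr_\stab$ exists as an algebraic stack, and that $\Xfr \to \Xfr \fslash \Tfr_\stab$ is a $\Tfr_\stab$-gerbe.

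Next we check the axioms of \zcref{dfn:toric_stack} for $\Yfr := \Xfr \fslash \Tfr_\stab$.
\begin{itemize}
\item \emph{Finite type and normality.} A gerbe for a smooth diagonalizable group is smooth and surjective. Finite type and normality are smooth-local, so both descend from $\Xfr$ to $\Yfr$.
\item \emph{Affine diagonal.} This is \zcref{lem:gerbe_affine_diagonal}(2), ``if'' direction.
\item \emph{Stabilizers.} Each geometric stabilizer of $\Yfr$ is a quotient $\ul{\Aut}(x)/\Tfr_\stab$ of a diagonalizable group by a closed subgroup, hence is diagonalizable.
\item \emph{Torus and dense open.} The rigidification construction is functorial and compatible with base change along open substacks. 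So $\Tfr \hookrightarrow \Xfr$ induces an open immersion $\Tfr \fslash \Tfr_\stab \hookrightarrow \Yfr$, which is dense because $\Xfr \to \Yfr$ is surjective and open.
\end{itemize}

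It remains to identify $\Tfr_\coarse := \Tfr \fslash \Tfr_\stab$ and its action. Write $\Tfr \simeq T \times BG$ via the splitting after \zcref{dfn:stacky_torus}. Then $\Tfr_\stab = G$ and $\Tfr \fslash G = T$, an honest torus. Canonically, this is the cofiber of $B\Tfr_\stab \to \Tfr$ in $\tAb(\Stk)$, so it inherits a group structure.

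For the action, the map $\Tfr \times \Xfr \to \Xfr$ is compatible with the $B\Tfr_\stab \times B\Tfr_\stab$-actions, whose relevant subgroup acts through the sum. By the universal property of rigidification applied to $\Tfr \times \Xfr$, this descends to an action $T \times \Yfr \to \Yfr$ making $T \hookrightarrow \Yfr$ equivariant. Since $T$ is a scheme, $\Yfr$ has trivial generic stabilizer.

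We expect the main obstacle to be the verification of the hypotheses of \cite{abramovich2003twisted}, specifically that the images of the $\phi_x$ form a flat closed \emph{central} subgroup of inertia functorially in $x$. This needs a careful statement about how an action of $B\Tfr_\stab$ produces a natural transformation $\Tfr_\stab \to I_\Xfr$. We would argue this via the structure morphisms of the action in \cite[Definition B.12]{fantechi2010smooth}: commutativity comes from the symmetric structure of $B\Tfr_\stab$, and injectivity comes from \zcref{lem:stabilizer_action_faithful}. Descending the action is a second, more routine, use of the same universal property.
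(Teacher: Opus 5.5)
Your proposal is correct and follows the paper's proof essentially step for step. The paper also invokes \cite[Theorem 5.1.5]{abramovich2003twisted} with \zcref{lem:stabilizer_action_faithful} supplying the hypotheses, checks finite type and normality by smooth-locality, gets affine diagonal from \zcref{lem:gerbe_affine_diagonal}, obtains diagonalizable stabilizers as quotients, and descends the torus action and dense open immersion by naturality of rigidification; you only make explicit a few points the paper leaves implicit (centrality, density, and the identification $\Tfr \fslash \Tfr_\stab \simeq T$).
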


\begin{proof}
    The first two statements follow from \cite[Theorem 5.1.5]{abramovich2003twisted}, using \zcref{lem:stabilizer_action_faithful} to verify that the hypotheses of the cited theorem hold.
    
    To see that $\Xfr \fslash \Tfr_\stab$ is toric, we first note by the naturality of the rigidification construction that the action $\Tfr \curvearrowright \Xfr$ descends to an action $\Tfr_\coarse = \Tfr \fslash \Tfr_\stab \curvearrowright \Xfr \fslash \Tfr_\stab$.
    Similarly, the $\Tfr$-equivariant open immersion $\Tfr \hookrightarrow \Xfr$ descends to a $\Tfr$-equivariant open immersion $\Tfr_\coarse \hookrightarrow \Xfr \fslash \Tfr_\stab$.
    
    The stack $\Xfr \fslash \Tfr_\stab$ is normal and of finite type because these are smooth-local properties.
    \zcref{lem:gerbe_affine_diagonal} applied to the natural map $\Xfr \to \Xfr \fslash \Tfr_\stab$ implies that $\Xfr \fslash \Tfr_\stab$ has affine diagonal.
    Finally, the stabilizer groups of $\Xfr \fslash \Tfr_\stab$ at geometric points are quotients of the stabilizer groups of $\Xfr$ at the corresponding geometric points, hence are diagonalizable.
    Thus $\Xfr \fslash \Tfr_\stab$ is toric.
\end{proof}

Note in particular that when $\Xfr = \Tfr$, we see that the rigidification $\Tfr \fslash \Tfr_\stab$ exists.

\begin{dfn}
    Let $\Tfr$ be a stacky torus.
    The \emph{coarse group} of $\Tfr$ is $\Tfr_\coarse := \Tfr \fslash \Tfr_\stab$.
    This is an algebraic torus.
\end{dfn}

\begin{rmk}
    Suppose $\Tfr = T / G$ for some algebraic torus $T$, diagonalizable $G$, and some $\phi: G \to T$.
    Then $\Tfr_\stab = \ker(\phi)$ and $\Tfr_\coarse = \coker(\phi)$.
    Unlike $T$ and $G$ however, the groups $\Tfr_\stab$ and $\Tfr_\coarse$ are defined canonically in terms of $\Tfr$, with no auxiliary data needed.
\end{rmk}

\subsection{Equivariantization of toric stacks}

In the previous subsection we discussed how to annihilate the stabilizer group of the dense stacky torus of a toric stack $\Xfr$.
Now we would like to annihilate the rest of the dense torus, producing a quotient toric stack $\Xfr / \Tfr$.

\begin{prop} \label{prop:equivariantization_is_toric}
    Let $\Xfr$ be a toric stack with dense torus $\Tfr$.
    Then the quotient stack 
    \[
        \Xfr / \Tfr := (\Xfr \fslash \Tfr_\stab) / \Tfr_\coarse
    \]
    exists and is a toric stack with trivial dense torus.
\end{prop}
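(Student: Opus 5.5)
By \zcref{prop:rigidification_is_toric}, the rigidification $\Yfr := \Xfr \fslash \Tfr_\stab$ exists and is a toric stack whose dense torus is the algebraic torus $T := \Tfr_\coarse$. It therefore suffices to prove the following: if $\Yfr$ is a toric stack whose dense stacky torus is an honest torus $T$, then the quotient stack $\Yfr / T$ is a toric stack with trivial dense torus.

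First, $[\Yfr / T]$ exists as an algebraic stack. This is the standard fact that the quotient of an Artin stack by a smooth affine group scheme acting on it is an Artin stack. Concretely, $\Yfr \to [\Yfr/T]$ is a $T$-torsor, hence smooth and surjective, so a smooth atlas of $\Yfr$ composed with it gives an atlas of $[\Yfr/T]$. Finite type follows since $T$ is of finite type.

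Normality is smooth-local, and $\Yfr \to [\Yfr/T]$ is a smooth cover by a normal stack, so $[\Yfr/T]$ is normal. The dense open immersion $T \hookrightarrow \Yfr$ is $T$-equivariant. It descends to an open immersion $\Spec k = [T/T] \hookrightarrow [\Yfr/T]$, which is dense because density of an open substack can be checked after pulling back along the smooth surjection $\Yfr \to [\Yfr/T]$, where it pulls back to $T \subset \Yfr$. The trivial stacky torus acts trivially, so this is exactly the toric structure with trivial dense torus.

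For stabilizers, let $\bar y$ be a geometric point of $[\Yfr/T]$ lifting to $y \in \Yfr(\bar k)$. There is an exact sequence
\[
1 \to \Aut_\Yfr(y) \to \Aut_{[\Yfr/T]}(\bar y) \to \Stab_T(y) \to 1,
\]
where $\Stab_T(y) \subset T$ is a closed subgroup of a torus, hence diagonalizable. An extension of diagonalizable groups need not be diagonalizable in general, so here I would use the local structure instead. Work Zariski-locally near $y$: choose a basic open $[U/G]$ of $\Yfr$ containing $y$. (Whether such basic opens exist is exactly what later sections address, so I would rather avoid this.) Alternatively, I would argue via commutativity. The stabilizer is a commutative group, because on the dense torus orbit the action comes from the commutative group $T$ and commutativity of a flat stabilizer group scheme is a closed condition. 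It is an extension of diagonalizable groups, hence linearly reductive in characteristic zero with diagonalizable identity component and quotient, and a commutative such group is diagonalizable. This step needs care, and I expect it to be the one place where the argument can go wrong.

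Finally, affine diagonal, which I expect to be the main obstacle. Consider the factorization
\[
\Yfr \times T \rightrightarrows \Yfr \times \Yfr \to [\Yfr/T] \times [\Yfr/T],
\]
where the pullback of the diagonal $\Delta_{[\Yfr/T]}$ along $\Yfr \times \Yfr \to [\Yfr/T]^2$ is the action-graph map $\Yfr \times T \to \Yfr \times \Yfr$, $(y,t) \mapsto (y, ty)$. Since affineness is smooth-local on the target, it suffices to show this map is affine. It factors as the automorphism $(y,t) \mapsto (y, ty)$ of $\Yfr \times T$, followed by $\id_\Yfr \times (\text{orbit map})$. The cleanest route is to write it as the composite
\[
\Yfr \times T \xrightarrow{(y,t)\mapsto (y,ty,t)} \Yfr \times \Yfr \times T \to \Yfr \times \Yfr .
\]
The first map is a base change of $\Delta_\Yfr$ (it is the graph of a morphism $\Yfr \times T \to \Yfr$), hence affine. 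The second map is the projection with affine fiber $T$, hence affine. Composites of affine morphisms are affine, so $[\Yfr/T]$ has affine diagonal. This completes the verification that $\Xfr/\Tfr$ is toric.
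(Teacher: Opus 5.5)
\textbf{There is a genuine gap in the stabilizer step, the one you flagged yourself.} Your exact sequence $1 \to \Aut_\Yfr(y) \to \Aut_{[\Yfr/T]}(\bar y) \to \mathrm{Stab}_T(y) \to 1$ is correct, but the commutativity argument fails for two reasons.
\begin{itemize}
\item Closedness of the locus of commutative fibres needs the group scheme to be flat, and the inertia of $[\Yfr/T]$ is not flat. Already for $[\AA^1/\GG_m]$ it is trivial over the open point and $\GG_m$ over the origin.
\item Commutativity of the generic stabilizer does not propagate in general. A finite nonabelian group acting faithfully and linearly on $\AA^2$ gives a quotient stack with trivial generic stabilizer and a nonabelian stabilizer at the origin.
\end{itemize}
More to the point, the sequence together with diagonalizability of $\Aut_\Yfr(y)$ cannot suffice on its own. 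Let $O_2 = \GG_m \rtimes \mu_2$ (with $\mu_2$ acting by inversion) act on $\GG_m$ through $\det\colon O_2 \to \mu_2 \subset \GG_m$. Then $\Yfr = [\GG_m/O_2]$ carries a translation action of $T = \GG_m$, and all its stabilizers are $SO_2 \cong \GG_m$, yet $[\Yfr/T] \simeq BO_2$. So some input from the toric hypotheses is unavoidable. Your route through basic opens is indeed circular: the paper obtains basic opens (\zcref{prop:toric_stack_locally_basic}) by applying the stacky Sumihiro theorem to exactly this quotient.

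\textbf{How the paper handles this step, and a repair.} The paper's proof treats the stabilizer as an extension of the whole torus $T$ by $\ul{\Aut}(x)$ and cites the classification of such extensions. That argument is sound when the quotient is connected: the diagonalizable kernel is then central, because its automorphism group scheme is \'etale, and the commutator pairing on a torus is trivial by rigidity. Your sequence is the accurate one, however, and it shows the argument only covers the case where $\mathrm{Stab}_T(y)$ is connected. The simplest repair is to first prove only linear reductivity. Your sequence does give this, since an extension of linearly reductive groups is linearly reductive. Linear reductivity is exactly what the Geraschenko--Satriano stacky Sumihiro theorem requires (see the remark after \zcref{thm:stacky_sumihiro}). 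The charts it produces have the form $U/T_U$, with $T_U$ the dense torus of an affine toric variety $U$, so their stabilizers are subgroups of tori, and diagonalizability follows afterwards.

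\textbf{The rest of your proposal is correct.} Existence, finite type, normality and density are fine. Your affine-diagonal argument is an unpacked form of the paper's, which base-changes along $\Spec k \to BT$ and then composes with $BT \to \Spec k$. That step is therefore routine rather than the main obstacle.
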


\begin{proof}
    By \zcref{prop:rigidification_is_toric}, the stack $(\Xfr \fslash \Tfr_\stab)$ is toric with dense stacky torus $\Tfr \fslash \Tfr_\stab = \Tfr_\coarse$.
    Thus, replacing $\Xfr$ by $\Xfr \fslash \Tfr_\stab$, we may assume without loss of generality that $\Tfr = \Tfr_\coarse$.
    For conceptual clarity we will write $\Tfr = T$ since this group is a scheme.
    
    The quotient stack $\Xfr / T$ is algebraic by \cite[Theorem 4.1]{romagny2005group}.
    Because the natural morphism $\Xfr \to \Xfr / T$ is smooth, we see that $\Xfr / T$ is normal and of finite type over $\Spec k$.
    To see that $\Xfr / T$ has affine diagonal, consider the Cartesian square
    \[
        \begin{tikzcd}
            \Xfr \rar \dar & \Spec k \dar \\
            {\Xfr / T} \rar & BT.
        \end{tikzcd}
    \]
    Because $\Xfr \to \Spec k$ has affine diagonal and this property is local on the base, $\Xfr / T \to BT$ has affine diagonal.
    Composing this morphism with $BT \to \Spec k$, we see that $\Xfr / T \to \Spec k$ has affine diagonal.
    
    Finally, let $x \in \Xfr(k')$ be a geometric point (for some algebraically closed $k' \supset k$), giving rise to a geometric point $\ol{x} \in \Xfr / T(k')$.
    The stabilizer group $\ul{\Aut}_{k'}(\ol{x})$ is an extension of the algebraic torus $T$ by the diagonalizable group $\ul{\Aut}_{k'}(x)$.
    This is necessarily diagonalizable by the classification of such extensions (see e.g.\ \cite[Proposition 15.28]{milne2017algebraic}).
    Thus the stabilizer groups of all geometric points of $\Xfr / T$ are reductive.
\end{proof}

\begin{dfn}
    We refer to the toric stack $\Xfr / \Tfr := (\Xfr \fslash \Tfr_\stab) / \Tfr_\coarse$ as the \emph{equivariantization} of $\Xfr$.
\end{dfn}

\begin{rmk} \label{rmk:equivariantization_switch_order}
    In the setting of \zcref{prop:equivariantization_is_toric}, if we choose a splitting of the natural map $\Tfr \to \Tfr_\coarse$, then we get an action of $\Tfr_\coarse$ on $\Xfr$.
    By \cite[Theorem 5.1]{romagny2005group}, this induces an equivalence $\Xfr / \Tfr \simeq (\Xfr / \Tfr_\coarse) \fslash \Tfr_\stab$.
    In other words, quotienting by the coarse group commutes with rigidifying the stabilizer group.
\end{rmk}

\subsection{A weak structure theorem for toric stacks}

The equivariantization construction lets us decompose questions about general toric stacks into:
\begin{itemize}
    \item Questions about toric stacks with trivial torus,
    \item Questions about torsors over toric stacks with trivial torus, and
    \item Questions about gerbes over toric stacks with trivial torus. 
\end{itemize}
The following results make this principle mathematically precise:

\begin{prop} \label{prop:eqvar_cartesian_square}
    Let $\Xfr$ be a toric stack and choose a splitting of the natural map $\Tfr \to \Tfr_\coarse$, so $\Tfr_\coarse$ acts on $\Xfr$.
    Then there is a Cartesian square (in $\TStk$)
    \[
    \begin{tikzcd}
        \Xfr \rar["p"] \dar & \Xfr \fslash \Tfr_\stab \dar \\
        {\Xfr / \Tfr_\coarse} \rar["\ol{p}"] & {\Xfr / \Tfr}.
    \end{tikzcd}
    \]
\end{prop}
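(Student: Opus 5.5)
We want to show that the square of \zcref{prop:eqvar_cartesian_square} is Cartesian, that is, that the natural map $\Xfr \to (\Xfr / \Tfr_\coarse) \times_{\Xfr / \Tfr} (\Xfr \fslash \Tfr_\stab)$ is an equivalence. The plan is to identify both horizontal maps as $\Tfr_\stab$-gerbes and the vertical maps as $\Tfr_\coarse$-torsors. We then argue that the square is Cartesian in stacks, and observe that all four corners are toric stacks with compatible dense tori. Fix the splitting $s: \Tfr_\coarse \to \Tfr$, which gives an action of $\Tfr_\coarse$ on $\Xfr$.

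\textbf{Step 1: the square is defined.} The bottom map $\ol{p}$ comes from \zcref{rmk:equivariantization_switch_order}. That remark gives $\Xfr / \Tfr \simeq (\Xfr / \Tfr_\coarse) \fslash \Tfr_\stab$, so $\ol{p}$ is the rigidification map. The right vertical map is the quotient map $\Xfr \fslash \Tfr_\stab \to (\Xfr \fslash \Tfr_\stab)/\Tfr_\coarse$. The left vertical map is the quotient map $\Xfr \to \Xfr / \Tfr_\coarse$. The square commutes because rigidification is natural with respect to $\Tfr_\coarse$-equivariant maps. The $\Tfr_\stab$-action commutes with the $\Tfr_\coarse$-action, since both come from the commutative 2-group $\Tfr$ acting on $\Xfr$.

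\textbf{Step 2: Cartesianness in $\Stk$.} The right vertical map is a $\Tfr_\coarse$-torsor; we pull it back along $\ol{p}$. Its pullback $P := (\Xfr / \Tfr_\coarse) \times_{\Xfr/\Tfr} (\Xfr \fslash \Tfr_\stab)$ is a $\Tfr_\coarse$-torsor over $\Xfr / \Tfr_\coarse$. The map $\Xfr \to P$ is a $\Tfr_\coarse$-equivariant morphism of $\Tfr_\coarse$-torsors over $\Xfr / \Tfr_\coarse$. It is equivariant because the action of $\Tfr_\coarse$ on $\Xfr$ via $s$ maps to the action on $\Xfr \fslash \Tfr_\stab$ under $p$. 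Any equivariant morphism of torsors under a group stack over a common base is an equivalence: check smooth-locally on the base, where both torsors are trivial and the map is given by translation. We must take some care that $\Tfr_\coarse$ is an honest group scheme (a torus), so that torsors behave classically. One could alternatively check gerbe-side, since both horizontal maps are $\Tfr_\stab$-gerbes; but the torsor argument is cleaner.

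\textbf{Step 3: Cartesianness in $\TStk$.} The fiber product is formed in stacks; to see it is the fiber product in $\TStk$ we upgrade the square to $\TStk$. Concretely, we check that the corresponding square of dense stacky tori $\Tfr \to \Tfr_\coarse$, $\Tfr \to \Tfr/\Tfr_\coarse \simeq B\Tfr_\stab$, $\Tfr_\coarse \to 0$, $B\Tfr_\stab \to 0$ is Cartesian in $\tAb(\Stk)$. Via the splitting, this amounts to $\Tfr \simeq \Tfr_\coarse \times B\Tfr_\stab$. By the shifted Cartier duality antiequivalence with $\TGpd \simeq \tTerm$, the splitting of stacky tori corresponds to the splitting $\Ac \simeq \pi_0(\Ac) \times \Bsf\Aut_\Ac(0)$ of \zcref{prop:strict_trivialization}, so this product decomposition holds. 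Morphisms into $\Xfr$ in $\TStk$ are then pairs of compatible morphisms into the two corners, by unwinding the definition of $\Map_\TStk$ as a fiber product of equivariant mapping groupoids.

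I expect Step 3 to be the main obstacle. The definition of $\TStk$ is 2-categorical, and matching homomorphisms of tori with equivariant structures requires bookkeeping. Step 2 is essentially formal.
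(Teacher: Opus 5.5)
Your proposal is correct and follows essentially the same route as the paper. The paper sets $\Yfr = \Xfr \fslash \Tfr_\stab$ and uses $\Xfr/\Tfr = \Yfr/\Tfr_\coarse$ to recognize the square as the standard Cartesian square of quotient stacks attached to the $\Tfr_\coarse$-equivariant map $p$; your torsor argument in Step 2 is a proof of that standard fact. The paper then declares the upgrade to $\TStk$ clear, and your Step 3 supplies that bookkeeping, with one small correction: the decomposition $\Tfr \simeq \Tfr_\coarse \times B\Tfr_\stab$ compatible with the maps in the square follows formally from the chosen splitting, so you do not need the non-canonical splitting of toric groupoids.
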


\begin{proof}
    Let $\Yfr = \Xfr \fslash \Tfr_\stab$, so $\Xfr / \Tfr = \Yfr / \Tfr_\coarse$.
    The square in question becomes
    \[
    \begin{tikzcd}
        \Xfr \rar \dar & \Yfr \dar \\
        \Xfr / \Tfr_\coarse \rar & \Yfr / \Tfr_\coarse.
    \end{tikzcd}
    \]
    which is the usual Cartesian square of quotient stacks associated with a $\Tfr_\coarse$-equivariant morphism.
    It is clear that all morphisms involved here are toric and that $\Xfr$ satisfies the universal property of the Cartesian product in the category of toric stacks.
\end{proof}

\begin{cor} \label{cor:toric_stack_bottom_up}
    Let $\Xfr$ be a toric stack.
    Then there is a Cartesian square (in $\TStk$)
    \[
    \begin{tikzcd}
        \Xfr \rar \dar & \Xfr_1 \dar["p_1"] \\
        \Xfr_2 \rar["p_2"] & \Xfr_0
    \end{tikzcd}
    \]
    where:
    \begin{itemize}
        \item $\Xfr_0$ is a toric stack with trivial torus,
        \item $p_1: \Xfr_1 \to \Xfr_0$ is the total space of a $T$-torsor for some algebraic torus $T$, and
        \item $p_2: \Xfr_2 \to \Xfr_0$ is a $D$-gerbe for some diagonalizable group $D$.
    \end{itemize}
\end{cor}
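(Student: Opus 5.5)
The plan is to read \zcref{cor:toric_stack_bottom_up} off \zcref{prop:eqvar_cartesian_square} directly. First, choose a splitting of the natural homomorphism $\Tfr \to \Tfr_\coarse$. Such a splitting exists because $\Tfr \simeq BG \times T$ non-canonically, as in the discussion after \zcref{dfn:stacky_torus}, and under this identification $\Tfr_\coarse$ is the torus factor. Then $\Tfr_\coarse$ acts on $\Xfr$, and \zcref{prop:eqvar_cartesian_square} produces a Cartesian square in $\TStk$. We set
\[
    \Xfr_0 = \Xfr / \Tfr, \quad \Xfr_1 = \Xfr \fslash \Tfr_\stab, \quad \Xfr_2 = \Xfr / \Tfr_\coarse,
\]
with $p_1$ the natural map $\Xfr \fslash \Tfr_\stab \to (\Xfr \fslash \Tfr_\stab)/\Tfr_\coarse$ and $p_2 = \ol{p}$.

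Next we check the three listed properties in turn. \zcref{prop:equivariantization_is_toric} says $\Xfr_0$ is a toric stack with trivial dense torus. By \zcref{prop:rigidification_is_toric}, $\Xfr_1$ is a toric stack with dense torus the honest algebraic torus $T := \Tfr_\coarse$. The quotient map from a stack with a $T$-action to its quotient stack is a $T$-torsor, so $p_1$ is the total space of a $T$-torsor.

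For $p_2$, \zcref{rmk:equivariantization_switch_order} gives $\Xfr / \Tfr \simeq (\Xfr / \Tfr_\coarse) \fslash \Tfr_\stab$, and $\ol{p}$ is identified with the rigidification map. The rigidification map is a $D$-gerbe with $D = \Tfr_\stab$, which is diagonalizable. This step is the main obstacle. We need \cite[Theorem 5.1.5]{abramovich2003twisted} to apply to $\Xfr/\Tfr_\coarse$ with $\Tfr_\stab$ acting via the residual action. That requires a faithfulness statement analogous to \zcref{lem:stabilizer_action_faithful}.

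I would obtain that faithfulness by pulling back along the smooth surjection $\Xfr \to \Xfr / \Tfr_\coarse$. The inertia action of $\Tfr_\stab$ on an $S$-point of $\Xfr/\Tfr_\coarse$ lifts, smooth-locally, to the action on a point of $\Xfr$, where \zcref{lem:stabilizer_action_faithful} applies. Faithfully flat descent of triviality of the kernel then finishes the argument. Alternatively, $\ol{p}$ is the base change of the $\Tfr_\stab$-gerbe $p$ along... (no, the square goes the other way). Instead we argue that $p$ is a $\Tfr_\stab$-gerbe and the square is Cartesian, so $\ol{p}$ becomes a $\Tfr_\stab$-gerbe after the smooth surjective base change $\Xfr_1 \to \Xfr_0$. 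Being a gerbe is smooth-local on the target, so $\ol{p}$ is itself a $D$-gerbe. This second route avoids re-verifying the rigidification hypotheses, and it is the one I would try first.

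Finally, we confirm that all maps in the square are toric morphisms, which is already part of \zcref{prop:eqvar_cartesian_square}.
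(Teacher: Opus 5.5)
Your proposal is correct and is the paper's proof: the paper takes $\Xfr_0 = \Xfr / \Tfr$, $\Xfr_1 = \Xfr \fslash \Tfr_\stab$, $\Xfr_2 = \Xfr / \Tfr_\coarse$ in the Cartesian square of \zcref{prop:eqvar_cartesian_square} and says nothing more. Your descent argument that $p_2$ is a $\Tfr_\stab$-gerbe, because it pulls back along the smooth surjection $p_1$ to the rigidification gerbe $p$ and being a gerbe descends, fills in a step the paper leaves implicit; in a write-up, delete the abandoned ``base change'' aside.
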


\begin{proof}
    Take $\Xfr_0 = \Xfr / \Tfr$, $\Xfr_1 = \Xfr \fslash \Tfr_\stab$, and $\Xfr_2 = \Xfr / \Tfr_\coarse$.
\end{proof}

\subsection{The stacky Sumihiro theorem and some consequences}

By passing to the equivariantization of a toric stack, we are able to reduce many problems to the easier case where the toric stack has trivial torus.
This case is much easier to analyze using the key local result of \cite{geraschenko2015toric_2}:\footnote{The errors in \cite{geraschenko2015toric_2} lie in the global analysis. \zcref{thm:stacky_sumihiro} is, to the best of the author's knowledge, entirely correct (and quite useful).}

\begin{thm}[{``Stacky Sumihiro theorem,'' \cite[Theorem 4.5]{geraschenko2015toric_2}}] \label{thm:stacky_sumihiro}
    Let $\Xfr$ be a toric stack with trivial torus, and let $x \in \Xfr(k)$.
    Then there exists a basic toric stack $\Ufr$ with trivial torus and a Zariski open immersion $j: \Ufr \hookrightarrow \Xfr$ of toric stacks such that $x \in j(\Ufr(k))$.
\end{thm}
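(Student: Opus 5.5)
The plan is to deduce the statement from the local structure theory of stacks with linearly reductive stabilizers, and then use the toric hypotheses to upgrade an étale neighborhood to a Zariski one. Since $\Xfr$ has trivial dense torus, it contains a dense open point $j: \Spec k \hookrightarrow \Xfr$. The stabilizer $G_x = \ul{\Aut}(x)$ is diagonalizable, hence linearly reductive, and $\Xfr$ has affine diagonal. I would therefore first invoke the local structure theorem of Alper--Hall--Rydh. It provides an affine scheme $W = \Spec A$ with a $G_x$-action, a point $w \in W(k)$ fixed by $G_x$, and a pointed étale representable morphism $f: [W / G_x] \to \Xfr$ sending $w$ to $x$ and inducing an isomorphism on stabilizers at $w$.

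Next I would show that, after shrinking, $[W/G_x]$ is a basic toric stack with trivial torus. The preimage $f^{-1}(\Spec k)$ is étale over $\Spec k$, so it is a finite disjoint union of copies of $\Spec k$. I would replace $W$ by a $G_x$-stable affine neighborhood of $w$ that meets only one of these components; here I use that $w$ lies in the closure of each component near $w$, and that the complement of the other components' closures is still open. Since $f$ is étale and $\Xfr$ is normal, $W$ is normal. In the resulting $[W/G_x]$ there is a single dense open point with trivial stabilizer, so $G_x$ has a dense orbit in $W$ on which it acts freely. Thus $W$ is a normal affine variety containing an open dense copy of the torus $T := G_x^\circ$ (up to finite components), and the $G_x$-action on that orbit is translation. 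Decomposing $A$ into $G_x$-weight spaces identifies $W$ with $\Spec k\angles{M}$ for a toric monoid $M$, with $G_x$ acting through the homomorphism $G_x \to T$ given by characters. So $[W/G_x]$ is basic in the sense of \zcref{dfn:basic}, with trivial torus.

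Finally I would show that $f$ is an open immersion; this is the step I expect to be the main obstacle. I know that $f$ is representable and étale, is an isomorphism over the dense open point, and is stabilizer-preserving at $w$. The approach I would try first is to shrink $W$ further so that $f$ is stabilizer-preserving everywhere and fiberwise injective on geometric points. Stabilizer preservation is an open condition by upper semicontinuity of stabilizer dimension plus étaleness. For injectivity, I would use affine diagonal and \zcref{lem:inclusion_of_basic_is_affine}-type arguments to show that $f$ is separated and quasi-finite of generic degree one. Then Zariski's main theorem for representable morphisms into the normal stack $\Xfr$ forces $f$ to be an open immersion. The delicate point is controlling injectivity at non-generic orbits: two orbits in $[W/G_x]$ might map to the same point of $\Xfr$. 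I would rule this out by using that $T$-orbits of $W$ correspond to faces of $M$, and that each closed orbit is determined by its stabilizer, which $f$ preserves. The resulting open immersion is automatically toric, since it is an isomorphism on the dense open points.
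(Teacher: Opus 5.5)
Your proposal is correct and follows the route the paper relies on. The paper does not reprove the statement; it cites \cite[Theorem 4.5]{geraschenko2015toric_2} and remarks that the slice theorem of \cite{alper2020luna} removes the ``global type'' hypothesis there, i.e.\ supplies the \'etale local quotient chart $f\colon [W/G_x]\to\Xfr$ you start from. Your second step is also right once it is phrased via normality: the closures of the finitely many points of $f^{-1}(\Spec k)$ are disjoint connected components, so you pass to the one containing $w$, and since $w$ is $G_x$-fixed this forces $G_x$ to be a torus $T$ and $W$ an affine toric variety.

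Your last paragraph should shrink to one line. The map $f$ is affine (Alper--Hall--Rydh arrange this because $\Xfr$ has affine diagonal), it is \'etale, and it is an isomorphism over the dense open point, whose preimage is dense by flatness. So Zariski's main theorem, applied smooth-locally on the normal stack $\Xfr$, already makes $f$ an open immersion. You therefore need none of the following: further shrinking, which is impossible anyway, since $w$ is the unique closed $T$-orbit of $W$ and so $W$ is its only $T$-stable open neighborhood; openness of the stabilizer-preserving locus, where semicontinuity of stabilizer dimension is beside the point because for \'etale representable $f$ each $G_y$ is already an open subgroup of $G_{f(y)}$; and any separate injectivity argument on the non-generic orbits.
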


\begin{rmk}
    The hypotheses used in \cite[Theorem 4.5]{geraschenko2015toric_2} are slightly different from those we make in \zcref{thm:stacky_sumihiro}, so we should explain why the differences do not present any issues:
    \begin{itemize}
        \item \cite[Theorem 4.5]{geraschenko2015toric_2} assumes that the stabilizers at geometric points of $\Xfr$ are linearly reductive but not \emph{a priori} diagonalizable.
        Of course, it follows from this result that all such stabilizers are in fact diagonalizable.
        We have decided to instead assume from the start that the stabilizers in question are diagonalizable, since this simplifies our analysis.
        Furthermore, diagonalizability seems like a natural assumption to make, since toric stacks should be ``as commutative as possible.''
        \item \cite[Theorem 4.5]{geraschenko2015toric_2} includes a technical hypothesis that $\Xfr$ is of ``global type.''
        As mentioned in \cite[Remark 4.4]{geraschenko2015toric_2}, the main theorem of \cite{alper2020luna} (which was still in preparation at the time of publication of \cite{geraschenko2015toric_2}) implies that every toric stack is automatically of ``global type,'' so we have removed this hypothesis.
    \end{itemize}
\end{rmk}

\zcref{thm:stacky_sumihiro} allows us to relate the topology of a toric stack with trivial torus to its set of $k$-points.

\begin{lem} \label{lem:pts_and_nbhds_trivial_torus_basic_case}
    Let $\Ufr$ be a basic toric stack with trivial torus, and write $\Ufr = \Xfr_K$ for some stacky cone $K$.
    Then:
    \begin{enumerate}
        \item $\Ufr(k) = |K|$.
        \item For every $x \in \Xfr(k)$, there is a unique smallest basic open $\Vfr_x \subset \Xfr$ containing $x$.
        \item The assignment $x \mapsto \Vfr_x$ gives a order-preserving bijection between $\Xfr(k)$ and the poset of basic opens of $\Xfr$ ordered by inclusion.
    \end{enumerate}
\end{lem}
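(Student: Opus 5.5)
The plan is to make everything explicit through the quotient presentation of \zcref{prop:geometrization_of_cone}. (In (2) and (3) I read $\Xfr$ as $\Ufr$.) Write $\Cc = \GDual(\Oc(K)) \simeq A \vec{/} M$ using \zcref{cor:skew_quotient_of_discrete}. Then $\Ufr \simeq [U/G]$ with $U = \Spec k\angles{M}$, $G = \Spec k\angles{A}$, and $G$ acting through $G \to T = \Spec k\angles{M^\gp}$. The dense torus is $T/G$, and it is trivial by hypothesis. Hence the $G$-orbits on $U(k)$ are exactly the $T$-orbits, and $\Ufr(k)$ (isomorphism classes of $k$-points) is the set of $T$-orbits on $U$.

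For (1), classical orbit--cone theory for the affine toric variety $U$ says the $T$-orbits are in bijection with $\Face(M)$, or equivalently with the prime ideals of $M$. The orbit of a face $F$ is the locus where exactly the characters in $F$ are nonzero. Next, identify $\Face(M)$ with $|K|$. Since $K \simeq \Cone(\Oc(K))$ (\zcref{cor:stacky_cone_equivalence}), we have $|K| = \Face(\PO \Oc(K))$. By \zcref{cor:gale_dual_as_skew_quotient} and \zcref{thm:gale_duality}, $\PO\Oc(K) = D_\CMon(\PO\Cc) = D_\CMon(M^\sharp)$ (\zcref{prop:skew_quotient_invariants}). Duality gives $\Face(D_\CMon(M^\sharp)) \cong \Face(M^\sharp)\op$ via $(-)^\bot$, and $\Face(M^\sharp) = \Face(M)$. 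So $|K|$ is anti-isomorphic to $\Face(M)$, hence isomorphic to $\Spec(M)$, and therefore in bijection with the orbits. The order-reversal should match the orbit-closure order, with the minimal element $K(0)$ corresponding to the dense orbit.

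For (2) and (3), use \zcref{prop:subcones_of_cone} and \zcref{prop:geometrization_cone_immersion}. Subcones $K_{\leq\kappa}$ give open substacks $\Xfr_{K_{\leq\kappa}} = [U_{S}/G]$, where $U_S = \Spec k\angles{M}[S\inv]$ for the face $S$ dual to $\kappa$ (\zcref{prop:gale_duality_localization}). Classically, $U_S$ contains the orbit of a face $F$ exactly when $S \subseteq F$. From this, $\Vfr_x$ for $x$ the orbit of $F$ should be $[U_F/G]$, the geometrization of $K_{\leq \kappa_x}$. It then remains to show that \emph{every} basic open $\Vfr \subset \Ufr$ has this form. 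Write $\Vfr$ as a union of orbits, i.e.\ $\Vfr = [V/G]$ with $V \subset U$ a $T$-stable open. Because $\Vfr$ is basic and the inclusion is affine (\zcref{lem:inclusion_of_basic_is_affine}), $V$ is an affine $T$-stable open of $U$. Such a $V$ is the preimage of an open union of orbits containing a unique closed orbit, and so $V = U_F$ for that orbit's face. The map $x \mapsto \Vfr_x$ is then the bijection $|K| \to \{\text{subcones}\}$, $\kappa \mapsto K_{\leq\kappa}$, which is order-preserving. Uniqueness of the smallest basic open follows because $U_F$ is contained in every $T$-stable open containing the orbit of $F$.

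I expect the main obstacle to be the step that every basic open is one of the $\Xfr_{K_{\leq\kappa}}$. This needs care: an affine $T$-stable open subset $V \subset U$ of an affine toric variety must be a principal open $U_F$. My first approach would be to note that $V$ is an affine toric variety whose cone is a face of the cone of $U$, via the classical fact that toric open immersions of affine toric varieties are face inclusions. A secondary check is that the order conventions (faces versus primes, and the Gale duality reversal) line up so that the bijection is order-preserving rather than order-reversing.
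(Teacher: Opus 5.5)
Your proposal is correct and follows the same route as the paper. Both pass to the quotient presentation $[U/G]$ (with $G \to T$ an isomorphism, so effectively $U/T$), identify $k$-points with $T$-orbits and basic opens with $T$-stable affine opens of $U$, and then invoke the classical orbit--cone correspondence; you additionally spell out the identification $|K| \cong \Face(M)\op$ via Gale duality and use \zcref{lem:inclusion_of_basic_is_affine} to show every basic open is some $[U_F/G]$, two steps the paper leaves implicit.
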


\begin{proof}
    Write $\Ufr = U / T$, where $U$ is an affine toric variety and $T$ is its dense torus.
    Under this identification, elements of $\Ufr(k)$ correspond to $T$-orbits of closed points in $U$, and basic opens of $\Ufr$ correspond to $T$-equivariant open affines in $U$.
    Thus the result follows from the standard orbit-cone correspondence for toric varieties (\cite[Theorem 3.2.6]{cls}).
\end{proof}

\begin{prop} \label{prop:pts_and_nbhds_trivial_torus}
    Let $\Xfr$ be a toric stack with trivial torus.
    Then:
    \begin{enumerate}
        \item $\Xfr(k)$ is finite.
        \item For every $x \in \Xfr(k)$, there is a unique smallest basic open $\Ufr_x \subset \Xfr$ containing $x$.
        \item The assignment $x \mapsto \Ufr_x$ gives a bijection between $\Xfr(k)$ and the set of basic opens of $\Xfr$.
        \item $\Xfr = \cup_{x \in \Xfr(k)} \Ufr_x$.
    \end{enumerate}
\end{prop}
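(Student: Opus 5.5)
We derive \zcref{prop:pts_and_nbhds_trivial_torus} from the stacky Sumihiro theorem (\zcref{thm:stacky_sumihiro}) and the basic case (\zcref{lem:pts_and_nbhds_trivial_torus_basic_case}). Two facts carry the argument: basic opens are closed under intersection (\zcref{prop:intersection_of_basic}), and a toric open substack of a toric stack with trivial torus again has trivial torus. For the second fact, the dense torus of any toric open substack is the same dense open $\Spec k \hookrightarrow \Xfr$, since it is contained in every nonempty toric open.

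First, \zcref{thm:stacky_sumihiro} gives, for each $x \in \Xfr(k)$, a basic open $\Ufr$ with trivial torus containing $x$. Since $\Xfr$ is of finite type it is quasicompact, and the $k$-points are dense among points of finite type. Hence finitely many such basic opens $\Ufr_1, \dots, \Ufr_n$ cover $\Xfr$: their union contains all $k$-points, so its closed complement has no $k$-points and is therefore empty. By \zcref{lem:pts_and_nbhds_trivial_torus_basic_case}(1), each $\Ufr_i(k) = |K_i|$ is finite, which proves (1).

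For (2), take the intersection of all basic opens containing $x$. This set is nonempty by Sumihiro. To see it is finite, note that each basic open is determined by its underlying open subset of $|\Xfr|$. Any open set is a union of the opens $\Ufr_i \cap \Vfr$, and inside the basic $\Ufr_i$ these toric opens are governed by the finite poset $|K_i|$. So only finitely many toric opens exist. Iterating \zcref{prop:intersection_of_basic} then shows that the intersection $\Ufr_x$ is basic, hence it is the unique smallest basic open containing $x$.

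For (3), injectivity is the main point. Suppose $\Ufr_x = \Ufr_y$. Applying \zcref{lem:pts_and_nbhds_trivial_torus_basic_case}(2)--(3) inside the basic stack $\Ufr_x$, the smallest basic open of $\Ufr_x$ containing $x$ is $\Ufr_x$ itself; here we use that basic opens of $\Ufr_x$ are basic opens of $\Xfr$ and vice versa after intersecting. The same holds for $y$, so the bijection in the lemma forces $x = y$. For surjectivity, any basic open $\Vfr$ corresponds via the lemma to its own closed-orbit point $x$ with $\Vfr_x = \Vfr$ inside $\Vfr$. Then $\Ufr_x \subset \Vfr$, and $\Ufr_x$ is a basic open of $\Vfr$ containing $x$, so $\Ufr_x = \Vfr$. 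Part (4) follows from (2) together with the covering argument in the first step.

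I expect the main obstacle to be making precise that the toric opens of $\Xfr$ are determined by topology, and that there are only finitely many of them. This is what allows us to intersect over all basic opens containing $x$; if needed, we would instead intersect only over the finitely many $\Ufr_i \ni x$, together with their toric sub-opens furnished by the basic lemma.
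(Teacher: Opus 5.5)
Your proposal is correct and follows essentially the same route as the paper. Part (1) is stacky Sumihiro plus quasicompactness. Part (2) is a finite intersection of basic opens via \zcref{prop:intersection_of_basic}. Part (3) applies the basic-case lemma inside $\Ufr_x$ for injectivity and inside $\Vfr$ for surjectivity. The only cosmetic difference is how you justify that there are only finitely many basic opens: you use the finite cover by the $\Ufr_i$ and their finite posets $|K_i|$, whereas the paper simply notes that opens are determined by their $k$-points and $\Xfr(k)$ is finite.
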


\begin{proof}
    (1). \zcref{thm:stacky_sumihiro} implies every $k$-point of $x \in \Xfr$ is contained in a basic open $\Ufr'_x \subset \Xfr$.
    Because $\Xfr$ is of finite type over the algebraically closed field $k$, the $k$-points of $\Xfr$ are dense in $\Xfr$.
    Thus we may write $\Xfr = \cup_{x \in \Xfr(k)} \Ufr'_x$.
    Because $\Xfr$ is quasicompact, we may choose some finite subset $\{ x_i \}_{i \in I} \subset \Xfr(k)$ such that $\Xfr = \cup_{i \in I} \Ufr'_{x_i}$.
    Then $\Xfr(k) = \cup_{i \in I} \Ufr'_{x_i}(k)$, and \zcref{lem:pts_and_nbhds_trivial_torus_basic_case}(1) implies $\Xfr(k)$ is finite.

    (2). Suppose we are given $x \in \Xfr(k)$.
    Let $\Ufr_x$ be the intersection of all basic opens of $\Xfr$ containing $x$.
    All basic opens of $\Xfr$ are determined by their $k$-points, and $\Xfr(k)$ is finite, so in fact $\Ufr_x$ is a finite intersection of basic opens of $\Xfr$.
    \zcref{prop:intersection_of_basic} then implies $\Ufr_x$ is itself a basic open of $\Xfr$.
    
    (3). The assignment $x \mapsto \Ufr_x$ is injective: if $\Ufr_x = \Ufr_{x'}$, then we have $x, x' \in \Ufr_x$ and so \zcref{lem:pts_and_nbhds_trivial_torus_basic_case}(3) (applied to $\Vfr = \Ufr_x$) implies $x = x'$.
    To see that $x \mapsto \Ufr_x$ is surjective, let $\Vfr \subset \Xfr$ be any basic open.
    Again by \zcref{lem:pts_and_nbhds_trivial_torus_basic_case}(3), we can find a point $x_\Vfr \in \Vfr(k)$ such that $\Vfr$ is the only basic open of $\Vfr$ containing $x_\Vfr$.
    Then we must have $\Vfr = \Ufr_{x_\Vfr}$, so $x \mapsto \Ufr_x$ is surjective and hence bijective.

    (4). This was essentially proved in the course of proving (1).
\end{proof}

If $\Xfr$ is a toric stack with trivial torus, then \zcref{prop:pts_and_nbhds_trivial_torus} gives a natural partial order on $\Xfr(k)$: 

\begin{dfn}
    Let $\Xfr$ be a toric stack with trivial torus.
    The \emph{specialization partial order} on $\Xfr(k)$ is defined by $x \leq x'$ if and only if $\Ufr_{x} \subset \Ufr_{x'}$ in the notation of \zcref{prop:pts_and_nbhds_trivial_torus}.
\end{dfn}

\section{The local description of toric stacks} \label{sec:local_description}

Let $\Xfr$ be a toric stack with dense stacky torus $\Tfr$.
\zcref{cor:toric_stack_bottom_up} allows us to understand $\Xfr$ in terms of its equivariantization $\Xfr / \Tfr$, torsors, and gerbes.
The stacky Sumihiro theorem (\zcref{thm:stacky_sumihiro}) implies that $\Xfr / \Tfr$ is Zariski-locally basic.
Our goal in this section is to lift this result to $\Xfr$:

\begin{prop} \label{prop:toric_stack_locally_basic}
    Let $\Xfr$ be a toric stack.
    Then $\Xfr$ is a union of basic opens.
\end{prop}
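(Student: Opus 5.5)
The plan is to pull back a basic cover of the equivariantization $\Xfr / \Tfr$ along the Cartesian square of \zcref{cor:toric_stack_bottom_up}, and then to show that each pulled-back open is basic. By \zcref{prop:pts_and_nbhds_trivial_torus}(4), $\Xfr / \Tfr$ is a finite union of basic opens $\Ufr_x$. Let $q: \Xfr \to \Xfr / \Tfr$ be the quotient map and set $\Vfr_x = q\inv(\Ufr_x)$. These are $\Tfr$-stable open substacks, and each contains the dense stacky torus (the torus of $\Xfr/\Tfr$ is trivial and sits in every basic open). So each $\Vfr_x$ is a toric open substack of $\Xfr$, and together the $\Vfr_x$ cover $\Xfr$. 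It therefore suffices to show each $\Vfr_x$ is basic, and we may replace $\Xfr$ by $\Vfr_x$. This reduces us to the case where $\Xfr / \Tfr = \Ufr_0$ is basic with trivial torus, say $\Ufr_0 = [U_0 / T_0]$ with $U_0$ an affine toric variety and $T_0$ its dense torus.

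We handle the torsor and gerbe layers separately. Choose a splitting $\Tfr \to \Tfr_\coarse$ and write $\Xfr$ as the fiber product of $\Xfr_1 = \Xfr \fslash \Tfr_\stab$ and $\Xfr_2 = \Xfr / \Tfr_\coarse$ over $\Xfr_0 = \Ufr_0$.

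For the torsor layer, $\Xfr_1 \to \Ufr_0$ is a $\Tfr_\coarse$-torsor. Pulling it back to $U_0$ gives a $\Tfr_\coarse$-torsor over an affine toric variety carrying a $T_0$-equivariant structure. Torsors under a torus over $U_0$ correspond to line bundles, and $\operatorname{Pic}$ of an affine toric variety is trivial (\cite[Proposition 4.2.2]{cls}). So the torsor is trivial, and its $T_0$-equivariant structure is given by a character-twist homomorphism $T_0 \to \Tfr_\coarse$. Hence $\Xfr_1 \simeq [(U_0 \times \Tfr_\coarse)/T_0]$, where $T_0$ acts through a homomorphism to the torus of $U_0 \times \Tfr_\coarse$. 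This is basic. To be careful, I would phrase this through the graded-module description of $\QCoh(\Ufr_0)$ in \zcref{prop:presheaves_as_graded_modules}: invertible $A$-graded $k\angles{M}$-modules are the shifts $k\angles{M}(a)$, by the argument in \zcref{prop:line_objects_in_presheaves}.

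For the gerbe layer, $\Xfr_2 \to \Ufr_0$ is a $D$-gerbe with $D = \Tfr_\stab$ diagonalizable. We need it to be the gerbe associated to a central extension of $T_0$ by $D$ acting on $U_0$, i.e.\ $\Xfr_2 \simeq [U_0 / E]$ with $1 \to D \to E \to T_0 \to 1$. Such an $E$ is diagonalizable by \cite[Proposition 15.28]{milne2017algebraic}, and it acts via $E \to T_0$. This is the step I expect to be the main obstacle. The plan is to pull the gerbe back to $U_0$, where $H^2_{\text{ét}}(U_0, D)$ should vanish, or at least the relevant Brauer-type class should die. This is presumably where the cited results of DeMeyer on cohomology of toric varieties enter, in characteristic zero over algebraically closed $k$. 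Once the pullback is trivial, the gerbe over $[U_0/T_0]$ is classified by $T_0$-equivariant data on a trivial gerbe, that is, by a central extension $E$ together with a cocycle, giving $\Xfr_2 \simeq [U_0/E]$.

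Finally, we combine the two layers:
\[
\Xfr \simeq \Xfr_1 \times_{\Ufr_0} \Xfr_2 \simeq [(U_0 \times \Tfr_\coarse)/(E \times_{T_0} T_0)] \simeq [(U_0 \times \Tfr_\coarse)/E].
\]
Here $E$ acts via a homomorphism to the torus $T_0 \times \Tfr_\coarse$. This quotient is basic by definition.
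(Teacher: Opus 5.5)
Your reduction (pull back the basic cover of $\Xfr/\Tfr$ from \zcref{prop:pts_and_nbhds_trivial_torus}) and your torsor layer ($\Pic(U_0)=0$, with the equivariant structure given by a character) match the paper. Its proof of this proposition is exactly that reduction plus \zcref{prop:basic_quotient_implies_basic}, whose torsor step is \zcref{lem:torsor_is_basic}. You genuinely diverge in the gerbe layer.

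The paper works on the stack itself. A \v{C}ech spectral sequence for $U_0 \times T_0^\bullet \to [U_0/T_0]$, fed by DeMeyer's result $H^q(U_0 \times T_0^p, \GG_m) = H^q(T_0^p, \GG_m)$, gives $H^2([U_0/T_0], \GG_m) = 0$ (\zcref{lem:UmodT_Gm_cohomology}). Kummer then gives $H^2([U_0/T_0], \mu_n) = \Pic/n\Pic$ (\zcref{lem:gerbe_over_basic_classification}). So every $D$-gerbe is a fiber product of root stacks and a trivial $B\GG_m^s$. After pulling back to $\Xfr_1$, the paper invokes the fact that such gerbes over basic stacks are basic (\zcref{lem:ess_triv_gerbe_is_basic}, via Fantechi--Mann--Nironi).

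You instead trivialize the gerbe on the atlas $U_0$ and descend. This produces the explicit presentation $[(U_0\times\Tfr_\coarse)/E]$ in one stroke and bypasses root stacks. The paper's route buys a full classification of gerbes on $[U_0/T_0]$ (essential triviality), which is a cleaner statement to reuse.

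Your route does go through, but three inputs need to be stated correctly.

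First, normalize $U_0$ to have no torus factor by absorbing any such factor into $T_0$. Otherwise $H^2(U_0,\GG_m)$ picks up the Brauer group of that torus factor, which is nonzero in rank at least $2$, so the vanishing you want is false as stated. With the normalization, $H^2(U_0,\GG_m)=0$ is the $p=0$ case of DeMeyer's result. Then $H^2(U_0,\mu_n)=0$ follows from Kummer and $\Pic(U_0)=0$.

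Second, $T_0$-equivariant data on a trivial gerbe is not a priori a central extension of $T_0$. Presenting $\Xfr_2$ by the groupoid $U_0\times_{\Xfr_2}U_0$, what you actually get is a $D$-torsor on $T_0\times U_0$ with a multiplication over $T_0^2\times U_0$. It comes from a central extension $E$ of $T_0$ because $H^1(T_0\times U_0, D)=H^1(T_0,D)$ and $D(T_0^p\times U_0)=D(T_0^p)$. These are the same unit and Picard computations that feed the $q=0,1$ rows of the paper's spectral sequence, and they again use the no-torus-factor normalization.

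Third, check that your final isomorphism is one of toric stacks, as the uniqueness halves of \zcref{lem:torsor_is_basic} and \zcref{lem:ess_triv_gerbe_is_basic} do in the paper. This holds: your identifications are torsor and banded-gerbe isomorphisms, hence equivariant for $\Tfr_\coarse$ and $B\Tfr_\stab$, up to the choice of identity point.
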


We will also discuss analogous results for morphisms of toric stacks, showing that all morphisms of toric stacks are locally controlled by morphisms of basic opens.

\subsection{Gerbes on a basic toric stack with trivial torus}

Our proof of \zcref{prop:toric_stack_locally_basic} has a fairly straightforward outline: given a toric stack $\Xfr$, we consider the equivariantization morphism $q: \Xfr \to \Xfr / \Tfr$ and show that a cover of $\Xfr / \Tfr$ by basic opens (which exists by \zcref{thm:stacky_sumihiro}) pulls back to a cover of $\Xfr$ by basic opens.
The key step is to show that, if $\Ufr \subset \Xfr / \Tfr$ is a basic open (necessarily with trivial torus), then $q\inv(\Ufr)$ is still basic.
By \zcref{cor:toric_stack_bottom_up}, we may view $q\inv(\Ufr)$ as a fiber product of gerbes and torsors over $\Ufr$.
Torsors over $\Ufr$ are fairly easy to understand and classify, so we will focus for now on understanding gerbes over $\Ufr$.

Recall (see e.g.\ \cite[Definition 7.19]{geraschenko2015toric_1}) that a gerbe $p: \Yfr \to \Xfr$ is \emph{essentially trivial} if there is an equivalence of gerbes
\[
    \Yfr \simeq \sqrt[n_1]{\Lsc_1 / \Xfr} \times_\Xfr \sqrt[n_2]{\Lsc_2 / \Xfr} \times_\Xfr \dots \times_\Xfr \sqrt[n_r]{\Lsc_r / \Xfr} \times B\GG_m^{s}
\]
for some line bundles $\Lsc_i$ on $\Xfr$, root stacks $\sqrt[n_i]{\Lsc_i / \Xfr}$, and some $s \geq 0$.
Our goal in this section is to show:

\begin{prop} \label{prop:gerbe_is_ess_triv}
    Let $\Ufr$ be a basic toric stack with trivial torus, and let $D$ be a diagonalizable algebraic group.
    Then any $D$-gerbe over $\Ufr$ is essentially trivial.
\end{prop}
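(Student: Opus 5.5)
We will reduce to the cyclic and $\GG_m$ cases and then use cohomological vanishing on $\Ufr$. Write $D \simeq \GG_m^s \times \prod_{i=1}^r \mu_{n_i}$; this is possible because $k$ is algebraically closed of characteristic zero. $D$-gerbes over $\Ufr$ are classified by $H^2(\Ufr, D)$ (étale or fppf cohomology; $D$ is abelian, so no banding issues). Cohomology is additive in $D$, so a $D$-gerbe is a fiber product over $\Ufr$ of a $\GG_m^s$-gerbe and $\mu_{n_i}$-gerbes. It therefore suffices to show two things. First, every $\GG_m$-gerbe on $\Ufr$ is trivial, i.e.\ $H^2(\Ufr, \GG_m) = 0$. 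Second, every $\mu_n$-gerbe is a root stack $\sqrt[n]{\Lsc/\Ufr}$.

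The Kummer sequence $1 \to \mu_n \to \GG_m \xrightarrow{n} \GG_m \to 1$ gives an exact sequence
\[
    \Pic(\Ufr) \xrightarrow{\delta} H^2(\Ufr, \mu_n) \to H^2(\Ufr, \GG_m)[n].
\]
Here $\delta(\Lsc)$ is exactly the class of $\sqrt[n]{\Lsc/\Ufr}$. So both claims reduce to the vanishing $H^2(\Ufr, \GG_m) = 0$, i.e.\ vanishing of the (cohomological) Brauer group of $\Ufr$.

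To prove this vanishing, we use that $\Ufr = [U/T]$ with $U$ an affine toric variety and $T$ its dense torus; this follows from \zcref{prop:geometrization_of_cone} and the proof of \zcref{lem:pts_and_nbhds_trivial_torus_basic_case}. This is the key step. Using the equivariant description, we compute $H^*([U/T], \GG_m)$ via the Čech/descent spectral sequence for $U \to [U/T]$, or via the Leray spectral sequence for $[U/T] \to BT$. The relevant inputs are the following.
\begin{itemize}
    \item $\GG_m(U) = k^\times \times \text{(characters of } T \text{ that are units)}$, which is computed combinatorially.
    \item $\Pic(U) = 0$ for the affine toric variety $U$ (cf.\ the use of \cite[Proposition 4.2.2]{cls} earlier).
    \item $\Br(U) = 0$ for affine toric varieties over an algebraically closed field of characteristic zero.
    \item The group cohomology of $T$ with coefficients in characters, which vanishes in positive degrees since $T$ is linearly reductive, together with $H^2(BT, \GG_m) = 0$.
\end{itemize}
Here I would invoke the results of \cite{demeyer1992cohomological} on Brauer groups of toric varieties and their equivariant variants; this is presumably why the paper needs $k$ algebraically closed of characteristic zero.

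Alternatively, and more cleanly, I would first try the following route. Every gerbe over $\Ufr$ restricts to a gerbe over the closed point $B(\text{stabilizer})$. The stack $\Ufr$ is cohomologically affine with a unique closed point $x$ (the minimal orbit), whose stabilizer is $G_x \cong$ a torus. One would then argue that restriction $H^2(\Ufr, D) \to H^2(BG_x, D)$ is injective by a deformation/homotopy-invariance argument (using the contracting $\GG_m$-action toward $x$). Finally, $H^2(BG_x, \GG_m) = 0$ and $H^2(BG_x, \mu_n) = \Hom(G_x, \mu_n)$-type classes are all realized by root stacks of line bundles coming from characters of $G_x$, and these line bundles extend to $\Ufr$.

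The main obstacle is this vanishing or rigidity step for $H^2(\Ufr, \GG_m)$. Specifically, one must control the Brauer group of the singular affine toric variety $U$ equivariantly; in the singular case, torsion Brauer classes do not obviously vanish. For that I would rely on the cited DeMeyer–Ford computation that the Brauer group of an affine toric variety over such $k$ is trivial, and then handle the equivariant correction terms via linear reductivity of $T$.
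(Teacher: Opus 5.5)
Your reduction is the paper's own. You split $D\cong\GG_m^s\times\prod_i\mu_{n_i}$ and use the Kummer sequence to reduce everything to $H^2(\Ufr,\GG_m)=0$. You then prove that vanishing with the \v{C}ech spectral sequence $E_1^{p,q}=H^q(U\times T^p,\GG_m)\Rightarrow H^{p+q}([U/T],\GG_m)$, fed by \cite{demeyer1992cohomological}. The strategy is right, but two of the inputs you list for that spectral sequence are wrong as stated. The computation only closes once they are corrected.

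First, you need to present $\Ufr=[U/T]$ with $U$ having no torus factors. You can always arrange this, since $[(U'\times T'')/(T'\times T'')]=[U'/T']$. Without it, your claim that affine toric varieties have vanishing (cohomological) Brauer group is false. For example, $[\GG_m^2/\GG_m^2]=\Spec k$ is a basic toric stack with trivial torus, yet $H^2(\GG_m^2,\GG_m)\neq 0$. Moreover, $U$ would then have nonconstant units, which changes the bottom row. What the cited result gives, and what the paper uses, is $H^q(U\times T^p,\GG_m)=H^q(T^p,\GG_m)$ for $U$ without torus factors. Hence $E_1^{0,2}=0$, $E_1^{p,1}=\Pic(T^p)=0$, and $E_1^{p,0}=k^\times\oplus\weight(T)^{\oplus p}$.

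Second, the bottom row is not controlled by linear reductivity of $T$. Its coefficients are groups of units, not a linear representation. It also does not vanish in positive degrees: $E_2^{1,0}=\weight(T)=\Pic(\Ufr)$, which is nonzero in general (e.g.\ $\Pic(\AA^1/\GG_m)=\ZZ$). What is true is $E_2^{2,0}=0$. Once $U$ has only constant units, the projection $U\times T^\bullet\to T^\bullet$ identifies this row with the \v{C}ech complex of units for $\Spec k\to BT$. Since the other total-degree-$2$ terms vanish for $BT$, its degree-$2$ cohomology is $H^2(BT,\GG_m)=0$. This is the fact you also listed; the paper checks it by computing $d_1^{1,0}$ and $d_1^{2,0}$ by hand. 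With $E_2^{0,2}=E_2^{1,1}=E_2^{2,0}=0$ you get $H^2(\Ufr,\GG_m)=0$, and the rest of your argument goes through.

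Your alternative contraction route is unnecessary. As sketched it would at best handle finite coefficients, where homotopy invariance is standard. It would leave a possible non-torsion part of $H^2(\Ufr,\GG_m)$ unaccounted for, so you would still need the input from \cite{demeyer1992cohomological}.
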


\begin{rmk}
    \zcref{prop:gerbe_is_ess_triv} does not extend to the case where the dense stacky torus is nontrivial.
    Indeed, the group of $\GG_m$-gerbes on a nontrivial algebraic torus $T_0$ is quite large by \cite[Corollary 7]{magid1978brauer}, so we do not have much control over $\GG_m$-gerbes even in this seemingly simple case.
\end{rmk}

Gerbes are classified by second cohomology groups, so we will begin by performing some \'etale cohomology calculations.
We refer to \cite{lopez2026brauer} for a good explanation of how to perform similar calculations.

\begin{lem} \label{lem:UmodT_Gm_cohomology}
    Let $U$ be an affine toric variety with no torus factors, and let $T \subset U$ be the dense torus.
    Then 
    \[
    H^i(U/T, \GG_m) = \begin{cases}
        k^\times & i = 0 \\
        \weight(T) & i = 1 \\
        0 & i = 2.
    \end{cases}
    \]
\end{lem}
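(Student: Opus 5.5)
We compute via the Leray (equivalently Hochschild–Serre) spectral sequence for the $T$-torsor $\pi: U \to U/T$. Equivariant étale cohomology of $U$ is the cohomology of $U/T$. So $H^i(U/T,\GG_m)=H^i_T(U,\GG_m)$, and there is a spectral sequence
\[
E_2^{p,q} = H^p(BT, \underline{H}^q(U,\GG_m)) \Rightarrow H^{p+q}(U/T,\GG_m).
\]
Here $\underline{H}^q$ denotes the cohomology of the fibers, with the $T$-action; since $T$ is connected, this action on the discrete groups $H^q(U,\GG_m)$ for $q \geq 1$ is trivial. Concretely, I would use the Čech/bar model: the simplicial scheme $U \times T^{\bullet}$ computes cohomology of $U/T$.

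The inputs come from classical toric geometry. Write $U = \Spec k\angles{M}$ with $M$ sharp, so that $U$ has no torus factors.
\begin{itemize}
    \item $H^0(U,\GG_m) = k\angles{M}^\times = k^\times \times M^\pm = k^\times$, using sharpness.
    \item $H^1(U,\GG_m) = \Pic(U) = 0$, by \cite[Proposition 4.2.2]{cls}.
    \item $H^2(U,\GG_m) = \operatorname{Br}'(U)$. For this I would cite the computation of Brauer groups of affine toric varieties in \cite{lopez2026brauer}: for $U$ with no torus factors, the cohomological Brauer group vanishes in characteristic zero. Alternatively, contract $U$ to its fixed point via a one-parameter subgroup in the interior of the cone, and use homotopy invariance of torsion $\GG_m$-cohomology along this $\GG_m$-action, giving $H^2(U,\GG_m)_{\mathrm{tors}} = H^2(\Spec k,\GG_m) = 0$. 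Since $U$ is normal, $H^2$ is torsion.
\end{itemize}

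We also need the cohomology of $BT$ with coefficients $\GG_m$ and with discrete coefficients:
\[
H^0(BT,\GG_m)=k^\times,\qquad H^1(BT,\GG_m)=\Pic(BT)=\weight(T),\qquad H^2(BT,\GG_m)=0.
\]
The last vanishing follows from the bar model $T^\bullet$: its cohomology is the cohomology of the cochain complex of $\GG_m$-units on $T^n$, which are $k^\times$ times characters, with $H^{\geq 1}$ of $T^n$ contributions vanishing in low degrees. Equivalently, use $B\GG_m^r$ and Künneth.

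With $H^1(U,\GG_m) = 0$ and $H^2(U,\GG_m) = 0$, the $E_2$ page in total degree $\le 2$ reduces to $E_2^{p,0} = H^p(BT, \GG_m\text{-units of }U)$. Because $\mathcal{O}(U\times T^n)^\times = k^\times\times \weight(T)^n$, this is the same as $H^p(BT,\GG_m)$, giving exactly $k^\times,\ \weight(T),\ 0$ in degrees $0,1,2$.

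For a sanity check on degree 1, $\Pic(U/T) = \Pic^T(U)$. $T$-linearizations of the trivial bundle are given by characters, modulo those coming from invertible functions, and there are none besides constants since $M$ is sharp. So $\Pic(U/T) = \weight(T)$, consistent with the above.

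The main obstacle is the vanishing of $H^2$, in two places: the statement $H^2(U,\GG_m)=0$, and the vanishing of $E_2^{2,0}$ together with the differentials into total degree 2. I would handle the first via the contraction/homotopy argument above, or by citing \cite{lopez2026brauer}. I would handle the second by direct computation in the bar complex $\mathcal{O}(U\times T^n)^\times$, which is the standard complex computing group cohomology of the lattice-valued cocycles and is acyclic in degree 2.
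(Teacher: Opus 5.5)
Your overall route is the paper's: the Čech-to-cohomology spectral sequence for the simplicial cover $U\times T^\bullet\to U/T$. Its $q=0$ row is the bar complex of $\mathcal{O}(U\times T^p)^\times=k^\times\oplus\weight(T)^p$, whose cohomology in degrees $0,1,2$ is $k^\times,\weight(T),0$. Two of your justifications, however, do not hold as written.

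The first is minor. The Hochschild--Serre heuristic, with $E_2^{p,q}=H^p(BT,H^q(U,\GG_m))$ and trivial action because $T$ is connected, is not how this spectral sequence works for étale $\GG_m$-cohomology. The higher rows are $E_1^{p,q}=H^q(U\times T^p,\GG_m)$, and these are not governed by $H^q(U,\GG_m)$. For instance, $H^2(U\times T^p,\GG_m)\cong H^2(T^p,\GG_m)$ is nonzero for $p\geq 2$ once $\dim T\geq 1$, although $H^2(U,\GG_m)=0$. Besides row $0$, total degree $\leq 2$ needs exactly three vanishings: $E_1^{0,1}$, $E_1^{1,1}$ and $E_1^{0,2}$. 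That is, $\Pic(U)=\Pic(U\times T)=0$ and $H^2(U,\GG_m)=0$. The Picard groups vanish because $U\times T$ is again an affine toric variety, so \cite[Proposition 4.2.2]{cls} applies. You should state this rather than appeal to a trivial action.

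The second is a real gap in your self-contained argument for $H^2(U,\GG_m)=0$. The contraction argument correctly handles torsion: $\AA^1$-invariance gives $H^2(U,\mu_n)=H^2(\Spec k,\mu_n)=0$, and the Kummer sequence then kills $H^2(U,\GG_m)[n]$. But the step ``since $U$ is normal, $H^2$ is torsion'' is not valid. Torsion-ness of $H^2(X,\GG_m)$ is Grothendieck's theorem for \emph{regular} $X$, it can fail for normal singular schemes, and $U$ is typically singular. Your identification $H^2(U,\GG_m)=\operatorname{Br}'(U)$ presupposes the same thing. So your Brauer-group citation only suffices if the cited result computes the full group $H^2(U,\GG_m)$ and not just its torsion.

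The paper avoids this by citing \cite[Corollary 10]{demeyer1992cohomological}, which gives $H^q(U\times T^p,\GG_m)=H^q(T^p,\GG_m)$ for all $p,q$. This supplies the full vanishing $H^2(U,\GG_m)=H^2(\Spec k,\GG_m)=0$, and the Picard inputs, in one stroke. With that citation in place of the normality argument, your proof coincides with the paper's.
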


\begin{proof}
    Consider the standard simplicial cover $U_\bullet \to U/T$, with $U_i = U \times T^i$ for all $i$.
    This gives rise to a \v{C}ech-to-cohomology spectral sequence (see \stacks{079Z})
    \[
        E_1^{p,q} = H^q(U_p, \GG_m) \Rightarrow H^{p+q}(U/T, \GG_m).
    \]
    By \cite[Corollary 10]{demeyer1992cohomological}\footnote{For those viewing the arXiv version of \cite{demeyer1992cohomological}, we recommend the arXiv's (currently experimental) HTML feature; the PDF which is currently available on the arXiv is difficult to read due to compilation errors in the \LaTeX.}, we have $H^q(U_p, \GG_m) = H^q(T^p, \GG_m)$ for all $p, q \geq 0$.
    In particular:
    \begin{itemize}
        \item For $q = 0$ and any $p$, we have $H^0(U_p, \GG_m) = H^0(T^p, \GG_m) = k^\times \oplus (\weight(T))^{\oplus p}$.
        \item For $q = 1$ and any $p$, we have $H^1(U_p, \GG_m) = H^1(T^p, \GG_m) = \Pic(T^p) = 0$.
        \item For $q = 2$ and $p = 0$, we have $H^2(U_0, \GG_m) = H^2(\Spec k, \GG_m) = 0$.
    \end{itemize}
    The low degree terms of the $E_1$ page of this spectral sequence look like
    \[
    \begin{tikzcd}
        0 \rar & H^2(T, \GG_m) \rar & H^2(T^2, \GG_m) \rar & \dots \\
        0 \rar & 0 \rar & 0 \rar & \dots \\
        k^\times \rar["d_1^{0,0}"] & k^\times \oplus \weight(T) \rar["d_1^{1,0}"] & k^\times \oplus \weight(T)^2 \rar["d_1^{2,0}"] & \dots
    \end{tikzcd}
    \]
    To compute the $E_2$ page in low degree, we note that $d_1^{0,p}$ is given by the alternating sum of the maps $k^\times \oplus \weight(T)^p \to k^\times \oplus \weight(T)^{p+1}$ induced by pullback along the projection and multiplication maps $T^{p+1} \to T^p$.
    Thus:
    \begin{itemize}
        \item For $p = 0$ and $q = 0$, the projection and multiplication maps $T \to \Spec k$ are the same, so the alternating sum of the corresponding pullback maps vanishes.
        Thus $d_1^{0,0} = 0$ and $E_2^{0,0} = \ker d_1^{0,0} = k^\times$.
        \item For $p = 1$ and $q = 0$, the projection maps $p_1, p_2: T^2 \to T$ and the multiplication map $m: T^2 \to T$ give
        \begin{align*}
            d_1^{1,0}(c, \chi_1) &= p_2^* (c, \chi_1) - m^*(c, \chi_1) + p_1^*(c, \chi_1) \\
            &= (c, 0, \chi_1) - (c, \chi_1, \chi_1) + (c, \chi_1, 0) \\
            &= (c, 0, 0).
        \end{align*}
        Thus $\ker d_1^{1,0} = \bset{(0, \chi_1)}{\chi_1 \in \weight(T)} = \weight(T)$.
        Because $\im d_1^{0,0} = 0$, we get $E_2^{1,0} = \weight(T)$.
        \item For $p = 2$ and $q = 0$, a similar computation shows that we have
        \begin{align*}
            d_1^{2,0}(c, \chi_1, \chi_2) &= (c, 0, \chi_1, \chi_2) - (c, \chi_1, \chi_1, \chi_2) + (c, \chi_1, \chi_2, \chi_2) - (c, \chi_1, \chi_2, 0)\\
            &= (0, -\chi_1, 0, \chi_2).
        \end{align*}
        Thus $\ker d_1^{2,0} = \bset{(c,0,0)}{c \in k^\times} = \im d_1^{2,0}$, and we get $E_2^{2,0} = 0$.
    \end{itemize}
    We see that our spectral sequence has $E_2$ page
    \[
        \begin{tikzcd}
            0 \ar[drr] & E_2^{1,2} \ar[drr] & E_2^{2,2} & \dots \\
            0 \ar[drr] & 0 \ar[drr] & 0 & \dots \\
            k^\times & \weight(T) & 0 & \dots
        \end{tikzcd}
    \] 
    In low degrees (specifically for $p + q \leq 2$), there are no nontrivial differentials in the $E_2$ page or later pages.
    Thus our spectral sequence collapses at the $E_2$ page in these degrees.
    Since the spectral sequence converges to $H^{p+q}(U/T, \GG_m)$, we win.
\end{proof}

We now use \zcref{lem:UmodT_Gm_cohomology} to classify gerbes over a basic toric stack with trivial torus.

\begin{lem} \label{lem:gerbe_over_basic_classification}
    Let $\Ufr$ be a basic toric stack with trivial torus.
    Then:
    \begin{enumerate}
        \item Every $\GG_m$-gerbe over $\Ufr$ is trivial.
        \item Every $\mu_n$-gerbe over $\Ufr$ (for $n > 0$) is a root stack $\sqrt[n]{\Lsc / \Ufr}$ for some $\Lsc \in \Pic(\Ufr)$.
        Furthermore, we have $\sqrt[n]{\Lsc / \Ufr} \simeq \sqrt[n]{\Lsc' / \Ufr}$ as gerbes if and only if $\Lsc \otimes (\Lsc')\inv$ has an $n$th root in $\Pic(\Ufr)$.
    \end{enumerate}
\end{lem}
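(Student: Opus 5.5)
Both parts will be read off from the cohomology computation in \zcref{lem:UmodT_Gm_cohomology}. A basic toric stack with trivial torus has the form $\Ufr = U/T$ with $U$ an affine toric variety and $T$ its dense torus, acting by translation. If $U$ has torus factors, I split them off: write $U = U' \times T''$ with $U'$ having no torus factors, so that $U/T = U'/T'$ with $T'$ the dense torus of $U'$. After this reduction the lemma applies and gives $H^0 = k^\times$, $H^1(\Ufr,\GG_m) = \Pic(\Ufr) = \weight(T)$ and $H^2(\Ufr,\GG_m) = 0$. Throughout I use the standard fact (Giraud) that isomorphism classes of $A$-gerbes over $\Ufr$, for $A$ abelian, are classified by $H^2(\Ufr, A)$, with the trivial class corresponding to $\Ufr \times BA$. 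I work in the fppf topology, which is harmless since $\mu_n$ is étale in characteristic zero and $\GG_m$ is smooth.

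For part (1), a $\GG_m$-gerbe has class in $H^2(\Ufr,\GG_m) = 0$, so it is trivial.

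For part (2), I apply the Kummer sequence $1 \to \mu_n \to \GG_m \xrightarrow{n} \GG_m \to 1$, which is exact étale-locally because $\operatorname{char} k = 0$. Its long exact sequence reads
\[
H^1(\Ufr,\GG_m) \xrightarrow{n} H^1(\Ufr,\GG_m) \xrightarrow{\delta} H^2(\Ufr,\mu_n) \to H^2(\Ufr,\GG_m) = 0,
\]
so $\delta$ induces an isomorphism $\Pic(\Ufr)/n\Pic(\Ufr) \simto H^2(\Ufr,\mu_n)$. The remaining task is to identify $\delta(\Lsc)$ with the class of the root gerbe $\sqrt[n]{\Lsc/\Ufr}$, up to a universal sign. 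This is the standard compatibility: the root stack parametrizes pairs $(\Msc, \Msc^{\otimes n} \simeq \Lsc)$, which is exactly the gerbe of liftings of the $\GG_m$-torsor of $\Lsc$ along the $n$th power map. I would cite this compatibility from the literature on root stacks (e.g.\ Cadman, or \cite{abramovich2008gromov}) rather than reprove it.

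Granting that identification, surjectivity of $\delta$ says every $\mu_n$-gerbe is some $\sqrt[n]{\Lsc/\Ufr}$. The kernel description says $\delta(\Lsc) = \delta(\Lsc')$ exactly when $\Lsc \otimes (\Lsc')^{-1} \in n\Pic(\Ufr)$, i.e.\ when it has an $n$th root.

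I expect the main obstacle to be the words "as gerbes". Equality of cohomology classes gives an equivalence of $\mu_n$-gerbes banded in the standard way. One must check that this notion of isomorphism is the one intended, and that the sign or band conventions in matching $\delta$ with root stacks do not affect the criterion. They should not, since $n\Pic(\Ufr)$ is a subgroup and so is closed under inverses.
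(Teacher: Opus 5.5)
Your proposal is correct and matches the paper's proof: reduce to $\Ufr = U/T$ with $U$ free of torus factors, get (1) from $H^2(\Ufr,\GG_m)=0$ in \zcref{lem:UmodT_Gm_cohomology}, and get (2) from the Kummer sequence. The Kummer sequence gives $H^2(\Ufr,\mu_n)=\Pic(\Ufr)/n\Pic(\Ufr)$, and the paper then cites \cite{abramovich2008gromov}, just as you propose, to identify the map $\Pic(\Ufr)\to H^2(\Ufr,\mu_n)$ with $\Lsc\mapsto\sqrt[n]{\Lsc/\Ufr}$; your splitting $U=U'\times T''$ and your remarks on topology and band/sign conventions make explicit points the paper leaves implicit.
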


\begin{proof}
    We can write $\Ufr = U / T$ where $U$ is an affine toric variety with no torus factors and $T \subset U$ is the dense torus.
    Here $\Pic(\Ufr) = \weight(T)$.
    Claim (1) now follows from \zcref{lem:UmodT_Gm_cohomology} and the fact that $H^2(-, \GG_m)$ classifies $\GG_m$-gerbes.

    For (2), fix $n > 0$ and consider the long exact sequence in $H^\bullet(\Ufr, -)$ arising from the Kummer sequence
    \[
        \begin{tikzcd}
            1 \rar & \mu_n \rar & \GG_m \rar["z \mapsto z^d"] & \GG_m \rar & 1.
        \end{tikzcd}
    \]
    By \zcref{lem:UmodT_Gm_cohomology}, near the $H^2(\Ufr, \mu_n)$ term, this long exact sequence is
    \[
    \begin{tikzcd}
        \dots \rar & \Pic(\Ufr) \rar["(-)^{\otimes n}"] \rar & \Pic(\Ufr) \rar & H^2(\Ufr, \mu_n) \rar & 0.
    \end{tikzcd}
    \]
    Thus $H^2(\Ufr, \mu_n) = \Pic(\Ufr) / n \Pic(\Ufr)$.
    If we think of $H^2(\Ufr, \mu_n)$ as classifying $\mu_n$-gerbes, then the natural map $\Pic(\Ufr) \to H^2(\Ufr, \mu_n)$ is given by $\Lsc \mapsto \sqrt[n]{\Lsc / \Ufr}$ (see e.g.\ \cite[Appendix B.1]{abramovich2008gromov}).
    Since this map is surjective and has kernel $n \Pic(\Ufr)$, claim (2) follows.
\end{proof}

\begin{proof}[Proof of \zcref{prop:gerbe_is_ess_triv}]
    Choose a decomposition $D \cong \mu_{n_1} \times \dots \times \mu_{n_r} \times \GG_m^s$ for some $n_i$ and $s$.
    If $p: \Ufr' \to \Ufr$ is a $D$-gerbe, then this decomposition splits $\Ufr'$ into a fiber product of $\mu_{n_i}$-gerbes and $\GG_m$-gerbes over $\Ufr$.
    In both cases, these gerbes are essentially trivial by \zcref{lem:gerbe_over_basic_classification}.
    Thus $p: \Ufr' \to \Ufr$ is also essentially trivial.
\end{proof}

\subsection{Bootstrapping basicness} \label{sub:bootstrapping_basicness}

We want to use \zcref{prop:gerbe_is_ess_triv} (and the easier classification of torsors on a basic toric stack) to show the following.

\begin{prop} \label{prop:basic_quotient_implies_basic}
    Let $\Ufr$ be a toric stack with dense stacky torus $\Tfr$.
    Then $\Ufr$ is basic if and only if $\Ufr / \Tfr$ is basic.
\end{prop}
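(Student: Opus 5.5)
The plan treats the two directions separately; the forward direction is formal and the converse carries the content. If $\Ufr = [U/G]$ is basic, with $U = \Spec k\angles{M}$, dense torus $T$ and $G \to T$, then the dense stacky torus is $\Tfr = T/G$. I will identify $\Ufr / \Tfr$ with $U/T$ by rigidifying and then quotienting; \zcref{rmk:equivariantization_switch_order} makes the order of these two operations harmless. Concretely, $[U/G]$ quotiented by $T/G$ should be $[U/T]$, and this is basic by definition, with $T$ acting through the identity.

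For the converse, assume $\Ufr/\Tfr$ is basic. I will use the Cartesian square of \zcref{cor:toric_stack_bottom_up}, which exhibits $\Ufr \simeq \Ufr_1 \times_{\Ufr_0} \Ufr_2$ with $\Ufr_0 = \Ufr/\Tfr$. Here $\Ufr_1 \to \Ufr_0$ is a $T$-torsor and $\Ufr_2 \to \Ufr_0$ is a $D$-gerbe, where $T = \Tfr_\coarse$ and $D = \Tfr_\stab$. Write $\Ufr_0 = U_0/T_0$ with $U_0$ an affine toric variety without torus factors and $T_0$ its dense torus.

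The first step handles the torsor. Since $T \cong \GG_m^r$, a $T$-torsor over $\Ufr_0$ is an $r$-tuple of line bundles. These line bundles correspond to characters of $T_0$ because $\Pic(\Ufr_0) = \weight(T_0)$, as in the proof of \zcref{lem:gerbe_over_basic_classification}. Hence the pullback of $\Ufr_1$ to $U_0$ is trivial, $U_0 \times T$, and $\Ufr_1 = [(U_0 \times T)/T_0]$. Here $T_0$ acts on the $T$ factor through a homomorphism $T_0 \to T$ determined by these characters, so $\Ufr_1$ is basic.

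The second step handles the gerbe. By \zcref{prop:gerbe_is_ess_triv}, $\Ufr_2 \simeq \prod_{\Ufr_0} \sqrt[n_i]{\Lsc_i/\Ufr_0} \times B\GG_m^s$. Each $\Lsc_i$ is a character line bundle. Writing the root stack as a quotient of $U_0 \times \GG_m$ by $T_0 \times \GG_m$, where the $\GG_m$ acts on its own factor by the $n_i$-th power map, exhibits $\Ufr_2$ as $[U_0 / G_2]$. Here $G_2$ is an extension of $T_0$ by $D$, diagonalizable as in the proof of \zcref{prop:equivariantization_is_toric}, and it acts on $U_0$ through $G_2 \to T_0$.

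The final step assembles these. $\Ufr$ is a fiber product over $[U_0/T_0]$ of two quotients by diagonalizable groups mapping to $T_0$. So $\Ufr \simeq [(U_0 \times T)/(G_2 \times_{T_0} T_0)]$, which simplifies to $[(U_0 \times T)/G_2]$, with $G_2$ acting via $G_2 \to T_0 \to T_0 \times T$. This is basic, and $U_0 \times T$ is an affine toric variety.

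The main obstacle is the fiber-product bookkeeping in the final step. I must check that the torsor and gerbe presentations share the same $T_0$-quotient, and that the resulting group acts through a homomorphism into the dense torus of $U_0 \times T$. An alternative route is to note that $\Ufr$ is affine over $\Ufr_0$ with diagonalizable fibers and argue directly, but I would try the explicit presentation first.
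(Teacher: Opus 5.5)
Your argument is correct and is essentially the paper's proof. The forward direction is identical. For the converse you use the same Cartesian square from \zcref{cor:toric_stack_bottom_up}, make the torsor $\Ufr_1$ basic via $\Pic(\Ufr_0)=\weight(T_0)$, and control the gerbe via \zcref{prop:gerbe_is_ess_triv}. The one difference in execution is that you present both $\Ufr_1$ and $\Ufr_2$ explicitly over $[U_0/T_0]$ and compute the fiber product $[(U_0\times T)/G_2]$ by hand, whereas the paper pulls the gerbe back along $\Ufr_1\to\Ufr_0$ and cites \zcref{lem:ess_triv_gerbe_is_basic}.

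One thing you should add is a check that your presentation is compatible with the given toric structure on $\Ufr$, since ``basic'' is a property of the toric stack and not merely of the underlying stack. The paper handles this with the uniqueness halves of \zcref{lem:torsor_is_basic} and \zcref{lem:ess_triv_gerbe_is_basic}. In your setup it suffices, for example, to normalize the trivialization of the torsor over $U_0$ so that the image of the unit of $\Tfr$ corresponds to $(1,1)$.
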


For the proof, we will first require two lemmas allowing us to view torsors and essentially trivial gerbes as basic toric stacks.

\begin{lem} \label{lem:torsor_is_basic}
    Let $\Ufr$ be a basic toric stack with trivial generic stabilizer, and let $T$ be an algebraic torus.
    Then (the total space of) any $T$-torsor $\Ufr'$ over $\Ufr$ can be equipped with the structure of a basic toric stack such that the projection map $p: \Ufr' \to \Ufr$ is a map of toric stacks.
    Furthermore, up to the choice of a $k$-point in the fiber of $p$ over the unit of $\Ufr$, this is the unique toric stack structure on $\Ufr'$ such that $p$ is toric.
\end{lem}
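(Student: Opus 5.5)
The plan is to make everything explicit in terms of a quotient presentation. Write $\Ufr = [V/G]$ with $V = \Spec k\angles{M}$ an affine toric variety with dense torus $T_V$, and $G$ diagonalizable acting via a homomorphism $\psi: G \to T_V$. Choosing a splitting $T \cong \GG_m^r$, a $T$-torsor on $\Ufr$ is the same as an $r$-tuple of line bundles on $\Ufr$: the torsor $\Ufr'$ is the fiber product of the frame bundles (total spaces minus zero sections) of $\Lsc_1, \dots, \Lsc_r$.

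\textbf{Step 1: line bundles on $\Ufr$ are characters.} By \zcref{prop:geometrization_of_cone} and \zcref{prop:line_objects_in_presheaves}, the line bundles on $\Ufr$ are exactly the objects of the toric category $A \vec{/} M$ with $A = \weight(G)$. Equivalently, every line bundle on $\Ufr$ is a shift $\Osc(a)$ for some $a \in \weight(G)$; this uses $\Pic(V) = 0$ via \cite[Proposition 4.2.2]{cls}. Choose characters $a_1, \dots, a_r$ with $\Lsc_i \simeq \Osc(a_i)$. These assemble into a homomorphism $\alpha: G \to T$.

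\textbf{Step 2: identify $\Ufr'$ as a basic quotient.} The frame bundle of $\Osc(a)$ over $[V/G]$ is $[(V \times \GG_m)/G]$, where $G$ acts on the second factor through the character $a$. Hence
\[
    \Ufr' \simeq [(V \times T)/G],
\]
with $G$ acting through $(\psi, \alpha): G \to T_V \times T$. Since $V \times T$ is an affine toric variety with dense torus $T_V \times T$, \zcref{dfn:basic} shows $\Ufr'$ is basic, with dense stacky torus $[(T_V \times T)/G]$. The projection $V \times T \to V$ is equivariant for the projection $T_V \times T \to T_V$ and is compatible with the $G$-actions. It therefore descends to a toric morphism $p$, whose homomorphism on stacky tori is $[(T_V\times T)/G] \to [T_V/G]$.

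\textbf{Step 3: uniqueness, the main obstacle.} Suppose $\Ufr'$ carries some toric structure $(\Tfr', j')$ for which $p$ is toric. Density and equivariance should force the open orbit $j'(\Tfr')$ to equal $p\inv(\Tfr_\Ufr)$. The argument: $p$ carries the open orbit into the open orbit $\Tfr_\Ufr$, and since both are open and dense, a dimension count forces $j'(\Tfr') = p\inv(\Tfr_\Ufr)$. Then $j'(e) = e'$ is a $k$-point in the fiber over the unit.

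It remains to show that $e'$ determines the group structure. I would argue that any two group structures on the $T$-torsor $p\inv(\Tfr_\Ufr) \to \Tfr_\Ufr$ with unit $e'$, making $p$ a homomorphism, coincide. First pass to the cover $T_V \times T \to p\inv(\Tfr_\Ufr)$. There a pointed morphism of tori is a homomorphism, by rigidity: units of $k\angles{\ZZ^n}$ are scalar multiples of monomials, as in \cite[Exercise 3.3.4]{cls}. This pins down the multiplication. Once the group structure and $j'$ are fixed, the action on all of $\Ufr'$ is determined, because it is determined on the dense open $p\inv(\Tfr_\Ufr)$ and $\Ufr'$ is normal with affine diagonal, so morphisms agreeing on a dense open agree.

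The delicate point in this step is handling the stackiness of $G$, that is, 2-morphisms. Here I would use that $\Ufr$ has trivial generic stabilizer, so $\psi$ is injective and $p\inv(\Tfr_\Ufr) \simeq (T_V \times T)/G$ is an honest torus. This reduces the rigidity argument to schemes.
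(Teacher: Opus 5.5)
Your existence argument is the paper's. The paper inducts on the rank of $T$ and handles one $\GG_m$-factor at a time, while you treat all $r$ line bundles at once. Your rigidity argument also correctly pins down the group law on the dense torus.

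The genuine gap is the last step of uniqueness. You claim that morphisms out of a normal stack with affine diagonal are determined by their restriction to a dense open, and this is false for Artin stacks. The endomorphisms of $[\AA^1/\GG_m]$ induced by $x \mapsto x^n$ ($n \geq 1$) all restrict to the identity of the dense open point. Yet they are pairwise non-isomorphic, since they act on the stabilizer at the origin by $t \mapsto t^n$.

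This already happens in the lemma's own setting. Take $\Ufr = [\AA^1/\GG_m]$, which is basic with trivial generic stabilizer, and the trivial torsor $\Ufr' = \Ufr \times \GG_m$. The maps $a_n \colon \GG_m \times \Ufr' \to \Ufr'$, $(g,x,t) \mapsto (x^n, gt)$, all restrict to multiplication on the dense torus, but only $a_1$ is an action. So passing to the honest torus $p^{-1}(\Tfr_\Ufr)$ controls the group law. It does not control the extension of the action across the boundary, where $\Ufr'$ can have positive-dimensional stabilizers.

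What pins the action down is the unit axiom, which your argument never uses. One way to use it is as follows.
\begin{itemize}
\item By associativity and commutativity, the action $a \colon \Tfr' \times \Ufr' \to \Ufr'$ is itself a toric morphism, with $\Tfr' \times \Tfr'$ acting on the source through multiplication.
\item Finite products are coproducts in $\TStk^\basic$ by \zcref{rmk:explicit_mult_on_cocharacter_cat}. This rests on \zcref{prop:geometrization_fully_faithful_cone}, which does not use the present lemma.
\item Hence $a$ is determined up to $2$-isomorphism by its restrictions to $\Tfr' \times \{e'\}$ and $\{1\} \times \Ufr'$, namely $j'$ and $\id$.
\end{itemize}
The paper organizes uniqueness differently. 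It identifies the fibre $p^{-1}(e)$ with $T$ via the chosen point and compares with the $T$-torsor action, which is already defined on all of $\Ufr'$.

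A smaller point: a dimension count cannot show $j'(\Tfr') = p^{-1}(\Tfr_\Ufr)$, since two dense opens of the same dimension can differ. Your own rigidity tool fixes this.
\begin{itemize}
\item $p^{-1}(\Tfr_\Ufr)$ is a torus as a variety, because $\psi$ is injective.
\item It is $\Tfr'$-stable, because $p$ is equivariant.
\item Rigidity together with the unit axiom shows that $\Tfr'$ acts on it by translation through a homomorphism $\beta$.
\item The orbit $j'(\Tfr')$ is then a coset of the closed subgroup $\beta(\Tfr')$. Since this orbit is open, the subgroup is open, so by connectedness it is all of $p^{-1}(\Tfr_\Ufr)$.
\end{itemize}
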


\begin{proof}
    For existence, write $\Ufr = U / G$ as in \zcref{dfn:basic}.
    Arguing inductively on the rank of $T$, we reduce to the case where $T$ has rank one, i.e.\ $T \cong \GG_m$.
    In this case we note that $\Pic(U) = 0$ by \cite[Proposition 4.2.2]{cls}, so every $G$-equivariant line bundle on $U$ has trivial underlying line bundle.
    Thus every $G$-equivariant line bundle on $U$ is of the form $\Osc_U(\chi)$ for some (possibly non-unique) $\chi \in \weight(G)$.\footnote{For an example of non-uniqueness, note that every character of $\GG_m$ gives rise to the same $\GG_m$-equivariant line bundle on $\GG_m$.}
    In terms of $\GG_m$-torsors, this means we can write $\Ufr' = (U \times \GG_m) / G$ where $G$ acts on $U$ as it does in our presentation of $\Ufr$ and $G$ acts on $\GG_m$ via $\chi$.
    Thus $\Ufr$ is a basic toric stack.

    For uniqueness, let $e \in \Ufr(k)$ be the unit of the dense torus of $\Ufr$.
    A toric stack structure on $\Ufr'$ compatible with $p$ is determined by an algebraic torus structure on the scheme $p\inv(e)$ and an action of $p\inv(e)$ on $\Ufr'$ together with data controlling compatibility of this structure with $p$.
    A choice of $k$-point in $p\inv(e)$ defines an isomorphism $p\inv(e) \cong T$ and thus an algebraic torus structure on $p\inv(e)$.
    The action of $p\inv(e)$ must then be defined on the fibers of $p$ by the corresponding $T$-action. 
    Since the global action of $p\inv(e)$ on $\Ufr'$ is determined by its action on fibers, we see that the toric stack structure on $\Ufr'$ is unique given the choice of $k$-point.
\end{proof}

\begin{lem} \label{lem:ess_triv_gerbe_is_basic}
    Let $\Ufr$ be a basic toric stack and $D$ a diagonalizable group.
    Then any essentially trivial $D$-gerbe $\Ufr'$ over $\Ufr$ can be equipped with the structure of a basic toric stack such that the projection map $p: \Ufr' \to \Ufr$ is a map of toric stacks.
    Furthermore, this is the unique toric stack structure on $\Ufr'$ such that $p$ is toric.
\end{lem}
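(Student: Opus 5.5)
Since $\Ufr'$ is essentially trivial, there is an equivalence of gerbes
\[
    \Ufr' \simeq \sqrt[n_1]{\Lsc_1 / \Ufr} \times_\Ufr \dots \times_\Ufr \sqrt[n_r]{\Lsc_r / \Ufr} \times B\GG_m^{s}.
\]
The plan for existence is to present each factor explicitly as a basic toric stack over a common presentation of $\Ufr$, and then take the fiber product. First I would write $\Ufr = [U/G]$ with $U$ affine toric with dense torus $T$ and $G \to T$ as in \zcref{dfn:basic}. As in the proof of \zcref{lem:torsor_is_basic}, $\Pic(U) = 0$, so each $\Lsc_i$ is a $G$-equivariant structure $\Osc_U(\chi_i)$ on the trivial bundle, with $\chi_i \in \weight(G)$.

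The root stack $\sqrt[n_i]{\Lsc_i/\Ufr}$ parametrizes $n_i$th roots of $\Lsc_i$. Over $[U/G]$ I would identify it with $[U / G_i]$, where $G_i = G \times_{\GG_m} \GG_m$ is the fiber product of $\chi_i: G \to \GG_m$ and the $n_i$th power map. Here $G_i$ acts on $U$ through the projection $G_i \to G \to T$. Concretely, a $G_i$-torsor on $S$ with equivariant map to $U$ is a $G$-torsor together with an $n_i$th root of the associated line bundle $\Lsc_i$. The group $G_i$ is diagonalizable, being a fiber product of diagonalizable groups; its character group is the pushout $\weight(G) \oplus_{\ZZ} \ZZ$. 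Taking fiber products over $\Ufr$, I expect
\[
    \Ufr' \simeq [U / (G_1 \times_G \cdots \times_G G_r \times \GG_m^s)],
\]
where the $\GG_m^s$ factor acts trivially. This exhibits $\Ufr'$ as basic, since the acting group is diagonalizable and acts through a homomorphism to $T$. The projection $p$ is induced by the group homomorphism to $G$, so it is a morphism of toric stacks, with the dense stacky torus mapping to that of $\Ufr$. The main technical check is the root-stack identification, and that the equivalence of gerbes matches it compatibly; I would verify it via the cited description of $\sqrt[n]{\Lsc}$ in \cite[Appendix B.1]{abramovich2008gromov}.

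For uniqueness, the approach is to show that the dense stacky torus $\Tfr'$ of $\Ufr'$ is forced to be $p\inv(\Tfr)$. Indeed $p\inv(\Tfr)$ is open and dense, since $p$ is a gerbe and hence open and surjective, and it is a $D$-gerbe over $\Tfr$. The harder part, and what I expect to be the main obstacle, is pinning down the group structure on $p\inv(\Tfr)$ and the action on $\Ufr'$. Unlike the torsor case, there is no choice of point, because a gerbe's fiber over the unit is $BD$, which has a canonical point up to unique-up-to-$D$ isomorphism. I would argue that any toric structure making $p$ toric makes $\Tfr' \to \Tfr$ a surjective homomorphism of stacky tori with kernel $BD$. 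Such homomorphisms correspond, under $\SCDual$, to maps of toric groupoids, so uniqueness reduces to a statement about $\tTerm$, where the extension is determined by the gerbe class. The action on $\Ufr'$ is then determined on the dense open $p\inv(\Tfr)$; since $\Ufr'$ is normal with affine diagonal, this should extend uniquely, giving uniqueness.
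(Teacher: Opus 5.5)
Your existence argument is essentially the paper's. The paper inducts on the factors of $D$, citing \cite[Lemma 7.1(2)]{fantechi2010smooth} for a single root stack and noting that $\Ufr \times B\GG_m = [U/(G\times\GG_m)]$. You instead identify $\sqrt[n_i]{\Osc_U(\chi_i)/[U/G]}$ with $[U/G_i]$ by hand and take all fiber products at once. That works, because each $G_i \to G$ is fppf-surjective, so the fiber products of classifying stacks are classifying stacks of the fiber-product groups.

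The gap is in uniqueness, which the paper obtains by running the argument of \cite[Theorem 6.25(1)]{fantechi2010smooth}. Your last step says the $\Tfr'$-action is determined on $p^{-1}(\Tfr)$ and ``extends uniquely'' because $\Ufr'$ is normal with affine diagonal. That principle fails for Artin stacks. For two actions $a_1, a_2$, affine diagonal only makes $\mathrm{Isom}(a_1,a_2) \to \Tfr'\times\Ufr'$ affine. So a 2-isomorphism given over a dense open extends in at most one way, but it need not extend at all. For example, the maps $[\AA^1/\GG_m] \to B\GG_m$ classifying $\Osc$ and $\Osc(1)$ are isomorphic over the dense open point but not globally. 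For separated Deligne--Mumford stacks the step does go through, since the Isom scheme is finite and the source is normal. Here, however, $D$ and the stabilizers of $\Ufr$ may contain copies of $\GG_m$.

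What is actually needed is an argument that uses the action axioms, not density. Two actions $a_1, a_2$ lifting the same action on $\Ufr$ through the $D$-gerbe $p$ differ by a $D$-torsor $P$ on $\Tfr'\times\Ufr'$. Once the group structures are matched, $P$ is trivial over $\Tfr'\times\Tfr'$. Density alone does not force $P$ to be trivial: already $\Pic([\AA^1/\GG_m]) = \ZZ$ restricts to $0$ on the dense torus. You must use the unit and associativity constraints, e.g.\ triviality of $P$ along the unit section together with multiplicativity in the $\Tfr'$-variable.

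The preceding step is also only asserted. Extensions of $\Tfr$ by $BD$ among stacky tori are classified by $\Ext^1_\ZZ(\weight(D), \weight(\Tfr_\stab))$, which is nonzero in general (e.g.\ $D = \Tfr_\stab = \mu_n$). So you still have to prove that this class is recovered from the gerbe $p^{-1}(\Tfr)\to\Tfr$. One way is to pull back along $B\Tfr_\stab \to \Tfr$, the residual gerbe at the unit, where you see $B$ of the corresponding extension of $\Tfr_\stab$ by $D$.
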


\begin{proof}
    Write $\Ufr = U / G$ as in \zcref{dfn:basic}.
    For the existence claim, by essential triviality of $\Ufr'$ and induction on the number of factors in a decomposition 
    \[
        D \cong \mu_{n_1} \times \dots \mu_{n_r} \times \GG_m \times \dots \times \GG_m,
    \]
    we can reduce to the following two cases:
    \begin{itemize}
        \item $D = \mu_n$ for some $n$ and $\Ufr' = \sqrt[n]{\Lsc / \Xfr}$ for some line bundle $\Lsc$ on $\Ufr$.
        In this case the result follows from \cite[Lemma 7.1(2)]{fantechi2010smooth} (the Deligne--Mumford hypothesis in the result stated there is not necessary for the argument to work).
        \item $D = \GG_m$ and $\Ufr' = \Ufr \times B\GG_m$.
        In this case $\Ufr' = U / (G \times \GG_m)$ where $\GG_m$ acts trivially on $U$, so the claim is obvious.
    \end{itemize}
    The uniqueness claim follows as in \cite[Theorem 6.25(1)]{fantechi2010smooth}.
\end{proof}

\begin{proof}[{Proof of \zcref{prop:basic_quotient_implies_basic}}]
    ($\Rightarrow$) This is easy: if $\Ufr = U / G$ and $T$ is the dense torus of $U$, then $\Tfr = T / G$ and so 
    \[
        \Ufr / \Tfr = (U / G) / (T / G) = U / T.
    \]

    ($\Leftarrow$).
    Consider the Cartesian square of \zcref{cor:toric_stack_bottom_up}:
    \[
    \begin{tikzcd}
        \Ufr \rar["p_2'"] \dar["p_1'"] & \Ufr_1 \dar["p_1"] \\
        \Ufr_2 \rar["p_2"] & \Ufr / \Tfr.
    \end{tikzcd}
    \]
    Here $p_1: \Ufr_1 \to \Ufr / \Tfr$ is a $T$-gerbe for some algebraic torus $T$ and $p_2: \Ufr_2 \to \Ufr / \Tfr$ is a $D$-gerbe for some diagonalizable algebraic group $D$.

    The unit of $\Tfr$ maps to a $k$-point of $\Ufr_1$ in the fiber over the unit of the dense stacky torus of $\Ufr / \Tfr$.
    By the existence part of \zcref{lem:torsor_is_basic}, this $k$-point in $\Ufr_1$ allows us to view the underlying stack of $\Ufr_1$ as a basic toric stack over $\Ufr / \Tfr$.
    The uniqueness part of \zcref{lem:torsor_is_basic} implies that this structure agrees with the original toric stack structure on $\Ufr_1$.
    Thus $\Ufr_1$ is basic.

    The $D$-gerbe $p_2: \Ufr_2 \to \Ufr / \Tfr$ is essentially trivial by \zcref{prop:gerbe_is_ess_triv}.
    Thus the pullback gerbe $p_2': \Ufr \to \Ufr_1$ must also be essentially trivial.
    By \zcref{lem:ess_triv_gerbe_is_basic}, this implies the underlying stack of $\Ufr$ can be viewed as a basic toric stack in such a way that $p_2'$ is toric.
    The uniqueness part of \zcref{lem:ess_triv_gerbe_is_basic} implies that this toric stack structure agrees with the original toric stack structure on $\Ufr$.
    Thus $\Ufr$ is basic.
\end{proof}

\subsection{Proof of \zcref{prop:toric_stack_locally_basic}}

With the material of the preceding subsections, we are now ready to prove:

\begin{proof}[Proof of \zcref{prop:toric_stack_locally_basic}]
    Let $\Tfr$ be the dense stacky torus of $\Xfr$, and let $q: \Xfr \to \Xfr / \Tfr$ be the equivariantization morphism.
    By \zcref{prop:pts_and_nbhds_trivial_torus}, $\Xfr / \Tfr$ is a union of basic opens $\{\ol{\Ufr}_{[x]}\}_{[x] \in (\Xfr / \Tfr)(k)}$.
    Let $\Ufr_{[x]} = q\inv(\ol{\Ufr}_{[x]}) \subset \Xfr$ for all $[x] \in (\Xfr / \Tfr)(k)$.
    Then $\Xfr$ is the union of all $\Ufr_{[x]}$, so it suffices to show each $\Ufr_{[x]}$ is basic.
    For all $[x]$, we have $\ol{\Ufr}_{[x]} = \Ufr_{[x]} / \Tfr$ because equivariantization is a local construction for $\Tfr$-equivariant charts.
    Thus $\Ufr_{[x]} / \Tfr$ is basic, so $\Ufr_{[x]}$ is basic by \zcref{prop:basic_quotient_implies_basic}.
\end{proof}

\subsection{The local description of morphisms}

Now that we know every toric stack is a union of basic opens, it is natural to wonder whether toric morphisms can be described in terms of morphisms of basic opens.
This is indeed true:

\begin{prop} \label{prop:basic_maps_to_basic}
    Let $\Xfr$ be a toric stack, let $\Ufr'$ be a basic toric stack, and let $f: \Ufr' \to \Xfr$ be a morphism.
    Then there exists a basic open $\Ufr \subset \Xfr$ such that $f(\Ufr') \subset \Ufr$.
\end{prop}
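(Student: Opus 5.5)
The plan is to reduce to the trivial-torus case via equivariantization, where the combinatorics of \zcref{prop:pts_and_nbhds_trivial_torus} is available, and then pull back along the equivariantization map using \zcref{prop:basic_quotient_implies_basic}.

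Let $\Tfr'$ and $\Tfr$ be the dense stacky tori of $\Ufr'$ and $\Xfr$. Since $f$ is a morphism of toric stacks, it is equivariant for the homomorphism $f|_{\Tfr'}: \Tfr' \to \Tfr$. It should therefore descend to a morphism of equivariantizations $\ol{f}: \Ufr'/\Tfr' \to \Xfr/\Tfr$ compatible with the quotient maps $q'$ and $q$. Concretely, I would build $\ol{f}$ in two steps. First, rigidify: $\Tfr'_\stab$ maps to $\Tfr_\stab$, so by the universal property of rigidification $f$ induces $\Ufr' \fslash \Tfr'_\stab \to \Xfr \fslash \Tfr_\stab$. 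Second, quotient by the coarse tori, which are compatible via the induced map $\Tfr'_\coarse \to \Tfr_\coarse$. Here $\Ufr'/\Tfr'$ is basic by \zcref{prop:basic_quotient_implies_basic}, and it has trivial torus.

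Next I work in the trivial-torus setting. Let $x' \in (\Ufr'/\Tfr')(k)$ be the unique closed point, i.e.\ the maximal element of $|K|$, where $\Ufr'/\Tfr' = \Xfr_K$ by \zcref{lem:pts_and_nbhds_trivial_torus_basic_case}. Every $k$-point of a basic trivial-torus stack specializes to this closed point. In the $U/T$ picture, every $T$-orbit closure in $U$ contains the minimal orbit, so every open neighborhood of $x'$ is all of $\Ufr'/\Tfr'$.

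Set $x = \ol{f}(x')$ and take the basic open $\ol{\Ufr}_x \subset \Xfr/\Tfr$ given by \zcref{prop:pts_and_nbhds_trivial_torus}(2). The preimage $\ol{f}^{-1}(\ol{\Ufr}_x)$ is an open substack containing $x'$, so it is everything. Hence $\ol{f}(\Ufr'/\Tfr') \subset \ol{\Ufr}_x$.

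Now set $\Ufr = q^{-1}(\ol{\Ufr}_x) \subset \Xfr$. As in the proof of \zcref{prop:toric_stack_locally_basic}, this is a toric open substack with $\Ufr/\Tfr = \ol{\Ufr}_x$ basic. So $\Ufr$ is basic by \zcref{prop:basic_quotient_implies_basic}. Finally, $q \circ f = \ol{f} \circ q'$ lands in $\ol{\Ufr}_x$, so $f(\Ufr') \subset q^{-1}(\ol{\Ufr}_x) = \Ufr$.

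I expect the main obstacle to be the first step: making precise that a toric morphism descends to equivariantizations. In particular, the rigidification step needs the stabilizer compatibility, so that $f$ carries the $\Tfr'_\stab$-action to the $\Tfr_\stab$-action and annihilates the kernel appropriately. The quotient step may also require a choice of splittings $\Tfr \to \Tfr_\coarse$, as in \zcref{rmk:equivariantization_switch_order}.

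If this descent is awkward, I would instead argue topologically on $|\Xfr|$. The image of the closed $\Tfr'$-orbit of $\Ufr'$ lies in a single $\Tfr$-orbit, by equivariance together with density of $k$-points. Every point of $\Ufr'$ specializes to that closed orbit. Any open containing a point of the image orbit is $\Tfr$-stable and hence contains the whole orbit. So it suffices to find a basic open of $\Xfr$ containing a point of that orbit, which \zcref{prop:toric_stack_locally_basic} provides. Its preimage under $f$ is then open, contains the closed orbit, and so is all of $\Ufr'$.
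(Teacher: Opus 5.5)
Your main argument is correct and matches the paper's proof: you descend $f$ to $\ol{f}: \Ufr'/\Tfr' \to \Xfr/\Tfr$, use the maximal (closed) point of the basic trivial-torus stack $\Ufr'/\Tfr'$ to force $\ol{f}$ into a single basic open $\ol{\Ufr}$, and pull back along $q$ using \zcref{prop:basic_quotient_implies_basic}. The only differences are that the paper simply asserts the descent to $\ol{f}$ (which you construct), and obtains $\ol{\Ufr}$ directly from \zcref{thm:stacky_sumihiro} rather than from \zcref{prop:pts_and_nbhds_trivial_torus}(2). One small correction to your fallback: ``every point of $\Ufr'$ specializes to the closed orbit'' is false as stated (e.g.\ the point $1 \in \AA^1$), so you should instead use that $f^{-1}(\Ufr)$ is $\Tfr'$-stable by equivariance and that every $\Tfr'$-orbit closure contains the closed orbit.
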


\begin{proof}
    Let:
    \begin{itemize}
        \item $\Tfr'$ be the dense stacky torus of $\Ufr$,
        \item $\Tfr$ be the dense stacky torus of $\Xfr$,
        \item $q': \Ufr' \to \Ufr' / \Tfr$ be the quotient morphism of $\Ufr'$, and
        \item $q: \Xfr \to \Xfr / \Tfr$ be the quotient morphism of $\Xfr$.
    \end{itemize}
    Passing to quotients gives a commutative diagram
    \[
    \begin{tikzcd}
        \Ufr' \rar["f"] \dar["q'"] & \Xfr \dar["q"] \\
        {\Ufr' / \Tfr'} \rar["\ol{f}"] & {\Xfr / \Tfr}.
    \end{tikzcd}
    \]
    Let $[u'] \in (\Ufr' / \Tfr')(k)$ be the maximal point with respect to the specialization partial order.
    By \zcref{thm:stacky_sumihiro}, $\ol{f}([u'])$ is contained in a basic open $\ol{\Ufr} \subset \Xfr / \Tfr$.
    Maximality of $[u']$ implies that $\ol{f}(\Ufr' / \Tfr') \subset \ol{\Ufr}$.
    Let $\Ufr = q\inv(\ol{\Ufr}) \subset \Xfr$, so $\ol{\Ufr} = \Ufr / \Tfr$.
    Then $f(\Ufr') \subset \Ufr$, and $\Ufr$ is basic by \zcref{prop:basic_quotient_implies_basic}.
\end{proof}

\begin{cor} \label{cor:toric_stack_maps_lim_colim}
    Let $\Xfr$ and $\Xfr'$ be toric stacks.
    Then
    \[
        \Map_\TStk(\Xfr, \Xfr') = \lim_{\Ufr \subset \Xfr} \colim_{\Ufr' \subset \Xfr'} \Map_{\TStk}(\Ufr, \Ufr')
    \]
    where the limit and colimit are taken over basic opens in $\Xfr$ and $\Xfr'$, respectively.
\end{cor}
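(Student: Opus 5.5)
We will prove \zcref{cor:toric_stack_maps_lim_colim} in two stages, mirroring the proof of \zcref{prop:lim_colim_maps_in_StkFan}.

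\textbf{Step 1: the limit over the source.} By \zcref{prop:toric_stack_locally_basic}, $\Xfr$ is covered by its basic opens. By \zcref{prop:intersection_of_basic}, this family is closed under finite intersections. Hence $\Xfr$ is the colimit (in stacks, and compatibly with the dense torus and its action) of the diagram of its basic opens under inclusion. We then identify $\Map_\TStk(\Xfr,\Xfr')$ with $\lim_{\Ufr\subset\Xfr}\Map_\TStk(\Ufr,\Xfr')$ by Zariski descent for morphisms of stacks.

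The one subtlety is the extra toric data. A morphism in $\TStk$ is a triple $(\phi,f,\alpha)$, and we must check that it glues. Each basic open $\Ufr$ contains the same dense stacky torus $\Tfr$, because every nonempty toric open substack contains $\Tfr$. So the homomorphism $\phi:\Tfr\to\Tfr'$ and the isomorphism $\alpha$ are the same data on every chart. Likewise, $\Tfr$-equivariance of $f$ can be checked on a $\Tfr$-stable open cover, and the basic opens form such a cover. Consequently the fiber product defining $\Map_\TStk$ commutes with this limit.

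\textbf{Step 2: the colimit over the target, when the source is basic.} Fix a basic $\Ufr$. We want
\[
\Map_\TStk(\Ufr,\Xfr') = \colim_{\Ufr'\subset\Xfr'}\Map_\TStk(\Ufr,\Ufr').
\]
Since each $\Ufr'\hookrightarrow\Xfr'$ is an open immersion, each map $\Map_\TStk(\Ufr,\Ufr')\to\Map_\TStk(\Ufr,\Xfr')$ is fully faithful. Its essential image consists of the morphisms that factor through $\Ufr'$. The colimit is a filtered union: \zcref{prop:intersection_of_basic} makes the indexing poset closed under intersections, and a morphism factoring through both $\Ufr'_1$ and $\Ufr'_2$ factors through $\Ufr'_1\cap\Ufr'_2$. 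This intersection property ensures there are no unwanted identifications in the colimit. Essential surjectivity is exactly \zcref{prop:basic_maps_to_basic}: every toric morphism $\Ufr\to\Xfr'$ lands in some basic open $\Ufr'$, and the toric structure of the restricted map is inherited from that of the original.

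The main obstacle is making Step 1 rigorous at the level of 2-categories rather than just underlying stacks. Concretely, one must verify that the Grothendieck-construction description of $\Map_\TStk$ commutes with the limit over charts. The plan is to argue fiberwise over $\Hom_{\tAb(\Stk)}(\Tfr,\Tfr')$: for fixed $\phi$, the groupoids of $\phi$-equivariant maps satisfy descent along $\Tfr$-stable open covers, and the compatibility datum $\alpha$ lives on $\Tfr$, which is common to all charts. Substituting Step 2 into Step 1 then yields the claimed formula.
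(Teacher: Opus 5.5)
Your proposal is correct and follows the paper's argument: use the cover of $\Xfr$ by basic opens from \zcref{prop:toric_stack_locally_basic} to reduce to a basic source, then obtain the colimit over basic opens of $\Xfr'$ from \zcref{prop:basic_maps_to_basic}; your fully-faithfulness and descent remarks fill in details the paper leaves implicit. One small correction: the poset of basic opens of $\Xfr'$ is closed under intersections but not under unions (e.g.\ the two copies of $\AA^1$ in $\PP^1$), so it is not filtered, but the colimit is still the union for the reason you actually give, namely that the basic opens through which a fixed morphism factors form a nonempty subposet closed under meets, hence cofiltered with contractible nerve.
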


\begin{proof}
    Morphisms of toric stacks are defined locally, so by \zcref{prop:toric_stack_locally_basic} we can write
    \[
        \Map_\TStk(\Xfr, \Xfr') = \lim_{\Ufr \subset \Xfr} \Map_{\TStk}(\Ufr, \Xfr').
    \]
    Thus it suffices to prove that if $\Ufr$ is basic, then 
    \[
        \Map_{\TStk}(\Ufr, \Xfr') = \colim_{\Ufr' \subset \Xfr'} \Map_{\TStk}(\Ufr, \Ufr').
    \]
    But this follows from \zcref{prop:basic_maps_to_basic}.
\end{proof}

Another consequence of \zcref{prop:basic_maps_to_basic} is the following ``covering'' characterization of basic toric stacks.

\begin{cor}
    Let $\Xfr$ be a toric stack.
    Then the following are equivalent:
    \begin{enumerate}
        \item $\Xfr$ is basic.
        \item There exists a basic toric stack $\Ufr$ and a surjective morphism of toric stacks $\Ufr \to \Xfr$.
        \item There exists an affine toric variety $U$ and a surjective morphism of toric stacks $U \to \Xfr$.
    \end{enumerate}
\end{cor}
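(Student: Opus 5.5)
We prove the cycle of implications $(1) \Rightarrow (3) \Rightarrow (2) \Rightarrow (1)$.

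For $(1) \Rightarrow (3)$, write $\Xfr = [U / G]$ as in \zcref{dfn:basic}. The quotient map $U \to [U/G]$ is smooth and surjective. It is compatible with the tori: the dense torus $T$ of $U$ maps to the dense stacky torus $[T/G]$ via the natural homomorphism, and $U \to [U/G]$ is equivariant for that homomorphism. So this map is already a surjective morphism of toric stacks.

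For $(3) \Rightarrow (2)$ there is nothing to do, since an affine toric variety is a basic toric stack (take $G$ trivial).

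The content lies in $(2) \Rightarrow (1)$. Let $f: \Ufr \to \Xfr$ be a surjective morphism of toric stacks with $\Ufr$ basic. By \zcref{prop:basic_maps_to_basic}, there is a basic open $\Ufr_0 \subset \Xfr$ with $f(\Ufr) \subset \Ufr_0$. Surjectivity of $f$ then forces $|\Ufr_0| = |\Xfr|$ as topological spaces. An open substack containing every point is the whole stack, so $\Ufr_0 = \Xfr$. Hence $\Xfr$ is basic.

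The main point requiring care is that ``surjective'' is understood topologically, or equivalently on $k$-points. Both $\Ufr_0$ and $\Xfr$ are of finite type over the algebraically closed field $k$, so $k$-points are dense in any locally closed substack. Therefore an open substack containing all $k$-points has closed complement with no $k$-points, and that complement is empty. This gives the conclusion under either reading of surjectivity, so I expect no real obstacle beyond citing \zcref{prop:basic_maps_to_basic}.
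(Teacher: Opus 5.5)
Your proof is correct and is essentially the paper's argument: in both, the key step is \zcref{prop:basic_maps_to_basic}, followed by surjectivity forcing the basic open to be all of $\Xfr$. The only difference is the order of the cycle: the paper proves $(1)\Rightarrow(2)\Rightarrow(3)\Rightarrow(1)$ and applies the proposition to the affine toric variety $U$, leaving the composite $U \to [U/G] \to \Xfr$ implicit in its ``obvious'' step $(2)\Rightarrow(3)$, whereas you apply it directly to the basic stack $\Ufr$ and spell out the quotient map in your $(1)\Rightarrow(3)$.
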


\begin{proof}
    ($1 \Rightarrow 2$).
    Obvious.
    
    ($2 \Rightarrow 3$).
    Obvious.

    ($3 \Rightarrow 1$).
    If $f: U \to \Xfr$ is a morphism as in (3), then \zcref{prop:basic_maps_to_basic} implies $f(U)$ is contained in a basic open $\Ufr \subset \Xfr$.
    Surjectivity of $f$ then forces $\Ufr = \Xfr$, so $\Xfr$ is basic.
\end{proof}

\section{Geometrization is an equivalence} \label{sec:proof_of_main_thm}

In this section, we will combine the results of the previous sections to show that geometrization gives an equivalence between $\StkFan$ and $\TStk$.

\begin{thm} \label{thm:classification}
    The geometrization functor $\Xfr_{-}: \StkFan \to \TStk$ is an equivalence of 2-categories.
\end{thm}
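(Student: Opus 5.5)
The strategy is to reduce everything to the basic (cone) case and then glue, using the local descriptions established on both sides: \zcref{prop:lim_colim_maps_in_StkFan} for stacky fans and \zcref{cor:toric_stack_maps_lim_colim} for toric stacks. Concretely, I would prove two things. First, that $\Xfr_{-}$ restricted to $\StkCone$ is an equivalence onto $\TStk^\basic$. Second, that geometrization matches subcones of $\Sigma$ with basic opens of $\Xfr_\Sigma$, compatibly with inclusions and intersections. Full faithfulness and essential surjectivity then both follow by comparing the lim--colim formulas and gluing.

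\textbf{Step 1 (cones).} Essential surjectivity onto basic toric stacks is \zcref{prop:geometrization_of_cone}(3). For full faithfulness, I would use \zcref{cor:stacky_cone_equivalence} and the Gale duality antiequivalence (\zcref{thm:gale_duality}) to identify $\Map_\StkCone(K,K')$ with $\iota_0\Fun^\oplus(\GDual\Oc(K'),\GDual\Oc(K))$. The task is then to show that this groupoid equals $\Map_\TStk(\Xfr_K,\Xfr_{K'})$.

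Tannakian duality (\zcref{prop:geometrization_of_cone}(1) together with \cite{schappi2018abelian}) identifies morphisms of stacks $\Xfr_K\to\Xfr_{K'}$ with symmetric monoidal colimit-preserving functors $\PSh_k(\GDual\Oc(K'))\to\PSh_k(\GDual\Oc(K))$. By \zcref{prop:line_objects_in_presheaves}, these are the same as symmetric monoidal functors $k\angles{\GDual\Oc(K')}\to k\angles{\GDual\Oc(K)}$. The torus-equivariance data is exactly what cuts these $k$-linear functors down to ones that come from toric categories. I would prove this with the reconstruction result \zcref{prop:linear_reconstruction_algebraic}. Equivariance with respect to a homomorphism of stacky tori corresponds, on the groupoid completions $\GDual\Oc(K)^\gpd=\iota_0$-parts, to a functor preserving setlike morphisms; this is because homomorphisms of stacky tori are maps of coalgebra-enriched categories via shifted Cartier duality. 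The compatibility $\alpha$ with $j$ supplies the isomorphism over $k\angles{\Cc^\gpd}$. The fiber product description of $\Cc$ then gives exactly a symmetric monoidal functor of toric categories. I expect this step, and in particular the bookkeeping of 2-morphisms and making the equivariance-to-setlike translation precise, to be the \textbf{main obstacle}.

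\textbf{Step 2 (opens).} Next I would show that for a stacky fan $\Sigma$, the basic opens of $\Xfr_\Sigma$ are exactly the $\Xfr_K$ for subcones $K\subset\Sigma$. By \zcref{prop:geometrization_cone_immersion} each $\Xfr_K$ is a basic open. Conversely, a basic open $\Ufr$ maps into some $\Xfr_K$ by \zcref{prop:basic_maps_to_basic}, so $\Ufr$ is a basic open of $\Xfr_K$. Then Gale duality of localizations (\zcref{prop:gale_duality_localization}) together with \zcref{prop:localization_properties}, plus the orbit–cone bijection of \zcref{lem:pts_and_nbhds_trivial_torus_basic_case} after equivariantization, identifies $\Ufr$ with some $\Xfr_{K_{\le\kappa}}$. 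Intersections correspond by the fiber product formula used in \zcref{prop:geometrization_of_fan}.

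\textbf{Step 3 (conclusion).} Full faithfulness follows by combining \zcref{prop:lim_colim_maps_in_StkFan}, Step 2, and \zcref{cor:toric_stack_maps_lim_colim}: both mapping groupoids are the same lim–colim of cone mapping groupoids, and these agree by Step 1.

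For essential surjectivity, take a toric stack $\Xfr$. By \zcref{prop:toric_stack_locally_basic}, $\Xfr$ is a union of basic opens. Using \zcref{prop:pts_and_nbhds_trivial_torus} on $\Xfr/\Tfr$ together with \zcref{prop:basic_quotient_implies_basic}, the poset of basic opens is finite and closed under intersection (\zcref{prop:intersection_of_basic}). Step 1 turns this poset diagram into a diagram of stacky cones with open-immersion transitions, all sharing $\Nc=\GDual$ of the dense torus's groupoid. This defines a toric decorated poset satisfying the fan condition of \zcref{prop:fan_criterion}, and its geometrization glues back to $\Xfr$.
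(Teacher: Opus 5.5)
Your proposal is correct and follows essentially the paper's route: full faithfulness is reduced to cones via \zcref{prop:lim_colim_maps_in_StkFan} and \zcref{cor:toric_stack_maps_lim_colim}, and the cone case rests on the linear reconstruction \zcref{prop:linear_reconstruction_algebraic}. The paper packages that reconstruction as $\CLine(\Xfr_K)=\GDual(\Oc(K))$ and the quasi-inverse $\Ufr\mapsto\Cone(\GDual(\CLine(\Ufr)))$ of \zcref{prop:geometrization_fully_faithful_cone}, whereas you work directly on hom-groupoids; essential surjectivity in both comes from pulling back the finite basic-open cover of $\Xfr/\Tfr$ and gluing. The one small deviation is your Step~2, which identifies the basic opens of $\Xfr_\Sigma$ with the $\Xfr_K$ via equivariantization and the orbit--cone correspondence and so certifies that the gluing maps are subcone inclusions, a role the paper gives to \zcref{lem:geometrization_cone_immersion_converse}, proved there with cocharacter categories.
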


\begin{proof}
    Fully faithfulness is \zcref{prop:geometrization_fully_faithful} below, and essential surjectivity is \zcref{prop:geometrization_ess_surj} below.
\end{proof}

\subsection{Fully faithfulness} \label{sub:fully_faithful}

To prove that geometrization is a fully faithful functor, first consider the basic case.
Given a stacky cone $K$, we seek to recover $K$ from the toric stack structure on $\Xfr_K$.
Actually, because we defined $\Xfr_K$ in terms of $D_\Gale(\Oc(K))$, it will be easier for us to focus on recovering $D_\Gale(\Oc(K))$ from $\Xfr_K$.
The following construction will allow us to do so.

\begin{dfn}
    Let $\Xfr$ be a toric stack.
    Define a symmetric monoidal category $\CLine(\Xfr)$, the \emph{combinatorial line category} of $\Xfr$, by\footnote{From the discussion in \zcref{sub:reconstruction}, we see that the convolution monoidal structure on $\QCoh(\Tfr)$ induces comonoid structures on the hom-sets in $\Line(\QCoh(\Tfr))$ and thus a notion of ``setlike morphism'' in this category.}
    \[
        \CLine(\Xfr) = \Line(\QCoh(\Xfr)) \times_{\Line(\QCoh(\Tfr))} \Line(\QCoh(\Tfr))^\setlike.
    \]
    This is contravariantly functorial in $\Xfr$.
\end{dfn}

Morally, $\CLine(\Xfr)$ is the non-full subcategory of $\QCoh(\Xfr)$ spanned by the invertible sheaves with only monomials as morphisms.

\begin{rmk}
    We will show later (in \zcref{prop:CLine_vs_DGale}) that, for any toric stack $\Xfr$, $\CLine(\Xfr)$ is always a toric category.
    Throughout \zcref{sec:picard_groups}, we will discuss more practical ways to compute $\CLine(\Xfr)$ and variants thereof.
    For now, we will focus on understanding $\CLine(\Ufr)$ when $\Ufr$ is a basic toric stack.
\end{rmk}

Using the $\CLine$ construction and \zcref{prop:linear_reconstruction_algebraic}, we obtain a way to reconstruct a stacky cone from its basic toric stack.

\begin{lem} \label{lem:linear_reconstruction_geometric}
    Let $K$ be a stacky cone.
    Then $\CLine(\Xfr_K) = \GDual(\Oc(K))$.
\end{lem}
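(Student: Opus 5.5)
The plan is to reduce the lemma to the algebraic reconstruction statement \zcref{prop:linear_reconstruction_algebraic}, applied to $\Cc := \GDual(\Oc(K))$. That proposition gives
\[
    \Cc = \Line(\PSh_k(\Cc)) \times_{\Line(\PSh_k(\Cc^\gpd))} \Line(\PSh_k(\Cc^\gpd))^\setlike.
\]
So it suffices to match each term of this fiber product with the corresponding term in the definition of $\CLine(\Xfr_K)$. We also need the maps and the coalgebra structures on hom-sets to correspond.

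First, \zcref{prop:geometrization_of_cone}(1) gives a symmetric monoidal equivalence $(\QCoh(\Xfr_K), \otimes) \simeq (\PSh_k(\Cc), \star_\Cc)$, induced by the natural functor $\Cc \to \QCoh(\Xfr_K)$. Hence $\Line(\QCoh(\Xfr_K)) = \Line(\PSh_k(\Cc))$, and by \zcref{prop:line_objects_in_presheaves} both equal $k\angles{\Cc}$.

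Second, for the torus we use \zcref{prop:geometrization_of_cone}(2), which says $\Tfr_K = \Xfr_{K(0)}$. Here $\Oc(K(0)) = \Nc_K$, and $\GDual(\Nc_K) = \Cc^\gpd$ by \zcref{prop:gale_duality_localization}. Applying part (1) to the cone $K(0)$ then gives $\QCoh(\Tfr_K) \simeq \PSh_k(\Cc^\gpd)$. The restriction functor $\QCoh(\Xfr_K) \to \QCoh(\Tfr_K)$ should correspond to the functor induced by the localization $\Cc \to \Cc^\gpd$, since the inclusion $K(0) \subset K$ induces the localization on Gale duals. This is the naturality in $K$ noted at the end of the proof of \zcref{prop:geometrization_of_cone}(1).

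The main obstacle is the setlike step. We must show that the coalgebra structure on hom-sets of $\Line(\QCoh(\Tfr_K))$, coming from convolution on $\QCoh(\Tfr_K)$ along the group law of $\Tfr_K$, agrees with the coalgebra structure on $k\angles{\Cc^\gpd}$. The latter is the one for which the elements of $\Cc^\gpd$ are the setlike elements. The approach is to recall that the group structure on $\Tfr_K = \Xfr_{K(0)}$ was defined via the diagonal $\Delta: \Cc^\gpd \to \Cc^\gpd \times \Cc^\gpd$. Pulling back along the multiplication map $m: \Tfr_K \times \Tfr_K \to \Tfr_K$ therefore corresponds, under the Tannakian identification, to restriction along $\Delta$. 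On a basis morphism $f$, this sends $f$ to $f \otimes f$, so the comultiplication on hom-spaces is exactly the one induced by the set structure of $\Cc^\gpd$.

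Concretely, write $\Cc^\gpd \simeq A \vec{/} M^\gp$ and $\Tfr_K = T/G$, and check that pulling back along $m$ sends a monomial $\chi \in M^\gp$ to $\chi \otimes \chi$. The counit condition comes from pulling back along the unit $e$. Setlike morphisms are then scalar multiples $\lambda \chi$ with $\lambda^2 = \lambda$ and $\lambda = 1$, that is, the monomials. This gives $\Line(\QCoh(\Tfr_K))^\setlike = k\angles{\Cc^\gpd}^\setlike = \Cc^\gpd$ by \zcref{prop:setlike_morphisms_recover_category}.

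Assembling the three identifications, which are compatible with the maps in the fiber product, \zcref{prop:linear_reconstruction_algebraic} yields $\CLine(\Xfr_K) = \GDual(\Oc(K))$.
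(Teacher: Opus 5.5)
Your proposal is correct and follows the paper's own route. You apply \zcref{prop:linear_reconstruction_algebraic} to $\GDual(\Oc(K))$, identify $\PSh_k(\GDual(\Oc(K)))$ with $\QCoh(\Xfr_K)$ via \zcref{prop:geometrization_of_cone}(1), and use \zcref{prop:gale_duality_localization} to identify the groupoid completion with $\GDual(\Oc(K(0)))$, so that part (1) applied to $K(0)$ yields $\QCoh(\Tfr_K)$. Your explicit checks are also sound: the restriction functor corresponds to the localization, and the coalgebra structure coming from the group law of $\Tfr_K$ matches that of $k\angles{\GDual(\Oc(K))^\gpd}$; these fill in compatibilities the paper leaves to ``comparing with the definition.''
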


\begin{proof}
    Applying \zcref{prop:linear_reconstruction_algebraic} to $\GDual(\Oc(K))$ shows
    \begin{equation} \label{eq:linear_reconstruction_intermediate}
        \GDual(\Oc(K)) = \Line\big(\PSh_k(\GDual(\Oc(K)))\big) \times_{\Line(\PSh_k(\GDual(\Oc(K))^\gpd))} \Line\big(\PSh_k(\GDual(\Oc(K))^\gpd))\big)^\setlike.
    \end{equation}
    By \zcref{prop:geometrization_of_cone}(1), we have
    \[
        \Big(\PSh_k\big(\GDual(\Oc(K))\big), \star_{\GDual(\Oc(K))}\Big) = \big(\QCoh(\Xfr_K), \otimes_{\Osc}\big).
    \]
    Furthermore, $\GDual(\Oc(K))^\gpd = \GDual(\iota_0 \Oc(K)) = \GDual\big(\Oc(K(0))\big)$ by \zcref{prop:gale_duality_localization}.
    Because the dense stacky torus $\Tfr_K$ is $\Xfr_{K(0)}$, \zcref{prop:geometrization_of_cone}(1) gives
    \[
        \Big(\PSh_k\big(\GDual(\Oc(K)^\gpd)\big), \star_{\GDual(\Oc(K)^\gpd)}\Big) = \big(\QCoh(\Tfr_K), \otimes_{\Osc}\big).
    \]
    Plugging this into \eqref{eq:linear_reconstruction_intermediate} and comparing with the definition of $\CLine(\Xfr_K)$ gives the claim.
\end{proof}

\begin{prop} \label{prop:geometrization_fully_faithful_cone}
    Geometrization induces an equivalence $\Xfr_{-}: \StkCone \simto \TStk^\basic$ with quasi-inverse $\Ufr \mapsto \Cone(\GDual(\CLine(\Ufr)))$.
    In particular, $\Xfr_{-}$ is fully faithful when viewed as a functor on stacky cones.
\end{prop}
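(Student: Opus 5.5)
The plan is to show that $\Xfr_{-}\colon \StkCone \to \TStk^\basic$ is essentially surjective and fully faithful, and to read off the quasi-inverse from \zcref{lem:linear_reconstruction_geometric}. Essential surjectivity is already \zcref{prop:geometrization_of_cone}(3). For the quasi-inverse on objects, \zcref{lem:linear_reconstruction_geometric} gives $\CLine(\Xfr_K) = \GDual(\Oc(K))$. Applying $\GDual$ and using $(\GDual)^2 = \id$ (\zcref{thm:gale_duality}) gives $\GDual(\CLine(\Xfr_K)) = \Oc(K)$. Then \zcref{cor:stacky_cone_equivalence} gives $\Cone(\GDual(\CLine(\Xfr_K))) = \Cone(\Oc(K)) = K$. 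It remains to make this natural in $K$ and to show that $\Xfr_{-}$ is an equivalence on mapping groupoids.

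For full faithfulness, I would factor $\Xfr_{-}$ on cones as
\[
\StkCone \xrightarrow{\Oc} \TCat \xrightarrow{\GDual} \TCat\op \xrightarrow{\Phi} \TStk^\basic,
\qquad \Phi(\Cc)(R) = \iota_0 \Fun^\otimes(\Cc, \Mod_R).
\]
The first two functors are equivalences by \zcref{cor:stacky_cone_equivalence} and \zcref{thm:gale_duality}. So it suffices to show that $\Phi$ is fully faithful, with left inverse given by $\CLine$.

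In one direction, $\CLine$ is contravariantly functorial, so a toric morphism $f\colon \Phi(\Cc) \to \Phi(\Cc')$ induces a symmetric monoidal functor $\CLine(f)\colon \CLine(\Phi(\Cc')) \to \CLine(\Phi(\Cc))$. The key check is that this really lands in $\TCat$ and in setlike morphisms. Pullback $f^*$ preserves line bundles, and because $f$ carries an equivariance datum $f|_{\Tfr}$ compatible with the dense tori, the restriction to $\QCoh(\Tfr)$ is pullback along a homomorphism of abelian 2-group stacks. That pullback is compatible with the convolution comonoid structures, hence preserves setlike morphisms. Via \zcref{lem:linear_reconstruction_geometric}, this yields a symmetric monoidal functor $\Cc' \to \Cc$.

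In the other direction, $\Phi$ sends a functor $F\colon \Cc' \to \Cc$ to precomposition with $F$. I must check that the two composites are naturally equivalent to the identity. The composite $\CLine \circ \Phi \simeq \id$ is immediate from the naturality in \zcref{prop:geometrization_of_cone}(1), since the equivalence $\PSh_k(\Cc) \simeq \QCoh(\Phi(\Cc))$ is induced by the tautological functor $\Cc \to \QCoh(\Phi(\Cc))$.

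The main obstacle is the composite $\Phi \circ \CLine \simeq \id$ on morphisms. I would handle it with the Tannakian formalism, exactly as in the proof of \zcref{prop:geometrization_of_cone}. A morphism of stacks $\Phi(\Cc) \to \Phi(\Cc')$ is the same as a colimit-preserving symmetric monoidal functor $\QCoh(\Phi(\Cc')) \to \QCoh(\Phi(\Cc))$, which by Day convolution is a symmetric monoidal functor $\Cc' \to \Line(\QCoh(\Phi(\Cc))) = k\angles{\Cc}$ (\zcref{prop:line_objects_in_presheaves}). Such a functor may send monomials to non-setlike morphisms, that is, to arbitrary $k$-linear combinations. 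The role of the equivariance and dense-torus data in a toric morphism is precisely to force monomials to monomials. Concretely, I would use \zcref{lem:toric_cat_as_fiber_product}: a functor into $\Cc = k\angles{\Cc} \times_{k\angles{\Cc^\gpd}} \Cc^\gpd$ is a functor into $k\angles{\Cc}$ together with a compatible lift on $\Cc^\gpd$. The lift on $\Cc^\gpd$ should correspond exactly to the homomorphism of stacky tori, via the antiequivalence between $\TGpd$ and stacky tori and \zcref{prop:gale_duality_localization}. Matching the 2-morphisms, including the isomorphism $\alpha$ in the definition of $\Map_\TStk$, against natural transformations is where I expect the bookkeeping to be delicate. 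It should nonetheless be routine once the fiber-product description is in place.
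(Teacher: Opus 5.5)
Your proposal is correct, and it follows a more complete route than the paper's proof.

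\textbf{What the paper does.} The paper argues only at the level of objects. It computes $\Cone(\GDual(\CLine(\Xfr_K))) = K$ by the same chain you use (\zcref{lem:linear_reconstruction_geometric}, involutivity of $\GDual$, \zcref{cor:stacky_cone_equivalence}). It then gets $\Xfr_{F(\Ufr)} \simeq \Ufr$ from \zcref{prop:geometrization_of_cone}(3). Full faithfulness is left implicit in the contravariant functoriality of $\CLine$. Natural $F \circ \Xfr_{-} \simeq \id$ gives $\Xfr_{-}$ a left inverse on mapping groupoids. It does not show that a toric morphism of basic toric stacks is determined by its effect on $\CLine$, so it does not give $\Xfr_{-} \circ F \simeq \id$ naturally.

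\textbf{What your route adds.} You factor through $\Phi$ and identify
\[
\Map_{\Stk}(\Phi(\Cc),\Phi(\Cc')) = \iota_0\Fun^\otimes(\Cc', k\angles{\Cc}).
\]
This follows directly from the defining formula for $\Phi(\Cc')$ and descent for $\QCoh$, so no Tannakian theorem is needed. You then use \zcref{lem:toric_cat_as_fiber_product} to cut this down to $\iota_0\Fun^\otimes(\Cc',\Cc)$. That is exactly the morphism-level argument the paper omits. You also correctly isolate why the torus data is needed: $z \mapsto z+1$ on $\AA^1$ is a map of stacks but not a toric one.

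\textbf{What the deferred bookkeeping needs.} The step you call routine needs one real input. The $\Tfr$-equivariance structure on $f$, and the compatibility of any 2-morphism with it, must be determined by restriction to $\Tfr \times \Tfr$. This holds because $\Tfr \times \Phi(\Cc)$ is reduced with $\Tfr \times \Tfr$ dense, and $\Phi(\Cc')$ has affine diagonal, so 2-morphisms into it are determined on dense opens. Given this, forgetting the equivariance structure is fully faithful, and a toric morphism amounts to the triple $(F, G, \alpha)$.

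One further compatibility check remains. The antiequivalence between $\TGpd$ and stacky tori must identify the underlying stack map of the homomorphism attached to $G\colon \Cc'^{\gpd} \to \Cc^{\gpd}$ with the one given by $\Cc'^{\gpd} \to \Cc^{\gpd} \to k\angles{\Cc^{\gpd}}$. Once that holds, the triple $(F, G, \alpha)$ is precisely a point of the fiber product $k\angles{\Cc} \times_{k\angles{\Cc^{\gpd}}} \Cc^{\gpd}$, evaluated on $\Cc'$.
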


\begin{proof}
    Write $F: \StkCone \to \TStk^\basic$ for the functor $\Ufr \mapsto \Cone(\GDual(\CLine(\Ufr)))$.
    We need to show $F$ defines a quasi-inverse to the geometrization functor $\Xfr_{-}$.

    For one direction, suppose $K \in \StkCone$.
    Then 
    \begin{align*}
        F(\Xfr_K) &= \Cone(\GDual(\CLine(\Xfr_K))) \\
        &= \Cone\big((\GDual)^2(\Oc(K))\big) \textrm{ by \zcref{lem:linear_reconstruction_geometric}} \\
        &= \Cone(\Oc(K)) \textrm{ by \zcref{thm:gale_duality}} \\
        &= K \textrm{ by \zcref{cor:stacky_cone_equivalence}.}
    \end{align*}
    
    For the reverse direction, suppose we are given a basic toric stack $\Ufr$.
    Then $\Ufr = \Xfr_K$ for some stacky cone $K$ by \zcref{prop:geometrization_of_cone}(3), so $\Xfr_{F(\Ufr)} = \Xfr_{F(\Xfr_K)} = \Xfr_K = \Ufr$
    as needed.
\end{proof}

\begin{rmk}
    The functor $\CLine$ is \emph{not} fully faithful when considered as a functor on arbitrary toric stacks.
    For example, one can compute that
    \[
        \CLine(\PP^1) = \ZZ \vec{/}_{[1,1]} \NN^2 = \CLine([\AA^2 / \GG_m]),
    \]
    so that
    \[
        \Cone(\GDual(\CLine(\PP^1))) = \Cone(\GDual(\CLine([\AA^2 / \GG_m]))).
    \]
    This is unsurprising if one thinks of basic toric stacks as analogous to affine schemes: the functor $\Osc(-)$ is not fully faithful on the category of all schemes, but it is fully faithful on the category of affine schemes.
\end{rmk}

Despite this, we can extend the claim about fully faithfulness of $\Xfr_{-}$ in \zcref{prop:geometrization_fully_faithful_cone} to arbitrary stacky fans:

\begin{prop} \label{prop:geometrization_fully_faithful}
    The geometrization functor $\Xfr_{-}: \StkFan \to \TStk$ is fully faithful.
\end{prop}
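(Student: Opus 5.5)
The plan is to reduce to the cone case, \zcref{prop:geometrization_fully_faithful_cone}, by comparing two lim--colim formulas for mapping groupoids. Let $\Sigma$ and $\Sigma'$ be stacky fans. By \zcref{prop:lim_colim_maps_in_StkFan},
\[
    \Map_\StkFan(\Sigma, \Sigma') = \lim_{K \subset \Sigma} \colim_{K' \subset \Sigma'} \Map_\StkCone(K, K').
\]
By \zcref{cor:toric_stack_maps_lim_colim},
\[
    \Map_\TStk(\Xfr_\Sigma, \Xfr_{\Sigma'}) = \lim_{\Ufr \subset \Xfr_\Sigma} \colim_{\Ufr' \subset \Xfr_{\Sigma'}} \Map_\TStk(\Ufr, \Ufr'),
\]
where $\Ufr$ and $\Ufr'$ range over basic opens. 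Geometrization induces a comparison map between these two expressions. On each term this map is $\Map_\StkCone(K, K') \to \Map_\TStk(\Xfr_K, \Xfr_{K'})$, which is an equivalence by \zcref{prop:geometrization_fully_faithful_cone}. It therefore suffices to show that the indexing diagrams match.

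The key step is to identify the poset of basic opens of $\Xfr_\Sigma$ with the poset of subcones of $\Sigma$, that is, with $|\Sigma|$ via \zcref{prop:pts_and_cones_stacky_fan}. Each subcone $K$ gives a basic open $\Xfr_K \subset \Xfr_\Sigma$. By \zcref{prop:geometrization_cone_immersion}, inclusions of subcones give open immersions, and distinct subcones give distinct opens. To see the distinct-opens claim, compare $\CLine$ or torus orbits: $\Xfr_{K_1} = \Xfr_{K_2}$ would force $K_1 = K_2$ through the equivalence $\StkCone \simeq \TStk^\basic$ compatibly with the inclusions.

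Surjectivity is the real point: every basic open $\Ufr \subset \Xfr_\Sigma$ should equal some $\Xfr_K$. To prove it, I apply \zcref{prop:basic_maps_to_basic} to the inclusion $\Ufr \hookrightarrow \Xfr_\Sigma$. Its proof gives a basic open of $\Xfr_\Sigma$ containing $\Ufr$, namely the preimage of the smallest basic open around the image of the maximal point of $\Ufr/\Tfr$. I then want that basic open to be of the form $\Xfr_K$. Working in the equivariantization $\Xfr_\Sigma/\Tfr$, which is covered by the $\Xfr_K/\Tfr$, the point of $\Ufr/\Tfr$ lies in some $\Xfr_K/\Tfr$. By \zcref{lem:pts_and_nbhds_trivial_torus_basic_case}, $\Ufr/\Tfr$ is then the smallest basic open around its maximal point inside $\Xfr_K/\Tfr$. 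That open corresponds to a face, hence to a subcone $K_0 \le K$ with $\Ufr/\Tfr = \Xfr_{K_0}/\Tfr$. Pulling back along $q$ gives $\Ufr = \Xfr_{K_0}$. This uses that $\Ufr$ is the full $q$-preimage of its image, which holds since $\Ufr$ is torus-invariant.

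Once the indexing posets agree, the colimits agree. In the stacky-fan formula the inner colimit effectively runs over subcones $K'$ containing the image of $K$, and in the toric formula over basic opens containing the image of $\Xfr_K$. A morphism $\Xfr_K \to \Xfr_{\Sigma'}$ lands in some $\Xfr_{K'}$ with $K'$ determined by the image of the maximal point. So the two colimits are compatible filtered unions, and the outer limits agree as well.

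The main obstacle I expect is the surjectivity step: showing that every basic open of $\Xfr_\Sigma$ is some $\Xfr_K$, together with the 2-categorical coherence of matching the colimits. Of the two, surjectivity is the substantive one.
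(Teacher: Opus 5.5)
Your proposal is correct and follows the paper's proof. Both compare the lim--colim formulas of \zcref{prop:lim_colim_maps_in_StkFan} and \zcref{cor:toric_stack_maps_lim_colim} termwise using \zcref{prop:geometrization_fully_faithful_cone}; the paper leaves implicit the identification of the basic opens of $\Xfr_\Sigma$ with the $\Xfr_K$ for subcones $K \subset \Sigma$, which you verify explicitly.
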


\begin{proof}
    Let $\Sigma$ and $\Sigma'$ be stacky fans.
    By \zcref{cor:toric_stack_maps_lim_colim}, we can write
    \[
        \Map_\TStk(\Xfr_K, \Xfr_{\Sigma'}) = \colim_{K' \subset \Sigma} \Map_\TStk(\Xfr_K, \Xfr_{K'}). 
    \]
    Now observe that
    \begin{align*}
        \Map_\TStk(\Xfr_\Sigma, \Xfr_{\Sigma'}) &= \lim_{K \subset \Sigma} \colim_{K' \subset \Sigma} \Map_\TStk(\Xfr_K, \Xfr_{K'}) \textrm{ by the above}. \\
        &= \lim_{K \subset \Sigma} \colim_{K' \subset \Sigma} \Map_{\StkCone}(K, K') \textrm{ by \zcref{prop:geometrization_fully_faithful_cone}} \\
        &= \Map_{\StkFan}(\Sigma, \Sigma') \textrm{ by \zcref{prop:lim_colim_maps_in_StkFan}.} \qedhere
    \end{align*}
\end{proof}

\subsection{Cocharacter categories}

Combining the equivalences of \zcref{prop:geometrization_fully_faithful_cone} and \zcref{cor:stacky_cone_equivalence} yields an equivalence $\TStk^\basic \simto \TCat$.
Our construction of this equivalence is somewhat inexplicit.
We can remedy this by introducing the following construction:

\begin{dfn}
    Let $\Ufr$ be a basic toric stack.
    The \emph{cocharacter category} of $\Ufr$ is the toric category 
    \[
        \coweightcat(\Ufr) := \Map_\TStk(\GG_m, \Ufr) \vec{/}_{j_{\AA^1}^*} \Map_\TStk(\AA^1, \Ufr),
    \]
    where the homomorphism $j_{\AA^1}^*$ is restriction along the inclusion $j_{\AA^1}: \GG_m \hookrightarrow \AA^1$.
\end{dfn}

We will see that $\coweightcat$ is well-defined in the course of proving:

\begin{prop} \label{prop:cocharacter_equivalence}
    Let $K$ be a stacky cone.
    Then:
    \begin{enumerate}
        \item $\Map_\TStk(\GG_m, \Xfr_K) = \iota_0 \Oc(K)$,
        \item $\Map_\TStk(\AA^1, \Xfr_K) = \PO \Oc(K)$, and
        \item $\coweightcat(\Xfr_K) = \Oc(K)$.
    \end{enumerate}
    Thus $\coweightcat: \TStk^\basic \to \TCat$ is the inverse of the equivalence $\Xfr_{\Cone(-)}: \TCat \simto \TStk^\basic$. 
\end{prop}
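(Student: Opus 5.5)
The plan is to reduce everything to the fully faithful statements already proved, namely \zcref{prop:geometrization_fully_faithful_cone} and \zcref{cor:stacky_cone_equivalence}. Together they say that $\Map_\TStk(\Xfr_{K_1}, \Xfr_{K_2}) = \Map_\StkCone(K_1, K_2) = \iota_0 \Fun^\oplus(\Oc(K_1), \Oc(K_2))$ naturally in both variables.

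The first step is to identify $\GG_m$ and $\AA^1$ as geometrizations. By \zcref{ex:Gm_cone}, $\GG_m = \Xfr_{\Cone(\ZZ)}$, with $\Oc(\Cone(\ZZ)) = \ZZ$ by \zcref{ex:global_sections_of_cone}. By \zcref{ex:A1_cone}, $\AA^1 = \Xfr_{\Cone(\ZZ \vec{/} \NN)}$. Both identifications are consistent with the explicit construction in \zcref{prop:geometrization_of_cone}:
\begin{itemize}
\item $\GDual(\ZZ) = \Bsf \ZZ = 0 \vec{/} \ZZ$ gives $\Spec k\angles{\ZZ} = \GG_m$;
\item $\GDual(\ZZ \vec{/} \NN) = \Bsf \NN$ gives $\Spec k\angles{\NN} = \AA^1$.
\end{itemize}
Next we check that the inclusion $j_{\AA^1}: \GG_m \hookrightarrow \AA^1$ is the geometrization of the subcone inclusion $\Cone(\ZZ) = \Cone(\ZZ\vec{/}\NN)(0) \hookrightarrow \Cone(\ZZ\vec{/}\NN)$. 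That inclusion corresponds under $\Oc$ to the skew quotient functor $\ZZ \to \ZZ \vec{/} \NN$. This agrees with the fact, from \zcref{prop:geometrization_of_cone}(2), that the dense torus of $\Xfr_K$ is $\Xfr_{K(0)}$.

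With this in hand, (1) and (2) follow formally. We have
\[
\Map_\TStk(\GG_m, \Xfr_K) = \iota_0\Fun^\oplus(\ZZ, \Oc(K)) = \iota_0 \Oc(K)
\]
by \zcref{ex:i0_corepresented}. Likewise
\[
\Map_\TStk(\AA^1, \Xfr_K) = \iota_0\Fun^\oplus(\ZZ\vec{/}\NN, \Oc(K)) = \PO \Oc(K)
\]
by \zcref{ex:P0_corepresented}, which is a discrete sharp toric monoid. Naturality of these equivalences in the source then shows that $j_{\AA^1}^*$ corresponds to restriction along $\ZZ \to \ZZ\vec{/}\NN$.

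The main obstacle is the next step: showing that this restriction map is exactly the canonical homomorphism $\cod: \PO \Oc(K) \to \iota_0 \Oc(K)$. I would verify this by unwinding the universal property in \zcref{prop:skew_quotient_UP}. A functor $\ZZ\vec{/}\NN \to \Cc$ amounts to a morphism $f: 0 \to c$, and restricting it to $\ZZ$ remembers only $1 \mapsto c = \cod(f)$.

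Granting this, the mapping data is a sharp toric monoid mapping to a toric groupoid, so $\coweightcat(\Xfr_K)$ is a well-defined toric category. Moreover $\coweightcat(\Xfr_K) = \iota_0 \Oc(K) \vec{/} \PO \Oc(K) = \Oc(K)$ by \zcref{thm:TCat_structure_thm}, which proves (3).

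For the final claim, every basic toric stack is some $\Xfr_K$ by \zcref{prop:geometrization_of_cone}(3). Since all identifications above are natural in $K$, through the 2-functoriality of $\Map_\TStk(-,-)$ and of the structure theorem, $\coweightcat \circ \Xfr_{\Cone(-)} \simeq \id_\TCat$. Because $\Xfr_{\Cone(-)}$ is an equivalence (combining \zcref{cor:stacky_cone_equivalence} and \zcref{prop:geometrization_fully_faithful_cone}), $\coweightcat$ is its quasi-inverse.
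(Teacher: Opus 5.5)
Your proposal is correct and follows essentially the same route as the paper: (1) and (2) come from writing $\GG_m = \Xfr_{\Cone(\ZZ)}$ and $\AA^1 = \Xfr_{\Cone(\ZZ \vec{/} \NN)}$ and applying full faithfulness on stacky cones together with the two corepresentability examples, and (3) follows from the structure theorem once $j_{\AA^1}^*$ is identified with restriction along $\ZZ \to \ZZ \vec{/} \NN$. Two steps the paper only asserts are spelled out in your version: that this restriction is the codomain map $\PO \Oc(K) \to \iota_0 \Oc(K)$, and the argument for the final quasi-inverse claim.
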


\begin{proof}
    (1).
    This is a direct computation:
    \begin{align*}
        \Map_\TStk(\GG_m, \Xfr_K) &= \Map_\TStk(\Xfr_{\Cone(\ZZ)}, \Xfr_K) \\
        &= \Map_\StkCone(\Cone(\ZZ), K) \textrm{by \zcref{prop:geometrization_fully_faithful_cone}} \\
        &= \iota_0 \Fun^\oplus(\ZZ, \Oc(K)) \textrm{ by \zcref{cor:stacky_cone_equivalence}} \\
        &= \iota_0 \Oc(K) \textrm{ by \zcref{ex:i0_corepresented}.}
    \end{align*}

    (2).
    This is essentially the same computation as in (1), except we use $\AA^1 = \Xfr_{\Cone(\ZZ \vec{/} \NN)}$ and replace the reference to \zcref{ex:i0_corepresented} by a reference to \zcref{ex:P0_corepresented}.

    (3).
    The natural inclusion $j_{\AA^1}: \GG_m \hookrightarrow \AA^1$ is induced by the skew quotient map $\ZZ \to \ZZ \vec{/} \NN$, so by parts (1) and (2) we see that $\coweightcat(\Xfr_K)$ is the skew quotient of the usual codomain map $\PO \Oc(K) \to \iota_0 \Oc(K)$.
    The result then follows from \zcref{thm:TCat_structure_thm}.
\end{proof}

\begin{rmk} \label{rmk:explicit_mult_on_cocharacter_cat}
    Here is a more explicit description of the multiplication on $\Map_\TStk(\AA^1, \Ufr)$ when $\Ufr$ is a basic toric stack.
    Combining \zcref{prop:finite_product_coproduct_TCat} and \zcref{prop:geometrization_fully_faithful_cone}, we see that finite products and finite coproducts agree in $\TStk^\basic$.
    Thus, given $f_1, f_2 \in \Map_\TStk(\AA^1, \Ufr)$, we obtain a coproduct morphism\footnote{This relies crucially on the assumption that $\Ufr$ is basic: the morphisms $\AA^1 \to \PP^1$ given by $z \mapsto [1 : z]$ and $z \mapsto [z : 1]$ cannot be combined into a reasonable morphism $\AA^2 \to \PP^1$.} $f: \AA^2 \to \Ufr$ satisfying $f|_{\AA^1 \times \{1\}} = f_1$ and $f|_{\{1\} \times \AA^1} = f_2$.
    Restricting this along the diagonal $\Delta_{\AA^1}: \AA^1 \to \AA^2$ gives a morphism $f_1 \cdot f_2 \in \Map_\TStk(\AA^1, \Ufr)$.
    
    If $\Ufr = \Xfr_K$ for some stacky cone $K$, one can check that this construction agrees with the standard monoid operation on $\PO \Oc(K)$, and also with the multiplication induced by the commutative 2-monoid structure on $\Ufr$ outlined before the proof of \zcref{prop:geometrization_of_cone_functorial}.
    We omit the proof, which is largely an exercise in tracing through the definitions of the equivalences involved.
    
    We can also provide a similar description for the multiplication on $\Map_\TStk(\GG_m, \Ufr)$.
    In fact, unlike the description of the multiplication on $\Map_\TStk(\AA^1, \Ufr)$, this does not require $\Ufr$ to be basic: since any morphism from toric $\GG_m$ to a toric stack lands in the dense stacky torus of the codomain, the product morphisms $f_1 \cdot f_2$ are always well-defined.
\end{rmk}

Cocharacter categories are useful for directly relating properties of a basic toric stack $\Ufr$ to properties of the corresponding toric category.
As an example application of this, we prove the following result relating open immersions of stacky cones to open immersions of stacky fans.
We will generalize this in \zcref{prop:open_imm_biconditional}.

\begin{lem} \label{lem:geometrization_cone_immersion_converse}
    Let $j: \Ufr' \to \Ufr$ be an open immersion of basic toric stacks.
    Then $j$ is the geometrization of an open immersion of stacky cones.
\end{lem}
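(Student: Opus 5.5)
We would first use \zcref{prop:geometrization_fully_faithful_cone} to write $\Ufr' = \Xfr_{K'}$, $\Ufr = \Xfr_K$ and $j = \Xfr_f$ for a morphism of stacky cones $f: K' \to K$. We may take $K' = \Cone(\coweightcat(\Ufr'))$ and $K = \Cone(\coweightcat(\Ufr))$ by \zcref{prop:cocharacter_equivalence}. The goal is then to show that $f$ is isomorphic to the inclusion of a subcone $K_{\leq \kappa} \subset K$. By \zcref{prop:subcones_of_cone}(1), these inclusions are exactly the open immersions of stacky cones.

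\emph{Step 1 (reduction to a subcone).} Let $\kappa'$ be the maximal element of $|K'|$, and set $\kappa = |f|(\kappa')$. Every element of $|K'|$ lies below $\kappa'$, so $|f|$ lands in $|K|_{\leq \kappa}$. Hence $f$ factors as $K' \xrightarrow{g} K_{\leq \kappa} \hookrightarrow K$. By \zcref{prop:geometrization_cone_immersion}, the second map geometrizes to an open immersion $\Xfr_{K_{\leq \kappa}} \hookrightarrow \Xfr_K$. Since $j$ is an open immersion, hence a monomorphism, $\Xfr_g$ is again an open immersion $\Ufr' \hookrightarrow \Xfr_{K_{\leq \kappa}}$. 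So it suffices to show that $\Xfr_g$ is an equivalence. Full faithfulness (\zcref{prop:geometrization_fully_faithful_cone}) then gives that $g$ is an isomorphism.

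\emph{Step 2 ($\Xfr_g$ is surjective).} We pass to equivariantizations. The dense stacky torus of $\Ufr'$ maps to that of $\Xfr_{K_{\leq\kappa}}$ via a homomorphism of stacky tori that is also an open immersion, so it is an equivalence. Hence $\Ufr' / \Tfr \hookrightarrow \Xfr_{K_{\leq \kappa}} / \Tfr$ is an open immersion of basic toric stacks with trivial torus. By \zcref{lem:pts_and_nbhds_trivial_torus_basic_case}, the $k$-points of these quotients are identified with $|K'|$ and $|K_{\leq \kappa}|$, and basic opens correspond to the subsets $|K|_{\leq \sigma}$. The point of $\Xfr_{K_{\leq\kappa}}/\Tfr$ indexed by $\kappa$ lies in the image of $\Ufr'/\Tfr$, since it is the image of the point indexed by $\kappa'$. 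The image of $\Ufr'/\Tfr$ is a basic open, so it contains the smallest basic open containing that point. This smallest basic open is all of $\Xfr_{K_{\leq\kappa}}/\Tfr$, because $\kappa$ is maximal in $K_{\leq\kappa}$. Therefore $\Ufr'/\Tfr \to \Xfr_{K_{\leq\kappa}}/\Tfr$ is a surjective open immersion, i.e.\ an isomorphism. Pulling back along the quotient maps, $\Ufr' \to \Xfr_{K_{\leq \kappa}}$ is a surjective open immersion of stacks, hence an equivalence. Here we use that $\Ufr' = q\inv(\Ufr'/\Tfr)$, as in the proof of \zcref{prop:toric_stack_locally_basic}.

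\emph{Main obstacle.} The delicate point is making Step 2 rigorous. We must check that the open immersion is compatible with the identifications of $k$-points of the equivariantizations with $|K'|$ and $|K|$, i.e.\ that the orbit indexed by $\kappa'$ really maps to the orbit indexed by $|f|(\kappa')$. We would verify this by comparing the cocharacter description $\Map_\TStk(\AA^1, -)$ with faces. A face $S$ corresponds to the orbit containing the limit points $\lim_{z \to 0}$ of cocharacters in the relative interior of $S$. The density condition $P_{K,\kappa} = \angles{\im f_{\#,\kappa'}}$ in the definition of morphisms of $\TDP$ is exactly what forces these limit points to land in the orbit indexed by $\kappa$. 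If this bookkeeping becomes awkward, the fallback is to argue directly with \zcref{prop:cocharacter_equivalence}. One shows that $\PO\Oc(K') \to P_{K,\kappa}$ is surjective: any $s \in P_{K,\kappa}$ satisfies $s + s' = n\cdot f(p)$, and the corresponding morphism $\AA^1 \to \Xfr_{K_{\leq\kappa}}$ lands in the $\Tfr$-stable open $\Ufr'$ because its closed orbit does.
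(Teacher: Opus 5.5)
Step 1 and your closing appeal to full faithfulness are fine. Step 2 also goes through once you know that $j(\Ufr')$ meets the closed orbit of $\Xfr_{K_{\leq\kappa}}$: a $\Tfr$-stable open containing the closed orbit is everything, so you do not even need the detour through equivariantizations. But that single fact carries the whole content of the lemma, and your proposal does not establish it.

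\zcref{lem:pts_and_nbhds_trivial_torus_basic_case} gives a bijection $\Ufr(k) = |K|$ for each basic stack separately. Nothing proved at this point says that $\Xfr_g$ sends the orbit indexed by $\kappa'$ into the orbit indexed by $|g|(\kappa')$, so your sentence ``it is the image of the point indexed by $\kappa'$'' is unjustified. Your limit-point route needs a stacky orbit--cone correspondence of the form ``$\lambda(0)$ lies in the orbit indexed by $\angles{\lambda}$ for $\lambda \in \Map_\TStk(\AA^1, \Xfr_K) = \PO\Oc(K)$,'' which the paper never proves. Granting it, your density computation is correct: if $\angles{p} = P_{K',\kappa'}$, then $\angles{g_{\#,\kappa'}(p)} = \angles{\im g_{\#,\kappa'}} = P_{K,\kappa}$. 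Proving it, however, means lifting toric maps $\AA^1 \to [U/G]$ to $U$. It also means matching the abstract identification of \zcref{prop:cocharacter_equivalence} with classical cocharacters of $T$ that have limits in $U$. You have not done this work.

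Your fallback has the same hole in a different place. From $s + s' = g_{\#,\kappa'}(np)$ you only know that $(s\cdot s')(0)$ lies in $\Ufr'$. The phrase ``because its closed orbit does'' presupposes $s(0) \in \Ufr'$. To get that, you need the orbit of $(s\cdot s')(0)$ to lie in the closure of the orbit of $s(0)$, so that the closed $\Tfr$-stable complement of $\Ufr'$ cannot contain $s(0)$.

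This specialization step is exactly what the paper's proof supplies. Since finite products and coproducts agree in $\TStk^\basic$ (\zcref{rmk:explicit_mult_on_cocharacter_cat}), $s$ and $s'$ assemble into a toric map $F: \AA^2 \to \Ufr$ with $F(0,0) = (s\cdot s')(0)$. Every $\GG_m^2$-orbit closure in $\AA^2$ contains $(0,0)$, so $F$ lands in $\Ufr'$, and restricting to $\AA^1 \times \{1\}$ shows that $s$ does. The paper uses this to show directly that $\Map_\TStk(\AA^1,\Ufr') \subset \Map_\TStk(\AA^1,\Ufr)$ is a face. It then concludes from \zcref{prop:cocharacter_equivalence} and \zcref{thm:TCat_structure_thm}, with no orbit bookkeeping at all.

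With that argument inserted, your fallback becomes a complete proof. Its ingredients are surjectivity of $P_{K',\kappa'} \to P_{K,\kappa}$ via the density condition, injectivity because $\Xfr_g$ is a monomorphism, and the isomorphism on $\iota_0$ coming from the dense tori. It is then essentially the paper's argument, with the density condition used to name the face in advance.
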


\begin{proof}
    By \zcref{prop:cocharacter_equivalence}, it suffices to show that $\coweightcat(j): \coweightcat(\Ufr') \to \coweightcat(\Ufr)$ induces:
    \begin{itemize}
        \item An isomorphism $\Hom_{\TStk}(\GG_m, \Ufr') \simto \Hom_{\TStk}(\GG_m, \Ufr)$, and
        \item A face inclusion $\Hom_{\TStk}(\AA^1, \Ufr') \to \Hom_{\TStk}(\AA^1, \Ufr)$.
    \end{itemize}
    The former is obvious: if $\Tfr$ is the dense stacky torus of $\Ufr'$ and $\Ufr$, then
    \[
        \Hom_{\TStk}(\GG_m, \Ufr') = \Hom_{\TStk}(\GG_m, \Tfr) = \Hom_{\TStk}(\GG_m, \Ufr).
    \]
    
    For the latter, it is clear that $\Hom_{\TStk}(\AA^1, \Ufr')$ is a submonoid of $\Hom_{\TStk}(\AA^1, \Ufr)$.
    To see that it is in fact a face, suppose $f_1, f_2 \in \Hom_{\TStk}(\AA^1, \Ufr)$ satisfy $f_1 \cdot f_2 \in \Hom_{\TStk}(\AA^1, \Ufr')$.
    As in \zcref{rmk:explicit_mult_on_cocharacter_cat}, this implies that the coproduct morphism $f: \AA^2 \to \Ufr$ sends $(0, 0) \in \AA^2$ to a point of $\Ufr'$.

    We claim that the morphism $f$ lands in $\Ufr' \subset \Ufr$.
    Indeed, for all $x \in \AA^2(k)$, the torus orbit closure $\ol{\GG_m^2 \cdot x}$ contains $(0, 0)$.
    Thus $f(0, 0)$ must be contained in the smallest $\Tfr$-equivariant reduced closed substack of $\Ufr$ containing $f(x)$.
    If $f(x) \not\in \Ufr'(k)$, then $f(0, 0) \in (\Ufr \setminus \Ufr')(k)$, a contradiction.
    Thus we must have $f(x) \in \Ufr'(k)$.
    Because every point of $\AA^2$ specializes to a $k$-point, it follows that $f$ maps $\AA^2$ to $\Ufr'$.

    Restricting $f$ to $\AA^1 \times \{1\}$ and $\{1\} \times \AA^1$, we see that $f_1$ and $f_2$ take values in $\Ufr'$.
    Thus $\Hom_{\TStk}(\AA^1, \Ufr')$ is a face of $\Hom_{\TStk}(\AA^1, \Ufr)$, as needed.
\end{proof}

\subsection{Essential surjectivity}

We can prove that the geometrization functor $\Xfr_{-}: \StkFan \to \TStk$ is essentially surjective by combining several of our previous results:

\begin{prop} \label{prop:geometrization_ess_surj}
    Let $\Xfr$ be a toric stack.
    Then $\Xfr = \Xfr_\Sigma$ for some stacky fan $\Sigma$.
\end{prop}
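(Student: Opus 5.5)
The plan is to build $\Sigma$ directly out of the basic opens of $\Xfr$ and then identify $\Xfr$ with the geometrization $\Xfr_\Sigma$ by gluing.

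\textbf{Step 1: the index poset.} By \zcref{prop:toric_stack_locally_basic}, $\Xfr$ is a union of basic opens. I take $|\Sigma|$ to be the poset of all basic opens $\Ufr \subset \Xfr$, ordered by inclusion. To see that this poset is finite, I pass to the equivariantization $q: \Xfr \to \Xfr/\Tfr$. Since $q$ is a universal homeomorphism on the level of $\Tfr$-stable opens, the assignment $\Ufr \mapsto q(\Ufr)$ identifies basic opens of $\Xfr$ with basic opens of $\Xfr/\Tfr$. One direction is \zcref{prop:basic_quotient_implies_basic}; the other uses that a toric open substack is $\Tfr$-stable. By \zcref{prop:pts_and_nbhds_trivial_torus}, the basic opens of $\Xfr/\Tfr$ are in bijection with the finite set $(\Xfr/\Tfr)(k)$. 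By \zcref{prop:intersection_of_basic}, $|\Sigma|$ is closed under pairwise intersection, so it has finite meets. The meet of the empty family is the minimal element, namely the dense stacky torus.

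\textbf{Step 2: the decorations.} By \zcref{prop:geometrization_fully_faithful_cone} (or explicitly \zcref{prop:cocharacter_equivalence}), each basic open $\Ufr$ is $\Xfr_{K_\Ufr}$ for the stacky cone $K_\Ufr = \Cone(\coweightcat(\Ufr))$, and this choice is functorial in $\Ufr$. For an inclusion $\Ufr' \subset \Ufr$, \zcref{lem:geometrization_cone_immersion_converse} shows that the induced map $K_{\Ufr'} \to K_\Ufr$ is an open immersion of stacky cones. Concretely, $\Map_\TStk(\GG_m,-)$ is unchanged under this map and $\Map_\TStk(\AA^1,\Ufr')$ is a face of $\Map_\TStk(\AA^1,\Ufr)$.

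I then set
\[
\Nc_\Sigma = \Map_\TStk(\GG_m, \Tfr) = \Map_\TStk(\GG_m,\Ufr) \quad\text{for all basic } \Ufr,
\]
and $P_{\Sigma,\Ufr} = \Map_\TStk(\AA^1,\Ufr)$ with its map to $\Nc_\Sigma$. This is a toric decorated poset.

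To check the fan condition, I use the cover $\{\Sigma_{\le \Ufr}\}$. First, each $\Sigma_{\le\Ufr}$ should be the subcone $K_\Ufr$. This requires that the basic opens contained in $\Ufr$ are exactly the basic opens of $\Ufr$, and that these correspond to faces of $\PO\Oc(K_\Ufr)$. The correspondence holds by \zcref{lem:geometrization_cone_immersion_converse} together with \zcref{prop:geometrization_cone_immersion} applied to $\Ufr$. Second, intersections $\Sigma_{\le \Ufr_1}\cap\Sigma_{\le\Ufr_2} = \Sigma_{\le \Ufr_1\cap\Ufr_2}$ are again in the cover. So \zcref{prop:fan_criterion} shows that $\Sigma$ is a stacky fan.

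\textbf{Step 3: comparison.} By construction, $\Xfr_\Sigma$ is the colimit of the $\Xfr_{K_\Ufr} \simeq \Ufr$ along the open immersions $\Xfr_{K_{\Ufr'}}\to\Xfr_{K_\Ufr}$. The equivalences $\Xfr_{K_\Ufr}\simeq \Ufr$ are natural in $\Ufr$ by functoriality of the quasi-inverse in \zcref{prop:geometrization_fully_faithful_cone}, and they are compatible with the dense torus embeddings. They therefore assemble into a toric morphism $\Xfr_\Sigma \to \Xfr$. This morphism is an open immersion on each chart, and the charts cover $\Xfr$. On overlaps, $\Xfr_{K_1}\times_{\Xfr_\Sigma}\Xfr_{K_2}=\Xfr_{K_1\cap K_2}$ corresponds to $\Ufr_1\cap\Ufr_2$. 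Hence it is an equivalence by Zariski descent.

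I expect the main obstacle to be the bookkeeping in Step 2. One must verify that $\Sigma_{\le\Ufr}$, with the induced decoration, is \emph{equivalent as a toric decorated poset} to $\Cone(\coweightcat(\Ufr))$. This amounts to showing that every face of $\Map_\TStk(\AA^1,\Ufr)$ arises from a basic open of $\Ufr$, which is the surjectivity half of the face--open correspondence. I would get this by Gale duality: a face $S$ of $\PO\Oc(K_\Ufr)$ gives a subcone, whose geometrization is a basic open by \zcref{prop:geometrization_cone_immersion}. Finally, the 2-categorical coherence of the gluing in Step 3 needs care, but it is routine.
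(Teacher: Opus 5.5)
Your proposal is correct and follows essentially the paper's route. The paper indexes the fan by $(\Xfr/\Tfr)(k)$ with the specialization order, which is the same poset as your basic opens of $\Xfr$ ordered by inclusion. It then glues the stacky cones of the preimages of the basic opens of $\Xfr/\Tfr$ using \zcref{prop:basic_quotient_implies_basic}, \zcref{prop:geometrization_fully_faithful_cone} and \zcref{lem:geometrization_cone_immersion_converse}. You additionally spell out two points the paper leaves implicit: the face--basic-open correspondence (via \zcref{prop:geometrization_cone_immersion}) and the fan condition.

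One harmless slip: the meet of the empty family would be a greatest element, not the minimal one. Only pairwise meets are needed in \zcref{prop:fan_criterion}, so nothing is affected.
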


\begin{proof}
    Let $\Tfr$ be the dense stacky torus of $\Xfr$, and consider the equivariantization morphism $p: \Xfr \to \Xfr / \Tfr$.
    Let $|\Sigma| = (\Xfr / \Tfr)(k)$ with the specialization partial order.
    By \zcref{prop:pts_and_nbhds_trivial_torus}, $\Xfr / \Tfr$ is a union of basic opens $\ol{\Ufr}_\sigma$ labeled by $\sigma \in |\Sigma|$.
    \zcref{prop:basic_quotient_implies_basic} implies that all preimages $\Ufr_\sigma = p\inv(\ol{\Ufr}_\sigma)$ are basic because each equivariantization $\Ufr_\sigma / \Tfr = \ol{\Ufr}_\sigma$ is basic.
    In particular, for all $\sigma \in \Sigma$, there is a stacky cone $\Sigma_{\leq \sigma}$ such that $\Ufr_\sigma = \Xfr_{\Sigma_{\leq \sigma}}$.
    
    We have written $\Xfr$ as a union / colimit of a diagram of open immersions of basic toric stacks $\Xfr_{\Sigma_{\leq \sigma}}$.
    By \zcref{prop:geometrization_fully_faithful_cone} and \zcref{lem:geometrization_cone_immersion_converse}, this is the same as saying that $\Xfr$ is obtained by gluing together the basic toric stacks $\Xfr_{\Sigma_{\leq \sigma}}$ along open immersions $\Sigma_{\leq \sigma} \hookrightarrow \Sigma_{\leq \sigma'}$.
    Letting $\Sigma = \cup_{\sigma \in |\Sigma|} \Sigma_{\leq \sigma}$, we see that $\Xfr = \Xfr_\Sigma$, as desired.
\end{proof}

\subsection{Remarks on geometric Gale duality}

By combining several of the equivalences we have constructed thus far, we may produce an interesting autoequivalence of $\TStk^\basic$:

\begin{thm}[Geometric Gale duality] \label{thm:geometric_gale_duality}
    There exists a unique antiequivalence $\GeomDual: (\TStk^\basic)\op \simto \TStk$ satisfying:
    \begin{enumerate}
        \item $(\GeomDual)^2 = \id$,
        \item $\GeomDual(\GG_m) = B\GG_m$,
        \item $\GeomDual(\AA^1) = \AA^1 / \GG_m$, and
        \item $\GeomDual$ sends the inclusion map $\GG_m \hookrightarrow \AA^1$ to the natural map $\AA^1 / \GG_m \to B\GG_m$.
    \end{enumerate}
    Furthermore, $\GeomDual$ interchanges combinatorial line categories and cocharacter categories: if $\Ufr$ is a basic toric stack, then
    \[
        \coweightcat(\GeomDual(\Ufr)) = \CLine(\Ufr) \textrm{ and } \CLine(\GeomDual(\Ufr)) = \coweightcat(\Ufr).
    \]
\end{thm}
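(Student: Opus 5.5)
The plan is to transport the categorical Gale duality of \zcref{thm:gale_duality} across the equivalence $\TStk^\basic \simeq \TCat$. By \zcref{prop:geometrization_fully_faithful_cone} and \zcref{cor:stacky_cone_equivalence}, the functor $\Xfr_{\Cone(-)}: \TCat \to \TStk^\basic$ is an equivalence. By \zcref{prop:cocharacter_equivalence}, its quasi-inverse is $\coweightcat$. I will define
\[
    \GeomDual := \Xfr_{\Cone(-)} \circ \GDual \circ \coweightcat: (\TStk^\basic)\op \to \TStk^\basic \subset \TStk.
\]
This is an antiequivalence onto $\TStk^\basic$, since it is a composite of equivalences with one contravariant autoequivalence. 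Property (1) follows from $(\GDual)^2 = \id$ together with $\coweightcat \circ \Xfr_{\Cone(-)} \simeq \id$.

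Properties (2)--(4) I would read off by computing cocharacter categories. We have $\coweightcat(\GG_m) = \ZZ$ and $\coweightcat(\AA^1) = \ZZ \vec{/} \NN$, since $\GG_m = \Xfr_{\Cone(\ZZ)}$ and $\AA^1 = \Xfr_{\Cone(\ZZ\vec{/}\NN)}$. The map $\coweightcat(j_{\AA^1})$ is the skew quotient map $\ZZ \to \ZZ\vec{/}\NN$. Applying \zcref{thm:gale_duality}(3) gives $\Bsf\ZZ$, $\Bsf\NN$, and the inclusion $\Bsf\NN\hookrightarrow\Bsf\ZZ$. It then remains to check the following:
\begin{itemize}
    \item $\Xfr_{\Cone(\Bsf\ZZ)} = B\GG_m$, which is \zcref{ex:BT_as_geometrization};
    \item $\Xfr_{\Cone(\Bsf\NN)} = \AA^1/\GG_m$, which is \zcref{ex:A1_mod_Gm_cone}, verified via the construction in \zcref{prop:geometrization_of_cone} using $\GDual(\Bsf\NN) = \ZZ\vec{/}\NN$;
    \item the face inclusion $\Bsf\NN \hookrightarrow \Bsf\ZZ$ geometrizes to the natural map $\AA^1/\GG_m \to B\GG_m$, which I would check by unwinding the Tannakian description in \zcref{prop:geometrization_of_cone}.
\end{itemize}

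For uniqueness, I would transport any other such $\GeomDual'$ back to $\TCat$ via $\coweightcat$. This yields a contravariant involution satisfying exactly the axioms of \zcref{thm:gale_duality}(3), so it must agree with $\GDual$. Here I rely on the fact that \zcref{thm:gale_duality}(3) only uses corepresentability by $\ZZ$ and $\ZZ\vec{/}\NN$ (\zcref{ex:i0_corepresented}, \zcref{ex:P0_corepresented}), which transfers verbatim.

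Finally, for the interchange of $\CLine$ and $\coweightcat$, write $\Ufr = \Xfr_K$ with $\Oc(K) = \coweightcat(\Ufr)$. By \zcref{lem:linear_reconstruction_geometric}, $\CLine(\Ufr) = \GDual(\Oc(K)) = \GDual(\coweightcat(\Ufr))$. By construction, $\coweightcat(\GeomDual(\Ufr)) = \GDual(\coweightcat(\Ufr))$, which gives the first identity. The second follows by applying the first to $\GeomDual(\Ufr)$ and using (1).

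The main obstacle I expect is the 2-categorical bookkeeping in the uniqueness claim and in (4). One must make sure that the identifications of $\Xfr_{\Cone(\Bsf\NN)}$ and its map to $B\GG_m$ are canonical rather than merely existing. I would handle this by working entirely through the explicit equivalence $\coweightcat$, so that every identification is induced by the corepresenting objects.
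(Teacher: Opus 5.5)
Your proposal is correct and follows the paper's proof. The paper likewise defines $\GeomDual$ by $\Xfr_{\Cone(\Cc)} \mapsto \Xfr_{\Cone(\GDual(\Cc))}$, which is your composite $\Xfr_{\Cone(-)} \circ \GDual \circ \coweightcat$, and gets the axioms (and implicitly uniqueness) by geometrizing \zcref{thm:gale_duality}; it proves the interchange of $\coweightcat$ and $\CLine$ exactly as you do, via \zcref{prop:cocharacter_equivalence} and \zcref{lem:linear_reconstruction_geometric} followed by symmetry, and your explicit checks of (2)--(4) and reading of the target as $\TStk^\basic$ just fill in details the paper leaves implicit.
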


\begin{proof}
    The antiequivalence $\GeomDual$ is defined by $\GeomDual(\Xfr_{\Cone(\Cc)}) = \Xfr_{\Cone(\GDual(\Cc))}$.
    The axioms for $\GeomDual$ are obtained by geometrizing those listed in \zcref{thm:gale_duality}.
    To see that $\GeomDual$ interchanges combinatorial line categories and cocharacter categories, suppose $\Ufr$ is a basic toric stack.
    Write $\Ufr = \Xfr_{\Cone(\Cc)}$ for some toric category $\Cc$.
    Then we can compute
    \begin{align*}
        \coweightcat(\GeomDual(\Ufr)) &= \coweightcat(\Xfr_{\Cone(\GDual(\Cc))}) \\
        &= \GDual(\Cc) \textrm{ by \zcref{prop:cocharacter_equivalence}} \\
        &= \CLine(\Xfr_{\Cone(\Cc)}) \textrm{ by \zcref{lem:linear_reconstruction_geometric}} \\
        &= \CLine(\Ufr).
    \end{align*}
    The reverse equivalence follows by symmetry.
\end{proof}

\begin{rmk}
    A theory of Gale duality for smooth toric stacks $\AA^n / G$ with $G \subset \GG_m^n$ is already well-known and appears (for instance) in the study of abelian 3d mirror symmetry \cite{gammage2025perverse, gammage2026hypertoric}.
    The author does not yet know which of the results of this theory still hold for Gale dual pairs of singular toric stacks.
\end{rmk}

\begin{rmk}
    Suppose $X$ is a smooth projective toric variety.
    Then $X$ can naturally be viewed as a GIT quotient, i.e.\ a toric open substack of some basic toric stack $\Ufr$.
    The stacky fan of $\GeomDual(\Ufr)$ is closely related to the \emph{GKZ fan} (a.k.a.\ \emph{secondary fan}) of $X$ discussed in \cite[Chapter 14]{cls}, which controls variation of GIT for this GIT problem.
    By geometrizing the GKZ fan, we obtain a \emph{secondary toric variety}, discussed in e.g.\ \cite{kite2022discriminants}.
    It would be interesting to understand the geometry of the relationship between the toric stack $\GeomDual(\Ufr)$ and the secondary toric variety.
\end{rmk}

\part{Applications} \label{part:applications}

\section{The stacky orbit-cone correspondence and some consequences} \label{sec:orbit_cone}

Before we can establish further applications of \zcref{thm:classification}, we will find it useful to develop a better understanding of how the topology and geometry of a toric stack relate to the combinatorics of its stacky fan.

\subsection{The stacky fan perspective on equivariantization}

When $\Xfr = \Xfr_\Sigma$ is the geometrization of a stacky fan, we can translate our rigidification and equivariantization constructions into the language of stacky fans.

\begin{dfn}
    Let $\Sigma$ be a stacky fan.
    We define a stacky fan $\Sigma \fslash \stab$, the \emph{rigidification} of $\Sigma$, as follows:
    \begin{itemize}
        \item $\Nc_{\Sigma \fslash \stab} = \pi_0(\Nc_\Sigma) / \pi_0(\Nc_\Sigma)_\tors$,
        \item $|\Sigma \fslash \stab| = |\Sigma|$, and
        \item For all $\sigma \in |\Sigma \fslash \stab|$, $P_{\Sigma \fslash \stab, \sigma} = P_{\Sigma,\sigma}$ with the induced map to $\Nc_{\Sigma \fslash \stab}$.
    \end{itemize}
    We also define a stacky fan $\Sigma / \Sigma(0)$, the \emph{equivariantization} of $\Sigma$, as follows:
    \begin{itemize}
        \item $\Nc_{\Sigma / \Sigma(0)} = 0$,
        \item $|\Sigma / \Sigma(0)| = |\Sigma|$, and
        \item For all $\sigma \in |\Sigma / \Sigma(0)|$, $P_{\Sigma / \Sigma(0), \sigma} = P_{\Sigma,\sigma}$ with the unique map to $0$.
    \end{itemize}
\end{dfn}

\begin{prop} \label{prop:rigid_eqvar_fan}
    Let $\Sigma$ be a stacky fan.
    Then:
    \begin{enumerate}
        \item $\Xfr_\Sigma \fslash (\Tfr_\Sigma)_\stab = \Xfr_{\Sigma \fslash \stab}$ and
        \item $\Xfr_\Sigma / \Tfr_\Sigma = \Xfr_{\Sigma / \Sigma(0)}$.
    \end{enumerate}
\end{prop}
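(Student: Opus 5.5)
Both rigidification and equivariantization are characterized by universal properties and are Zariski-local on the target. The stacky fans $\Sigma \fslash \stab$ and $\Sigma/\Sigma(0)$ are also built cone by cone with the same poset $|\Sigma|$. So I would reduce to the case of a single stacky cone $K = \Sigma_{\leq\sigma}$ and then glue. Concretely, there are natural morphisms of stacky fans $\Sigma \to \Sigma\fslash\stab \to \Sigma/\Sigma(0)$: they are the identity on $|\Sigma|$ and on each $P_{\Sigma,\sigma}$, and on $\Nc$ they are the projections $\Nc_\Sigma \to \pi_0(\Nc_\Sigma)/\mathrm{tors} \to 0$. The density condition holds trivially. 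Geometrizing gives toric morphisms $\Xfr_\Sigma \to \Xfr_{\Sigma\fslash\stab} \to \Xfr_{\Sigma/\Sigma(0)}$, compatible with the covers by basic opens $\Xfr_{\Sigma_{\leq\sigma}}$. Since both rigidification and quotient by a group action commute with restriction to open substacks pulled back from the target, it suffices to prove (1) and (2) for a stacky cone $K$, compatibly with open immersions of subcones.

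For a cone, I would use \zcref{cor:skew_quotient_of_discrete} to write $\GDual(\Oc(K)) \simeq A\vec{/}M$ with $A$ discrete, so that $\Xfr_K = [U/G]$ with $U = \Spec k\angles{M}$ and $G = \Spec k\angles{A}$, acting through $\phi\colon M^\gp \to A$ (\zcref{prop:geometrization_of_cone}). By \zcref{prop:geometrization_of_cone}(2), $\Tfr_K = T/G$. Hence $(\Tfr_K)_\stab = \ker(G\to T)$, which is Cartier dual to $\coker\phi$, and the coarse torus is $\coker(G\to T)$. For (2), as in \zcref{prop:basic_quotient_implies_basic}, $\Xfr_K/\Tfr_K = U/T$. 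I would then identify $U/T$ with $\Xfr_{K/K(0)}$. Here $\Oc(K/K(0)) = 0\vec{/}P = \Bsf P$, where $P$ is the maximal monoid of $K$. By \zcref{ex:Gale_dual_classical_toric}, $\GDual(\Bsf P) = \Hom(P,\ZZ)\vec{/}\Hom(P,\NN)$, whose geometrization is $[\Spec k\angles{\Hom(P,\NN)}/\Spec k\angles{\Hom(P,\ZZ)}]$. This is $U/T$ provided $M^\sharp = \Hom(P,\NN)$, i.e.\ provided $\PO\GDual(\Oc(K)) = D_\CMon(P)$. That holds by \zcref{cons:gale_dual}, since torus factors of $U$ are absorbed into $T$.

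For (1), the rigidification $[U/G]\fslash\ker(G\to T)$ is $[U/\im(G\to T)]$. Its stacky fan is obtained via Gale duality: dualizing, this corresponds to replacing $A$ by the torsion-free-compatible image, which on the $\Nc$ side kills $\Aut_\Nc(0)$ and the torsion of $\pi_0\Nc$. I would verify this using \zcref{cons:gale_dual}: $\iota_0\GDual(\Cc) = \Omega^\infty\RHom(E^\bullet,\ZZ)$, and $H^{-1}$ of $\RHom(E^\bullet,\ZZ)$ together with the torsion of $H^0$ are exactly the dual of $\ker(G\to T)$ (the $\Ext^1$ part). Passing to $\tau$-truncation and torsion-free quotient on $\Nc$ corresponds to replacing $A$ with the image of $M^\gp$ saturated appropriately. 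An alternative, cleaner route is to use the universal property: by \zcref{thm:classification} and \zcref{cor:stacky_cone_equivalence}, morphisms out of $\Xfr_K$ that kill $B(\Tfr_K)_\stab$ correspond to functors on $\Nc$ factoring through $\pi_0(\Nc)/\mathrm{tors}$. The reason is that toric morphisms to a stack with trivial generic stabilizer have free discrete cocharacter target, and a map from $\Nc$ into a lattice factors uniquely through $\pi_0(\Nc)/\mathrm{tors}$.

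The main obstacle will be this last identification in (1). One must show that the geometrization of $\Sigma\fslash\stab$ is the rigidification, meaning that $\Xfr_\Sigma \to \Xfr_{\Sigma\fslash\stab}$ is a $(\Tfr_\Sigma)_\stab$-gerbe with the correct universal property, and not just a map to a stack with trivial generic stabilizer. I would settle it on cones via the explicit quotient description $[U/G] \to [U/\im(G\to T)]$, checking with \zcref{prop:geometrization_of_cone}(2) that the stabilizer of the dense torus matches. The gluing step is then formal by \zcref{prop:geometrization_cone_immersion}.
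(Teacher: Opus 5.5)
Your proposal is correct, and it handles the cone case by a genuinely different route from the paper. The paper never chooses a presentation. It regards $\Xfr_K$ as a commutative 2-monoid stack and writes $\Xfr_K \fslash (\Tfr_K)_\stab$ and $\Xfr_K/\Tfr_K$ as the cofibers of $B(\Tfr_K)_\stab \to \Xfr_K$ and $\Tfr_K \to \Xfr_K$. It then transports these cofibers through the equivalence $\coweightcat\colon \TStk^\basic \simeq \TCat$, using $\coweightcat(B(\Tfr_K)_\stab) = (\Nc_K)_\tors$ and $\coweightcat(\Tfr_K) = \Nc_K$. Finally it computes the resulting cofibers in $\TCat$ separately on $\iota_0$ and on $\PO$, via the finite colimit formula and the structure theorem.

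You instead fix $\GDual(\Oc(K)) \simeq A \vec{/} M$ and use the standard fact that $[U/G] \fslash H = [U/(G/H)]$ for $H = \ker(G \to T)$ acting trivially. You then do the derived bookkeeping by hand. The key observation is that $\coker\phi$, which is Cartier dual to $(\Tfr_K)_\stab$, accounts for exactly $\Aut_{\Nc_K}(0) = \Hom(\coker\phi,\ZZ)$ and $\pi_0(\Nc_K)_\tors = \Ext^1(\coker\phi,\ZZ)$.

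The paper's argument is shorter and choice-free. However, it relies on identifying rigidification with a cofiber of 2-monoid stacks, compatibly with $\coweightcat$, and it takes this for granted. Your argument uses only well-documented facts about quotient stacks. The price is a non-canonical presentation. You neutralize this correctly by first building the comparison maps $\Xfr_\Sigma \to \Xfr_{\Sigma\fslash\stab} \to \Xfr_{\Sigma/\Sigma(0)}$ as geometrizations of morphisms of stacky fans, so that only a property local on the target remains to be checked.

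There are two small corrections in part (1).

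\begin{enumerate}
\item The group replacing $A$ should be exactly $\im\phi \subset A$, the character group of $\im(G \to T)$, with no saturation. With this choice, $\fib(M^\gp \twoheadrightarrow \im\phi) = \ker\phi[0]$, and so $\GDual(\im\phi \vec{/} M) = \Hom(\ker\phi,\ZZ) \vec{/} D_\CMon(M) = (\pi_0(\Nc_K)/\tors)\vec{/}P$, as required. Saturating $\im\phi$ in $A$ would in general put a nonzero finite $\Ext^1$ back into $\pi_0$.
\item For what you call the main obstacle, matching stabilizers of the dense torus does not by itself identify the map. The clean way to pin it down is as follows. The functor $\im\phi\vec{/}M \to A\vec{/}M$ induced by the inclusion is Gale dual to the natural functor $\Oc(K) \to \Oc(K\fslash\stab)$. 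It geometrizes to the quotient map $[U/G] \to [U/\im(G\to T)]$, which is the rigidification.
\end{enumerate}

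Your alternative universal-property route only tests against toric targets with trivial generic stabilizer, so it does not directly capture the universal property of rigidification. It works only as a Yoneda argument inside that full subcategory, and made precise it is essentially the paper's cofiber argument.
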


\begin{proof}
    (1).
    The rigidification construction is local for $\Tfr_\Sigma$-equivariant charts on $\Xfr_\Sigma$, so we can reduce to the case where $\Sigma = K$ is a stacky cone.
    In this case, $\Xfr_K$ is a commutative 2-monoid stack, $B(\Tfr_K)_\stab$ acts on $\Xfr_K$ via a homomorphism $B(\Tfr_K)_\stab \to \Xfr_K$, and $\Xfr_K \fslash (\Tfr_K)_\stab = \cofib(B(\Tfr_K)_\stab \to \Xfr_K)$.
    Using the equivalence $\coweightcat: \TStk^\basic \simto \TCat$ of \zcref{prop:cocharacter_equivalence}, we see that it suffices to identify $\cofib(\coweightcat(B(\Tfr_K)_\stab) \to \coweightcat(\Xfr_K))$ with $\Oc(K \fslash \stab)$.
    
    Note that $\coweightcat(B(\Tfr_K)_\stab) = (\Nc_K)_\tors$, the full subcategory of $\Nc_K$ consisting of objects which are torsion in $\pi_0(\Nc_K)$.
    Thus, by the colimit computations of \zcref{prop:Tcat_finite_lim_colim}, we obtain
    \begin{align*}
        \iota_0 \cofib(\coweightcat(B(\Tfr_K)_\stab) \to \coweightcat(\Xfr_K)) &= \cofib(\iota_0\coweightcat(B(\Tfr_K)_\stab) \to \iota_0\coweightcat(\Xfr_K)) \\
        &= \cofib((\Nc_K)_\tors \to \Nc_K) \\
        &= \pi_0(\Nc_K) / \pi_0(\Nc_K)_\tors \\
        &= \iota_0 \Oc(K \fslash \stab)
    \end{align*}
    and
    \[
        \PO \cofib(\coweightcat(B(\Tfr_K)_\stab) \to \coweightcat(\Xfr_K)) = \coker(0 \to \PO \Oc(K)) = \PO \Oc(K) = \PO \Oc(K \fslash \stab).
    \]
    Thus \zcref{thm:TCat_structure_thm} implies
    \[
        \cofib(\coweightcat(B(\Tfr_K)_\stab) \to \coweightcat(\Xfr_K)) = \Oc(K \fslash \stab)
    \]
    as needed.

    (2).
    This is much the same as (1), except instead of using $\coweightcat(B(\Tfr_K)_\stab) = (\Nc_K)_\tors$, we use $\coweightcat(\Tfr_K) = \Nc_K$.
\end{proof}

\subsection{Orbits and cones}

We can now gain a better understanding of the relationship between the combinatorial data of a stacky fan $\Sigma$ and the orbits in the corresponding toric stack $\Xfr_\Sigma$.
We saw the following result in the course of the proof of \zcref{prop:geometrization_ess_surj}.

\begin{prop} \label{prop:underlying_poset_of_eqvar}
    Let $\Sigma$ be a stacky fan.
    Equip $(\Xfr_\Sigma / \Tfr_\Sigma)(k)$ with the specialization partial order.
    Then $(\Xfr_\Sigma / \Tfr_\Sigma)(k) = |\Sigma|$ as posets.
\end{prop}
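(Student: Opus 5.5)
By \zcref{prop:rigid_eqvar_fan}(2), we may identify $\Xfr_\Sigma / \Tfr_\Sigma$ with $\Xfr_{\Sigma / \Sigma(0)}$. The stacky fan $\Sigma / \Sigma(0)$ has the same underlying poset $|\Sigma|$, so it suffices to prove the claim when $\Nc_\Sigma = 0$. In that case $\Xfr := \Xfr_\Sigma$ is a toric stack with trivial torus, by \zcref{prop:geometrization_of_fan}. For such $\Sigma$, the plan is to build a bijection $|\Sigma| \to \Xfr(k)$ and check that it is order-preserving in both directions.

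First I treat a single cone. Let $K = \Sigma_{\leq \sigma}$, so that $\Xfr_K$ is a basic open of $\Xfr$ with trivial torus. \zcref{lem:pts_and_nbhds_trivial_torus_basic_case} identifies $\Xfr_K(k)$ with $|K|$. Moreover, for $\tau \leq \sigma$, the point labelled $\tau$ has smallest basic open neighborhood $\Xfr_{K_{\leq \tau}} = \Xfr_{\Sigma_{\leq \tau}}$. I would make this identification explicit by writing $\Xfr_K = U/T$ and using the classical orbit--cone correspondence. Under it, the $k$-point attached to a face $\tau$ is the $T$-orbit whose smallest $T$-stable affine open neighborhood is $U_\tau$, and the geometrization of the subcone $K_{\leq\tau}$ is exactly $U_\tau/T$ by \zcref{prop:geometrization_cone_immersion}. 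This point-to-subcone compatibility is the step I expect to need the most care. One must match the labelling of faces of the Gale dual monoid $\PO\GDual(\Oc(K))$ with the faces in $|K|$, and this runs through the face-reversing correspondence $S\mapsto S^\bot$ of \zcref{prop:gale_duality_localization}. The point is that localizations on the geometric side correspond to face inclusions on the fan side.

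Next I glue. $\Xfr$ is the union of the opens $\Xfr_{\Sigma_{\leq\sigma}}$, and these intersect in $\Xfr_{\Sigma_{\leq\sigma}\cap\Sigma_{\leq\sigma'}} = \Xfr_{\Sigma_{\leq \sigma\wedge\sigma'}}$. Hence the local labellings agree on overlaps, since the point $\tau$ in $\Xfr_{\Sigma_{\leq\sigma}}$ lies in the open $\Xfr_{\Sigma_{\leq\tau}}$. Together these give a surjection $|\Sigma| \to \Xfr(k)$. It is injective because the point labelled $\tau$ has minimal basic neighborhood $\Xfr_{\Sigma_{\leq\tau}}$, and distinct $\tau$ give distinct subcones by \zcref{prop:pts_and_cones_stacky_fan}. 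Here I use that the minimal basic open $\Ufr_x$ of \zcref{prop:pts_and_nbhds_trivial_torus} can be computed inside any basic open containing $x$.

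Finally I check the orders. By definition, $x\le x'$ in the specialization order iff $\Ufr_x\subset\Ufr_{x'}$. Under the bijection this says $\Xfr_{\Sigma_{\leq\tau}}\subset\Xfr_{\Sigma_{\leq\tau'}}$. Since open immersions of basic toric stacks come from subcone inclusions (\zcref{lem:geometrization_cone_immersion_converse} together with full faithfulness, \zcref{prop:geometrization_fully_faithful_cone}), this containment is equivalent to $\Sigma_{\leq\tau}\subset\Sigma_{\leq\tau'}$, i.e.\ to $\tau\le\tau'$.
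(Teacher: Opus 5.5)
Your argument is correct in outline, but it takes a different route from the paper. The paper gives no separate proof. It points to the proof of \zcref{prop:geometrization_ess_surj}, where a stacky fan is built from an arbitrary toric stack $\Xfr$ whose underlying poset is \emph{defined} to be $(\Xfr/\Tfr)(k)$ with the specialization order. Applying this to $\Xfr=\Xfr_\Sigma$, full faithfulness (\zcref{prop:geometrization_fully_faithful}) makes the reconstructed fan equivalent to $\Sigma$. An equivalence in $\StkFan$ includes an isomorphism of underlying posets, which gives the claim. That argument is short, but it rests on the whole classification theorem and yields only an abstract poset isomorphism. Your route avoids essential surjectivity entirely: you reduce to $\Nc_\Sigma=0$ via \zcref{prop:rigid_eqvar_fan}(2), apply the orbit--cone correspondence chart by chart on $U_\tau/T$, and glue along $\Xfr_{\Sigma_{\le\sigma}}\cap\Xfr_{\Sigma_{\le\sigma'}}=\Xfr_{\Sigma_{\le\sigma\wedge\sigma'}}$. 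It produces an explicit labelling of points by $\tau$, which is the better form for questions of naturality in $\Sigma$, such as the identification of $(\Xfr_f/\Tfr_f)(k)$ with $|f|$ used in \zcref{prop:open_imm_biconditional}.

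One step needs tightening. Your injectivity claim and your order-reflection both reduce to this: if $\Xfr_{\Sigma_{\le\tau}}\subseteq\Xfr_{\Sigma_{\le\tau'}}$ as open substacks of $\Xfr_\Sigma$, then $\tau\le\tau'$. Similarly, ``distinct subcones'' does not by itself give ``distinct open substacks.'' The results you cite do not supply this. \zcref{lem:geometrization_cone_immersion_converse} and \zcref{prop:geometrization_fully_faithful_cone} only say that the inclusion is $\Xfr_j$ for \emph{some} open immersion of stacky cones $j\colon\Sigma_{\le\tau}\to\Sigma_{\le\tau'}$, and nothing at the cone level forces $|j|$ to respect the labels in $|\Sigma|$. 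For example, when $\Sigma$ is the fan of $\PP^1$, both rays of $\Sigma/\Sigma(0)$ give $\Cone(\Bsf\NN)$. An abstract open immersion of $\AA^1/\GG_m$ into itself cannot detect that the two charts of $\PP^1/\GG_m$ are different. The fix uses what you already have. If $\Xfr_{\Sigma_{\le\tau}}\subseteq\Xfr_{\Sigma_{\le\tau'}}$, then $\Xfr_{\Sigma_{\le\tau}}=\Xfr_{\Sigma_{\le\tau}}\cap\Xfr_{\Sigma_{\le\tau'}}=\Xfr_{\Sigma_{\le\tau\wedge\tau'}}$. Inside the single chart $\Xfr_{\Sigma_{\le\tau}}$, your explicit orbit--cone description says distinct faces give distinct $T$-stable affine opens, so $\tau=\tau\wedge\tau'\le\tau'$. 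Alternatively, use fan-level full faithfulness (\zcref{prop:geometrization_fully_faithful}) to get $\iota_{\tau'}\circ j\cong\iota_\tau$ for the inclusions $\iota$ into $\Sigma$. Since 2-morphisms require equal maps of posets, this forces $|j|(\tau)=\tau$.
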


The $\Tfr_\Sigma$-orbits of $k$-points in $\Xfr_\Sigma$ are precisely the preimages of $k$-points of $\Xfr_\Sigma / \Tfr_\Sigma$ in $\Xfr_\Sigma$.
Thus \zcref{prop:underlying_poset_of_eqvar} can be understood as a stacky ``orbit-cone correspondence,'' identifying $\Tfr_\Sigma$-orbits of $k$-points in $\Xfr_\Sigma$ with elements of $\Sigma$.

\begin{dfn}
    Let $\Sigma$ be a stacky fan, and let $\sigma \in \Sigma$.
    We write $\Ofr(\sigma)$ for the $\Tfr_\Sigma$-orbit in $\Xfr_\Sigma$ corresponding to $\sigma$.
\end{dfn}

We can describe these orbits $\Ofr(\sigma)$ explicitly:

\begin{prop} \label{prop:orbit_as_SCDual}
    Let $\Sigma$ be a stacky fan, and let $\sigma \in \Sigma$.
    Then there is a natural equivalence 
    \[
        \Ofr(\sigma) = \SCDual(\ul{\GDual(\cofib(P_\sigma^\gp \to \Nc_\Sigma))}).
    \]
\end{prop}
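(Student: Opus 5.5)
We reduce to a single basic open and then compute the orbit by the same Tannakian argument used in \zcref{prop:geometrization_of_cone}. The orbit $\Ofr(\sigma)$ is contained in the basic open $\Xfr_{\Sigma_{\leq \sigma}}$, and the orbit does not change if we pass to this subcone. So we may assume $\Sigma = K$ is a stacky cone and $\sigma$ is its maximal element. Then $\Ofr(\sigma)$ is the closed orbit of $\Xfr_K$.

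Next we choose a presentation. By \zcref{cor:skew_quotient_of_discrete}, write $\GDual(\Oc(K)) \simeq A \vec{/} M$, so that $\Xfr_K = [U/G]$ with $U = \Spec k\angles{M}$ and $G = \Spec k\angles{A}$. The preimage of $\Ofr(\sigma)$ in $U$ is the closed torus orbit $O_{\min}$ corresponding to the minimal face $M^\pm$. By the classical orbit--cone correspondence, $O_{\min} = \Spec k\angles{M^\pm}$, and it carries the action of $T = \Spec k\angles{M^\gp}$ through the quotient $T \to \Spec k\angles{M^\pm}$. Hence
\[
\Ofr(\sigma) = [\Spec k\angles{M^\pm}/G].
\]
This is the stacky torus attached to the toric groupoid $A \vec{/} M^\pm = \cofib(M^\pm \to A)$, namely $\Ofr(\sigma) = \SCDual(\ul{\cofib(M^\pm \to A)})$ by \zcref{ex:shifted_Cartier_dual_of_2term}, equivalently $\Xfr_{\Cone(\GDual(A\vec{/}M^\pm))}$. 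The same Tannakian argument gives a more intrinsic description: $\QCoh(\Ofr(\sigma))$ is the category of $A$-graded $k\angles{M}/(M\setminus M^\pm)$-modules. This identifies $\CLine(\Ofr(\sigma))$ with $\iota_0$ of the \emph{quotient} of $\GDual(\Oc(K))$ that kills $\PO$, namely the groupoid $\cofib(M^\pm \to A)$.

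It remains to identify $\cofib(M^\pm \to A)$ with $\GDual(\cofib(P_\sigma^\gp \to \Nc_K))$, and this is the main obstacle. By the involutivity in \zcref{thm:gale_duality}, it is equivalent to show
\[
\GDual(\cofib(M^\pm \to A)) = \cofib(P_\sigma^\gp \to \Nc_K)
\]
as toric groupoids. Write $\Cc = \Oc(K) = \Nc_K \vec{/} P_\sigma$ and $\Cc^\vee = \GDual(\Cc) = A \vec{/} M$. Then $\cofib(P_\sigma^\gp \to \Nc_K) = \Cc^\gpd$ by \zcref{prop:Cgpd_calculation}. By \zcref{prop:gale_duality_localization}, $\GDual(\Cc^\gpd) = \iota_0 \GDual(\Cc) = \iota_0(A\vec{/}M) = \cofib(M^\pm \to A)$, using \zcref{prop:skew_quotient_invariants}(1) for the last equality. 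This is exactly the required identity.

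Finally we check naturality. The equivalence should not depend on the choice of presentation $A \vec{/} M$, which we handle by rephrasing everything via $\CLine$ and the canonical functor $\GDual(\Oc(K)) \to \QCoh(\Xfr_K)$. We must also verify that restricting along $\Xfr_{\Sigma_{\leq\sigma}} \subset \Xfr_\Sigma$ is compatible with the identifications above. We also need to check that the orbit's torus action matches the $\SCDual$ structure, which follows from the corresponding compatibility for stacky tori established in \zcref{prop:geometrization_of_cone}(2).
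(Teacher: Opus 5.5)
Your proposal is correct and follows essentially the same route as the paper. You reduce to a stacky cone with maximal point, present $\GDual(\Oc(K)) \simeq A \vec{/} M$, identify the closed orbit with $[\Spec k\angles{M^\pm}/G] = \SCDual(\ul{\cofib(M^\pm \to A)})$ via \zcref{ex:shifted_Cartier_dual_of_2term}, and match $\cofib(M^\pm \to A) = \iota_0\GDual(\Oc(K)) = \GDual(\Oc(K)^\gpd) = \GDual(\cofib(P_\sigma^\gp \to \Nc_K))$ using \zcref{prop:Cgpd_calculation} and \zcref{prop:gale_duality_localization}; the only differences are cosmetic (you do the geometric step before the Gale-duality step, and neither the detour through involutivity nor the side remark about $\CLine(\Ofr(\sigma))$ is needed).
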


\begin{proof}
    This claim is $\Tfr_\Sigma$-equivariantly local on $\Xfr_\Sigma$, so we can reduce to the case where $\Sigma = K$ is a stacky cone with maximal point $\sigma = \kappa$.
    In this case we have
    \begin{align*}
        \GDual(\cofib(P_\kappa^\gp \to \Nc_K)) &= \GDual(\Oc(K)^\gpd) \textrm{ by \zcref{prop:Cgpd_calculation}} \\
        &= \iota_0 \GDual(\Oc(K)) \textrm{ by \zcref{prop:gale_duality_localization},}
    \end{align*}
    so it suffices to show $\Ofr(\kappa) = \SCDual(\ul{\iota_0 \GDual(\Oc(K))})$.

    Use \zcref{cor:skew_quotient_of_discrete} to write $\GDual(\Oc(K)) = A \vec{/} M$ for some discrete abelian group $A$ and some toric monoid $M$.
    Then $\iota_0 \GDual(\Oc(K)) = \cofib(M^\pm \to A)$.
    The proof of \zcref{prop:geometrization_of_cone} shows that $\Xfr_K = [U / G]$, where $U = \Spec k\angles{M}$ and $G = \Spec k\angles{A}$.
    Let $T$ be the dense torus of $U$.
    The minimal torus orbit of $U$ is precisely the locus on which all characters of $T$ which are not invertible on $U$ vanish.
    That is, the minimal torus orbit is naturally isomorphic to the torus 
    \[
        T' = \Spec k\angles{M^\pm} = \Spec k\angles{M} / (M \setminus M^\pm).
    \]
    Therefore
    \begin{align*}
        \Ofr(\kappa) &= T' / G \\
        &= \Spec k\angles{M^\pm} / \Spec k\angles{A} \\
        &= \SCDual(\cofib(M^\pm \to A)) \textrm{ by \zcref{ex:shifted_Cartier_dual_of_2term}} \\
        &= \SCDual(\iota_0 \GDual(\Oc(K))).
    \end{align*}
    One can check that this equivalence is independent of the choice of skew quotient presentation of $\GDual(\Oc(K))$.
\end{proof}

\begin{cor} \label{cor:orbit_codimension}
    Let $\Sigma$ be a stacky fan, and let $\sigma \in \Sigma(n)$ for some $n$.
    Then $\codim_{\Xfr_\Sigma} \Ofr(\sigma) = n$.
\end{cor}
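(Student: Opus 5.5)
To prove this, I would compute both $\dim \Xfr_\Sigma$ and $\dim \Ofr(\sigma)$ by the dimension formula for stacky tori, and then subtract. First I reduce to the local case. Codimension is local on $\Xfr_\Sigma$, and $\Ofr(\sigma)$ lies in the basic open $\Xfr_{\Sigma_{\leq \sigma}}$. So I may replace $\Sigma$ by the stacky cone $K = \Sigma_{\leq\sigma}$ with maximal point $\kappa = \sigma$. This does not change $\Nc$ or $P_\sigma$, and by \zcref{prop:geometrization_of_fan} it does not change the dimension either. Since $\Xfr_\Sigma$ is irreducible, with dense open $\Tfr_\Sigma$, I get $\codim \Ofr(\sigma) = \dim \Xfr_\Sigma - \dim \Ofr(\sigma)$. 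Here I take as known that codimension is additive in this way for locally closed substacks of an irreducible finite type stack. This is justified by pulling back along a smooth chart $U \to [U/G]$, where the orbit pulls back to a torus orbit in the affine toric variety $U$.

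Next I compute the two dimensions. For any toric groupoid $\Nc$, the stacky torus $\SCDual(\ul{\GDual(\Nc)})$ has dimension $\rk \pi_0(\Nc) - \rk \Aut_\Nc(0)$. This is exactly the identity $\dim \Tfr_K = \rk\pi_0(\Nc_K) - \rk\Aut_{\Nc_K}(0)$ of \zcref{prop:geometrization_of_cone}(2), since $\Tfr_K = \SCDual(\ul{\GDual(\Nc_K)})$. It applies to every toric groupoid, because every toric groupoid is $\Nc_{K'}$ for the cone $K' = \Cone(\Nc)$. By \zcref{prop:orbit_as_SCDual}, I therefore get
\[
\dim \Ofr(\sigma) = \rk \pi_0(\Nc') - \rk \Aut_{\Nc'}(0), \qquad \Nc' = \cofib(P_\sigma^\gp \to \Nc_\Sigma).
\]

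The remaining step is the Euler characteristic count. Via \zcref{prop:deligne_classification}, I identify $\Nc_\Sigma$ with a complex $F^\bullet \in \tTerm$. The quantity $\rk H^0 - \rk H^{-1}$ is additive in cofiber sequences in $\Dsf(\ZZ)$. The cofiber sequence $P_\sigma^\gp[0] \to F^\bullet \to \cofib$ then gives $\dim \Ofr(\sigma) = \dim \Xfr_\Sigma - \rk P_\sigma^\gp = \dim\Xfr_\Sigma - n$. One point needs checking: the cofiber $\cofib(P_\sigma^\gp \to F^\bullet)$ may acquire cohomology in degree $-2$, namely the kernel on $H^0$ shifted. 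The toric groupoid $\Nc'$ is then its truncation, with the degree $-2$ piece lying outside $\tTerm$.

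This is the step I expect to be the main obstacle, and I would handle it as follows. Since $\sigma \in \Sigma(n)$ sits in a stacky cone, the map $P_\sigma^\gp \to F^\bullet$ comes from the construction of a toric category. I would argue that the relevant cofiber is the one already used in \zcref{prop:Cgpd_calculation}, namely $\iota_0(\Oc(K)^\gpd)$, as in the proof of \zcref{prop:orbit_as_SCDual}. That object has free $H^{-1}$, and the cofiber in $\TGpd$ is computed so that the Euler characteristic is still additive. Concretely, I would represent $P_\sigma^\gp \to F^\bullet$ by an honest chain map into a two-term free complex and write the mapping cone with $P_\sigma^\gp$ placed in degree $-1$. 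That cone is again two-term free, so the count $\rk F^0 - \rk(F^{-1}\oplus P_\sigma^\gp)$ gives the result directly.
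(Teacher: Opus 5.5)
Your proof is correct, but it is organized differently from the paper's. The paper does not keep $\Nc_\Sigma$ general. It uses the smooth equivariantization map $p\colon \Xfr_\Sigma \to \Xfr_{\Sigma/\Sigma(0)}$, under which $\Ofr(\sigma)$ is the preimage of the $k$-point $\sigma$, so codimension is unchanged. This reduces to the case $\Nc_\Sigma = 0$. There \zcref{prop:orbit_as_SCDual} gives $\Ofr(\sigma) \simeq \SCDual(\ul{\GDual(\Bsf\ZZ^n)}) = B\GG_m^n$, of dimension $-n$, inside a stack of dimension $0$.

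You instead keep $\Nc_\Sigma$ arbitrary. You localize to $\Xfr_{\Sigma_{\leq\sigma}}$ only to justify $\codim = \dim \Xfr_\Sigma - \dim \Ofr(\sigma)$ via a chart $U \to [U/G]$. You then get the answer from additivity of $\rk H^0 - \rk H^{-1}$ along $P_\sigma^\gp[0] \to F^\bullet \to \cofib$. Your route avoids the rigidification/equivariantization results (\zcref{prop:rigid_eqvar_fan}) at the cost of a small Euler-characteristic count. The paper's route makes the count trivial but leans on that machinery. Both rest on the same two inputs: \zcref{prop:orbit_as_SCDual} and the dimension formula for stacky tori.

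One correction: the obstacle you flag does not exist. The cofiber of a map from an object in degree $0$ into a complex in degrees $[-1,0]$ is the cone $F^{-1} \oplus P_\sigma^\gp \to F^0$. This is again concentrated in degrees $[-1,0]$. The kernel of $P_\sigma^\gp \to H^0(F^\bullet)$ contributes to $H^{-1}$, not to degree $-2$. That $H^{-1}$ stays finitely generated free, being an extension of a subgroup of $P_\sigma^\gp$ by $H^{-1}(F^\bullet)$. So no truncation occurs, the cofiber in $\TGpd$ is exactly this cone, and your final rank count $\rk F^0 - \rk F^{-1} - n$ is the entire argument.
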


\begin{proof}
    The equivariantization map $p: \Xfr_\Sigma \to \Xfr_{\Sigma / \Sigma(0)}$ is smooth and $\Ofr(\sigma)$ is the preimage $p\inv(\sigma)$, so
    \[
        \codim_{\Xfr_\Sigma} \Ofr(\sigma) = \codim_{\Xfr_{\Sigma / \Sigma(0)}} \sigma.
    \]
    Thus we may assume without loss of generality that $\Sigma = \Sigma / \Sigma(0)$, i.e.\ $\Nc_\Sigma = 0$.
    
    We may compute
    \begin{align*}
        \Ofr(\sigma) &= \SCDual\big(\ul{\GDual(\cofib(P_\sigma^\gp \to 0))}\big) \textrm{ by \zcref{prop:orbit_as_SCDual}} \\
        &= \SCDual(\ul{\GDual}(\Bsf P_\sigma^\gp)) \\
        &\simeq \SCDual(\ul{\GDual}(\Bsf \ZZ^n)) \textrm{ because $\sigma \in \Sigma(n)$} \\
        &= B\GG_m^n,
    \end{align*}
    so $\dim \Ofr(\sigma) = -n$.
    Because 
    $\dim \Xfr_\Sigma = \dim \Tfr_\Sigma = 0$, we get $\codim_{\Xfr_\Sigma} \Ofr(\sigma) = 0 - (-n) = n$.
\end{proof}

\subsection{Classification of open immersions}

The stacky orbit-cone correspondence allows us to show that open immersions of toric stacks exactly correspond to open immersions of stacky fans.

\begin{prop} \label{prop:open_imm_biconditional}
    Let $f: \Sigma \to \Sigma'$ be a morphism of stacky fans.
    Then $f$ is an open immersion of stacky fans if and only if $\Xfr_f: \Xfr_\Sigma \to \Xfr_{\Sigma'}$ is an open immersion of toric stacks.
\end{prop}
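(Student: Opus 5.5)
The forward direction reduces to the cone case. Suppose $f$ is isomorphic to the inclusion of a downward closed subset $\Sigma \subset \Sigma'$, with $\Nc_\Sigma = \Nc_{\Sigma'}$ and $P_\Sigma = P_{\Sigma'}|_{\Sigma}$. Then $\Xfr_\Sigma$ is the union of the $\Xfr_K$ for subcones $K \subset \Sigma$. Each such $K$ is also a subcone of $\Sigma'$. By \zcref{prop:geometrization_cone_immersion}, each $\Xfr_K \to \Xfr_{K'}$ for $K \subset K' \subset \Sigma'$ is an open immersion. Hence $\Xfr_\Sigma$ is the open substack $\bigcup_{K \subset \Sigma} \Xfr_K$ of $\Xfr_{\Sigma'}$. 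I must check that the colimit defining $\Xfr_\Sigma$ maps isomorphically onto this union. This holds because the gluing data are the restrictions of those for $\Sigma'$: intersections $\Xfr_{K_1} \cap \Xfr_{K_2} = \Xfr_{K_1 \cap K_2}$ in $\Xfr_{\Sigma'}$, with $K_1 \cap K_2 \subset \Sigma$. The dense torus is unchanged since $\Sigma(0) = \Sigma'(0)$, so $\Xfr_f$ is an open immersion of toric stacks.

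For the converse, suppose $\Xfr_f$ is an open immersion. By full faithfulness (\zcref{thm:classification}, via \zcref{prop:geometrization_fully_faithful}), it suffices to show that $\Xfr_f$ is isomorphic, as a toric morphism, to $\Xfr_\iota$ for $\iota$ the inclusion of a subfan $\Sigma'' \subset \Sigma'$. Identify $\Xfr_\Sigma$ with a $\Tfr_{\Sigma'}$-stable open $\Vfr \subset \Xfr_{\Sigma'}$. An open immersion containing the dense torus identifies $\Tfr_\Sigma \simeq \Tfr_{\Sigma'}$ compatibly with $j$, so $\Vfr$ is torus-stable.

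Next, pass to equivariantizations. The set of $k$-points of $\Vfr / \Tfr$ is a downward closed (open) subset of $(\Xfr_{\Sigma'}/\Tfr)(k) = |\Sigma'|$, by \zcref{prop:underlying_poset_of_eqvar}. Call this subset $|\Sigma''|$. A torus-stable open is the union of the orbits $\Ofr(\sigma)$ it contains, and an orbit lies in an open iff its point does. Hence $\Vfr = \bigcup_{\sigma \in |\Sigma''|} \Xfr_{\Sigma'_{\leq\sigma}} = \Xfr_{\Sigma''}$. Then $\Xfr_f$ factors as an isomorphism $\Xfr_\Sigma \simto \Xfr_{\Sigma''}$ followed by $\Xfr_\iota$. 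By full faithfulness the isomorphism comes from an isomorphism $\Sigma \simto \Sigma''$ of stacky fans, and $f \cong \iota \circ (\text{iso})$. Alternatively, one can argue locally with \zcref{lem:geometrization_cone_immersion_converse} on basic opens and glue using \zcref{prop:lim_colim_maps_in_StkFan}.

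The main obstacle is showing that a torus-stable open substack is exactly the union of the basic opens $\Xfr_{\Sigma'_{\leq \sigma}}$ over the cones it meets. This requires two facts. First, the smallest basic open containing $\Ofr(\sigma)$ is $\Xfr_{\Sigma'_{\leq\sigma}}$. Second, $\Xfr_{\Sigma'_{\leq\sigma}}$ is contained in any torus-stable open containing $\Ofr(\sigma)$. Both can be checked on the equivariantization $\Xfr_{\Sigma'/\Sigma'(0)}$ (\zcref{prop:rigid_eqvar_fan}). There, opens are determined by their finitely many $k$-points, using \zcref{prop:pts_and_nbhds_trivial_torus} together with the fact that the quotient map is smooth surjective and so induces a bijection on torus-stable opens.
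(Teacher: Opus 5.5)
Your proof is correct, but it takes a different route from the paper in both directions.

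\textbf{The paper's route.} The paper treats an open immersion, on either side, as a morphism that is injective (on $|\Sigma|$, resp.\ on $k$-points) and Zariski-locally on the source an open immersion, and it checks these two properties separately.
\begin{itemize}
\item For ($\Rightarrow$) it applies \zcref{prop:geometrization_cone_immersion} cone by cone, and then proves injectivity on $k$-points by an orbit argument.
\item For ($\Leftarrow$) it uses \zcref{lem:geometrization_cone_immersion_converse}, the cocharacter-category argument showing that $\Map_\TStk(\AA^1,-)$ of the smaller basic open is a face. It then gets injectivity of $|f|$ from the observation that, since the tori are identified, the orbit maps $\Ofr(\sigma)\to\Ofr(f(\sigma))$ are surjective on $k$-points.
\end{itemize}

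\textbf{Your route.} In the forward direction you identify $\Xfr_\Sigma$ outright with the union of the basic opens $\Xfr_K$, $K\subset\Sigma$, inside $\Xfr_{\Sigma'}$. This makes injectivity automatic. In the converse you identify the image with the geometrization of an explicit subfan $\Sigma''$ via the orbit--cone correspondence. You then transport the factorization back using \zcref{prop:geometrization_fully_faithful}. This is legitimate and not circular, since full faithfulness is proved earlier without reference to this proposition.

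\textbf{What each buys.} Your approach bypasses the face computation of \zcref{lem:geometrization_cone_immersion_converse} entirely. It also yields extra information: the image of an open immersion is precisely $\Xfr_{\Sigma''}$, where $\Sigma''$ is the subfan of cones whose orbits the image meets. The paper's approach is more local and does not lean on the global equivalence. The price of your approach is that it needs the topological form of the orbit--cone correspondence:
\begin{itemize}
\item Zariski opens of $\Xfr_{\Sigma'}/\Tfr$ are exactly the downward-closed subsets of $|\Sigma'|$;
\item $\Ofr(\sigma)\subset\Xfr_{\Sigma'_{\leq\tau}}$ if and only if $\sigma\leq\tau$.
\end{itemize}
The paper states this only as a poset identification in \zcref{prop:underlying_poset_of_eqvar}, but it does follow from \zcref{lem:pts_and_nbhds_trivial_torus_basic_case}.

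\textbf{Two points to spell out.} You pass over these quickly.
\begin{itemize}
\item The torus component $\phi\colon\Tfr_\Sigma\to\Tfr_{\Sigma'}$ of $\Xfr_f$ is an isomorphism. It is a homomorphism, and it is an open immersion because $j'\circ\phi\cong\Xfr_f\circ j$ is one. An open subgroup of the connected stacky torus $\Tfr_{\Sigma'}$ is everything.
\item The factorization $\Xfr_\Sigma\to\Xfr_{\Sigma''}$ is a morphism in $\TStk$, with torus component $\phi$, and not merely a morphism of stacks. Equivariance descends because $\Xfr_\iota$ is a monomorphism.
\end{itemize}
Only with both points in hand can full faithfulness be applied to it.
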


\begin{proof}
    First note that, in both cases, an open immersion is a morphism which is:
    \begin{itemize}
        \item injective (on elements of the underlying poset for stacky fans and on $k$-points for toric stacks), and
        \item an open immersion Zariski-locally on the domain.
    \end{itemize}

    ($\Rightarrow$).
    Suppose $f: \Sigma \to \Sigma'$ is an open immersion.
    To see that $\Xfr_f$ is an open immersion Zariski-locally on $\Xfr_\Sigma$, let $K \subset \Sigma$ be an open subcone.
    The image $f(K)$ lands in some subcone $K' \subset \Sigma'$.
    The induced morphism $\Xfr_K \to \Xfr_{K'}$ is an open immersion by \zcref{prop:geometrization_cone_immersion}, and composing this with the open immersion $\Xfr_{K'} \to \Xfr_{\Sigma'}$ shows that $\Xfr_K \to \Xfr_{\Sigma'}$ is an open immersion.
    Because $\Xfr_\Sigma$ is a union of the $\Xfr_K$'s, we see that $\Xfr_f$ is Zariski-locally an open immersion.

    To see that $\Xfr_f$ is injective on $k$-points, let $x_1, x_2 \in \Xfr_\Sigma(k)$, and suppose $\Xfr_f(x_1) = \Xfr_f(x_2)$.
    Both $x_1$ and $x_2$ give rise to orbits $[x_1], [x_2] \in (\Xfr_\Sigma / \Tfr_\Sigma)(k)$.
    By \zcref{prop:underlying_poset_of_eqvar}, the natural morphism $(\Xfr_f / \Tfr_f)(k): (\Xfr_\Sigma / \Tfr_\Sigma)(k) \to (\Xfr_{\Sigma'} / \Tfr_{\Sigma'})(k)$ can be identified with $|f|: |\Sigma| \to |\Sigma'|$.
    Since $|f|$ is injective, we must have $[x_1] = [x_2]$ in $(\Xfr_\Sigma / \Tfr_\Sigma)(k)$.
    Thus $x_1$ and $x_2$ lie in the same orbit $\Ofr(\sigma) \subset \Xfr_\Sigma$.
    But the restriction of $\Xfr_f$ to the basic open $\Xfr_{\Sigma_{\leq \sigma}}$ containing $\Ofr(\sigma)$ is an open immersion, hence injective.
    Thus we must have $x_1 = x_2$.
    
    ($\Leftarrow$).
    Suppose $\Xfr_f$ is an open immersion of toric stacks.
    To see that $\Xfr_f$ is Zariski-locally an open immersion, note that $\Xfr_f$ maps basic opens to basic opens, so we may reduce to the case where $\Sigma$ and $\Sigma'$ are stacky cones.
    In this case the claim follows from \zcref{lem:geometrization_cone_immersion_converse}.

    To see that $f$ is (globally) injective, we again use \zcref{prop:underlying_poset_of_eqvar} to identify $(\Xfr_\Sigma / \Tfr_\Sigma)(k)$ with $|\Sigma|$.
    It suffices to show that the quotiented morphism $\Xfr_f / \Tfr_f$ is injective.
    Equivalently, for any $\Tfr_\Sigma$-orbits $\Ofr(\sigma_1), \Ofr(\sigma_2)$ in $\Xfr_\Sigma$, we need to show that $\Xfr_f(\Ofr(\sigma_1)(k)) \cap \Xfr_f(\Ofr(\sigma_2)(k)) \neq \varnothing$ implies $\Ofr(\sigma_1) = \Ofr(\sigma_2)$.
    Since $\Xfr_f$ is $\Tfr_\Sigma$-equivariant and induces an isomorphism $\Tfr_\Sigma \simto \Tfr_{\Sigma'}$, all natural maps $\Ofr(\sigma_i) \to \Ofr(f(\sigma_i))$ are quotient maps, hence are surjective on $k$-points.
    Thus, if $\Xfr_f(\Ofr(\sigma_1)(k)) \cap \Xfr_f(\Ofr(\sigma_2)(k)) \neq \varnothing$, we must in fact have $\Xfr_f(\Ofr(\sigma_1)) = \Xfr_f(\Ofr(\sigma_2))$.
    But then injectivity of $\Xfr_f$ forces $\Ofr(\sigma_1) = \Ofr(\sigma_2)$, as needed.
\end{proof}

\begin{cor}
    The geometrization functor $\Xfr_{-}: \StkFan \to \TStk$ is an equivalence of Zariski sites.
\end{cor}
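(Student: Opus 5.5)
The plan is to unpack what ``equivalence of Zariski sites'' requires. It means two things. First, $\Xfr_{-}$ is an equivalence of the underlying (homotopy) categories, which is \zcref{thm:classification}. Second, $\Xfr_{-}$ and its quasi-inverse both preserve and reflect covering families. Here the Zariski topology on $\StkFan$ is the one of \zcref{rmk:Zariski_on_StkFan}. On $\TStk$, a cover of $\Xfr$ is a family of open immersions of toric stacks $\{\Xfr_i \to \Xfr\}$ that is jointly surjective, which we may test on $k$-points since everything is of finite type over algebraically closed $k$.

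The first step is to match the morphisms in the covering families. Since $\Xfr_{-}$ is an equivalence, every open immersion of toric stacks $\Xfr_{\Sigma_i} \to \Xfr_\Sigma$ is isomorphic to $\Xfr_{j_i}$ for a unique-up-to-equivalence morphism $j_i: \Sigma_i \to \Sigma$. By \zcref{prop:open_imm_biconditional}, $j_i$ is an open immersion of stacky fans exactly when $\Xfr_{j_i}$ is an open immersion of toric stacks. So open immersions correspond on the nose in both directions.

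The second step is to match joint surjectivity. I would use the stacky orbit-cone correspondence. By \zcref{prop:underlying_poset_of_eqvar}, $(\Xfr_\Sigma/\Tfr_\Sigma)(k) = |\Sigma|$, and this identification is natural in $\Sigma$, as already used in the proof of \zcref{prop:open_imm_biconditional}. For an open immersion $j_i$, the toric open substack $\Xfr_{j_i}(\Xfr_{\Sigma_i})$ is $\Tfr_\Sigma$-stable. It is therefore the union of the orbits $\Ofr(\sigma)$ for $\sigma \in |j_i|(|\Sigma_i|)$. Every $k$-point of $\Xfr_\Sigma$ lies in exactly one orbit $\Ofr(\sigma)$. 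Hence the $\Xfr_{j_i}$ are jointly surjective on $k$-points if and only if $\bigcup_i |j_i|(|\Sigma_i|) = |\Sigma|$, which is precisely the covering condition on stacky fans.

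The main point needing care is that the image of an open immersion of toric stacks is a union of full orbits. This holds because the map is $\Tfr_\Sigma$-equivariant and induces an isomorphism on dense stacky tori. I would also check that the topologies compare on objects up to equivalence, not only on the nose. This is harmless, because covers are defined up to isomorphism of the morphisms involved. Combining the two steps with \zcref{thm:classification} gives the equivalence of sites.
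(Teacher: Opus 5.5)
Your proposal is correct and takes essentially the paper's route: the paper gives no written proof and presents the corollary as an immediate consequence of \zcref{thm:classification} and \zcref{prop:open_imm_biconditional}, with covering families matching through the orbit--cone identification $(\Xfr_\Sigma/\Tfr_\Sigma)(k) = |\Sigma|$ of \zcref{prop:underlying_poset_of_eqvar}. Your explicit check that joint surjectivity on $k$-points is equivalent to $\bigcup_i |j_i|(|\Sigma_i|) = |\Sigma|$ supplies the step the paper leaves unwritten.
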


\subsection{Cohomological characterization of basic toric stacks}

The stacky orbit-cone correspondence also allows us to determine whether or not a toric stack is basic entirely in terms of sheaf cohomology.

\begin{prop} \label{prop:basic_coh_aff}
    Let $\Xfr_\Sigma$ be a toric stack.
    The following are equivalent:
    \begin{enumerate}
        \item $\Sigma$ is a stacky cone.
        \item $\Xfr_\Sigma$ is basic.
        \item $\Xfr_\Sigma$ is cohomologically affine, i.e.\ the global sections functor $\Gamma(\Xfr_\Sigma, -): \QCoh(\Xfr_\Sigma) \to \Mod_k$ is exact.
    \end{enumerate}
\end{prop}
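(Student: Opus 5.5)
The plan is to prove the cycle $(1)\Rightarrow(2)\Rightarrow(3)\Rightarrow(1)$.

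For $(1)\Rightarrow(2)$, invoke \zcref{prop:geometrization_of_cone}(1): the geometrization of a stacky cone is basic.

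For $(2)\Rightarrow(3)$, write $\Xfr_\Sigma = [U/G]$ with $U$ affine and $G$ diagonalizable. Then $\Gamma([U/G],-)$ is the composite of the exact functor $\Gamma(U,-)$ on $G$-equivariant quasicoherent sheaves with taking $G$-invariants. Taking invariants is exact because $G$ is linearly reductive in characteristic zero (see e.g.\ \cite[Example 12.9]{alper2013good}). So $\Xfr_\Sigma$ is cohomologically affine.

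The substantive direction is $(3)\Rightarrow(1)$. By \zcref{prop:pts_and_nbhds_stacky_fan}, it suffices to show that $|\Sigma|$ has a unique maximal element. Via \zcref{prop:underlying_poset_of_eqvar}, maximal elements of $|\Sigma|$ correspond to the closed $\Tfr_\Sigma$-orbits $\Ofr(\sigma)$ of $\Xfr_\Sigma$. Here an orbit is closed exactly when $\sigma$ is maximal, since the closure of $\Ofr(\sigma)$ is the union of the $\Ofr(\sigma')$ with $\sigma'\geq\sigma$, by the orbit--cone correspondence in basic opens. So the goal is to show that a cohomologically affine toric stack has a unique closed orbit. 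I would use the theory of good moduli spaces. A cohomologically affine stack $\Xfr$ with affine diagonal has a good moduli space $\Xfr\to \Spec \Gamma(\Xfr,\Osc)$ (\cite{alper2013good}), and in a good moduli space each geometric fiber contains a unique closed point. So it suffices to show that $\Gamma(\Xfr_\Sigma,\Osc)$ does not distinguish distinct closed orbits.

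The key computation is therefore of global functions, and I expect this to be the main obstacle. The approach would be to compute $\Gamma(\Xfr_\Sigma,\Osc)$ as the limit over subcones $K$ of $\Gamma(\Xfr_K,\Osc)$. By \zcref{prop:geometrization_of_cone}(1), each $\Gamma(\Xfr_K,\Osc)$ is the degree-$0$ part $k\angles{\Map_{\GDual(\Oc(K))}(0,0)}$. These rings are monoid rings on submonoids of a common group attached to the dense torus, so the limit is the monoid algebra of the intersection of these submonoids. Functions on this intersection are nonzero, being monomials, on the dense torus. A monomial character is also nonvanishing on a closed orbit $\Ofr(\sigma)$ exactly when it is invertible on the corresponding chart.

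With this description, I would argue as follows. Suppose $\sigma_1\neq\sigma_2$ are both maximal. First pass to the equivariantization using \zcref{prop:rigid_eqvar_fan}. The map $\Xfr\to\Xfr/\Tfr$ is a gerbe composed with a torsor, so for affine charts the invariant pieces are summands. Hence cohomological affineness, together with the good moduli space properties, descends to $\Xfr_{\Sigma/\Sigma(0)}$, which has trivial torus. In that case the torus is trivial, so the only monomials are $1$, and $\Gamma(\Osc)=k$. The good moduli space is then a point, so there is a unique closed point. By \zcref{prop:pts_and_nbhds_trivial_torus} and the specialization order, this gives a unique maximal element of $|\Sigma|$. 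Checking that cohomological affineness descends along $\Xfr\to\Xfr/\Tfr$ is the delicate point. There I would try to show that pushforward along the smooth affine map, or the gerbe, splits $\Osc$-linearly into eigen-pieces, whose global sections on $\Xfr/\Tfr$ are computed as direct summands of global sections on $\Xfr$.
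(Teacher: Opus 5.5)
Your argument is correct. Its overall shape matches the paper's: shrink the torus until global functions are just $k$, then use exactness of $\Gamma$ to rule out two closed orbits. The two key steps, however, are carried out differently.

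\textbf{The reduction step.} The paper chooses a splitting of $\Tfr \to \Tfr_\coarse$ and quotients only by the torus $\Tfr_\coarse$. It therefore needs only the easy fact that a quotient of a cohomologically affine stack by a torus is again cohomologically affine. It keeps the gerbe, noting that $\Gamma(B\Tfr_\stab,\Osc)=k$ anyway, so reducedness and density of the dense stacky torus already force $\Gamma(\Xfr,\Osc)=k$. You instead pass to the full equivariantization $\Xfr/\Tfr$. This is canonical (no splitting), but it costs you an extra descent along the $\Tfr_\stab$-gerbe $p$. Your direct-summand mechanism does handle both pieces. For the gerbe one even has $p_*p^*\Fsc\cong\Fsc$, so $\Gamma(\Xfr\fslash\Tfr_\stab,\Fsc)=\Gamma(\Xfr,p^*\Fsc)$. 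For the $\Tfr_\coarse$-torsor $q$, the sheaf $\Fsc$ is the weight-zero summand of $q_*q^*\Fsc$.

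\textbf{The final step.} You quote Alper's result that a good moduli space $\Xfr/\Tfr\to\Spec k$ has a unique closed point in its fiber. The paper proves exactly the needed instance by hand. Let $\Yfr$ be the disjoint union of the $n$ closed orbits. Exactness of $\Gamma$ on $0\to\Isc\to\Osc\to i_*\Osc_\Yfr\to 0$ gives a surjection $k\twoheadrightarrow k^{n}$, so $n=1$. This surjectivity is precisely the mechanism behind the result you cite, so the paper's version is a self-contained unpacking of yours and avoids good moduli spaces entirely.

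\textbf{Minor points.} Your monomial computation of $\Gamma(\Xfr_\Sigma,\Osc)$ for general $\Sigma$ is never actually used. In the trivial-torus case, $\Gamma(\Osc)=k$ follows at once from reducedness and density of the dense open point. Also, the result you want for ``unique maximal element implies stacky cone'' is \zcref{prop:pts_and_cones_stacky_fan}, not the label you cited.
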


\begin{proof}
    ($1 \Leftrightarrow 2$).
    This follows from \zcref{prop:geometrization_of_cone}.

    ($2 \Rightarrow 3$).
    This is a special case of \cite[Example 12.9]{alper2013good}.
    
    ($3 \Rightarrow 1$).
    Suppose $\Xfr_\Sigma$ is cohomologically affine.
    By \zcref{prop:pts_and_cones_stacky_fan}, to see that $\Sigma$ is a stacky cone, it suffices to show that $|\Sigma|$ has a unique maximal point.
    
    Choose a splitting of $\Tfr \to \Tfr_\coarse$, so $\Tfr_\coarse$ acts on $\Xfr_\Sigma$.
    The quotient $\Xfr_\Sigma / \Tfr_\coarse$ is cohomologically affine (again by the argument of \cite[Example 12.9]{alper2013good}).
    Furthermore, $\Xfr_\Sigma / \Tfr_\coarse = \Xfr_{\Sigma'}$ for some stacky fan $\Sigma'$ with $|\Sigma'| = |\Sigma|$. 
    Thus we may replace $\Sigma$ by $\Sigma'$ and assume without loss of generality that $\Tfr_\Sigma = B(\Tfr_\Sigma)_\stab$.

    Let $\sigma_1, \dots, \sigma_n$ be the maximal points of $\Sigma$, so the corresponding orbits $\Ofr(\sigma_i)$ are all closed.
    Then $\Yfr = \cup_{i=1}^n \Ofr(\sigma_i) \subset \Xfr_\Sigma$ is a closed substack of $\Xfr_\Sigma$.
    Let $i: \Yfr \to \Xfr_\Sigma$ be the natural closed immersion, and let $\Isc$ be the ideal sheaf of $\Yfr$, so there is a short exact sequence
    \[
    \begin{tikzcd}
        0 \rar & \Isc \rar & \Osc_{\Xfr_\Sigma} \rar & i_* \Osc_\Yfr \rar & 0.
    \end{tikzcd}
    \]
    Taking global sections and using cohomological affineness gives a short exact sequence
    \[
    \begin{tikzcd}
        0 \rar & \Gamma(\Xfr_\Sigma, \Isc) \rar & \Gamma(\Xfr_\Sigma, \Osc_{\Xfr_\Sigma}) \rar & \Gamma(\Yfr, \Osc_\Yfr) \rar & 0.
    \end{tikzcd}
    \]
    Because $\Xfr_\Sigma$ is reduced, global sections of $\Osc_{\Xfr_\Sigma}$ are determined by their restriction to the dense torus.
    In other words, $\Gamma(\Xfr_\Sigma, \Osc_{\Xfr_\Sigma})$ is a subring of
    \[
        \Gamma(\Tfr_\Sigma, \Osc_{\Tfr_\Sigma}) = \Gamma(B(\Tfr_\Sigma)_\stab, \Osc_{B(\Tfr_\Sigma)_\stab}) = k.
    \]
    Since $\Gamma(\Xfr_\Sigma, \Osc_{\Xfr_\Sigma})$ is a $k$-algebra, we must actually have $\Gamma(\Xfr_\Sigma, \Osc_{\Xfr_\Sigma}) = k$.
    It follows that $k$ surjects onto $\Gamma(\Yfr, \Osc_\Yfr)$.
    Since 
    \[
        \Gamma(\Yfr, \Osc_\Yfr) = \bigoplus_{i=1}^n \Gamma(\Ofr(\sigma_i), \Osc_{\Ofr(\sigma_i)}) = k^{\oplus n},
    \]
    we must have $n = 1$.
    Thus $\sigma_1$ is the unique maximal point of $|\Sigma|$, as desired.
\end{proof}

Of course, being cohomologically affine is actually a property of morphisms of stacks.
The generalization of the above criterion to the relative setting is straightforward:

\begin{dfn}
    A morphism of stacky fans $f: \Sigma \to \Sigma'$ is \emph{conical} if, for every subcone $K' \subset \Sigma'$, the preimage $f\inv(K') \subset \Sigma$ is a subcone.
\end{dfn}

\begin{prop} \label{prop:coh_aff_relative}
    Let $f: \Sigma \to \Sigma'$ be a morphism of stacky fans.
    The following are equivalent:
    \begin{enumerate}
        \item $f$ is conical.
        \item The geometrization $\Xfr_f: \Xfr_\Sigma \to \Xfr_{\Sigma'}$ is cohomologically affine, i.e.\ the pushforward functor $(\Xfr_f)_*: \QCoh(\Xfr_\Sigma) \to \QCoh(\Xfr_{\Sigma'})$ is exact.
    \end{enumerate}
\end{prop}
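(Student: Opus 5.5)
The plan is to reduce to the absolute statement \zcref{prop:basic_coh_aff}, using that cohomological affineness of a morphism is local on the target for quasicompact morphisms with affine diagonal. By \zcref{prop:toric_stack_locally_basic} and the construction of $\Xfr_{\Sigma'}$, the stack $\Xfr_{\Sigma'}$ is covered by the basic opens $\Xfr_{K'}$ for subcones $K' \subset \Sigma'$. Moreover, the preimage $\Xfr_f\inv(\Xfr_{K'})$ is the open toric substack $\Xfr_{f\inv(K')}$, where $f\inv(K')$ is the downward closed subset, viewed as a subfan via \zcref{prop:downward_closed_is_subfan}. To check this, I would compare $k$-points of the equivariantizations using \zcref{prop:underlying_poset_of_eqvar}: an orbit $\Ofr(\sigma)$ maps into $\Xfr_{K'}$ if and only if $|f|(\sigma) \in K'$. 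Then \zcref{prop:open_imm_biconditional} identifies the open substacks on both sides.

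For $(1) \Rightarrow (2)$: if $f$ is conical, each $f\inv(K')$ is a subcone, so $\Xfr_f\inv(\Xfr_{K'})$ is basic, hence cohomologically affine by \zcref{prop:basic_coh_aff}. The target $\Xfr_{K'}$ is also basic with affine diagonal. A morphism from a cohomologically affine stack to a stack with affine diagonal is cohomologically affine (\cite[Proposition 3.14]{alper2013good}). So each restriction $\Xfr_f\inv(\Xfr_{K'}) \to \Xfr_{K'}$ is cohomologically affine. Since cohomological affineness is Zariski-local on the target, $\Xfr_f$ is cohomologically affine.

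For $(2) \Rightarrow (1)$: fix a subcone $K'$. Cohomological affineness is stable under base change along the open immersion $\Xfr_{K'} \hookrightarrow \Xfr_{\Sigma'}$, so $\Xfr_{f\inv(K')} \to \Xfr_{K'}$ is cohomologically affine. Composing with $\Xfr_{K'} \to \Spec k$, which is cohomologically affine by \zcref{prop:basic_coh_aff}, shows that $\Xfr_{f\inv(K')}$ is cohomologically affine. By \zcref{prop:basic_coh_aff} again, $f\inv(K')$ is then a stacky cone. It is downward closed, so it is a subcone, and hence $f$ is conical.

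The main obstacle is the identification $\Xfr_f\inv(\Xfr_{K'}) = \Xfr_{f\inv(K')}$ as open substacks. To handle it, I would reduce to orbits. Both sides are $\Tfr_\Sigma$-stable opens, and $\Tfr_\Sigma$-stable opens are determined by the $\Tfr_\Sigma$-orbits of $k$-points they contain. The orbits in $\Xfr_\Sigma$ are indexed by $|\Sigma|$, and $\Xfr_f$ sends $\Ofr(\sigma)$ into $\Ofr(|f|(\sigma))$ by functoriality of equivariantization. A secondary point is the locality and base-change properties of cohomological affineness from \cite{alper2013good}, which apply because all stacks involved have affine diagonal.
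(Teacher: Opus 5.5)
Your proposal is correct and follows essentially the same route as the paper: reduce via Zariski-locality on the target to the basic opens $\Xfr_{K'}$, then apply \zcref{prop:basic_coh_aff} in both directions, together with Alper's composition, flat base change, and cancellation properties for cohomologically affine morphisms. The only difference is that you justify the identification $\Xfr_f^{-1}(\Xfr_{K'}) = \Xfr_{f^{-1}(K')}$ via the orbit--cone correspondence, a step the paper simply asserts.
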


\begin{proof}
    ($\Rightarrow$).
    Suppose $f$ is conical.
    By \cite[Proposition 3.10]{alper2013good}, the property of being cohomologically affine is fppf-local (and hence Zariski-local) on the base whenever the base has at least quasi-affine diagonal.
    Thus we may cover $\Xfr_{\Sigma'}$ by basic opens $\Xfr_{K'}$ and reduce to the case where $\Sigma' = K'$ is a stacky cone.
    Since $f$ is conical, $\Sigma = f\inv(K')$ must also be a stacky cone $K$.
    But then 
    \[
    \begin{tikzcd}
        \Xfr_K \ar[rr, "\Xfr_f"] \ar[dr] & & \Xfr_{K'} \ar[dl] \\
        & \Spec k & 
    \end{tikzcd}
    \]
    is a commutative diagram of toric stacks where the structure maps to $\Spec k$ are both cohomologically affine by \zcref{prop:basic_coh_aff}.
    By the cancellation property for cohomologically affine morphisms (\cite[Proposition 3.13]{alper2013good}), this implies $\Xfr_f$ is cohomologically affine.

    ($\Leftarrow$).
    Suppose $\Xfr_f$ is cohomologically affine.
    Let $K' \subset \Sigma'$ be a subcone, and let $\Ufr = \Xfr_{K'} \times_{\Xfr_{\Sigma}} \Xfr_{\Sigma'}$ be the preimage of $\Xfr_{K'}$ in $\Xfr_\Sigma$.
    Then $\Ufr$ is a toric open substack of $\Xfr_{\Sigma'}$, and the natural morphism $\Ufr \to \Xfr_{K'}$ is cohomologically affine.
    Composing this morphism with the cohomologically affine structure morphism $\Xfr_{K'} \to \Spec k$, we see that the structure morphism $\Ufr \to \Spec k$ must also be cohomologically affine.
    But then $\Ufr$ is basic by \zcref{prop:basic_coh_aff}, hence corresponds to some subcone $K \subset \Sigma$.
    Since $\Ufr$ also corresponds to the subfan $f\inv(K') \subset \Sigma$, we have $f\inv(K') = K$.
    Since $K'$ was arbitrary, $f$ must be conical.
\end{proof}

\section{Classification of toric Deligne--Mumford stacks and beyond} \label{sec:dm_stacks}

So far we have worked in the generality of toric Artin stacks with possibly infinite generic stabilizer.
Many applications of toric stacks focus on much simpler cases, assuming e.g.\ that the toric stack in question is Deligne--Mumford or that the generic stabilizer is trivial.
In this section we will give criteria in terms of stacky fans for determining when a toric stack satisfies some common conditions.
In particular, given a stacky fan $\Sigma$, we can detect when the toric stack $\Xfr_\Sigma$:
\begin{itemize}
    \item has generic stabilizer (\zcref{cor:generic_stabilizer_criteria}),
    \item is Deligne--Mumford (\zcref{prop:DM_criterion}),
    \item is a scheme (\zcref{prop:scheme_criterion}),
    \item is separated (\zcref{prop:separated_criterion}), and
    \item is complete (\zcref{prop:complete_criterion}).
    \item is smooth (\zcref{prop:smooth_criterion}).
\end{itemize}

\begin{rmk}
    \zcref{prop:separated_criterion} actually shows that a toric stack $\Xfr_\Sigma$ is separated if and only if it is a KM fan in the sense of \cite{gillam2015theory} (see \zcref{ex:KM_fan}).
    Our proof of \zcref{prop:separated_criterion} is based on a reduction to the classification of separated toric Deligne--Mumford stacks presented in \cite[Theorem 3.10.7]{gillam2015theory}.
    Our criterion for completeness is then merely a restatement of \cite[Proposition 3.5.4]{gillam2015theory}.
\end{rmk}

\begin{rmk}
    For the sake of brevity, we only consider absolute properties of stacks (rather than relative properties of morphisms) here.
    Analogous characterizations of the corresponding relative properties should be possible and interesting.
\end{rmk}

\subsection{Stabilizer computations and consequences}

We are first interested in determining when the stabilizer group of a toric stack at a $k$-point is finite or trivial.
The following lemma is key in answering this question:

\begin{lem} \label{lem:computation_of_Tstab}
    Let $\Ac$ be a toric groupoid, and let $\Tfr = \SCDual(\ul{\GDual(\Ac)})$.
    Then:
    \begin{enumerate}
        \item $\Tfr_\stab$ is finite if and only if $\Ac$ is discrete, and
        \item $\Tfr_\stab$ is trivial if and only if $\Ac$ is discrete and free.
    \end{enumerate}
\end{lem}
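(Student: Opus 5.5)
We reduce to an explicit splitting and then compute directly. By \zcref{prop:2Term_vs_TGpd} and \zcref{prop:strict_trivialization}, we may (non-canonically) write $\Ac \simeq A_0 \times \Bsf A_1$ with $A_0 = \pi_0(\Ac)$ finitely generated and $A_1 = \Aut_\Ac(0)$ finitely generated free. Since $\Tfr_\stab$ is intrinsic to $\Tfr$ and both conditions in the statement are intrinsic to $\Ac$, this choice of splitting is harmless. We have $\Ac$ discrete iff $A_1 = 0$, and $\Ac$ discrete and free iff additionally $A_0$ is free.

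The next step is to compute $\GDual(\Ac)$. By \zcref{ex:gale_dual_of_product}, $\GDual$ commutes with products, and by \zcref{ex:Gale_dual_of_lattice}, $\GDual(\Bsf A_1) = D_\Ab(A_1)$. For the possibly non-free $A_0$, I would use \zcref{cons:gale_dual} directly. There $M = 0$, so $E^\bullet = \fib(0 \to A_0) = A_0[-1]$ and
\[
  \GDual(A_0) = \Omega^\infty \RHom(A_0[-1], \ZZ) = \Omega^\infty\big(\RHom(A_0,\ZZ)[1]\big).
\]
This has $\pi_0 = \Ext^1(A_0,\ZZ) \cong (A_0)_\tors^\vee$ (finite) and $\Aut(0) = \Hom(A_0,\ZZ)$. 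Hence
\[
  \GDual(\Ac) \simeq \big(\Ext^1(A_0,\ZZ) \oplus D_\Ab(A_1)\big) \times \Bsf \Hom(A_0,\ZZ).
\]

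Now apply shifted Cartier duality using \zcref{ex:shifted_Cartier_dual_of_A} and \zcref{ex:shifted_Cartier_dual_of_BA}:
\[
  \Tfr \simeq B\CDual\big(\Ext^1(A_0,\ZZ) \oplus D_\Ab(A_1)\big) \times \CDual\big(\Hom(A_0,\ZZ)\big).
\]
The stabilizer of the identity of $BG \times T$ (with $T$ a torus) is $G$, so
\[
  \Tfr_\stab \simeq \CDual\big(\Ext^1(A_0,\ZZ)\big) \times \CDual\big(D_\Ab(A_1)\big) \simeq \CDual\big((A_0)_\tors^\vee\big) \times \GG_m^{\rk A_1}.
\]
The first factor is finite of order $|(A_0)_\tors|$. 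Thus $\Tfr_\stab$ is finite iff $A_1 = 0$, i.e.\ $\Ac$ is discrete. It is trivial iff moreover $(A_0)_\tors = 0$, i.e.\ $A_0$ is free, which gives both claims.

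The main obstacle is justifying that the stabilizer of the unit of $\SCDual(\ul{A_0 \times \Bsf A_1'})$ (in the notation of the factors above) is exactly the Cartier dual of the $\pi_0$-part, compatibly with the product decomposition. One also has to check that the splitting of $\GDual(A_0)$ into $\pi_0$ and $\Aut$ parts is legitimate, which holds by \zcref{prop:strict_trivialization}. I would handle this by noting that $\SCDual$ preserves finite products, since it is a right adjoint-type $\Hom$ into $B\GG_m$. After that, the identification follows from the explicit $BG \times T$ description given after \zcref{dfn:stacky_torus}.
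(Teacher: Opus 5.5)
Your proof is correct and is essentially the paper's argument: split $\Ac$ using \zcref{prop:strict_trivialization}, compute $\GDual$ factor by factor, apply \zcref{ex:shifted_Cartier_dual_of_A} and \zcref{ex:shifted_Cartier_dual_of_BA} to get $\Tfr \simeq BG \times T$, and read off $\Tfr_\stab = G$. The differences are cosmetic. The paper further splits $\pi_0(\Ac) \simeq \ZZ^{r_1} \times \pi_\tors$ instead of computing $\GDual(A_0) = \Omega^\infty(\RHom(A_0,\ZZ)[1])$ in one step, and it uses the compatibility of $\SCDual$ with finite products without comment, whereas you flag it. Your stated reason for that compatibility is slightly off: the right one is that finite products of abelian 2-group stacks are also coproducts, so the contravariant $\Hom(-,B\GG_m)$ sends them to products.
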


\begin{proof}
    By \zcref{prop:strict_trivialization} and the structure theorem of finitely generated abelian groups, we may choose a (non-canonical) isomorphism 
    \[
        \Ac \simeq \ZZ^{r_1} \times \pi_\tors \times \Bsf \ZZ^{r_2}
    \]
    where $r_1, r_2 \geq 0$ and $\pi_\tors$ is a finitely generated torsion abelian group.
    We can compute:
    \begin{itemize}
        \item $\SCDual(\ul{\GDual(\ZZ^{r_1})}) \simeq \SCDual(\ul{\Bsf \ZZ^{r_1}}) = \GG_m^{r_1}$ by \zcref{ex:shifted_Cartier_dual_of_BA}.
        \item $\SCDual(\ul{\GDual(\pi_\tors)}) \simeq \SCDual(\ul{\pi_\tors}) \simeq B\pi_\tors$ by \zcref{ex:shifted_Cartier_dual_of_A}.
        \item $\SCDual(\ul{\GDual(\Bsf \ZZ^{r_2})}) \simeq \SCDual(\ul{\ZZ^{r_2}}) = \Bsf \GG_m^{r_2}$ by \zcref{ex:shifted_Cartier_dual_of_A}.
    \end{itemize}
    Thus
    \[
        \Tfr \simeq \GG_m^{r_1} \times BA \times B \GG_m^{r_2}.
    \]
    We see that $\Tfr_\stab \simeq A \times \GG_m^{r_2}$ is finite if and only if $r_2 = 0$, which happens if and only if $\Ac$ is discrete.
    Furthermore, $\Tfr_\stab = 0$ if and only if $r_2 = 0$ and $A = 0$, or equivalently $\Ac$ is discrete and free.
\end{proof}

\begin{cor} \label{cor:generic_stabilizer_criteria}
    Let $\Xfr = \Xfr_\Sigma$ be a toric stack.
    Then:
    \begin{enumerate}
        \item $\Xfr$ has finite generic stabilizer if and only if $\Nc_\Sigma$ is discrete.
        \item $\Xfr$ has trivial generic stabilizer if and only if $\Nc_\Sigma$ is discrete and free.
    \end{enumerate}
\end{cor}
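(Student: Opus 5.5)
The plan is to reduce both parts of \zcref{cor:generic_stabilizer_criteria} to \zcref{lem:computation_of_Tstab}, applied to the toric groupoid $\Ac = \Nc_\Sigma$.

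The first step identifies the generic stabilizer with $(\Tfr_\Sigma)_\stab$. By \zcref{prop:geometrization_of_fan}, $\Xfr_\Sigma$ has dense stacky torus $\Tfr_\Sigma = \SCDual(\ul{\GDual(\Nc_\Sigma)})$, embedded as a dense open substack. The generic stabilizer of $\Xfr$ is the stabilizer group at a geometric point of this dense open. For a stacky torus, the stabilizer at any $k$-point is isomorphic to $\Tfr_\stab = \ul{\Aut}(e)$, by translating via the group structure. At a non-$k$ geometric point it is the base change of this group. So the generic stabilizer of $\Xfr_\Sigma$ is $(\Tfr_\Sigma)_\stab$.

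The second step is to conclude. Apply \zcref{lem:computation_of_Tstab} with $\Ac = \Nc_\Sigma$, so that $\Tfr = \Tfr_\Sigma$.
\begin{enumerate}
\item $(\Tfr_\Sigma)_\stab$ is finite if and only if $\Nc_\Sigma$ is discrete. This gives part (1).
\item $(\Tfr_\Sigma)_\stab$ is trivial if and only if $\Nc_\Sigma$ is discrete and free. This gives part (2).
\end{enumerate}
Here finiteness and triviality are properties of the diagonalizable group, and they are preserved under field extension.

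The only point needing any care is the identification in the first step. Its justification is that $\Tfr_\Sigma$ is an open substack, so $\ul{\Aut}$ computed in $\Tfr_\Sigma$ agrees with $\ul{\Aut}$ computed in $\Xfr_\Sigma$. Together with the translation argument on the group stack, this is enough. Everything else is immediate.
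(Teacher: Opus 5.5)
Your proposal is correct and is essentially the paper's proof, which simply observes that the statement is about the dense stacky torus $\Tfr_\Sigma = \SCDual(\ul{\GDual(\Nc_\Sigma)})$ and applies \zcref{lem:computation_of_Tstab} with $\Ac = \Nc_\Sigma$. Your extra step identifying the generic stabilizer with $(\Tfr_\Sigma)_\stab$ (open immersions preserve automorphism sheaves, and translation in the stacky torus) makes explicit what the paper leaves implicit.
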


\begin{proof}
    These are really claims about the stacky torus $\Tfr_\Sigma = \SCDual(\ul{\GDual(\Nc_\Sigma)})$, so they follow from \zcref{lem:computation_of_Tstab}.
\end{proof}

By applying \zcref{lem:computation_of_Tstab} to the torus orbits $\Ofr(\sigma)$, we can determine when a toric stack $\Xfr_\Sigma$ is Deligne--Mumford.

\begin{prop} \label{prop:DM_criterion}
    Let $\Xfr = \Xfr_\Sigma$ be a toric stack.
    The following are equivalent:
    \begin{enumerate}
        \item $\Xfr$ is Deligne--Mumford,
        \item The fan $\Sigma$ satisfies:
        \begin{itemize}
            \item $\Nc_\Sigma$ is discrete and, 
            \item For all $\sigma \in \Sigma$, the natural morphism $b_\sigma: P_\sigma \to \Nc_\Sigma$ is injective.
        \end{itemize}
        \item For all $\sigma \in \Sigma$, the toric category $\Oc(\Sigma_{\leq \sigma})$ is a poset.
    \end{enumerate}
\end{prop}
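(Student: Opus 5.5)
The plan is to reduce everything to stabilizers of torus orbits and then compute those stabilizers with \zcref{lem:computation_of_Tstab}. A finite type Artin stack with affine diagonal over $k$ (characteristic zero) is Deligne--Mumford if and only if all stabilizer groups at geometric points are finite; in characteristic zero, finite group schemes are étale, so the diagonal is unramified. Every $k$-point of $\Xfr_\Sigma$ lies in some orbit $\Ofr(\sigma)$ by \zcref{prop:underlying_poset_of_eqvar}, and the stabilizer of a point in $\Ofr(\sigma)$ is the stabilizer group of the stacky torus $\Ofr(\sigma)$. Finiteness at $k$-points suffices, since the locus where stabilizers are positive-dimensional is closed and $T$-stable, so it would contain a $k$-point. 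Hence (1) is equivalent to: for every $\sigma$, $\Ofr(\sigma)_\stab$ is finite.

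By \zcref{prop:orbit_as_SCDual}, $\Ofr(\sigma)=\SCDual(\ul{\GDual(\Ac_\sigma)})$ with $\Ac_\sigma=\cofib(P_\sigma^\gp\to\Nc_\Sigma)$. By \zcref{lem:computation_of_Tstab}(1), finiteness holds if and only if $\Ac_\sigma$ is discrete. The core computation uses $\Dsf^{[-1,0]}(\ZZ)$ via \zcref{prop:deligne_classification}, identifying $\Nc_\Sigma$ with a complex $N^\bullet$. The cofiber $\cofib(P_\sigma^\gp\to N^\bullet)$, truncated, has
\[
H^{-1}\cong\ker\big(P_\sigma^\gp\oplus H^{-1}(N^\bullet)\to \ldots\big),
\]
which sits in an exact sequence
\[
0\to H^{-1}(N^\bullet)\to H^{-1}(\Ac_\sigma)\to \ker\big(P_\sigma^\gp\to H^0(N^\bullet)\big)\to 0.
\]
So $\Ac_\sigma$ is discrete if and only if $\Aut_{\Nc_\Sigma}(0)=0$ and $P_\sigma^\gp\to\pi_0(\Nc_\Sigma)$ is injective. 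Since $P_\sigma$ is integral and $P_\sigma^\gp$ is free, injectivity of $P_\sigma^\gp\to\pi_0\Nc_\Sigma$ is equivalent to injectivity of $b_\sigma:P_\sigma\to\Nc_\Sigma$. One direction is clear. For the other, if $x\in P_\sigma^\gp$ is in the kernel, write $x=p-q$; then $b(p)=b(q)$, so $p=q$ and $x=0$. Here "injective" for a map to the possibly non-discrete $\Nc_\Sigma$ must be read as injective on $\pi_0$; once $\Nc_\Sigma$ is discrete, this is literal. This gives (1)$\Leftrightarrow$(2). Taking $\sigma$ minimal, $P_\sigma=0$ forces discreteness of $\Nc_\Sigma$, consistent with \zcref{cor:generic_stabilizer_criteria}.

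For (2)$\Leftrightarrow$(3), $\Oc(\Sigma_{\le\sigma})=\Nc_\Sigma\vec{/}P_\sigma$. A toric category is a poset exactly when all hom-sets have at most one element. By \zcref{cons:skew_quotient}, $\Map(a_1,a_2)$ consists of pairs $(m,f)$ with $f:\phi(m)\oplus a_1\simto a_2$. If $\Nc_\Sigma$ is discrete and $b_\sigma$ is injective, then $f$ is unique and $m$ is determined, so the category is a poset. Conversely, if it is a poset, then $\Aut(0)\supset\Aut_{\Nc}(0)$ is trivial, so $\Nc_\Sigma$ is discrete, and $\Map(0,b(m))\ni m$ forces injectivity. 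The expected main obstacle is the cohomology bookkeeping in the middle step, ensuring the truncation $\tau^{\le0}$ does not affect $H^{-1}$, together with justifying the reduction from geometric points to $k$-points and orbits.
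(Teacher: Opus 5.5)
Your proposal is correct and follows the paper's proof. Like the paper, you reduce to the orbits $\Ofr(\sigma)$, apply \zcref{prop:orbit_as_SCDual} and \zcref{lem:computation_of_Tstab} to the cofiber of $P_\sigma^\gp \to \Nc_\Sigma$, and read off (2)$\Leftrightarrow$(3) from the hom-sets of $\Nc_\Sigma \vec{/} P_\sigma$. The differences are minor: the paper first reduces to discrete $\Nc_\Sigma$ via \zcref{cor:generic_stabilizer_criteria} and splits the cofiber as $\coker(b_\sigma^\gp) \times \Bsf \ker(b_\sigma^\gp)$, whereas your long exact sequence for $H^{-1}$ handles both conditions at once; you also check (2)$\Leftrightarrow$(3) by hand instead of citing \zcref{ex:toric_poset} and \zcref{ex:toric_poset_converse}, and you treat non-$k$ geometric points more carefully than the paper does.
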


\begin{proof}
    ($1 \Leftrightarrow 2$).
    If $\Xfr$ is Deligne--Mumford, then $\Xfr$ must have finite generic stabilizer.
    Thus for both directions of the claim we may assume $\Nc_\Sigma$ is discrete.
    Because $\Xfr$ is a union of the stacky torus orbits $\Ofr(\sigma)$ for $\sigma \in \Sigma$, we see that $\Xfr$ is Deligne--Mumford if and only if all $\Ofr(\sigma)$ have finite generic stabilizer.
    
    Fix $\sigma \in \Sigma$.
    By \zcref{prop:orbit_as_SCDual}, we may write
    \[
        \Ofr(\sigma) = \SCDual(\ul{\GDual(\cofib(b_\sigma^\gp: P_\sigma^\gp \to \Nc_\Sigma))}).
    \]
    Because $\Nc_\Sigma$ is discrete, the cofiber here decomposes as $\coker(b_\sigma^\gp) \times \Bsf \ker(b_\sigma^\gp)$.
    Then \zcref{lem:computation_of_Tstab} implies the generic stabilizer of $\Ofr(\sigma)$ is finite if and only if $\ker(b_\sigma^\gp) = 0$, which happens if and only if $b_\sigma$ is injective.

    ($2 \Leftrightarrow 3$).
    Write each $\Oc(\Sigma_{\leq \sigma})$ as $\Nc_\Sigma \vec{/} P_\sigma$.
    The forward direction follows from \zcref{ex:toric_poset}, and the backward direction follows from \zcref{ex:toric_poset_converse}.
\end{proof}

Before characterizing when a toric stack is a scheme, we should first note that ``there are no toric algebraic spaces:''

\begin{rmk} \label{rmk:no_toric_alg_sp}
    \zcref{thm:classification} implies that every toric algebraic space $\Xfr$ is a scheme.
    The proof is a straightforward adaptation of that in \cite[Remark 6.3]{geraschenko2015toric_2}, using the fact that $\Xfr$ admits a Zariski cover by basic opens.
\end{rmk}

Thus showing that a toric stack is a scheme reduces to showing that all stabilizer groups are trivial.
We can use this to characterize the toric schemes much like we characterized the toric Deligne--Mumford stacks:

\begin{prop} \label{prop:scheme_criterion}
    Let $\Xfr = \Xfr_\Sigma$ be a toric stack.
    The following are equivalent:
    \begin{enumerate}
        \item $\Xfr$ is a scheme.
        \item The stacky fan $\Sigma$ satisfies:
        \begin{itemize}
            \item $\Nc_\Sigma$ is discrete and free,
            \item For all $\sigma \in \Sigma$, $b_\sigma: P_\sigma \to \Nc_\Sigma$ is injective, and
            \item For all $\sigma \in \Sigma$, $\im b_\sigma$ is saturated in $\Nc_\Sigma$.
        \end{itemize}
        \item For all $\sigma \in \Sigma$, the toric category $\Oc(\Sigma_{\leq \sigma})$ is a poset satisfying the following saturation condition:
        \begin{itemize}
            \item If $n \in \Nc_\Sigma = \iota_0 \Oc(\Sigma_{\leq \sigma})$ and $0 \leq d \cdot n$ for some $d > 0$, then $0 \leq n$.
        \end{itemize}
    \end{enumerate}
\end{prop}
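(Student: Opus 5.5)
We prove the equivalences along the same lines as \zcref{prop:DM_criterion}, with $(2 \Leftrightarrow 3)$ being combinatorial and $(1 \Leftrightarrow 2)$ being geometric.

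For $(2 \Leftrightarrow 3)$, write $\Oc(\Sigma_{\leq \sigma}) = \Nc_\Sigma \vec{/} P_\sigma$. By \zcref{ex:toric_poset} and \zcref{ex:toric_poset_converse}, this toric category is a poset exactly when $\Nc_\Sigma$ is discrete and $b_\sigma$ is injective. In that case the nonnegative elements are $\im b_\sigma$, so the displayed saturation condition says that $\im b_\sigma$ is saturated in $\Nc_\Sigma$. Freeness of $\Nc_\Sigma$ is also encoded in (3). If $n$ is torsion with $d \cdot n = 0 \geq 0$, then saturation gives $n \geq 0$, and likewise $-n \geq 0$. So $n \in (\im b_\sigma)^\pm = 0$, because $P_\sigma$ is sharp. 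Taking $\sigma$ minimal, or any $\sigma$, shows that $\Nc_\Sigma$ is torsion-free, hence free.

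For $(2 \Rightarrow 1)$, assume (2). Then each $\Sigma_{\leq \sigma}$ is the stacky cone $\Cone(N \vec{/} P)$, where $N = \Nc_\Sigma$ is a lattice and $P = \im b_\sigma$ is a sharp saturated submonoid. By \zcref{prop:sat_submon_of_lattice}, $P = \ul{\sigma} \cap N$ for a strongly convex rational cone $\ul{\sigma}$. The stacky cone is then $K_{\ul{\sigma}}$ from \zcref{ex:classical_toric_cone}, which is a KM cone. By \zcref{prop:KM_geometrization}, its geometrization is the affine toric variety $U_{\ul{\sigma}}$. Concretely, in \zcref{cons:gale_dual} the group $\Ext^1(N/L, \ZZ)$ vanishes with $L = N$, so the gerbe and quotient parts disappear. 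Hence $\Xfr_\Sigma$ is glued from affine schemes along open immersions, using \zcref{prop:geometrization_cone_immersion}. A stack with a Zariski cover by schemes is a scheme, so $\Xfr_\Sigma$ is a scheme.

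For $(1 \Rightarrow 2)$, assume $\Xfr$ is a scheme. Its stabilizers are trivial, so \zcref{cor:generic_stabilizer_criteria} shows that $\Nc_\Sigma$ is discrete and free. \zcref{prop:DM_criterion} shows that each $b_\sigma$ is injective. For saturation, fix $\sigma$ and let $P = \im b_\sigma$ with $N = \Nc_\Sigma$. By \zcref{prop:orbit_as_SCDual}, the orbit is
\[
\Ofr(\sigma) = \SCDual\big(\ul{\GDual(\cofib(P^\gp \to N))}\big) = \SCDual\big(\ul{\GDual(N/P^\gp)}\big).
\]
Since $\Ofr(\sigma)$ has trivial stabilizer, \zcref{lem:computation_of_Tstab} forces $N/P^\gp$ to be free, so $P^\gp$ is saturated in $N$. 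By \zcref{prop:saturated_submonoid_criterion}, it then remains to show that $P$ is saturated in $P^\gp$, that is, that $P$ is saturated abstractly. This holds automatically because $P \cong P_\sigma$ is a toric monoid. Hence $\im b_\sigma$ is saturated in $\Nc_\Sigma$.

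The only delicate point is the torsion-freeness of $N/P^\gp$, which comes from triviality of the stabilizer on the closed orbit. It is handled by reading off $\Tfr_\stab$ via \zcref{lem:computation_of_Tstab}.
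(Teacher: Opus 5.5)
Your argument is correct. For $(1 \Rightarrow 2)$ and the poset part of $(2 \Leftrightarrow 3)$ it coincides with the paper's. There you use the orbit formula of \zcref{prop:orbit_as_SCDual}, then \zcref{lem:computation_of_Tstab} and \zcref{prop:saturated_submonoid_criterion}, and \zcref{ex:toric_poset}, \zcref{ex:toric_poset_converse} for the translation to posets.

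The genuine difference is in $(2 \Rightarrow 1)$. The paper runs the stabilizer argument as a single biconditional. Once $\Nc_\Sigma$ is discrete and free and the $b_\sigma$ are injective, $\Xfr$ is an algebraic space iff every $\Ofr(\sigma)$ has trivial stabilizer iff every $\Nc_\Sigma/\im b_\sigma^\gp$ is torsion-free. Then \zcref{rmk:no_toric_alg_sp} upgrades ``algebraic space'' to ``scheme.'' You instead identify each basic open $\Xfr_{\Sigma_{\leq\sigma}}$ with an affine toric variety and glue. This avoids both the passage from pointwise stabilizers to algebraic spaces and the only-sketched \zcref{rmk:no_toric_alg_sp}, at the cost of not running parallel to \zcref{prop:DM_criterion}.

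This is the alternative the paper mentions in passing (``can be deduced from \zcref{ex:Gale_dual_classical_toric}''). That example is also the cleanest justification of your identification. It gives $\GDual(N \vec{/} (\ul{\sigma} \cap N)) = \Bsf(\ul{\sigma}^\vee \cap M)$, and the proof of \zcref{prop:geometrization_of_cone}(1) with trivial grading group then yields $\Spec k\angles{\ul{\sigma}^\vee \cap M}$ directly. The $\Ext^1(N/L,\ZZ)$ computation you attribute to \zcref{cons:gale_dual} actually lives in the proof of \zcref{prop:KM_geometrization}. Taking $L = N$ in the recipe recalled there is harmless only because $P$ is saturated in $N$. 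For $P = 2\NN \subset \ZZ$ it would produce $\AA^1$ rather than $[\AA^1/\mu_2]$, so saturation, not the vanishing of $\Ext^1$, is what does the work.

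Finally, your torsion argument in $(3 \Rightarrow 2)$ is a real addition. The paper's proof cites \zcref{prop:DM_criterion}, which only yields discreteness of $\Nc_\Sigma$, so freeness is left unaddressed there. You derive it from the saturation condition together with sharpness of $P_\sigma$.
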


\begin{proof}
    ($1 \Leftrightarrow 2$). This can be deduced from \zcref{ex:Gale_dual_classical_toric}, though we give an alternate argument in the spirit of \zcref{prop:DM_criterion}. 
    
    If $\Xfr$ is a scheme, then $\Xfr$ must be Deligne--Mumford with trivial generic stabilizer.
    Thus for both directions of the claim we may assume $\Nc_\Sigma$ is discrete and free and all $b_\sigma$ are injective.
    Again writing $\Xfr$ as a union of torus orbits $\Ofr(\sigma)$, we see that $\Xfr$ is an algebraic space (hence a scheme, by \zcref{rmk:no_toric_alg_sp}) if and only if all $\Ofr(\sigma)$ have trivial generic stabilizer.

    Fix $\sigma \in \Sigma$.
    We may use \zcref{prop:orbit_as_SCDual} and injectivity of the $b_\sigma$ to write
    \[
        \Ofr(\sigma) = \SCDual(\ul{\GDual(\Nc_\Sigma / \im b_\sigma^\gp}).
    \]
    By \zcref{lem:computation_of_Tstab}, we see $\Ofr(\sigma)$ has trivial generic stabilizer if and only if $\Nc_\Sigma / \im b_\sigma^\gp$ is torsionfree.
    By \zcref{prop:saturated_submonoid_criterion}, this is equivalent to $\im b_\sigma$ being saturated in $\Nc_\Sigma$.

    ($2 \Leftrightarrow 3$).
    Let $\sigma \in \Sigma$.
    We showed in \zcref{prop:DM_criterion} that the first two conditions of (2) hold if and only if $\Oc(\Sigma_{\leq \sigma})$ is a poset.
    The third condition of (2) is equivalent to the saturation condition on the poset because, by \zcref{ex:toric_poset_converse}, saying that $n \geq 0$ is the same as saying that $n \in b_\sigma(P_{\sigma})$.
\end{proof}

\begin{rmk}
    \zcref{prop:DM_criterion} tells us how to describe the maximal Deligne--Mumford toric open substack of a given toric stack $\Xfr_\Sigma$ via stacky fans: we simply throw out all points of $\Sigma$ which do not satisfy \zcref{prop:DM_criterion}(3).
    Likewise, \zcref{prop:scheme_criterion} tells us that the maximal toric open subscheme of $\Xfr_\Sigma$ has a stacky fan obtained by throwing out all points of $\Sigma$ which do not satisfy \zcref{prop:scheme_criterion}(3),
\end{rmk}

\subsection{Separation and KM fans}

We'd now like to understand when a toric stack $\Xfr = \Xfr_\Sigma$ is separated.
The answer is closely related to the theory of KM fans presented in \cite{gillam2015theory}.
In fact, Gillam and Molcho show that separated toric stacks with trivial generic stabilizer are classified by ``lattice KM fans,'' i.e.\ the stacky fans of \cite[\S 4]{tyomkin2012tropical}.
To generalize their result, we need the following:

\begin{lem} \label{lem:rigidification_separated}
    Let $\Xfr$ be a toric stack with finite generic stabilizer.
    Then the rigidification map $p: \Xfr \to \Xfr \fslash \Tfr_\stab$ is separated.
\end{lem}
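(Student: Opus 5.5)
We will show that $p$ is separated by reducing to the structure of gerbes over a base. By \zcref{prop:rigidification_is_toric}, $p: \Xfr \to \Xfr \fslash \Tfr_\stab$ is a $\Tfr_\stab$-gerbe. The hypothesis of finite generic stabilizer means, by \zcref{lem:computation_of_Tstab} applied to the dense stacky torus (equivalently \zcref{cor:generic_stabilizer_criteria}), that $D := \Tfr_\stab$ is a finite diagonalizable group. So it suffices to prove the general claim that a $D$-gerbe with $D$ finite is separated.

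Separatedness of a morphism is smooth-local (indeed fppf-local) on the target. Following the first step of the proof of \zcref{lem:gerbe_affine_diagonal}(1), we choose a smooth surjection $\Ufr \to \Xfr \fslash D$ over which the gerbe trivializes. Then the base change of $p$ is the projection $\Ufr \times BD \to \Ufr$, which is the base change of the structure morphism $BD \to \Spec k$. It thus suffices to show $BD \to \Spec k$ is separated. Its diagonal is $BD \to BD \times BD$, whose base change along $\Spec k \to BD \times BD$ is the group scheme $D$ itself (or a $D$-torsor over a test scheme, by the standard computation $\Spec k \times_{BD \times BD} BD \cong D$). Since $D$ is finite, this diagonal is finite, hence proper. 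We already know from \zcref{lem:gerbe_affine_diagonal}(1) that $p$ has affine (in particular quasi-separated, representable) diagonal, so properness of the diagonal gives separatedness.

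The main point requiring care is the local nature of the trivialization. Gerbes are trivialized only fppf/étale-locally rather than Zariski-locally, so we must use that separatedness descends along smooth (or fppf) covers of the target; this is standard for algebraic stacks. A second subtlety is checking that the diagonal of $BD$ is finite, not merely quasi-finite. This follows because fppf-locally it is $D \times S \to S$, which is finite since $D$ is a finite group scheme over $k$ (in characteristic zero, a finite constant diagonalizable group). Apart from these two checks, the argument is formal.
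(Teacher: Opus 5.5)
Your proposal is correct and follows essentially the same argument as the paper: trivialize the $\Tfr_\stab$-gerbe locally on the target (separatedness being local on the base), reduce to the base change of $B\Tfr_\stab \to \Spec k$, and conclude from finiteness of $\Tfr_\stab$. You additionally spell out why the diagonal of $B\Tfr_\stab$ is finite (its fibers are $\Tfr_\stab$-torsors), hence proper, which the paper leaves implicit.
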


\begin{proof}
    The map $p$ exhibits $\Xfr$ as a $\Tfr_\stab$-gerbe over $\Xfr \fslash \Tfr_\stab$.
    The property of being separated is local on the base, so we may pass to a coordinate chart on $\Xfr \fslash \Tfr_\stab$ for which the gerbe in question is trivial.
    We thus reduce to showing that the projection map $B\Tfr_\stab \times S \to S$ is separated for any scheme $S$ over $\Spec k$.
    This projection map is the base change of the structure map $B\Tfr_\stab \to \Spec k$, hence is separated because $\Tfr_\stab$ is finite.
\end{proof}

\begin{prop} \label{prop:separated_criterion}
    Let $\Xfr = \Xfr_\Sigma$ be a toric stack.
    Then $\Xfr$ is separated if and only if $\Sigma$ arises from a KM fan (see \zcref{ex:KM_fan}), i.e.\
    \begin{enumerate}
        \item (Finite generic stabilizer): $\Nc_\Sigma$ is discrete,
        \item (Deligne--Mumford): All structure maps $b_\sigma: P_\sigma \to \Nc_\Sigma$ (for $\sigma \in \Sigma$) are inclusions,
        \item ($\{ b_\sigma(P_\sigma) \}_{\sigma \in |\Sigma|}$ is a set): For all $\sigma_1, \sigma_2 \in \Sigma$, if $b_{\sigma_1}(P_{\sigma_1}) = b_{\sigma_2}(P_{\sigma_2})$, then $\sigma_1 = \sigma_2$, and
        \item (Cones intersect in faces): For all $\sigma_1, \sigma_2 \in \Sigma$, the intersection $b_{\sigma_1}(P_{\sigma_1}) \cap b_{\sigma_2}(P_{\sigma_2})$ (taken in $\Nc_\Sigma$) is a face of $b_{\sigma_1}(P_{\sigma_1})$ and $b_{\sigma_2}(P_{\sigma_2})$.
    \end{enumerate}
\end{prop}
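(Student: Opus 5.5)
We reduce to the case of trivial generic stabilizer, where we can use the classification of separated toric Deligne--Mumford stacks in \cite[Theorem 3.10.7]{gillam2015theory}.

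First we show that each of the four conditions is necessary. Suppose $\Xfr$ is separated. Then $\Xfr$ has finite (proper, hence finite since diagonalizable) stabilizers at all geometric points. In particular the generic stabilizer $(\Tfr_\Sigma)_\stab$ is finite, so $\Nc_\Sigma$ is discrete by \zcref{cor:generic_stabilizer_criteria}; this is condition (1). Finite stabilizers in characteristic zero also make $\Xfr$ Deligne--Mumford, so \zcref{prop:DM_criterion} gives condition (2).

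Next we pass to the rigidification. By \zcref{lem:rigidification_separated}, the map $p: \Xfr \to \Xfr \fslash \Tfr_\stab$ is a separated gerbe for a finite group. It is also proper and surjective (it is a $\Tfr_\stab$-gerbe with $\Tfr_\stab$ finite). So $\Xfr$ is separated if and only if $\Xfr \fslash \Tfr_\stab$ is separated. The forward direction uses that a proper surjection onto a stack, whose source is separated, forces the target to be separated (the standard descent of separatedness along proper surjections). The backward direction is just composition with the separated map $p$.

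By \zcref{prop:rigid_eqvar_fan}, $\Xfr \fslash \Tfr_\stab = \Xfr_{\Sigma \fslash \stab}$. Here $\Sigma \fslash \stab$ has free discrete cocharacter group $\pi_0(\Nc_\Sigma)/\pi_0(\Nc_\Sigma)_\tors$ and the same poset and monoids $P_\sigma$. This rigidified stack is a toric Deligne--Mumford stack with trivial generic stabilizer. By \cite[Theorem 3.10.7]{gillam2015theory}, combined with \zcref{thm:classification} and \zcref{prop:KM_geometrization}, it is separated if and only if $\Sigma \fslash \stab$ comes from a lattice KM fan. That is, the images of the $P_\sigma$ in $\overline{N} := N/N_\tors$ are injective, pairwise distinct, and intersect in common faces.

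The main obstacle is translating conditions (3) and (4) between $N := \pi_0(\Nc_\Sigma)$ and $\overline{N}$. The map $N \to \overline{N}$ is injective on each sharp toric submonoid $b_\sigma(P_\sigma)$, because $P_\sigma^\gp$ is free and hence meets $N_\tors$ trivially. So injectivity of the $b_\sigma$ transfers between the two settings. The real issue is that two cones might have equal or non-face intersections in $\overline{N}$ but not in $N$, or conversely. Our plan is to exploit the geometric meaning of the conditions instead of comparing them combinatorially. Separatedness of $\Xfr$ can be tested on the basic opens $\Xfr_{\Sigma_{\le\sigma_1}}, \Xfr_{\Sigma_{\le\sigma_2}}$: the diagonal restricted to them is $\Xfr_{\Sigma_{\le \sigma_1\wedge\sigma_2}} \to \Xfr_{\Sigma_{\le\sigma_1}}\times\Xfr_{\Sigma_{\le\sigma_2}}$, and we must determine when this is proper. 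We would compute this map via cocharacter categories (\zcref{prop:cocharacter_equivalence}) and the valuative criterion applied to one-parameter subgroups $\AA^1 \to \Xfr$. These one-parameter subgroups are controlled by $\PO$ of $\Oc(\Sigma_{\le\sigma_i})$, i.e.\ by the monoids $b_{\sigma_i}(P_{\sigma_i}) \subset N$ themselves rather than their images in $\overline{N}$. If this direct valuative argument gets messy, the fallback is to invoke the KM-fan version of \cite{gillam2015theory}, which allows torsion in $N$, directly on $\Sigma$ once (1) and (2) hold. Either way, conditions (3) and (4) in $N$ should come out as exactly the lattice KM conditions on $\Sigma \fslash \stab$, which completes the equivalence.
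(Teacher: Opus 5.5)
\textbf{There is a gap in the step you call the main obstacle.} Your reduction is the same as the paper's, and it is fine up to that point. Finiteness of stabilizers gives (1) and (2). You then pass to $\Xfr \fslash \Tfr_\stab = \Xfr_{\Sigma \fslash \stab}$; your argument that a proper surjection from a separated stack forces the target, which has affine diagonal, to be separated is correct. Finally you apply \cite[Theorem 3.10.7]{gillam2015theory}. What you never prove is that the lattice KM conditions on $\Sigma \fslash \stab$, which live in $\overline{N} = N/N_\mathrm{tors}$, are equivalent to conditions (3) and (4) in $N$. The valuative-criterion plan via one-parameter subgroups is only sketched.

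Your fallback cannot cover the forward direction. Gillam--Molcho's torsion-allowing result, \cite[Corollary 3.2.7]{gillam2015theory}, only says that KM fans give separated stacks; the paper uses it for ``if''. Their classification in the converse direction is only for trivial generic stabilizer. Extending it to $N$ with torsion is precisely what this proposition adds, so the transfer from $\overline{N}$ back to $N$ cannot be outsourced. It is the actual content of ``only if''.

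\textbf{The missing argument is elementary.} It uses a fact you already observed: reduction mod torsion is injective on each $b_\sigma(P_\sigma)$.

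\emph{Condition (3).} Equal images in $N$ give equal images in $\overline{N}$, so $\sigma_1 = \sigma_2$.

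\emph{Condition (4).} Set $B = b_{\sigma_1}(P_{\sigma_1}) \cap b_{\sigma_2}(P_{\sigma_2})$ and let $\overline{B}$ be the corresponding intersection in $\overline{N}$. By the lattice KM property, $\overline{B}$ is a face of $\overline{b}_{\sigma_1}(P_{\sigma_1})$. Faces of $P_{\sigma_1}$ correspond to points below $\sigma_1$, since $\Sigma_{\le \sigma_1}$ is a stacky cone, so $\overline{B} = \overline{b}_{\sigma_3}(P_{\sigma_3})$ for some $\sigma_3 \le \sigma_1$. Likewise $\overline{B} = \overline{b}_{\sigma_3'}(P_{\sigma_3'})$ with $\sigma_3' \le \sigma_2$, and distinctness of the images forces $\sigma_3 = \sigma_3'$.
\begin{itemize}
\item Via the face maps, $b_{\sigma_3}(P_{\sigma_3}) \subseteq B$.
\item Conversely, any $x \in B$ satisfies $\overline{x} = \overline{b_{\sigma_3}(p)}$ for some $p \in P_{\sigma_3}$. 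Both $x$ and $b_{\sigma_3}(p)$ lie in $b_{\sigma_1}(P_{\sigma_1})$, so injectivity gives $x = b_{\sigma_3}(p)$.
\end{itemize}
Hence $B = b_{\sigma_3}(P_{\sigma_3})$, a common face.

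\textbf{Your worry about the converse is also unfounded.} Suppose $p_i \in b_{\sigma_i}(P_{\sigma_i})$ have the same image in $\overline{N}$. Then $n p_1 = n p_2$ lies in $F = b_{\sigma_1}(P_{\sigma_1}) \cap b_{\sigma_2}(P_{\sigma_2})$ for some $n > 0$. Since $F$ is a face of both monoids, $p_1, p_2 \in F$, and injectivity gives $p_1 = p_2$. Thus $\overline{B}$ is the image of $F$, so (3) and (4) for $\Sigma$ imply the lattice KM conditions for $\Sigma \fslash \stab$. Your route then proves ``if'' without citing \cite[Corollary 3.2.7]{gillam2015theory}, whereas the paper cites it directly.
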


\begin{proof}
    ($\Rightarrow$).
    Suppose $\Xfr$ is separated.
    Because $\Xfr$ has affine diagonal, this implies that the diagonal morphism of $\Xfr$ is in fact finite, so all stabilizer groups of $\Xfr$ are finite.
    In particular, $\Xfr$ must be Deligne--Mumford, so the first two conditions hold by \zcref{prop:DM_criterion}.
    Write $N_\Sigma = \Nc_\Sigma$ for notational clarity, since this is now guaranteed to be discrete.
    
    Consider the rigidification $\Xfr \fslash \Tfr_\stab$.
    By \zcref{lem:rigidification_separated} and the cancellation law for separated morphisms, the stack $\Xfr \fslash \Tfr_\stab$ is separated.
    The classification result of \cite[Theorem 3.10.7]{gillam2015theory} then implies $\Xfr \fslash \Tfr_\stab$ is the geometrization of a lattice KM fan.
    By \zcref{prop:rigid_eqvar_fan}, the stacky fan corresponding to $\Xfr \fslash \Tfr_\stab$ is $\Sigma \fslash \stab$, i.e.\ it is the same as $\Sigma$ but with $N_\Sigma$ replaced by $\ol{N}_\Sigma := N_\Sigma / (N_\Sigma)_\tors$ and with the morphisms $b_\sigma$ replaced by the natural morphisms $\ol{b}_\sigma: P_\sigma \to \ol{N}_\Sigma$.
    
    Because $\Sigma \fslash \stab$ is a KM fan, $\{ \ol{b}_\sigma(P_\sigma) \}_{\sigma \in |\Sigma|}$ is a set.
    Because $b_{\sigma_1}(P_{\sigma_1}) = b_{\sigma_2}(P_{\sigma_2})$ implies $\ol{b}_{\sigma_1}(P_{\sigma_1}) = \ol{b}_{\sigma_2}(P_{\sigma_2})$ and thus $\sigma_1 = \sigma_2$, we see that $\{ b_\sigma(P_\sigma) \}_{\sigma \in |\Sigma|}$ is a set.
    
    Again because $\Sigma \fslash \stab$ is a KM fan, we know that cones intersect in faces in $\Sigma \fslash \stab$.
    This translates into the claim that, for all $\sigma_1, \sigma_2 \in \Sigma$, the intersection $\ol{b}_{\sigma_1}(P_{\sigma_1}) \cap \ol{b}_{\sigma_2}(P_{\sigma_2})$ is a face of $\ol{b}_{\sigma_1}(P_{\sigma_1})$ and $\ol{b}_{\sigma_2}(P_{\sigma_2})$.
    We can use this to show $b_{\sigma_1}(P_{\sigma_1}) \cap b_{\sigma_2}(P_{\sigma_2})$ is a face of $b_{\sigma_1}(P_{\sigma_1})$ and $b_{\sigma_2}(P_{\sigma_2})$ as follows.

    For notational convenience we write $B = b_{\sigma_1}(P_{\sigma_1}) \cap b_{\sigma_2}(P_{\sigma_2})$ and $\ol{B} = \ol{b}_{\sigma_1}(P_{\sigma_1}) \cap \ol{b}_{\sigma_2}(P_{\sigma_2})$.
    Because $\{\ol{b}_{\sigma}(P_\sigma)\}_{\sigma \in |\Sigma|}$ is closed under taking faces, we can write the face $\ol{B} \subset \ol{b}_{\sigma_1}(P_{\sigma_1})$ as $\ol{b}_{\sigma_3}(P_{\sigma_3})$ for some $\sigma_3 \leq \sigma_1$.
    Likewise we can write $\ol{B} = \ol{b}_{\sigma_3'}(P_{\sigma_3'})$ for some $\sigma_3' \leq \sigma_2$.
    Because $\{ \ol{b}_\sigma(P_\sigma) \}_{\sigma \in |\Sigma|}$ is a set, we must have $\sigma_3 = \sigma_3'$.
    Then, because $b_{\sigma_3}(P_{\sigma_3})$ is a face of $b_{\sigma_1}(P_{\sigma_1})$ and $b_{\sigma_2}(P_{\sigma_2})$, it will suffice to show $B = b_{\sigma_3}(P_{\sigma_3})$ as subsets of $N_\Sigma$.

    Let $i_1: P_{\sigma_3} \to P_{\sigma_1}$ and $i_2: P_{\sigma_3} \to P_{\sigma_2}$ be the face inclusion morphisms, so 
    $b_{\sigma_3} = b_{\sigma_1} \circ i_1 = b_{\sigma_2} \circ i_2$.
    Thus 
    \[
        b_{\sigma_3}(P_{\sigma_3}) = b_{\sigma_1}(i_1(P_{\sigma_3})) \subset b_{\sigma_1}(P_{\sigma_1}),
    \]
    and likewise $b_{\sigma_3}(P_{\sigma_3}) \subset b_{\sigma_2}(P_{\sigma_2})$.
    Thus $b_{\sigma_3}(P_{\sigma_3}) \subset B$.
    
    For the reverse inclusion $B \subset b_{\sigma_3}(P_{\sigma_3})$, note that the natural map $B \to \ol{B}$ is injective because the natural map $b(P_{\sigma_1}) \to \ol{b}(P_{\sigma_1})$ is an isomorphism.
    It therefore suffices to show $\ol{B} \subset \ol{b}_{\sigma_3}(P_{\sigma_3})$.
    But this holds by definition of $\sigma_3$.
    Thus 
    \[
    b_{\sigma_1}(P_{\sigma_1}) \cap b_{\sigma_2}(P_{\sigma_2}) = B = b_{\sigma_3}(P_{\sigma_3})
    \]
    so cones intersect in faces for $\Sigma$.

    ($\Leftarrow$).
    This is \cite[Corollary 3.2.7]{gillam2015theory}.
\end{proof}

\begin{rmk}
    The ``cones intersect in faces'' condition in \zcref{prop:separated_criterion} is superficially similar to the requirement that subcones of a stacky fan intersect in subcones, but the two conditions are distinct.
    Indeed, the former is an assertion about set-theoretic intersections within the discrete abelian group $\Nc_\Sigma$, while the latter is an assertion about meets in the poset $|\Sigma|$.
\end{rmk}

\begin{ex}
    For an explicit example of a stacky fan where all conditions of \zcref{prop:separated_criterion} other than ``cones intersect in faces'' hold, consider the stacky fan 
    \[
        \Sigma = \Cone(\ZZ^2 \vec{/}_\phi \NN^2) \cup_{\Cone(\ZZ^2)} \Cone(\ZZ^2 \vec{/}_\psi \NN)
    \]
    where
    \[
        \phi = \begin{bmatrix}
            1 & 0 \\
            0 & 1
        \end{bmatrix}
        \textrm{ and }
        \psi = \begin{bmatrix}
            1 \\
            1
        \end{bmatrix}.
    \]
    Then $\Sigma$ is a stacky fan satisfying the first three conditions listed in \zcref{prop:separated_criterion}, but the set-theoretic intersection $\im \phi \cap \im \psi$ is $\im \psi$, which is not a face of $\im \phi$.
\end{ex}

\subsection{The underlying classical fan of a KM fan}

\begin{dfn}[{\cite[Example 2.4.4]{gillam2015theory}}]
    Let $\Sigma$ be a KM fan (viewed as a stacky fan).
    The \emph{underlying classical fan} of $\Sigma$, written $\Sigma_\coarse$, is the stacky fan $(N_{\Sigma,\coarse}, |\Sigma_\coarse|, P_{\Sigma,\coarse})$ where:
    \begin{itemize}
        \item $N_{\Sigma,\coarse} = N_\Sigma / (N_\Sigma)_\tors$,
        \item $|\Sigma_\coarse| = |\Sigma|$, and
        \item For all $\sigma \in |\Sigma_\coarse| = |\Sigma|$, $P_{\Sigma,\coarse,\sigma}$ is the saturation of $P_{\Sigma,\sigma}$ in $N_{\Sigma,\coarse}$.
    \end{itemize}
    This is a classical fan as in \cite{cls}.
\end{dfn}

\begin{rmk}
    The above definition of a stacky fan $\Sigma_\coarse$ makes sense for any Deligne--Mumford fan $\Sigma$, though the result will be a classical fan if and only if $\Sigma$ is a KM fan.
\end{rmk}

\cite[\S 2.3]{gillam2015theory} outlines a method for drawing pictures of KM fans using underlying classical fans.
Essentially, one draws the usual picture of $\Sigma_\coarse$ (as in \cite{cls}), except that one only draws the lattice points contained in the sets $\im b_\sigma$.
When $N_\Sigma$ has torsion, one includes extra dimensions in the picture corresponding to generators of $(N_\Sigma)_\tors$.

\begin{ex}
    The stacky fan corresponding to $\PP(1, 2)$ can be drawn as 
    \[
    \begin{tikzcd}
        \dots & \lar \bullet & \lar \bullet & \lar \bullet \ar[rr] & & \bullet \ar[rr] & & \dots,
    \end{tikzcd}
    \]
    where the arrows on the right are longer because they skip over the lattice points 1, 3, 5, et cetera.
\end{ex}

\cite[Proposition 3.2.6]{gillam2015theory} shows that, for any KM fan $\Sigma$, the toric variety $\Xfr_{\Sigma_\coarse}$ is the coarse moduli space of $\Xfr_\Sigma$.
We can show that this is actually a good moduli space in the sense of \cite{alper2013good}:

\begin{prop}
    Let $\Sigma$ be a KM fan.
    Then $\Xfr_{\Sigma_\coarse}$ is the good moduli space of $\Xfr_\Sigma$.
\end{prop}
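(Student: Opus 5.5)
We must show that the coarse moduli map $\pi: \Xfr_\Sigma \to \Xfr_{\Sigma_\coarse}$ (which exists and is a coarse moduli space by \cite[Proposition 3.2.6]{gillam2015theory}) is a good moduli space. That means $\pi$ should be cohomologically affine and the natural map $\Osc_{\Xfr_{\Sigma_\coarse}} \to \pi_* \Osc_{\Xfr_\Sigma}$ should be an isomorphism. Both conditions are Zariski-local on the target by \cite[Proposition 3.10]{alper2013good}, and $\Xfr_{\Sigma_\coarse}$ has affine diagonal. So the first step is to cover $\Xfr_{\Sigma_\coarse}$ by its affine toric opens $U_\sigma = \Xfr_{(\Sigma_\coarse)_{\leq \sigma}}$.

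Since $|\Sigma_\coarse| = |\Sigma|$ and $\pi$ is induced by the morphism of stacky fans $\Sigma \to \Sigma_\coarse$ that is the identity on posets, this morphism is conical. By \zcref{prop:open_imm_biconditional} the preimage $\pi^{-1}(U_\sigma)$ is the basic open $\Xfr_{\Sigma_{\leq \sigma}}$. Conicality together with \zcref{prop:coh_aff_relative} already gives that $\pi$ is cohomologically affine. It therefore remains to check the structure-sheaf condition on each basic open, i.e.\ to show $\Gamma(\Xfr_{\Sigma_{\leq\sigma}}, \Osc) = \Gamma(U_\sigma, \Osc)$ compatibly with $\pi^*$.

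For this local computation, take $K = \Sigma_{\leq \sigma}$ with $P = P_\sigma \subset N = N_\Sigma$. Use the explicit presentation from the proof of \zcref{prop:KM_geometrization}: choose a free $L \subset N$ of finite index containing $P^\gp$, let $S = \{\phi \in \Hom(L,\ZZ) : \phi(P)\geq 0\}$, and write $\Xfr_K = \Spec k\angles{S} / \Spec k\angles{\Ext^1(N/L,\ZZ)}$. Global functions are then the invariants, namely the degree-zero part of the $\Ext^1(N/L,\ZZ)$-grading. That part is $k\angles{S'}$, where $S' = S \cap \ker(\Hom(L,\ZZ)\to \Ext^1(N/L,\ZZ)) = S \cap \Hom(N,\ZZ)$, using the exact sequence in that proof.

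Next identify $S' = \{\phi: N/N_\tors \to \ZZ : \phi(P) \geq 0\}$. This is exactly the dual monoid of the saturation of the image of $P$ in $N/N_\tors$, because a functional is nonnegative on $P$ if and only if it is nonnegative on the saturation. By classical toric geometry (\zcref{ex:Gale_dual_classical_toric}), $\Spec k\angles{S'} = U_\sigma$.

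The main obstacle is bookkeeping rather than substance. One must check that the map induced by $\pi$ on global functions really is this inclusion $k\angles{S'} \hookrightarrow k\angles{S}$, and that these local identifications glue compatibly with the Gillam–Molcho coarse map. I would handle this by noting that both maps are torus-equivariant and agree on the dense torus, where everything reduces to the lattice map $N \to N/N_\tors$. Since $\Xfr_\Sigma$ is reduced, agreement on the dense torus suffices, as in the proof of \zcref{prop:basic_coh_aff}.
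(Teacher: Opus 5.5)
Your proposal is correct, but it follows a different route from the paper.

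The paper also reduces to a single cone $K$, but then computes nothing. It observes that coarse and good moduli spaces are both initial among maps to algebraic spaces. Hence, once $\Xfr_K$ has a good moduli space, that space must coincide with the coarse space $\Xfr_{K_\coarse}$ supplied by \cite[Proposition 3.2.6]{gillam2015theory}. Existence comes from \cite[Theorem 13.2]{alper2013good}: since $\Xfr_K$ is a quotient of an affine scheme by a diagonalizable group, $\Xfr_K \to \Spec \Gamma(\Xfr_K, \Osc_{\Xfr_K})$ is a good moduli space.

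You instead verify the two defining conditions directly. Cohomological affineness comes from conicality of $\Sigma \to \Sigma_\coarse$ via \zcref{prop:coh_aff_relative}. The structure-sheaf condition comes from computing the invariants $k\angles{S \cap \Hom(N,\ZZ)}$ and identifying them, by duality with the saturation, with the coordinate ring of $U_\sigma$.

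What your route buys is an explicit description of the invariant ring. It also does not really need Gillam--Molcho: taking $\pi = \Xfr_f$ for the fan morphism $f\colon \Sigma \to \Sigma_\coarse$, the same argument shows directly that $\Xfr_f$ is a good moduli space. Its cost is exactly the bookkeeping you flag, namely that $\pi^*$ on global functions is the inclusion $k\angles{S'} \hookrightarrow k\angles{S}$. Your reducedness-plus-dense-torus argument does handle this. $U_\sigma$ is separated, and $\pi|_{\Xfr_K}$ and the canonical map $\Xfr_K \to \Spec\Gamma(\Xfr_K,\Osc_{\Xfr_K}) \cong U_\sigma$ both restrict to the homomorphism of stacky tori determined by $N \to N/N_\tors$. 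The paper's uniqueness argument gets this identification for free.

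Two small points you should make explicit:
\begin{itemize}
\item Check that $\Sigma \to \Sigma_\coarse$ really is a morphism in $\TDP$. The density condition holds because the image of $P_\sigma$ meets the relative interior of its saturation.
\item The identification $\pi^{-1}(U_\sigma) = \Xfr_{\Sigma_{\leq\sigma}}$ follows from the orbit--cone correspondence (\zcref{prop:underlying_poset_of_eqvar}) together with the correspondence between toric opens and subfans. It does not follow from \zcref{prop:open_imm_biconditional} alone.
\end{itemize}
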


\begin{proof}
    Being a good moduli space is local on $\Xfr_{\Sigma_\coarse}$, so it suffices to consider the case where $\Sigma_\coarse = K_\coarse$ is a single cone (so the same also holds for $\Sigma = K$).
    By \cite[Proposition 3.2.6]{gillam2015theory}, $\Xfr_{K_\coarse}$ is the coarse moduli space of $\Xfr_K$.
    Both good moduli spaces and coarse moduli spaces satisfy the same universal property (at least in the locally noetherian case) of being the ``initial algebraic spaces receiving a map from the stack in question.''
    Thus, if $\Xfr_K$ also has a good moduli space, then this good moduli space agrees with the coarse moduli space $\Xfr_{K_\coarse}$.
    We know the natural morphism $\Xfr_K \to \Spec \Gamma(\Xfr_K, \Osc_{\Xfr_K})$ is a good moduli space morphism by \cite[Theorem 13.2]{alper2013good}.
    Thus $\Xfr_{K_\coarse} = \Spec \Gamma(\Xfr_K, \Osc_{\Xfr_K})$ is a good moduli space of $\Xfr_\Sigma$.
\end{proof}

\begin{rmk}
    General toric stacks need not admit good moduli spaces, and the construction of an ``underlying classical fan'' breaks down for general stacky fans.
    It is possible to characterize the toric stacks which admit good moduli spaces using a criterion similar to that of \cite[Theorem 6.5]{geraschenko2015toric_1}, though we shall not do so here.
\end{rmk}

\subsection{Completeness}

For the sake of completeness, we recall the following result from \cite[Proposition 3.5.4]{gillam2015theory} characterizing when the morphism from a separated toric stack to $\Spec k$ is proper.

\begin{prop}[{\cite[Proposition 3.5.4]{gillam2015theory}}] \label{prop:complete_criterion}
    Let $\Xfr = \Xfr_\Sigma$ be a toric stack.
    The following are equivalent:
    \begin{enumerate}
        \item $\Xfr$ is complete.
        \item $\Sigma$ arises from a KM fan, and for all $n \in \Nc_\Sigma$, there exists a nonzero natural number $d$ and $\sigma \in \Sigma$ such that $d \cdot n \in \im b_\sigma$.
        \item $\Sigma$ arises from a KM fan, and the underlying classical fan $\Sigma_\coarse$ is complete.
    \end{enumerate}
\end{prop}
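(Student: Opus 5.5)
The plan is to reduce everything to the separated Deligne--Mumford case already treated by Gillam--Molcho, using \zcref{prop:separated_criterion}. Completeness means separated and universally closed (proper over $\Spec k$) by definition, so in all three conditions we may assume that $\Xfr$ is separated. By \zcref{prop:separated_criterion}, this is equivalent to $\Sigma$ arising from a KM fan $(N, \{P_\sigma\})$ as in \zcref{ex:KM_fan}. By \zcref{prop:KM_geometrization}, $\Xfr_\Sigma$ then agrees with the toric stack Gillam--Molcho attach to this KM fan. It therefore suffices to prove the equivalences for KM fans, with $\Nc_\Sigma = N$ discrete and each $b_\sigma$ an inclusion.

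For $(2) \Leftrightarrow (3)$ the argument is combinatorial. The support of $\Sigma_\coarse$ is $\bigcup_\sigma \ul{\sigma}$, where $\ul{\sigma} \subset N_\coarse \otimes \RR$ is the rational cone spanned by the image of $P_\sigma$. The fan $\Sigma_\coarse$ is complete exactly when every point of $N_\coarse$ lies in some $\ul{\sigma}$. A point of $N_\coarse$ lies in $\ul{\sigma}$ if and only if some positive multiple lies in the image of $P_\sigma$; this uses that $P_\sigma$ is finitely generated and that its saturation in $N_\coarse$ is $\ul{\sigma} \cap N_\coarse$ (\zcref{prop:sat_submon_of_lattice}). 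To pass from $N_\coarse$ to $N$, we note that for $n \in N$ a multiple of $n$ lies in $\im b_\sigma$ if and only if a multiple of $\bar n$ lies in $\im \ol{b}_\sigma$. This holds because the torsion of $N$ is killed by a further multiple, and because $P_\sigma \to N_\coarse$ is injective (it is sharp and torsion-free, and $N_\tors \cap P_\sigma^\gp = 0$ since $P_\sigma^\gp$ is free and maps injectively to $N$).

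For $(1) \Leftrightarrow (3)$, proper over $\Spec k$ can be checked on the coarse moduli space. Separated DM stacks of finite type have proper coarse moduli maps $\Xfr_\Sigma \to \Xfr_{\Sigma_\coarse}$ by Keel--Mori, and \cite[Proposition 3.2.6]{gillam2015theory} identifies this coarse space with the toric variety $X_{\Sigma_\coarse}$. Since the coarse moduli map is proper and surjective, $\Xfr_\Sigma$ is proper if and only if $X_{\Sigma_\coarse}$ is proper. The classical criterion \cite[Theorem 3.4.6]{cls} says that $X_{\Sigma_\coarse}$ is complete if and only if $\Sigma_\coarse$ is a complete fan, which is condition (3).

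The main obstacle is making sure the reduction really covers every complete toric stack. Completeness in our sense includes separatedness, so \zcref{prop:separated_criterion} forces finite stabilizers. The delicate point is only the direction ``proper coarse space implies proper stack'', which needs the coarse map to be proper. This is standard for separated DM stacks, so I would simply cite Keel--Mori alongside \cite[Proposition 3.5.4]{gillam2015theory}.
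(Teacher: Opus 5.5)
Your proposal is correct and follows the paper's route. The paper gives no separate proof: it treats the statement as a restatement of \cite[Proposition 3.5.4]{gillam2015theory} once \zcref{prop:separated_criterion} and \zcref{prop:KM_geometrization} identify complete (hence separated) toric stacks with geometrizations of KM fans. Your unpacking of the cited result is also sound: the properness of the Keel--Mori coarse map, \cite[Proposition 3.2.6]{gillam2015theory} identifying the coarse space with $\Xfr_{\Sigma_\coarse}$, the classical completeness criterion, and the torsion-killing argument for $(2)\Leftrightarrow(3)$.
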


\subsection{Smoothness and the Cox construction}

It is straightforward to check when a toric stack is smooth:

\begin{prop} \label{prop:smooth_criterion}
    Let $\Xfr = \Xfr_\Sigma$ be a toric stack.
    The following are equivalent:
    \begin{enumerate}
        \item $\Xfr_\Sigma$ is smooth.
        \item For all maximal points $\sigma \in \Sigma$, the commutative monoid $P_\sigma$ is isomorphic to $\NN^n$ for some $n$ (depending on $\sigma$).
        \item For all $\sigma \in \Sigma$, the commutative monoid $P_\sigma$ isomorphic to $\NN^n$ for some $n$ (depending on $\sigma$).
    \end{enumerate}
\end{prop}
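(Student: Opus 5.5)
We first reduce to stacky cones, then reduce further to a stacky cone with trivial dense torus, and compute there using the explicit Gale dual.

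\emph{Reduction to cones.} Smoothness is Zariski-local, and $\Xfr_\Sigma$ is covered by the basic opens $\Xfr_{\Sigma_{\leq \sigma}}$ with $\sigma$ maximal. So (1) holds if and only if each $\Xfr_{K}$ is smooth, where $K = \Sigma_{\leq\sigma}$ and $\sigma$ is maximal. Also (3) implies (2) trivially, and (2) implies (3) because every face of $\NN^n$ is a coordinate face $\NN^m$, by \zcref{prop:faces_of_product}. Hence it suffices to prove: for a stacky cone $K$ with maximal point $\kappa$, $\Xfr_K$ is smooth if and only if $P_\kappa \cong \NN^n$.

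\emph{Reduction to trivial torus.} Consider the equivariantization map $\Xfr_K \to \Xfr_K/\Tfr_K$, which is $\Xfr_{K/K(0)}$ by \zcref{prop:rigid_eqvar_fan}. By \zcref{cor:toric_stack_bottom_up}, this map factors into a torsor under an algebraic torus and a gerbe for a diagonalizable group. Both are smooth and surjective in characteristic zero. Since smoothness descends along smooth surjections, $\Xfr_K$ is smooth if and only if $\Xfr_{K/K(0)}$ is. The stacky cone $K/K(0)$ has the same monoid $P_\kappa$, now mapping to $\Nc = 0$. So we may assume $\Nc_K = 0$ and $\Oc(K) = \Bsf P$ with $P = P_\kappa$ sharp toric.

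\emph{Computation.} By \zcref{ex:Gale_dual_classical_toric},
\[
\GDual(\Bsf P) = \Hom(P,\ZZ) \vec{/} \Hom(P,\NN).
\]
Following the proof of \zcref{prop:geometrization_of_cone}, this gives
\[
\Xfr_K \simeq [\Spec k\angles{M}/T],
\]
where $M = D_\CMon(P)$ and $T = \Spec k\angles{M^\gp}$ is the dense torus acting on itself. Since $[U/T] \to$ point-quotient is a smooth cover by $U = \Spec k\angles{M}$, $\Xfr_K$ is smooth exactly when the affine toric variety $U$ is smooth. By the classical criterion (\cite[Theorem 1.3.12]{cls}), $U$ is smooth if and only if $M$ is generated by part of a basis of $M^\gp$. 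Since $M$ is sharp, this means $M \cong \NN^n$. Finally, $D_\CMon$ is an involutive antiequivalence on sharp toric monoids and $D_\CMon(\NN^n) = \NN^n$. Therefore $M \cong \NN^n$ if and only if $P \cong D_\CMon(M) \cong \NN^n$.

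\emph{Main obstacle.} The delicate step is the descent of smoothness through the gerbe, that is, checking that gerbes under non-reduced-looking groups such as $\mu_n$ are smooth morphisms. This holds in characteristic zero because $\mu_n$ is étale. I would verify it by smooth-locally trivializing the gerbe to $BD \times S \to S$ and using that $BD \to \Spec k$ is smooth for $D$ smooth.
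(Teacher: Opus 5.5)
Your proposal is correct and follows the paper's route. Both arguments reduce to a maximal subcone, pass to the equivariantization $K/K(0)$ so that $\Oc(K) = \Bsf P$, identify $\Xfr_K$ with $[U/T]$, and invoke \cite[Theorem 1.3.12]{cls}, with (2)$\Leftrightarrow$(3) coming from the faces of $\NN^n$ being coordinate faces. You are slightly more careful than the paper's write-up in two places: you justify that equivariantization preserves smoothness (it factors as a gerbe followed by a torsor, both smooth and surjective), and you note that $U = \Spec k\angles{D_\CMon(P)}$ rather than $\Spec k\angles{P}$, which does not change the conclusion since $D_\CMon$ preserves $\NN^n$.
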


\begin{proof}
    ($1 \Rightarrow 2$).
    Suppose $\Xfr$ is smooth.
    Let $\sigma \in \Sigma$ be maximal, and let $K = \Sigma_{\leq \sigma}$.
    We may assume without loss of generality that $\Sigma = K$, so $\Xfr$ is basic.
    Furthermore, since replacing $K$ by its equivariantization $K / K(0)$ does not change $P_\sigma$ or the smoothness of the toric stack in question, we may assume without loss of generality that $K = K(0)$, or equivalently $\Nc_K = 0$.
    Thus $\Osc(K) = \Bsf P_\sigma$.
    It follows that $\Xfr_K = [U / T]$ where $U$ is the toric variety $\Spec k\angles{P_\sigma}$ and $T$ is the dense torus in $U$.
    Smoothness is a smooth-local property, so $\Xfr_K$ is smooth if and only if $U$ is smooth.
    By \cite[Theorem 1.3.12]{cls}, $U$ is smooth if and only if $P_\sigma \cong \NN^n$ for some $n$, giving the claim.

    ($2 \Rightarrow 3$).
    Suppose (2) holds.
    For all $\sigma' \in \Sigma$, there exists a maximal point $\sigma \geq \sigma'$ such that $P_{\sigma'}$ is a face of $P_\sigma$.
    Since every face of $\NN^n$ is of the form $\NN^{n'}$ for some $n' \leq n$, we win.

    ($3 \Rightarrow 1$).
    This follows by running the argument of (1) in reverse for all subcones $K \subset \Sigma$.
\end{proof}

We will call stacky fans satisfying the equivalent conditions of \zcref{prop:smooth_criterion} \emph{smooth stacky fans}.
Using our formalism, it is easy to reinvent the Cox construction:

\begin{prop} \label{prop:cox_construction_fan}
    Let $\Sigma$ be a smooth stacky fan.
    Then there exists a universal stacky cone $K_\Sigma$ receiving an open immersion $\Sigma \hookrightarrow K_{\Sigma}$.
    Furthermore, we can write $K_\Sigma = \Cone(\Nc_\Sigma \vec{/} \NN^{\Sigma(1)})$, where $\NN^{\Sigma(1)} \to \Nc_\Sigma$ is induced by the maps $\{ b_\rho: \NN \to \Nc_\Sigma \}_{\rho \in \Sigma(1)}$.
\end{prop}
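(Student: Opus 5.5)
We first build the candidate cone and the open immersion, and then verify universality; we work throughout in $\StkFan$.

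\textbf{Construction.} Let $K_\Sigma = \Cone(\Nc_\Sigma \vec{/} \NN^{\Sigma(1)})$, with structure map $b: \NN^{\Sigma(1)} \to \Nc_\Sigma$ given on $\ebf_\rho$ by $b_\rho(1)$. For $\sigma \in |\Sigma|$, smoothness (\zcref{prop:smooth_criterion}(3)) gives $P_\sigma \cong \NN^n$. Its faces of rank one are exactly the $P_\rho$ with $\rho \leq \sigma$ and $\rho \in \Sigma(1)$, using the identification $|\Sigma_{\leq\sigma}| = \Face(P_\sigma)$ from the stacky cone condition. Hence $P_\sigma = \bigoplus_{\rho \leq \sigma,\, \rho \in \Sigma(1)} P_\rho$ canonically, and this gives a face inclusion $P_\sigma \hookrightarrow \NN^{\Sigma(1)}$ onto the coordinate face $\NN^{\sigma(1)}$ over $\Nc_\Sigma$.

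\textbf{The open immersion.} Define $j: \Sigma \to K_\Sigma$ by:
\begin{itemize}
  \item $j_{\#,\Nc} = \id$;
  \item $|j|(\sigma) = \NN^{\sigma(1)}$;
  \item $j_{\#,\sigma}$ the face inclusion above.
\end{itemize}
Compatibility with face inclusions is immediate, and the density condition holds because the image is already a face. Injectivity of $|j|$ holds because $\sigma$ is determined by its rays: $\Sigma_{\leq\sigma}$ is a stacky cone on $\NN^{\sigma(1)}$. Image downward-closedness holds because the faces of $\NN^{\sigma(1)}$ are the $\NN^{\tau(1)}$ for $\tau \leq \sigma$. So $j$ is isomorphic to the inclusion of a subfan, i.e.\ an open immersion.

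\textbf{Universality.} Let $f: \Sigma \to K = \Cone(\Cc)$ be a morphism to a stacky cone. By \zcref{prop:Cone_Gamma_adjunction} and \zcref{cor:stacky_cone_equivalence},
\[
  \Map(\Sigma, K) = \iota_0\Fun^\oplus(\Oc(\Sigma), \Cc), \qquad \Map(K_\Sigma, K) = \iota_0\Fun^\oplus(\Nc_\Sigma \vec{/} \NN^{\Sigma(1)}, \Cc).
\]
It therefore suffices to show that $\Oc(j): \Oc(\Sigma) \to \Nc_\Sigma \vec{/} \NN^{\Sigma(1)}$ is an equivalence. By \zcref{prop:skew_quotient_UP}, functors out of $\Nc_\Sigma \vec{/} \NN^{\Sigma(1)}$ are pairs consisting of $F_\Nc: \Nc_\Sigma \to \iota_0\Cc$ and a homomorphism $\NN^{\Sigma(1)} \to \PO\Cc$ compatible over $\iota_0\Cc$. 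Since $\NN^{\Sigma(1)}$ is free, the latter is just a compatible choice of an element of $\PO\Cc$ for each ray.

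On the other side, a functor out of the colimit $\Oc(\Sigma)$ is a compatible family of functors out of the $\Nc_\Sigma \vec{/} P_\sigma$. These agree on the common $\Nc_\Sigma$, which is the value at the minimal element of $|\Sigma|$, and each is determined by ray data because $P_\sigma = \bigoplus P_\rho$. The rays are glued only along the minimal cone, where $P = 0$, so the colimit imposes no further relations. Universality is then immediate: $f$ factors through $j$ uniquely, up to unique 2-isomorphism.

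\textbf{Main obstacle.} The main obstacle is this colimit comparison. One must identify the colimit in $\TCat$, which via \zcref{eq:finite_colimit_tcat} is computed in $\TGpd$ and $\CMon^{\sharp,\toric}$ rather than naively. The key point is that $\colim_\sigma P_\sigma = \NN^{\Sigma(1)}$ in $\CMon^{\sharp,\toric}$, and that the $\iota_0$-colimit is just $\Nc_\Sigma$ because every index has the same $\Nc$. The second claim follows since the index poset has an initial object, and is hence connected. For the first, I would check it by dualizing with $D_\CMon$: the limit of the $\NN^{\sigma(1)\vee}$ is $\NN^{\Sigma(1)}$ since every ray lies in some $\sigma$. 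The remaining 2-categorical coherence I would treat as routine.
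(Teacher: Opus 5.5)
Your strategy is the paper's, reordered. The paper sets $K_\Sigma = \Cone(\Oc(\Sigma))$, which is universal by the adjunction $\Oc \dashv \Cone$, and then identifies $\Oc(\Sigma)$ with $\Nc_\Sigma \vec{/} \NN^{\Sigma(1)}$ via $\colim_\sigma P_\sigma = \NN^{\Sigma(1)}$. You instead write down $\Cone(\Nc_\Sigma \vec{/} \NN^{\Sigma(1)})$ first and show $\Oc(j)$ is an equivalence, which is the same colimit computation.

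\textbf{Gap: injectivity of $|j|$.} This step does not go through as you justify it. Write $\sigma(1) := \Sigma(1) \cap \Sigma_{\leq\sigma}$. Knowing that each $\Sigma_{\leq \sigma}$ is a stacky cone on $\NN^{\sigma(1)}$ only shows that two elements lying below a \emph{common} $\sigma$ with the same rays coincide. For incomparable $\sigma_1, \sigma_2$ with $\sigma_1(1) = \sigma_2(1)$, you must use the fan axiom:
\begin{itemize}
\item the meet $\sigma_3 = \sigma_1 \wedge \sigma_2$ exists and $\Sigma_{\leq \sigma_3} = \Sigma_{\leq \sigma_1} \cap \Sigma_{\leq \sigma_2}$, so $\sigma_3(1) = \sigma_1(1) \cap \sigma_2(1) = \sigma_1(1)$;
\item $\sigma_3$ and $\sigma_1$ both lie in the single cone $\Sigma_{\leq \sigma_1}$, so $\sigma_3 = \sigma_1$;
\item symmetrically $\sigma_3 = \sigma_2$.
\end{itemize}
This is exactly the argument in the paper, and it is not a formality. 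Consider $\AA^2$ with doubled origin, i.e.\ two copies of $\Cone(\ZZ^2 \vec{/} \NN^2)$ glued along $\{0, \rho_1, \rho_2\}$. It is a toric decorated poset covered by smooth subcones, and every step of your argument as written applies to it. Yet your $|j|$ sends both maximal elements to the same face $\NN^2$. What fails there is precisely the existence of meets. Once injectivity is proved this way, order-reflection follows from it together with your downward-closedness observation. Order-reflection is needed for $j$ to be isomorphic to the inclusion of a subfan.

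\textbf{Minor point on the colimit.} In your last paragraph, connectedness of $|\Sigma|$ is not the right reason for $\colim_\sigma \Nc_\Sigma = \Nc_\Sigma$ in the 2-category $\TGpd$. A bicolimit of a constant diagram depends on the fundamental groupoid of the nerve of the index category, not just on $\pi_0$. For example, a connected index poset whose nerve is a circle would produce an extra $\Bsf$-factor. The initial (minimal) element you cite makes the nerve contractible, so your conclusion is correct; only the stated reason is too weak.
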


\begin{proof}
    Let $K_\Sigma = \Oc(\Sigma)$, so that $K_\Sigma$ is the universal stacky cone receiving a map $j: \Sigma \to K_\Sigma$ by \zcref{prop:Cone_Gamma_adjunction}.
    Let $\kappa$ be the maximal point of $K_\Sigma$.
    Then $P_{K_\Sigma,\kappa} = \colim_{\sigma \in \Sigma} P_{\Sigma,\sigma}$.
    Every $P_{\Sigma,\sigma}$ is the direct sum of the one-dimensional faces $P_{\Sigma,\rho} \cong \NN$ for $\rho \in \Sigma(1) \cap \Sigma_{\leq \sigma}$.
    It follows that $P_{K_\Sigma,\kappa} = \NN^{\Sigma(1)}$ since both satisfy the same universal property.
    The natural map $P_{K_\Sigma,\kappa} \to \Nc_\Sigma$ is induced by $\{ b_\rho: \NN \to \Nc_\Sigma \}_{\rho \in \Sigma(1)}$.
    
    For $\sigma \in \Sigma$, the morphism $j$ sends $\sigma$ to the face $\NN^{\Sigma(1) \cap \Sigma_{\leq \sigma}} \subset \NN^{\Sigma(1)}$.
    Thus:
    \begin{itemize}
        \item $|j|: |\Sigma| \to |K_\Sigma|$ is an injective morphism of posets.
        Indeed, if $j(\sigma_1) = j(\sigma_2)$, then $\Sigma(1) \cap \Sigma_{\leq \sigma_1} = \Sigma(1) \cap \Sigma_{\leq \sigma_2}$.
        Letting $\sigma_3$ be the meet of $\sigma_1$ and $\sigma_2$, we see that $\Sigma(1) \cap \Sigma_{\leq \sigma_1} = \Sigma(1) \cap \Sigma_{\leq \sigma_3}$.
        Saying that $\sigma_3 \leq \sigma_1$ is the same as saying that $P_{\Sigma,\sigma_3}$ is a face of the toric monoid $P_{\Sigma,\sigma_1}$.
        The face $P_{\Sigma,\sigma_3}$ is determined by the one-dimensional faces it contains.
        By construction, these are the same as the one-dimensional faces of $P_{\Sigma,\sigma_1}$, so $\sigma_3 = \sigma_1$.
        Likewise $\sigma_3 = \sigma_2$, so $\sigma_1 = \sigma_2$, as desired.
        \item $j$ is an open immersion Zariski-locally on $\Sigma$.
        Indeed, for any $\sigma \in \Sigma$, the morphism $\Sigma_{\leq \sigma} \to K_{\Sigma}$ agrees with the morphism of stacky cones induced by the face inclusion $\Nc_\Sigma \vec{/} \NN^{\Sigma(1) \cap \Sigma_{\leq \sigma}} \to \Nc_\Sigma \vec{/} \NN^{\Sigma(1)}$.
        This latter morphism of stacky cones is clearly an open immersion.
    \end{itemize}
    Combining these claims shows that $j$ is an open immersion.
\end{proof}

\begin{cor}[Cox construction] \label{cor:cox_construction_stack}
    Let $\Xfr$ be a smooth toric stack.
    Then there exists a universal basic toric stack $\Yfr$ receiving an open immersion $\Xfr \hookrightarrow \Yfr$.
    Furthermore, $\Yfr$ is smooth.
\end{cor}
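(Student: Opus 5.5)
The plan is to transport \zcref{prop:cox_construction_fan} across the equivalence of \zcref{thm:classification}. Write $\Xfr = \Xfr_\Sigma$ for a stacky fan $\Sigma$, using \zcref{prop:geometrization_ess_surj}. Since $\Xfr$ is smooth, \zcref{prop:smooth_criterion} shows that $\Sigma$ is a smooth stacky fan. So \zcref{prop:cox_construction_fan} provides a stacky cone $K_\Sigma = \Cone(\Nc_\Sigma \vec{/} \NN^{\Sigma(1)})$ and an open immersion $j: \Sigma \hookrightarrow K_\Sigma$ that is universal among maps from $\Sigma$ to stacky cones. We then set $\Yfr = \Xfr_{K_\Sigma}$. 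By \zcref{prop:geometrization_of_cone}, $\Yfr$ is basic, and by \zcref{prop:open_imm_biconditional}, $\Xfr_j: \Xfr \to \Yfr$ is an open immersion of toric stacks.

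For universality, let $\Ufr$ be any basic toric stack with an open immersion $\Xfr \hookrightarrow \Ufr$. By \zcref{prop:geometrization_of_cone}(3), or equivalently \zcref{prop:geometrization_fully_faithful_cone}, we have $\Ufr \simeq \Xfr_K$ for a stacky cone $K$. Full faithfulness (\zcref{prop:geometrization_fully_faithful}) then identifies $\Map_\TStk(\Xfr, \Ufr)$ with $\Map_\StkFan(\Sigma, K)$. By \zcref{prop:Cone_Gamma_adjunction} and \zcref{cor:stacky_cone_equivalence}, the latter is $\iota_0\Fun^\oplus(\Oc(\Sigma), \Oc(K))$, which is $\Map_\StkCone(K_\Sigma, K)$, which full faithfulness again identifies with $\Map_\TStk(\Yfr, \Ufr)$. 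This identification is given by precomposition with $\Xfr_j$.

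The one point needing care is the precise meaning of ``universal.'' The adjunction shows that $\Yfr$ is initial among all toric morphisms from $\Xfr$ to basic toric stacks, not just open immersions. This is a stronger statement and in particular covers the open immersion version. I would verify that the factorization $\Yfr \to \Ufr$ is compatible with the given maps up to coherent 2-isomorphism; this follows from the 2-naturality of the equivalences used.

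It remains to show that $\Yfr$ is smooth. The monoid at the maximal point of $K_\Sigma$ is $\NN^{\Sigma(1)}$, so condition (2) of \zcref{prop:smooth_criterion} holds and $\Yfr$ is smooth. Concretely, $\Yfr$ is a quotient of $\AA^{\Sigma(1)}$ by a diagonalizable group.

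The main obstacle is only the bookkeeping: confirming that $\Oc(\Sigma)$ computed as a colimit agrees with the toric category $\Nc_\Sigma \vec{/} \NN^{\Sigma(1)}$. This identification is exactly what \zcref{prop:cox_construction_fan} already supplies, so no new computation should be required.
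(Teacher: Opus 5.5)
Your proposal is correct and follows the paper's own proof. The paper likewise writes $\Xfr = \Xfr_\Sigma$ via \zcref{thm:classification}, applies \zcref{prop:cox_construction_fan}, and transports the open immersion with \zcref{prop:open_imm_biconditional}. You additionally make explicit two things the paper leaves implicit: the universality (via full faithfulness and the adjunction of \zcref{prop:Cone_Gamma_adjunction}) and the smoothness of $\Yfr$ (via \zcref{prop:smooth_criterion} applied to $\NN^{\Sigma(1)}$).

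One aside is false as stated, though your proof does not use it. In general $\Yfr \simeq [(\AA^{\Sigma(1)} \times T)/G]$ for some torus $T$, not a quotient of $\AA^{\Sigma(1)}$ alone. For example, for $\Xfr = \GG_m$ one has $\Sigma(1) = \varnothing$ and $\Yfr = \GG_m$, which is not of the form $BG$. So the ``concretely'' remark needs a no-torus-factors hypothesis.
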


\begin{proof}
    Use \zcref{thm:classification} to write $\Xfr = \Xfr_\Sigma$ for some unique $\Sigma$.
    The result then follows from \zcref{prop:cox_construction_fan}, since \zcref{prop:open_imm_biconditional} shows that open immersions of stacky fans correspond to open immersions of toric stacks.
\end{proof}

\section{Picard groups and class groups} \label{sec:picard_groups}

In this section we discuss methods for computing the Picard and class groups of toric stacks.
Our main results, \zcref{prop:CLine_computation} and \zcref{cor:cl_sequence}, show how to generalize the standard computations of Picard groups and class groups of toric varieties (e.g.\ \cite[Theorems 4.1.3 and 4.2.1]{cls}) to toric stacks.
The proofs give a nice illustration of the benefits of a local theory of toric stacks: everything can be computed cone by cone and then glued together.
As an application of this theory, we construct new examples of toric schemes which are not quotients of toric varieties (\zcref{prop:counterexample}).

\subsection{Computation of $\CLine$ and Picard groups}

To compute the Picard group of a toric stack $\Xfr_\Sigma$, we first explain how to describe $\CLine(\Xfr_\Sigma)$ in terms of the combinatorial data attached to $\Sigma$.

\begin{prop} \label{prop:CLine_vs_DGale}
    Let $\Xfr_\Sigma$ be a toric stack.
    Then $\CLine(\Xfr_\Sigma) = \GDual(\Oc(\Sigma))$.
    In particular, $\CLine(\Xfr_\Sigma)$ is toric.
\end{prop}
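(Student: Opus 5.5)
The plan is to reduce to the basic case, which is \zcref{lem:linear_reconstruction_geometric}, by gluing. By \zcref{prop:toric_stack_locally_basic} and the definition of $\Xfr_\Sigma$, we have $\Xfr_\Sigma = \colim_{K \subset \Sigma} \Xfr_K$, where the colimit runs over subcones along open immersions. Quasicoherent sheaves satisfy Zariski descent, so $\QCoh(\Xfr_\Sigma) = \lim_{K} \QCoh(\Xfr_K)$ as symmetric monoidal categories. An object is $\otimes$-invertible if and only if it is so locally, so this restricts to $\Line(\QCoh(\Xfr_\Sigma)) = \lim_K \Line(\QCoh(\Xfr_K))$. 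The dense stacky torus $\Tfr_\Sigma = \Tfr_K$ is the same for all $K$, and the restriction functors to it are compatible. Since fiber products commute with limits, the definition of $\CLine$ then gives
\[
    \CLine(\Xfr_\Sigma) = \lim_{K \subset \Sigma} \CLine(\Xfr_K) = \lim_{K \subset \Sigma} \GDual(\Oc(K)),
\]
with the second equality from \zcref{lem:linear_reconstruction_geometric}.

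Next I would identify the right-hand side with $\GDual(\Oc(\Sigma))$. By definition $\Oc(\Sigma) = \colim_{\sigma} \Nc_\Sigma \vec{/} P_{\Sigma,\sigma} = \colim_K \Oc(K)$, which is a finite colimit in $\TCat$ (\zcref{prop:Tcat_finite_lim_colim}). The functor $\GDual$ is a contravariant autoequivalence by \zcref{thm:gale_duality}, so it sends finite colimits to finite limits in $\TCat$. This gives $\GDual(\Oc(\Sigma)) = \lim_K \GDual(\Oc(K))$, and since a finite limit of toric categories is toric, the ``in particular'' follows.

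The main obstacle is the final comparison: the limit $\lim_K \CLine(\Xfr_K)$ obtained from descent is computed in $\SMCat$, whereas the limit of the $\GDual(\Oc(K))$ is the one in $\TCat$. I would show that the $\TCat$-limit agrees with the $\SMCat$-limit for this diagram. The transition maps $\GDual(\Oc(K)) \to \GDual(\Oc(K'))$ are localizations by \zcref{prop:gale_duality_localization}. By \eqref{eq:finite_limit_tcat}, the $\TCat$-limit has $\iota_0$ equal to $\lim \iota_0$ (computed via truncated limits in $\tTerm$) and $\PO$ equal to $\lim \PO$ (computed in $\CMon$). I would check directly that the $\SMCat$-limit has these same invariants. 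Its groupoid of objects is the descent groupoid of line bundles, and on morphisms the limit is computed by \zcref{lem:morphisms_in_T_cat_mono}-style injectivity, so sections glue uniquely. The delicate point is the truncation $\tau^{\leq 0}$: I expect the $\pi_0$ of the descent groupoid to match $H^0$ of the limit complex precisely because torsors for the automorphism groups glue, since the relevant $H^1$ contributions vanish on the combinatorial side. If this direct check becomes messy, the fallback is to verify the equality $\lim_{\SMCat} = \lim_{\TCat}$ only for localization diagrams indexed by a finite meet-semilattice, using induction on $|\Sigma|$ and binary pullbacks $\Sigma = \Sigma_1 \cup_{\Sigma_1 \cap \Sigma_2} \Sigma_2$.
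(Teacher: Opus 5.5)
Your proposal is correct and is essentially the paper's proof. Both use Zariski descent for $\CLine(\Xfr_{-})$, take the basic case from \zcref{lem:linear_reconstruction_geometric}, and use $\GDual(\colim_K \Oc(K)) = \lim_K \GDual(\Oc(K))$ because $\GDual$ is a contravariant autoequivalence of $\TCat$.

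The $\SMCat$-versus-$\TCat$ comparison you flag, which the paper leaves implicit, is formal rather than delicate. A pseudo-pullback in $\SMCat$ of toric categories is already toric. Its $\iota_0$ is the pullback of the $\iota_0$'s, whose $\pi_0$ is automatically $H^0$ of the derived pullback, since the truncation only discards $H^1$. Its $\PO$ is the pullback of the path monoids, by the epimorphism part of \zcref{lem:morphisms_in_T_cat_mono}. So no vanishing argument is needed.
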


\begin{proof}
    The contravariant functor $\CLine(\Xfr_{-})$ satisfies Zariski descent on $\StkFan$ (by combining the definition $\Xfr_\Sigma = \cup_{K \subset \Sigma} \Xfr_K$ with the fact that $\QCoh(-)$ satisfies Zariski descent).
    The functor $\GDual(\Oc(-))$ also satisfies Zariski descent, since
    \[
        \GDual(\Oc(\Sigma)) = \GDual\Big(\colim_{K \subset \Sigma} \Oc(K)\Big) = \lim_{K \subset \Sigma} \GDual(\Oc(K)).
    \]
    Thus it suffices to show $\CLine(\Xfr_K) = \GDual(\Oc(K))$ for all stacky cones $K$.
    But this is \zcref{lem:linear_reconstruction_geometric}.
\end{proof}

\begin{dfn}
    Let $\Sigma$ be a stacky fan.
    We define the abelian group of (integral) \emph{support functions} on $\Sigma$ as
    \[
        \SF(\Sigma) = \lim_{K \subset \Sigma} \Hom(\PO \Oc(K), \ZZ).
    \]
\end{dfn}

In other words, a support function on $\Sigma$ consists of a collection of linear functions $\PO \Oc(K) \to \ZZ$ which agree on intersections.
Thus this notion extends the notion of support function on classical fans.
Note that we can also describe $\SF(\Sigma)$ as
\[
    \SF(\Sigma) = \lim_{K \subset \Sigma} \Hom\big((\PO \Oc(K))^\gp, \ZZ\big) = \Hom\big((\PO \Oc(\Sigma))^\gp, \ZZ\big),
\]
though this description makes it less clear why elements of $\SF(\Sigma)$ deserve to be called ``support functions.''

We can use the above definition to extract global information about Picard groups.

\begin{prop} \label{prop:CLine_computation}
    Let $\Xfr_\Sigma$ be a toric stack.
    Then
    \[
        \iota_0 \CLine(\Xfr_\Sigma) = \fib_{\tAb}\big( \GDual(\Nc_\Sigma) \to \Bsf\SF(\Sigma)\big).
    \]
\end{prop}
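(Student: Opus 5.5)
By \zcref{prop:CLine_vs_DGale} we have $\CLine(\Xfr_\Sigma) = \GDual(\Oc(\Sigma))$. The statement is therefore about the toric groupoid $\iota_0 \GDual(\Oc(\Sigma))$, and the plan is to compute it through the characterization in \zcref{cor:gale_dual_as_skew_quotient}:
\[
    \iota_0 \GDual(\Oc(\Sigma)) = \iota_0 \Fun^\oplus(\Oc(\Sigma), \Bsf \ZZ).
\]
Write $\Oc(\Sigma) = \colim_{K} \Oc(K)$, with the colimit taken over subcones $K \subset \Sigma$. A functor out of $\Oc(\Sigma)$ is then the same as a compatible family of functors out of the $\Oc(K)$, which gives
\[
    \iota_0 \CLine(\Xfr_\Sigma) = \lim_{K \subset \Sigma} \iota_0 \Fun^\oplus(\Oc(K), \Bsf\ZZ),
\]
where the limit is computed in $\tAb$.

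Next I would compute each term cone by cone. Each $\Oc(K)$ is the skew quotient $\Nc_\Sigma \vec{/} P_K$ with $P_K = \PO \Oc(K)$. Applying \zcref{prop:skew_quotient_UP} with $\Cc = \Bsf\ZZ$, and using $\iota_0 \Bsf\ZZ = \Bsf\ZZ$ and $\PO \Bsf\ZZ = 0$ (or $\PO \Bsf \ZZ$ is trivial as a 2-monoid up to the sharpness remark), we get
\[
    \iota_0\Fun^\oplus(\Oc(K), \Bsf\ZZ) = \Hom_{\tAb}(\Nc_\Sigma, \Bsf\ZZ) \times_{\Hom_{\tCMon}(P_K, \Bsf\ZZ)} 0 = \fib\big(\GDual(\Nc_\Sigma) \to \Hom_{\tCMon}(P_K, \Bsf\ZZ)\big).
\]
The first factor is $\iota_0\GDual(\Nc_\Sigma) = \GDual(\Nc_\Sigma)$ by \zcref{cor:gale_dual_as_skew_quotient}, since $\PO \Nc_\Sigma = 0$. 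For a discrete monoid $P_K$, the groupoid $\Hom_{\tCMon}(P_K, \Bsf\ZZ)$ is equivalent to $\Bsf \Hom(P_K, \ZZ)$. Indeed, homomorphisms $P_K \to \Bsf \ZZ$ are all trivial up to isomorphism, because a discrete monoid maps to the one object, and the automorphisms of the trivial homomorphism are monoid maps $P_K \to \ZZ$. I would verify this identification carefully via \zcref{prop:deligne_classification}, using that $P_K^\gp$ is free, so that there is no $\Ext^1$ contribution.

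Finally, I would pass to the limit. Fibers commute with limits, and $\GDual(\Nc_\Sigma)$ is constant in $K$, while $\lim_K \Bsf\Hom(P_K,\ZZ) = \Bsf \lim_K \Hom(P_K,\ZZ) = \Bsf \SF(\Sigma)$. Together these yield
\[
    \iota_0 \CLine(\Xfr_\Sigma) = \fib\big(\GDual(\Nc_\Sigma) \to \Bsf\SF(\Sigma)\big).
\]

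The main obstacle is the limit of the $\Bsf$ terms. Limits of 2-groups of the form $\Bsf A_K$ over the poset of subcones can in principle acquire higher derived contributions (a $\lim^1$-type term in $\pi_0$). I would handle this with \zcref{prop:2Ab_limits}, computing the limit in $\Dsf(\ZZ)$ and then truncating. The claim to check is that the $\pi_0$ part of $\lim_K \Bsf\Hom(P_K,\ZZ)$ vanishes. Alternatively, one can avoid the issue entirely by working with the single colimit monoid $\PO\Oc(\Sigma)$: rewrite $\SF(\Sigma) = \Hom((\PO\Oc(\Sigma))^\gp,\ZZ)$ and apply \zcref{prop:skew_quotient_UP} directly to $\Oc(\Sigma) = \iota_0\Oc(\Sigma)\vec{/}\PO\Oc(\Sigma)$. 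That route requires the identification $\iota_0 \Oc(\Sigma) = \Nc_\Sigma$ (up to the relevant cofiber), which follows from \zcref{thm:TCat_structure_thm} and the colimit formula \eqref{eq:finite_colimit_tcat}, and I would try it first.
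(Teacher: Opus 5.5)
The step you flagged as the main obstacle genuinely fails, and it cannot be repaired, because the statement is false for some stacky fans.

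Your cone-by-cone computation is sound as far as
\[
    \iota_0\CLine(\Xfr_\Sigma)\simeq\fib\bigl(\GDual(\Nc_\Sigma)\to\lim_K\Hom_{\tCMon}(P_K,\Bsf\ZZ)\bigr).
\]
The constant term passes through the limit because $|\Sigma|$ has a least element. The failing step is the identification of the target with $\Bsf\SF(\Sigma)$: by \zcref{prop:2Ab_limits},
\[
\pi_0\lim_K\Bsf\Hom(P_K^\gp,\ZZ)=\lim^1_K\Hom(P_K^\gp,\ZZ),
\]
and this need not vanish. Take $P=\{(a,b)\in\NN^2 : b\le 2a\}$, with rays $(1,0)$ and $(1,2)$. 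Glue three copies $P_{\sigma_{12}},P_{\sigma_{23}},P_{\sigma_{31}}$ cyclically along rays $\rho_1,\rho_2,\rho_3$, giving a cone over a triangle. Restricting $\Hom(P^\gp,\ZZ)$ to the two rays has image $\{(s,t): s\equiv t \bmod 2\}$. Hence $\lim^1\cong(\ZZ/2)^3/\langle(1,0,1),(1,1,0),(0,1,1)\rangle\cong\ZZ/2$.

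To see that the statement itself fails, let $\Nc_\Sigma=\colim^{\Ab}_K P_K^\gp$, with the $b_\sigma$ the tautological maps. Explicitly, $\Nc_\Sigma$ is generated by $v_1,v_2,v_3,w_{12},w_{23},w_{31}$ with $2w_{ij}=v_j-v_i$, so $\Nc_\Sigma\cong\ZZ^3\oplus\ZZ/2$ with torsion generated by $w_{12}+w_{23}+w_{31}$. This is a stacky fan: each $\Sigma_{\le\sigma}=\Cone(\Nc_\Sigma\vec{/}P_\sigma)$, and meets exist. Moreover $\operatorname{hocolim}_K P_K^\gp\to\Nc_\Sigma$ is an equivalence, since the homotopy colimit has no $H_1$.

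Your own formula therefore gives $\iota_0\CLine(\Xfr_\Sigma)\simeq 0$. Concretely, the nontrivial line bundle on $\Tfr_\Sigma\simeq\GG_m^3\times B\mu_2$ does not extend to $\Xfr_\Sigma$. On the other hand, every $\fib(\GDual(\Nc_\Sigma)\to\Bsf\SF(\Sigma))$, whatever the map, has $\pi_0$ surjecting onto $\pi_0\GDual(\Nc_\Sigma)=\Ext^1(\ZZ/2,\ZZ)\neq0$.

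Your fallback route is essentially the paper's proof. You use \zcref{prop:skew_quotient_UP} where the paper uses \zcref{prop:gale_duality_localization} and Gale-dualizes the cofiber from \zcref{prop:Cgpd_calculation}. It breaks at the same point: it needs $\iota_0\Oc(\Sigma)=\Nc_\Sigma$, which the paper's last line also assumes without comment. Formula \eqref{eq:finite_colimit_tcat} presupposes a map $\colim^{\sharp,\toric}P_\sigma\to\Nc_\Sigma$ compatible with the $b_\sigma$. No such map exists here, because $(\colim^{\sharp,\toric}P_\sigma)^\gp$ is torsion-free while the $b_\sigma$ hit the torsion. In this example one computes $\iota_0\Oc(\Sigma)=\Nc_\Sigma/(\Nc_\Sigma)_\tors$.

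What holds in general is $\fib\bigl(\GDual(\iota_0\Oc(\Sigma))\to\Bsf\Hom(\PO\Oc(\Sigma)^\gp,\ZZ)\bigr)$, equivalently your limit formula. The stated form follows under an extra hypothesis, for example:
\begin{itemize}
\item $\lim^1_K\Hom(P_K^\gp,\ZZ)=0$; or
\item $\Nc_\Sigma$ is discrete and free, so that $\pi_0\GDual(\Nc_\Sigma)=0$ and the map lifts uniquely to $\Bsf\SF(\Sigma)$.
\end{itemize}
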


\begin{proof}
    This is a direct computation:
    \begin{align*}
        \iota_0 \CLine(\Xfr_\Sigma) &= \iota_0 \GDual(\Oc(\Sigma))
        \\
        &= \GDual(\Oc(\Sigma)^\gpd) \textrm{ by \zcref{prop:gale_duality_localization}} \\
        &= \GDual\big(\cofib_{\tAb}(\PO \Oc(\Sigma)^\gp \to \iota_0 \Oc(\Sigma))\big) \textrm{ by \zcref{prop:Cgpd_calculation}} \\
        &= \fib_{\tAb}\big(\GDual(\iota_0 \Oc(\Sigma)) \to \GDual(\PO \Oc(\Sigma)^\gp)\big) \\
        &= \fib_{\tAb}\big(\GDual(\iota_0 \Oc(\Sigma)) \to \Bsf \Hom((\PO \Oc(\Sigma))^\gp, \ZZ)\big) \\
        &= \fib_{\tAb}\big( \GDual(\Nc_\Sigma) \to \Bsf\SF(\Sigma)\big) \textrm{ by \zcref{ex:Gale_dual_of_lattice}}. \qedhere
    \end{align*}   
\end{proof}

\begin{cor} \label{cor:pic_sequence}
    Let $\Xfr_\Sigma$ be a toric stack.
    Then there is a natural exact sequence
    \[ 
    \begin{tikzcd}
        0 \rar & \Aut_{\CLine(\Xfr_\Sigma)}(\Osc_{\Xfr_\Sigma}) \rar & \Hom(\pi_0(\Nc_\Sigma), \ZZ) \rar & \SF(\Sigma) \ar[dll, out=-15, in=165] \\
        & \Pic(\Xfr_\Sigma) \rar & \pi_0(\GDual(\Nc_\Sigma)) \rar & 0.
    \end{tikzcd}
    \]
    Furthermore, there is a non-canonical isomorphism
    \[
        \pi_0(\GDual(\Nc_\Sigma)) \cong \Ext^1(\pi_0(\Nc_\Sigma)_\tors, \ZZ) \oplus \Hom(\Aut_{\Nc_\Sigma}(0), \ZZ).
    \]
\end{cor}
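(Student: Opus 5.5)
The plan is to take the long exact sequence of homotopy groups attached to the fiber sequence of \zcref{prop:CLine_computation}. That proposition gives
\[
    \iota_0 \CLine(\Xfr_\Sigma) = \fib_{\tAb}\big(\GDual(\Nc_\Sigma) \to \Bsf\SF(\Sigma)\big).
\]
Via the equivalence $\Omega^\infty: \Dsf^{[-1,0]}(\ZZ) \simeq \tAb^\st$ of \zcref{prop:deligne_classification}, and the limit description of \zcref{prop:2Ab_limits}, a fiber in $\tAb$ is $\tau^{\leq 0}$ of the fiber in $\Dsf(\ZZ)$. Write $C$ for the complex corresponding to $\GDual(\Nc_\Sigma)$ and $\SF(\Sigma)[1]$ for the complex corresponding to $\Bsf\SF(\Sigma)$. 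The long exact cohomology sequence of $\fib \to C \to \SF(\Sigma)[1]$ then reads
\[
0 \to H^{-1}(\fib) \to H^{-1}(C) \to \SF(\Sigma) \to H^0(\fib) \to H^0(C) \to 0.
\]
The final $0$ comes from $H^0(\SF(\Sigma)[1]) = 0$. Truncation does not affect $H^{-1}$ or $H^0$.

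Next I identify each term, in the order the sequence runs.
\begin{itemize}
    \item $H^{-1}(\fib) = \Aut_{\iota_0\CLine(\Xfr_\Sigma)}(0)$, and the unit object is $\Osc_{\Xfr_\Sigma}$.
    \item $H^0(\fib) = \pi_0(\iota_0\CLine(\Xfr_\Sigma))$. I claim this is $\Pic(\Xfr_\Sigma)$. By definition $\CLine(\Xfr_\Sigma)$ has the same objects as $\Line(\QCoh(\Xfr_\Sigma))$. Isomorphism classes agree because any isomorphism of line bundles restricts on $\Tfr$ to an isomorphism in $\Line(\QCoh(\Tfr))$. By \zcref{lem:isos_in_kC} (applied to $\CLine(\Tfr) = \GDual(\Oc(\Sigma(0)))$ via \zcref{prop:CLine_vs_DGale}), that restriction is a scalar multiple of a setlike one. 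Rescaling it lifts it to an isomorphism in the fiber product. I expect this step to be the main point needing care, since one must check the rescaled map still lies over an isomorphism in $\QCoh(\Xfr_\Sigma)$; but scalars act on both factors compatibly, so it should go through.
    \item $H^0(C) = \pi_0(\GDual(\Nc_\Sigma))$.
    \item $H^{-1}(C) = \Aut_{\GDual(\Nc_\Sigma)}(0)$. From \zcref{cons:gale_dual} with $M = 0$, we have $\GDual(\Nc_\Sigma) = \Omega^\infty \RHom(F^\bullet[-1], \ZZ)$, where $F^\bullet$ represents $\Nc_\Sigma$. Hence $H^{-1}$ equals $\Ext^0(F^\bullet,\ZZ) = \Hom(H^0F^\bullet,\ZZ) = \Hom(\pi_0(\Nc_\Sigma), \ZZ)$.
\end{itemize}
The map $H^{-1}(C) \to \SF(\Sigma)$ is then restriction along $P^\gp \to \Nc_\Sigma$, which gives naturality.

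For the non-canonical isomorphism, I use $H^0(C) = H^1\RHom(F^\bullet,\ZZ)$. Since the complex $F^\bullet$ splits as $\pi_0(\Nc_\Sigma)[0] \oplus \Aut_{\Nc_\Sigma}(0)[1]$ (\zcref{prop:strict_trivialization}), this becomes
\[
    \Ext^1(\pi_0(\Nc_\Sigma),\ZZ) \oplus \Hom(\Aut_{\Nc_\Sigma}(0),\ZZ).
\]
Finally, $\Ext^1(\pi_0(\Nc_\Sigma),\ZZ) = \Ext^1(\pi_0(\Nc_\Sigma)_\tors,\ZZ)$, because the free part of $\pi_0(\Nc_\Sigma)$ has vanishing $\Ext^1$.
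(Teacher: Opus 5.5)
Your proposal is correct and matches the paper's proof: both take the long exact cohomology sequence of the fiber sequence from \zcref{prop:CLine_computation}, compute $\Aut_{\GDual(\Nc_\Sigma)}(0) = \Hom(\pi_0(\Nc_\Sigma),\ZZ)$ directly, and obtain the non-canonical isomorphism by writing $\GDual(\Nc_\Sigma)$ as an $\RHom$ complex and splitting the complex representing $\Nc_\Sigma$. Your extra argument for $\pi_0\iota_0\CLine(\Xfr_\Sigma) = \Pic(\Xfr_\Sigma)$ is a valid filling-in of a step the paper leaves implicit; it uses \zcref{lem:isos_in_kC} to rescale an isomorphism of line bundles by a nonzero scalar so that its restriction to the torus becomes setlike.
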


\begin{proof}
    This is the long exact sequence in cohomology corresponding to the fiber sequence
    \[
    \begin{tikzcd}
        \iota_0 \CLine(\Xfr_\Sigma) \rar & \GDual(\Nc_\Sigma) \rar & \Bsf\SF(\Sigma)
    \end{tikzcd}
    \]
    in $\TGpd$, where we use the direct computation $\Aut_{\GDual(\Nc_\Sigma)}(0) = \Hom(\pi_0(\Nc_\Sigma), \ZZ)$.
    The non-canonical isomorphism in the final claim follows from computing $\GDual(\Nc_\Sigma)$ as a $\RHom$ group.
\end{proof}

\begin{cor}
    Let $\Xfr_\Sigma$ be a toric stack.
    Then $\iota_0 \CLine(\Xfr_\Sigma / \Tfr_\Sigma) = \Pic(\Xfr_\Sigma / \Tfr_\Sigma) = \SF(\Sigma)$.
\end{cor}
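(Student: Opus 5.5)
The plan is to apply \zcref{prop:CLine_computation} to the stacky fan $\Sigma / \Sigma(0)$. By \zcref{prop:rigid_eqvar_fan}(2), this fan geometrizes to $\Xfr_\Sigma / \Tfr_\Sigma$, and by definition $\Nc_{\Sigma/\Sigma(0)} = 0$, $|\Sigma / \Sigma(0)| = |\Sigma|$, and $P_{\Sigma/\Sigma(0),\sigma} = P_{\Sigma,\sigma}$. The first step is to check that $\SF(\Sigma / \Sigma(0)) = \SF(\Sigma)$. For each subcone $K \subset \Sigma$ with maximal point $\kappa$, \zcref{thm:TCat_structure_thm} gives $\PO \Oc(K) = P_{\Sigma,\kappa}$. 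The same holds for the corresponding subcone of $\Sigma/\Sigma(0)$, whose toric category is $0 \vec{/} P_{\Sigma,\kappa} = \Bsf P_{\Sigma,\kappa}$. The limits defining $\SF$ are taken over the same poset with the same transition maps, so the two groups agree.

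Next, \zcref{prop:CLine_computation} gives
\[
\iota_0 \CLine(\Xfr_\Sigma/\Tfr_\Sigma) = \fib_{\tAb}\big(\GDual(0) \to \Bsf \SF(\Sigma)\big) = \fib(0 \to \Bsf \SF(\Sigma)).
\]
Using \zcref{prop:deligne_classification}, $\fib(0 \to \SF(\Sigma)[1]) = \SF(\Sigma)[0]$, so $\iota_0\CLine(\Xfr_\Sigma/\Tfr_\Sigma)$ is the discrete group $\SF(\Sigma)$. The same conclusion follows from the exact sequence of \zcref{cor:pic_sequence}: there $\Hom(\pi_0(0),\ZZ) = 0$ and $\pi_0(\GDual(0)) = 0$, so $\SF(\Sigma) \simto \Pic(\Xfr_\Sigma/\Tfr_\Sigma)$. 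The sequence also forces $\Aut(\Osc) = 0$ in $\CLine$, which is consistent with discreteness.

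The remaining step, and the main point needing care, is to identify $\pi_0 \iota_0 \CLine(\Xfr)$ with $\Pic(\Xfr)$, that is, to show that the isomorphism classes of objects are unchanged when one passes to setlike morphisms. Objects of $\CLine(\Xfr)$ are line bundles $\Lsc$ together with an isomorphism of their restrictions to the torus, where this isomorphism must be setlike. So the question is whether every line bundle on $\Xfr$ appears, with its restriction to the dense torus trivialized in a setlike way. Here the dense torus is $\Tfr = \Spec k$, where $\Line(\QCoh(\Spec k)) = k\angles{0}$ and the setlike isomorphisms are just the identity. Hence every $\Lsc$ lifts, and since $\Aut_{\CLine}$ is trivial, $\pi_0\iota_0\CLine(\Xfr) = \Pic(\Xfr)$.

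This surjectivity argument carries over to a general $\Xfr_\Sigma$ via \zcref{lem:isos_in_kC}: any isomorphism $\Lsc|_\Tfr \simeq \Osc$ can be rescaled so that it becomes setlike, because every line bundle on a stacky torus lies in $k\angles{\Cc^\gpd}$.
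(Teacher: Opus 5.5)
Your proposal is correct and is essentially the paper's proof. The paper applies \zcref{prop:CLine_computation} to $\Sigma/\Sigma(0)$, identified with $\Xfr_\Sigma/\Tfr_\Sigma$ via \zcref{prop:rigid_eqvar_fan}(2), and uses $\Nc_{\Sigma/\Sigma(0)}=0$ and $\SF(\Sigma/\Sigma(0))=\SF(\Sigma)$ to get $\fib_{\tAb}(0\to\Bsf\SF(\Sigma))=\SF(\Sigma)$. It simply asserts the equality of support-function groups, and leaves the identification $\pi_0\iota_0\CLine=\Pic$ implicit, as in \zcref{cor:pic_sequence}. Your extra justification of that identification therefore goes beyond the paper.

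One correction to that extra step. Triviality of $\Aut_{\CLine}$ does not give injectivity of $\pi_0\iota_0\CLine(\Xfr)\to\Pic(\Xfr)$; it only expresses injectivity of $\Gamma(\Xfr,\Osc_\Xfr)^\times\to k^\times$. What you need is that two lifts $(\Lsc,\phi)$ and $(\Lsc',\phi')$ of isomorphic line bundles are isomorphic in $\CLine$. This holds because any isomorphism $\Lsc\simeq\Lsc'$ can be rescaled by a constant to match the trivializations at the unit point, i.e.\ because $\Gamma(\Xfr,\Osc_\Xfr)^\times\to k^\times$ is surjective.

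Your final paragraph is not needed for this statement, and it is slightly misframed. Objects of the fiber product carry an arbitrary, not necessarily setlike, isomorphism over $\Tfr$, so lifting objects is automatic; \zcref{lem:isos_in_kC} is what controls isomorphism classes. Also, restrictions of line bundles to a stacky torus such as $B\GG_m$ need not be trivial.
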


\begin{proof}
    Note that $\Xfr_\Sigma / \Tfr_\Sigma = \Xfr_{\Sigma / \Sigma(0)}$ by \zcref{prop:rigid_eqvar_fan}.
    Because $\Nc_{\Sigma / \Sigma(0)} = 0$ and $\SF(\Sigma / \Sigma(0)) = \SF(\Sigma)$, \zcref{prop:CLine_computation} gives
    \[
        \iota_0 \CLine(\Xfr_\Sigma / \Tfr_\Sigma) = \iota_0 \CLine(\Xfr_{\Sigma / \Sigma(0)}) = \fib_{\tAb}(0 \to \Bsf\SF(\Sigma)) = \SF(\Sigma). \qedhere
    \]
\end{proof}

\subsection{Application: a toric scheme which is not a quotient of a toric variety}

Our Picard group computations can be used to prove that some toric schemes do not arise via the construction of \cite{geraschenko2015toric_1}, giving new counterexamples to the claim of \cite{geraschenko2015toric_2}.

\begin{prop} \label{prop:counterexample}
    There exists a non-separated toric scheme $\Xfr$ which cannot be written as a quotient $X / G$ where
    \begin{itemize}
        \item $X$ is a toric variety with dense torus $T$ and
        \item $G$ is a diagonalizable group acting on $X$ through a homomorphism $G \to T$.
    \end{itemize}
\end{prop}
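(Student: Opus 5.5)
The plan is to find an obstruction visible on Picard groups. Suppose $\Xfr = [X/G]$ with $X$ a toric variety with torus $T$ and $G \to T$ a homomorphism. Two facts follow, and I want a toric scheme violating their combination.

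First, every point of $\Xfr$ is the image of a point of $X$, so the $G$-equivariant line bundles on $X$ give line bundles on $\Xfr$. Second, $X$ is a separated toric variety, so on each affine chart $U_\sigma \subset X$ the characters of $T$ supply enough monomial sections. The consequence I intend to extract is this: for every pair of $k$-points of $\Xfr$, or at least every pair of torus orbits, there is a line bundle on $\Xfr$ with a monomial section that vanishes at one and not the other. Equivalently, the images of $\Pic(\Xfr)$ in the various $\SF$-local data separate cones. I will formulate this through \zcref{cor:pic_sequence}. Pulling back along $X \to \Xfr$ and using the standard computation of $\Pic(X)$ as $\SF$ of the fan modulo global linear functions (\cite[Theorem 4.2.1]{cls}), the map $\Pic(\Xfr) \to \Pic(\Xfr / \Tfr) = \SF(\Sigma)$ must then have image large enough to restrict surjectively onto $\Hom(P_\sigma^\gp,\ZZ)$ for each cone, up to finite index. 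This is the step I expect to be the main obstacle: pinning down the precise necessary condition that a Geraschenko--Satriano quotient imposes on $\Pic(\Xfr) \to \SF(\Sigma)$, and proving it rigorously, probably by comparing the stacky fan of $[X/G]$ from \zcref{ex:GS_fan} with $\Sigma$.

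For the example, I take $\Nc_\Sigma = \ZZ^2$ and a fan built by gluing classical cones non-separatedly. Start from the complete fan of the classical counterexample in \cite[Example 4.2.13]{cls}, which is complete with $\Pic = 0$, or take a simpler fan with $\Pic$ trivial. Then double some cones along a common face. This keeps the result a scheme by \zcref{prop:scheme_criterion}, makes it non-separated by \zcref{prop:separated_criterion}, and preserves the fan condition by \zcref{prop:fan_criterion}.

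I then compute $\SF(\Sigma)$ and the map $\Hom(\Nc_\Sigma,\ZZ) \to \SF(\Sigma)$ explicitly, obtaining $\Pic(\Xfr)$ from \zcref{cor:pic_sequence}; here $\pi_0\GDual(\Nc_\Sigma) = 0$ since $\Nc_\Sigma$ is free and discrete. Finally I check that this $\Pic$ is too small to satisfy the separation condition from the obstruction step, for instance because it is torsion or zero while some cone has positive rank. The doubled cones matter here: they force every support function coming from $\Pic$ to agree on both copies, so the two copies of a point cannot be separated even though a quotient presentation would require it.
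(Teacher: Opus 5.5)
Your example is essentially the paper's: double the complete toric threefold $Y$ with $\Pic(Y)=0$ along a subcone. The trouble is the step you flag as the main obstacle. It is not merely unproven; as formulated it is false, and the final check built on it cannot work. Test it on $Y$ itself, which is trivially of the form $X/G$ with $X=Y$ and $G=1$. Since $\Pic(Y)=0$ and $\Gamma(Y,\Osc_{Y})=k$, no line bundle on $Y$ has a section vanishing on one orbit but not another. Likewise, the (zero) image of $\Pic(Y)$ cannot restrict onto $\Hom(P_\sigma^\gp,\ZZ)\cong\ZZ^3$ for a maximal cone, even up to finite index. So your criterion (``$\Pic$ is torsion or zero while some cone has positive rank'') would prove that $Y$ is not a quotient of a toric variety. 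In your intended example $\Pic(\Xfr)=\Pic(Y)=0$ and the cones of $\Xfr$ are copies of those of $Y$ with the same maps to $\Nc$, so a cone-by-cone condition on the image of $\Pic$ gives the same verdict for $\Xfr$ as for $Y$. The direction is also wrong: there is no natural map $\Pic(\Xfr)\to\SF(\Sigma)$. The map in \zcref{cor:pic_sequence} is $\SF(\Sigma)\to\Pic(\Xfr)$, given by pullback along $\Xfr\to\Xfr/\Tfr$.

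The missing idea is to look at the torsor rather than at separation by sections. If $\Xfr\simeq X/G$ with $\Xfr$ a scheme, then $X\to\Xfr$ is a $G$-torsor. Split $G$ into copies of $\GG_m$ and $\mu_d$. Then $\Pic(\Xfr)=0$ kills the $\GG_m$-torsors, and since $\Gamma(\Xfr,\Osc_{\Xfr})^\times=k^\times$, the Kummer sequence kills the $\mu_d$-torsors. Hence $X\cong\Xfr\times G$, which contains $\Xfr$ as a closed subscheme and is therefore non-separated, a contradiction.

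Your separation intuition can be rescued, but only in a pairwise form using rational sections. After identifying the fan of $X$ with $\Sigma$, a character $m$ of $T$ gives an element of $\SF(\Sigma)$ whose class in $\Pic(\Xfr)$ comes from $m|_G$. If $\Pic(\Xfr)=0$, exactness in \zcref{cor:pic_sequence} makes it a global linear function on $\pi_0(\Nc_\Sigma)$. That function takes equal values on the two copies of a doubled cone, so it cannot be the separating character the separation lemma supplies in $X$.

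Either route rests on proving $\Pic(\Xfr)=0$, and your setup cannot deliver that. The complete fan with trivial Picard group is three-dimensional, and no complete fan in $\ZZ^2$ has $\Pic=0$, since complete toric surfaces are projective. The paper doubles $Y$ along a subcone $K_0$ containing a maximal, hence full-dimensional, cone. Then $\SF(K_0)=\Hom(\pi_0(\Nc_{\Sigma_0}),\ZZ)=\SF(\Sigma_0)$, so $\SF(\Sigma)=\SF(\Sigma_0)\times_{\SF(K_0)}\SF(\Sigma_0)$ consists only of global linear functions. This gives $\Pic(\Xfr)=0$ by \zcref{cor:pic_sequence}.
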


\begin{proof}
    Let $\Sigma_0$ be the stacky fan of any positive-dimensional complete toric variety $Y = \Xfr_{\Sigma_0}$ with $\Pic(Y) = 0$ (e.g. the toric variety of \cite[Example on pp.\ 25--26]{fulton1993introduction}, also considered in \cite[Example 4.2.13]{cls}).
    Let $K_0 \subset \Sigma_0$ be any subcone containing a maximal point of $|\Sigma_0|$, and let $\Sigma = \Sigma_0 \cup_{K_0} \Sigma_0$.
    We claim that $\Xfr = \Xfr_{\Sigma}$ has the desired properties.

    Because $\Xfr$ is a union of two copies of $Y$, we see that $\Xfr$ is a scheme.
    Furthermore, $\Xfr$ is not separated: for any $\sigma \in |\Sigma_0| \setminus |K_0|$, the fan $\Sigma$ contains two distinct elements $\sigma_1$, $\sigma_2$ such that $\im b_{\sigma_1} = \im b_{\sigma_2} = \im b_\sigma \subset \Nc_\Sigma$, so condition (3) of \zcref{prop:separated_criterion} fails to hold.
    
    We claim that $\Pic(\Xfr) = 0$.
    Because $Y$ is complete, we have $\Aut(\Osc_Y) = k^\times$, so $\Aut_{\CLine(Y)}(\Osc_Y)$ is trivial.
    Plugging the values $\Aut_{\CLine(Y)}(\Osc_Y) = 0$ and $\Pic(Y) = 0$ into \zcref{cor:pic_sequence} (for the toric variety $Y$), we see that $\SF(\Sigma_0) = \Hom(\pi_0(\Nc_{\Sigma_0}), \ZZ))$.
    We also have $\SF(K_0) = \Hom(\pi_0(\Nc_{\Sigma_0}), \ZZ))$ because the rational polyhedral cone corresponding to $K_0$ is a top dimensional cone in $\Nc_{\Sigma_0} \otimes \RR$.
    We can then compute 
    \begin{align*}
        \SF(\Sigma) &= \SF(\Sigma_0) \times_{\SF(K_0)} \SF(\Sigma_0) \\
        &= \Hom(\pi_0(\Nc_{\Sigma_0}), \ZZ)) \times_{\Hom(\pi_0(\Nc_{\Sigma_0}), \ZZ))} \Hom(\pi_0(\Nc_{\Sigma_0}), \ZZ)) \\
        &= \Hom(\pi_0(\Nc_{\Sigma_0}), \ZZ)) \\
        &= \Hom(\pi_0(\Nc_{\Sigma}), \ZZ)).
    \end{align*}
    Inserting this into \zcref{cor:pic_sequence} (for $\Xfr$), we see that
    \[
        \Pic(\Xfr) = \pi_0(\GDual(\Nc_{\Sigma})) = 0.
    \]

    Now suppose for the sake of contradiction that we can write $\Xfr = X / G$ as in the proposition statement.
    Then $X$ is the total space of a $G$-torsor over $\Xfr$.
    Write $G$ as a product of cyclic groups and copies of $\GG_m$, and note that:
    \begin{itemize}
        \item Every $\GG_m$-torsor over $\Xfr$ is trivial.
        \item For all $d > 0$, every $\mu_d$-torsor over $\Xfr$ is trivial by a standard Kummer sequence argument.
    \end{itemize}
    Thus every $G$-torsor over $\Xfr$ is trivial, and in particular $X$ is a product $\Xfr \times G$.
    Since $\Xfr$ is not separated, $X$ cannot be separated, hence cannot be a toric variety.
\end{proof}

\subsection{Defining the class group}

Before computing the class group $\Cl(\Xfr)$, we should give a rigorous definition, since the notion of the class group of a (reasonable) algebraic stack does not seem to be well-developed in the literature.
We offer a quick definition based on the equivalence between the class group and the group of rank-one reflexive sheaves (see e.g.\ \stacks{0EBM}).

\begin{dfn}
    Let $\Xfr$ be a normal, locally integral, locally noetherian algebraic stack.
    Let $\Coh(\Xfr)^\refl$ be the category of \emph{reflexive coherent sheaves} on $\Xfr$, i.e.\ those coherent sheaves $\Fsc$ such that the natural map $\Fsc \to \Fsc^{\vee\vee}$ is an isomorphism (where $(-)^\vee = \scHom_\Xfr(-, \Osc_\Xfr)$).
    As in \stacks{0EBH}, we may view $\Coh(\Xfr)^\refl$ as a symmetric monoidal category with the modified tensor product $\tilde{\otimes}$ given by
    \[
        \Fsc \tilde{\otimes} \Gsc = (\Fsc \otimes_{\Osc_\Xfr} \Gsc)^{\vee\vee}
    \]
    for all $\Fsc, \Gsc \in \Coh(\Xfr)^\refl$.
    The \emph{class group} $\Cl(\Xfr)$ is the group of isomorphism classes of $\tilde{\otimes}$-invertible objects of $\Coh(\Xfr)^\refl$.
\end{dfn}

The standard local results about reflexive sheaves on schemes can be extended to results about reflexive sheaves on algebraic stacks using the following argument.
Suppose $\Xfr$ is a normal, locally integral, locally noetherian algebraic stack.
Then, for any smooth morphism $X \to \Xfr$, the source $X$ must also be normal, locally integral, and locally noetherian (see e.g.\ \stacks{034D}).
The usual smooth descent equivalence $\QCoh(\Xfr) = \lim_{X \to \Xfr} \QCoh(X)$ (where the limit is taken over smooth morphisms to $X$) restricts to an equivalence
\[
    \Coh(\Xfr)^\refl = \lim_{X \to \Xfr} \Coh(X)^\refl.
\]
Thus the data of a reflexive coherent sheaf on $\Xfr$ is the data of reflexive coherent sheaves on $X$ for all smooth $X \to \Xfr$ together with compatibility isomorphisms.

This argument, together with the isomorphism $\Cl(X) = \Pic(X)$ for regular, locally integral, locally noetherian schemes $X$, implies:

\begin{prop} \label{prop:regular_cl_is_pic}
    Let $\Xfr$ be a regular integral locally noetherian algebraic stack.
    Then $\Cl(\Xfr) = \Pic(\Xfr)$.
\end{prop}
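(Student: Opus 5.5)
We argue by reducing to the scheme case via smooth descent, exactly as outlined in the paragraph preceding the statement. Since $\Xfr$ is regular, locally integral and locally noetherian, every smooth morphism $X \to \Xfr$ from a scheme has regular, locally integral, locally noetherian source, by the permanence results cited there (regularity ascends along smooth morphisms). We then use the descent equivalence
\[
    \Coh(\Xfr)^\refl = \lim_{X \to \Xfr} \Coh(X)^\refl,
\]
and would first check that it is symmetric monoidal for $\tilde{\otimes}$. This amounts to showing that double dualization commutes with smooth (hence flat) pullback. That holds because $\scHom$ out of a coherent sheaf commutes with flat base change on locally noetherian stacks.

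Next we identify the invertible objects. For each smooth $X \to \Xfr$ with $X$ a regular scheme, every reflexive rank-one sheaf on $X$ is invertible: on a regular local ring, a reflexive module of rank one corresponds to a Weil divisor, hence is locally principal by factoriality (Auslander--Buchsbaum). On invertible sheaves, $\tilde{\otimes}$ agrees with $\otimes$. So for regular schemes the $\tilde{\otimes}$-invertible reflexive sheaves are precisely the line bundles, giving $\Cl(X) = \Pic(X)$ functorially in smooth pullbacks.

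Finally we pass to the limit. A reflexive sheaf $\Fsc$ on $\Xfr$ is $\tilde{\otimes}$-invertible if and only if each pullback $\Fsc|_X$ is. For the forward direction, pullback is monoidal. For the converse, the inverses $\Fsc|_X^\vee$ are the pullbacks of the reflexive sheaf $\Fsc^\vee$, and the evaluation map $\Fsc \tilde{\otimes} \Fsc^\vee \to \Osc_\Xfr$ is an isomorphism smooth-locally, hence globally. Being a line bundle is also smooth-local. So the full subcategory of $\tilde{\otimes}$-invertible reflexive sheaves on $\Xfr$ is $\lim_X \Line(\QCoh(X)) = \Line(\QCoh(\Xfr))$, and taking isomorphism classes gives $\Cl(\Xfr) = \Pic(\Xfr)$. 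The main obstacle is the first step: verifying carefully that reflexivity and the modified tensor product are compatible with smooth descent on stacks. The rest is the classical scheme-theoretic statement.
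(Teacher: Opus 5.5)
Your proposal is correct and takes the same route as the paper. The paper's proof combines the smooth-descent equivalence $\Coh(\Xfr)^\refl = \lim_{X \to \Xfr} \Coh(X)^\refl$ with the classical identity $\Cl(X) = \Pic(X)$ for regular schemes, and you fill in the details the paper leaves implicit: compatibility of double duals and $\tilde{\otimes}$ with flat pullback, Auslander--Buchsbaum, and smooth-locality of invertibility. The one step you pass over quickly is that a $\tilde{\otimes}$-invertible reflexive sheaf has rank one; this follows at once at generic points, where $\tilde{\otimes}$ is the ordinary tensor product over a field.
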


Similarly, we may use this argument together with the isomorphism $\Cl(X) = \Cl(U)$ whenever $U \subset X$ is open with $\codim_X (X \setminus U) \geq 2$ to prove:

\begin{prop} \label{prop:codim_2_cl}
    Let $\Xfr$ be a normal integral locally noetherian algebraic stack, and let $\Ufr \subset \Xfr$ be an open substack such that $\codim_\Xfr(\Xfr \setminus \Ufr) \geq 2$.
    Then $\Cl(\Xfr) = \Cl(\Ufr)$.
\end{prop}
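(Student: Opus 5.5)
The plan is to reduce to schemes via smooth descent, exactly as in the argument preceding \zcref{prop:regular_cl_is_pic}. Write $j: \Ufr \hookrightarrow \Xfr$ for the open immersion. I want to show that restriction $j^*: \Coh(\Xfr)^\refl \to \Coh(\Ufr)^\refl$ is a symmetric monoidal equivalence for $\tilde{\otimes}$. Then it induces a bijection on isomorphism classes of $\tilde{\otimes}$-invertible objects, which gives $\Cl(\Xfr) = \Cl(\Ufr)$. The inverse should be $\Fsc \mapsto (j_* \Fsc)$, and for this I need $j_*$ of a reflexive coherent sheaf to be coherent and reflexive.

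First, choose a smooth cover $\{ p: X \to \Xfr \}$ by schemes. Each such $X$ is normal and locally noetherian by \stacks{034D}, and it is a disjoint union of integral schemes. The preimage $U = p\inv(\Ufr)$ is open in $X$. Since smooth morphisms are flat and have constant relative dimension locally, codimension is preserved under smooth pullback. Hence $\codim_X(X \setminus U) \geq 2$ on each connected component; this is the step where I would cite the standard behaviour of codimension under flat or smooth base change.

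Second, on each such $X$ the classical fact (\stacks{0EBJ}-type results) is that restriction $\Coh(X)^\refl \to \Coh(U)^\refl$ is an equivalence, with inverse the pushforward $j_{X,*}$. This uses normality and the codimension-$2$ hypothesis, since reflexive sheaves on normal schemes are $S_2$. The equivalence is compatible with $\tilde{\otimes}$, because double duals commute with restriction and are determined in codimension $\leq 1$.

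Third, the equivalences on charts are compatible with smooth pullback. Restriction obviously commutes with pullback. Pushforward along $j$ commutes with flat base change, which follows from flat base change for quasicoherent pushforward along quasicompact open immersions (locally noetherian makes $j$ quasicompact locally). So the identifications $\Coh(\Xfr)^\refl = \lim_{X} \Coh(X)^\refl$ and $\Coh(\Ufr)^\refl = \lim_X \Coh(U)^\refl$ turn $j^*$ into a limit of equivalences, hence an equivalence. Passing to invertible objects gives the result.

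The main obstacle is the first step: making the codimension hypothesis on $\Xfr$ descend correctly to $X$. I would define $\codim_\Xfr$ via a smooth atlas, so that the statement is essentially tautological, and cite \stacks{} for well-definedness.
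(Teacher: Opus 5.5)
Your proposal is correct and follows the same route as the paper: use the smooth-descent description $\Coh(\Xfr)^\refl = \lim_{X \to \Xfr} \Coh(X)^\refl$ to reduce to smooth charts, where the classical codimension-two result for normal locally noetherian schemes applies. Your version is in fact a bit more careful than the paper's one-line justification. You use the chartwise equivalence $\Coh(X)^\refl \simeq \Coh(U)^\refl$, compatible with $\tilde{\otimes}$ and with pullback, rather than merely $\Cl(X) = \Cl(U)$, and it is the equivalence of categories that genuinely glues under descent.
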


\subsection{Computing the class group}

To compute $\Cl(\Xfr_\Sigma)$ for a toric stack $\Sigma$, we will replace the fan $\Sigma$ by the smooth subfan $\Sigma(\leq 1)$.

\begin{prop} \label{prop:Sigma_leq_1_properties}
    Let $\Xfr_\Sigma$ be a toric stack.
    Then $\Xfr_{\Sigma(\leq 1)}$ is a smooth Zariski-open substack of $\Xfr_\Sigma$, and $\codim_{\Xfr_\Sigma}(\Xfr_\Sigma \setminus \Xfr_{\Sigma(\leq 1)}) \geq 2$.
\end{prop}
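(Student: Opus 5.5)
The plan is to verify three things in turn: that $\Sigma(\leq 1)$ is a subfan so that $\Xfr_{\Sigma(\leq 1)}$ is a Zariski open, that it is smooth, and the codimension bound on the complement.

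\emph{Openness.} If $\sigma' \leq \sigma$, then $P_{\sigma'}$ is a face of $P_\sigma$, so $\rk P_{\sigma'}^\gp \leq \rk P_\sigma^\gp$. Hence $\Sigma(\leq 1)$ is downward closed. By \zcref{prop:downward_closed_is_subfan} it is a subfan, and by \zcref{prop:open_imm_biconditional} the inclusion geometrizes to an open immersion $\Xfr_{\Sigma(\leq 1)} \hookrightarrow \Xfr_\Sigma$. Its image is $\bigcup_{\sigma \in \Sigma(\leq 1)} \Xfr_{\Sigma_{\leq\sigma}}$.

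\emph{Smoothness.} By \zcref{prop:smooth_criterion} it suffices to check that each $P_\sigma$ with $\sigma \in \Sigma(\leq 1)$ is isomorphic to some $\NN^n$.
\begin{itemize}
\item If $\rk P_\sigma^\gp = 0$, then $P_\sigma = 0$, since it is sharp and integral.
\item If $\rk P_\sigma^\gp = 1$, then $P_\sigma$ is a sharp toric submonoid of $P_\sigma^\gp \cong \ZZ$. By \zcref{prop:sat_submon_of_lattice} it is a strongly convex saturated cone in $\ZZ$, namely $\NN$ or $-\NN$ (it cannot be $0$, as its group completion has rank one). So $P_\sigma \cong \NN$.
\end{itemize}

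\emph{Codimension.} By the stacky orbit--cone correspondence (\zcref{prop:underlying_poset_of_eqvar}), $\Xfr_\Sigma$ is set-theoretically the disjoint union of the orbits $\Ofr(\sigma)$ for $\sigma \in |\Sigma|$. The open $\Xfr_{\Sigma(\leq 1)}$ is exactly the union of the $\Ofr(\sigma)$ with $\sigma \in \Sigma(\leq 1)$. To see this, restrict to each basic open and use that the orbits contained in $\Xfr_{\Sigma_{\leq \sigma}}$ are the $\Ofr(\sigma')$ with $\sigma' \leq \sigma$; this is the orbit--cone correspondence applied to a cone. Consequently the complement $\Xfr_\Sigma \setminus \Xfr_{\Sigma(\leq 1)}$ is the finite union of the $\Ofr(\sigma)$ with $\sigma \in \Sigma(n)$ for $n \geq 2$. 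By \zcref{cor:orbit_codimension} each such orbit has codimension $n \geq 2$. A finite union of locally closed pieces each of codimension $\geq 2$ has closure of codimension $\geq 2$, which gives the bound.

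The main obstacle is this last step: identifying the underlying set of $\Xfr_{\Sigma(\leq 1)}$ with the union of the orbits indexed by $\Sigma(\leq 1)$, and making the codimension of a finite union of orbits precise for stacks. To handle this, I would pass through the smooth equivariantization map $\Xfr_\Sigma \to \Xfr_{\Sigma/\Sigma(0)}$. Codimension is preserved under this smooth map, and on the quotient the $k$-points form the finite poset $|\Sigma|$ with the specialization order, so the statement reduces to a finite poset calculation.
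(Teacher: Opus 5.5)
Your proposal is correct and follows the paper's proof essentially step by step. You get openness from \zcref{prop:open_imm_biconditional}, smoothness from \zcref{prop:smooth_criterion} using that a sharp toric monoid with group completion of rank one is $\NN$, and the codimension bound by writing the complement as the union of the orbits $\Ofr(\sigma)$ with $\sigma \notin \Sigma(\leq 1)$ and applying \zcref{cor:orbit_codimension}. Beyond the paper, you also check that $\Sigma(\leq 1)$ is downward closed and explain why the complement is exactly that union of orbits, both of which the paper leaves implicit.
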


\begin{proof}
    Since the natural map $\Sigma(\leq\! 1) \to \Sigma$ is an open immersion of stacky fans, the induced morphism $\Sigma(\leq 1) \hookrightarrow \Sigma$ is an open immersion of toric stacks by \zcref{prop:open_imm_biconditional}.

    To see that $\Xfr_{\Sigma(\leq 1)}$ is smooth, we note that for all $\sigma \in \Sigma(\leq 1)$, one of the following holds:
    \begin{itemize}
        \item $\sigma \in \Sigma(0)$, so $P_\sigma = 0$.
        \item $\sigma \in \Sigma(1)$, so $P_\sigma^\gp \cong \ZZ$.
        The only sharp toric monoid with group completion $\ZZ$ is $\NN$, so we must have $P_\sigma \cong \NN$.
    \end{itemize}
    Thus \zcref{prop:smooth_criterion} implies $\Xfr_\Sigma$ is smooth.

    For the claim about codimensions, we note $\Xfr_\Sigma \setminus \Xfr_{\Sigma(\leq 1)}$ decomposes as a union of orbits $\Ofr(\sigma)$ for $\sigma \in \Sigma \setminus \Sigma(\leq 1)$.
    For any such $\sigma$, we have $\codim_{\Xfr_\Sigma} \Ofr(\sigma) \geq 2$ by \zcref{cor:orbit_codimension}.
    Thus 
    \[
    \codim_{\Xfr_\Sigma}(\Xfr_\Sigma \setminus \Xfr_{\Sigma(\leq 1)}) = \min_{\sigma \in \Sigma \setminus \Sigma(\leq 1)} \codim_{\Xfr_\Sigma} \Ofr(\sigma) \geq 2. \qedhere
    \]
\end{proof}

\begin{cor} \label{cor:cl_sequence}
    Let $\Xfr_\Sigma$ be a toric stack.
    Then there is a natural exact sequence
    \[ 
    \begin{tikzcd}
        0 \rar & \Aut_{\CLine(\Xfr_\Sigma)}(\Osc_{\Xfr_\Sigma}) \rar & \Hom(\pi_0(\Nc_\Sigma), \ZZ) \rar & \oplus_{\rho \in \Sigma(1)} \Hom(P_\rho^\gp, \ZZ) \ar[dll, out=-15, in=165] \\
        & \Cl(\Xfr_\Sigma) \rar & \pi_0(\GDual(\Nc_\Sigma)) \rar & 0.
    \end{tikzcd}
    \]
    As in \zcref{cor:pic_sequence}, there is a non-canonical isomorphism
    \[
        \pi_0(\GDual(\Nc_\Sigma)) \cong \Ext^1(\pi_0(\Nc_\Sigma)_\tors, \ZZ) \oplus \Hom(\Aut_{\Nc_\Sigma}(0), \ZZ).
    \]
\end{cor}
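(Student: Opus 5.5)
The plan is to reduce to the smooth subfan $\Sigma(\leq 1)$ and then read off the sequence from \zcref{cor:pic_sequence} applied to that subfan. By \zcref{prop:Sigma_leq_1_properties}, $\Xfr_{\Sigma(\leq 1)}$ is a smooth open substack of $\Xfr_\Sigma$ whose complement has codimension at least $2$. Combining \zcref{prop:codim_2_cl} with \zcref{prop:regular_cl_is_pic}, we get
\[
\Cl(\Xfr_\Sigma) = \Cl(\Xfr_{\Sigma(\leq 1)}) = \Pic(\Xfr_{\Sigma(\leq 1)}).
\]
Toric stacks are normal, integral and of finite type, so these propositions apply.

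Next we apply \zcref{cor:pic_sequence} to the stacky fan $\Sigma' = \Sigma(\leq 1)$. By construction $\Nc_{\Sigma'} = \Nc_\Sigma$, so the terms $\Hom(\pi_0(\Nc_\Sigma), \ZZ)$ and $\pi_0(\GDual(\Nc_\Sigma))$ are unchanged. The non-canonical splitting of the latter is exactly the one in \zcref{cor:pic_sequence}. Two terms remain to identify.

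For the support functions, $\SF(\Sigma(\leq 1))$ is a limit over the subcones $\Sigma_{\leq \sigma}$ with $\sigma \in \Sigma(0)$ or $\sigma \in \Sigma(1)$. The cone $\Sigma(0)$ contributes $\Hom(0, \ZZ) = 0$. Each ray $\rho$ contributes $\Hom(P_\rho, \ZZ) = \Hom(P_\rho^\gp, \ZZ)$, since $\PO \Oc(\Sigma_{\leq\rho}) = P_\rho$ by \zcref{thm:TCat_structure_thm}. Any two distinct rays meet only in $\Sigma(0)$, so the compatibility conditions on intersections are vacuous. Hence the limit is the product $\oplus_{\rho \in \Sigma(1)} \Hom(P_\rho^\gp, \ZZ)$, which is finite because $|\Sigma|$ is finite. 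The maps in the sequence are then the natural ones induced by the $b_\rho$.

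For the first term, we must show
\[
\Aut_{\CLine(\Xfr_{\Sigma(\leq 1)})}(\Osc) = \Aut_{\CLine(\Xfr_\Sigma)}(\Osc_{\Xfr_\Sigma}).
\]
This is the step I expect to need the most care. I would use \zcref{prop:CLine_computation}: both automorphism groups are the kernel of $\Hom(\pi_0(\Nc_\Sigma), \ZZ) \to \SF(-)$. So it suffices that the restriction $\SF(\Sigma) \to \SF(\Sigma(\leq 1))$ is injective on the image of $\Hom(\pi_0(\Nc_\Sigma),\ZZ)$. I would argue this via the identification $\SF(\Sigma) = \Hom((\PO\Oc(\Sigma))^\gp, \ZZ)$. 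Each $P_\sigma^\gp$ is generated rationally by its rays, because a sharp toric monoid is generated up to saturation by elements on its one-dimensional faces. Therefore a linear function vanishing on every ray vanishes on every $P_\sigma$, which gives the injectivity.

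Alternatively, the two automorphism groups are monomial automorphisms of $\Osc$, and these can be compared geometrically. Invertible functions on a normal stack extend across codimension $\geq 2$, and setlikeness is tested on the dense torus, which is the same for both stacks. Either route finishes the comparison, and splicing these identifications into \zcref{cor:pic_sequence} gives the claimed exact sequence.
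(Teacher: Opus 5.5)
Your proposal is correct, and apart from one step it follows the paper's proof. Both pass to $\Sigma(\leq 1)$, obtain $\Cl(\Xfr_\Sigma)=\Pic(\Xfr_{\Sigma(\leq 1)})$ from \zcref{prop:codim_2_cl} and \zcref{prop:regular_cl_is_pic}, compute $\SF(\Sigma(\leq 1))$ ray by ray (the paper phrases this as cofinality of $\Sigma(1)$), and read off \zcref{cor:pic_sequence}.

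The difference is how the automorphism terms are matched. The paper simply cites algebraic Hartogs, which is your second route: a monomial unit on $\Xfr_{\Sigma(\leq 1)}$ extends across the codimension-$\geq 2$ complement in the normal stack $\Xfr_\Sigma$. Your primary route is combinatorial. Both groups are kernels of $\Hom(\pi_0(\Nc_\Sigma),\ZZ)\to\SF(-)$. The map $\SF(\Sigma)\to\SF(\Sigma(\leq 1))$ is injective because each $P_\sigma$ is rationally generated by its one-dimensional faces, and $\ZZ$ is torsion-free. These faces are exactly the $P_\rho$ with $\rho\in\Sigma(1)$ and $\rho\leq\sigma$.

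This is valid, and it even gives injectivity on all of $\SF(\Sigma)$; it is essentially the combinatorial content of Hartogs. It stays inside the fan formalism, but it uses the automorphism-level part of \zcref{prop:CLine_computation} for the full fan $\Sigma$. That part is safe: by Zariski descent, $\Aut_{\CLine(-)}(\Osc)$ turns the colimit over subcones into a limit, and the cone case is immediate. The Hartogs route needs only the sequence for $\Sigma(\leq 1)$ together with normality of $\Xfr_\Sigma$.

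One correction: drop the appeal to $\SF(\Sigma)=\Hom((\PO\Oc(\Sigma))^\gp,\ZZ)$. Your ray argument uses $\SF(\Sigma)$ only through its definition, as compatible families of homomorphisms $P_\sigma\to\ZZ$, so the identification is not needed. It is also not reliable in general. Because $\GDual$ turns the colimit $\Oc(\Sigma)$ into a limit and $\PO$ preserves limits, one gets $\PO\Oc(\Sigma)=D_\CMon(Q)$. Here $Q=\lim_K \Hom(\PO\Oc(K),\NN)$ is the monoid of $\NN$-valued support functions, so $\Hom((\PO\Oc(\Sigma))^\gp,\ZZ)=Q^\gp$. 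For the complete fan $\Sigma_0$ of \zcref{prop:counterexample}, every support function is globally linear, so $Q=0$. On the other hand, $\SF(\Sigma_0)=\Hom(\pi_0(\Nc_{\Sigma_0}),\ZZ)\neq 0$.
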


\begin{proof}
    This follows by taking the exact sequence of \zcref{cor:pic_sequence} for the toric stack $\Xfr_{\Sigma(\leq 1)}$.
    Indeed, we have
    \begin{itemize}
        \item $\Cl(\Xfr_\Sigma) = \Cl(\Xfr_{\Sigma(\leq 1)}) = \Pic(\Xfr_{\Sigma(\leq 1)})$ by \zcref{prop:codim_2_cl} and \zcref{prop:regular_cl_is_pic}.
        \item $\SF(\Sigma(\leq\! 1)) = \lim_{\rho \in \Sigma(1)} \Hom(P_\rho^\gp, \ZZ)$ because $\Sigma(1)$ is cofinal in $|\Sigma(\leq\! 1)|$, and
        \item $\Aut_{\CLine(\Xfr_{\Sigma(\leq 1)})}(\Osc_{\Xfr_{\Sigma(\leq 1)}}) = \Aut_{\CLine(\Xfr_\Sigma)}(\Osc_{\Xfr_\Sigma})$ by the algebraic Hartogs's theorem. \qedhere
    \end{itemize}
\end{proof}

\part*{References}

\printbibliography[heading=none]

\end{document}